\newtheorem{theorem}{Theorem}[section]
\newtheorem{corollary}[theorem]{Corollary}
\newtheorem{proposition}[theorem]{Proposition}
\newtheorem{definition}[theorem]{Definition}
\newtheorem{lemma}[theorem]{Lemma}
\newtheorem{claim}[theorem]{Claim}
\newtheorem{conjecture}[theorem]{Conjecture}
\newtheorem{exercise}[theorem]{Exercise}
\newtheorem*{theorem*}{Theorem}
\newtheorem*{proposition*}{Proposition}
\newtheorem*{definition*}{Definition}
\newtheorem*{lemma*}{Lemma}
\newtheorem*{claim*}{Claim}
\newtheorem*{corollary*}{Corollary}
\newtheorem*{convention*}{Convention}
\newtheorem{observation}[theorem]{Observation}
\theoremstyle{definition}
\newtheorem{convention}[theorem]{Convention}
\newtheorem{example}[theorem]{Example}
\newtheorem*{notation}{Notation}
\theoremstyle{remark}
\newtheorem{rem}[theorem]{Remark}
\newtheorem{remark}[theorem]{Remark}
\newtheorem{construction}[theorem]{Construction}
\newtheorem*{rem*}{Remark}
\newcommand{\wt}[1]{\widetilde{#1}}
\newcommand\bR{\mathbb R}
\newcommand\R{\mathbb R}
\newcommand\bZ{\mathbb Z}
\newcommand\Z{\mathbb Z}
\newcommand\bN{\mathbb N}
\newcommand\bC{\mathbb C}
\newcommand\bH{\mathbb H}
\newcommand{\bD}{\mathbb D}
\newcommand{\bT}{\mathbb T}
\newcommand\eps{\varepsilon}
\DeclareMathOperator{\id}{id}
\newcommand{\flow}{ \varphi }
\newcommand{\hflow}{ \tilde{\varphi} }
\newcommand\orb{ \mathcal O_{\flow} }
\newcommand\fs{\mathcal F^{s} }
\newcommand\hfs{\widetilde{\mathcal F}^{s} }
\newcommand\fu{\mathcal F^{u} }
\newcommand\hfu{\widetilde{\mathcal F}^{u} }
\newcommand\bfs{\bar{\mathcal F}^{s} }
\newcommand\bfu{\bar{\mathcal F}^{u} }
\newcommand\fss{\mathcal F^{ss} }
\newcommand\fuu{\mathcal F^{uu} }
\newcommand{\leafs}{\Lambda(\hfs)}
\newcommand{\leafu}{\Lambda(\hfu)}
\newcommand{\cC}{\mathcal{C}}
\newcommand{\cF}{\mathcal{F}}
\newcommand{\cJ}{\mathcal{J}}
\newcommand{\cL}{\mathcal{L}}
\newcommand{\cO}{\mathcal{O}}
\newcommand{\cP}{\mathcal{P}}
\newcommand{\cR}{\mathcal{R}}
\newcommand{\cT}{\mathcal{T}}
\newcommand{\cW}{\mathcal{W}}
\newcommand{\Pbound}{\partial P}
\newcommand{\Fixbar}{\overline{\mathrm{Fix}}}
\newcommand{\Fix}{\mathrm{Fix}}
\newcommand{\lG}{\leq_G}
\newcommand{\PSL}{\mathrm{PSL}}
\newcommand{\SL}{\mathrm{SL}}
\newcommand{\Homeo}{\mathrm{Homeo}}
\newcommand{\Stab}{\mathrm{Stab}}
\newcounter{notes}%
\title{Pseudo-Anosov Flows: \\a Plane Approach}
\author[Thomas Barthelm\'e]{Thomas Barthelm\'e}
\address{Queen's University, Kingston, Ontario}
\email{thomas.barthelme@queensu.ca}
\urladdr{sites.google.com/site/thomasbarthelme}
\author[Kathryn Mann]{Kathryn Mann}
 \address{Cornell University, Ithaca, NY}
 \email{k.mann@cornell.edu}
\urladdr{https://e.math.cornell.edu/people/mann}
\begin{document}

\maketitle

 \tableofcontents

\chapter*{Foreword}

This monograph is an introduction and invitation to the study of pseudo-Anosov flows on 3-manifolds from the topological perspective, a subject with roots in both classical dynamics and in three-manifold topology.   
To set the scene, we start by telling a brief, though necessarily incomplete, story of this history.  

\section*{Roots in dynamics...}
The broad theme of studying qualitative aspects of dynamical systems through the global comportment of orbits can be traced back to the work of Poincaré.  This work was picked up and further developed in the late nineteenth and early twentieth century by %
many other influential mathematicians, including Lyapunov, Hadamard, Birkhoff, Hedlund, and Hopf. 
An important theme early on was to find and study explicit examples of systems exhibiting complicated topological behavior (such as the homoclinic tangles of Poincaré) and/or ergodic properties (such as topological transitivity, topological mixing, ergodicity, etc\dots).  Geodesic flow on surfaces with constant negative curvature was among the first systems that was studied and shown
to exhibit such ergodic 
properties. See \cite{Hed39} for a historical survey giving the state of the art at the end of the 1930s, and \cite{FH19} and references therein for a modern introduction.

In the 1960s an important shift towards axiomatization and abstraction took place, which gave rise to the modern field of {\em hyperbolic dynamics}.
On one side, the work of Anosov gave an {\em abstract framing} of uniformly hyperbolic dynamics through his definition of Anosov systems (introduced as {\em U-systems}).  This extended the results known for geodesic flow on surfaces of constant negative curvature to the setting of variable negative curvature, and higher dimensional manifolds.  On the other side, Smale introduced the concept of \emph{structural stability} as a key organizing  principle\footnote{It is now known that structural stability \emph{characterizes} hyperbolic systems, thanks to the cumulative work of Robbin, Robinson, Ma\~n\'e and Hayashi, see \cite{Hay01}.}. As well as promoting this perspective, Smale's 1966 ICM address \cite{Sma67} outlined a rigorous framework for the study and classification of Anosov and Axiom A diffeomorphisms and flows, making precise the idea that the ``qualitative study" of dynamical systems should mean the study and classification up to {\em orbit equivalence} and describing a program to do so.   This is still a central organizing principle today, and is the notion of equivalence we consider in this text.  

Smale's classification program for Anosov \emph{diffeomorphisms} was successfully resolved in low (co)dimension, by John Franks \cite{Fra70} and Sheldon Newhouse \cite{New70})  early on, as well as in the case of infra-nilmanifolds.  There, the picture is quite rigid: every Anosov diffeomorphism on an infra-nilmanifold is conjugate to an algebraic automorphism \cite{Man74}.\footnote{It is still an open question whether this classification holds without the topological assumption on the manifold.} For Anosov flows however, many exotic, non-algebraic examples on 3-manifolds were discovered in the late 70s, early 80s, hinting at the richness of the subject, and the difficulty of carrying out a classification. 

Starting in the 1980s, Anosov diffeomorphisms and Anosov flows were often used as a fundamental test case to investigate dynamical or ergodic-theoretical properties of flows, with the geodesic flows of negatively curved manifolds being of particular interest.  This grew into a rich and still active research area; the reader interested in the history and development can consult \cite[\S 0.3]{FH19}, as well as the text \cite{FH19} itself for a thorough introduction to the field.  
In this text we focus primarily on the {\em topological} properties of these flows and their associated structures, and exclusively on 3-manifolds.   
\section*{...and in geometric topology}

In parallel with the work of Smale and Anosov, another motivation for the study of pseudo-Anosov flows was developing, coming from Thurston's work on the classification and geometerization of 3-manifolds.   Already implicit in the work of Nielsen (see e.g., \cite{Nie44}) was the idea of a {\em pseudo-Anosov homeomorphism} of a surface: a homeomorphism with two, invariant singular foliations, one contracted and one expanded.  These can be seen as generalizations of linear hyperbolic maps of the torus.   In the 1970s, Thurston showed that the mapping torus of a pseudo-Anosov homeomorphism of a surface is a 3-manifold that admits a hyperbolic structure and he conjectured\footnote{Thurston wrote: ``Does every hyperbolic 3-manifold have a finite-sheeted cover which
fibers over the circle? This dubious-sounding question seems to have a definite
chance for a positive answer.'' \cite{Thu82}} 
(now proved by Agol \cite{Ago13}) that, up to passing to covers, all closed hyperbolic 3-manifolds arise in this way. Such a mapping torus comes equipped with a {\em suspension flow}; a pseudo-Anosov flow transverse to the fibers. 

Thurston also showed that, for a fixed hyperbolic
3-manifold $M$, different realizations of $M$ as mapping tori of pseudo-Anosov homeomorphisms can be organized into {\em faces} of the Thurston norm ball, a finite-sided polyhedron in $H^2(M,\bR)$.  
Extending and explaining Thurston's work, Fried \cite[Expos\'e 14]{FLP79} described how each fibered face of the Thurston norm ball in fact gives a canonical flow: for each such face $F$, there is a flow on $M$ such that the first return map to {\em any} fiber of a fibering in $F$ is a pseudo-Anosov homeomorphism.    These flows were the first examples referred to as ``pseudo-Anosov flows".

Thurston and Fried's suspension examples were studied by Mosher, who later extended the scope of the theory to a more general class of pseudo-Anosov flows which included not only suspensions, but flows that were {\em locally} modeled on the suspension picture \cite{Mos92a, Mos92b}.  This general definition using a local model was also present in the dynamics literature of the time, in the study of {\em expansive flows} (see e.g., the description of ``circle prong singularities" in \cite{Pat93, IM90} and a brief discussion in \cite[\S 2.3]{Pol87}.) 
This definition, or rather a precise topological reframing of it, is what we take to be the basic object of study in this text (see Definition \ref{def_topPA}).

By the late 1990s, thanks in large part to work of Thierry Barbot and Sergio Fenley,  the idea emerged to study pseudo-Anosov and Anosov flows on 3-manifolds from a uniform perspective, that would give insight simultaneously to the dynamical problems and classification program of Smale, and the Thurstonian geometric-topological perspective.   Their work makes essential use of a tool called the {\em orbit space} of the flow, reducing the topological study of Anosov flows to questions of discrete groups acting on a plane with transverse foliations. 

The idea to use the leaf spaces of the foliations to understand topological properties of flows, was already present in the work of Verjovsky \cite{Ver74}, as well as in works of Plante, Thurston, and Ghys.  Orbit spaces appear also in the study of quasi-geodesic flows on 3-manifolds pioneered by Calegari \cite{Cal06} (and later, Frankel, see e.g.,~\cite{Fra13}), and are becoming an increasingly important tool.  As explained below, they will play a key role in this work.

\section*{What we do here}

In this text we (re)-tell the theory of pseudo-Anosov flows on 3-manifolds with the orbit space as the central character; via a streamlined framework called {\em Anosov-like group actions}.  This brings a simplified and unified perspective, and allows for the import of tools from 1- and 2-dimensional topological dynamics.   In so doing, we are able to give a relatively efficient presentation (or re-presentation) of foundational results in the field which appeared over several decades and many papers.

We hope that the reader, whether a beginning or more advanced topologist or dynamicist, finds this a welcoming invitation to the rich theory of pseudo-Anosov flows.

\section*{Scope of this text}

This text stays well on the topological side, and thus, we do not treat the analytic and ergodic aspects of Anosov flows (which was in fact Anosov's original motivation!). The book of Hasselblatt and Fisher \cite{FH19} is a good reference for that side of the theory.
  
We also focus strictly on 3-manifolds.  While there has been significant work done on the dynamics of Anosov flows on higher dimensional manifolds, the {\em topological} side is poorly understood.  
There is a relative sparsity of examples compared to those furnished by surgery in dimension 3 -- a codimension-two surgery-type construction is given in \cite{BBG21} but little else has been done.   For codimension one flows 
(those where either the weak-stable or unstable foliation is of codimension one), a famous conjecture of Verjovsky states that there are in fact very few examples: any smooth codimension one Anosov flow in dimension greater or equal to $4$ is orbit equivalent to a suspension flow.\footnote{Its proof was announced in a recent preprint of Kadhim War \cite{War23}.} 

Within the theory of pseudo-Anosov flows, we treat questions of structure and classification of the flows themselves, rather than the related theory of flows transverse or quasi-transverse to foliations (or laminations) in 3-manifolds.  There are many important questions in this direction, growing out of the Thurstonian perspective described above, but these are beyond the scope of this text. Similarly, the theory of Birkhoff sections, while important, is absent here.

Finally, we have intentionally focused on foundational results, rather than attempting to describe the state of the art of current research.  As such, we do not touch at all on some currently very active areas such as universal circles and orbit space compactifications, the relationship between Anosov flows and (bi)-contact structures, and the story of veering triangulations.

\section*{Acknowledgements} 

The authors thank Isaac Broudy, Sergio Fenley, Rafael Potrie and Chi Cheuk Tsang for their comments and corrections on earlier versions of this text.

TB is partially supported by the NSERC Discovery (RGPIN-2024-04412) and Alliance International programs (ALLRP 598447 - 24). 
KM thanks Queen's university and its Principal’s Development Fund for Visiting Scholars for their support. KM was also partially supported by NSF CAREER grant DMS 1844516, grant DMS 2505228, and a Simons Foundation fellowship.

\chapter{From pseudo-Anosov flows to the orbit space}\label{chap_pA_to_orbit}

\section{Definitions}

\subsection{Anosov and topological Anosov flows}

We start with the definitions of the main objects of study in this monograph. Throughout this text, $M$ will always refer to a closed (i.e., compact and without boundary) $3$-manifold, equipped with a Riemannian metric.  Thus, it makes sense to speak of distances, tangent spaces, and norms of tangent vectors.  
 
 \begin{definition} \label{def_smoothAF}
 A $C^1$ flow  $\flow^t \colon M \rightarrow M$ is called a \emph{(smooth) Anosov flow} if there is a splitting of the tangent bundle ${TM = \bR X \oplus E^{ss} \oplus E^{uu}}$ 
 preserved by $D\flow^t$ and two constants $a,b >0$ with the following properties:
\begin{enumerate}[label = (\roman*)]
 \item $X$ is the generating vector field of $\flow$;
 \item \label{item_smooth_contract} For any $v\in E^{ss}$ and $t>0$,
    \begin{equation*}
     \lVert D\flow^t(v)\rVert \leq be^{-at}\lVert v \rVert \, ;
    \end{equation*}
  \item \label{item_smooth_expand} For any $v\in E^{uu}$ and $t>0$,
    \begin{equation*}
     \lVert D\flow^{-t}(v)\rVert \leq be^{-at}\lVert v \rVert\, .
    \end{equation*}
\end{enumerate}
\end{definition}
If $b=1$, we say the metric on $M$ is {\em adapted} to the flow.  Such a metric is also sometimes called a \emph{Lyapunov metric}.  Using an averaging trick, one can show that any manifold with a smooth Anosov flow $\flow$ admits an adapted metric for $\flow$. See \cite[Prop 5.1.5]{FH19} for a proof. 

In this text, we will mainly be interested in flows up to \emph{orbit equivalence}.

\begin{definition}\label{def_orbit_equivalence}
Two flows  $\flow$ and $\psi$ on a manifold $M$ are \emph{orbit equivalent} if there is a homeomorphism $h \colon M\to M$ 
taking (unoriented) orbits of $\flow$ to orbits of $\psi$.  

Equivalently, the flow $h^{-1}\circ \psi^t\circ h$ is a {\em time-change} of $\flow^t$, meaning that there is a continuous real-valued function $\tau \colon \R \times M \to \R$ such that
\[
 \flow^{\tau(t,x)}(x) = h^{-1}\circ \psi^t\circ h(x)
\]
holds for all $x\in M$ and $t\in \R$.
\end{definition}
The fact that $h$ is a homeomorphism implies that either $\tau(t,x)>0$ whenever $t>0$, or $\tau(t,x)<0$ whenever $t>0$. 
 In the first case, we say that $h$ preserves direction of the flow; in the second we say it is direction reversing.\footnote{The usual definition of orbit equivalence in dynamical system assumes that it is direction-preserving, in our sense. For reasons that will appear more clearly in later chapters, it is more natural from our point of view to also include direction-reversing orbit equivalences in the definition.}

\begin{remark}
Orbit equivalence is not to be confused with the notion of \emph{topological conjugacy}, which asks for a homeomorphism $h$ such that $\flow^t = h^{-1}\circ \psi^t\circ h$.  
Topological conjugacy is generally much more restrictive.  For instance, we will see later that all of the geodesic flows for negatively curved metrics on a fixed surface are orbit equivalent.  However, they cannot be topologically conjugate if the metrics have different length spectrum. 
\end{remark}

Studying flows up to orbit equivalence amounts to understanding the qualitative picture of a flow given by the trajectories of its orbits.  This idea has roots going back to the work of Poincaré and Birkhoff; but it is the definition of orbit equivalence which provides a formal framework.  
Another motivation to consider Anosov flows up to orbit equivalence is due to the following {\em structural stability} theorem of Anosov: 

\begin{theorem}[Structural stability \cite{Ano67}] \label{thm_structural_stability} 
Let $\flow$ be an Anosov flow.  There exists a $C^0$ neighborhood of $\flow$ in the space of maps $M \times \R \to M$ such that any Anosov flow in this neighborhood is orbit equivalent to $\flow$.  Furthermore, if $\psi$ is any $C^1$ flow that is sufficiently close to $\flow$ in the $C^1$ topology, then $\psi$ is Anosov and is orbit-equivalent to $\flow$.  
\end{theorem}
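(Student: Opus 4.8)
The plan is to reduce the theorem to two statements: that the Anosov condition is $C^1$-open (which gives the conclusion ``$\psi$ is Anosov'' in the second clause), and that every Anosov flow $\psi$ which is $C^0$-close to $\flow$ as a map $M\times\bR\to M$ is orbit equivalent to $\flow$. The latter covers both clauses, since a $C^1$-small perturbation is in particular $C^0$-small.

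For openness I would run a cone-field argument. From the hyperbolic splitting of $\flow$ one extracts, for a sufficiently large $T>0$, continuous cone fields $\cC^s$ and $\cC^u$ on $M$ -- around the weak-stable plane $E^{ss}\oplus\bR X$ and the weak-unstable plane $E^{uu}\oplus\bR X$ -- with the properties that $D\flow^{T}$ maps $\cC^u$ strictly inside itself while expanding its vectors, $D\flow^{-T}$ does the same for $\cC^s$, and the flow line $\bR X$ is the unique line contained in both cones. Each of these is an open condition on the $1$-jet of the time-$T$ map, and by compactness of $M$ only finitely many are needed; so a $C^1$ flow $\psi$ close enough to $\flow$ satisfies all of them, and feeding it into the standard cone criterion produces $\psi$-invariant bundles $E^{ss}_\psi$, $E^{uu}_\psi$ with uniform contraction and expansion. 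Thus $\psi$ is Anosov, with hyperbolic bundles $C^0$-close to those of $\flow$.

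For the orbit equivalence I would use two properties of the Anosov flow $\flow$, both consequences of hyperbolicity: \emph{shadowing} (every sufficiently fine pseudo-orbit of $\flow$ is tracked within $\eps$, after an orientation-preserving reparametrization of time, by a genuine $\flow$-orbit) and \emph{expansivity} (two $\flow$-orbits that stay within a fixed $\eps_0$ of each other, up to reparametrization, for all time must coincide). Granting these, the construction is: if $\psi$ is $C^0$-close to $\flow$ then for each $x\in M$ the curve $t\mapsto\psi^t(x)$ is a fine pseudo-orbit of $\flow$, hence is $\eps$-shadowed by some $\flow$-orbit; intersecting that orbit with a small local cross-section of $\flow$ through $x$ singles out a point $h(x)$, well defined by expansivity. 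I would then check, in order: (1) $h$ is continuous, from the uniformity of the shadowing; (2) the same construction with $\flow$ and $\psi$ interchanged gives a map $g\colon M\to M$ which, by expansivity of both flows, is a two-sided inverse of $h$, so $h$ is a homeomorphism; (3) if $y=\psi^s(x)$ then $h(x)$ and $h(y)$ are shadowed by the same $\flow$-orbit, hence lie on one $\flow$-orbit, so $h\circ\psi^s=\flow^{\tau(s,x)}\circ h$ for a continuous $\tau$; (4) $\tau(s,x)>0$ for $s>0$, because the reparametrizations coming from shadowing are increasing. Hence $h$ is a direction-preserving orbit equivalence.

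The main obstacle is the second part, and within it two points. The genuine use of hyperbolicity is in \emph{establishing shadowing and expansivity for flows} with enough uniformity: this goes through the graph-transform (contraction-mapping) construction of local stable and unstable plaques and the resulting local product structure, with the extra complication, in the flow setting, of the neutral direction and of reparametrizing time. The remaining work is the bookkeeping needed to promote the orbit-correspondence produced by shadowing to an actual self-homeomorphism of $M$: fixing the time-$0$ normalization consistently, proving $h$ continuous, and verifying that $h$ and its mirror construction are mutually inverse. A shadowing-free alternative is functional-analytic: look for $h$ of the form $x\mapsto\flow^{\sigma(x)}\!\big(\exp_x v(x)\big)$ with $(v,\sigma)$ small, turn the orbit-equivalence condition into a fixed-point equation for $(v,\sigma)$ in a Banach space of continuous sections of $TM/\bR X$, and solve it by inverting the linearized operator -- its invertibility is precisely the solvability of a twisted cohomological equation along the flow, which is what hyperbolicity provides. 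Either way, the heart of the matter is making a suitable operator a contraction.
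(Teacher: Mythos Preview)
The paper does not supply a proof of this theorem: it is stated as a classical result, attributed to Anosov \cite{Ano67}, and used thereafter as a black box (with a pointer to \cite{FH19} for a modern treatment). So there is nothing in the paper to compare your proposal against.

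That said, your outline is essentially the standard modern proof. The cone-field argument for $C^1$-openness and the shadowing-plus-expansivity route to the orbit equivalence are exactly what one finds in textbook accounts such as \cite{FH19}. Your identification of the genuine work --- establishing shadowing and expansivity for flows with sufficient uniformity, and the bookkeeping of turning the shadowing correspondence into a homeomorphism --- is accurate. The alternative functional-analytic approach you sketch (solving for the conjugacy as a fixed point of a contraction on sections of $TM/\bR X$) is closer in spirit to Anosov's original argument and to Moser's treatment; both routes are valid and both hinge, as you say, on hyperbolicity making the relevant linearized operator invertible.
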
 

As noted by Smale \cite{Sma67}, structural stability implies there are only countably many Anosov flows up to orbit equivalence, and one could hope to completely classify them by discrete algebraic or numerical invariants.  %

In addition to the foliation by orbits, Anosov flows preserve other 1-dimensional and also 2-dimensional foliations.  
\begin{proposition}[Anosov \cite{Ano63}] \label{prop_integrable}
 If $\flow$ is a smooth Anosov flow, then the distributions $E^{ss}$, $E^{uu}$, $E^{ss} \oplus \R X$, and $E^{uu} \oplus \R X$ are uniquely integrable, hence tangent to foliations. 
\end{proposition}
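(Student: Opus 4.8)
The plan is to prove unique integrability of the four distributions $E^{ss}$, $E^{uu}$, $E^{ss}\oplus\bR X$, $E^{uu}\oplus\bR X$ using the standard Hadamard–Perron machinery, exploiting the symmetry $t\leftrightarrow -t$ that swaps the stable and unstable pictures so that it suffices to treat, say, $E^{ss}$ and $E^{ss}\oplus \bR X$. First I would fix an adapted (Lyapunov) metric, so that we may take $b=1$ in Definition \ref{def_smoothAF}; this is legitimate by the averaging remark following that definition and makes the contraction estimates clean. The heart of the argument is the \emph{stable manifold theorem} for the flow: through each point $x\in M$ there is a $C^1$ immersed submanifold $W^{ss}(x)$, tangent to $E^{ss}(x)$ at $x$, characterized dynamically as $\{y : \dist(\flow^t x,\flow^t y)\to 0 \text{ as } t\to+\infty\}$ (indeed exponentially fast). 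I would then define $W^{s}(x) = \bigcup_{t\in\bR}\flow^t(W^{ss}(x))$, the weak-stable set, and show it is a $C^1$ injectively immersed surface tangent to $E^{ss}\oplus\bR X$ everywhere along itself.

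The key steps, in order: (1) Establish the local stable manifold theorem. The cleanest route is to pass to the time-one map or, better, to work directly in the flow setting via the graph transform: for $x\in M$, in exponential normal coordinates one writes a candidate local stable manifold as a graph of a function from a ball in $E^{ss}(x)$ to $E^{uu}(x)\oplus\bR X(x)$, and shows the induced operator on the space of such (Lipschitz, then $C^1$) graphs is a contraction, using hyperbolicity of $D\flow^t$ and the fact that the splitting is $D\flow^t$-invariant and continuous. This yields $W^{ss}_{loc}(x)$, a $C^1$ disc tangent to $E^{ss}(x)$ at $x$, depending continuously on $x$. (2) Globalize: $W^{ss}(x) := \bigcup_{t\ge 0}\flow^{-t}\bigl(W^{ss}_{loc}(\flow^t x)\bigr)$ is a $C^1$ injectively immersed submanifold, and at each of its points $y$ it is tangent to $E^{ss}(y)$ — this uses $D\flow^t$-invariance of $E^{ss}$ together with the dynamical characterization of $W^{ss}$, which forces $T_yW^{ss}(x)=E^{ss}(y)$. (3) Check these submanifolds \emph{partition} $M$: if $W^{ss}(x)\cap W^{ss}(x')\ne\emptyset$ then the dynamical characterization gives $W^{ss}(x)=W^{ss}(x')$. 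A continuous distribution that is everywhere tangent to a partition of $M$ into $C^1$ submanifolds of constant dimension is, by definition, a foliation, and uniqueness of the integral through each point is automatic from (3). (4) For the weak distributions, set $W^{s}(x)=\bigcup_{t\in\bR}\flow^t W^{ss}(x)$; since $X$ is nonsingular and $\flow$ permutes the strong stable leaves, this is a $C^1$ surface tangent to $E^{ss}\oplus\bR X$, and the same partition/uniqueness argument applies. (5) Apply the substitution $\flow^t\rightsquigarrow\flow^{-t}$, which exchanges $E^{ss}\leftrightarrow E^{uu}$ and items \ref{item_smooth_contract}$\leftrightarrow$\ref{item_smooth_expand} in Definition \ref{def_smoothAF}, to obtain the statements for $E^{uu}$ and $E^{uu}\oplus\bR X$ for free.

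I expect the main obstacle — and the only part that is genuinely more than bookkeeping — to be Step (1), the local stable manifold theorem, and specifically obtaining $C^1$ (not merely Lipschitz or Hölder) regularity of the local stable discs together with continuous dependence of $E^{ss}$ and of the discs on the base point. The Lipschitz graph transform is a routine contraction-mapping argument, but upgrading to $C^1$ requires a second fibered contraction on the space of candidate tangent planes (a ``$C^1$-section'' or fiber-contraction argument), and care is needed because the splitting $TM=\bR X\oplus E^{ss}\oplus E^{uu}$ is a priori only continuous, so one cannot differentiate it and must instead feed its continuity into the estimates by hand. Rather than reproduce this, I would invoke the Hadamard–Perron theorem in the form standard for flows (see, e.g., \cite[Ch.~6]{FH19}) to supply $W^{ss}_{loc}$, and devote the written proof to Steps (2)–(5): verifying the dynamical characterization, the tangency $T_yW^{ss}(x)=E^{ss}(y)$, the partition property, and the passage to the weak foliations and to the unstable case by time-reversal.
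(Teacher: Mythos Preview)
Your proposal is correct and matches the paper's approach: the paper does not give a proof at all but simply remarks that the proposition follows from the stable manifold theorem, citing \cite[Theorem 6.1.1]{FH19}. Your sketch is a faithful (and considerably more detailed) outline of exactly that argument, including the same reference.
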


The associated foliations to $E^{ss}$, $E^{uu}$, $\R X \oplus E^{ss} $, and $\R X \oplus E^{uu}$
are denoted respectively by $\fss$, $\fuu$, $\fs$, and $\fu$, and called the strong stable, strong unstable, (weak) stable, and (weak) unstable foliations, respectively.

Proposition \ref{prop_integrable} follows from the {\em stable manifold theorem}, see e.g., \cite[Theorem 6.1.1]{FH19}. 
Part of the proof consists in characterizing leaves of $\fs$ and $\fu$ in terms of asymptotic behavior of orbits.  By definition, each leaf is a union of orbits; the $\fs$ (resp.~$\fu$) leaves are precisely those where orbits converge in positive (resp.~negative) time, as follows:
\begin{proposition}[See, e.g., \cite{FH19} \S 6.1] \label{prop_characterize_stable}
Two orbits $\{ \flow^t(x) : t \in \R\}$ and $\{ \flow^t(y) : t \in \R\}$ lie on a common leaf of $\fs$ if and only if they converge in forward-time, in the sense that there exists a continuous, strictly increasing 
reparameterization $\sigma$ such that $d(\flow^{\sigma(t)}(y), \flow^t(x)) \to 0$ as $t \to \infty$. 
\end{proposition}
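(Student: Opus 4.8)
The plan is to prove the two implications separately, using the characterization of weak stable leaves via strong stable manifolds that comes out of the stable manifold theorem. Recall that through each point $x$ the strong stable manifold $W^{ss}(x)$, tangent to $E^{ss}$, consists precisely of points $y$ with $d(\flow^t(y),\flow^t(x))\to 0$ as $t\to+\infty$ (with no reparameterization needed), and that the weak stable leaf $\fs(x)$ is the union $\bigcup_{t\in\R} W^{ss}(\flow^t(x))$, i.e.\ the saturation of a strong stable manifold by the flow. I will take these facts as the content of Proposition~\ref{prop_integrable} and the stable manifold theorem cited just before.

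For the forward direction, suppose $x$ and $y$ lie on a common leaf $L$ of $\fs$. Then $y\in W^{ss}(\flow^{t_0}(x))$ for some $t_0\in\R$, so $d(\flow^t(y),\flow^{t+t_0}(x))\to 0$ as $t\to+\infty$. Setting $\sigma(t)=t-t_0$ gives a continuous strictly increasing reparameterization with $d(\flow^{\sigma(t)}(y),\flow^{t-t_0}(x)) = d(\flow^{t-t_0}(y),\ldots)$; a cleaner way to phrase it: with $\sigma(t)=t+t_0$ we get $d(\flow^{\sigma(s)}(y),\flow^s(x))\to 0$ as $s\to+\infty$ after substituting $s=t$, $t+t_0$ in place of $t$. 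This direction is essentially immediate once the leaf is described as a flow-saturated strong stable manifold; the only care needed is bookkeeping the time shift into the reparameterization $\sigma$.

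The converse is the substantive direction. Assume there is a continuous strictly increasing $\sigma$ with $d(\flow^{\sigma(t)}(y),\flow^t(x))\to 0$. The first step is to show that $\sigma(t)-t$ stays bounded, indeed converges: because the orbit of $x$ through a point with no periodic complications moves at unit metric speed only roughly, one uses a local product structure / local cross-section argument near the orbit closure of $x$ — for $t$ large, $\flow^{\sigma(t)}(y)$ lies in a small flow box around $\flow^t(x)$, and the flow-direction coordinate of $\flow^{\sigma(t)}(y)$ in that box is $\sigma(t)-t+o(1)$; since consecutive flow boxes are glued compatibly and the point is $o(1)$-close, $\sigma(t)-t$ must be Cauchy, hence converges to some $c\in\R$. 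The second step: replacing $y$ by $\flow^{c}(y)$ (equivalently absorbing the limit shift), we get $d(\flow^{t}(\flow^c(y)),\flow^t(x))\to 0$ as $t\to+\infty$ — i.e.\ no reparameterization is needed after this correction — which by the characterization of the strong stable manifold means $\flow^c(y)\in W^{ss}(x)$. Hence $y\in\flow^{-c}(W^{ss}(x)) = W^{ss}(\flow^{-c}(x))\subset \fs(x)$, so $x$ and $y$ share a weak stable leaf.

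The main obstacle is the second paragraph of the converse: upgrading ``$\sigma$-reparameterized orbits converge'' to ``after a single time shift, unreparameterized orbits converge,'' i.e.\ controlling the reparameterization. This is where one genuinely needs the flow (not just its orbit foliation): the argument that $\sigma(t)-t$ converges uses uniform continuity of the flow and compactness of $M$ to run the flow-box comparison, and one must rule out the reparameterization ``drifting'' (which could happen if $\flow^t(x)$ slowed to a near-stop, impossible here since $X$ is nonvanishing on a compact manifold so speeds are bounded below). Everything else — the forward direction and the final identification $W^{ss}(\flow^{-c}(x))\subset \fs(x)$ — is routine given Propositions~\ref{prop_integrable} and the stable manifold theorem.
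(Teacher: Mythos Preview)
The paper does not supply its own proof of this proposition; it is simply attributed to \cite{FH19}, \S 6.1, so there is nothing to compare against. Your forward direction is fine (the reparameterization is just a time shift, and the notational stumble between $\sigma(t)=t-t_0$ and $\sigma(t)=t+t_0$ is harmless; the former is correct).

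The converse, however, has a real gap. Your flow-box argument does not establish that $\sigma(t)-t$ converges, or even that it is bounded. What the argument actually yields is: for $t_1<t_2$ lying in a single flow box, the increment $(\sigma(t_2)-t_2)-(\sigma(t_1)-t_1)$ is bounded by a constant times $\sup_{[t_1,t_2]} d(\flow^{\sigma(t)}(y),\flow^t(x))$. Chaining such bounds over $[T,\infty)$ gives increments tending to zero, which is not enough for a Cauchy sequence (think of the harmonic series). The sentence ``the flow-direction coordinate of $\flow^{\sigma(t)}(y)$ in that box is $\sigma(t)-t+o(1)$'' does not parse as written: the flow coordinate in a small box is a small number, whereas $\sigma(t)-t$ has no a priori bound. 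The lower bound on $\lVert X\rVert$ rules out one failure mode but not this kind of drift. What is missing is a genuine use of hyperbolicity at this step, not merely of the flow-box structure: one uses the local product structure together with exponential expansion along $E^{uu}$ to argue that if the reparameterized forward orbits stay close then the unstable separation between $\flow^{\sigma(T)}(y)$ and $\flow^T(x)$ must vanish, placing $\flow^{\sigma(T)}(y)$ on the local weak stable manifold of $\flow^T(x)$ for $T$ large. Convergence of $\sigma(t)-t$ is then a \emph{consequence} of $y\in\fs(x)$, not a lemma on the way to it; your order of deduction is inverted.
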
 

As a consequence, if  $h$ is a direction-preserving orbit equivalence between flows $\flow$ and $\psi$, then $h$ maps weak stable and unstable leaves of $\flow$ to weak stable and unstable (respectively) leaves of $\psi$.   If instead $h$ is direction reversing, then it maps weak stable leaves of $\flow$ to weak unstable leaves of $\psi$, and weak unstable of $\flow$ to weak stable of $\psi$.    

Orbit equivalence does not typically preserve the strong stable and strong unstable foliations of Anosov flows.  
Since we are primarily concerned with the study of flows up to orbit equivalence, we will focus on the weak foliations and their subfoliations by orbits; ignoring the strong stable/unstable. This naturally leads to the {\em a priori} weaker definition of {\em topological Anosov flow}, which takes as starting point the behavior of orbits along the leaves of 2-dimensional (weak) foliations.   
A topological Anosov flow is a special case of a {\em pseudo-Anosov flow}, as defined in the next section, and it was the study of such pseudo-Anosov flows which historically motivated this topological perspective.  See Remark \ref{rem_other_definitions} for further comments on the definitions.  

The definition of topological Anosov flow that we present here is constructed to parallel that of smooth Anosov flow, but without reference to strong distributions or to derivatives: the invariant splitting of the tangent bundle is replaced with the existence of a pair of transverse, 2-dimensional foliations; and the contraction and expansion of strong-stable vectors in items \ref{item_smooth_contract} and \ref{item_smooth_expand} of smooth Anosov flows is replaced by {\em local} conditions (rather than {\em infinitesimal}) which say that nearby orbits on stable leaves approach in the future (item \ref{item_TAF_forward_asymptotic} below), but not the past (item \ref{item_TAF_backwards_expansivity}), and vice-versa along unstable leaves.  

We will use the following standard definitions and notation.  

\begin{notation}
Given a foliation $\cF$ of a manifold $M$ and a point $x \in M$, we denote by $\cF(x)$ the leaf of $\cF$ containing $x$, and $B_\eps(x) \subset M$ the $\eps$-ball about $x$.  
The {\em $\eps$-local leaf of $x$}, denoted $\cF_{\eps}(x)$, is the connected component of $\cF(x) \cap B_\eps(x)$ containing $x$.\footnote{Note that some authors, especially in the literature on expansive flows, use a different convention, and define the $\eps$-local stable leaf of $x$ as the points whose whole forward orbits stay at distance less than $\epsilon$ from the forward orbit of $x$.} Similarly the {\em $\eps$-local orbit} of $x$ is the connected component of $\flow^t(x) \cap B_\eps(x)$ containing $x$.  
\end{notation}
In the definition of topological Anosov flow, we will need to use reparameterizations of orbits.  We define a {\em standard reparameterization} to be a continuous, increasing homeomorphism $\sigma\colon \bR \to \bR$ with $\sigma(0)=0$.  

 \begin{definition}\label{def_TAF}
A \emph{topological Anosov flow} is a flow $\flow^t \colon M \to M$ (recall $M$ is a closed 3-manifold) that satisfies the following:
\begin{enumerate}[label = (\roman*)]
\item \label{item_TAF_weak_foliations}  There are two continuous, topologically transverse, $2$-dimensional foliations $\cF^{s}$ and $\cF^{u}$ whose leaves are saturated by orbits of $\flow$ and intersect along orbits of $\flow$
\item \label{item_TAF_forward_asymptotic}  For any $\eps>0$ sufficiently small, there exists $\delta$ such that if 
$y \in \cF_\delta^{s}(x)$ (resp.~$y \in \cF_\delta^{u}(x)$), there is a standard reparameterization $\sigma$ such that $d\left(\flow^t(x), \flow^{\sigma(t)}(y)\right) < \eps$ for all $t>0$ and tends to 0 as $t\to +\infty$ (resp.~$t\to -\infty$). 
\item \label{item_TAF_backwards_expansivity}
For any $\eps>0$ sufficiently small, there exists $\delta >0$ such that, if $y \in \cF^s_\eps(x)$ (resp.~$y \in\cF^u_\eps(x)$) is not in the same $\eps$-local orbit as $x$, then for any standard reparameterization $\sigma$, there exists $t<0$ (resp.~$t>0$) with $d(\flow^t(x), \flow^\tau(t)(y)) > \delta$. 
\end{enumerate}
 \end{definition}
 
The foliations $\fs$ and $\fu$ are not required to have any additional regularity, even along the leaves.  While many definitions of foliations require some leafwise-manifold structure, leaves of $\fs$ and $\fu$ may in general not even be rectifiable.  {\em Topologically transverse} means that every point in $M$ has a neighborhood with a chart to $\R^3$ taking the foliations $\cF^s$ and $\cF^u$ to a pair of transverse foliations by planes.  
 
 \begin{remark}[Topological flows and time change]
Any continuous time-change of a smooth Anosov flow is a topological Anosov flow, and any smooth time-change of a smooth Anosov flow is a smooth Anosov flow, see \cite[Theorem 5.1.16]{FH19}, but it is a long-standing open question to know whether any topological Anosov flow is orbit equivalent to a smooth Anosov flow. This question was answered positively by Mario Shannon \cite{Sha21} for transitive Anosov flows, that is, flows that have a dense orbit.  Transitivity is discussed further in Section \ref{sec_transitive}.

This is another motivation for taking topological Anosov flow as our primary object of study: it allows for modifications by continuous time change as in the definition of orbit equivalence.  Thus, in this this text, we will primarily consider topological Anosov flows, and not worry about whether they are orbit equivalent to smooth ones or not. 
\end{remark}

\subsection{Pseudo-Anosov flows} 

There is a further generalization of Anosov flows on 3-manifolds which has received significant attention since the work of Thurston, due to the relationship of these flows and the topology and geometry of 3-manifolds.  This is the class of {\em pseudo-Anosov flows}. 
Roughly speaking, pseudo-Anosov flows look like Anosov flows everywhere except in a neighborhood of at most finally many periodic orbits (called singular orbits), where the flow instead is locally modeled on a (semi)-branched cover over a periodic orbit of an Anosov flow.   

To give the precise definition, we first describe the topological picture near a singular orbit.  See Figure \ref{fig_model_prong} for an illustration. 
\begin{definition} \label{def_model_prong}
For $p \geq 3$, a {\em 2-dimensional $p$-prong} is a pair of singular foliations $\cF^+, \cF^-$ on an open disc $D$, homeomorphic (via an foliation-preserving homeomorphism) to a neighborhood of the origin in $\bR^2$ with foliations lifted from the horizontal and vertical foliations of $\bR^2 \cong \bC$ under the semi-branched cover $z \mapsto z^{p/2}$.

A {\em 3-dimensional local $p$-prong} is a pair of 2-dimensional singular foliations on $D \times (0,1)$ obtained by taking the product of a 2-dimensional $p$-prong $\cF^+, \cF^-$ on $D$ with $(0,1)$; that is to say the leaves are of the form $l^{\pm} \times (0,1)$ where $l^\pm$ is a leaf of $\cF^\pm$. 
\end{definition}

\begin{figure}[h]
\centering 
\includegraphics[width=5cm]{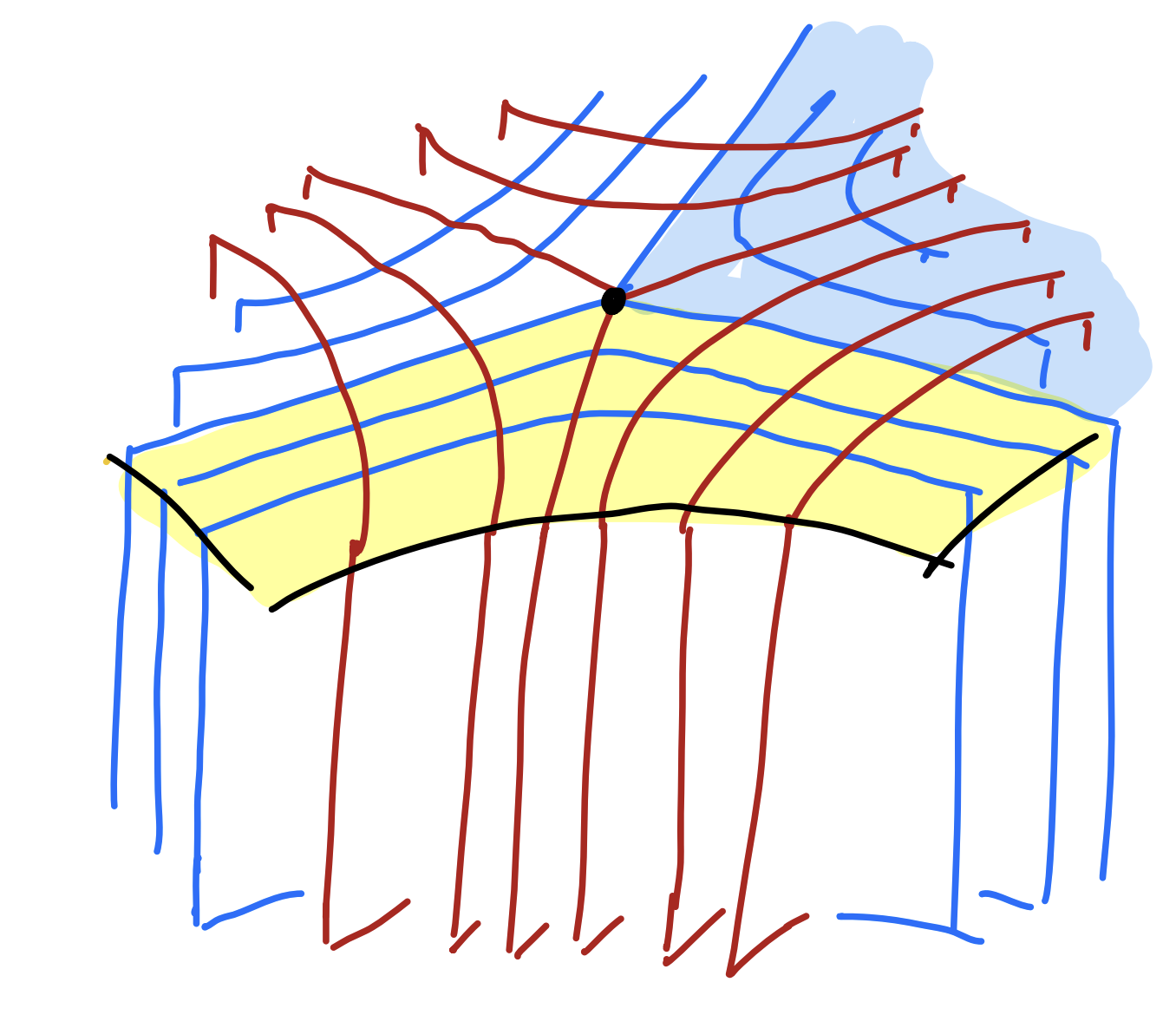}
\caption{A 3-dimensional 3-prong singularity. 
}
\label{fig_model_prong}
\end{figure}

The reader familiar with the Nielsen--Thurston theory of surface homeomorphisms (as in \cite{FLP79}) will notice that a model $p$-prong is a local picture of the mapping torus of a pseudo-Anosov homeomorphism of a surface near a fixed $p$-prong singularity.  Early informal definitions of pseudo-Anosov flow often described them as ``flows locally modeled on pseudo-Anosov suspensions".  The definition below is a precise formulation of this in the topological setting.  

Our convention in this text is that Anosov flows are a special case of pseudo-Anosov flows, namely, those with no singular orbits. 
If the foliations $\fs$ and $\fu$ in the definition below are nonsingular, one recovers the definition of topological Anosov flow presented above, so a pseudo-Anosov flow is a strict generalization. 

\begin{definition}\label{def_topPA}
A (topological) \emph{pseudo-Anosov} flow is a flow $\flow^t \colon M \to M$ satisfying the following conditions.  
\begin{enumerate}[label = (\roman*)]
\item \label{item_PAF_foliations} There are two continuous, topologically transverse, $2$-dimensional foliations $\cF^{s}$ and $\cF^{u}$ whose leaves are saturated by orbits of $\flow$ and intersect along orbits of $\flow$.  Singularities lie along a finite (possibly empty) collection of periodic orbits $\alpha_1, \dots, \alpha_n$. 
Each $\alpha_i$ is locally homeomorphic to the singular locus $0 \times (0,1)$ of a model $p_i$-prong, for some $p_i \geq 3$.
\item  \label{item_PAF_converges} 
For any $\eps>0$ sufficiently small, there exists $\delta$ such that if 
$y \in \cF_\delta^{s}(x)$ (resp.~$y \in \cF_\delta^{u}(x)$), there is a standard reparameterization $\sigma$ such that $d\left(\flow^t(x), \flow^{\sigma(t)}(y)\right) < \eps$ for all $t>0$ and tends to 0 as $t\to +\infty$ (resp.~$t\to -\infty$).
\item \label{item_PAF_backwards_expansivity} 
For any $\eps>0$ sufficiently small, there exists $\delta >0$ such that, if $y \in \cF_\eps^s(x)$ (resp.~$y \in\cF_\eps^u(x)$) is not in the same $\eps$-local orbit as $x$, then for any standard reparameterization $\sigma$, there exists $t<0$ (resp.~$t>0$) with $d(\flow^t(x), \flow^{\sigma(t)}(y)) > \delta$. 
\end{enumerate}
\end{definition}
As in the definition of (smooth) Anosov flow, the property of being a pseudo-Anosov flow is independent of the choice of metric on $M$, provided $M$ is compact, and it is preserved under lifting to covers.  

\begin{remark} \label{rem_other_definitions} 
Many historical definitions of pseudo-Anosov flow or topological Anosov flow omit condition \ref{item_PAF_backwards_expansivity} (see, e.g., \cite{Mos92a,  BF13} and papers referencing these), or state a weaker form of it as in \cite{BFP23}.  This appears to be an oversight.  Indeed, Example \ref{ex_blowup_Anosov} shows that there are flows which satisfy (in fact strengthenings of) conditions \ref{item_PAF_foliations} and \ref{item_PAF_converges} but also have the property that certain orbits along stable leaves converge in both forwards {\em and} backwards time, a behavior which has always been assumed impossible.  In particular, Example \ref{ex_blowup_Anosov} does not admit a Markov partition, a property often asserted or tacitly assumed true for pseudo-Anosov flows. 

Some instances of the definition of pseudo-Anosov flow in the literature require the existence of a Markov partition (e.g.,~\cite{AT24}), which resolves the issue described above.\footnote{Thanks to Theorem \ref{thm_expansive_implies_pA} and \cite{Iak24}, pseudo-Anosov flows in the sense of Definition \ref{def_topPA} always do admit Markov partitions.} Other definitions, such as those in \cite{FM01, Cal07} or the definition of ``smooth pseudo-Anosov flow" in \cite{AT24} require some smoothness\footnote{A desire for smoothness is also mentioned in  \cite{Mos92a} but not formalized into the definition.}, typically asking for the flow to be smooth Anosov away from the singular locus with at least some tameness of behavior as one approaches the singularities, again preventing backwards and forwards convergence. However, like the topological Anosov case, it remains an open question whether topological pseudo-Anosov flows are always orbit equivalent to such ``smooth" ones, though it is proved in the transitive case in \cite{AT24}. 
\end{remark} 

We reiterate that $\fs$ and $\fu$ are not required to have any regularity or leafwise regularity. 
However, a result of Calegari \cite{Cal01} says that, up to conjugacy, one may take {\em either one} of $\fs$ or $\fu$ to have smooth leaves -- without necessarily obtaining any regularity of orbits -- by choosing a suitable triangulation of $M$ and performing an isotopy. 
It is unknown under what circumstances (aside from assuming transitivity of the flow and using the result of \cite{Sha21,AT24}) one can make both foliations simultaneously even leafwise $C^1$.  
Nonetheless, the fact that these foliations are required to be topologically transverse means that each point has a local flow-box neighborhood in the following sense.  

\begin{definition} \label{def_flow_box}
Let $\flow$ be a pseudo-Anosov flow on $M$.  A {\em flow-box neighborhood} of a point $p \in M$ on a nonsingular orbit is a neighborhood $B$ of $p$ admitting a homeomorphism to $[-1,1]^3 \subset \R^3$ sending $x$ to the origin, connected components of leaves of $\fs \cap B$ to the foliation by planes of fixed $x$-coordinate, and connected components of leaves of $\fu \cap B$ to the foliation by planes of constant $y$-coordinate.  We call these connected components of $\fs \cap B$ and $\fu \cap B$ the (stable and unstable) \emph{sheets} of the flow box $B$.

A flow box neighborhood of a point on a singular orbit is a neighborhood locally homeomorphic, via a foliation preserving homeomorphism, to a local model prong described in Definition \ref{def_model_prong}.  
\end{definition} 

The next few lemmas give a series of consequences of Definition \ref{def_topPA}, strengthening the conditions listed there.  
The first two combined improve condition \ref{item_PAF_converges} to say that any points on a common leaf of $\fs$ converge towards each other in forward time (up to reparameterization) and eventually lie in a common local stable leaf.  

\begin{lemma}[Global forward convergence along stable leaves]  \label{lem_all_converge}
Suppose $x \in \cF^s(y)$, and let $\delta >0$.  Then, there exists a standard reparameterization $\sigma \colon \R \to \R$ and $T \in \R$ such that, for all $t>T$, we have $d(\flow^t(x), \flow^{\sigma(t)}(y)) < \delta$.  
\end{lemma}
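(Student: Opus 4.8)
The plan is to propagate the local forward-convergence of condition \ref{item_PAF_converges} of Definition \ref{def_topPA} along a path inside the leaf $\cF^s(y) = \cF^s(x)$, and then chain together the resulting reparameterizations. First I would fix $\eps > 0$ small enough that condition \ref{item_PAF_converges} applies at every point of $M$ (possible since $M$ is compact).

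Next, I would connect $x$ to $y$ by a path in the leaf. Leaves of $\cF^s$ are connected and, being locally modeled either on $\R^2$ or on the leaf of a model $p$-prong (Definition \ref{def_model_prong}), are locally path-connected; hence $\cF^s(y)$ is path-connected and there is a path $\gamma \colon [0,1] \to \cF^s(y)$ with $\gamma(0) = x$ and $\gamma(1) = y$. Since $\gamma([0,1])$ is compact, $\gamma$ is uniformly continuous for the metric on $M$, so I can choose a subdivision $0 = s_0 < s_1 < \dots < s_N = 1$ with $d(\gamma(s), \gamma(s')) < \eps$ whenever $s, s'$ lie in a common subinterval $[s_{i-1}, s_i]$. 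Writing $z_i = \gamma(s_i)$, the arc $\gamma([s_{i-1}, s_i])$ is then a connected subset of $\cF^s(z_{i-1}) \cap B_\eps(z_{i-1})$ containing $z_{i-1}$ and $z_i$, so $z_i \in \cF^s_\eps(z_{i-1})$, with $z_0 = x$ and $z_N = y$. Note this works regardless of whether $\gamma$ passes through singular orbits, since condition \ref{item_PAF_converges} is assumed at every point of $M$.

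Then, applying condition \ref{item_PAF_converges} to each consecutive pair gives, for $i = 1, \dots, N$, a standard reparameterization $\sigma_i$ with $d(\flow^t(z_{i-1}), \flow^{\sigma_i(t)}(z_i)) \to 0$ as $t \to +\infty$. I would set $\tau_0 = \id$ and $\tau_i = \sigma_i \circ \tau_{i-1}$; since a composition of increasing self-homeomorphisms of $\R$ fixing $0$ is again one, each $\tau_i$ is a standard reparameterization, and in particular $\tau_i(t) \to +\infty$ as $t \to +\infty$. Put $\sigma = \tau_N$. By the triangle inequality,
\[
 d( \flow^t(x), \flow^{\sigma(t)}(y) ) \;\leq\; \sum_{i=1}^N d\left( \flow^{\tau_{i-1}(t)}(z_{i-1}),\, \flow^{\tau_i(t)}(z_i) \right),
\]
and for each $i$ the summand equals $d(\flow^{s}(z_{i-1}), \flow^{\sigma_i(s)}(z_i))$ with $s = \tau_{i-1}(t) \to +\infty$, hence tends to $0$. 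A finite sum of terms each tending to $0$ tends to $0$, so for any $\delta > 0$ there is $T \in \R$ with $d(\flow^t(x), \flow^{\sigma(t)}(y)) < \delta$ for all $t > T$; in fact this proves the slightly stronger statement that $d(\flow^t(x), \flow^{\sigma(t)}(y)) \to 0$.

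I expect the only real subtlety to be the bookkeeping around reparameterizations: checking that the composites $\tau_i$ remain standard reparameterizations that escape to $+\infty$, so that the chained estimate propagates correctly. Everything else — path-connectedness of leaves, the uniform-continuity subdivision placing consecutive points in a common $\eps$-local leaf, and the triangle inequality — is routine.
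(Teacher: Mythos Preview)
Your proof is correct and follows essentially the same approach as the paper: connect $x$ to $y$ by a path in the leaf, subdivide so that consecutive points lie in a common $\eps$-local stable leaf, apply condition \ref{item_PAF_converges} to each pair, and chain the reparameterizations via the triangle inequality. Your bookkeeping with the composites $\tau_i$ is in fact slightly more explicit than the paper's, and your observation that one actually gets $d(\flow^t(x),\flow^{\sigma(t)}(y))\to 0$ (with $\sigma$ independent of $\delta$) is a small but genuine sharpening.
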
 
By symmetry, the statement holds replacing $\cF^s$ with $\cF^u$ and $t>T$ with $t<T$, with the same proof.  

\begin{proof}
Let $\delta >0$ be given, and let $\eps$ be a constant small enough to apply condition \ref{item_PAF_converges}.   
Take a path in $\cF^s(x)$ between $x$ and $y$. Cover it with (finitely many) flow-box neighborhoods each of diameter at most $\eps$, and use this to build a sequence of points $x=x_0, x_1, \ldots, x_n=y$ in $\cF^s(x)$ with $x_i \in \cF^s_\eps(x_{i-1})$.   Iteratively applying condition \ref{item_PAF_converges}, we can find a standard reparameterization $\sigma_i$ and time $T_i$ so that $d(\flow^{\sigma_{i-1}}(t)(x_{i-1}), \flow^{\sigma_{i}}(t)(x_{i})) < \delta /n$ for all $t > T_i$.  Let $\sigma = \sigma_n$ and $T = \max_i\{T_i\}$.  Then $d\left(\flow^t(x), \flow^{\sigma(t)}(y)\right) < \delta$ for all $t >T$.   
\end{proof} 

While Lemma \ref{lem_all_converge} says that, up to reparameterization, two orbits on a stable leaf converge in the future, distance is measured in $M$, so {\em a priori} the orbits could remain on different local stable leaves as they approach each other.   Our next goal is to show this is not the case.  

\begin{lemma}[Global convergence along the leaf]  \label{lem_same_local_stable}
For any $\delta >0$, if $x \in \cF^s(y)$, then there exists a standard reparameterization $\sigma$ and some $T \in \bR$ such that $\flow^{\sigma(t)}(y) \in \cF^{s}_{\delta}(\flow^t(x))$ for all $t>T$. 
\end{lemma}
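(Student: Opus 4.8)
The plan is to upgrade Lemma~\ref{lem_all_converge}, which already gives $d(\flow^t(x),\flow^{\sigma(t)}(y))<\delta$ for large $t$, to the statement that $\flow^{\sigma(t)}(y)$ lies in the $\delta$-\emph{local} stable leaf of $\flow^t(x)$, i.e.\ that the two orbits are connected by a short path \emph{inside a leaf of} $\fs$ (not merely short paths in $M$). The key topological input is that $\fs$ is topologically transverse to $\fu$: through every point there is a flow-box neighborhood in which the stable sheets are precisely the leaves of constant $x$-coordinate (Definition~\ref{def_flow_box}), and near a singular orbit the analogous local model holds. I would first fix $\delta>0$ and choose, by compactness of $M$, a uniform $\eps_0>0$ and a uniform radius $r>0$ so that every point of $M$ has a flow-box neighborhood containing $B_r(z)$ and of diameter $<\delta$, and moreover so that within any such flow box, two points in the same stable sheet at distance $<\eps_0$ are connected by a path in that sheet of diameter $<\delta$ — call this the ``local leaf connectivity at scale $(\eps_0,\delta)$''. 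This is a purely local consequence of having flow-box charts.

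Next I would run the proof of Lemma~\ref{lem_all_converge} but track leaves rather than just distances. Take the same chain $x=x_0,x_1,\dots,x_n=y$ with $x_i\in\cF^s_{\eps}(x_{i-1})$ obtained from a cover of a path in $\cF^s(x)$ by flow boxes, now also requiring each flow box to have diameter $<\eps_0$. Applying condition~\ref{item_PAF_converges} to each consecutive pair, there are standard reparameterizations with $d(\flow^t(x_{i-1}),\flow^{\sigma_i(t)}(x_i))<\eps_0/2$ for $t>T_i$. Since $\flow^{\sigma_i(t)}(x_i)$ and $\flow^t(x_{i-1})$ are at distance $<\eps_0$ and lie on a common leaf of $\fs$, once $t$ is large enough that $\flow^t(x_{i-1})$ has entered a fixed flow box we can invoke local leaf connectivity to see that they lie in a common stable \emph{sheet}, and are joined by a path in $\fs$ of diameter $<\delta/n$. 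Concatenating these $n$ paths and reparameterizing $y$ by $\sigma:=\sigma_n$, we get for $t$ larger than $\max_i T_i$ a path in $\cF^s(\flow^t(x))$ from $\flow^t(x)$ to $\flow^{\sigma(t)}(y)$ of diameter $<\delta$, which is exactly the assertion $\flow^{\sigma(t)}(y)\in\cF^s_\delta(\flow^t(x))$.

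The step I expect to be the genuine obstacle — and the one needing care rather than routine bookkeeping — is the passage from ``two orbits are $\eps_0$-close and on a common global leaf'' to ``they are on a common \emph{local} stable sheet of a single flow box.'' A priori, two points of a leaf of $\fs$ can be close in $M$ while the leaf wanders far away and comes back, so the short connecting path need not stay in one chart; one must rule out that $\flow^{\sigma_i(t)}(x_i)$ sits in a \emph{different} stable sheet of the flow box than $\flow^t(x_{i-1})$. I would handle this by using condition~\ref{item_PAF_backwards_expansivity}: if the two nearby points were on different local orbits within a flow box but still on a common leaf of $\fs$, then looking backwards in time their orbits must separate by a definite amount $\delta'$, whereas running the above chain \emph{backwards} one pair at a time (each pair becomes arbitrarily close in backward time only if on the same local orbit) would contradict this — more precisely, forward convergence of $x_{i-1}$ to $x_i$ forces, for large $t$, that they occupy the \emph{same} stable sheet of the ambient flow box, since otherwise the ``unstable'' coordinate separating the sheets is fixed and positive, contradicting $d\to 0$. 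Once that is pinned down, the two points are on the same sheet and local leaf connectivity applies. A secondary technical point is the singular orbits: here one replaces flow boxes by the model-prong charts of Definition~\ref{def_model_prong}, and local leaf connectivity at scale $(\eps_0,\delta)$ holds there as well since the prong is a foliated disc times an interval; no new idea is needed, just the observation that a path in a prong-leaf staying in a small ball has small diameter.
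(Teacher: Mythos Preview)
You correctly identify the crux: passing from ``close in $M$ and on a common global leaf of $\fs$'' to ``on a common local stable sheet.'' But your proposed resolution does not work as written. Your first attempt invokes condition~\ref{item_PAF_backwards_expansivity} for the stable foliation (backward separation of points on the same $\eps$-local stable leaf), yet the case you need to rule out is precisely when the two points are \emph{not} on the same $\eps$-local stable leaf, so that hypothesis fails. Your second attempt (``the unstable coordinate separating the sheets is fixed and positive'') is only a local statement in one flow box; as the orbits move through different charts there is no globally defined unstable coordinate, and leaves of $\fs$ can accumulate on themselves, so closeness in $M$ plus same global leaf does not by itself prevent different sheets.

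The paper's argument is both simpler and supplies the missing mechanism. It uses Lemma~\ref{lem_all_converge} as a black box (no need to track the chain $x_0,\dots,x_n$ again), reduces to the situation where $\flow^t(x)$ and $\flow^{\sigma(t)}(y)$ are already $\delta$-close for all $t\ge 0$, and then derives a contradiction from the assumption that they occupy different stable sheets of a flow box. The key is the \emph{local product structure}: pick $z\in\fs_\eps(x)\cap\fu_\eps(y)$. Then $z$ fellow-travels $x$ forward (condition~\ref{item_PAF_converges} on the stable side), while $z$ is on the local \emph{unstable} leaf of $y$ and not on its local orbit (since $x$ and $y$ are on different stable sheets), so condition~\ref{item_PAF_backwards_expansivity} applied to the \emph{unstable} foliation forces $z$ to separate from $y$ by a definite amount $\delta_0$ in \emph{forward} time. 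The triangle inequality with $\delta<\delta_0/2$ then contradicts $d(\flow^t(x),\flow^{\sigma(t)}(y))<\delta$. In short: the missing idea is to bring in the transverse foliation via the auxiliary point $z$ and apply~\ref{item_PAF_backwards_expansivity} on the unstable side, not the stable side.
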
 

\begin{proof}
Let $\eps$ be small enough so that conditions \ref{item_PAF_converges} and \ref{item_PAF_backwards_expansivity} apply and cover $M$  with flow boxes $U_i$ of diameter $< \eps$, small enough so that they are contained in foliation charts for $\cF^s$. Thus, if $U_i \cap U_j \neq \emptyset$ then each sheet of $\fs$ in $U_i$ meets at most one sheet of $\fs$ in $U_j$. 
Let $\delta_0$ be the constant given by condition \ref{item_PAF_backwards_expansivity}, and pick $\delta < \delta_0/2$ small enough so that if $d(x, y) < 2\delta$, then $x$ and $y$ are contained in a single flow box $U_i$ and there exists some $z \in \fs_\eps(x) \cap \fu_\eps(y) \cap U_i$.  Such a $\delta$ can be found using the Lebesgue number lemma of the cover and continuity of the foliations.  

Now let $x, y$ be given with $x \in \cF^s(y)$.  
If $x$ and $y$ are in the same orbit, then it is easy to find a standard reparameterization $\sigma$ satisfying the desired conditions, so we assume they are in distinct orbits.  By Lemma \ref{lem_all_converge} there is a standard reparameterization $\sigma$ such that $d(\flow^t(x),\flow^{\sigma(t)}y) \to 0$.  

Up to replacing $x$ and $y$ with points further along their orbits, we can assume that
$d(\flow^t(x),\flow^{\sigma(t)}(y))<\delta$ for all $t\geq 0$.  
If $x, y$ are on the same sheet of some flow box $U_i$, then we can find a sequence of flow boxes so that $\flow^t(x)$ and $\flow^{\sigma(t)}y$ are always in a common box along the sequence, and thus always on the same sheet.  Since $d(\flow^t(x),\flow^{\sigma(t)}(y)) \to 0$, continuity of the foliations means we have $\flow^t(x) \in \cF^s_\delta(\flow^{\sigma(t)}y)$ for all sufficiently large $t$.  

So we need only treat the case where $x$ and $y$ lie in a flow-box $U_i$ but not on the same local sheet. We will show this gives a contradiction.  By our choice of constants, there exists $z \in \fs_\eps(x) \cap \fu_\eps(y) \cap U_i$.  Since $z \in \fs_\eps(x)$, there exists a reparameterization $\tau$ and $T>0$ such that $d(\flow^{\tau(t)}(z), \flow^t(x)) < \delta$ for all $t > T$.   Again replacing $x, y, z$ with points further along their orbits, we can assume that $d(\flow^{\tau(t)}(z), \flow^t(x)) < \delta$ for all $t>0$.  

Since $x$ is not in the same sheet of $U_i$ as $y$, the point $z$ is not on the same local orbit as $y$.  Thus, by condition \ref{item_PAF_backwards_expansivity}, (applied to the standard reparameterization $\sigma \circ \tau^{-1}$) there exists $t>0$ such that $d(\flow^{\tau(t)}(z),\flow^{\sigma(t)}(y)) > \delta_0$.  This contradicts the triangle inequality, since $d(\flow^{\tau(t)}(z), \flow^t(x)) < \delta$ and we chose $\delta < \delta_0/2$.   See Figure \ref{fig_divergence} for a schematic illustration.  
\end{proof} 

\begin{figure}[h]
\centering 
\includegraphics[width=7cm]{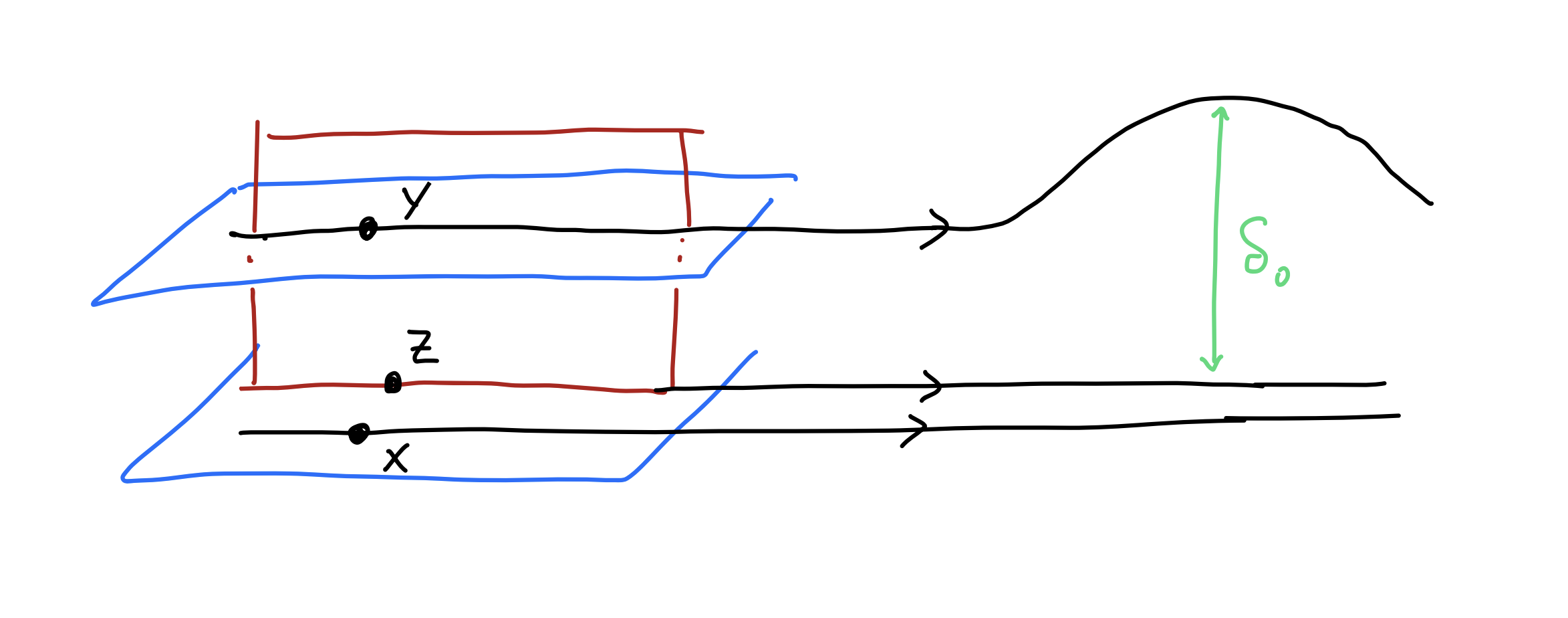}
\caption{If $x, y$ are sufficiently nearby but not on the same local sheet of $\fs$, then $x$ fellow-travels a point $z$ which does not fellow-travel $y$.} 
\label{fig_divergence}
\end{figure}

\begin{remark}  \label{rem_forward_converge_implies_local_stable}
Paying a little more care to continuity of the foliations and compactness, one can use the proof above to show the following:  
For any $\delta$, there exists $\eta<\delta$ so that if $x, y \in M$ are such that for some standard reparameterization we have $d(\flow^t(x), \flow^{\sigma(t)}(y)) < \eta$ for all $t>0$, then $\flow^t(x) \in \cF^s_\delta(\flow^{\sigma(t)}(y))$ for all $t>0$.  

In particular, this shows that leaves of $\cF^s$ are exactly the maximal sets of orbits which converge (in $M$ or equivalently on local sheets) in the future.  
\end{remark}

\subsection{Expansive flows}
In this section we describe a different definition of pseudo-Anosov flow, which is equivalent to the one above thanks to the work of  Inaba and Matsumoto \cite{IM90}, and  Paternain  \cite{Pat93}.  
This comes from the classical notion of {\em expansive flows}, introduced by Bowen and Walters \cite{BW72}.  

\begin{definition}\label{def.expflow}
A nonsingular flow $\flow^t \colon M \to M$ is \emph{expansive} if for every $\eps>0$ there exists $\delta>0$ with the following property:  If $x,y \in M$ and there exists a standard parameterization $\sigma$ with $d(\flow^t(x), \flow^{\sigma(t)}(y)) \leq \delta$ for all $t \in \bR$, then $y = \flow^s(x)$ for some $|s|< \eps$. 
\end{definition}

The study of expansive maps or homeomorphisms of metric spaces predates that of expansive flows; Definition \ref{def.expflow} was the first formalization to the flows setting.  These soon became recognized as an important class of examples generalizing Smale's Axiom A systems.  
We will prove below that pseudo-Anosov flows are examples of expansive flows: 

\begin{proposition}  \label{prop_pa_implies_expansive}
Any pseudo-Anosov flow in the sense of Definition \ref{def_topPA} is expansive. 
\end{proposition}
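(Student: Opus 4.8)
The plan is to turn the hypothesis that $\flow^t(x)$ and $\flow^{\sigma(t)}(y)$ remain within $\delta$ of one another for all $t$ into the conclusion that $y=\flow^s(x)$ with $|s|$ small, reading the orbit relation off the stable/unstable structure and bounding $|s|$ by a soft compactness argument. Fix $\eps>0$; we must produce $\delta>0$, and I would fix the auxiliary constants in the order $\eps,\rho,\delta',\eta,\delta$. Here $\rho>0$ is a ``tube radius'' for $\eps$, defined in the third paragraph below; $\delta'>0$ is the Lebesgue number of a finite cover of $M$ by flow-box neighborhoods (and, over the finitely many singular orbits, model-prong neighborhoods as in Definition~\ref{def_model_prong}), each of diameter less than $\rho$; $\eta=\eta(\delta')<\delta'$ is the constant produced by Remark~\ref{rem_forward_converge_implies_local_stable} (together with its time-reversed analogue for $\cF^u$); and finally $\delta<\eta$.

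Now let $x$, $y$ and the standard reparameterization $\sigma$ be as in Definition~\ref{def.expflow}, so $d(\flow^t(x),\flow^{\sigma(t)}(y))\le\delta<\eta$ for all $t\in\R$. Restricting to $t>0$ and applying Remark~\ref{rem_forward_converge_implies_local_stable} gives $\flow^t(x)\in\cF^s_{\delta'}(\flow^{\sigma(t)}(y))$ for all $t>0$; in particular $x$ and $y$ lie on a common leaf of $\cF^s$, and --- either by letting $t\to 0^+$ and using continuity of the foliation in flow boxes, or by first pushing $x$ and $y$ slightly forward along their orbits and reindexing $\sigma$ --- one concludes that $x$ and $y$ lie on a common \emph{local} stable leaf. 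Symmetrically, using $t<0$ and the time-reversed remark, $x$ and $y$ lie on a common local leaf of $\cF^u$. Hence $x,y\in\cF^s_{\delta'}(y)\cap\cF^u_{\delta'}(y)\subseteq B_{\delta'}(y)$, and by the choice of $\delta'$ this ball lies in a single flow-box (or model-prong) neighborhood $W$ of diameter less than $\rho$. By topological transversality, since $\cF^s_{\delta'}(y)$ is connected it must lie in the single stable sheet of $W$ through $y$, and likewise $\cF^u_{\delta'}(y)$ lies in the unstable sheet of $W$ through $y$; these two sheets meet exactly in one orbit arc $\ell$ through $y$ (in the model-prong case the relevant leaves are the unions of prongs, which are joined only along the singular orbit). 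Therefore $x\in\ell$, that is, $y=\flow^s(x)$ for some $s\in\R$, and the orbit arc joining $x$ to $y$ is contained in $\ell\subseteq W\subseteq B_\rho(y)$.

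It remains to show $|s|<\eps$, which is where the tube radius enters. The relevant fact is soft: for every $\eps>0$ there is $\rho>0$ such that, for every $p\in M$, if the orbit arc $\{\flow^r(p):0\le r\le\tau\}$ (or $\tau\le r\le 0$) lies in $B_\rho(p)$ then $|\tau|<\eps$. Indeed, were this false one would find $p_n\in M$ and $\tau_n$ with $|\tau_n|\ge\eps_0$ and $d(p_n,\flow^r(p_n))\to 0$ uniformly for $r$ in a fixed interval of positive length; a subsequential limit $p=\lim p_n$ would then satisfy $\flow^r(p)=p$ throughout that interval, i.e.\ $p$ would be a stationary point of $\flow$ --- impossible for a pseudo-Anosov flow, whose orbits are all one-dimensional. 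Applying this with $p=y$ to the orbit arc from $y$ to $x$, which lies in $B_\rho(y)$ by the previous paragraph, yields $|s|<\eps$, as required.

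The step I expect to be the main obstacle is the middle one: passing cleanly from ``$\delta$-closeness in $M$'' to ``lying on a common \emph{local} stable (resp.\ unstable) leaf'' at the correct scale --- in particular dealing with the $t=0$ endpoint not literally covered by Remark~\ref{rem_forward_converge_implies_local_stable}, and verifying that $\sigma$, $\sigma^{-1}$ and their composites with orbit time-shifts remain standard reparameterizations --- and formulating the transversality input so that it applies uniformly both at regular flow boxes and at the model prongs over singular orbits. The existence of a finite cover of $M$ by (arbitrarily small) flow-box and model-prong neighborhoods is standard from compactness but should be recorded explicitly. By contrast the final bound on $|s|$ is genuinely elementary, using only that $\flow$ has no stationary points.
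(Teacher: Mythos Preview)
Your proof is correct. Both you and the paper reach the same destination---show that $x$ and $y$ lie on a common local orbit arc, then bound $|s|$ by a soft compactness argument---but you get there by a different and somewhat cleaner route.

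The paper argues essentially from first principles using conditions \ref{item_PAF_converges} and \ref{item_PAF_backwards_expansivity} of Definition~\ref{def_topPA}: it first treats the easy case where $y$ already lies in $\cF^s_{\eps_0}(x)$ (where \ref{item_PAF_backwards_expansivity} alone forces $y$ onto the local orbit), and in the general case it introduces an auxiliary point $z\in\cF^u_{\eps_0}(x)\cap\cF^s_{\eps_0}(y)$, uses \ref{item_PAF_converges} to make $z$ track $x$ in backward time, and then invokes \ref{item_PAF_backwards_expansivity} together with the triangle inequality to force a contradiction if $x$ and $y$ are not on a common local unstable sheet. You instead invoke Remark~\ref{rem_forward_converge_implies_local_stable} and its time-reversal as a black box, obtaining directly that $x$ and $y$ lie on the same local stable leaf (from forward $\eta$-closeness) and the same local unstable leaf (from backward $\eta$-closeness), and then read off the orbit relation from topological transversality. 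Since that remark has already packaged the ``intermediate point $z$'' argument inside the proof of Lemma~\ref{lem_same_local_stable}, your route avoids repeating it and is correspondingly shorter. Your compactness argument for the tube radius $\rho$ matches the paper's one-line appeal to uniform continuity of $\flow$.

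Two small points worth tightening when you write this up: the time-reversed analogue of Remark~\ref{rem_forward_converge_implies_local_stable} is not stated in the paper, so you should remark that it follows by applying the stable version to the reversed flow $\flow^{-t}$ (and check, as you implicitly do, that $s\mapsto -\sigma(-s)$ is again a standard reparameterization); and the $t=0$ endpoint you flag is indeed harmless, handled by either of the two devices you mention.
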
 

What is truly remarkable is that the converse is also true: thus expansivity is a complete characterization of pseudo-Anosov flows. Some authors have even taken expansive as a {\em definition} of pseudo-Anosov flow, see for instance \cite{Sha21, Iak24}. 
\begin{theorem} [Inaba--Matsumoto \cite{IM90} Theorem 1.5;  Paternain \cite{Pat93}, Lemma 7)] \label{thm_expansive_implies_pA}
Let $\flow^t \colon M \to M$ be an expansive flow on a closed $3$-manifold, tangent to a non-vanishing vector field.  Then $\flow$ is pseudo-Anosov in the sense of Definition \ref{def_topPA}. 
\end{theorem}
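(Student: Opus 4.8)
The plan is to follow Inaba--Matsumoto and Paternain: manufacture the weak foliations $\fs$ and $\fu$ directly from the asymptotic behaviour of orbits, and then verify the three conditions of Definition \ref{def_topPA}, using the classification of expansive homeomorphisms of surfaces (Lewowicz, Hiraide) as the key input. First I would fix a small $\eps>0$ and introduce the local stable set
\[
 W^s_\eps(x) = \bigl\{\, y \in M : d(\flow^t(x), \flow^{\sigma(t)}(y)) \le \eps \text{ for all } t \ge 0, \text{ for some standard reparameterization } \sigma \,\bigr\},
\]
together with its symmetric counterpart $W^u_\eps(x)$ (with $t \le 0$). From expansivity and compactness of $M$ --- by arguments in the spirit of \cite{BW72} and of the proof of Lemma \ref{lem_same_local_stable} --- I would deduce the basic properties of these sets: the expansivity constants may be chosen uniformly over $M$; each $W^s_\eps(x)$ is closed, contains the $\eps$-local orbit of $x$, and depends upper-semicontinuously on $x$; and every $y \in W^s_\eps(x)$ in fact has $d(\flow^t(x), \flow^{\sigma(t)}(y)) \to 0$ as $t \to +\infty$. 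The one purely negative fact I need is the definition of expansivity itself: no two orbits that are not flow-translates of one another can stay within $\delta$ of each other, in reparameterized time, for all $t \in \bR$.

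Next I would reduce to a two-dimensional problem. Since $\flow$ is tangent to a non-vanishing vector field, every point of $M$ lies in a flow box $B \cong D \times (-1,1)$ with $D$ a $2$-disc transverse to the flow, together with a partially defined first-return homeomorphism between cross-sections; on such a cross-section the local stable and unstable sets above cut out closed sets, and expansivity of $\flow$ translates into a local form of expansivity for the return maps. This is the delicate step: the return maps are only partially defined, the cross-sections are non-compact, and in general there is no \emph{global} cross-section, so one must work with a patched family of local return maps and show that the structures they induce on overlapping cross-sections are coherent. Making this precise is the technical heart of \cite{IM90, Pat93}, and it is the step I expect to be the main obstacle.

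Granting that, I would invoke the structure theorem for expansive surface homeomorphisms (Lewowicz, Hiraide): such a map is topologically conjugate to a pseudo-Anosov homeomorphism, so its stable and unstable sets form a pair of transverse singular foliations with only finitely many prong singularities, each a $p$-prong with $p \ge 3$ (one-prongs being excluded by the Euler-characteristic/index count). Saturating these cross-sectional foliations by the flow and gluing along overlaps yields two continuous, topologically transverse, $2$-dimensional foliations $\fs$ and $\fu$ of $M$, saturated by orbits and meeting along orbits. A point at which both foliations are singular lies on a periodic orbit --- a singular point of a return map is periodic, since the finitely many prong singularities are permuted by the map --- and the set of such singular orbits is finite, by compactness of $M$ and finiteness of the prong set on each cross-section. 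This gives condition \ref{item_PAF_foliations}.

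Finally I would check the asymptotic conditions. Condition \ref{item_PAF_converges} is built into the construction: leaves of $\fs$ are, by design, sets of orbits converging in forward time, and leaves of $\fu$ of orbits converging in backward time. For condition \ref{item_PAF_backwards_expansivity}, I would take its $\eps$ to be the expansivity constant attached to a still smaller $\eps_1$, and take $\delta$ to be that same constant. Then if $y \in \fs_\eps(x)$ is not on the $\eps$-local orbit of $x$, a standard reparameterization $\tau$ witnesses $d(\flow^t(x), \flow^{\tau(t)}(y)) \le \eps$ for $t \ge 0$; were there also a standard reparameterization $\sigma$ with $d(\flow^t(x), \flow^{\sigma(t)}(y)) \le \delta$ for all $t < 0$, then pasting $\tau|_{[0,\infty)}$ to $\sigma|_{(-\infty,0]}$ (both fix $0$) would produce a standard reparameterization realizing a distance at most $\delta$ for \emph{all} $t \in \bR$, forcing $y$ onto the $\eps$-local orbit of $x$ by expansivity --- a contradiction. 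Hence for every standard reparameterization there is some $t<0$ at which the reparameterized distance exceeds $\delta$, which is \ref{item_PAF_backwards_expansivity}; the unstable case is symmetric. Everything here apart from the cross-section reduction of the second paragraph is either formal or a direct appeal to the surface classification, so the reader may treat the cited results of Inaba--Matsumoto and Paternain as a black box.
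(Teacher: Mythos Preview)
The paper does not prove this theorem. Immediately after stating it, the authors write that ``the proof is too involved to repeat here'' and instead prove only the converse direction (Proposition~\ref{prop_pa_implies_expansive}). So there is no in-paper argument to compare your proposal against; the paper black-boxes the result and attributes it to Inaba--Matsumoto and Paternain, noting only that the main content is showing the stable and unstable sets form singular foliations with constrained singularity types.

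That said, your sketch mischaracterises what those references actually do. You propose to reduce to local return maps on cross-sectional discs and then invoke the Lewowicz--Hiraide classification of expansive surface homeomorphisms. But that classification is a theorem about globally defined expansive homeomorphisms of \emph{closed} surfaces; it does not apply to partially defined return maps on open discs, and---as you yourself note---there is in general no global cross-section. The cited works do not reduce to Lewowicz--Hiraide: they construct the stable and unstable sets directly for the flow (the paper points to Keynes--Sears \cite{KS81} for this step) and then analyse the local structure near singularities by hand, adapting rather than applying the surface techniques. So the paragraph you flag as ``the main obstacle'' is essentially the entire proof, and the subsequent ``granting that, invoke Lewowicz--Hiraide'' step is not a valid move as stated.

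Your verification of condition~\ref{item_PAF_backwards_expansivity} by pasting a forward reparameterisation $\tau$ with a hypothetical backward one $\sigma$ at $t=0$ is the right idea, but two details need care. First, condition~\ref{item_PAF_backwards_expansivity} must hold for \emph{all} sufficiently small $\eps$, not only for those $\eps$ that happen to coincide with an expansivity constant $\delta(\eps_1)$; your choice of $\eps$ only produces a special sequence of values. Second, $y \in \cF^s_\eps(x)$ in the paper's convention (the connected component of $\cF^s(x)\cap B_\eps(x)$) does not directly give a reparameterised forward bound of $\eps$---the paper explicitly flags this distinction in the footnote after the definition of local leaf. Both issues are repairable using the genuine forward convergence to $0$ from~\ref{item_PAF_converges} rather than a uniform $\eps$-bound.
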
 
We note that this theorem does not have an obvious generalization in higher dimensions; there are many expansive flows on higher dimensional manifolds which can have quite wild singular behavior.  The main content of the proof of Theorem \ref{thm_expansive_implies_pA} is to show that expansive flows have invariant singular foliations (the existence of stable and unstable sets was already established by Keynes and Sears \cite{KS81}), and that the topology around the singularities is quite constrained.   
The proof is too involved to repeat here, instead we show only the easy direction stated in Proposition \ref{prop_pa_implies_expansive} that pseudo-Anosov implies expansive. 

\begin{proof}[Proof of Proposition \ref{prop_pa_implies_expansive}]
The proof is similar to that of Lemma \ref{lem_same_local_stable}.  
Let $\flow$ be a pseudo-Anosov flow, and let $\eps>0$ be given.  
Fix $\eps_0 < \eps$ small enough so that condition \ref{item_PAF_backwards_expansivity} applies with this choice of $\eps_0$ (recall that it specifies ``for all sufficiently small...").  Additionally, using uniform continuity of $\flow$, we can ensure that $\eps_0$ is small enough so that for all $x \in M$, if $y$ lies in the $\eps_0$-local orbit of $x$, then we have $y = \flow^s(x)$ where $|s|<\eps$. 

 Let $\delta(\eps_0)$ be the constant obtained from condition \ref{item_PAF_backwards_expansivity} with this input, and fix $\delta = \delta(\eps_0)/2$.  Up to shrinking $\delta$ even further, we can assume also as in the proof of Lemma \ref{lem_same_local_stable}, that if $d(p, q)<\delta$ then they lie in a common foliation chart given by a flow box, and $\cF^s_{\eps_0}(p) \cap \cF^s_{\eps_0}(q)$ also contains a point in this flow box. 
 
Suppose as a first easy case that $x, y \in M$ satisfy $y \in \cF^s_{\eps_0}(x)$.  If there exists a standard reparameterization $\sigma$ such that $d(\flow^t(x), \flow^{\sigma(t)}(y)) < \delta < \delta(\eps_0)$ for all $t$, then in particular this is true for all $t<0$ so we conclude by item
\ref{item_PAF_backwards_expansivity}  that $y$ lies in the same $\eps_0$-local leaf as $x$, and so by our choice of constants $y = \flow^s(x)$ where $|s|< \eps$.   A similar argument applies if $y \in \cF^u_{\eps_0}(x)$.  

Passing now to the general case, suppose that $x, y \in M$ are any two points such that there exists a standard reparameterization such that $d(\flow^t(x), \flow^{\sigma(t)}(y)) < \delta$ for all $t$.   Assume that they are not on the same local unstable or stable leaf, and hence not in the same local orbit; we will derive a contradiction.  
In particular, $d(\flow^t(x), \flow^{\sigma(t)}(y)) < \delta$ is true for $t =0$, so we can find $z \in \cF^u_{\eps_0}(x) \cap \cF^s_{\eps_0}(y)$, not in the same local orbit as $x$.   Then by condition \ref{item_PAF_converges} there exists $T<0$ and a reparameterization $\tau$ so that 
$\flow^{\tau(t)}(z) \in \cF^s_\delta(\flow^t (x))$ for all $t < T$.  

Let $z' = \flow^{\tau(T)}(z)$, $y' = \flow^{\sigma(T)}(y)$ and $x' = \flow^t(x)$.  Then by the triangle inequality we have $d(\flow^{\tau(t')}(z), \flow^{\sigma(t')}(y))< \delta(\epsilon_0)$ for all $t<0$.  So by condition \ref{item_PAF_backwards_expansivity}, $z'$ and $y'$ lie on the same $\eps_0$-local orbit, thus $x'$ and $y'$ lie on the same local unstable sheet of a flow box in our cover.  
Since neighboring foliation charts meet along single sheets, it follows that $x$ and $y$ lie in the same unstable sheet of a flow box as well, contradicting our assumption that they were not in the same local unstable leaf.  
\end{proof}

As well as giving a very concise definition, the expansive property has the benefit of giving a checkable condition in order to verify that a given flow is pseudo-Anosov, somewhat in the spirit of the {\em Alekseev cone field criterion}\footnote{See e.g., \cite[Proposition 5.1.7]{FH19}.} for smooth Anosov flows.   
 In essence, the cone field criterion requires the existence of two ``constant angle" cone fields $C^s$ and $C^u$ defined at each tangent space, so that $D\flow^t$ maps the closure of $C^s$ into $C^s$ and decreases norms of vectors in $C^u$ for $t>0$ (possibly up to multiplicative error); and similarly for $t<0$ it maps the closure of $C^u$ into $C^u$ and decreases norms of vectors in $C^s$.    Thus, one does not need to find any {\em invariant} object to use this criterion.   Similarly, the definition of expansive does not ask for an invariant object, but rather for a notion of local expansion.

We conclude this section by noting that in order to distinguish a nonsingular topological Anosov flow from a singular pseudo-Anosov flow, it suffices to find a single invariant foliation.  

\begin{proposition}[Barthelm\'e--Fenley--Potrie, \cite{BFP23} Theorem 5.9]
Let $\flow$ be a topological pseudo-Anosov flow on $M$.  If $\flow$ preserves each leaf of some 2-dimensional nonsingular foliation on $M$, then it is a topological Anosov flow. \footnote{Note that there is a typo in the statement of Theorem 5.9 of \cite{BFP23}, and that $\flow$ preserves each leaf of a foliation is a necessary hypothesis, as can be immediately seen by considering the foliations by transverse surfaces in the mapping torus of a pseudo-Anosov homeomorphism.}
\end{proposition}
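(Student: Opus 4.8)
The plan is to argue by contraposition: assuming $\flow$ has a singular orbit, I derive a contradiction with the existence of a $2$-dimensional foliation $\mathcal{G}$ all of whose leaves are $\flow$-invariant. So suppose $\alpha$ is a periodic orbit locally modeled on a $p$-prong with $p\geq 3$, and fix a small solid-torus neighborhood $V$ of $\alpha$: by Definitions~\ref{def_model_prong} and~\ref{def_topPA}, $V$ is a mapping torus of the first-return map $f$ of a $2$-dimensional $p$-prong disk $D$, with $o:=\alpha\cap D$ carrying $p$ stable and $p$ unstable separatrices that alternate around $o$. Since $\flow$ preserves every leaf of $\mathcal{G}$, each leaf is a union of orbits; hence the leaves of $\mathcal{G}|_V$ are saturated by orbits, and $\mathcal{G}|_V$ descends to a $1$-dimensional \emph{nonsingular} foliation $\mathcal{G}_D$ of $D$ (nonsingular even at $o$, because $\mathcal{G}$ is), which is invariant under $f$. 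Replacing $\alpha$ by an iterate --- equivalently $f$ by a power, the return map of $\alpha$ traversed several times --- I may assume $f$ fixes each of the $2p$ separatrices; every property used below passes to such powers.

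Next I record the local behaviour of $f$ near $o$. First, once $V$ is small enough, $f$ has no fixed point near $o$ except $o$ itself: by Proposition~\ref{prop_pa_implies_expansive} the flow is expansive, and a fixed point of $f$ near $o$ would give a periodic orbit contained in $V$ that fellow-travels $\alpha$ for all time, hence coincides with $\alpha$. Second, applying condition~(ii) of Definition~\ref{def_topPA} to $\alpha$ together with a point on a separatrix shows that $f$ contracts toward $o$ along each stable separatrix, and that $f^{-1}$ contracts toward $o$ along each unstable separatrix. These combine to show that \emph{no $f$-invariant arc issuing from $o$ can run into the interior of a sector}: on such an arc $f$ would be monotone toward or away from $o$ (no interior fixed points), whereas any point in the interior of a sector leaves every neighborhood of $o$ under both forward and backward iteration of $f$ --- the local ``hyperbolic quadrant'' picture, which one extracts from conditions~(ii)--(iii) of Definition~\ref{def_topPA} and the characterization of local stable and unstable leaves in Remark~\ref{rem_forward_converge_implies_local_stable}, just as in the proof of Lemma~\ref{lem_same_local_stable}.

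Now consider the leaf $g:=\mathcal{G}_D(o)$. It is $f$-invariant, and since $\mathcal{G}_D$ is nonsingular it is an embedded arc near $o$, i.e.\ it consists of exactly two germs issuing from $o$. By the previous paragraph neither germ runs into a sector interior, so each runs along a separatrix; thus near $o$ we have $g=\sigma_1\cup\{o\}\cup\sigma_2$ with $\sigma_1\neq\sigma_2$, and $g$ separates a small punctured disk about $o$ into two regions $D^{+}$ and $D^{-}$ (interchanged or preserved by $f$; after one further power, preserved). The remaining $2p-2$ separatrices then lie, near $o$, in $D^{+}\cup D^{-}$. I claim neither region contains both a stable and an unstable separatrix. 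Indeed, suppose a stable separatrix $s$ and an unstable separatrix $u$ both issued into $D^{+}$. The leaves of $\mathcal{G}_D$ lying in $D^{+}$ and accumulating onto $g$ form a one-parameter family which $f$ permutes, so $f$ induces a homeomorphism $\phi$ of the transverse parameter, fixing the value of $g$. Computing $\phi$ by following the intersection of a leaf with $s$ --- a point $f$ pushes strictly toward $o$ --- shows $\phi$ moves leaves strictly toward $g$; computing the \emph{same} $\phi$ by following the intersection with $u$ --- a point $f$ pushes strictly away from $o$ --- shows $\phi$ moves leaves strictly away from $g$; contradiction. Hence each of $D^{+},D^{-}$ meets only stable or only unstable separatrices. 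But the $2p$ separatrices alternate cyclically around $o$, so removing $\sigma_1,\sigma_2$ leaves two cyclic arcs, each monochromatic and therefore of length at most $1$; thus $2p-2\leq 2$, i.e.\ $p\leq 2$, contradicting $p\geq 3$. Therefore $\flow$ has no singular orbit, and is a topological Anosov flow.

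The combinatorial core --- alternation of separatrices against a pigeonhole count, where the hypothesis $p\geq 3$ enters --- is robust; the delicate part, and where I expect the real work to lie, is the $C^0$ topology, since $\mathcal{G}_D$ is merely topological and is not assumed transverse to the stable and unstable foliations. Two points need care: that each germ of $g$ genuinely runs \emph{along} a separatrix rather than oscillating across it, and that the displacement homeomorphism $\phi$ is well defined with the two opposite monotonicities along $s$ and along $u$. Both are handled using the topological normal form of a prong from Definition~\ref{def_model_prong} together with continuity of the foliations --- passing to ``first crossings'' of separatrices with local leaves of $\mathcal{G}_D$ and running the usual Lebesgue-number and continuity bookkeeping, as in Remark~\ref{rem_forward_converge_implies_local_stable} --- but these are the steps one would have to execute carefully in a full write-up.
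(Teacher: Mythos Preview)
Your proof is correct and follows essentially the same approach as the paper: pass to a transverse disk $D$ at a point $o$ on the singular orbit, argue that the leaf $g=\mathcal G_D(o)$ is forced to consist of two separatrices (the paper phrases this as ``$L_l$ must be contained in either $\cF^s(\alpha)$ or $\cF^u(\alpha)$''), and then obtain a contradiction because a nearby leaf on one side must cross both a stable and an unstable ray---you via the two incompatible monotonicities of $\phi$, the paper via ``the image under the first return map would have to intersect itself.'' Your alternation/pigeonhole count $2p-2\le 2$ is exactly where the paper invokes ``because the leaves $\cF^s(x)$ and $\cF^u(x)$ are singular''; the arguments are the same, yours just makes the combinatorics and the $C^0$ caveats explicit.
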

\begin{proof}
Let $\cF$ be a 2-dimensional topological foliations whose leaves are saturated by orbits of $\flow$. We need to show that this is incompatible with having a singular periodic orbit. So assume that $\flow$ admits a singular periodic orbit $\alpha$.  
Let $L$ be the leaf of $\cF$ containing $\alpha$. Consider $L_l$ one connected component of $L\smallsetminus\{\alpha\}$. Since $\alpha$ is periodic, invariance of $L$ by the flow implies that $L_l$ must be contained in either $\cF^s(\alpha)$ or $\cF^u(\alpha)$. Similarly, denoting by $L_r$ the other connected component of $L\smallsetminus\{\alpha\}$, we deduce that $L_r$ is also contained in either $\cF^s(\alpha)$ or $\cF^u(\alpha)$. (Note that at this point, a priori $L_l$ could be in a stable leaf and $L_r$ in an unstable, or vice versa, but this does not matter for the proof.)
Now consider a small transverse disk $D$ around a point on $\alpha$. Because the leaves $\cF^s(x)$ and $\cF^u(x)$ are singular, a leaf of $\cF$ that is near $L$ on at least one side of it must intersect a ray of both $\cF^s(x)\cap D$ and $\cF^u(x)\cap D$. Such a leaf cannot be left invariant by $\flow$, as the image of such a leaf under the first return map to $D$ would have to intersect itself.  
\end{proof}

\section{Examples}\label{sec_flows_examples}

We will now describe some examples of Anosov and pseudo-Anosov flows.  In the 1960s, the only known examples were the  {\em suspensions} and {\em geodesic flows}.   Starting in the late 1970s, many new examples of Anosov flows were constructed on  3-manifolds using surgery techniques.  Such constructions take as input an existing flow and make a modification to produce another flow, typically on a different 3-manifold.   Later, starting with work of Bonatti and Langevin \cite{BL94} a new strategy was introduced, gluing ``building blocks" together, to produce a flow out of elementary pieces rather than modifying a previously known example.  

After describing geodesic and suspension flows, we give a few samples of flows obtained by surgery and gluing techniques.   For simplicity, and for historical reasons, the constructions we describe will involve Anosov flows, but many generalize easily to the pseudo-Anosov setting.

 \subsection{Suspension flows}\label{sec_example_suspensions}
 
If $N$ is a manifold and $f\colon N \to N$ is a homeomorphism, the {\em mapping torus} $N_f$ of $f$ is the quotient 
of $N \times \bR$ by the equivalence relation generated by $(x, t) \sim (f(x), t-1)$.  

The {\em suspension flow $\flow_f$ of $f$} is the flow on $N_f$ generated by the vector field $\partial/\partial t$ in the $\bR$-coordinate direction.
 The time $k$ map $\flow_f^k\colon N_f \to N_f$, for $k \in \bZ$, preserves the slice $N \times \{0\}$, and its restriction to this slice agrees with $f^k$.   Thus, $\flow_f$ can be thought of as a ``continuous interpolation" of the iterates of $f$ on $N \times \{0\} \subset N_f$.  
To produce an example which is Anosov, one needs a map $f$ that has invariant expanding/contracting directions, as in the example below.  

\begin{example}[The suspension flow of a hyperbolic linear map] \label{ex_suspension_AF}

The linear action of a matrix $A \in \SL(2,\Z)$ on $\R^2$ preserves the integer lattice, so descends to a map $\bar{A}$ of the torus $\bT^2 := \bR^2/\bZ^2$. Since $\det(A) = 1$, this induced map is a homeomorphism.  If $A$ has eigenvalues $\lambda, \lambda^{-1}$ with $|\lambda| \neq 1$, then $\bar{A}$ is an Anosov diffeomorphism, the discrete analog of an Anosov flow.\footnote{The reader can guess the definition, or modify Definition \ref{def_smoothAF} by deleting all appearances of $X$ and restricting $t$ to take integer values.} 

Let $E_\lambda$ and $E_{\lambda^{-1}}$ denote the two eigendirections of $A$, and suppose that $\lambda>1$. 
The tangent space to any point of the mapping torus $\bT^2_{\bar{A}}$ splits naturally as $\R^2 \times \R$ and 
the 1-dimensional, constant, distributions 
\[E^{uu} := (E_\lambda, 0) \text{ and } E^{ss}:= (E_{\lambda^{-1}}, 0)
\] 
are well defined everywhere and invariant under the suspension flow $\flow_{\bar{A}}$.  Similarly, the 2-dimensional distributions 
$E^u$ and $E^s$, spanned by $E^{uu}$ (resp.~$E^{ss}$) together with the direction of the flow, are tangent to invariant planar foliations.  With these foliations, $\flow_{\bar{A}}$ is a topological Anosov flow.  
\end{example} 

\begin{exercise}
Verify that the suspension $\flow_{\bar{A}}$ is indeed a topological Anosov flow.
\end{exercise}

To check that $\flow_{\bar{A}}$ satisfies the smooth Anosov property (Definition \ref{def_smoothAF}) one must first describe a Riemannian metric structure on $\bT^2_{\bar{A}}$.   We will define one that is particularly convenient, and in so doing, we will also see that the flow is {\em algebraic} in the sense that it can be written as the action of a one-parameter subgroup of a Lie group on a quotient of that Lie group.

Let $\{ x_1, x_2\}$ denote the basis for $\R^2$ consisting of unit eigenvectors of $A$, with $A(x_1) = \lambda x_1$.   The standard Euclidean metric $d x_1^2 + d x_2^2$ descends to the torus quotient $\bT^2$ so defines a (Euclidean) Riemannian metric there.  Now on $\bT^2 \times \R$ we fix the metric 
\[ ds^2 = \lambda^{2t} d x_1^2 + \lambda^{-2t} d x_2^2 + dt^2.\]
The quotient map $(x, t) \sim (A(x), t-1)$ is an isometry with respect to this metric, so the metric descends to the quotient. It is now easy to see that $\flow_{\bar{A}}$ is Anosov and $E^{ss}$ and $E^{uu}$ as defined above are the strong stable and unstable distributions. 
 
 In fact, all transformations of the form 
 \[ (x_1, x_2, t) \mapsto ( \lambda^{-s} x_1 +a  , \lambda^{s} x_2+ b, s+t )\]
 for $(a, b, s) \in \R^3$ are isometries of this metric.  The equation above is the formula for multiplication $(a,b, s) \cdot  (x_1, x_2, t)$ in the Lie group $\mathrm{Sol}$; and what we have just described is a left-invariant metric defining a Sol-structure on $\bT^2_{\bar{A}}$ for which the suspension flow is given by {\em right} multiplication by the one-parameter subgroup $(0, 0, t)$.

Example \ref{ex_suspension_AF} can be generalized to the pseudo-Anosov setting by replacing $\bT^2$ by a higher genus surface $S$ and $\bar{A}$ by a {\em pseudo-Anosov homeomorphism} -- a homeomorphism $f\colon S \to S$ leaving invariant two transverse singular foliations with isolated prong singularities, with an expansion/contraction property analogous to that in Definition \ref{def_topPA}. See, e.g., \cite{FM12}.   Such examples were the motivation for the definition of pseudo-Anosov flow.  In fact, the earliest definition of pseudo-Anosov flow (as in Fried's article \cite[Exposé 14]{FLP79}) requires pseudo-Anosov flows to be suspensions. It was only much later in the work of Mosher that the picture was generalized to include non-suspension examples.

 \subsection{Geodesic flow}\label{sec_example_geodesic_flow}

Let $\Sigma$ be a manifold equipped with a Riemannian metric.  The {\em geodesic flow}\footnote{Often this is called the ``geodesic flow on $\Sigma$'', despite the fact that the flow is actually {\em on} the manifold $T^1(\Sigma)$.} is the flow on the unit tangent $T^1\Sigma$ defined by the property that $\phi^t(v)$ is the unit tangent to the geodesic $\{ \exp(sv): s \in \R\}$ at point $\exp(tv)$, where $\exp$ denotes the Riemannian exponential map.   
For surfaces of constant negative curvature, such flows are algebraic in the same sense as the suspension flow described above.  

\begin{example}[Geodesic flow of a hyperbolic surface]
 If $\Sigma$ is a surface of constant curvature $-1$, then $T^1\Sigma$ can be isometrically identified with the (left) quotient of $\PSL(2, \bR) \cong T^1(\bH^2)$ by a discrete subgroup $\Gamma \cong \pi_1(\Sigma)$; where $\PSL(2,\bR)$ is equipped with the standard left-invariant metric.  
 Geodesic flow on $T^1 \Sigma \cong  \Gamma \backslash \PSL(2, \bR)$ corresponds to right multiplication by the 1-parameter subgroup $G^t=\begin{pmatrix} e^{t/2} & 0 \\ 0 & e^{-t/2} \end{pmatrix}$. 
 
It is easy to verify that the matrices $H_+ = \left( \begin{smallmatrix} 0 & 0 \\ 1 &0  \end{smallmatrix} \right) $ and $H_- =\left( \begin{smallmatrix} 0 & 1 \\ 0 &0  \end{smallmatrix} \right)$ in $\mathfrak{sl}(2,\bR)$, the Lie algebra of $\PSL(2,\bR)$, define distributions invariant by the derivative of $G^t$ and tangent vectors in these distributions are respectively expanded and contracted at an exponential rate. That is, the geodesic flow is an Anosov flow. 
A much more in-depth description of the geodesic flow of hyperbolic surfaces can be found in  \cite[Chapter 2]{FH19}.
 \end{example}
 
Despite geodesic flows being clearly dependent on the choice of metric, we generally say \emph{the} geodesic flow for a hyperbolic or negatively curved surface of fixed topological type.  
This is because any two geodesic flows on the same negatively curved surface are orbit equivalent, as shown in \cite{Ghy84,Gro00}.\footnote{Gromov's paper cited here was circulated as a preprint around the same time as Ghys' (independent) results, but Gromov's work was not published until much later.} 
One can easily see this using structural stability of Anosov flows (see, e.g., \cite[Theorem 5.3.6]{FH19}), as follows.  Notice first that two distinct geodesic flows a priori live on two different manifolds, the unit tangent bundle of $\Sigma$ associated with two different metrics. To remedy this, one may define instead the geodesic flows on the \emph{homogenized bundle} (or \emph{bundle of directions}) $H\Sigma = (T\Sigma \smallsetminus \{0\}) / \R^+$ which is topologically the same as $T^1 \Sigma$.  
 Now the space of negatively curved Riemannian metrics on a surface is connected (one may travel in the conformal class to the hyperbolic metric and then travel in the Teichm\"uller space).  Structural stability implies that open connected sets in the space of Anosov flows are orbit equivalent, hence the result.

 P.~Tomter \cite{Tom68} classified algebraic flows, showing that all examples are either supensions or arise as a tower of torus bundle over a homogeneous geodesic Anosov flow.  In dimension 3, this implies that the only algebraic examples are the geodesic and suspension flows already described.   Thus, any new examples require different techniques of construction.

 \subsection{The examples of Handel and Thurston} \label{sec_example_Handel_Thurston}
 
 Our next two examples are modifications of geodesic and suspension flows (respectively) by surgery techniques.  These were the first historical examples of Anosov flows which were not orbit equivalent to algebraic ones.  

\example[Handel and Thurston's construction]
We describe the family of Anosov flows constructed in \cite{HT80}. 

Let $\Sigma$ be a fixed, orientable, hyperbolic surface, and let  $\flow$ denote the geodesic flow on $T^1\Sigma$.  Fix a simple closed geodesic $c$ and let $T$ be the preimage of $c$ under the projection 
$T^1\Sigma \to \Sigma$.  Topologically, $T$ is a torus.  
This torus contains exactly two periodic orbits of $\flow$, representing the geodesic $c$ with each orientation, and these two periodic orbits are isotopic, essential simple closed curves in $T$. We can trivialize $T = S^1 \times S^1$ so that these periodic orbits are the first coordinate direction, and the second is the unit tangent fiber.  
The union of these curves divides $T$ into two regions, on which the flow is transverse to $T$ but in opposite directions.  

An illustration of $T$, showing the two periodic orbits, and the intersection of the foliations $\cF^u$ and $\cF^s$ with $T$ is shown in Figure \ref{fig_HT_torus}.  

\begin{figure}[h]
\centering 
\scalebox{0.6}{
\includegraphics{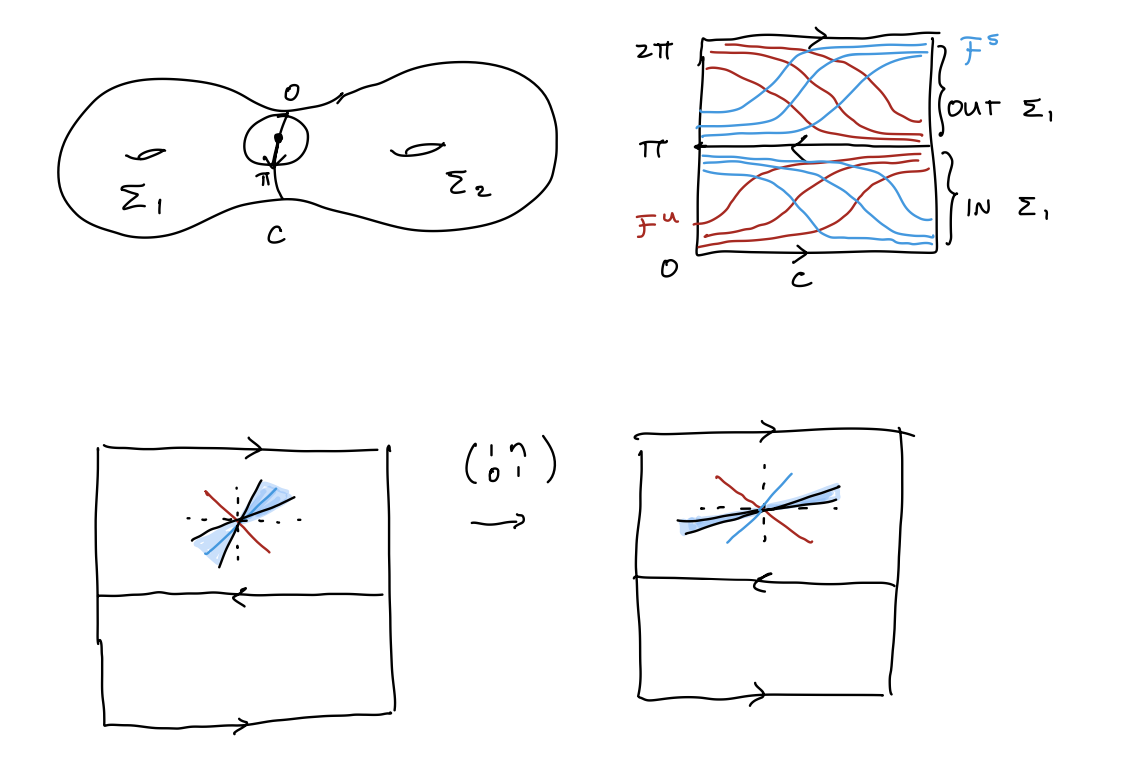}}
\caption{}\label{fig_HT_torus}
\end{figure}

The next step is to perform a surgery by cutting $T^1\Sigma$ along $T$ dividing it into submanifolds $S_1$ and $S_2$, and then re-gluing $S_1$ to $S_2$ along $T$ by a sheer of the form $\left( \begin{smallmatrix} 1 & n \\ 0 & 1 \end{smallmatrix} \right)$, for some $n \in \bZ$ in our local coordinates, to obtain a new 3-manifold $M_n$.   This gluing map preserves the periodic orbits on $T$ and matches the transverse components.  One can also check explicitly in coordinates that, after re-gluing, the vector field generating $\flow$ remains a continuous vector field on $M_n$.  The question is whether, or more precisely {\em for which} $n \in \bZ$, this vector field generates an Anosov flow.  

There is a uniform argument to show that for any $n > 0$ (in the correct choice of local coordinates), the flow on $M_n$ is Anosov.  The proof involves a computation in coordinates which we do not carry out in detail.  However, for those familiar with the cone field criterion, we give an indication of why the positivity or negativity of $n$ is important.   

Fix coordinates on $T$ as in Figure \ref{fig_HT_torus}.  Consider, for concreteness, a point $x$ where the flow is orthogonal to $T$ and moving from $S_1$ to $S_2$.  
Working with a metric so that $E^{ss}$, $E^{uu}$ and the vector field generating the flow are all orthogonal, consider a cone  $C_x$ about the stable direction.  If $n$ is positive, the image of $C_x$ under the gluing map $\left( \begin{smallmatrix} 1 & n \\ 0 & 1 \end{smallmatrix} \right)$ remains in the same octant in these coordinates, so will again be contracted towards the stable direction as one applies the flow.  However, if $n$ is negative, the image of $C_x$ may contain $E^{uu}$, posing a problem.   

However, for any fixed $n<0$, one may choose a hyperbolic metric on $\Sigma$ so that the surgery by $\left( \begin{smallmatrix} 1 & n \\ 0 & 1 \end{smallmatrix} \right)$ described above {\em does} result in an Anosov flow.   In fact, this will be true for any hyperbolic metric where the length of the curve $c$ is sufficiently short.  The idea is as follows:  by a standard result in hyperbolic geometry, as the length of a curve $c$ approaches $0$, the length of a thin collar-neighborhood tube about $c$ approaches infinity; thus, for any fixed $t_0$ one can arrange so that any two consecutive intersections of a geodesic flowline with the torus $T$ are separated by a trajectory of time at least $t_0$. Which allow to ensure that the orbits spend a long enough time away from the surgery so that the cone criterion is satisfied. 
This argument is done carefully, in a more general context, in \cite[Section 2.3]{BGP16}.  

Provided that $n \neq 0$, the manifold $M_n$ is not Seifert fibered.  Rather, it is a graph manifold with one or two Seifert fibered pieces glued along the torus $T$.  Thus, the flow on it cannot be orbit equivalent to a geodesic flow or suspension flow, since geodesic flows are all supported on Seifert manifolds, and the mapping torus of an Anosov diffeomorphism is an exceptional graph manifold whose unique Seifert component is a product $\bT^2\times I$. Thus, we have produced a non-algebraic example.

  \begin{rem} \label{rem_neige_gluing}
 The technique of Handel and Thurston has been vastly generalized, and is now  included as a case of the general construction result of Neige Paulet.  See \cite{Pau23} for a general treatment of such gluing theory.  
 \end{rem}

 \subsection{The DA-construction and Franks--Williams example}\label{sec_example_Franks_Williams}
We now describe a construction that modifies a suspension flow to produce a new example.  The interesting feature of the new flow is that there is a separating torus in the 3-manifold which is transverse to the flow, which implies that the flow is not transitive, i.e., it does not have a dense orbit.  We now know many nontransitive examples on 3-manifolds, but the construction was quite surprising at the time.  Whether all Anosov diffeomorphisms were transitive is a question raised by Smale \cite{Sma67}; for flows the question is already implicit in Anosov \cite{Ano67}. The diffeomorphism case is still open in higher dimensions.  

The idea of the Franks--Williams example is to take a suspension of a slightly modified version of a linear Anosov map, where the modification is done so that there is a torus transverse to the suspension flow which separates the manifold.  The suspension flow thus constructed is not Anosov, but the lack of hyperbolicity lies only on one side of the torus.  So, one can cut along the torus, remove the bad side, and glue in another copy of the ``good" side. 

To make this precise, we apply the \emph{DA} (Derived from Anosov) construction of Smale \cite{Sma67}, which was later fleshed out by Williams in \cite{Wil70}.  

\begin{example}[Suspension of a DA map of the torus]
Let $A$ be a linear Anosov map $A$ of the torus, and let $x$ be a fixed point of $A$.  
Let $h$ be a diffeomorphism supported on a small neighborhood of $x$, on which $x$ is the only fixed point and $h$ repels radially away from $x$.  

Consider now the suspension flow $\psi$ of the map $h \circ A$.  The orbit of $x$ is periodic for this flow, since it is a fixed point of $h \circ A$, but is now a repeller (as long as $h$ is chosen strong enough to counterbalance the contraction of $A$ on the stable leaf), and has a tubular neighborhood $V$ whose boundary $T$ is a transverse torus.  Since $h$ repels radially, it preserves the stable manifold of $x$ and so the weak stable manifolds of the new suspension flow are contained in those of the suspension of $A$.  We may choose the transverse torus $T$ so that the intersection of $\cF^s$ leaves with $T$ form a foliation with two closed leaves, coming from $\cF^s(x)$, and two Reeb components in the complement, as shown in Figure \ref{fig_DA}. 

 Finally, and less obviously, one can verify that if $h$ is chosen correctly then $M \smallsetminus \cup_{t\geq0}  \psi^t(V)$ is a {\em hyperbolic attractor} for $\psi$.\footnote{An {\em attractor} is a set of the form $\bigcap_{t>0} \flow^t(U)$ where $U$ is a nonempty, proper subset such that $\flow^t(U) \subset U$ for all $t>0$, and $\flow^T(\bar{U}) \subset U$ for some $T>0$.  A set is called {\em hyperbolic} for a flow $\flow$ if the restriction of $\flow$ to the set has invariant expanded/contracted tangent directions as in the definition of a smooth Anosov flow. See \cite{FH19} for further background.}
  \end{example} 
 
 \begin{remark}
 By working with a local section in the neighborhood of a periodic orbit rather than the global section above, one can generalize this construction to any Anosov flow.  Precisely, if  $\flow$ is an Anosov flow on a $3$-manifold $M$ and $\alpha$ is a periodic orbit, one    
can modify $\flow$ so that $\alpha$ becomes a repelling periodic orbit, again by ``pushing away'' from $\alpha$ along its stable foliation, and satisfying the same properties listed above. 
\end{remark}

\begin{figure}[h]
\begin{subfigure}[b]{0.45\textwidth}
\centering 
\scalebox{0.4}{
\includegraphics{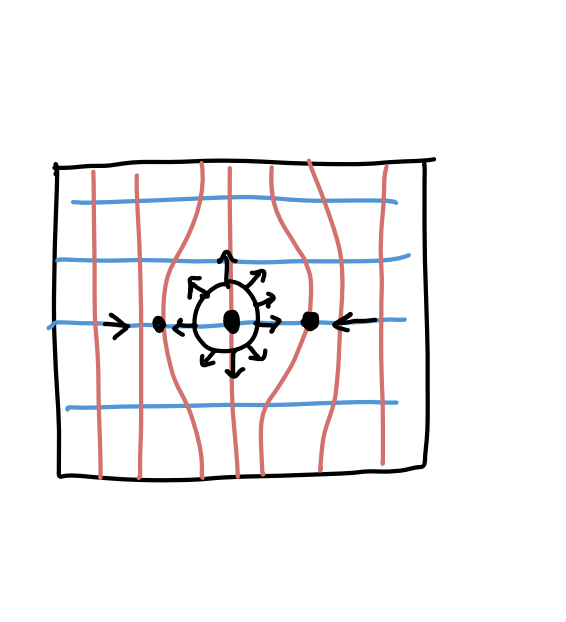}}
\caption{A repelling DA bifurcation on $\alpha$}
\end{subfigure}
\quad
\begin{subfigure}[b]{0.45\textwidth}
\centering 
\scalebox{0.4}{
\includegraphics{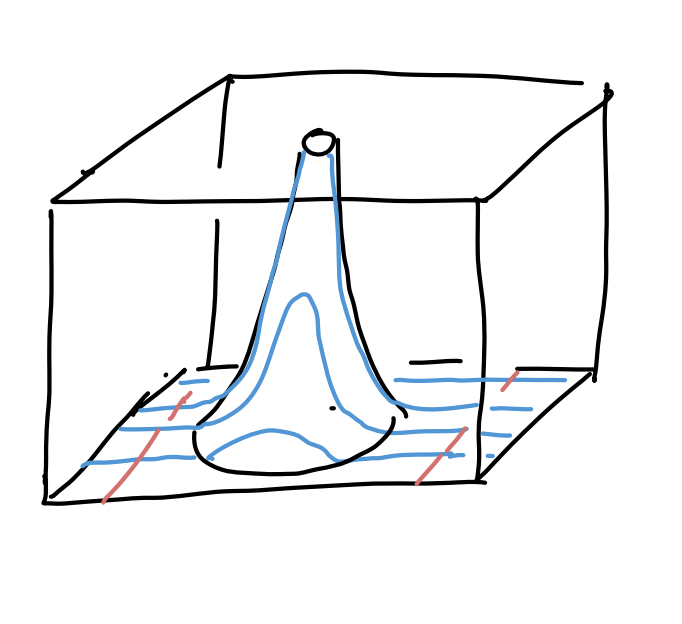}}
\caption{The transverse torus for a DA on a suspension}
\end{subfigure}
\caption{The DA bifurcation}\label{fig_DA}
\end{figure}

Using this we describe the Franks--Williams construction from \cite{FW80}. 
 
 \begin{example}\label{ex_Franks_Williams}
Let $\psi_1$ be the suspension flow of the DA map $h \circ A$ described above, supported on the mapping torus $M$ of $h \circ A$. Let $V_1$ be the tubular neighborhood of the periodic orbit of $x$, with transverse torus boundary $T_1$ chosen as in the description of the construction.  For simplicity, we adjust the metric on $M$ so that $T_1$ is normal to the direction of flow and the flow has unit speed.   Let $M_1 = M \setminus V_1$.  
Now, take another copy $M'$ of $M$ and let $V_2$ and $T_2$ denote the images of $V_1$, $T_1$ in this copy.  Consider the flow $\psi_2^t := \psi_1^{-t}$ on $M'$.  Here the torus $T_2$ is transverse (and still normal) and its intersection with the \emph{unstable} foliation of $\psi_2$ has two closed leaves and two Reeb components, and $M' \smallsetminus \cup_{t\geq0}  \psi^t(V_2)$ is a hyperbolic {\em repeller}.  

Let $M_2 = M' \smallsetminus V_2$, and glue $M_1$ to $M_2$ along their boundary tori, so that the image of the stable Reeb foliation of $\psi_1$ on $T_1$ is transverse to the unstable foliation of $\psi_2$ on $T_2$.  The vector fields generating $\psi_1$ and $\psi_2$ agree on the glued tori, so define a global flow on the glued manifold $M_1 \cup_{T_1} M_2$.  
Since orbits of the new flow intersect the torus at most once, and thus no additional recurrence is created, a result due to Ricardo Ma\~n\'e \cite{Man77}, shows that the transversality of the glued stable and unstable manifolds is in fact sufficient to ensure that this new flow is Anosov. See \cite[Theorem 1.2]{FW80}.  
\end{example}

 \subsection{Dehn--Fried surgery}\label{sec_example_Dehn_Fried}
 Topologically, the construction of Handel and Thurston in Section \ref{sec_example_Handel_Thurston} involved a {\em Dehn surgery} along a torus in the 3-manifold $T^1(\Sigma)$.  
  Goodman \cite{Goo83} developed a more general method of Dehn surgery along annuli, applicable to any Anosov flow, using an annulus transverse to the flow and contained in the neighborhood of a periodic orbit.  This construction readily generalizes to pseudo-Anosov flows as well, and in fact working in a neighborhood of a periodic orbit is not even strictly necessary; a modern exposition of this general construction is given in \cite{Tsa24}.  Goodman's construction was originally carried out in the smooth category, but works for topological flows as well.  
 
 Around the same time,  Fried \cite{Fri83} described a different method of modifying Anosov flows by Dehn surgery.  His construction does not obviously give a smooth Anosov flow (even when the input flow is smooth), but only a topological one.  However, the construction has the advantage of being easy to visualize.\footnote{For transitive flows, the fact that Fried surgeries produce flows orbit equivalent to smooth ones is shown by Mario Shannon in \cite{Sha21}.  In fact, he shows that Fried surgery on a smooth flow is always orbit equivalent to Goodman's surgery construction, which is by definition smooth.   As of writing the question has still not been resolved for nontransitive Anosov flows.} 
 We describe this surgery procedure now.  
 
 To begin, recall that a 
  $(p,1)$-\emph{Dehn twist} on the torus $\bT^2 = \R^2/\Z^2$ is any homeomorphism isotopic to the map $f\colon \bT^2 \rightarrow \bT^2$ given by
  \[
   f(s, t) = (s + pt , t ) .
  \]

  \begin{figure}[h]
    \includegraphics[width=8cm]{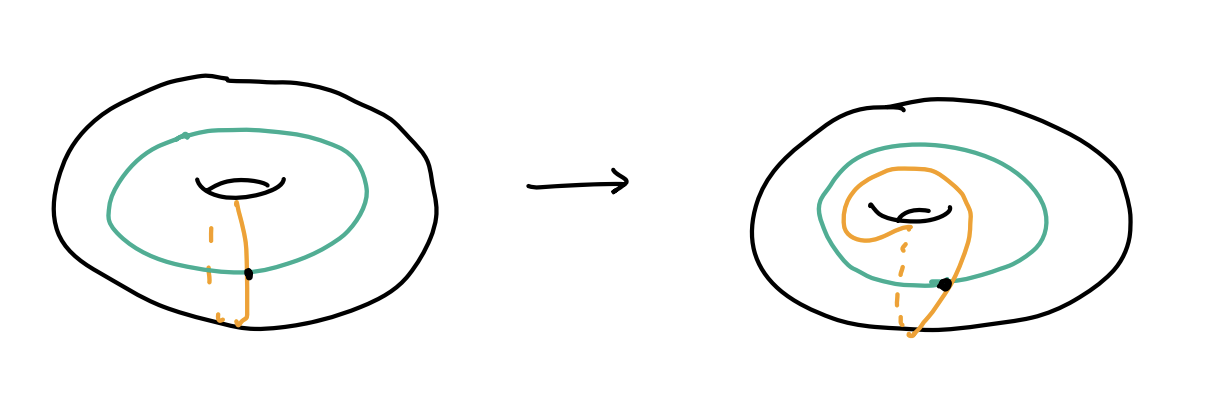}
   \caption{A $(1,1)$-Dehn twist} \label{fig_Dehn_twist}
  \end{figure}
 
Note that this map is dependent on the choice of coordinates on $\bT^2$, specifically on its factorization as $S^1 \times S^1$.  
 
 An \emph{integral Dehn surgery} on a $3$-manifold $M$ is the action of removing a solid torus $\bD^2 \times S^1$ from $M$, fixing coordinates on the boundary $\bT^2 =  \partial \bD^2 \times S^1$ so that the first factor (called the {\em meridian}) bounds a disc, and 
gluing it back in via a $(p,1)$ Dehn-twist on its boundary.   
 Note that, while the meridian is canonically defined, the other $S^1$ factor (called the {\em parallel}) is not; any curve intersecting the meridian once can be used to define the splitting into meridian--parallel coordinates.  
 
\begin{rem}  \label{rem_collapse_curves}
Since $\bD^2 \times S^1$ retracts to the circle $\{x\} \times S^1$ for any $x \in \bD^2$, the surgery described above is topologically equivalent to the following construction: {\em remove} the interior of the solid torus $\bD^2 \times S^1$ from $M$, foliate the boundary torus $S^1 \times S^1$ by a family of curves representing $(p, 1)$ in $\pi_1(S^1 \times S^1) \cong \bZ \times \bZ$, and collapse each curve in the foliation to a point.  
\end{rem}

 \begin{example}[Fried's surgery \cite{Fri83}]
  Let $\flow$ be an Anosov flow on a manifold $M$ and let $\alpha$ be a periodic orbit of $\flow$. Assume for simplicity that $\fs(\alpha)$ and $\fu(\alpha)$ are both annuli (for a generic periodic orbit, one or both could be a M\"obius band, and hence a tubular neighborhood of the orbit could be a solid Klein bottle).
  Fix a local linearization of $\flow$ in a neighborhood of this orbit, that is to say, choose coordinates in which the flow locally agrees with the suspension flow of a hyperbolic linear map $\left( \begin{smallmatrix} \lambda & 0 \\ 0 & 1/\lambda \end{smallmatrix} \right)$, for $\lambda >1$, near the origin in $\bR^2$.  
    
   Let $M^\ast$ be the manifold obtained by blowing up $M$ along the normal bundle of $\alpha$. That is, if $V$ is a tubular neighborhood of $\alpha$ and $(r,\theta, t)$ are polar coordinates on $V$ such that $\alpha = (0,0,t)$, then $M^\ast$ is obtained by replacing each point $x \in \alpha$ by the circle of normal tangent directions $\left\{(r,\theta,t) \mid r = 0, \, \theta \in \R/\Z \right\}$. 
  The manifold $M^{\ast}$ has one torus boundary $T_\alpha$ and contains $M\smallsetminus\alpha$ in its interior. The flow $\flow$, which is defined on the interior of $M^\ast$, extends continuously to give a flow $\flow_\ast^t$ on $M^\ast$.  See Figure \ref{fig_blow_up_Fried} for an illustration of the behavior of orbits of this flow.  
There are exactly four periodic orbits of $\flow_\ast^t$ on $T_\alpha$, two attracting and two repelling. 
  
  \begin{figure}[h]
    \includegraphics[width=10cm]{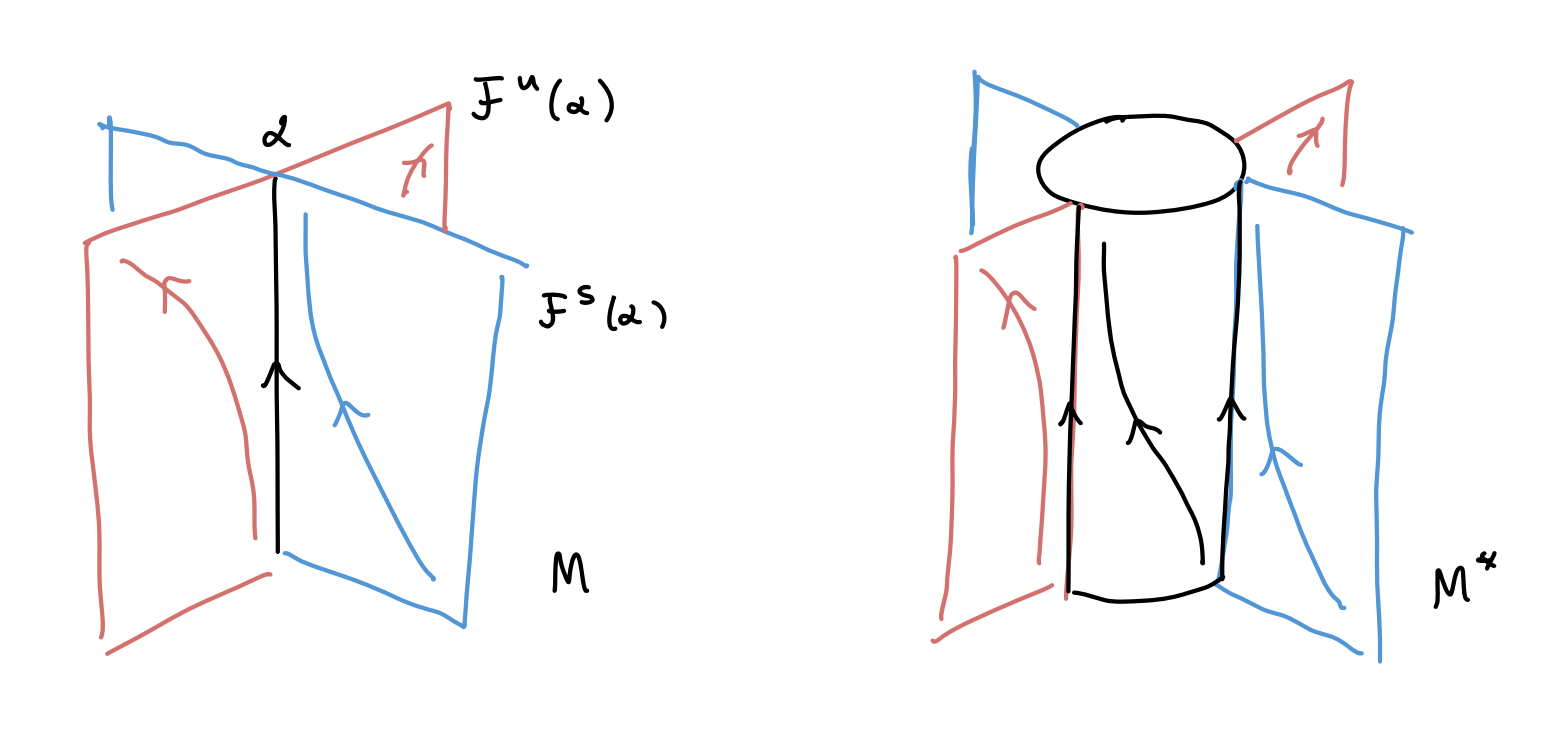}
   \caption{Blowing-up the orbit $\alpha$} \label{fig_blow_up_Fried}
  \end{figure}

  On $T_\alpha$, choose a foliation $\mathcal C$ by circles, transverse to $\phi_\ast^t$, and such that each circle leaf intersects each of the periodic orbits only once.   Moreover, for any choice of $p \in \bZ$, one can find such a curve that projects onto $p \alpha$, by choosing this to wind around $p$ times in the direction of $\alpha$ (with orientation) in some connected component of the complement of the periodic orbits.  
 
 Now, let $M_{\mathcal C}$ be the manifold obtained by identifying every circle leaf of $\mathcal{C}$ to a point.    By Remark \ref{rem_collapse_curves}, this defines a  $(n,1)$-Dehn surgery on $M$, where the meridian of $T_\alpha$ is chosen to be a curve $ \{r=0, \, t=\text{const.}\}$ and the parallels are the curves $ \{r=0, \, \theta=\text{const.}\}$.
    
 The flow $\phi_\ast^t$ naturally descends to a flow $\phi_{\mathcal C}\ast^t$ on $M_{\mathcal C}$.  To check that this flow is \emph{topologically} Anosov is not too difficult.  This is because the flow and the foliations of $M$ and $M_{\mathcal C}$ are identical outside of the orbit $\alpha$, and the new orbit $\alpha_{\mathcal C}$ has, by construction, invariant weak stable and unstable leaves with the correct behavior of orbits.  
 \end{example}

 \begin{rem}[Flows on atoroidal 3-manifolds]
An irreducible 3-manifold $M$ is said to be {\em geometrically atoroidal} if it contains no $\pi_1$-injectively immersed torus, 
and algebraically atoroidal if $\pi_1(M)$ does not contain any subgroup isomorphic to $\bZ \times \bZ$.  By the Perelman--Thurston Geometrization Theorem,  this is also equivalent to either being spherical (having finite fundamental group) or admitting a hyperbolic metric. 
The suspension, geodesic, Handel--Thurston, and Franks--Williams flows are all supported on manifolds with essential tori.  But Fried (or Goodman) surgery can be used to produce a wealth of examples on atoroidal 3-manifolds.  This is thanks to the {\em hyperbolic Dehn surgery} theorem of Thurston (see Chapter 5 of Thurston's Princeton University Notes, now published in \cite{Thu22}, for a description of hyperbolic Dehn surgery.)

Starting with an Anosov flow $\flow$ on any 3-manifold $M$, take a periodic orbit or  finite union of periodic orbits $\{\alpha_i\}$ with the property that any torus in $M \setminus (\bigcup_i \alpha_i)$ is homotopic into the boundary.  Then, Thurston's theorem implies that for all but finitely many choices of slopes of Dehn surgeries $(p_i, 1)$ on the orbits $\alpha_i$ as in Fried's construction, the resulting manifold will be atoroidal, and hence Fried's surgery produces an Anosov flow on an atoroidal 3-manifold.  As a concrete example, one may start with geodesic flow on $M = T^1\Sigma$ and remove any collection of filling geodesics. 
 \end{rem}

We conclude this section with a famous open conjecture about Fried surgeries; the Fried--Ghys conjecture.  Fried posed this as a question, in Ghys' version, it is a conjecture, giving a suggestion for a very simple classification of transitive Anosov flows up to orbit equivalence \emph{and} Fried surgeries.  
\begin{conjecture} \label{ghys_fried_conjecture}
Let $\flow $ be a transitive Anosov flow with transversaly orientable foliations. Then $\flow$ is obtained from a suspension of $\left(\begin{smallmatrix} 2 & 1\\ 1 &1\end{smallmatrix}\right)$ via (a finite number of) Dehn--Fried surgeries.\end{conjecture}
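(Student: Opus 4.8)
The statement is the Fried--Ghys conjecture, which is open; below is the line of attack I would attempt and the point where it stalls.

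One direction is routine and I would dispatch it first: any flow obtained from the suspension $\psi_A$ of $A = \left(\begin{smallmatrix}2&1\\1&1\end{smallmatrix}\right)$ by a finite sequence of Dehn--Fried surgeries (as in Fried's construction \cite{Fri83}) is transitive Anosov with transversely orientable weak foliations. Transitivity persists because a Fried surgery along a periodic orbit whose stable and unstable leaves are annuli does not create a new separating transverse torus or a wandering set, and transverse orientability of $\fs,\fu$ can be read off the local linear model. If one prefers, the whole conjecture can be stated ``up to Dehn--Fried surgery,'' discarding this half entirely.

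The content is the converse, and here I would route through Birkhoff sections. By a theorem of Fried, a transitive Anosov flow $\flow$ on $M$ admits a Birkhoff section $S$ --- a compact surface with boundary a union of periodic orbits, meeting every orbit in bounded forward and backward time --- whose first-return map $f\colon S\to S$ is, up to isotopy, pseudo-Anosov. Cutting $M$ along $S$ and inspecting $\flow$ near $\partial S$ realises $(M,\flow)$ as a Dehn--Fried surgery, along the binding orbits, on the suspension flow of $f$. So the conjecture reduces to a two-dimensional statement: the suspension of \emph{any} pseudo-Anosov homeomorphism of a finite-type surface is a finite sequence of Dehn--Fried surgeries applied to $\psi_A$. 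To prove that I would exploit Thurston's fibered faces: Fried \cite{FLP79} showed that all fibrations in one fibered face of the Thurston norm ball of a hyperbolic $3$-manifold carry a single canonical pseudo-Anosov flow, so it would suffice to connect fibered faces across $3$-manifolds by surgeries realised inside Fried's construction, using the once-punctured-torus bundle with monodromy $A$ (the figure-eight knot complement) together with Thurston's hyperbolic Dehn surgery theorem as a base case, and treating the non-hyperbolic cases (geodesic flows, graph-manifold flows) through their own surgery presentations.

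The main obstacle is precisely this reduced two-dimensional statement; it is the reason the conjecture remains open. No procedure is known that converts an arbitrary pseudo-Anosov mapping class into $A$ by moves each implemented by a Dehn--Fried surgery, and two concrete difficulties block the naive attempt: one must pin down which surgery slopes on the binding orbits keep the new flow Anosov --- rather than producing a singular pseudo-Anosov flow, or a flow with wandering orbits as in the Franks--Williams phenomenon \cite{FW80} --- and one must guarantee that transverse orientability is never destroyed beyond repair. Partial confirmation is available in structured settings where the orbit space of $\flow$, and hence the family of available Birkhoff sections, is completely understood; the crux is to push that understanding to a general transitive Anosov flow.
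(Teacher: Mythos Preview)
This statement is a \emph{conjecture} in the paper, explicitly introduced as ``a famous open conjecture,'' and the paper offers no proof. You correctly identify it as the open Fried--Ghys conjecture, so there is no paper argument to compare against. Your outline via Birkhoff sections and Fried's theorem reducing to a statement about pseudo-Anosov monodromies is the natural line of attack, and your diagnosis of the obstruction --- that no procedure is known to reduce an arbitrary pseudo-Anosov mapping class to $A$ by moves each realised as a Dehn--Fried surgery, with control on transitivity and orientability at every step --- is accurate.

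The paper does record partial progress you could fold into your final paragraph: Dehornoy--Shannon \cite{DS19} verify the conjecture for geodesic flows on unit tangent bundles of negatively curved $2$-orbifolds, and (combining with a result of Minakawa explained in the same reference) for all suspensions. These are the concrete instances of the ``structured settings'' you allude to; the paper also notes that the conjecture remains open for algebraic Anosov flows that are finite covers of geodesic flows.
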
 
In \cite{DS19}, Dehornoy and Shannon show that the geodesic flows on the unit tangent bundle of negatively curved 2-dimensional orbifolds can all be obtained by surgery from suspension Anosov flows.  Using also a result of Minikawa explained in \cite{DS19}, this shows that Ghys' conjecture is true for all geodesic flows, as well as all suspensions.\footnote{Note that the result as stated in \cite{DS19} says that Ghys--Fried conjecture holds for all algebraic Anosov flows, but they use a too restrictive definition of algebraic Anosov flow in that article. The conjecture is still open for the algebraic Anosov flows which are finite covers of geodesic flows.}

 \subsection{The Bonatti--Langevin example}\label{sec_example_bonatti_langevin}
 All the surgery constructions discussed thus far started with an Anosov flow (and more generally, could be done starting with a pseudo-Anosov flow), then modified the supporting manifold while somehow carrying the flow along, thus producing a new example on a new manifold.  
 
 The following example, constructed in 1994 by  Bonatti and Langevin \cite{BL94}, takes a different approach: rather than starting with an existing (pseudo)-Anosov flow, they define, from scratch,  a partial flow on a surface with boundary and then glue up the boundary components.  This is done so that the resulting flow is Anosov and  has the following interesting features: it is transitive\footnote{The example built in \cite{BL94} is in fact made to be \emph{volume-preserving}. It is now known that \emph{any} transitive Anosov flow is orbit equivalent to a volume-preserving one \cite{Asa08}.}, but has a (of course, nonseparating) torus transverse to the flow, and moreover every orbit of the flow \emph{except for one} intersects this transverse torus.
 
The construction can be roughly described in the following way.  
Consider first the suspension flow (on $\bR^2 \times S^1$) of a hyperbolic linear map $ \left( \begin{smallmatrix} \lambda & 0 \\ 0 & 1/\lambda \end{smallmatrix} \right)$, and take an ``octagonal" tubular neighborhood of the periodic orbit $\{ 0\} \times S^1$ consisting of eight annuli, four of which are transverse to the flow (meeting the coordinate axes in $\bR^2$); with two of these on which the flow is outgoing and two incoming) and four of which are tangent to the flow.   See Figure \ref{fig_BL} for an illustration, first as a neighborhood of the periodic orbit in standard linear coordinates, then in a different system of coordinates, normalized so that flowlines on the tangent annulus sides of the octagons are horizontal.  

We next glue the opposite tangent sides together in pairs respecting flowlines, as indicated in Figure \ref{fig_BL} (right).  In particular, the direction of the flow means that opposite sides are glued with a rotation.  
 The result is a manifold $M_{BL}$ homeomorphic to $\Sigma \times S^1$, where $\Sigma$ is the complement of two discs in $\bR P^2$.  (To see this easily, recall that any $S^1$ bundle over a surface with boundary is trivial.)
The manifold has two boundary components, each a torus.  The flow (induced from the suspension) is outgoing on one torus and incoming on the other.  Thus, gluing the torus boundary components together produces a closed manifold with a flow.  As in the Handel--Thurston example, the relevant question is which gluing maps lead to an {\em Anosov} flow.  

In Bonatti and Langevin's work \cite{BL94}, they explicitly prove that the gluing map which exchanges the two coordinate directions (given by the $S^1$ factor and the factor with constant $S^1$ coordinate) on the boundaries leads to an Anosov flow.  More generally, the needed condition to ensure that the resulting flow is Anosov is to choose the gluing map so that the intersection of the stable leaf of the hyperbolic orbit with the incoming torus and the intersection of its unstable leaf with the exiting tori are not isotopic in the glued torus.  This extension was originally shown in \cite{Bar98}, then extended far beyond that specific construction in \cite[section 8]{BF13}, see also Proposition \ref{prop_gluing_fatgraphs} for a more precise statement.

\begin{figure}[h]
\includegraphics[width=12cm]{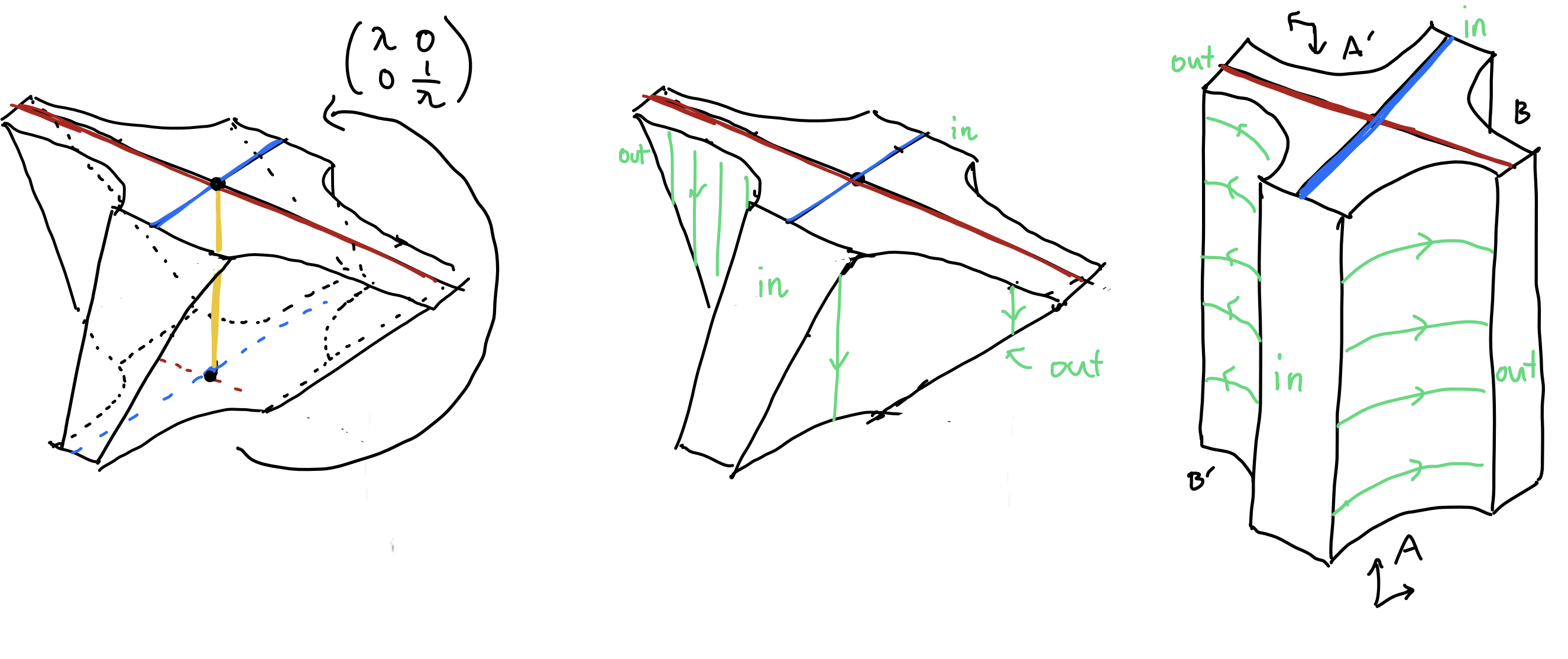}
\caption{A local model of a hyperbolic orbit, in two different coordinate views (middle and right).  The ``block" is obtained by gluing the opposite pairs of tangent annuli together with a rotation, as indicated in the rightmost figure.}
 \label{fig_BL}
\end{figure}

\begin{rem}[Generalizations of Bonatti--Langevin]
Like the Franks--Williams example, the example of Bonatti--Langevin consists of a flow on a (possibly disconnected) manifold with boundary that has a hyperbolic maximal invariant subset and transverse tori boundary and a choice of gluing of the boundaries to obtain an Anosov flow in the glued manifold. This strategy has been widely extended by  Bonatti,  Béguin and  Yu \cite{BBY17}, and then by  Paulet \cite{Pau24}, to give conditions on the gluings to ensure the creation of an Anosov flow on the glued manifold.  

In another direction, Barbot and Fenley \cite{BF13,BF15} generalized the ``building block" construction of this example to give a generalized procedure for gluing a collection of many such blocks together to produce flows on graph manifolds; and showed that all of the so-called {\em totally periodic (pseudo)-Anosov flows} are obtained from such constructions.  We discuss these more in Chapter \ref{chap_features_and_3manifold}.
\end{rem}

There have been many other techniques developed since the 1990s to produce Anosov and pseudo-Anosov flows.  In addition to the ones already mentioned above, we note of particular interest for Anosov flows the generalization of Goodman surgery by 
 Foulon and  Hasselblatt \cite{FH13}, which produces Anosov flows which are Reeb flows of contact structures. This technique was later extended further by  Salmoiraghi \cite{Sal23,Sal24} to obtain Anosov flows via surgery on bi-contact structures.

Singular pseudo-Anosov flows are even more common than Anosov ones. An unpublished work of Gabai and Mosher from the early 90s explain how to build pseudo-Anosov flows that are transverse to finite-depth foliations. In particular, they showed that any hyperbolic manifold with positive first Betti number admits a pseudo-Anosov flow. 
More recently, the theory of {\em veering triangulations} developed by Agol and Guéritaud \cite{Ago11, Gue16} has emerged as a powerful tool to produce examples of pseudo-Anosov flows. See, e.g., \cite{AT24,FSS19,LMT23,SS23}.

\subsection{A cautionary non-example}  \label{sec_non-ex} 
The following non-example illustrates the nuance in the ``backwards expansivity" condition, item \ref{item_PAF_backwards_expansivity}, of the definition of pseudo-Anosov flow.   By modifying a nontransitive flow, we produce an example of a flow $\psi$ with foliations $\cF^+$ and $\cF^-$ where orbits converge in the future along $\cF^+$ leaves, and in the past along $\cF^-$ leaves (satisfying condition \ref{item_PAF_converges}) but there are some orbits on $\cF^+$ leaves which also converge in the past.  
The construction uses a general tool called {\em blowing up a leaf}, which replaces a leaf of a foliation with a closed interval worth of leaves, and can be performed while keeping this foliation topologically transverse to another.  

\begin{construction}[Blowing up along a leaf -- see \cite{Cal07} Example 4.14 ] \label{const_blowup}
Suppose $\cF^s$ and $\cF^u$ are two topologically transverse foliations on a 3-manifold $M$.   Consider an atlas of foliation charts $f_i\colon U_i \to D^2 \times [0,1]$, (where $D^2 \cong [0,1]^2$ is a closed disc) so that the image of $\cF^s$-leaves are horizontal slices, and $\cF^u$ leaves are transverse vertical planes. 
Fix a leaf $l$ of $\cF^s$, and let $X_i \subset [0,1]$ denote the subset such that $f_i(U_i \cap L) = D^2 \times X_i$.  
For each chart $U_i$ let $h_i\colon [0,1] \to [0,2]$ be a strictly increasing function with a jump discontinuity at each point of $X_i$ and continuous on the complement of $X_i$.  Thus, there is a continuous, increasing function $\bar{h}_i$, which collapses each interval in the complement of the image of $h_i$ to a point, and such that $\bar{h}_i h_i$ is the identity map on $[0,1]$.   

We now use $D^2 \times [0,2]$ as foliation charts with the following transition maps:  For each of the original transition maps $\psi_{ij}\colon I_i \to I_j$, where $I_i \subset [0,1]$ is the interval of leaves of $U_i$ meeting the chart $U_j$, we modify this to be defined on the blown-up domain  by sending $y \in h_i(I_i \setminus X_i)$ to $h_j \psi_{ij} \bar{h}_i(y)$, and extending linearly over the intervals in the complement of $h_i(X_i)$.  We leave the gluing maps unchanged in the $D^2$ coordinate directions, so that there is again a transverse vertical foliation $\cF^u_*$, isomorphic to $\cF^u$.
This gives a pair of topologically transverse foliations $\cF^s_*$ and $\cF^u_*$ on a manifold homeomorphic to $M$.  The local maps $\bar{h}_i$ glue together to define a global map from the leaf space of $\cF^s_*$ to that of $\cF^s$, and realized by a continuous, surjective map $H\colon M \to M$ sending leaves to leaves.  This map $H$ is a homotopy equivalence, and injective except on the preimage of $l$, on which it collapses an interval worth of leaves to $l$.     
\end{construction} 

\begin{example} \label{ex_blowup_Anosov}
Let $\flow$ be a nontransitive Anosov flow on a closed 3-manifold $M$.  Choose one wandering orbit and let $l$ denote its leaf in $\fu$, and let $\fs_*$ and $\fu_*$ be the foliations obtained from $\fs$ and $\fu$ by blowing up the leaf $l$ as in the above construction.  Let $\psi$ be any parameterization of a flow whose orbits are the intersection of leaves of $\fs_*$ and $\fu_*$.
We claim that $\psi$ satisfies all the conditions of Definition \ref{def_topPA} except for condition \ref{item_PAF_backwards_expansivity}.

By construction, $\psi$ is at the intersection of two topologically transverse foliations $(\fu_*, \fs_*)$, so satisfies item \ref{item_PAF_foliations}. 
Thus, what we need to show is that item \ref{item_PAF_converges}, i.e., the forward convergence along stable leaves and backwards convergence along unstable leaves, still holds, and that item \ref{item_PAF_backwards_expansivity} (backwards expansivity) does not.

The construction of the blowup ensures that the dynamics of the flow is unchanged along unstable leaves: the map $H$ (from Construction \ref{const_blowup}) sends each leaf of $\fu_*$ to a leaf of $\fu$, preserving the foliation by orbits.  Thus, any two orbits on the same leaf of $\fu_*$ converge as $t \to -\infty$.  
We next show that for any orbit $\alpha$ in $l$, each orbit of $H^{-1}(\alpha)$ converges both in the past and future, and then that all orbits on $\fs_*$ leaves converge in the future.

To do this, suppose we are given some $\delta >0$. Let $K$ be a compact subset of $l$ such that for all $x \in l \smallsetminus K$, the set $H^{-1}(x)$ (which is an interval in $M$) has diameter less than $\delta$.    Take $x \in l$ and let $\tau_x := H^{-1}(x)$. 
Since $l$ is the unstable leaf of a wandering orbit, the distance between the compact set $K$ and the basic sets of $\flow$ is strictly positive. Thus, for $|t|$ sufficiently large, $\flow^t(x) \cap K = \emptyset$, so the diameter of $\psi^t(\tau_x)$ is less than $\delta$.  It follows that for any $y, z \in H^{-1}(x)$, we can find a standard reparameterization $\sigma$ such that $d(\psi^t(y), \psi^{\sigma(t)}(z))< \delta$ for all $t$ sufficiently large.  This gives the desired backwards and forwards convergence of orbits in $H^{-1}(\alpha)$.  

To show the backwards convergence required by condition \ref{item_PAF_converges}, we use the following fact given by the blowup construction: For any $r>0$, there exists a compact set $K_r \subset l$ such that if a transversal $\tau$ to $\cF^u$ does \emph{not} intersect $K$, then the transversal $\tau_\ast = H^{-1}(\tau)$ to $\cF^u_\ast$ is such that $\mathrm{diam}(\tau_\ast) \leq  \mathrm{diam}(\tau)+r$.

Consider $x,y$ on distinct orbits in the same stable leaf of $\fs_\ast$, and $\tau_\ast$ a segment between $x$ and $y$ and transverse to $\fu_\ast$. Let $\tau = H(\tau_\ast)$.
Pick a $r>0$ and associated $K_r\subset l$. As above, since $l$ is a leaf of a wandering orbit, there exists $T_r$ such that, if $|t|>T$, then $\flow^t(\tau)\cap K_r = \emptyset$.
Thus, for $t>T_r$, $\mathrm{diam}(\psi^t(\tau_\ast)) \leq  \mathrm{diam}(\flow^t(\tau))+r$. Since $\tau$ was on a stable leaf, and $r$ is arbitrary, we deduce that, for any fixed $\epsilon>0$, we can find a standard reparameterization $\sigma$ such that $d(\psi^t(x), \psi^{\sigma(t)}(y))< \epsilon$ for all $t$ sufficiently large.

\end{example} 

 In the above example, it appears to be necessary to blow up a wandering leaf in order to ensure that the blown up flow still satisfies the forwards convergence (as opposed to just having a flow whose orbits in the stable foliation stays a bounded distance apart). We do not know of a construction that works in the transitive case.

\section{The orbit space of a pseudo-Anosov flow}\label{sec_pA_to_orbit_space}

Understanding a flow $\varphi$ on $M$ up to orbit equivalence amounts to understanding the topology of the foliation of $M$ by orbits of $\flow$.  While orbits (and their stable and unstable leaves) are arranged in a complicated way in $M$, the picture is much nicer when lifted to the universal cover.  Thus, it can be advantageous to work $\pi_1(M)$-equivariantly in $\wt{M}$ rather than in $M$ itself.    In this section we describe the global picture of leaves and orbits lifted to $\wt M$, and then use this to pass to a convenient quotient called the {\em orbit space}.

\subsection{Topology of stable, unstable, and orbit foliations in $\wt M$}  
\begin{convention*} 
In this section, we fix a closed $3$-manifold $M$, a pseudo-Anosov flow $\flow$ on $M$, and denote by $\hflow$ the lift of $\flow$ to the universal cover  $\wt M$.  The (possibly singular) foliations $\fs$ and $\fu$ associated with $\flow$ lift to $\hflow$-invariant foliations on $\wt M$, which we denote by $\hfs$ and $\hfu$. 
  \end{convention*} 

Our first goal is to describe the topology of leaves of $\fs$ and $\fu$ in $M$, along with their lifts to $\wt M$, and their foliations by orbits.  
To start, we prove a lemma describing the orbit foliation on the universal cover of a leaf of $\cF^s$ or $\cF^u$.  Note that we do not know yet that leaves are $\pi_1$-injectively mapped into $M$ (we will prove this later), so {\em a priori} $\wt l$ may not agree with the lift of $l$ to $\wt M$.  
\begin{lemma} \label{lem_trivial_foliated_lift}
Let $l$ be a nonsingular leaf of $\fs$ or $\fu$, or a connected component of the complement of the closed orbit in a singular leaf.  Then the lift of the orbit-foliation of $l$ to the universal cover $\wt l$ of $l$ is a trivial foliation of the plane.   
\end{lemma}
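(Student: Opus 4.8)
The plan is to show that the leaf $l$, equipped with its foliation by orbits, is homeomorphic to a plane foliated by lines (equivalently, by parallel copies of $\R$), and then conclude that the universal cover $\wt l$ — which a priori might be a proper cover — is actually the plane with the trivial (product) foliation. First I would establish that $l$ itself is a surface: although $\fs$ and $\fu$ are only topologically transverse with no leafwise regularity assumed, the flow $\flow$ restricted to $l$ is a continuous flow without fixed points (on the nonsingular leaf, or on the complement of the periodic orbit in a singular leaf), and the flow-box neighborhoods from Definition \ref{def_flow_box} give $l$ the structure of a surface foliated by the orbit lines; locally this foliation is a product $\R \times \R$ with orbits as the $\R \times \{*\}$ factors. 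So $l$ is a (possibly non-Hausdorff? no — it is a genuine) surface with a $1$-dimensional foliation whose leaves are the orbits, each homeomorphic to $\R$ (orbits in $l$ are non-closed: a closed orbit would be the periodic orbit excluded in the singular case, or would force $l$ to be an annulus/Möbius band, which is handled below).

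The key step is to prove that this foliation, pulled back to $\wt l$, is the trivial foliation of $\R^2$ by horizontal lines. I would argue as follows. The orbit foliation on $l$ is a one-dimensional foliation of a surface with no closed leaves (on $\wt l$ even less so, since closed leaves would have to come from closed orbits). Moreover — and this is where the dynamics enters — along a stable leaf, condition \ref{item_PAF_converges} together with Lemma \ref{lem_all_converge} shows that all orbits in $l$ converge in forward time; in particular the orbit foliation of $l$ has no nontrivial holonomy, since holonomy would be incompatible with this uniform forward convergence (two nearby orbits that return close to themselves along a transversal would violate the convergence-versus-backward-expansivity dichotomy, cf. the argument in Lemma \ref{lem_same_local_stable}). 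A one-dimensional foliation of a surface without closed leaves and without holonomy, once lifted to the universal cover, is a foliation of a simply connected surface with trivial holonomy; by a standard theorem (Kamke / the classification of foliations of surfaces without holonomy, or Haefliger's argument ruling out vanishing cycles), such a foliation of $\wt l$ admits a one-dimensional transversal meeting every leaf exactly once, and hence $\wt l$ is homeomorphic to $\R^2$ with the product foliation by lines. The simple connectivity of $\wt l$ also forces $l$ to have abelian — in fact infinite cyclic or trivial — fundamental group, so the only cases to worry about are $l$ a plane, annulus, or Möbius band; in the latter two cases $\wt l = \R^2$ directly and the orbit foliation lifts to a foliation by lines with a transversal (the preimage of a core transversal), which is the trivial foliation.

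The main obstacle I anticipate is the genuine lack of leafwise regularity: we cannot use any smooth or even $C^1$ structure on $l$, so "no holonomy implies trivial foliation" must be deduced purely topologically. The cleanest route is probably: (i) show the orbit space of $l$ (quotient of $l$ by orbits) is a simply connected, possibly non-Hausdorff, one-manifold — in fact, using forward convergence of all orbits, one can see any two orbits are "joined" hence the orbit space of $\wt l$ is connected and simply connected; (ii) rule out the non-Hausdorff branching, again via the forward-convergence property, so the orbit space of $\wt l$ is $\R$; (iii) conclude $\wt l \cong \R \times \R$ with the product foliation. Executing (ii) — ruling out branching in the orbit space of a single leaf — is the delicate point, and I would handle it by the same triangle-inequality argument used in Lemma \ref{lem_same_local_stable}: branching orbits would be forward-asymptotic to a common orbit yet separated, contradicting backward expansivity \ref{item_PAF_backwards_expansivity} applied within $l$. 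Once the orbit space of $\wt l$ is identified with $\R$, triviality of the foliation is immediate.
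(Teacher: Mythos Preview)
Your final paragraph lands on the right target --- show the orbit space of $\wt l$ is $\bR$ by proving any two lifted orbits are separated by a third --- and this is precisely the paper's approach. But your route there is over-engineered, and the mechanism you propose for step (ii) (forward-asymptotic plus backward-expansive giving a contradiction) is not how separation is actually obtained. The paper's argument is a single move: by Lemma~\ref{lem_same_local_stable}, any two orbits $o, o'$ on the same stable leaf eventually lie in a common \emph{local} stable leaf, i.e.\ they converge \emph{within $l$}; hence their positive rays together with one short transversal $\tau$ bound a trivially foliated half-strip homeomorphic to $[0,1]\times[0,\infty)$, and any orbit through the interior of this strip separates $o$ from $o'$. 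No holonomy analysis, no backward expansivity, and no Kamke/Haefliger-type classification of surface foliations is invoked.

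Two further cautions. First, your detour through ``no closed leaves and no holonomy implies a global transversal'' brings in machinery that is both unnecessary and delicate at this regularity; the direct convergence argument above sidesteps it entirely. Second, your aside that a closed orbit would force $l$ to be an annulus or M\"obius band is circular here: the topological classification of leaves (Lemma~\ref{lem_no_compact_leaves}) is proved \emph{using} the present lemma, not before it. The only topological input the paper needs up front is that $\wt l$ is a plane, which follows immediately from $l$ admitting a foliation by orbits (hence not being $S^2$ or $\bR P^2$).
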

A {\em trivial foliation of the plane} means one that is homeomorphic to the standard foliation of $\R^2$ by vertical lines.  

\begin{proof}  
Without loss of generality, by reversing the flow, we can assume $l$ is a leaf of $\fs$. Since $l$ is foliated by orbits, it cannot be $S^2$ or $\bR P^2$, so $\wt l$ is homeomorphic to the plane.   It suffices to show that the space of (lifted) orbits in $\wt l$ is homeomorphic to $\bR$, i.e., any pair of orbits in $l$ can be separated by a third.  
For this, we use Lemma \ref{lem_same_local_stable}.  Suppose that  $o$ and $o'$ in $\wt l$ are distinct orbits.   Orient these orbits in the direction of the flow.  By Lemma \ref{lem_same_local_stable} the positive ends of any orbits stay in the same local stable leaf, thus $o$ and $o'$ converge in $l$.  Thus, we can find a small transverse segment $\tau$ in $\wt l$ such that $\tau$ and two (positive) rays of $o, o'$ bound a trivially foliated region, homeomorphic to $[0,1] \times [0, \infty)$ with the foliation by rays $\{s\} \times [0, \infty)$.  It follows that any leaf between $o$ and $o'$ in this region separates them. 
\end{proof} 

Next we describe the possible topology of leaves in $M$. 
For this we briefly recall (without proof) the classification of foliations on closed surfaces.  A detailed account of foliations on surfaces can be found in \cite[Chapter 1]{HH86}.  
\begin{proposition}[Foliations on surfaces] \label{prop_foliations_torus}
If $S$ is a closed, orientable surface admitting a foliation, then $S$ is a torus. The foliation contains a closed (possibly empty) set $C$ of closed leaves, which are pairwise homotopic.  If $C$ is nonempty, any connected component $A$ of the complement of $C$ is an annulus foliated either as a {\em Reeb component} or the suspension of an increasing homeomorphism of the interval.  See 
Figure \ref{fig_torus_foliations}. Finally, $C = \emptyset$ if and only if the foliation is homeomorphic to the suspension of a circle homeomorphism with irrational rotation number.  
\end{proposition}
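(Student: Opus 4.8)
The plan is to reduce first to the case $S=\bT^2$ by an Euler characteristic argument, then to peel off the closed leaves, and finally to analyze the remaining pieces using the Poincaré--Bendixson theory of surface foliations. For the reduction: a $1$-dimensional foliation of $S$ is tangent to a line field $\ell\subset TS$, so passing if necessary to the double cover $\hat S\to S$ on which $\ell$ lifts to an oriented line field produces a nowhere-vanishing vector field on $\hat S$; thus $\chi(\hat S)=0$ by Poincaré--Hopf, hence $\chi(S)=0$, and a closed orientable surface of zero Euler characteristic is the torus. This step is routine.

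Working now on $\bT^2$, I would first record the elementary topology of the closed leaves. A null-homotopic leaf would bound a disc $D$, and doubling the restricted foliation across $\partial D$ would produce a nonsingular $1$-dimensional foliation of $S^2$, which is impossible since $\chi(S^2)\neq 0$; hence every closed leaf is an essential simple closed curve. Since disjoint essential simple closed curves on $\bT^2$ are freely homotopic, the closed leaves are pairwise homotopic. Next, the union $C$ of all closed leaves is closed: a leaf that is a limit of closed leaves $L_n$ meets a fixed small transversal at a point which is a limit of fixed points of the (monotone) holonomy return map, hence is itself such a fixed point, so the leaf is closed. Each connected component $A$ of $\bT^2\smallsetminus C$ is then an open, $\cF$-saturated surface with $\chi(A)=0$ whose frontier is a union of closed leaves, so $A$ is an open annulus---unless $C=\emptyset$, in which case $A=\bT^2$.

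The substantive step is identifying the foliation of each such piece, for which I would invoke the structure theory of continuous codimension-one foliations of surfaces \cite[Ch.~1]{HH86}. Since $\cF$ has no singularities, a Poincaré--Bendixson argument shows that in any region containing no closed leaf every half-leaf is either proper or spirals onto a minimal set, and that a closed transversal can be constructed whenever there is no closed leaf. If $C=\emptyset$ I would produce such a closed transversal $\gamma$; a leaf confined to $\bT^2\smallsetminus\gamma$ would have to spiral onto a closed leaf, of which there are none, so $\gamma$ meets every leaf, and the first-return homeomorphism $f\colon\gamma\to\gamma$ realizes $\bT^2$ as the mapping torus of $\gamma$ and $\cF$ as the suspension of $f$. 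A periodic point of $f$ would be a closed leaf, so $C=\emptyset$ forces $f$ to have irrational rotation number, while conversely the suspension of such an $f$ has no closed leaf (a closed leaf would project to a periodic orbit of $f$). If $C\neq\emptyset$, the same analysis inside an open complementary annulus $A$ (all of whose leaves are non-compact), combined with an inspection of the germinal holonomy of the two boundary leaves of $A$, shows that $A$ is foliated either as a Reeb component or as the suspension of an increasing homeomorphism of the interval---the two cases depicted in Figure~\ref{fig_torus_foliations}.

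I expect this third step to be the main obstacle: carrying out the Poincaré--Bendixson analysis in the merely continuous (non-$C^1$) category---in particular producing a closed transversal meeting \emph{every} leaf in the $C=\emptyset$ case, and determining the precise local model of each complementary annulus, where one must take care of exceptional minimal sets and of holonomy---which is exactly the content of the surface-foliation theory cited above. By contrast, the Euler characteristic reduction and the topology of the closed leaves are essentially formal.
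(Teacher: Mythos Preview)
The paper does not prove this proposition: immediately before the statement it says ``we briefly recall (without proof) the classification of foliations on closed surfaces'' and refers the reader to \cite[Chapter 1]{HH86}. Your sketch is a correct and standard outline of that classical argument, and you have cited the very reference the paper defers to; there is nothing to compare beyond noting that you have supplied more than the paper does.

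One small remark on your write-up: in the $C=\emptyset$ case you should be explicit that the suspension of a circle homeomorphism with irrational rotation number is allowed to be a Denjoy-type example (only \emph{semi}-conjugate to a rotation), since the proposition is stated for $C^0$ foliations; your phrasing ``homeomorphic to the suspension of a circle homeomorphism with irrational rotation number'' already accommodates this, but your worry about ``exceptional minimal sets'' in the third step suggests you may be implicitly aiming for something stronger than what is claimed.
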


\begin{figure}[h]
\includegraphics[width=10cm]{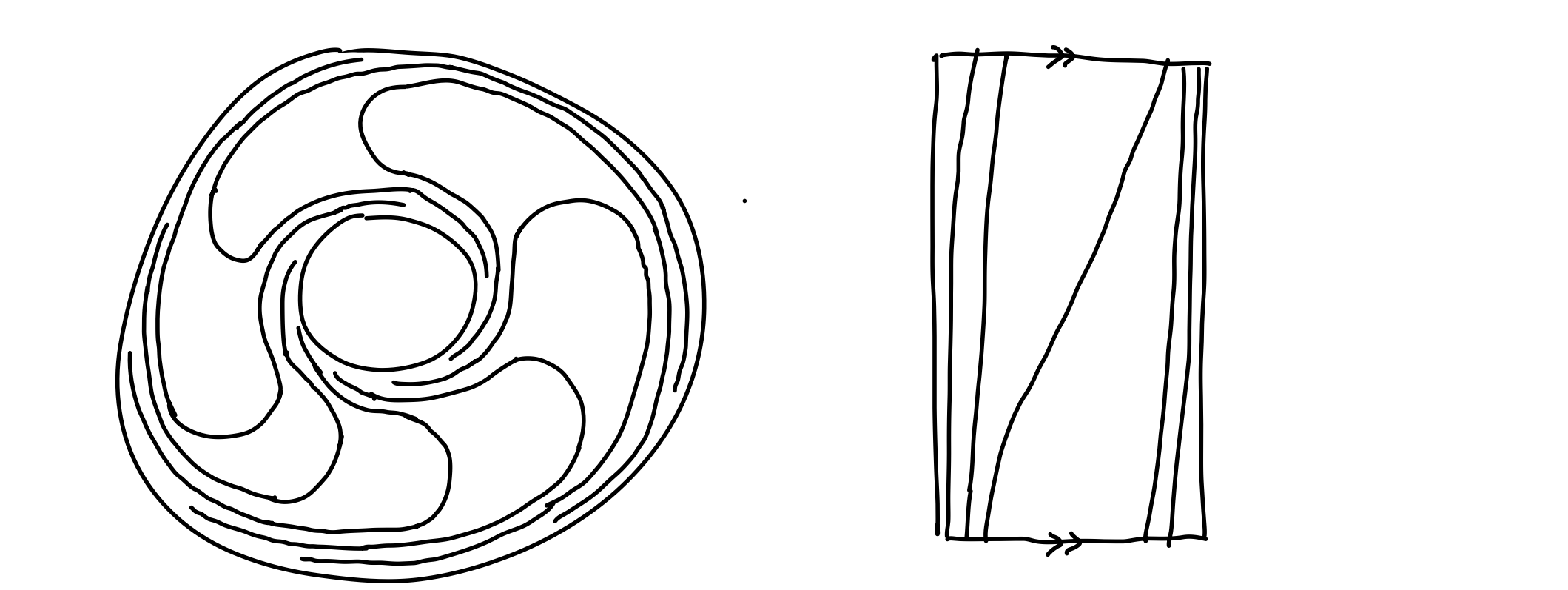}
\caption{Foliations of an annulus with closed boundary leaves}
 \label{fig_torus_foliations}
\end{figure}

\begin{lemma}[Leaves of $\fs$ and $\fu$] \label{lem_no_compact_leaves}
$\fs$ and $\fu$ have no compact leaves, and any leaf contains at most one periodic orbit.  Topologically, any nonsingular leaf of $\fs$ or $\fu$ is either a plane, an open annulus, or an open M\"obius band.   For singular leaves, each connected component of the complement of the periodic orbit is an open annulus or M\"obius band.  
\end{lemma}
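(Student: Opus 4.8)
Here is how I would prove Lemma~\ref{lem_no_compact_leaves}.

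The plan is to reduce everything to a statement about a single surface $S$, where $S$ is either a nonsingular leaf $l$ of $\fs$ or a connected component of $l\smallsetminus\alpha$ when $l$ is a singular leaf through the singular periodic orbit $\alpha$; the case of $\fu$ is identical after reversing the flow. In either case $S$ is a boundaryless surface invariant under $\flow$, so $\flow$ restricts to a complete nonsingular flow $\psi$ on $S$, and by Lemma~\ref{lem_trivial_foliated_lift} the orbit foliation of $S$ lifts to the trivial foliation of $\wt S\cong\R^2$ by vertical lines. I would write $S=\R^2/G$ with $G=\pi_1(S)$ acting freely and properly discontinuously and preserving the vertical foliation, and let $\rho\colon G\to\Homeo(\R)$ be the induced action on the leaf space $\R$ of vertical lines, with kernel $K$. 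The three assertions of the lemma are then proved in the order: at most one periodic orbit per leaf; no compact leaves; then the topological classification.

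First, if a leaf of $\fs$ contained two distinct periodic orbits $o_1,o_2$, these would be disjoint compact $\flow$-invariant subsets of $M$, so $d(\flow^t(x),\flow^s(y))\ge\dist(o_1,o_2)>0$ for all $s,t$ and all $x\in o_1$, $y\in o_2$, contradicting Lemma~\ref{lem_all_converge}; in particular a component of $l\smallsetminus\alpha$ contains no periodic orbit, since the leaf already contains $\alpha$. Next, suppose $l$ were compact; the nonsingular flow $\psi$ forces $\chi(l)=0$, so $l$ is a torus or Klein bottle, and (the Klein bottle case following by passing to the orientation double cover) we may take $l=\bT^2$. By Proposition~\ref{prop_foliations_torus} the orbit foliation of $\bT^2$ is either the suspension of an irrational rotation, or has a closed leaf (unique, by the previous paragraph) whose complement is a Reeb or suspension annulus. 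In each case one exhibits two \emph{distinct} orbits, and a standard reparameterization of one, along which the two orbits stay within arbitrarily small $M$-distance for \emph{all} times $t\in\R$: two leaves at constant transverse distance in the irrational case, and two sufficiently close interior leaves in the other cases — here one checks that the flow sends two nearby points of a transversal interval to points remaining uniformly close for all time, and that on the compact surface $l$ the ambient metric is uniformly equivalent to a model metric, so the smallness of the separation is uniform. This contradicts expansivity of $\flow$ (Proposition~\ref{prop_pa_implies_expansive}). Hence $\fs$ and $\fu$ have no compact leaves; in particular every leaf, and every component $S$ above, is an \emph{open} surface, so $G=\pi_1(S)$ is free.

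It remains to show that $G$ is trivial or $\Z$, whence $S$ is a plane, open annulus, or open Möbius band; and for a singular leaf, each component of $l\smallsetminus\alpha$ carries a loop around $\alpha$, so $\pi_1\ne1$ there, forcing an annulus or Möbius band. The kernel $K$ preserves each vertical line and acts freely and properly discontinuously on it, so $K\cong1$ or $\Z$; if $K$ is nontrivial then, being a normal infinite cyclic subgroup of the free group $G$, it forces $G\cong\Z$ and we are done, so assume $\rho$ is injective. If $\rho(g)$ fixes a point $x_0$ for some $g\ne1$, then $g$ preserves the line $L_{x_0}$, its full stabilizer $H$ acts freely and properly discontinuously on $L_{x_0}\cong\R$ so $H\cong\Z$, and the image of $L_{x_0}$ in $S$ is a periodic orbit; by the uniqueness statement this occurs for at most one $G$-orbit of points of $\R$. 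If it occurs for none, then $G$ acts freely on $\R$, the orbit space $\mathcal O=\R/G$ of $\psi$ is an a priori non-Hausdorff $1$-manifold, and a free action with such a quotient is necessarily a covering action (one verifies evenly covered intervals directly), so $\R\to\mathcal O$ is the universal cover, $\mathcal O$ is Hausdorff (covers of non-Hausdorff manifolds are non-Hausdorff), hence $\mathcal O\cong\R$ or $S^1$ and $G\cong\pi_1(\mathcal O)$ is $1$ or $\Z$; since $S\to\mathcal O$ is a fibre bundle with fibre $\R$, $S$ is $\R^2$, an open annulus, or an open Möbius band. If instead there is a periodic orbit $o$ in $l$, then by Lemma~\ref{lem_all_converge} every orbit of $l$ forward-converges to $o$ and the first-return map to a transversal of $o$ is a contraction towards its fixed point, so $o$ admits an attracting annular or Möbius neighbourhood $N$ with $\bigcap_{t\ge0}\psi^t(N)=o$, and $S=\bigcup_{t\ge0}\psi^{-t}(\operatorname{int}N)$ is an increasing union of copies of $N$ along $\pi_1$-isomorphic inclusions, whence $\pi_1(S)\cong\Z$ and $S$ is an open annulus or Möbius band.

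The hard part is the topological classification: concretely, upgrading ``$\rho$ acts freely on the leaf space'' to ``$\R\to\mathcal O$ is a genuine covering map'', the step that excludes leaves with non-cyclic (necessarily free) fundamental group; and, in the presence of a periodic orbit, making the trapping-neighbourhood and increasing-union argument precise. A secondary point needing care is the uniform-closeness estimate on a compact leaf used to invoke expansivity.
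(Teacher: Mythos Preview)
Your arguments for ``at most one periodic orbit'' and ``no compact leaves'' are fine and close to the paper's; you lean on Proposition~\ref{prop_pa_implies_expansive} uniformly where the paper mixes forward convergence and backwards expansivity case by case, but the content is the same.

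The gap is exactly where you flagged it. In the subcase where $\rho$ is injective and acts freely on the leaf space $\R$, you assert that $\mathcal O=\R/G$ is an (a priori non-Hausdorff) $1$-manifold and that $\R\to\mathcal O$ is a covering. Neither follows from what you have. Proper discontinuity of $G$ on $\R^2$ does \emph{not} descend to the leaf space: if $g_n x_0\to x_0$ in $\R$, the corresponding points on the vertical line $L_{x_0}$ that get moved close to a fixed transversal may wander off to infinity along $L_{x_0}$, so no compact set in $\R^2$ witnesses a contradiction. Without knowing the action on $\R$ is wandering, $\R/G$ need not be locally Euclidean, and the covering-space bootstrap never gets started. (The auxiliary claim that covers of non-Hausdorff $1$-manifolds are non-Hausdorff is also less immediate than you suggest, though it is not the main issue.)

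The clean fix, and what the paper does, is to invoke H\"older--Solodov (Theorems~\ref{thm:holder} and~\ref{thm:solodov}) at the outset. One first checks that every nontrivial $g\in G$ has at most one fixed point on the leaf space: two fixed vertical lines would descend to two periodic orbits in $\wt l/\langle g\rangle$ at positive Hausdorff distance, contradicting forward convergence. Solodov's theorem then gives that $\rho(G)$ embeds in $\mathrm{Aff}(\R)$, so $G$ is solvable; since $S$ is an open surface, this forces $\pi_1(S)\in\{1,\Z\}$ and $S$ is a plane, open annulus, or open M\"obius band. This single step replaces your entire case split (kernel, free action, attracting neighbourhood), and in particular makes the trapping-region argument in the periodic-orbit case unnecessary. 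If you want to salvage your outline with minimal change: in the free-action subcase, just cite H\"older to conclude $G$ is abelian, and since $G$ is free this gives $G\cong 1$ or $\Z$ immediately.
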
 

\begin{proof}
Up to switching direction of the flow, we need only treat leaves of $\fs$; let $l$ be such a leaf.  
First, if $l$ contains two periodic orbits, then since they are compact sets, they are some minimum Hausdorff distance apart, contradicting condition \ref{item_PAF_converges} (forward convergence of orbits) in the definition of pseudo-Anosov flow.   
Convergence can also be used to show that leaves cannot be compact, as follows.  A compact nonsingular leaf is a torus or Klein bottle; by passing to a double cover of the leaf we can consider only the torus case.  By  Proposition \ref{prop_foliations_torus}, if the foliation of this torus leaf contained a closed orbit, then either it would consist entirely of closed orbits, contradicting forward convergence, or we could find an orbit accumulating onto a closed leaf in the past, contradicting expansivity.  The remaining possibility is that the orbit-foliation of $l$ is the suspension of a circle homeomorphism with irrational rotation number; meaning that the first-return map to a closed transversal gives a homeomorphism of the circle without periodic points.  It then is an easy exercise to see this also fails forward convergence.  A similar argument applies to singular leaves, showing they also cannot be compact.\footnote{In the smooth Anosov case, there is an easy argument for non-compactness of leaves, since the contraction of the strong foliations directly implies that the area of a compact leaf would decrease under the flow, which is absurd.}

To describe singular and noncompact leaves, we return to the picture given by Lemma \ref{lem_trivial_foliated_lift}.  Suppose first for concreteness that $l$ is nonsingular.  Let $\wt l$ be the universal cover of $l$. Then $\pi_1(l)$ acts by deck transformations preserving the foliation of $\wt l$ by orbits, and so acts also on the space of orbits in $\wt l$, which is homeomorphic to $\mathbb{R}$.  Suppose some orbit $o$ is fixed by a nontrivial deck transformation $g$, so $o$ descends to a closed orbit in $\wt l / g$. Since $g$ acts by isometries, if it also fixed a second orbit $o'$, then $o$ and $o'$ would be some minimum Hausdorff distance apart in $\wt M$, again contradicting forward-convergence in $\wt M$ (on which the lifted flow is also pseudo-Anosov).  
Thus, the action of $\pi_1(l)$ on the space of orbits has the property that each element acts with at most one fixed point.   A classical theorem of H\"older and Solodov (see Theorem \ref{thm:solodov} stated below) implies that $\pi_1(l)$ is isomorphic to a subgroup of affine transformations of $\mathbb{R}$, so in particular solvable. By the classification of surfaces (and the fact that $l$ is noncompact), only the plane, open annulus and open M\"obius band, have a solvable fundamental group, hence $l$ must be one of these possibilities and $\pi_1(l) \cong \bZ$. 
In the singular case, the same argument shows that each element of $\pi_1(l)$ preserves only the singular orbit, hence $\pi_1(l) \cong \mathbb{Z}$.  

\end{proof} 

Combined with the results above, the following important proposition will allow us to deduce many topological properties of the foliations in $\wt M$.  

\begin{proposition}[Verjovsky, Fenley--Mosher, Paternain]  \label{prop_closed_transversal}
Any closed loop transverse to $\cF^s$ or to $\cF^u$ is not nullhomotopic.  
\end{proposition}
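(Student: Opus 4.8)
The plan is to argue by contradiction, the key point being that a nullhomotopic closed transversal would force the existence of a compact leaf, which is impossible by Lemma \ref{lem_no_compact_leaves}. So suppose $\gamma$ is a closed loop transverse to $\cF^s$ (the argument for $\cF^u$ is identical, reversing the flow) which is nullhomotopic in $M$. After possibly passing to a double cover of $M$ in order to make $\cF^s$ transversely orientable — which preserves all the relevant hypotheses, since a nullhomotopic loop lifts to a nullhomotopic loop and a cover of a noncompact leaf is noncompact — we may assume $\cF^s$ is transversely orientable.

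In the case where $\flow$ is a (topological) Anosov flow, so that $\cF^s$ is an honest nonsingular $C^0$ foliation, I would invoke Novikov's theorem in its topological formulation (see, e.g., the treatment of the $C^0$ case in \cite{Cal07}): a transversely orientable codimension-one foliation of a closed $3$-manifold which admits a nullhomotopic closed transversal contains a Reeb component. The boundary of a Reeb component is an embedded torus which is a leaf of the foliation, hence a compact leaf of $\cF^s$ — contradicting Lemma \ref{lem_no_compact_leaves}. This settles the Anosov case.

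The remaining, and main, work is the singular pseudo-Anosov case. Note first that Lemma \ref{lem_no_compact_leaves} already forbids compact leaves here as well: a singular leaf is the union of a periodic orbit with finitely many half-leaves, each a half-open annulus or M\"obius band, hence noncompact. The plan is then to reduce to the nonsingular Novikov theorem by desingularizing $\cF^s$ in small flow-box neighborhoods of the singular orbits $\alpha_1,\dots,\alpha_n$: for instance, by excising solid-torus neighborhoods $N_i$ of the $\alpha_i$ and regluing appropriately foliated pieces to obtain a nonsingular $C^0$ foliation on a closed $3$-manifold $M'$, arranged so that $M'$ still has no compact leaves, so that the loop $\gamma$ (after a small perturbation making it disjoint from the $N_i$) remains a closed transversal, and so that its homotopy class is controlled; then one applies the Anosov case on $M'$. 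Alternatively, one may try to run Novikov's vanishing-cycle construction directly on the singular foliation, checking that the prong singularities can be avoided by the vanishing cycle (plausible, since the singular leaves are already known to be noncompact and the singular orbits are periodic).

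The main obstacle is exactly this singular reduction: carrying out the desingularization (or the direct vanishing-cycle argument) while genuinely keeping control both of the homotopy class of $\gamma$ and of the absence of compact leaves, all the while remembering that $\cF^s$ is only continuous, with no leafwise regularity, so that one must use the $C^0$ versions of Novikov's theorem throughout. A tempting but circular-feeling alternative, which I would keep in reserve rather than as the main line, is to work directly in $\wt M$: a nullhomotopic closed transversal downstairs lifts to a genuine loop in $\wt M$, which projects to a locally injective nullhomotopic loop in the leaf space of $\hfs$, and one would want to rule this out by showing that leaf space is a simply connected (possibly non-Hausdorff) $1$-manifold — but establishing that structure is essentially equivalent to the present proposition, so this is not a genuinely independent route.
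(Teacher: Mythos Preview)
Your Anosov case is essentially the paper's second approach: apply Novikov's theorem (in its $C^0$ form due to Solodov) together with Lemma~\ref{lem_no_compact_leaves} to rule out the Reeb component that a nullhomotopic transversal would force. That part is fine.

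The gap is in the pseudo-Anosov case, and you have correctly identified it yourself. Your proposed desingularization---excising solid tori around the singular orbits and regluing foliated pieces on a new manifold $M'$---is not a workable strategy as stated: you would have to simultaneously ensure that $\gamma$ remains nullhomotopic in $M'$, that it remains transverse, and that no compact leaf is created by the regluing, and there is no evident way to do all three at once (indeed, changing the manifold typically destroys control over $\pi_1$). The paper instead follows Fenley--Mosher \cite{FM01}: rather than modifying the manifold, one \emph{splits open} the singular foliation $\cF^s$ along its singular leaves to obtain an essential lamination on $M$ itself, and then invokes the Gabai--Oertel theorem \cite{GO89}, which extends Novikov's conclusion (no nullhomotopic closed transversal) to essential laminations. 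This keeps $M$ and $\gamma$ unchanged, so the homotopy class of $\gamma$ is automatically preserved; the only thing to check is that the split-open lamination is essential, which is what Fenley--Mosher verify. Your alternative of running Novikov's vanishing-cycle argument directly on the singular foliation is closer in spirit to Paternain's approach \cite{Pat93} for expansive flows, but again you would need to actually carry it out rather than just propose it.

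Your final remark about working in $\wt M$ and the leaf space being circular is accurate: the paper indeed derives the structure of the leaf space (Proposition~\ref{prop_leaf_spaces}) and of $\orb$ (Theorem~\ref{thm_orb_is_plane}) as \emph{consequences} of this proposition, so that route is not available.
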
 

In the case of smooth Anosov flows, Proposition \ref{prop_closed_transversal} can be deduced from Haefliger's theorem, proved by Haefliger in the $C^2$ setting, and generalized by Solodov \cite{Sol82}  to lower regularity.  A proof in the $C^0$ case can be found in \cite[Ch. VII]{HH86}.  This theorem states that a
foliation which admits a nullhomotopic closed transversal always has a leaf with {\em one-sided holonomy}, a condition which is easy to rule out using the convergence of orbits in the future (or past) along stable (unstable) leaves.  This approach is taken in Verjovsky's thesis \cite{Ver74}. 
The general case of this theorem is proved (for expansive flows) by Paternain in \cite[Lemma 9]{Pat93}, using similar considerations.   

In place of Haefliger's theorem, one can instead use the closely related, but stronger, theorem of Novikov \cite{Nov65}, as generalized to $C^0$ foliations by Solodov \cite{Sol82}, and generalized to essential laminations by Gabai and Oertel \cite{GO89}.  This theorem states directly that a 3-manifold admitting a foliation (or more generally, an {\em essential lamination}) without compact leaves has no nullhomotopic closed transversal.  Lemma \ref{lem_no_compact_leaves} shows $\fs$ and $\fu$ have no compact leaves, so in the Anosov case, this immediately gives Proposition \ref{prop_closed_transversal}.  In the pseudo-Anosov case, Fenley and Mosher \cite{FM01} show that one can open up the foliation $\fs$ (or $\fu$) along singular leaves to give an essential lamination, then apply the work of  \cite{GO89} to give an independent proof of Proposition \ref{prop_closed_transversal}.  See Proposition 4.2 in \cite{FM01}.   Proofs of Novikov's theorem (in the differentiable setting) can be found in \cite{Cal07} or \cite[Ch.~6]{RS17}. 

From Novikov's theorem and its generalizations, one can also deduce the following.  

\begin{proposition}[Inaba--Matsumoto \cite{IM90}]
If $M$ admits a pseudo-Anosov flow, then $\pi_2(M) = 0$. 
\end{proposition}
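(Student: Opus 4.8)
The plan is to deduce $\pi_2(M)=0$ from the strong form of Novikov's theorem (and its lamination-theoretic generalization) already invoked above, whose hypotheses have essentially been checked. Recall two facts: (i) in the $C^0$ form due to Solodov \cite{Sol82}, a closed $3$-manifold carrying a continuous codimension-one foliation with no Reeb component has universal cover homeomorphic to $\R^3$; and (ii) by Gabai--Oertel \cite{GO89}, a closed $3$-manifold carrying an essential lamination has universal cover homeomorphic to $\R^3$. Since $\pi_2$ is a covering invariant, either conclusion gives $\pi_2(M)\cong\pi_2(\R^3)=0$. So the task reduces to producing, for any $M$ supporting a pseudo-Anosov flow, a foliation as in (i) or an essential lamination as in (ii). I would first dispense with orientability: if $M$ is non-orientable, pass to the orientable double cover, which carries the lifted flow (the pseudo-Anosov property passes to covers) and has isomorphic $\pi_2$; so assume $M$ orientable.

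If $\flow$ is nonsingular, i.e.\ a topological Anosov flow, then $\fs$ is a genuine continuous codimension-one foliation of $M$. By Lemma~\ref{lem_no_compact_leaves} it has no compact leaves; since every Reeb component contains a compact (torus or Klein bottle) boundary leaf, $\fs$ is Reebless, and (i) applies directly.

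If instead $\flow$ has singular orbits, $\fs$ is only a singular foliation and (i) does not apply verbatim. Here I would invoke the construction of Fenley--Mosher (Proposition~4.2 of \cite{FM01}): one opens up $\fs$ along its finitely many singular leaves to obtain a genuine lamination $\Lambda\subset M$, and they prove that $\Lambda$ is essential. The only facts entering from our side are that $\fs$ is topologically transverse to $\fu$ and, by Lemma~\ref{lem_no_compact_leaves}, has no compact — in particular no sphere — leaves. Then (ii) applies and completes the argument.

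The genuinely delicate step is the pseudo-Anosov case: verifying that the object obtained by blowing open the singular leaves is not merely a lamination but an \emph{essential} one (end-incompressible, with no sphere leaves and no monogon or compressibility in the complementary regions). This is precisely the technical content of \cite{FM01}, which I would simply cite. Everything else — the orientability reduction, the observation that ``no compact leaves'' rules out Reeb components, and the passage from $\wt M\cong\R^3$ to $\pi_2(M)=0$ — is routine.
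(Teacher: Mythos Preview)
Your proposal is correct and matches the paper's approach: the paper does not give a detailed proof but simply remarks that the result follows ``from Novikov's theorem and its generalizations,'' pointing to precisely the Solodov $C^0$ extension and the Gabai--Oertel/Fenley--Mosher essential-lamination machinery that you invoke. (One small point: Novikov/Solodov already yields $\pi_2(M)=0$ directly for a Reebless foliation, so in case~(i) routing through $\wt M\cong\R^3$ is unnecessary---and slightly overstates what \cite{Sol82} gives---though it is harmless for the conclusion.)
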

We will later show, using the orbit space, that $\wt M \cong \bR^3$, giving a different proof that $M$ is aspherical.  See Corollary \ref{cor_aspherical}.

Using Proposition \ref{prop_closed_transversal}, we prove the following. 

 \begin{proposition}(Topological properties of orbits and foliations) \label{prop_top_properties}
 \begin{enumerate}[label=(\roman*)]
 \item  \label{item_no_contractible_orbit} No periodic orbit of $\flow$ is null-homotopic. 
 \item \label{item_generated_periodict} The fundamental group of any leaf of $\fs$ or $\fu$ is generated by a periodic orbit. 
 \item \label{item_unique_intersection} Any pair of leaves of $\hfs$ and $\hfu$ are either disjoint or intersect along a unique orbit.
 \item \label{item_properly_embedded} In $\wt M$, nonsingular leaves of $\hfs$ and $\hfu$ are properly embedded planes, connected components of the complement of singular orbits in singular leaves are properly embedded half-planes, 
 and orbits of $\hflow$ are properly embedded lines.  
 \end{enumerate}
\end{proposition}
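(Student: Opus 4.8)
\emph{Overall strategy and dependencies.} I would prove the four items essentially in the order \ref{item_no_contractible_orbit}, \ref{item_generated_periodict}, \ref{item_properly_embedded}, \ref{item_unique_intersection}, since \ref{item_generated_periodict} together with \ref{item_no_contractible_orbit} will give $\pi_1$-injectivity of leaves, which feeds \ref{item_properly_embedded}, and the properly embedded planar structure from \ref{item_properly_embedded} is what makes \ref{item_unique_intersection} easy. Proposition~\ref{prop_closed_transversal} is the key input and is used in \ref{item_no_contractible_orbit}.

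\emph{Part \ref{item_no_contractible_orbit}.} I would argue by contradiction. Let $\alpha$ be a periodic orbit; work in a flow-box ``tube'' neighbourhood of $\alpha$ (or of $\alpha^k$, where $k$ is the period of the permutation of prongs induced by the first return map). Model this tube as $D\times\bR/{\sim}$ with the sheets of $\fs$ appearing horizontal (constant ``unstable'' coordinate). For each $n\ge 1$ one builds a loop winding $n$ times around $\alpha$ whose unstable coordinate decreases monotonically as one goes once around: it closes up precisely because the return map contracts that coordinate, and it is transverse to $\fs$ because its unstable coordinate is strictly monotone. This loop is freely homotopic to $\alpha^{kn}$ and is a closed transversal to $\fs$, so by Proposition~\ref{prop_closed_transversal} it is not nullhomotopic. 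Letting $n\to\infty$ shows $[\alpha]$ has infinite order in $\pi_1(M)$; in particular no periodic orbit is nullhomotopic. (The same construction works verbatim at a singular periodic orbit using its prong model.) I would record the by-product: \emph{the class of any periodic orbit has infinite order in $\pi_1(M)$.}

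\emph{Part \ref{item_generated_periodict}.} By Lemma~\ref{lem_no_compact_leaves}, $\pi_1(l)$ is trivial (nothing to prove) or infinite cyclic, the latter when $l$ is a noncompact annulus, a noncompact M\"obius band, or a component complementary to the singular orbit in a singular leaf. In the singular case the singular orbit lies in $l$ and generates $\pi_1(l)\cong\bZ$, so assume $l$ is a nonsingular annulus or M\"obius band. From the proof of Lemma~\ref{lem_no_compact_leaves}, $\pi_1(l)=\langle g\rangle\cong\bZ$ acts by affine homeomorphisms on the orbit space $\cO\cong\bR$ of $\wt l$, each nontrivial element with at most one fixed point. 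If $g$ has a fixed point, the fixed orbit descends to a closed orbit of $\flow$ in $l$. If not, $g$ is a nontrivial translation, so the orbit space of $l$ is a circle, all orbits of $l$ are noncompact, and by Lemma~\ref{lem_same_local_stable} (since $l$ is a single leaf of $\fs$ or $\fu$) any two of them are forward- (resp.\ backward-) asymptotic; passing to the first-return map of $\flow|_l$ on a cross section gives a circle homeomorphism with no periodic point, i.e.\ irrational rotation number, and such a map cannot have all orbits pairwise asymptotic -- this follows from the Poincar\'e semiconjugacy to a rotation, or from the non-existence of expansive circle homeomorphisms combined with backward expansivity on $l$. So in all cases $l$ has a closed orbit $\alpha$. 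A Poincar\'e--Hopf/Euler-characteristic argument shows $\alpha$ cannot bound a disc in $l$, hence is isotopic to the core and generates $\pi_1(l)$; two distinct closed orbits in $l$ would stay a bounded Hausdorff distance apart forever, contradicting forward convergence, so $\alpha$ is unique. Combining with Part~\ref{item_no_contractible_orbit}: the generator of $\pi_1(l)$ has infinite order in $\pi_1(M)$, so \emph{every leaf of $\fs$ or $\fu$ is $\pi_1$-injective in $M$}.

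\emph{Part \ref{item_properly_embedded}.} By $\pi_1$-injectivity, a leaf $\tilde l$ of $\hfs$ (or $\hfu$) is a component of the preimage of a leaf $l$ and equals the universal cover of $l$; by Lemma~\ref{lem_no_compact_leaves} and Lemma~\ref{lem_trivial_foliated_lift}, $\tilde l$ is then a plane carrying a trivial orbit foliation in the nonsingular case, while a singular leaf lifts to the lifted singular orbit (a line, by Part~\ref{item_no_contractible_orbit}) with $p_i$ half-planes, one per prong, attached along it. For properness and embeddedness: $\fs$ and $\fu$ have no compact leaves (Lemma~\ref{lem_no_compact_leaves}) and no nullhomotopic closed transversals (Proposition~\ref{prop_closed_transversal}), hence are Reebless -- in the singular case after splitting open the singular leaves into an essential lamination as in Fenley--Mosher; it is standard that a Reebless foliation (or essential lamination) of a closed $3$-manifold lifts to $\wt M$ with all leaves properly embedded. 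Finally an orbit of $\hflow$ is a single fibre of the trivial orbit foliation of a properly embedded leaf, hence is properly embedded, and is a line rather than a circle by Part~\ref{item_no_contractible_orbit}.

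\emph{Part \ref{item_unique_intersection}.} Let $\tilde s\in\hfs$, $\tilde u\in\hfu$. By Definition~\ref{def_topPA} their intersection is a union of orbits; by topological transversality and Part~\ref{item_properly_embedded} it is a disjoint union of properly embedded lines, each a single orbit. Suppose $o\neq o'$ are two of them. Both lie in $\tilde u$, so by the backward analogue of Lemma~\ref{lem_same_local_stable} there are $x\in o$, $y\in o'$ arbitrarily close and lying in a common $\delta$-local unstable leaf, and both lie in $\tilde s$. Taking $\delta$ below a Lebesgue number for a cover of $M$ by small flow boxes (lifted to $\wt M$) and using that $\tilde s$ is properly embedded, $x$ and $y$ lie in one flow box $B$, on a single stable sheet $P_s$ and a single unstable sheet $P_u$; but $P_s\cap P_u$ is a single orbit segment, forcing $o=o'$. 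Hence $\tilde s\cap\tilde u$ is empty or a single orbit.

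\emph{Main obstacle.} The delicate step is Part~\ref{item_generated_periodict} -- ruling out irrational-rotation-type dynamics on an annular or M\"obius leaf -- together with, to a lesser extent, pinning down the precise form of Novikov's theorem (and its lamination version) needed for properness in Part~\ref{item_properly_embedded}.
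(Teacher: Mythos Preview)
Your proof is correct; Part~(i) matches the paper. The remaining parts proceed by genuinely different routes. For~(ii), the paper bypasses rotation-number dynamics entirely: once the annular stable leaf $l$ has no periodic orbit, it takes a core loop $\gamma\subset l$ transverse to the orbit foliation (hence transverse to $\fu$), flows it forward until its diameter is below the injectivity radius of $M$ (using forward convergence on stable leaves), and contradicts Proposition~\ref{prop_closed_transversal} directly --- no return map, no Poincar\'e semiconjugacy. For~(iii) and~(iv) the paper \emph{reverses} your dependency. It proves~(iii) first via a closed-transversal construction: given two orbits $\alpha,\beta\subset l^s\cap l^u$, concatenate an arc $\tau^s\subset l^s$ from $\alpha$ to $\beta$ (already transverse to $\hfu$) with an arc $\tau^u\subset l^u$ from $\beta$ to $\alpha$; since $\tau^s$ meets $l^u$ from opposite transverse sides at its two endpoints, $\tau^u$ can be pushed off $l^u$ to close up a loop transverse to $\hfu$, again contradicting Proposition~\ref{prop_closed_transversal}. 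Item~(iv) then falls out in one line: a leaf that fails to be properly embedded accumulates on itself, so some leaf of the other foliation meets it in more than one orbit, contradicting~(iii). The paper's route is therefore more elementary --- it uses only Proposition~\ref{prop_closed_transversal} and never needs the full Novikov/Gabai--Oertel package you black-box for~(iv). Your route is valid, but note that your argument for~(iii) uses more than ``$\tilde s$ is properly embedded'': you need that $\tilde s$ meets each small flow box in a \emph{single} plaque, which is indeed part of what the Reebless/essential-lamination theory gives, but is worth stating explicitly.
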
 

 \begin{proof}
Suppose $\gamma$ is a periodic orbit of $\flow$. 
On the stable leaf of $\gamma$, each other orbit must, by definition, accumulate onto it in the future. Thus, one may push $\gamma$ off itself along its stable leaf to obtain a closed curve $c$ in a leaf of $\fs$ transverse to the flow direction, as in Figure \ref{fig_making_transversal}, left.  
Such a curve is necessarily transverse to $\fu$, which, by Proposition \ref{prop_closed_transversal}, implies that $\gamma$ is not null-homotopic.

For the second item, we use the description of the topology of leaves of $\fs$ and $\fu$ given by Lemma \ref{lem_no_compact_leaves}.  The singular case follows immediately from Lemma \ref{lem_no_compact_leaves} since singular leaves always contain a periodic orbit.
Now suppose that $l$ is a nonsingular leaf with nontrivial fundamental group. By Lemma \ref{lem_no_compact_leaves}, $l$ is either an annulus or a M\"obius band, and it is foliated by orbits. Moreover, by Lemma \ref{lem_trivial_foliated_lift} the foliation of $l$ by orbits is trivial in the universal cover of $l$. 
 In the case of a M\"obius band, this directly implies that there is a closed orbit, since orbits are oriented, and the generator of the fundamental group is orientation reversing on the leaf space of the orbit foliation. 
 Consider now the case where $l$ is an annulus. Since the foliation is trivial in $\wt l$, either there is a periodic orbit, in which case we are done, or all orbits on this annulus go from one end (in the topological sense) of it to the other.  If $\gamma$ is a loop representing a core of the annulus and transverse to $\flow$, then it is transverse also to $\fu$.  The forwards-asymptotic property of orbits implies that, as we apply $\flow$ to $\gamma$, it becomes smaller and will eventually be contained in a ball smaller than the injectivity radius of $M$, so is contractible in $M$, a contradiction with Proposition \ref{prop_closed_transversal}.

  \begin{figure}[h]
   \begin{center}
    \includegraphics[width=10cm]{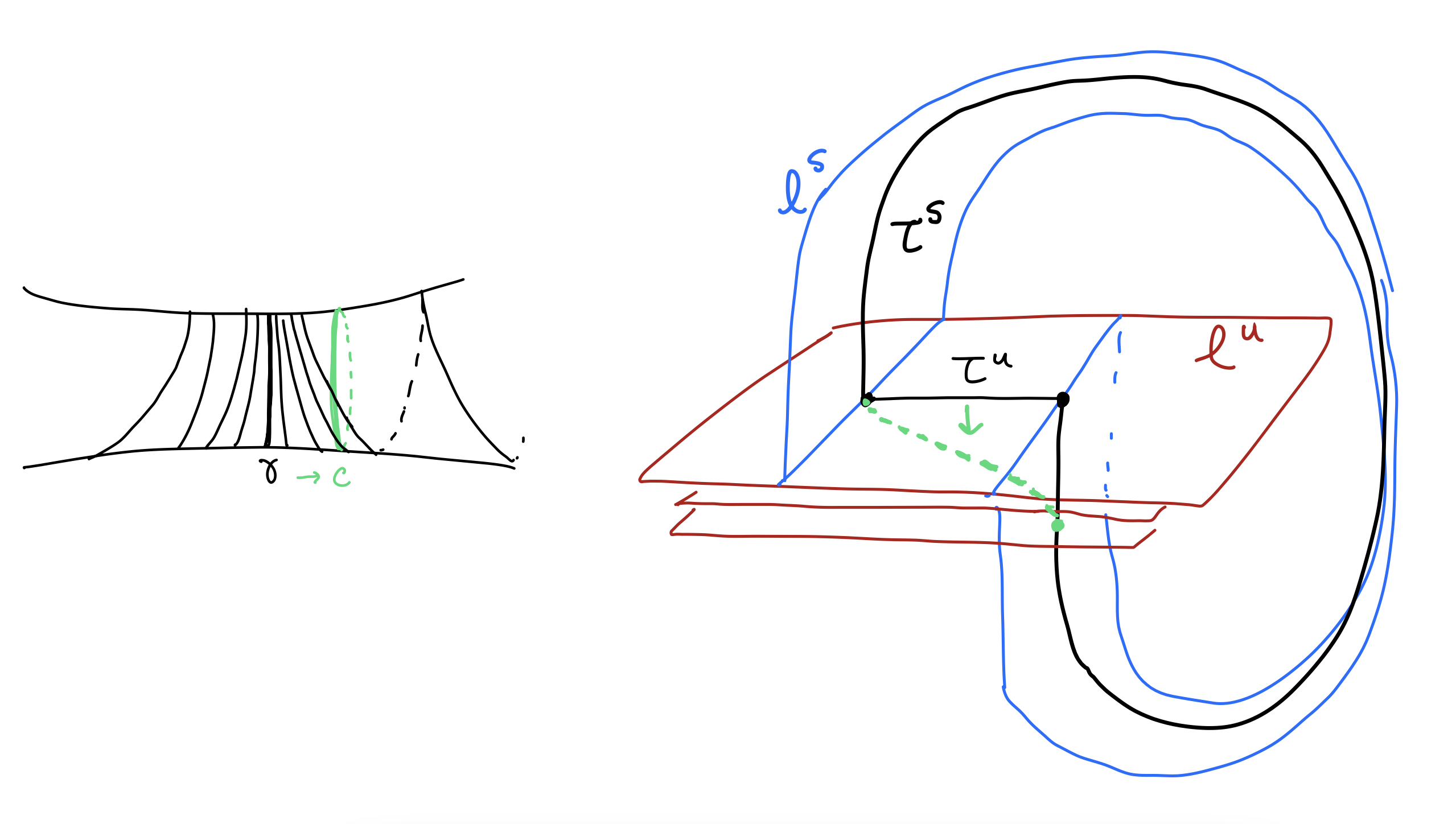}
   \end{center}
   \caption{Pushing a closed orbit, and a segment to produce closed transversals} \label{fig_making_transversal}
  \end{figure}

For the third item, 
suppose $l^s$ and $l^u$ are leaves of $\hfs$ and $\hfu$, respectively, which intersect along two distinct orbits $\alpha$, $\beta$.   
Consider a closed path $\tau$, transverse to the flow, made up of a segment $\tau^s$ from $\alpha$ to $\beta$ along  $l^s$ and a segment $\tau^u$ from $\beta$ to $\alpha$ along $l^u$.   Shortening $\tau^s$ (and modifying $\tau^u$ to replace with a different path along an unstable leaf, to replace $l^u$ if needed), we assume $\tau^s$ intersects $l^u$ only at its endpoints, and does not intersect any other leaf of $\hfu$ twice.  

Since $\tau^s$ is transverse to $\hfu$ it defines a local transverse orientation for the leaves of $\hfs$.  Since it intersects each leaf of $\hfu$ once (with the exception of the double intersection of the endpoints), even if the leaf $l^u$ is singular $\tau^s$ only intersects it along one face.  Thus, this local orientation makes sense, and implies that the two ends of $\tau^s$ intersect $l^u$ from opposite sides of $l^u$ (or the face of intersection).  This means that, in a local foliation chart near $\tau^u$, we can then push $\tau^u$ to make it transverse to $\hfs$, as shown in Figure \ref{fig_making_transversal}. This again contradicts Proposition \ref{prop_closed_transversal}.

 Finally, item \ref{item_generated_periodict} and our description of leaves in Lemma \ref{lem_no_compact_leaves} implies that a nonsingular leaf $\wt l$ is a topological plane and connected components of the complement of a singular orbit in a singular leaf are half-planes.  If one such leaf or half-leaf $\wt l$ of, say, $\hfs$ was not properly embedded, then it would accumulate on itself, and thus intersect some leaf of $\hfu$ along more than one orbit, which we just showed was impossible.  Finally, the fact that the orbits trivially foliate $\wt l$ shows that orbits are also properly embedded.  
 \end{proof}  
 
 \begin{remark}
 Notice that item \ref{item_unique_intersection} does not say that every leaf of $\hfs$ intersects every leaf of $\hfu$. In fact it follows from results proved in the next chapter (Theorem \ref{thm:trichotomy} and Proposition \ref{prop:trivial_affine}) that if \emph{one} leaf of $\hfs$ intersects every leaf of $\hfu$, then the flow is a suspension of an Anosov diffeomorphism on the torus. 
 \end{remark}

Now that we know leaves of $\hfs$ and $\hfu$ are planes, we wish to describe how they are arranged in $\wt M$. This is captured by the {\em leaf space}. 
 
\begin{definition}  \label{def_leaf_space}
If $\cF$ is a foliation of a manifold $X$, the 
{\em leaf space} of $\cF$, denoted $\Lambda(\cF)$, is the quotient space obtained by identifying each leaf to a single point, equipped with the quotient topology.  
\end{definition} 
In the case of $\leafs$ and $\leafu$, we call these the \emph{stable} and \emph{unstable leaf spaces} of $\flow$.

 \begin{proposition} \label{prop_leaf_spaces}
 If $\flow$ is Anosov, then the leaf spaces $\leafs$ and $\leafu$ are connected, simply connected, not necessarily Hausdorff, $1$-manifolds.   In general, the leaf spaces $\leafs$ and $\leafu$ are \emph{non-Hausdorff trees} in the sense of \cite{Fen03}.
 \end{proposition}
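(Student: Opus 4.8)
The plan is to analyze the leaf space $\Lambda(\hfs)$ (the argument for $\Lambda(\hfu)$ being symmetric) by first establishing that it is locally homeomorphic to $\bR$, then that it is connected and simply connected, and finally identifying the non-Hausdorff tree structure. The starting point is Proposition \ref{prop_top_properties}\ref{item_properly_embedded}: leaves of $\hfs$ are properly embedded planes in $\wt M \cong$ (a contractible 3-manifold). Given a leaf $\ell$ of $\hfs$, choose a small flow-box neighborhood of a point $p \in \ell$ on a nonsingular orbit; in the flow-box coordinates of Definition \ref{def_flow_box} the stable sheets are the planes of constant $x$-coordinate, so the local quotient by $\hfs$-leaves is an interval. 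Since distinct nearby stable sheets lie on distinct global leaves (this uses item \ref{item_unique_intersection} of Proposition \ref{prop_top_properties} together with proper embeddedness, exactly as in the proof that leaves don't accumulate on themselves), this interval embeds in $\Lambda(\hfs)$, giving a local chart to $\bR$. Thus $\Lambda(\hfs)$ is a (possibly non-Hausdorff, possibly non-second-countable a priori, but in fact second-countable) $1$-manifold. Connectedness of $\Lambda(\hfs)$ is immediate since $\wt M$ is connected and the quotient map is continuous and surjective.

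Next I would address simple connectedness. The key tool is Proposition \ref{prop_closed_transversal} (no nullhomotopic closed transversal) lifted to $\wt M$: since $\wt M$ is simply connected, \emph{every} closed loop transverse to $\hfs$ bounds a disc, but then by Proposition \ref{prop_closed_transversal} applied in $M$ no such loop can exist unless it is already nullhomotopic in $\wt M$ — more to the point, a transversal to $\Lambda(\hfs)$ that is an embedded loop would pull back to an embedded closed transversal to $\hfs$ in $\wt M$, which is impossible by lifting Proposition \ref{prop_closed_transversal}. This rules out embedded circles in $\Lambda(\hfs)$. To upgrade "no embedded loops" to "simply connected," I would use the standard fact that a connected $1$-manifold (in the generalized, non-Hausdorff sense) with no embedded $S^1$ is simply connected: any loop can be homotoped to a locally injective one, hence to an embedded circle or a constant, and the former is excluded. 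Alternatively, and perhaps more cleanly, one can directly build a retraction of $\Lambda(\hfs)$ onto a point by exhausting it by "arborescent" compact connected sub-onemanifolds; I would go with whichever is shorter to write.

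For the Anosov case, the non-Hausdorff $1$-manifold structure is exactly what the above gives, so there is nothing further. For the general (pseudo-Anosov) case, the singular leaves obstruct the manifold structure: a singular leaf $\ell$, whose complement-of-singular-orbit components are half-planes (Proposition \ref{prop_top_properties}\ref{item_properly_embedded}), sits in $\Lambda(\hfs)$ as a point with $p \geq 3$ local "prongs" rather than two, because in a flow-box neighborhood of a singular orbit the local model $p$-prong of Definition \ref{def_model_prong} has $p$ stable sheets emanating from the singular sheet. So near such a point $\Lambda(\hfs)$ looks like $p$ copies of $[0,1)$ glued at $0$. Together with the non-Hausdorffness (two leaves that are limits of a common sequence but not separated), this is precisely the definition of a non-Hausdorff tree in the sense of Fenley \cite{Fen03}: a simply connected, connected space obtained by gluing intervals along such branch points and non-separated points. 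I would finish by checking the two defining axioms of \cite{Fen03} — that between any two points there is a unique embedded arc (unique embedded arcs follow from simple connectedness plus the local structure), and that the branch locus is as described — citing \cite{Fen03} for the precise formulation.

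The main obstacle I expect is the passage from "no embedded circles in $\Lambda(\hfs)$" to genuine simple connectedness, together with the care needed because $\Lambda(\hfs)$ is non-Hausdorff: standard $1$-manifold arguments assume Hausdorffness, and one must check that homotoping a loop to a locally injective (hence embedded-circle-or-constant) representative still works in the non-Hausdorff setting, or else give the direct exhaustion argument. The local structure near singular points and the verification that non-separated leaves genuinely occur (as opposed to $\Lambda(\hfs)$ being accidentally Hausdorff) is routine given the earlier results, but should be stated rather than belabored.
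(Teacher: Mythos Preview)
Your proposal is correct and follows essentially the same approach as the paper. The paper gives only a brief sketch: local charts to $\bR$ come from small transverse segments (the paper phrases this as segments in leaves of $\hfu$, using Proposition~\ref{prop_top_properties}\ref{item_unique_intersection}, while you use flow-box coordinates, but these are the same thing); simple connectedness follows from Proposition~\ref{prop_closed_transversal}; and the details, including the non-Hausdorff subtleties you flag, are delegated to references (Palmeira \cite{Pal78}, \cite[Appendix D.1]{CC03}, and \cite{GO89,Fen03} for the singular case). You have correctly identified the one genuinely delicate point---upgrading ``no embedded circles'' to simple connectedness in a non-Hausdorff $1$-manifold---which the paper handles by citation rather than argument.
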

 Non-Hausdorff trees are called {\em order trees} in Gabai--Oertel's work \cite{GO89}.   An introduction to non-Hausdorff 1-manifolds can be found in \cite{HR57}, and in addition to the references above, non-Hausdorff trees are also studied in \cite{RS01}. 
  
 The idea of the proof of Proposition \ref{prop_leaf_spaces} in the Anosov case is simple: to show $\leafs$ is 1-dimensional, we need to find local charts to $\bR$.  Since leaves of $\hfu$ intersect leaves of $\hfs$ at most once by Proposition \ref{prop_top_properties} item \ref{item_unique_intersection}, given a leaf $l^s$ in $\hfs$, 
 we may fix some $l^u$ in $\hfu$ intersecting $l^s$ and use a small transverse segment $\tau$ in $l^u$ as a local chart.  
 To show that it is simply connected, one uses the fact that closed transversals are not nullhomotopic. See, for instance, \cite[Appendix D.1]{CC03} which gives a more detailed proof of a more general result of Palmeira \cite{Pal78}.  The singular case is treated in Gabai-Oertel \cite{GO89} or Fenley \cite{Fen03}.

To describe the non-Hausdorff points of leaf spaces, we introduce the following terminology.  
 \begin{definition}(Nonseparated and branching leaves) \label{def_non_separated}
 Let $\cF$ be a foliation.  Two leaves of $l, l'$ of $\cF$ are \emph{nonseparated} if for any open neighborhoods $U$ and $U' \subset \Lambda(\cF)$ of $l$ and $l'$ respectively, we have $U \cap U' \neq \emptyset$.    
A leaf $l$ is called \emph{branching} if there exists a leaf $l'$ such that $l$ and $l'$ are nonseparated.
 \end{definition}
 
 As a simple example, consider the universal cover of the {\em Reeb foliation} of the torus, that is, the foliation with a single closed orbit and a Reeb component in the complement.   Topologically, this can be described as the foliations of $\bR^2$ whose leaves are curves of the form $y = \tan^2(x) + c$ together with the vertical leaves $x = k\pi/2$ for $c \in \bR$ and $k$ an odd integer. See Figure \ref{fig_tan_square}.
 Here, adjacent vertical leaves are nonseparated.  Note that this example also shows that ``nonseparated with" is not a transitive relation.   
Taking a product with $\bR$ gives a higher dimensional version of this example, where nonseparated leaves occur in a foliation of $\R^3$ by planes.  
\begin{figure}[h]
\includegraphics[width=6cm]{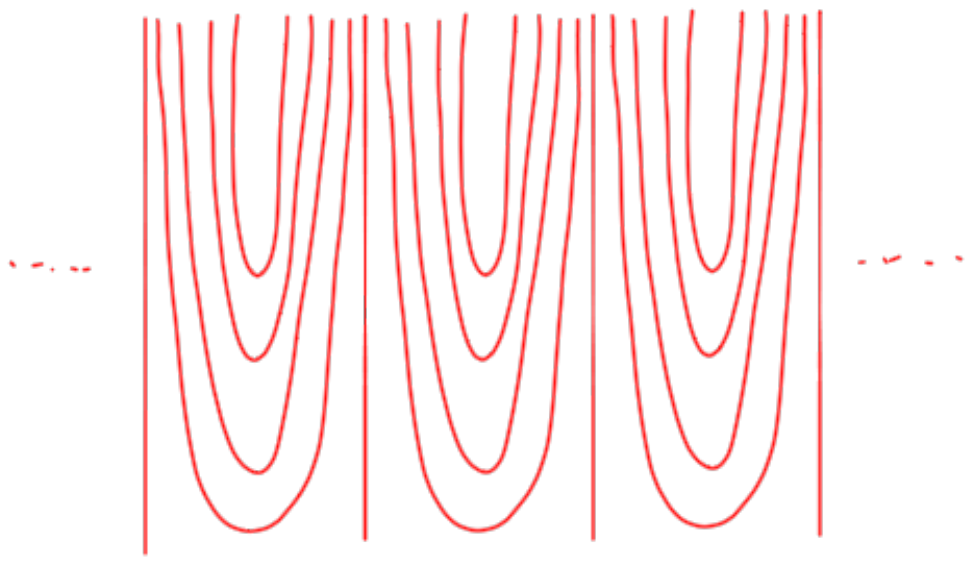}
\caption{The universal cover of a Reeb foliation of the torus.}\label{fig_tan_square}
\end{figure}

It will be useful to have the following sequential description of branching leaves. %
\begin{proposition}  \label{prop_sequential_definition}
Let $\cF$ be a foliation of a manifold $X$.  The following are equivalent  
\begin{enumerate} 
\item $l$ and $l'$ are nonseparated in $\Lambda(\cF)$,
\item for any sequence of points $x_n$ converging to a point $x$ on $l$, there exist points $x'_n$ on the same leaf as $x_n$, such that $x_n'$ converges to a point $x'$ on $l'$,
\item for any sequence of points $x_n$ converging to a point $x$ on $l$, and any point $x'$ on $l'$, there exist points $x_n'$ on $l_n$ converging to $x'$.
\end{enumerate} 
\end{proposition}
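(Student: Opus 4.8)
The plan is to prove $(1)\Leftrightarrow(2)$ and $(2)\Leftrightarrow(3)$ by unwinding the definition of the quotient topology on $\Lambda(\cF)$, using one standard fact from foliation theory: the saturation of an open subset of $X$ is open, equivalently the quotient map $q\colon X\to\Lambda(\cF)$ is open. Granting this, for any $p\in l$ and $\eps>0$ the set $q(\mathrm{Sat}(B_\eps(p)))$ is an open neighborhood of $l$ in $\Lambda(\cF)$, and it consists exactly of the leaves meeting $B_\eps(p)$; this is the only structural input needed, so the argument works for an arbitrary foliation of an arbitrary manifold, as the statement requires. I would dispose of $(3)\Rightarrow(2)$ at once, taking for $x'$ any point of the nonempty leaf $l'$. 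For $(2)\Rightarrow(1)$: suppose $x_n\to x\in l$ and $x'_n\to x'\in l'$ with $x_n$ and $x'_n$ on a common leaf $l_n$, and let $U\ni l$, $U'\ni l'$ be open in $\Lambda(\cF)$. Then $q^{-1}(U)$ and $q^{-1}(U')$ are saturated open sets containing $x$ and $x'$, so for large $n$ we have $x_n\in q^{-1}(U)$ and $x'_n\in q^{-1}(U')$; since these sets are saturated and $x_n,x'_n\in l_n$, the whole leaf $l_n$ lies in both, so $q(l_n)\in U\cap U'$. Hence $U\cap U'\ne\emptyset$, i.e.\ $l$ and $l'$ are nonseparated.

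For $(1)\Rightarrow(2)$ I would build the witnessing sequence directly. Fix $p\in l$ and $p'\in l'$. Since $l$ and $l'$ are nonseparated, for each $n$ the neighborhood $q(\mathrm{Sat}(B_{1/n}(p)))$ of $l$ meets the neighborhood $q(\mathrm{Sat}(B_{1/n}(p')))$ of $l'$, which produces a leaf $l_n$ meeting both $B_{1/n}(p)$ and $B_{1/n}(p')$. Choosing $x_n\in l_n\cap B_{1/n}(p)$ and $x'_n\in l_n\cap B_{1/n}(p')$ gives $x_n\to p\in l$ and $x'_n\to p'\in l'$ with $x_n,x'_n$ on the common leaf $l_n$, which is $(2)$.

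For $(2)\Rightarrow(3)$, which upgrades convergence to a single point of $l'$ into convergence to an arbitrary point, I would run a plaque-chain transport along $l'$. Given $x_n,x'_n\in l_n$ with $x_n\to x\in l$ and $x'_n\to x'\in l'$, and given $y'\in l'$, choose a path $\gamma$ in $l'$ from $x'$ to $y'$, foliation charts $W_1,\dots,W_k$, and a subdivision $0=t_0<\dots<t_k=1$ such that $\gamma([t_{i-1},t_i])$ lies in a single plaque $Q_i$ of $W_i$ and $\gamma(t_i)\in W_i\cap W_{i+1}$. Since $x'_n\to x'\in Q_1$, for large $n$ the plaque of $l_n$ through $x'_n$ in $W_1$ has transverse coordinate tending to that of $Q_1$, so it meets $W_1\cap W_2$ in points tending to $\gamma(t_1)\in Q_1\cap Q_2$; feeding these into $W_2$ and iterating through $W_3,\dots,W_k$ yields points $y'_n\in l_n$ with $y'_n\to y'$, which is $(3)$. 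Together with the first paragraph this gives $(1)\Leftrightarrow(2)\Leftrightarrow(3)$. (If $l=l'$ the same arguments apply with $p=p'$, and all three statements hold anyway via constant sequences.)

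The hard part---indeed the only part that is not soft point-set topology---is this transport step: I would need to choose the charts so that $\gamma$ stays within single plaques, make precise the phrase ``the plaque of $l_n$ tends to $Q_i$'' in terms of the transverse coordinate of each chart, and check that convergence near $\gamma(t_i)$ inside $W_i$ genuinely propagates into $W_{i+1}$. This is a routine but somewhat technical foliation argument; everything else is formal, following from the openness of $q$.
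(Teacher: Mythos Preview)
The paper does not actually prove this proposition; it says only that ``the proof is an exercise in the definition of foliations and the quotient topology.'' So there is no argument to compare against, and your techniques---openness of the quotient map $q$, saturated neighborhoods, and plaque-chain transport along $l'$---are exactly the right ingredients.

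That said, there is a genuine gap in your $(1)\Rightarrow(2)$. Condition (2) is stated for \emph{every} sequence $x_n\to x\in l$, but you only \emph{construct} one witnessing sequence; you never take an arbitrary sequence and produce the companion $x'_n$. This is not just a matter of rewriting more carefully: the universally quantified version of (2) is in fact \emph{false}. In the paper's own example of the lifted Reeb foliation (Figure~\ref{fig_tan_square}), take $l=\{x=\pi/2\}$ and $l'=\{x=-\pi/2\}$, which are nonseparated. The sequence $x_n=(\pi/2+1/n,0)$ converges to $(\pi/2,0)\in l$, but the leaf through $x_n$ lies entirely in the strip $(\pi/2,3\pi/2)\times\bR$ and so contains no point within distance $\pi$ of $l'$; these leaves accumulate on $\{x=\pi/2\}$ and $\{x=3\pi/2\}$, not on $l'$. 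The constant sequence $x_n\equiv x\in l$ gives an even cheaper failure when $l\ne l'$ and leaves are closed. The same counterexamples break the universally quantified (3).

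So the proposition as written needs (2) and (3) to be read existentially (``there exists a sequence $x_n\to x\in l$ \dots''), or with the implicit restriction that the leaves $l_n$ approach $l$ from the side on which $l'$ lies. Under the existential reading, your construction for $(1)\Rightarrow(2)$ is exactly what is required, your plaque-transport argument cleanly upgrades $(2)$ to $(3)$, and your $(2)\Rightarrow(1)$ already uses only a single witnessing sequence. With that caveat, your proof is correct and is the intended ``exercise.''
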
 
The proof is an exercise in the definition of foliations and the quotient topology. 
 
\begin{proposition}\label{prop_nonseparated_and_intersection} 
If $l, l'$ are nonseparated in $\hfs$ (resp.~$\hfu$) then no leaf of $\hfu$ (resp.~$\hfs$) intersects both.
\end{proposition}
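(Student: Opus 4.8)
The plan is to argue by contradiction. Suppose some leaf $L$ of $\hfu$ meets both $l$ and $l'$. Since nonseparated leaves are in particular distinct, $l$ and $l'$ are disjoint leaves of $\hfs$, and, using that each intersection is a single orbit by item \ref{item_unique_intersection} of Proposition \ref{prop_top_properties}, we obtain two \emph{distinct} orbits $\alpha := L \cap l$ and $\beta := L \cap l'$ of $\hflow$. The structural fact I want to exploit is that $L$, being a leaf of $\hfu$, is a properly embedded plane (or, if $L$ is singular, finitely many half-planes glued along their common boundary orbit), and by Lemma \ref{lem_trivial_foliated_lift} together with item \ref{item_properly_embedded} of Proposition \ref{prop_top_properties} it is trivially foliated by the orbits of $\hflow$. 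In particular the space of orbits of $L$ is Hausdorff, and $\alpha$, $\beta$ determine two distinct points of it.

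Next I would unpack the nonseparation hypothesis. Fixing points $a \in \alpha$ and $b \in \beta$, Proposition \ref{prop_sequential_definition} provides a sequence of leaves $l_n$ of $\hfs$ together with points $a_n, b_n \in l_n$ such that $a_n \to a$ and $b_n \to b$. Working in a flow-box neighborhood $U_a$ of $a$ (Definition \ref{def_flow_box}), $L$ contains the unstable sheet $S_L$ through $a$, while for $n$ large $l_n$ contains a stable sheet $S_n$ through $a_n$; topological transversality of $\hfs$ and $\hfu$ forces $S_n \cap S_L \neq \emptyset$, so $l_n$ meets $L$, and by item \ref{item_unique_intersection} of Proposition \ref{prop_top_properties} the intersection $l_n \cap L$ is then a single orbit $o_n$. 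The decisive point is to track $o_n$ in the orbit space of $L$: since $a \in l$, the sheet $S_n$ converges, as $a_n \to a$, to the stable sheet of $l$ through $a$, so $o_n \cap U_a \supseteq S_n \cap S_L$ converges to the flow-segment $\alpha \cap U_a$, whence $o_n \to \alpha$ in the orbit space of $L$. Running the identical argument at $b$ (using $b \in l'$) gives $o_n \to \beta$. Since the orbit space of $L$ is Hausdorff, $\alpha = \beta$ — the desired contradiction.

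I expect the main obstacle to be the flow-box continuity step: making rigorous that nearby sheets of the two transverse foliations must meet, and that these intersection segments converge to the expected flow-segment. This is routine for topologically transverse foliations but should be stated with some care. One must also deal with the possibility that $\alpha$ or $\beta$ is a singular orbit, in which case the corresponding point lies on the singular locus and one works in a model-prong flow box; at most one of $\alpha, \beta$ can be singular, since by Lemma \ref{lem_no_compact_leaves} the leaf $L$ contains at most one periodic orbit, and the argument is otherwise unchanged.
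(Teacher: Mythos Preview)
Your proof is correct and follows essentially the same approach as the paper's. The only cosmetic difference is that the paper picks a transversal $\tau \subset L$ from $\alpha$ to $\beta$ and observes that $l_n$ must hit $\tau$ in two distinct points (contradicting unique intersection), whereas you phrase the same observation as $o_n \to \alpha$ and $o_n \to \beta$ in the Hausdorff orbit space of $L$; these are equivalent ways of exploiting the trivial orbit-foliation of $L$ from Lemma~\ref{lem_trivial_foliated_lift}.
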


\begin{proof}
Assume that $l,l'$ are nonseparated leaves of $\hfs$ but that there exists a leaf $f$ of $\hfu$ intersecting both. Call $\gamma$, resp.~$\gamma'$, the orbit obtained as the intersection of $f$ and $l$, resp.~$l'$. (The orbits are well-defined thanks to Proposition \ref{prop_top_properties}.)
By Lemma \ref{lem_trivial_foliated_lift}, the foliation of $f$ by orbits is trivial, so we can find a segment $\tau$ in $f$ from $\gamma$ to $\gamma'$ that is transverse to this foliation by orbits.  Now consider a sequence $l_n$ of leaves of $\hfs$ limiting to $l$ and $l'$. Then, for $n$ large enough, $l_n$ will intersect $\tau$ in two distinct points, but this contradicts the fact that $f\cap l_n$ is a single orbit.  
\end{proof}

 \subsection{The orbit space}
To describe the way the spaces of leaves of $\hfs$ and $\hfu$ interact in $\wt{M}$, we will use the {\em orbit space} (defined below), which reduces the 3-dimensional picture to a 2-dimensional one.  
We continue to use the notation from the previous section, using $\flow$ to denote a pseudo-Anosov flow, $\fs$ and $\fu$  its weak stable and unstable foliations, and we write $\hflow, \hfs, \hfu$ for their respective lifts to $\wt M$.  

 \begin{definition} 
The \emph{orbit space} of $\flow$, denoted $\orb$, is the quotient 
of $\wt M$ by the relation ``being on the same orbit of $\hflow$'', 
equipped with the quotient topology.
 \end{definition}

Since leaves of $\hfs$ and $\hfu$ are unions of orbits, the leaves of each of these foliations descend to give partitions of the quotient space $\orb$.  In fact, we will show that the situation is much nicer than this: 

 \begin{theorem} \label{thm_orb_is_plane}
$\orb$ is homeomorphic to $\bR^2$ and $\hfs, \hfu$ descend to a pair of transverse 1-dimensional foliations on $\orb$, with isolated prong singularities.  
\end{theorem}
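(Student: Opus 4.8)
The plan is to build the homeomorphism $\orb \cong \bR^2$ by first establishing that $\orb$ is a simply connected (possibly non-Hausdorff) surface, then ruling out non-Hausdorffness, and finally invoking a classification statement for simply connected surfaces. The structure of transverse singular foliations will come along for free once we track how $\hfs$ and $\hfu$ descend.

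First I would set up local charts. Fix an orbit $o \subset \wt M$, lying in a (nonsingular, for now) leaf $l^s$ of $\hfs$ and a leaf $l^u$ of $\hfu$. By Lemma~\ref{lem_trivial_foliated_lift}, each of $l^s$ and $l^u$ is a plane trivially foliated by orbits, so near $o$ we may choose small transverse segments $\tau^s \subset l^s$ and $\tau^u \subset l^u$, each meeting $o$ once and transverse to the orbit foliation of its leaf. The product of a flow-box coordinate with these gives a neighborhood of $o$ in $\wt M$ homeomorphic to $\tau^s \times \tau^u \times (\text{flow direction})$ on which the orbit-equivalence relation is exactly collapsing the flow-direction factor; hence its image in $\orb$ is a neighborhood homeomorphic to $\tau^s \times \tau^u \cong \bR^2$, and in these coordinates $\hfs$ descends to the foliation by $\{pt\}\times \tau^u$ and $\hfu$ to $\tau^s \times \{pt\}$. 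At a singular orbit, the flow-box is a $3$-dimensional local $p$-prong (Definition~\ref{def_model_prong}), and collapsing the flow direction yields a $2$-dimensional $p$-prong chart, giving an isolated prong singularity of the descended foliations. This shows $\orb$ is locally Euclidean, that the projection $\wt M \to \orb$ is an open map (so $\orb$ is second countable), and that $\hfs, \hfu$ descend to transverse singular $1$-dimensional foliations with isolated prong singularities. It remains to identify the global topology.

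Next I would show $\orb$ is connected, Hausdorff, and simply connected. Connectedness is immediate since $\wt M$ is connected. For simple connectedness, a loop in $\orb$ lifts (using the local product structure and a path-lifting argument through the flow-boxes) to a path in $\wt M$ whose endpoints lie on a common orbit; since $\wt M$ is simply connected and orbits are connected, one contracts the loop — more carefully, one shows the projection $\wt M \to \orb$ induces an isomorphism on $\pi_1$ because its ``fibers'' (orbits) are contractible and it is a quotient by a foliation with a local product structure, exactly as for leaf spaces (Proposition~\ref{prop_leaf_spaces}); so $\pi_1(\orb) = \pi_1(\wt M) = 1$. The crucial point is Hausdorffness. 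Suppose $o \neq o'$ are non-separated points of $\orb$: then there are orbits $o_n \to o$ and $o_n \to o'$ (same orbits, two limits). Projecting to the stable leaf space $\leafs$ and the unstable leaf space $\leafu$, the leaves $\hfs(o_n)$ must converge both to $\hfs(o)$ and to $\hfs(o')$; if these stable leaves are distinct they are non-separated in $\leafs$, and likewise on the unstable side. But then $\hfs(o)$ and $\hfu(o')$ both meet the limiting configuration in a way that forces — via Proposition~\ref{prop_nonseparated_and_intersection} together with a transversal-pushing argument as in Lemma~\ref{lem_same_local_stable} — a closed nullhomotopic transversal to $\hfs$ or $\hfu$, contradicting Proposition~\ref{prop_closed_transversal}. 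The alternative, that $\hfs(o) = \hfs(o')$ (or the unstable analog), means $o, o'$ lie on a common stable leaf; but then $o' \in \cF^s(o)$ and Lemma~\ref{lem_same_local_stable} together with the trivial foliation of $\wt l$ (Lemma~\ref{lem_trivial_foliated_lift}) shows $o$ and $o'$ are separated in that plane, hence in $\orb$. So $\orb$ is Hausdorff.

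Finally, a connected, simply connected, Hausdorff $2$-manifold (without boundary, second countable) is homeomorphic to $\bR^2$ or $S^2$ by the classification of surfaces; $S^2$ is excluded because $\orb$ carries a $1$-dimensional foliation with only finitely many (prong) singularities, which would violate the Poincaré–Hopf index obstruction ($\chi(S^2) = 2 > 0$ but $p$-prongs contribute index $1 - p/2 \leq -1/2 < 0$). Hence $\orb \cong \bR^2$, and the descended foliations $\hfs, \hfu$ are the claimed transverse singular $1$-dimensional foliations with isolated prong singularities. The step I expect to be the main obstacle is the Hausdorffness argument: carefully converting a hypothetical pair of non-separated orbits into a nullhomotopic transversal requires combining the trivial orbit-foliation of leaves, the single-intersection property (Proposition~\ref{prop_top_properties}\ref{item_unique_intersection}), and Proposition~\ref{prop_nonseparated_and_intersection}, and making the ``push the transversal'' construction precise in the possibly-singular setting; everything else is either local chart bookkeeping or a citation.
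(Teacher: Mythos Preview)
Your outline follows the same architecture as the paper's proof (local charts, connected and simply connected, Hausdorff, surface classification, Euler-characteristic obstruction to rule out $S^2$), but there is one genuine gap and one place where your route is more circuitous than necessary.

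The gap is in the local chart step. You assert that on a flow-box neighborhood the orbit-equivalence relation is ``exactly collapsing the flow-direction factor,'' but this needs proof: two distinct columns $\{(s_1,u_1)\}\times I$ and $\{(s_2,u_2)\}\times I$ in the box could \emph{a priori} lie on the same global orbit of $\hflow$ (the orbit leaves the box and returns), so the projection to $\orb$ need not be injective on your transversal $\tau^s\times\tau^u$. The paper handles exactly this point: if some orbit meets the transverse disc $U$ twice, one uses a local transverse orientation to push the returning orbit segment off its unstable leaf and close it up inside $U$ to a nullhomotopic closed transversal to $\hfu$, contradicting Proposition~\ref{prop_closed_transversal}. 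Without this argument your charts are not known to inject into $\orb$, so you have not yet shown $\orb$ is a manifold.

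For Hausdorffness your contrapositive strategy is workable, but the invocation of ``a transversal-pushing argument as in Lemma~\ref{lem_same_local_stable}'' is misplaced and unnecessary. The paper's argument is direct: given distinct $o,o'$, either they lie on a common leaf (and the trivial orbit-foliation of that leaf supplies a separating transverse leaf), or $\hfs(o),\hfs(o')$ are separated in $\leafs$ (and a separating stable leaf works), or $\hfs(o),\hfs(o')$ are nonseparated. In the last case one observes that $\hfu(o),\hfu(o')$ \emph{must} be separated, since otherwise any stable leaf close to both $\hfs(o)$ and $\hfs(o')$ would intersect both $\hfu(o)$ and $\hfu(o')$, directly contradicting Proposition~\ref{prop_nonseparated_and_intersection}; one then separates by an unstable leaf. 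No closed transversal needs to be manufactured at this stage.
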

This is proved by Barbot \cite{Bar95a} and Fenley \cite{Fen94} in the case of Anosov flows (where there are no prong singularities), and Fenley--Mosher \cite{FM01} in the pseudo-Anosov case.  
 
 \begin{proof} 
 First, we show that $\orb$ is a 2-manifold, i.e., has local charts to $\R^2$.   
 Given an orbit $o$ of $\hflow$, let $U$ be a disc transverse to $\hflow$, intersecting $o$, and contained in a flow-box neighborhood of a point in $o$ so that its intersection with $\hfs$ and $\hfu$ gives a trivial product foliation if $o$ is nonsingular, or a standard prong if $o$ is singular.  
  Provided that no orbit of $\hflow$ intersects $U$ twice, we can use projection to $U$ to give a local chart from a neighborhood of $o$ to $\R^2$. 
  
 To show that no orbit intersects twice, we argue by contradiction. Suppose some orbit $o'$ intersects $U$ at two points. Since singular orbits are isolated, up to shrinking the discs we consider we can assume without loss of generality that $o'$ is nonsingular. 
First, we argue that the two intersections of $o'$ in $U$ are inside a trivially foliated region of $U$. This is obvious if $o$ is nonsingular so $U$ itself is trivially foliated.  Otherwise, either $o'\subset \hfs(o)$, or $o'$ is in exactly one of the connected components of $\wt M\smallsetminus \hfs(o)$. In the later case, we deduce that all the intersections of $o'$ with $U$ are in the same connected component of $U\smallsetminus \hfs(o)$, so we can take a trivially foliated rectangle $U'$ in $U$ that contains two hits of $o'$. In the first case, $o'$ must be in exactly one of the connected components of $\wt M\smallsetminus \hfu(o)$, and the same argument applies.

Since there are no closed orbits of $\hflow$, at least one of the local stable or local unstable manifolds of the two intersection points of $o'$ with $U'$ differ.  For concreteness, suppose the unstable intersection points differ.  Fix a transverse orientation for the leaves of $\hfu$ in $U'$, %
and use this to coherently 
push $o'$ off its unstable leaf and obtain a transverse segment to $\hfu$. This can then be closed up within $U'$ to a closed transversal to $\hfu$, contradicting Proposition \ref{prop_closed_transversal}.

Since $\orb$ is the quotient of $\wt M$ by properly embedded lines, it is connected, and the local transversal charts described above give it the structure of a locally trivial fibration, so it is simply connected.  
With these properties, to conclude $\orb$ is homeomorphic to $\bR^2$, it remains only to show that $\orb$ is Hausdorff and not $S^2$. 

Let $o$ and $o'$ be distinct orbits.  We will find some leaf $l$ of either $\hfs$ or $\hfu$ which separates them, in the sense that $o$ and $o'$ lie in distinct connected components of $\wt M \smallsetminus l$.  Since leaves are unions of orbits, this implies that $o$ and $o'$ are in disjoint open sets in the quotient $\orb$.  

First, observe that if $o$ and $o'$ lie on a common (say, stable) leaf $l$, then the fact that the orbit foliation of $l$ is trivial (Lemma \ref{lem_trivial_foliated_lift}), implies also that $o$ and $o'$ are separated in $\orb$ by the image of any unstable leaf intersecting $l$ between $o$ and $o'$.  
Next, if $\hfs(o)$ and $\hfs(o')$ are distinct but separated in the space of stable leaves, any stable leaf separating them will separate $o$ and $o'$.  If they are instead distinct and nonseparated, we claim that $\hfu(o)$ and $\hfu(o')$ are separated:  if not, a leaf sufficiently close to $\hfs(o)$ and $\hfs(o')$ would intersect both $\hfu(o)$ and $\hfu(o')$, contradicting Proposition \ref{prop_nonseparated_and_intersection}.    Thus, $o$ and $o'$ can be separated by a leaf of $\hfu$.   This is what we needed to show.  %

Finally, we argue that the projections of $\hfs, \hfu$ define singular foliations of $\orb$.  For this, we can simply use the local charts constructed at the beginning of this proof.  Each chart maps $\hfs$ and $\hfu$ leaves to local vertical/horizontal foliations of $\R^2$, or to a local picture of a model $p$-prong about a singular point.   This defines the desired foliation atlas.  
Thus, we have shown that $\orb$ is a Hausdorff, connected, simply connected 2-manifold, with two transverse foliations, possibly with isolated prong singularities.  From this it follows that $\orb$ is $\bR^2$, since $\chi(S^2) >0$, so it does not admit a nonsingular foliation, nor a foliation with prongs (which contribute negative Euler characteristic).  
 \end{proof}
 
 Since we showed earlier that orbits of $\hflow$ are properly embedded lines, 
Theorem \ref{thm_orb_is_plane} has the following consequence:
 \begin{corollary}  \label{cor_aspherical}
 If $M$ admits a pseudo-Anosov flow, then $\wt M \cong \bR^3$. 
 \end{corollary}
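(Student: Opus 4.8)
The plan is to exhibit $\wt M$ as the total space of a fiber bundle over $\orb$ with fiber $\bR$, and then use contractibility of $\orb \cong \bR^2$ (Theorem \ref{thm_orb_is_plane}) to conclude that this bundle is trivial, whence $\wt M \cong \orb \times \bR \cong \bR^3$.

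First I would record that $\hflow$ defines a free $\bR$-action on $\wt M$ whose orbit map is precisely the quotient map $\pi\colon \wt M \to \orb$. It is fixed-point free since a pseudo-Anosov flow is tangent to a non-vanishing vector field, and it has no periodic orbit in $\wt M$: by item \ref{item_no_contractible_orbit} of Proposition \ref{prop_top_properties} no periodic orbit of $\flow$ is null-homotopic, so none lifts to a closed curve.

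Next I would show that $\pi$ is locally trivial with fiber $\bR$. Given an orbit $o$, choose a transverse disc $D$ through a point of $o$ inside a flow-box neighborhood and small enough that no orbit of $\hflow$ meets $D$ twice -- exactly the type of disc used in the proof of Theorem \ref{thm_orb_is_plane}. Then $D$ maps homeomorphically onto an open neighborhood $V$ of $[o]$ in $\orb$, with continuous section $s\colon V \to D \subset \wt M$. The map
\[
 V \times \bR \longrightarrow \pi^{-1}(V), \qquad (v,t)\longmapsto \hflow^t(s(v)),
\]
is continuous; it is surjective because every point of $\pi^{-1}(V)$ lies on an orbit meeting $D$, and injective because no orbit meets $D$ twice and, being a properly embedded line (item \ref{item_properly_embedded} of Proposition \ref{prop_top_properties}), no orbit of $\hflow$ is periodic. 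Being a continuous bijection between $3$-manifolds, it is open by invariance of domain, hence a homeomorphism; this is the desired local triviality.

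Finally, since the flow coherently orients the fibers, $\pi$ has structure group $\Homeo^{+}(\bR)$, which is contractible -- equivalently, any bundle with fiber $\bR$ over the contractible, paracompact space $\orb \cong \bR^2$ is trivial. Hence $\wt M \cong \orb \times \bR \cong \bR^3$. The step requiring the most care is promoting the flow-box product structure, which only furnishes product neighborhoods of \emph{points}, to the genuine local trivialization $\pi^{-1}(V) \cong V \times \bR$ over a neighborhood in the orbit space; this is where completeness of the flow and properness of orbits enter. The remaining bundle-theoretic input is standard.
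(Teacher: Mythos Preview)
Your proof is correct and follows essentially the same approach as the paper. The paper is much terser: the locally trivial fibration structure of $\pi\colon \wt M \to \orb$ is already noted inside the proof of Theorem~\ref{thm_orb_is_plane}, so the corollary is stated as an immediate consequence of that theorem together with the fact (Proposition~\ref{prop_top_properties}\ref{item_properly_embedded}) that orbits are properly embedded lines.
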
 
In the case of Anosov flows, this follows also from a result of Palmeira \cite{Pal78}, which states that any simply connected manifold $M^n$ foliated by planes of dimension $n-1$ is necessarily homeomorphic to $\bR^{n}$. In the case of pseudo-Anosov flows, it also follows from \cite{GO89}, since the singular foliations of pseudo-Anosov flows induce essential laminations of the manifold.

In Chapter \ref{chap_examples_orbit_spaces}, we will give some concrete descriptions of the foliations that occur in the orbit spaces of the various flows described in Section \ref{sec_flows_examples}.  To do this accurately, we need some of the structure theory developed for these examples of {\em bifoliated planes} from Chapter \ref{chap_bifoliated_planes}. However, a reader who would like some visual images to keep in mind may wish to flip through Chapter \ref{chap_examples_orbit_spaces}, or at least its figures, before returning here.

\subsection{The action of $\pi_1(M)$ on the orbit space }

Thanks to Theorem \ref{thm_orb_is_plane}, we now have the main character of this text:  The orbit space $\orb$ is a topological plane equipped with two transverse 1-dimensional foliations, which are the projections of $\hfs$ and $\hfu$; we
denote these projected foliations by $\bfs$ and $\bfu$.   By construction, the projection gives an identification of $\hfs$ leaves with $\bfs$ leaves which descends to a (tautological) homeomorphism of leaf spaces.  Thus, we have
$\Lambda(\bfs) = \Lambda(\hfs)$ and $\Lambda(\bfu) = \Lambda(\hfu)$, and we will often pass back and forth between these spaces.    

The action of $\pi_1(M)$ on $\wt M$ by deck transformations preserves the lifted foliations and orbits of the flow, so descends to an action by homeomorphisms of $\orb$ preserving the foliations $\bfs$ and $\bfu$.  
We call this the \emph{action induced by $\flow$}.  

Observe that this picture is preserved by orbit equivalence, namely: 
\begin{observation}
Suppose $f\colon M \to M$ is an orbit equivalence between $\flow$ and $\psi$, and let $f_\ast$ denote its induced action on $\pi_1(M)$.  Then $f$ induces a map $\bar{f}\colon \mathcal{O}_\flow \to \mathcal{O}_\psi$ sending foliations to foliations and
such that, for each $g\in \pi_1(M)$ 
we have $\bar f(g \cdot o) = f_\ast(g) \cdot \bar f(o)$ for all $o \in \mathcal{O}_\flow$.
\end{observation}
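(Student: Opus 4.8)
The plan is to lift the orbit equivalence $f$ to the universal cover and then pass to the quotient by orbits. First I would use the fact that $f$ induces an isomorphism $f_\ast$ on $\pi_1(M)$ (well-defined up to inner automorphism, but we may fix a choice once and for all by fixing basepoints), and lift $f$ to a homeomorphism $\wt f\colon \wt M \to \wt M$ of universal covers. This lift satisfies the equivariance relation $\wt f(g \cdot \wt x) = f_\ast(g)\cdot \wt f(\wt x)$ for all $g\in\pi_1(M)$ and $\wt x\in\wt M$, by the standard theory of covering spaces. (Changing the choice of lift changes $\wt f$ by a deck transformation and $f_\ast$ by the corresponding inner automorphism, which is harmless.)

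Next I would observe that because $f$ is an orbit equivalence, it sends unoriented orbits of $\flow$ to unoriented orbits of $\psi$; hence $\wt f$ sends orbits of $\hflow$ to orbits of $\hpsi$ (the lift of $\psi$). Therefore $\wt f$ descends to a well-defined map $\bar f\colon \mathcal O_\flow \to \mathcal O_\psi$ on the quotients by orbits. Continuity of $\bar f$ follows from continuity of $\wt f$ together with the fact that the quotient maps $\wt M \to \mathcal O_\flow$ and $\wt M \to \mathcal O_\psi$ are continuous open surjections; applying the same argument to $\wt{f}^{-1}$ (a lift of the orbit equivalence $f^{-1}$) shows $\bar f$ is a homeomorphism. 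The equivariance relation $\bar f(g\cdot o) = f_\ast(g)\cdot \bar f(o)$ is then immediate from the equivariance of $\wt f$ by projecting the identity $\wt f(g\cdot \wt x) = f_\ast(g)\cdot \wt f(\wt x)$ to the orbit spaces and noting that the $\pi_1(M)$-actions on $\mathcal O_\flow$ and $\mathcal O_\psi$ are precisely the ones induced from the deck actions.

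Finally, to see that $\bar f$ sends foliations to foliations: by Proposition \ref{prop_characterize_stable} and the discussion following it, a direction-preserving orbit equivalence $f$ carries weak stable leaves of $\flow$ to weak stable leaves of $\psi$ and weak unstable to weak unstable, while a direction-reversing one swaps the two. In either case $\wt f$ carries leaves of $\hfs$ and $\hfu$ to leaves of $\widetilde{\psi}$'s weak foliations (possibly with the roles of $s$ and $u$ exchanged), since these are the lifts of the corresponding foliations downstairs. Passing to the quotient by orbits, and recalling that $\bfs,\bfu$ are exactly the images of $\hfs,\hfu$ in the orbit space (and similarly for $\psi$), we conclude $\bar f$ maps the bifoliation $(\bfs,\bfu)$ of $\mathcal O_\flow$ to the bifoliation of $\mathcal O_\psi$, as claimed.

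The main obstacle — really the only subtle point — is bookkeeping around the choice of lift and basepoints, so that the equivariance statement "$\bar f(g\cdot o) = f_\ast(g)\cdot\bar f(o)$ for each $g$" holds on the nose for the $f_\ast$ and $\bar f$ one has fixed; everything else is a routine unwinding of the definitions of the quotient topology and the covering-space action.
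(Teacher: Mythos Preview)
Your proof is correct and is exactly the standard unwinding of definitions; the paper does not give an explicit proof of this Observation, treating it as immediate from the construction of the orbit space and the discussion following Proposition~\ref{prop_characterize_stable}. Your write-up fills in precisely the details the paper leaves to the reader.
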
 

The following fundamental result of Barbot, based on an extension of arguments of Ghys in \cite{Ghy92},  gives the converse to this statement.  Thus, understanding the induced action of $\pi_1(M)$ on $\orb$ is enough to understand \emph{everything} about a flow, up to orbit equivalence.   

\begin{theorem}[Barbot \cite{Bar95a}, Théorème 3.4]\label{thm_action_determines_OEflow} 
Let $\flow$ and $\psi$ be  pseudo-Anosov flows on $M$.  Suppose there exists an isomorphism $f_\ast \colon \pi_1(M) \rightarrow \pi_1(M)$ and a homeomorphism  $\bar f \colon \orb \rightarrow \cO_{\psi} $ that is $f_\ast$-equivariant, i.e., 
 \[ \bar f(g \cdot o) = f_\ast(g) \cdot \bar f(o)\] 
 for all $o \in \mathcal{O}_\flow$ and $g\in \pi_1(M)$.  Then $\flow$ and $\psi$ are orbit equivalent by a homeomorphism $f$ inducing $f_\ast$ and $\bar f$.  
\end{theorem}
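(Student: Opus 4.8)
The plan is to reconstruct each flow, up to orbit equivalence, from its orbit space equipped with the $\pi_1(M)$-action, by exhibiting $\wt M$ as an \emph{oriented line bundle over the orbit space whose fibres are the orbits} and showing that this bundle is $\pi_1(M)$-equivariantly determined by its base. So the first step is to set up this bundle picture for both $\flow$ and $\psi$. By Proposition \ref{prop_top_properties}\ref{item_properly_embedded} every orbit of $\hflow$ is a properly embedded line in $\wt M$, oriented by the flow direction, and the argument proving Theorem \ref{thm_orb_is_plane} shows that the quotient map $q_\flow\colon \wt M \to \orb$ is a locally trivial fibration: the discs transverse to $\hflow$ used there (also at singular orbits, where the transverse picture is a prong) are exactly local trivialisations of $q_\flow$. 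Thus $q_\flow$ is an oriented $\bR$-bundle over $\orb \cong \bR^2$, hence trivial, and we may fix an identification $\wt M \cong \orb \times \bR$ in which $\hflow$-orbits are the fibres $\{o\}\times\bR$ with their standard orientation. In these coordinates the deck action reads $g\cdot(o,s)=(g\cdot o,\,\rho_g(o,s))$, where $o\mapsto g\cdot o$ is the induced action on $\orb$ and each $\rho_g(o,\cdot)$ is an orientation-preserving homeomorphism of $\bR$ (orientation preserving because deck transformations commute with $\hflow$), varying continuously with $o$ and satisfying the evident cocycle identity. We do the same for $\psi$, getting $\wt M \cong \cO_\psi\times\bR$, and then use $\bar f$ and $f_\ast$ to pull the $\psi$-bundle back along $\bar f$, obtaining an oriented $\bR$-bundle $E'\to\orb$ that carries a $\pi_1(M)$-action lifting the \emph{same} action on $\orb$ as the $\flow$-bundle --- this is precisely where the $f_\ast$-equivariance of $\bar f$ is used.

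It then suffices to produce a $\pi_1(M)$-equivariant, fibre-orientation-preserving bundle isomorphism over $\id_\orb$ between the $\flow$-bundle $\orb\times\bR$ and $E'$. Such an isomorphism is, by construction, a homeomorphism $\wt f\colon \wt M\to\wt M$ carrying $\hflow$-orbits to $\tilde\psi$-orbits, inducing $\bar f$ on orbit spaces, and satisfying $\wt f(g x)=f_\ast(g)\wt f(x)$; being $\pi_1(M)$-equivariant it descends (with the usual care about basepoints to pin down the induced map on $\pi_1$) to a homeomorphism $f\colon M\to M$ which is an orbit equivalence between $\flow$ and $\psi$ inducing $f_\ast$ and $\bar f$, which is the assertion. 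So this last step --- the existence of the equivariant bundle isomorphism --- is the heart of the matter, and the one I expect to be the main obstacle.

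For that step the crucial point is that the structure group $\Homeo^+(\bR)$ is contractible, so that both bundles are trivial as bare oriented $\bR$-bundles and the entire content is matching up the two $\pi_1(M)$-actions. The clean way to phrase it: an oriented $\bR$-bundle over $\orb$ together with a $\pi_1(M)$-action lifting a fixed action on $\orb$ is the same datum as an oriented $\bR$-bundle over the homotopy quotient $\orb\times_{\pi_1(M)} E\pi_1(M)$, which is homotopy equivalent to $M$ since $\orb$ is contractible and $M$ is aspherical (Corollary \ref{cor_aspherical}); as $\Homeo^+(\bR)$ is contractible, every such bundle is trivial, hence the two equivariant bundles are equivariantly isomorphic, and unwinding the homotopy quotient yields the desired $\wt f$. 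Alternatively one argues more concretely: pick any continuous, fibrewise-increasing identification $\wt f_0\colon \orb\times\bR\to E'$, measure its failure of equivariance as a continuous $\Homeo^+(\bR)$-valued cocycle over $\wt M$, and correct $\wt f_0$ to an equivariant map using a compact fundamental domain for $\pi_1(M)\acts\wt M$ (available because $M$ is closed) together with the possibility of averaging and interpolating increasing homeomorphisms of $\bR$ --- again just the contractibility of $\Homeo^+(\bR)$. On either route the remaining work is routine bookkeeping once the line-bundle picture above is in place; no geometric input about the foliations is needed beyond what is recorded here (indeed the hypothesis does not even require $\bar f$ to preserve the bifoliation, and its doing so a posteriori follows from Proposition \ref{prop_characterize_stable}).
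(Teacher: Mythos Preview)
Your reformulation is correct: $\wt M\to\orb$ is a $\pi_1(M)$-equivariant oriented $\bR$-bundle, and the theorem is equivalent to producing an equivariant bundle isomorphism over $\bar f$. The gap is that your justification for the existence of that isomorphism is exactly where the content of the theorem lies, and both arguments you offer are inadequate. Argument (a) is simply false: $G$-equivariant oriented $\bR$-bundles over a $G$-space are \emph{not} in bijection with bundles over the Borel construction when the base action is not free. Take $G=\bZ$ acting trivially on a point, with the generator acting on the fibre $\bR$ either trivially or by unit translation; these are not equivariantly isomorphic (no element of $\Homeo^+(\bR)$ conjugates a translation to the identity), yet both become the unique trivial bundle over $B\bZ=S^1$. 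Contractibility of $\Homeo^+(\bR)$ is present in this counterexample and buys nothing --- the Borel construction discards precisely the isotropy action at fixed points of $G$ on the base. What you never use is that in your situation the $G$-action on each \emph{total space} is free and properly discontinuous; this rules out the trivial-fibre-action branch above, but exploiting freeness is where the real work is. Translated downstairs, freeness means the orbit foliation on $M$ has no closed leaf with finite holonomy, which is exactly what makes Barbot's proof go: $M$ is then itself a Haefliger classifying space for the transverse holonomy groupoid of the orbit foliation, and uniqueness of classifying spaces produces a continuous $h\colon M\to M$ sending orbits bijectively to orbits, which is then upgraded to a homeomorphism by an averaging trick along orbits. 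Your argument (b) (``correct the cocycle over a fundamental domain'') is not a proof either; trivializing a nonabelian $1$-cocycle for a non-free action does not follow from contractibility of the coefficient group, as the same counterexample shows.

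A smaller point: insisting on a \emph{fibre-orientation-preserving} isomorphism cannot always succeed. If $\bar f$ carries $\bfs_\flow$ to $\bfu_\psi$ rather than $\bfs_\psi$ (both occur; see Proposition~\ref{prop_conjugacy_preserves_foliations} and the discussion after it), then at every fixed orbit the two fibrewise translations go in opposite directions and no orientation-preserving conjugacy exists; one must reverse fibre orientation globally, producing a direction-reversing orbit equivalence. Your sketch treats only the orientation-preserving case.
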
 

\begin{remark} 
In fact, the assumption that $\flow$ and $\psi$ are supported on the same manifold $M$ is not necessary.  By Corollary \ref{cor_aspherical}, any 3-manifold supporting a pseudo-Anosov flow is aspherical, hence a $K(\pi, 1)$ space.  Thus, the existence of an isomorphism between fundamental groups {\em implies} the manifolds are homeomorphic.
\end{remark} 

In the hypotheses of Theorem \ref{thm_action_determines_OEflow} we also did not assume that $\bar f$ sends foliations to foliations, but this is in fact a consequence of $\pi_1(M)$ equivariance.  
We show this below, using some elementary observations about the dynamics of the action of $\pi_1(M)$ on leaves which we demonstrate at the beginning of the next section. They do not depend on Theorem \ref{thm_action_determines_OEflow} so we import them here as a black box for the sake of thematic continuity.   

\begin{proposition} \label{prop_conjugacy_preserves_foliations}
If $\psi$ and $\flow$ are pseudo-Anosov flows, and  $\bar f \colon \orb \rightarrow \cO_{\psi} $ is a homeomorphism equivariant with respect to the actions of $\pi_1(M)$ on $\orb$ and $\cO_{\psi}$ respectively, then $\bar f$ sends leaves of the stable foliation of $\flow$ to leaves of either the stable or unstable foliation of $\psi$.    
\end{proposition}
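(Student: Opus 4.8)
The plan is to reconstruct the unordered pair of foliations $\{\bfs,\bfu\}$ from the $\pi_1(M)$-action on $\orb$ in a way that is automatically respected by any equivariant homeomorphism, and then to propagate this from the periodic leaves to every leaf. (Write $\bfs_\psi,\bfu_\psi$ for the induced foliations of $\cO_{\psi}$.) Because an orbit equivalence may reverse the flow direction, I only expect to get that $\bar f$ either sends \emph{every} stable leaf of $\flow$ to a stable leaf of $\psi$, or sends \emph{every} stable leaf of $\flow$ to an unstable leaf of $\psi$; pinning down which is hopeless.

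\emph{Local picture at periodic points.} A point $p\in\orb$ has nontrivial stabilizer $\Stab(p)\leq\pi_1(M)$ precisely when its image in $M$ is a periodic orbit of $\flow$, and then $\Stab(p)\cong\bZ$; fix a generator $g$, replaced by a power if $p$ is a prong singularity so that $g$ fixes each prong at $p$. By the dynamical observations of the next section, $g$ fixes the stable and the unstable leaf through $p$ and acts there as a topological saddle: up to replacing $g$ by $g^{-1}$, the local stable leaf at $p$ is exactly the set of points $q$ near $p$ with $g^{n}q\to p$ as $n\to+\infty$, and the local unstable leaf is exactly the set with $g^{n}q\to p$ as $n\to-\infty$. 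Hence the union $\Theta_p$ of these two local leaves is determined by the action of $\langle g\rangle$ on $\orb$ alone. Since $\bar f$ conjugates $\langle g\rangle\acts\orb$ to $\langle f_\ast(g)\rangle\acts\cO_{\psi}$, with $f_\ast(g)$ a deck transformation for $\psi$ fixing $\bar f(p)$ (and, by equivariance, each of its prongs), $\bar f$ carries $\Theta_p$ onto the corresponding set at $\bar f(p)$, matching the local stable and unstable leaves of $p$ with those of $\bar f(p)$ in one of the two possible ways. A short arc of $\bfs(p)$ issuing from $p$ is then mapped into a single local leaf $L_0$ of $\bfs_\psi$ or of $\bfu_\psi$; and since $\bfs(p)$ is exhausted by the images $g^{-n}(\text{arc})$, while $\bar f(g^{-n}x)=f_\ast(g)^{-n}\bar f(x)$ with $f_\ast(g)$ fixing the leaf containing $L_0$, the whole leaf $\bfs(p)$ is mapped into that leaf $L$; as $\bar f$ is a homeomorphism and leaves are properly embedded, $\bar f(\bfs(p))=L$. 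The same applies to $\bfu(p)$ and to every periodic leaf of $\bfs$ or $\bfu$.

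\emph{Globalization and consistency.} Here I would use the observation that every leaf of $\bfs$ is a limit in $\Lambda(\bfs)$ of periodic leaves of $\bfs$: a leaf of $\bfs$ is a flow-saturated surface upstairs, so even a wandering leaf accumulates on the non-wandering set, around which periodic weak-stable leaves are dense. Given such $\ell=\lim\ell_n$ with $\ell_n$ periodic, $\bar f(\ell_n)\to\bar f(\ell)$ locally uniformly and each $\bar f(\ell_n)$ is a leaf of $\bfs_\psi$ or of $\bfu_\psi$. A curve cannot be a local limit of stable leaves and of unstable leaves simultaneously, since these foliations are transverse and a stable and an unstable leaf meet in at most one point; so after passing to a subsequence all $\bar f(\ell_n)$ lie in one of the two foliations, and since that foliation is closed, $\bar f(\ell)$ is again a leaf of it. Thus $\bar f(\ell)$ is a single leaf of $\bfs_\psi$ or of $\bfu_\psi$. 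Finally, the sets $\{x\in\orb : \bar f(\bfs(x))\text{ is a stable leaf of }\psi\}$ and $\{x\in\orb : \bar f(\bfs(x))\text{ is an unstable leaf of }\psi\}$ are each open, by the same transversality argument applied to sequences $\bfs(x_n)\to\bfs(x)$, and they partition the connected plane $\orb$; hence one is empty, which is the statement.

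\emph{Main obstacle.} The crux is the globalization input, namely that periodic leaves are dense in $\Lambda(\bfs)$. This is immediate for transitive flows, where periodic orbits are dense in $M$, but for non-transitive pseudo-Anosov flows it must be deduced from the fact that a wandering weak leaf, being a flow-saturated surface, still comes $C^0$-close to the non-wandering set where periodic weak leaves are dense — one must take care not to replace this with the false statement that periodic orbits are dense in $M$. A secondary technical nuisance is the bookkeeping at singular orbits in the first step (passing to powers to fix prongs, and checking that $\bar f$ matches prongs to prongs), together with the non-Hausdorff, lozenge-type behavior of the leaf spaces, which is what forces the leaf-by-leaf argument in place of a single global product-chart argument.
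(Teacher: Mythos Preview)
Your proposal is correct and follows essentially the same approach as the paper: characterize periodic leaves dynamically via the saddle behavior of their stabilizer, use equivariance to send periodic leaves to leaves, invoke connectedness of the plane for the global stable-vs-unstable dichotomy, and extend to all leaves by density of the periodic ones. The ``main obstacle'' you flag---density of periodic stable leaves---is exactly what the paper isolates as a separate proposition (proved via the closing lemma) and imports as a black box; your limiting argument is in fact spelled out more carefully than the paper's one-line ``by continuous extension.''
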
 
 
\begin{proof} 
Let $\bfs_\flow$ and $\bfs_\psi$ denote the stable foliations for $\flow$ and $\psi$ respectively, and $\bfu_\flow$, $\bfu_\psi$ the unstable foliations.   Proposition \ref{prop_density} (taken here as a black box) shows that the set of leaves of $\fs$ that are fixed by nontrivial elements of $\pi_1(M)$ form a dense subset of the $\orb$.   A leaf fixed by a nontrivial element $g$ can be characterized (via Proposition \ref{prop_hyperbolic}) as a connected subset of $\orb$ containing a single fixed point for $g$ which is either a topological attractor or repeller (depending on the orientation of $g$), in the sense that it contains a unique fixed point $x$ and is equal to the set of points $y$ such that $g^n(y) \to x$ as $n \to \infty$ (in the attractor case) or as $n \to -\infty$ (for a repeller).  

If actions on $\orb$ and $\cO_\psi$ are conjugate by a map $\bar f$ (up to applying some automorphism $f_\ast$ of $\pi_1(M)$)  then $\bar f$ sends points fixed by $g$ to those fixed by $f_\ast(g)$, and preserves attractors and repellers.  
This shows that $\bar f$ sends a dense subset of leaves of $\fs_\flow$ to leaves of either $\fs_\psi$ or $\fu_\psi$ (connectedness of the plane implies that one fixed leaf of $\fs_\flow$ is sent to one of $\fs_\psi$ if and only if they all are, otherwise they are all globally swapped).  By continuous extension, we conclude $f$ sends foliations to foliations, as claimed. 
\end{proof} 

The dynamics of the action of $\pi_1(M)$ on $\orb$ also allows one to detect whether the orbit equivalence between $\flow$ and $\psi$ is direction preserving or not: $\bar f$ sends $\bfs_\flow$ to $\bfs_\psi$ when $f$ preserves the orientation of orbits, and to $\bfu_\psi$ when $f$ reverses direction.  
Theorem \ref{thm_action_determines_OEflow} is in part why we choose to consider undirected orbit equivalence.  
Forgetting the labels $\bfs$ and $\bfu$ of the two foliations on $\orb$, one obtains the topological picture of a bifoliated plane with an action of the group $\pi_1(M)$.  Barbot's theorem says this data is enough to recover the flow up to unoriented orbit equivalence. 

The proof given in \cite{Bar95a} treats only the Anosov flow case, but in fact applies equally well to the pseudo-Anosov setting. 
To show this, and for the sake of completeness, we include an outline.   Since it uses a sophisticated result on classifying spaces of foliations, we also give an alternative low-tech argument (for the Anosov case) which does not require any machinery.  

\begin{proof}[Sketch of Barbot's proof of Theorem \ref{thm_action_determines_OEflow}]
This proof uses the theory of classifying spaces for the transverse holonomy groupoids of foliations, in the sense of Haefliger \cite{Hae84}.  The foliations we consider are the 1-dimensional foliations of $M$ by orbits of $\flow$ and $\psi$.  

Associated to any foliation is a topological groupoid called the {\em transverse holonomy} groupoid. One definition of this groupoid is 
as the space of triples $(x, \tau, y)$ where $x$ and $y \in M$ lie in a common leaf (in our case, orbit) and $\tau$ is an orbit-segment between them, equipped with the quotient relation where $(x, \tau, y)$ is considered equivalent to $(x, \tau', y)$ if the germs of the holonomy along the paths $\tau$ and $\tau'$ agree.  Composition in the groupoid is via concatenation where defined.  (Equivalent, alternative definitions, such as that obtained by holonomy of a global section, and further discussion can be found in \cite{Hae84}.)   In the case of 1-dimensional foliations, an element $(x, \tau, y)$ is equivalent to some other only when there exists a closed orbit with finite holonomy group.  Thus, in the case of pseudo-Anosov flows, there are no identifications.  This means that $M$ as a foliated space, is itself a {\em classifying space} for the orbit foliation of any pseudo-Anosov flow it supports.

Now, suppose that $\flow$ and $\psi$ satisfy the hypotheses of Theorem \ref{thm_action_determines_OEflow}.  The equivariant homeomorphism $\bar{f}$ of orbit spaces can easily be shown to induce an equivalence of transverse holonomy groupoids for the orbit foliations of $\flow$ and $\psi$.  The uniqueness of classifying spaces then implies that the pairs $(M, \flow)$ and $(M, \psi)$ are related in the following sense:  There exists a continuous map $h\colon M \to M$, which is a homotopy equivalence inducing $f_\ast$ on $\pi_1(M)$,  and sending orbits of $\flow$ to orbits of $\psi$ bijectively.  

To improve the map $h$ to an orbit equivalence, one needs to modify it to be injective along orbits.  For this one can use an averaging trick.  
Lift $h$ to a continuous map $\wt h\colon \wt{M} \to \wt{M}$.  Since stable leaves are characterized as maximal sets of orbits that converge in the future, and unstable leaves similarly via convergence in the past (see Remark \ref{rem_forward_converge_implies_local_stable}), it follows that $h$ sends stable leaves for $\hflow$ to either stable or unstable leaves (globally) for $\wt \psi$.  As a further consequence, for sufficiently large $T$, the image of every orbit segment of the form 
$\{ \flow^t(x) : t \in [0, T]\}$ under $\wt h$ has distinct endpoints.  One then defines $f\colon M \to M$ by ``averaging" $\wt h$ along orbit segments of length $T$, then projecting to $M$.  Since orbits are $C^1$ curves, one can differentiate this map along the orbits and check by hand that it has nowhere zero derivative, hence is injective. 
For further details, see \cite[p.257]{Bar95a}. 
\end{proof}

For Anosov flows, one can give an alternative proof of Barbot's theorem by showing that any (topological) Anosov flow is orbit equivalent to a ``model'' flow that can be described only using the induced action.  This is done in \cite[Section 2]{BFM25}.  For the sake of simplicity, here we only treat the case of smooth, \emph{transversally orientable} Anosov flows, i.e., flows on orientable $3$-manifolds such that both their stable and unstable foliations admit transverse orientations. Equivalently, those are the flows such that the induced actions on the orbit space preserve a choice of orientation for both $\hfs$ and $\hfu$ in $\orb$.

\begin{theorem}\label{thm_induced_action_model_flow}
Let $\pi_1(M)\curvearrowright (\orb,\bfs,\bfu)$ be the induced action of a transversally orientable Anosov flow. Fix a coherent orientation on leaves of the foliation $\bfs$.
For all $x\in \orb$, let $\bfs_+(x)$ be the connected component of $\bfs(x)\smallsetminus \{x\}$ on the positive side of $x$ and define
\[
W^s_+ :=\{ (x,y) \in \orb \times \orb \mid y \in \bfs_+(x)\}.
\]
Let $\Psi$ be the $1$-dimensional foliation of $W^s_+$ where the leaf through a point $(x_0,y_0)\in W^s_+$ is the set $\{ (x_0,y) \in \orb \times \orb \mid y \in \bfs_+(x_0)\}$.
Then $W^s_+$ inherits a natural (coordinate-wise) action of $\pi_1(M)$ and,
\begin{enumerate}
\item $W^s_+/\pi_1(M)$ is a $3$-manifold homeomorphic %
 to $M$.
\item The foliation $\Psi$ of $W^s_+$ descends to a $1$-dimensional foliation $\psi$ of $W^s_+/\pi_1(M)$, and $\flow$ is orbit equivalent to any flow-parametrization of $\psi$.
\end{enumerate}
\end{theorem}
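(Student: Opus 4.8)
The plan is to realize $W^s_+/\pi_1(M)$ as an explicit model for the manifold $M$ by constructing a $\pi_1(M)$-equivariant homeomorphism from $W^s_+$ to $\wt M$ carrying the foliation $\Psi$ to the orbit foliation $\hflow$, and then invoke Theorem~\ref{thm_action_determines_OEflow} (or rather its underlying construction) to conclude. First I would fix, once and for all, a coherent transverse orientation on $\bfs$ and on $\bfu$; since the flow is transversally orientable this is exactly the data that lets us speak of $\bfs_+(x)$. The key geometric observation is that a point of $W^s_+$ is a pair $(x,y)$ with $y$ lying on the positive half of the stable leaf of $x$ in $\orb$; unwinding the definition of $\orb$ as the orbit space of $\hflow$, such a pair corresponds to an orbit $\tilde o_x$ of $\hflow$ together with a choice of a point on the positive half-leaf $\hfs_+(\tilde o_x)$, which after projecting to $\orb$ determines the ``height'' along the flow direction. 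More precisely, I would build the map as follows: given $(x,y)\in W^s_+$, the orbit $\tilde o_x \subset \wt M$ over $x$ is a properly embedded line (Proposition~\ref{prop_top_properties}\ref{item_properly_embedded}), and I claim there is a canonical homeomorphism from $\tilde o_x$ onto $\bfs_+(x)$ — or rather, onto a half-open version of it — coming from the fact that flowing forward along $\tilde o_x$ moves the associated local stable leaf monotonically; running this backward, the second coordinate $y\in\bfs_+(x)$ pins down a unique point of $\tilde o_x$. This gives a bijection $\Theta\colon W^s_+ \to \wt M$.

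Next I would check that $\Theta$ is a homeomorphism. Continuity in both directions follows from continuity of the foliations $\bfs$, $\bfu$ on $\orb$ and local triviality of the flow (flow-box neighborhoods, Definition~\ref{def_flow_box}); the main point to verify is that the ``height'' assignment $(x,y)\mapsto$ (point on $\tilde o_x$) varies continuously, which is a local statement provable in a single flow box. Equivariance is immediate: $\pi_1(M)$ acts on $W^s_+$ coordinatewise and on $\wt M$ by deck transformations, and by construction both actions are compatible with projection to $\orb$ and with the flow; because deck transformations commute with $\hflow$, the monotone identification of $\tilde o_x$ with $\bfs_+(x)$ is equivariant. Consequently $\Theta$ descends to a homeomorphism $W^s_+/\pi_1(M) \to \wt M/\pi_1(M) = M$, which is assertion~(1). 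By construction $\Theta$ carries each leaf $\{x_0\}\times\bfs_+(x_0)$ of $\Psi$ onto the orbit $\tilde o_{x_0}$ of $\hflow$, so $\Psi$ descends to a one-dimensional foliation $\psi$ of $W^s_+/\pi_1(M)$ whose leaves are exactly (the images of) the orbits of $\flow$; any flow parametrizing $\psi$ is then orbit equivalent to $\flow$ essentially by definition, though to be careful one should cite Theorem~\ref{thm_action_determines_OEflow} with $f_\ast = \id$ and $\bar f = \id$, since that theorem upgrades the equality of orbit foliations to a genuine orbit equivalence between any two parametrizations.

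The main obstacle I anticipate is the construction of the canonical, continuous, $\pi_1$-equivariant ``height'' function identifying $\tilde o_x$ with $\bfs_+(x)$ — i.e., making precise the intuition that ``flowing forward along an orbit shifts its image in $\orb$ monotonically along the stable leaf of the base orbit.'' The subtlety is that $\tilde o_x$ and $\hfs(\tilde o_x)$ live in $\wt M$ while $x$ and $\bfs_+(x)$ live in $\orb$, and one must choose consistently, at the level of the whole bifoliated plane, which point of $\bfs_+(x)$ corresponds to which parameter value on $\tilde o_x$; the correct device is to use a second transversal — pick a leaf $\ell^u$ of $\bfu$ through $x$ and trivialize a neighborhood of $\tilde o_x$ in $\wt M$ as (transversal in $\hfu$) $\times$ (orbit direction), then compare with the product structure on a neighborhood of $x$ in $\orb$. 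One has to verify that the resulting parametrization is independent of the auxiliary choices up to the right equivalence, and that it glues across flow boxes — this is where continuity of the (merely topological) foliations $\bfs,\bfu$ is used and where a little care with non-Hausdorff phenomena in the leaf spaces is needed, although for \emph{this} particular construction everything happens inside a single orbit and its neighborhood, so the non-Hausdorffness of $\Lambda(\hfs)$ does not actually intervene. Once this local-to-global gluing is done cleanly, the rest is formal.
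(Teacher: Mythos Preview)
Your high-level strategy matches the paper's: construct a $\pi_1(M)$-equivariant homeomorphism $W^s_+ \to \wt M$ carrying $\Psi$ to the orbit foliation of $\hflow$, then pass to quotients. The gap is precisely at the step you flag as the ``main obstacle'': the construction of the map $\Theta$ itself.

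Your proposed mechanism --- ``flowing forward along $\tilde o_x$ moves the associated local stable leaf monotonically'' --- is confused. The weak stable leaf $\hfs(\tilde o_x)$ is saturated by orbits, so it does not move under the flow at all; the projection of $\tilde o_x$ to $\orb$ is the single point $x$ for all time. There is no intrinsic way to read off a point of $\bfs_+(x)$ from a point of $\tilde o_x$ using only the weak foliations. Your fallback suggestion, to trivialize locally via flow boxes and a transversal in $\hfu$, does not glue to a global equivariant map: different flow boxes give different local identifications of the $t$-direction along $\tilde o_x$ with the $\bfs$-direction, and nothing forces these to be compatible, let alone $\pi_1(M)$-equivariant.

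The paper supplies exactly the missing structure: it uses the \emph{strong} stable foliation $\fss$ together with an adapted metric. The map is $h(x,y) = $ the unique point on the orbit $x$ whose distance along $\fss$ to the orbit $y$ equals $1$. Uniform contraction of $\fss$ under the flow guarantees this point exists and is unique; equivariance is immediate because deck transformations act by isometries preserving $\fss$. This is the concrete device that replaces your unspecified ``height function''. The paper notes (via a footnote) that every topological Anosov flow is orbit equivalent to one admitting a strong stable foliation, so the apparent restriction to smooth flows is not essential.
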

The same holds replacing $\fs$ with $\fu$, and defining the analogous set $W^u_+$.  

Using proposition \ref{prop_conjugacy_preserves_foliations}, Barbot's Theorem \ref{thm_action_determines_OEflow} follows directly in the case treated by Theorem \ref{thm_induced_action_model_flow}:  if $\flow$ and $\psi$ are Anosov flows with conjugate induced actions, then the conjugacy sends $\bfs_\flow$ to either $\bfs_\psi$ or $\bfu_\psi$.  The model $W^s_+/\pi_1(M)$ obtained using $\flow$ will thus be orbit equivalent to either the model $W^s_+/\pi_1(M)$ or $W^u_+/\pi_1(M)$ constructed for 
$\psi$, and hence the two flows are orbit equivalent to each other.  

\begin{proof}
The proof will use strong stable foliations for $\flow$ and an adapted metric, so applies to smooth Anosov flows, or any flow orbit equivalent to one that admits these.\footnote{This is in fact not a restriction: every topological Anosov flow is orbit equivalent to one with a strong stable foliation, see \cite[Proposition 5.3]{Pot25}.}

With this set-up, we define a map $h\colon W^s_+ \to \wt{M}$ as follows:  A point $(x,y) \in W^s_+$ specifies two orbits $x$ and $y$, on the same weak-stable leaf.  Let 
$h(x,y)$ be the point on the orbit $x$ such that the distance along $\fss$ to the orbit $y$ is exactly 1 unit.  
This specifies a unique point thanks to our choice of adapted metric, where strong stable leaves are uniformly contracted under the flow.  
It is easy to see that $h$ is bijective and continuous, with continuous inverse, and thus a homeomorphism.  

We now show that $h$ is $\pi_1(M)$--equivariant, where the action of $\pi_1(M)$ on $W^s_+$ is the (diagonal) action induced from the orbit space.  Given $\gamma \in \pi_1(M)$ and $(x,y) \in W^s_+$, by definition $h(\gamma(x), \gamma(y))$ is the point on the orbit of $\gamma(x)$ whose distance along the (lifted) strong stable foliation is 1 from the orbit $\gamma(y)$.  Since deck transformations act by isometries on $\wt{M}$ and preserve foliations, this is simply the image under $\gamma$ of the point on orbit $x$ distance $1$ from the orbit $y$, in other words equal to $\gamma( h(x,y))$.  

Thus, $h$ descends to a homeomorphism $\bar{h}\colon W^s_+/\pi_1(M) \to M$.  The constant first-coordinate foliation on $W^s_+$ is invariant under $h$ and its image under $\bar{h}$ is exactly the foliation by orbits of $\flow$.  This completes the proof.  
\end{proof}

\section{Dynamics of the action of $\pi_1(M)$ on $\orb$}\label{sec_induced_action}
Thanks to Theorem \ref{thm_action_determines_OEflow}, one should at least in theory be able to recover all topological properties of a pseudo-Anosov flow from the dynamics of the induced action of $\pi_1(M)$ on the orbit space.   In this section we describe a few of the essential properties of this action, which we will later use to construct an explicit dictionary between features of the orbit space and topological properties of the flow. 

The first important property of the action is on the level of individual elements.  

\begin{proposition} \label{prop_closed_orbit_fixed_point}
Let $\gamma$ be a nontrivial element of $\pi_1(M)$.  The induced action of $\gamma$ on $\orb$ has a fixed point if and only if $\flow$ has a periodic orbit in the free homotopy class of $\gamma$. 
\end{proposition}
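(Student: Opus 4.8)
The plan is to pass to the universal cover. By definition $\orb$ is the quotient of $\wt M$ by its $\hflow$-orbits, and deck transformations commute with $\hflow$, so $\gamma$ fixes a point $o\in\orb$ exactly when $\gamma$ leaves the corresponding orbit $\tilde o\subset\wt M$ invariant as a set. Thus the proposition is the equivalence ``$\gamma$ preserves some orbit of $\hflow$'' $\iff$ ``$\flow$ has a periodic orbit freely homotopic to $\gamma$'', and I would prove it by elementary covering-space arguments, using that $\pi_1(M)$ acts freely and properly discontinuously on $\wt M$.

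\emph{Direction $(\Leftarrow)$.} Recall that the free homotopy class of $\gamma$ is its conjugacy class in $\pi_1(M)$. If $\alpha$ is a periodic orbit of $\flow$ in this class, choose any lift $\tilde\alpha\subset\wt M$ of $\alpha$; its setwise stabilizer in $\pi_1(M)$ is a nontrivial subgroup containing a conjugate of $\gamma$ (traversing $\alpha$ with the multiplicity with which $\gamma$ runs around it). Replacing $\tilde\alpha$ by $\eta\tilde\alpha$ for a suitable $\eta\in\pi_1(M)$, we may arrange that $\gamma\tilde\alpha=\tilde\alpha$, and then the image of $\tilde\alpha$ in $\orb$ is a point fixed by $\gamma$.

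\emph{Direction $(\Rightarrow)$.} Suppose $\gamma\cdot o=o$ and let $\tilde o$ be the $\hflow$-orbit corresponding to $o$, so $\gamma\tilde o=\tilde o$. By Proposition \ref{prop_top_properties} item \ref{item_properly_embedded}, $\tilde o$ is a properly embedded line, homeomorphic to $\bR$. Set $H=\{g\in\pi_1(M):g\tilde o=\tilde o\}$, which contains $\gamma$. Since $\pi_1(M)$ acts freely and properly discontinuously on $\wt M$, so does $H$ on $\tilde o\cong\bR$; hence $\tilde o\to\tilde o/H$ is a covering onto a connected boundaryless $1$-manifold, and as $H\neq 1$ this forces $\tilde o/H\cong S^1$ and $H\cong\bZ$. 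Let $p\colon\wt M\to M$ be the covering projection and $\alpha:=p(\tilde o)$, an orbit of $\flow$. Two points of $\tilde o$ have the same image under $p$ precisely when they differ by an element of $H$ (a deck transformation taking one point of the orbit $\tilde o$ to another must send $\tilde o$ to itself), so $\tilde o/H\to\alpha$ is a continuous bijection from a compact space, hence a homeomorphism; thus $\alpha\cong S^1$ is a periodic orbit. Finally, since $\tilde o\cong\bR$ is simply connected, the covering $\tilde o\to\alpha$ is the universal cover of the circle, whose deck group is infinite cyclic generated by ``once around $\alpha$''; comparing the inclusion of $H$ into that deck group with the homeomorphism $\tilde o/H\cong\alpha$ shows $H$ is the full deck group, so $H=\langle[\alpha]\rangle$. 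Therefore $\gamma\in H$ is conjugate to a power of $[\alpha]$, i.e.\ freely homotopic to $\alpha$ run the appropriate number of times, and $\flow$ has a periodic orbit in the free homotopy class of $\gamma$.

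I do not expect a genuine difficulty: both directions are routine once Proposition \ref{prop_top_properties} is available. The point needing the most care in $(\Rightarrow)$ is the identification of $\mathrm{Stab}(\tilde o)$ with the deck group of $\tilde o\to\alpha$ (equivalently, that $\tilde o$ is a full connected component of $p^{-1}(\alpha)$; note injectivity of $\pi_1(\alpha)\to\pi_1(M)$ also follows from asphericity of $M$, Corollary \ref{cor_aspherical}, though it is not strictly needed here), together with being explicit about the convention that a ``periodic orbit in the free homotopy class of $\gamma$'' is allowed to be traversed with multiplicity when $\gamma$ is not primitive in $\pi_1(M)$.
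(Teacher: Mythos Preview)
Your proof is correct and follows essentially the same covering-space approach as the paper: both directions reduce to the equivalence between $\gamma$ preserving a lifted orbit and $\gamma$ being freely homotopic to a periodic orbit. Your $(\Rightarrow)$ direction is more detailed than the paper's---you explicitly identify $\mathrm{Stab}(\tilde o)\cong\bZ$ and carefully verify that the projection is a circle, whereas the paper simply asserts that $\gamma$ translates along $\tilde o$ (since it acts by isometries and $\flow$ is nonsingular) and then changes basepoint---and your explicit flagging of the multiplicity convention is a point the paper leaves implicit.
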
 

\begin{proof} 
First observe that the action of $\gamma$ fixes an orbit $\alpha \in \orb$ if and only if the deck transformation $\gamma$ of $\wt M$ preserves the orbit $\alpha$ of $\hflow$. Suppose this is the case.  Since deck transformations act by isometries of $\wt M$ and $\flow$ is nonsingular, the action of $\gamma$ translates along $\alpha$.  
 Up to replacing $\gamma$ with a conjugate, we may consider the basepoint of $\pi_1(M)$ to be on $\alpha$; so $\alpha$ descends to a closed curve in $M$ (which is by definition an orbit of $\flow$) that represents $\gamma$ in $\pi_1(M)$.  

Conversely, if there is some periodic orbit in the free homotopy class of $\gamma$, each one of its lifts to $\wt M$ is an orbit of $\hflow$ invariant by some conjugate of $\gamma$, depending on the lift.  Thus, it is fixed by this conjugate in $\orb$.  Since the property of acting with a fixed point is a conjugacy invariant, $\gamma$ acts on $\orb$ with a fixed point as well. 
\end{proof} 

To illustrate of the power of the orbit space perspective, we use Proposition \ref{prop_closed_orbit_fixed_point} to now prove the main theorem of \cite{Fen97}.  This result was also independently proved in \cite[Th\'eor\`eme F]{Bar95b}. 

\begin{corollary}[Closed orbits are primitive]\label{cor:closed_orbits_primitive}
Let $\gamma \in \pi_1(M)$ and suppose $\flow$ has a periodic orbit freely homotopic to $\gamma^n$ for some $n$.  Then $\flow$ has a closed orbit freely homotopic to $\gamma$.
\end{corollary}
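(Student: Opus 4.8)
The plan is to push everything down to the orbit space and turn the statement into a fixed-point problem. By Proposition~\ref{prop_closed_orbit_fixed_point}, $\flow$ has a periodic orbit freely homotopic to $\gamma$ if and only if $\gamma$ acts on $\orb$ with a fixed point, so it suffices to prove: \emph{if the homeomorphism $\gamma$ of $\orb$ has a periodic point, then it has a fixed point.} The hypothesis, together with Proposition~\ref{prop_closed_orbit_fixed_point} applied to $\gamma^n$, gives a point $o\in\orb$ with $\gamma^n(o)=o$ (we may assume $n\geq 1$, and $\gamma\neq e$ since periodic orbits are not null-homotopic by Proposition~\ref{prop_top_properties}\ref{item_no_contractible_orbit}; as $\pi_1(M)$ is torsion-free by Corollary~\ref{cor_aspherical}, $\gamma$ has infinite order). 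Thus the $\gamma$-orbit $\{o,\gamma o,\dots,\gamma^{m-1}o\}$, where $m\mid n$, is a finite, $\gamma$-invariant subset of $\orb\cong\R^2$. If $\gamma$ acts on $\orb$ preserving orientation, we are done immediately: Brouwer's plane translation theorem (in the form ``a fixed-point-free orientation-preserving homeomorphism of $\R^2$ has no periodic points'') forces $\gamma$ to have a fixed point, and Proposition~\ref{prop_closed_orbit_fixed_point} concludes.

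The content is the orientation-reversing case, where the plane translation theorem genuinely fails --- e.g.\ $(x,y)\mapsto (x+1-y^2,\,-y)$ is an orientation-reversing homeomorphism of $\R^2$ with a period-$2$ orbit and no fixed point. Here one must use that $\gamma$ preserves each of the transverse (possibly singular) foliations $\bfs$, $\bfu$. The strategy is: $\gamma$ acts on the leaf space $\Lambda(\bfs)$, which is a simply connected, separable, possibly non-Hausdorff $1$-manifold (a non-Hausdorff tree, by Proposition~\ref{prop_leaf_spaces}), and it has a periodic point there (the image of $o$'s leaf, fixed by $\gamma^n$); since such order trees have the fixed-point property for homeomorphisms with a bounded --- in particular finite --- orbit, $\gamma$ fixes some leaf $L^s\in\Lambda(\bfs)$, and likewise some leaf $L^u\in\Lambda(\bfu)$. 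If $\gamma$ fixes a point on $L^s$ (a line in $\orb$) we are done; otherwise $\gamma|_{L^s}$ is a fixed-point-free homeomorphism of $\R$, hence orientation-preserving, and since $\gamma$ reverses orientation of $\orb$ this forces $\gamma$ to interchange the two sides of $L^s$, and similarly the two sides of $L^u$. One then wants $L^s$ and $L^u$ to intersect: if they meet, they meet in a single point by Proposition~\ref{prop_top_properties}\ref{item_unique_intersection}, and that point is fixed by $\gamma$ because $\gamma$ preserves both $L^s$ and $L^u$ --- contradiction, so in fact $\gamma$ must have fixed a point on one of the two leaves. An equivalent packaging: isolate the closure of the bounded part of the complement of $\bigcup_i\bfs(\gamma^i o)\cup\bigcup_i\bfu(\gamma^i o)$; this is a compact, $\gamma$-invariant planar set, and if it is a connected, simply connected continuum one applies its fixed-point property (Lefschetz/Brouwer, which needs no orientation hypothesis), while if it is empty then all the points $\gamma^i o$ lie on a single leaf of $\bfs$ or of $\bfu$, which $\gamma$ then fixes, reducing to a homeomorphism of $\R$ with a periodic point --- which always has a fixed point.

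The main obstacle is precisely making the last step clean in the orientation-reversing case: ruling out the configuration in which $\gamma$ swaps two well-separated ``clusters'' of the finite orbit $\{\gamma^i o\}$ with no $\gamma$-invariant crossing of a stable and an unstable leaf, i.e.\ showing that $\gamma$-invariance of both foliations forces the chosen invariant stable and unstable leaves to actually meet (or forces the invariant compact set above to be connected and simply connected). I would handle this by a case analysis on the relative positions of the leaves $\bfs(\gamma^i o)$ and $\bfu(\gamma^i o)$ in the two leaf spaces, using that two nonseparated leaves of $\bfs$ cannot both meet a common leaf of $\bfu$ (Proposition~\ref{prop_nonseparated_and_intersection}) and that the orbit foliation of each leaf is trivial (Lemma~\ref{lem_trivial_foliated_lift}), which together constrain how the branching of $\Lambda(\bfs)$ and $\Lambda(\bfu)$ can interleave around a finite $\gamma$-orbit. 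Once a $\gamma$-fixed point in $\orb$ is produced, Proposition~\ref{prop_closed_orbit_fixed_point} delivers the desired closed orbit in the free homotopy class of $\gamma$.
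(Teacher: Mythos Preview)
The paper's proof is exactly your orientation-preserving case: take the contrapositive, note that if $\gamma$ has no fixed point on $\orb\cong\R^2$ then by the Brouwer plane translation theorem $\gamma^n$ has none either, and apply Proposition~\ref{prop_closed_orbit_fixed_point}. The paper does not isolate the orientation-reversing case at all. You are right that Brouwer's theorem needs the orientation hypothesis, and your period-$2$ example shows the implication can fail for general orientation-reversing plane homeomorphisms; so the paper's one-line argument, as written, covers only elements acting on $\orb$ preserving orientation (for instance when $M$ is orientable), and you have noticed a point the text glosses over.

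Your orientation-reversing argument is closer to complete than you give it credit for. Once you have $\gamma$-invariant leaves $L^s$ and $L^u$ and have argued that $\gamma$ swaps the two sides of the (nonsingular) leaf $L^s$, you are done immediately: if $L^s\cap L^u=\emptyset$ then $L^u$ lies entirely in one component of $\orb\setminus L^s$, yet $\gamma(L^u)=L^u$ while $\gamma$ exchanges those components --- a contradiction. No case analysis on ``clusters'' is needed. The actual remaining gap is the production of $L^s$ and $L^u$: you assert a fixed-point property for homeomorphisms of order trees with a finite orbit, which is true but is not proved here, and the relevant axis theory for non-Hausdorff trees is only developed much later in the text. (Singular fixed leaves are harmless, since the unique prong point on such a leaf must be fixed by $\gamma$.)
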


\begin{proof} 
We prove the contrapositive: Suppose the free homotopy class of $\gamma$ does not contain a closed orbit of $\flow$.  By Proposition \ref{prop_closed_orbit_fixed_point}, $\gamma$ acts freely (without fixed points) on $\orb \cong \R^2$.  By the classical Brouwer plane translation theorem \cite{Bro12} (see \cite{Fra92} for a proof), $\gamma^n$ also acts freely for all $n$. Thus, $\gamma^n$ does not represent the free homotopy class of a periodic orbit.  
\end{proof} 

We next describe the dynamics of an element of $\pi_1(M)$ around a fixed point. Recall a homeomorphism of a topological space is called {\em topologically contracting} if there is a point $x \in X$ such that $g^n(y) \to x$ as $n \to \infty$ for all $y \in X$, and {\em topologically expanding} if $g^{-1}$ is topologically contracting. In particular, $x$ is the unique fixed point for the action of $g$.  

\begin{proposition}\label{prop_hyperbolic}
Let $g\in \pi_1(M)$, $g\neq \id$. If $g$ fixes a leaf of $\bfs$ or $\bfu$, then it fixes a unique orbit $\alpha$ in that leaf.  
If the orientation of $g$ agrees with the orientation on $\alpha$ given by $\flow$, then g is topologically expanding\footnote{This is not a typo: When the direction of $g$ matches that of $o$, then the action of $g$ is \emph{expanding} on the stable leaf and contracting on the unstable.} on $\bfs(\alpha)$ and topologically contracting on $\bfu(\alpha)$.
\end{proposition}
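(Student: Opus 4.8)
The plan is to lift everything to $\wt M$, reduce to the study of the deck transformation $g$ acting on a single leaf, and in the last step to compare the germ of the $\orb$-action at the fixed orbit with the first-return map of $\flow$ along the corresponding periodic orbit.

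After reversing the flow if necessary, assume $g$ fixes a leaf $\ell$ of $\bfs$, and let $L=\hfs(\ell)\subset\wt M$ be the corresponding leaf of $\hfs$, which by Lemma~\ref{lem_trivial_foliated_lift} is a plane (or, when $\ell$ is singular, a union of half-planes meeting along the singular orbit) with trivial orbit foliation, so $\ell$ is homeomorphic to the $\R$ of orbit-leaves of $L$. As a deck transformation, $g$ preserves $L$ and commutes with $\hflow$; in particular $g$ acts on every $g$-invariant orbit by a nonzero flow-time translation, and hence preserves its flow orientation. Since $g\neq\id$ lies in $\Stab(L)$, the quotient $l=L/\Stab(L)\subset M$ is not simply connected, so by Lemma~\ref{lem_no_compact_leaves} it is an open annulus or M\"obius band and $\Stab(L)\cong\Z$, say $\Stab(L)=\langle h\rangle$ with $g=h^k$, $k\neq 0$. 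By Proposition~\ref{prop_top_properties} item~\ref{item_generated_periodict}, $\pi_1(l)$ is generated by a periodic orbit $\delta\subset l$; choosing the generator so that $\delta$ represents $h$ and lifting $\delta$ gives an $h$-invariant orbit $a\subset L$, so $\alpha:=[a]\in\ell$ is fixed by $g=h^k$. For uniqueness: any $g$-fixed orbit $b\subset L$ has stabilizer in $\langle h\rangle$ containing $h^k$, so its image in $l$ is a closed orbit of $\flow$; by Lemma~\ref{lem_no_compact_leaves} the only closed orbit of $\flow$ in $l$ is $\delta$, whose full preimage in the plane $L$ (Proposition~\ref{prop_top_properties} item~\ref{item_properly_embedded}) is the single line $a$, whence $b=a$. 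When $\ell$ is singular, $\alpha$ is its unique singular point $s$, which $g$ fixes since $g$ preserves the singular set; the same quotient analysis on the prong-components of $\ell\smallsetminus\{s\}$ shows $s$ is the only $g$-fixed orbit in $\ell$, while $g$ may cyclically permute the $p$ prongs.

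For the second statement, suppose $g$ translates $a$ in the forward flow direction, i.e.\ $g$ represents $\delta$ (based on $a$) traversed with the flow. Fix a disc transversal $\wt D\subset\wt M$ through a point $x_0\in a$; projection along $\hflow$ identifies a neighborhood of $\alpha$ in $\orb$ with $\wt D$, carrying the germs of $\bfs,\bfu$ at $\alpha$ to those of $\hfs\cap\wt D,\hfu\cap\wt D$ at $x_0$. Tracing through the definition of the $\orb$-action, for $o$ near $\alpha$ the point $g\cdot o$ is represented in $\wt D$ by the first intersection of the orbit of $o$ with $\wt D$ reached by flowing \emph{backwards}; since $g\cdot x_0=\hflow^{T}(x_0)$ with $T>0$ the period, this map is conjugate, via the covering $\wt D\to D\subset M$, to the \emph{inverse} $P^{-1}$ of the first-return map $P$ of $\flow$ to a disc transversal at $\delta\cap D$. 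By item~\ref{item_PAF_converges}, $P$ contracts the local stable leaf of $\delta\cap D$ toward $\delta\cap D$ and $P^{-1}$ contracts the local unstable leaf; hence $g$ is locally repelling at $\alpha$ along $\bfs(\alpha)$ and locally attracting along $\bfu(\alpha)$. Finally, $g$ fixes the leaf $\bfs(\alpha)=\ell\cong\R$ and, by the uniqueness established above (applied also to $g^2$, to rule out period-two orbits in the orientation-reversing case), $\alpha$ is the unique fixed point of $g$ there; a homeomorphism of $\R$ with a unique, locally repelling fixed point is globally repelling, since on each complementary ray the graph cannot recross the diagonal, so $g^{-n}(y)\to\alpha$ for all $y\in\bfs(\alpha)$. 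The mirror argument on $\bfu(\alpha)$ — whose unique $g$-fixed orbit is again $\alpha$, by the flow-reversed version of the first two steps — shows $g$ is topologically contracting there.

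The main obstacle is the direction bookkeeping in the last paragraph: one must verify carefully that, precisely because $g$ pushes $x_0$ \emph{forward} along $a$, the induced map on a transversal near $\alpha$ is the first-return map traversed \emph{backwards}, i.e.\ $P^{-1}$ rather than $P$ — this is the genuine (non-typo) reversal by which a forward-pointing $g$ expands the stable leaf and contracts the unstable one. A secondary technical point is the singular case, where $g$ may permute the $p\ge 3$ prongs cyclically; there the local model is a first-return map composed with a finite-order rotation of the prongs, and one passes to $g^{-p}$ to see the contraction toward $\alpha$ before returning to $g^{-n}$ by continuity.
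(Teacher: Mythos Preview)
Your proof is correct. The existence and uniqueness part follows essentially the same path as the paper (via Lemma~\ref{lem_no_compact_leaves} and Proposition~\ref{prop_top_properties}), though you spell out the covering-theory bookkeeping in more detail than necessary.

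The expanding/contracting argument, however, takes a genuinely different route. The paper argues directly in $\wt M$: it fixes a flow box $B$ around a point on the invariant orbit, notes that the nearby orbits $\beta_1,\beta_2$ bounding $B$ in $\hfs$ converge toward $\hflow^{nT}(x)$ by Lemma~\ref{lem_all_converge}, and uses that $g$ is an isometry translating forward along the orbit to conclude that for large $n$ the $\beta_i$ lie inside $g^n(B)$, so $g^n$ strictly expands the corresponding interval in $\bfs(\alpha)$. Your argument instead identifies the local action of $g$ on a transversal with the inverse of the first-return map $P$, and then reads off the contraction/expansion from the definition of $\cF^s$ and $\cF^u$. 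Both routes are ultimately powered by the same forward-convergence along stable leaves; yours makes the connection with the return-map dynamics explicit (and forces you to do the direction bookkeeping carefully, which you do correctly), while the paper's flow-box argument avoids setting up transversal coordinates and is shorter, but is perhaps less transparent about why the reversal occurs.
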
 

\begin{proof} 
Suppose $g$ fixes a leaf of $\bfs$; then it also fixes the corresponding leaf $l$ of $\hfs$.  Thus, $\pi_1(l)$ is nontrivial, so by Proposition \ref{prop_top_properties}, it is generated by a periodic orbit in $l$.  Thus, $g$ has a fixed point. 
By Lemma \ref{lem_no_compact_leaves}, $l$ contains at most one periodic orbit, so $g$ has a unique fixed orbit on $l$.  The same applies for leaves of $\bfu$.  It remains to show that this action is contracting or expanding, as determined by the orientation.  

Suppose $\alpha$ is an orbit of $\hflow$ invariant under $g$, and $g$ translates along $\alpha$ in the direction of the flow, and
call $T>0$ the period of $\alpha$ for $\flow$.
First suppose that $\alpha$ is nonsingular.  
Consider a flow box $B$ containing a segment of $\alpha$ centered about a point $x$.  
Let $\beta_1, \beta_2$ be the two orbits bounding this flow box in $\hfs(x)$.  
By Lemma \ref{lem_all_converge}, 
the distance from $\beta_i$ to $\hflow^{nT}(x)$ goes to $0$ as $n \to \infty$.  Since $g$ translates along $\alpha$ by isometries, for sufficiently large $n$, we have that $\beta_i$ is in the interior of $g^n(B)$.  Translating this to the orbit space, we conclude that $g^{n}$ {\em expands} the interval of orbits between $\beta_1$ and $\beta_2$ corresponding to $B \cap \hfs(x)$.   Since we already know that $g$ has a unique fixed point in $\hfs(x)$, this shows it is topologically expanding.  
The singular case follows directly from the definition of the local model prong.   %
\end{proof}

In order to establish more properties of the induced action action, we need the classical \emph{Anosov closing lemma}, generalized to the pseudo-Anosov setting: 
\begin{proposition}[Pseudo-Anosov closing lemma]\label{prop_pAclosing_lemma}
Let $\flow$ be a pseudo-Anosov flow on $M$.
For every $\epsilon>0$, and every $x\in M$, there exists $\delta=\delta(x,\epsilon)>0$ such that, if $d(\flow^T(x),x)<\delta$ for some large enough $|T|$, then there exists a \emph{closed} orbit $\beta$ of $\flow$ such that the orbit segment between $x$ and $\flow^T(x)$ 
is Hausdorff distance at most $\epsilon$ from $\beta$.  
Moreover, if $B$ is a compact set that does not intersect any singular orbits, then $\delta$ can be chosen uniformly on $B$.
\end{proposition}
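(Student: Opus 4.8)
The plan is to follow the classical proof of the Anosov closing lemma, making two substitutions: the flow-box neighborhoods with their transverse product structure of $\cF^s$- and $\cF^u$-sheets play the role of the local stable and unstable manifolds, and the topological convergence and expansivity of Definition~\ref{def_topPA} replace the $C^1$ hyperbolicity and the hyperbolic fixed point theorem. First I would reduce to a return map. Fix $\eps>0$ and $x\in M$ (for the uniform statement let $x$ range over a compact set $B$ disjoint from the singular orbits). Take a flow-box neighborhood of a point on the orbit of $x$, of size $r=r(\eps)$ small and, in the uniform case, bounded below on $B$ by compactness; inside it fix a transverse disk $D\ni x$ carrying the two transverse one-dimensional foliations $D\cap\cF^s$ and $D\cap\cF^u$, hence a product chart $R\cong I^s\times I^u$ on a neighborhood of $x$ in $D$. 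If $|T|$ is large and $d(\flow^T(x),x)<\delta$ with $\delta\ll r$, then transversality of $\flow$ to $D$ lets us flow $\flow^T(x)$ a time $|s|<\eps/2$ to a point $x'=\flow^{T+s}(x)\in D$ with $d(x,x')$ still small, and the first return map $g$ of $\flow$ to $D$ near $x$ is defined on a neighborhood of $x$ with $g(x)=x'$.

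The crucial step — and where ``$|T|$ large'' is used — is to show that $g$ is topologically hyperbolic: using global forward convergence along stable leaves (Lemma~\ref{lem_all_converge}, applied both to $\cF^s$ forwards and, via the flow reversal, to $\cF^u$) together with the expansivity clause~\ref{item_PAF_backwards_expansivity}, one shows that $g$ carries the stable trace $\cF^s_r(x)\cap D$ strictly into a much shorter stable trace through $x'$, while $g^{-1}$ carries $\cF^u_r(x)\cap D$ strictly into a much shorter one. In the chart $R$ this is exactly the horseshoe-type graph-transform picture: $g(R)$ is a strip thin in the $I^s$-direction that stretches across $R$ in the $I^u$-direction, and $g^{-1}(R)$ is similarly thin in the $I^u$-direction and stretches across $R$ in the $I^s$-direction.

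Next I would extract the periodic orbit and check the shadowing. A standard nested-strips argument shows that a map with the above structure, with $g(x)$ inside the product rectangle $R$, has a unique fixed point $p\in R$: the forward-invariant part of $R$ is a single arc of $D\cap\cF^u$, the backward-invariant part a single arc of $D\cap\cF^s$, and they meet at the one point $p$. Its $\flow$-orbit $\beta$ is then a closed orbit, of period within $\eps$ of $|T|$. For the shadowing, compare the orbit of $x$ with the loop $\beta=\{\flow^t(p)\}$ using the two ``corner'' points $z=(\cF^s_r(p)\cap D)\cap(\cF^u_r(x)\cap D)$ and $w=(\cF^u_r(p)\cap D)\cap(\cF^s_r(x)\cap D)$ of the small rectangle: forward convergence along $\cF^s$ keeps the forward orbit of $w$ near that of $x$ while backward convergence along $\cF^u$ keeps its backward orbit near $\beta$, and symmetrically for $z$; the two estimates overlap over the middle of the segment, so the whole segment $\{\flow^t(x):0\le t\le T\}$ stays within $\eps$ of $\beta$, the small time-reparametrizations being absorbed into $\eps$. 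Here Remark~\ref{rem_forward_converge_implies_local_stable} is used to upgrade ``converges'' to ``stays uniformly close''.

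Finally, for the uniform statement one observes that a compact $B$ disjoint from the singular orbits is covered by finitely many flow boxes of uniformly non-degenerate product structure, so the size $r$, the rectangle $R$, the amount of stable contraction and unstable expansion of the return map, and the threshold for ``$|T|$ large'' may all be chosen uniformly; near a singular orbit the $p$-prong charts degenerate and this bounded geometry is lost, which is why uniformity is only asserted on $B$. I expect the main obstacle to be precisely the topological hyperbolicity of the return map: Definition~\ref{def_topPA} provides convergence of orbits only up to an uncontrolled reparametrization and with no a priori rate, so turning it into the clean statement ``$g$ sends the stable trace strictly inside a shorter one'', with constants uniform over a compact set, requires a careful interplay of the forward-convergence clause~\ref{item_PAF_converges}, the backward-expansivity clause~\ref{item_PAF_backwards_expansivity}, and compactness. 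This is where essentially all the work lies; the fixed-point and shadowing steps are then standard.
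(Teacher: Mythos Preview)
Your approach is essentially correct but follows the classical route rather than the one the paper takes, and the comparison is instructive.

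You work directly in $M$: take a transverse disc, consider the first-return map $g$, argue it is topologically hyperbolic (stable trace contracted, unstable trace expanded) and extract a fixed point by nested strips. The paper instead lifts to $\wt M$ and passes to the orbit space. The near-return of $x$ determines a deck transformation $g\in\pi_1(M)$; the transverse rectangle $R$ projects to a trivially foliated rectangle $R_{\cO}\subset\orb$, and one studies the action of $g$ on the \emph{intervals} of stable and unstable leaves meeting $R_{\cO}$. Forward convergence along $\hfs$ shows that the interval of $\bfs$-leaves through $R_{\cO}$ is contained in the interval through $gR_{\cO}$, so $g^{-1}$ contracts this interval and Brouwer on $\bR$ gives a fixed stable leaf; symmetrically one gets a fixed unstable leaf; their intersection in $R_{\cO}$ is the closed orbit $\beta$. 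The $\eps$-shadowing is then isolated as a separate statement (Lemma~\ref{lem_bound_distance}: if two orbits start and end $\delta$-close after a long time, they stay $\eps$-close throughout).

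The payoff of the paper's route is exactly where you anticipated the difficulty: by quotienting out the flow direction, the reparametrization issues disappear entirely, and ``topological hyperbolicity of the return map'' becomes two one-dimensional interval contractions. Your nested-strips argument reaches the same fixed point but carries the reparametrization bookkeeping all the way through. Your shadowing step via the corner points $z,w$ is also morally the same as Lemma~\ref{lem_bound_distance}, but as written it is a little loose: $w\in\cF^s(x)\cap\cF^u(p)$ tracks $x$ forward and $p$ backward, which does not directly give $x$ close to $\beta$ over the whole segment without an additional ``start and end close implies close throughout'' input --- precisely what the paper packages separately. Both arguments ultimately need that lemma; the paper just makes the dependence explicit.
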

\begin{rem}\label{rem_pA_closing_dependancy_delta}
As we will see in the proof, one can in fact be more precise about the dependency of $\delta(x,\epsilon)$ on $x$: One can take $\delta(x,\epsilon) = \min\{\delta_0(\epsilon), d(x)\} $ where $\delta_0(\epsilon)$ depends only on the flow and $\epsilon$, and $d(x)$ depends on the flow and the distance between $x$ and the closest singular orbit of the flow.
\end{rem}

In particular, when the flow has no singular orbits (i.e., is Anosov), then $\delta$ can be chosen uniformly on $M$.

Note that in the presence of singularities, the dependence of $\delta$ on $x$ is indeed necessary, as one can sometimes exhibit segment of orbits that starts and ends very close to a singularity, but on different sides of it, and are not shadowed by any periodic orbit. This behavior happens for instance in the examples of \cite[Propositions 8.8 and 8.9]{BBM24b}.

The closing lemma is due to Anosov \cite{Ano63} for smooth Anosov flows, and is proved in greater generality (i.e., for hyperbolic sets that can be a proper subset of the ambient manifold) in many standard references, such as %
 \cite{FH19}, usually as a consequence of the shadowing property. For smooth pseudo-Anosov flows, the closing lemma was proved in \cite{Man98}, albeit with the stated (but unnecessary for that result) additional condition that the flow be quasi-geodesic. While no different in spirit to its smooth counterpart, it is difficult to find a statement and proof for topological pseudo-Anosov flows in the literature. 
In keeping with the spirit of this text, we present a proof in the orbit space.  

The proof uses critically the following lemma.  

\begin{lemma} \label{lem_bound_distance}
Let $\hflow$ be the lift of a pseudo-Anosov flow on a compact manifold $M$ to $\wt M$.  
For any $\epsilon$, there exists $\delta$ and $T$ such that if $x, y \in \wt M$ satisfy $d(x,y) < \delta$ and $d(\hflow^s(x), \hflow^r(y)) < \delta$ for some $s, r >T$, then there exists a standard reparameterization $\sigma$ such that $d(\hflow^t(x), \hflow^{\sigma(t)}(y))< \epsilon$ for all $t \in [0, s]$.  %
\end{lemma}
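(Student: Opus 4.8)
The plan is to deduce the lemma from two ingredients: a \emph{quantitative} form of local fellow-traveling along weak stable and unstable leaves (which comes from running the flow-box bootstrap of Lemma \ref{lem_same_local_stable} from time $0$), and the local product structure already exploited in the proofs of Lemma \ref{lem_same_local_stable} and Proposition \ref{prop_pa_implies_expansive}, applied \emph{twice}: once near time $0$ and once near time $s$. The bridge between $x$ and $y$ will be a single orbit $z$ of $\hflow$ lying on the weak stable leaf of $x$ and on the weak unstable leaf of $y$. The orbit of $x$ will shadow $z$ in \emph{forward} time starting from time $0$, while $z$ will shadow the orbit of $y$ in \emph{backward} time starting from time $\approx s$; since one estimate controls $[0,s]$ from the left and the other controls it from the right, together they cover the whole interval. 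The second closeness hypothesis $d(\hflow^s(x),\hflow^r(y))<\delta$ is exactly what allows the two pieces to be glued.

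\textbf{Quantitative local fellow-traveling.} First I would record the following refinement: there exist $\eta_0>0$ and a function $\beta$ with $\beta(\eta)\to 0$ as $\eta\to 0$ such that if $q\in\cF^s_\eta(p)$ with $\eta\le \eta_0$, there is a standard reparametrization $\sigma$ with $d(\hflow^t(p),\hflow^{\sigma(t)}(q))<\beta(\eta)$ for all $t\ge 0$ (and with the distance tending to $0$, by condition \ref{item_PAF_converges} of Definition \ref{def_topPA}); symmetrically, if $q\in\cF^u_\eta(p)$ then the analogous bound holds for all $t\le 0$. The proof is the bootstrap from Lemma \ref{lem_same_local_stable}: cover $M$ by flow boxes of diameter $<\beta$ contained in foliation charts, so that each stable sheet of a box meets at most one stable sheet of an overlapping box; if $q\in\cF^s_\eta(p)$ with $\eta$ below a Lebesgue number, then $p,q$ begin on a common stable sheet, and the set of times on which $\hflow^t(p)$ and $\hflow^{\sigma(t)}(q)$ lie on a common stable sheet of some box is shown to be all of $[0,\infty)$, because whenever this holds the two points are within $\beta<$ Lebesgue number, hence in a common box, and so remain on a common sheet as they pass to the next box. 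Taking finer and finer covers gives $\beta(\eta)\to 0$. (Near the singular orbits $\alpha_i$ one reads the same argument off the model prong flow boxes.)

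\textbf{Main argument.} Given $\epsilon$, choose $\epsilon_1\le\eta_0$ so small that $\beta(\epsilon_1)<\epsilon/4$ and $\beta(\epsilon_1)$ is below the threshold of both the local product structure and $\eta_0$; then choose $\delta$ small and $T$ large so that $d(p,q)<\delta$ forces $p,q$ into a common flow box together with a point of $\cF^s_{\epsilon_1}(p)\cap\cF^u_{\epsilon_1}(q)$, and so that $\beta(\epsilon_1)+\delta$ is still below those thresholds. Given $x,y$ with $d(x,y)<\delta$ and $d(\hflow^s(x),\hflow^r(y))<\delta$ for some $s,r>T$, pick $z$ with $z\in\cF^s_{\epsilon_1}(x)\cap\cF^u_{\epsilon_1}(y)$. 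Applying the quantitative stable estimate to $(x,z)$ gives a standard reparametrization $\sigma_1$ with $d(\hflow^t(x),\hflow^{\sigma_1(t)}(z))<\beta(\epsilon_1)$ for all $t\ge 0$; set $s_z=\sigma_1(s)$. By the triangle inequality $d(\hflow^{s_z}(z),\hflow^r(y))<\beta(\epsilon_1)+\delta$, so the local product structure applies to this pair and yields a point $w\in\cF^s_{\epsilon_1}(\hflow^{s_z}(z))\cap\cF^u_{\epsilon_1}(\hflow^r(y))$. The key observation is that $w$ lies on the orbit of $z$: it is on the weak stable leaf $\hfs(z)$ and also on $\hfu(\hflow^r(y))=\hfu(y)=\hfu(z)$, and by Proposition \ref{prop_top_properties}\ref{item_unique_intersection} these meet only along the orbit $o_z$ of $z$. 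Moreover $w$ lies on the orbit segment of $z$ through $\hflow^{s_z}(z)$ inside the common flow box (that segment being the only part of $o_z$ in the relevant stable sheet, by triviality of the orbit foliation on $\hfs(z)$, Lemma \ref{lem_trivial_foliated_lift}), so $w=\hflow^{t_w}(z)$ with $|t_w-s_z|\le\kappa$, $\kappa$ the maximal flow time across a flow box of the cover. Now the quantitative unstable estimate applied backwards to the pair $(\hflow^{t_w}(z),\hflow^r(y))$ (legitimate since $w\in\cF^u_{\epsilon_1}(\hflow^r(y))$) gives $d(\hflow^u(z),\hflow^{\rho(u)}(y))<\beta(\epsilon_1)$ for all $u\le t_w$, and in particular for all $u\in[0,s_z]$ after absorbing the sliver $[t_w,s_z]$ (if it is nonempty) using uniform continuity of $\hflow$ over time $\kappa$. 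Composing: for all $t\in[0,s]$,
\[
d\bigl(\hflow^t(x),\hflow^{\rho(\sigma_1(t))}(y)\bigr)\le d\bigl(\hflow^t(x),\hflow^{\sigma_1(t)}(z)\bigr)+d\bigl(\hflow^{\sigma_1(t)}(z),\hflow^{\rho(\sigma_1(t))}(y)\bigr)<\tfrac{\epsilon}{2},
\]
and a final bounded adjustment of the composed reparametrization near $t=0$ makes it a standard reparametrization at the cost of a further $\epsilon/2$, using uniform continuity of the flow over a small time.

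\textbf{Main obstacle.} The real difficulty is conceptual, not computational: any point $z$ of the local product between $x$ and $y$ shadows one of them only forwards and the other only backwards, so no single application of the convergence conditions controls the full segment $[0,s]$. This is precisely why both closeness hypotheses are needed, and why the argument hinges on the \emph{second} application of the local product structure at time $\approx s$ together with the ``$w\in o_z$'' trick — the latter being what sidesteps any question of whether the weak unstable leaf $\hfu(z)$ is badly folded in $\wt M$ (we never need $\hflow^{s_z}(z)$ and $\hflow^r(y)$ to be on a small local unstable leaf; we only need $w$, which lands on $z$'s own orbit, to be so). The remaining points — keeping the chain of reparametrizations standard and composable, the sliver $[t_w,s_z]$, and the behaviour in prong flow boxes near the $\alpha_i$ (where the singular orbit itself may serve as the intermediate $z$) — are routine once this structure is fixed; the role of $T$ is simply to guarantee that the matched times $s_z,t_w$ are positive, so that the backwards estimate from time $t_w$ genuinely covers $[0,s_z]$.
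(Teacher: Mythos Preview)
The paper does not actually prove this lemma in the topological or pseudo-Anosov setting: after a one-line sketch in the smooth Anosov case (exponential lower bound on divergence of orbits on distinct stable leaves), it defers the general case entirely to the section-and-return-map machinery of \cite{Iak24}. Your route is genuinely different and more self-contained: you extract the needed control not from a coding, but from the local product structure together with the quantitative bootstrap implicit in the proof of Lemma~\ref{lem_same_local_stable} (which indeed gives from-time-$0$ fellow-traveling once the two points start on the same local stable or unstable sheet). The ``$w\in o_z$'' step, which converts the second closeness hypothesis into a statement about the single bridge orbit $z$ via Proposition~\ref{prop_top_properties}\ref{item_unique_intersection}, is the right idea and is exactly what lets the forward and backward halves glue over all of $[0,s]$. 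In the nonsingular (Anosov) case your argument is complete.

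There is one real gap, in the prong case. Your parenthetical (``the singular orbit itself may serve as the intermediate $z$'') does not work as stated. When $x$ and $y$ are both close to a $p$-prong but lie in a stable component and an unstable component whose intersection is empty --- for $p=3$ this occurs for exactly one of the three pairings of components --- the set $\cF^s_{\epsilon_1}(x)\cap\cF^u_{\epsilon_1}(y)$ is genuinely empty, and the singular orbit lies on neither $\hfs(x)$ nor $\hfu(y)$, so it cannot play the role of $z$ in your chain of estimates. The lemma is nonetheless true here: in the model prong the unstable coordinate grows monotonically under the flow, so boundedness at both endpoints (times $0$ and $s$) forces both $x$ and $y$ to shadow the singular orbit itself throughout $[0,s]$, hence to shadow each other. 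But this is a separate argument (essentially the paper's smooth-case sketch read off the prong model), not an instance of your $z$-scheme; you should state it as its own case.
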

For smooth Anosov flows, the proof is a straightforward exercise: If $x, y$ lie on the same orbit, this is trivial, if not we may assume without loss of generality (up to reversing time) that they lie on different stable leaves.  The Anosov property implies that the distance between $\hflow^t(x)$ and the closest point on the orbit of $y$ is bounded below by $c \lambda^t$ for some $c>0, \lambda>1$, from which the Lemma easily follows.  

The pseudo-Anosov or topological case requires more work to get a substitute, coarser version of this uniform divergence.  This can be done by taking a section of the flow and considering the return maps, as in Lemma 4 of \cite{Iak24}.  See also the proof of \cite[Proposition 6]{Iak24} which essentially gives the closing lemma in this context.

\begin{proof}[Proof of proposition \ref{prop_pAclosing_lemma}]

First, if $x$ is on a periodic orbit $\alpha$, choose $\delta$ small enough so that the $\delta$-neighborhood is embedded in $M$.  Then, if $d(\flow^t(x), x) < \delta$ for any large $t$, the segment from $x$ to $\flow^t(x)$ will consist in a nearly integral number of loops around $\alpha$, hence we can use this power of $\alpha$ as the nearby periodic orbit.  
In particular, this applies to any point on a singular orbit of $\flow$, so we now fix some $x\in M$ that is not on a singular orbit. 
Let $\eps$ be given and let $\delta(\eps)$ be the constant from Lemma \ref{lem_bound_distance}.  
Consider a small topological rectangle $R$ of diameter at most $\delta(\eps)$, containing $x$, transverse to $\flow$, with its sides tangent to stable and unstable leaves and not intersecting any singular orbits.  %

Let $\delta$ be such that any point $y$ with $d(x, y)< \delta$ has a nearby point $\flow^s(y) \in R$, for some $s$ very small. (Note that the dependence of $\delta$ on $x$ appears here: the closest $x$ is to a singular orbit, the smallest $\delta$ will have to be.)
Suppose that there exists $T>0$ such that $d(x, \flow^T(x))<\delta$, but $d(x, \flow^s(x))>\delta$ for some $0<s<T$.  
In particular, up to slightly adjusting $T$ we can assume that $\flow^T(x) \in R$.   Let $c$ be the closed curve obtained by following the orbit from $x$ to $\flow^T(x)$ and then closing it via a path in $R$. Let $g\in \pi_1(M)$ be an element freely homotopic to $c$ and take lifts $\wt R\subset \wt M$ and $\wt x \in\wt R$ so that the lifted orbit through $\wt x$ intersects $g\wt R$.

If $T$ is large enough, the forward-asymptotic property of orbits on leaves of $\hfs$ implies that the forward orbit through any point in $\wt R\cap \hfs(\wt x)$ will intersect $g\wt R$. Similarly, the backward orbit through any point in $g\wt R\cap \hfu(\wt\flow^T(\wt x))$ will intersect $\wt R$.

Let $R_{\cO}$ denote the projection of $\wt R$ to $\orb$; so $g R_{\cO}$ is the projection of $g \wt R$. 
Let $\alpha$ be the orbit through $x$.
By definition, $R_{\cO}$ is a trivially foliated set in $\orb$ bounded by four segments along leaves of $\bfs$ and $\bfu$. 
Since $\alpha$ meets both $R_{\cO}$ and $g R_{\cO}$, these have nonempty intersection.  
The fact that all points in $\wt R \cap \hfs(\wt x)$ flow forward to intersect $g\wt R$ implies that the rectangle $gR_{\cO}$ contains the segment $\hfs(\wt x)\cap R_{\cO}$.  The set of leaves of $\bfs$ which intersect $gR_{\cO}$ forms an interval in the leaf space $\Lambda(\bfs)$, which we just argued {\em contains} the interval of leaves that intersect $R_{\cO}$.  Thus, $g^{-1}$ acts as a contraction on this interval, so by the Brouwer fixed point theorem it has a fixed leaf in the interval.  
Similarly, we may find a fixed leaf for $g$ among the set of $\bfu$ leaves meeting $R_{\cO}$, on which $g$ (rather than $g^{-1}$) acts as a contraction.   See Figure \ref{fig_closing}.

The intersection of these two fixed leaves, necessarily in $R$, gives a fixed orbit $\beta$ for $g$.  By construction, the projection of $\beta$ to $M$ ``shadows" the orbit segment between $x$ and $\flow^T(x)$.  By Lemma \ref{lem_bound_distance}, 
the distance between $\beta$ and this segment is controlled by the size of $R$. %

\begin{figure}[h]
\centering{
\includegraphics[width=6cm]{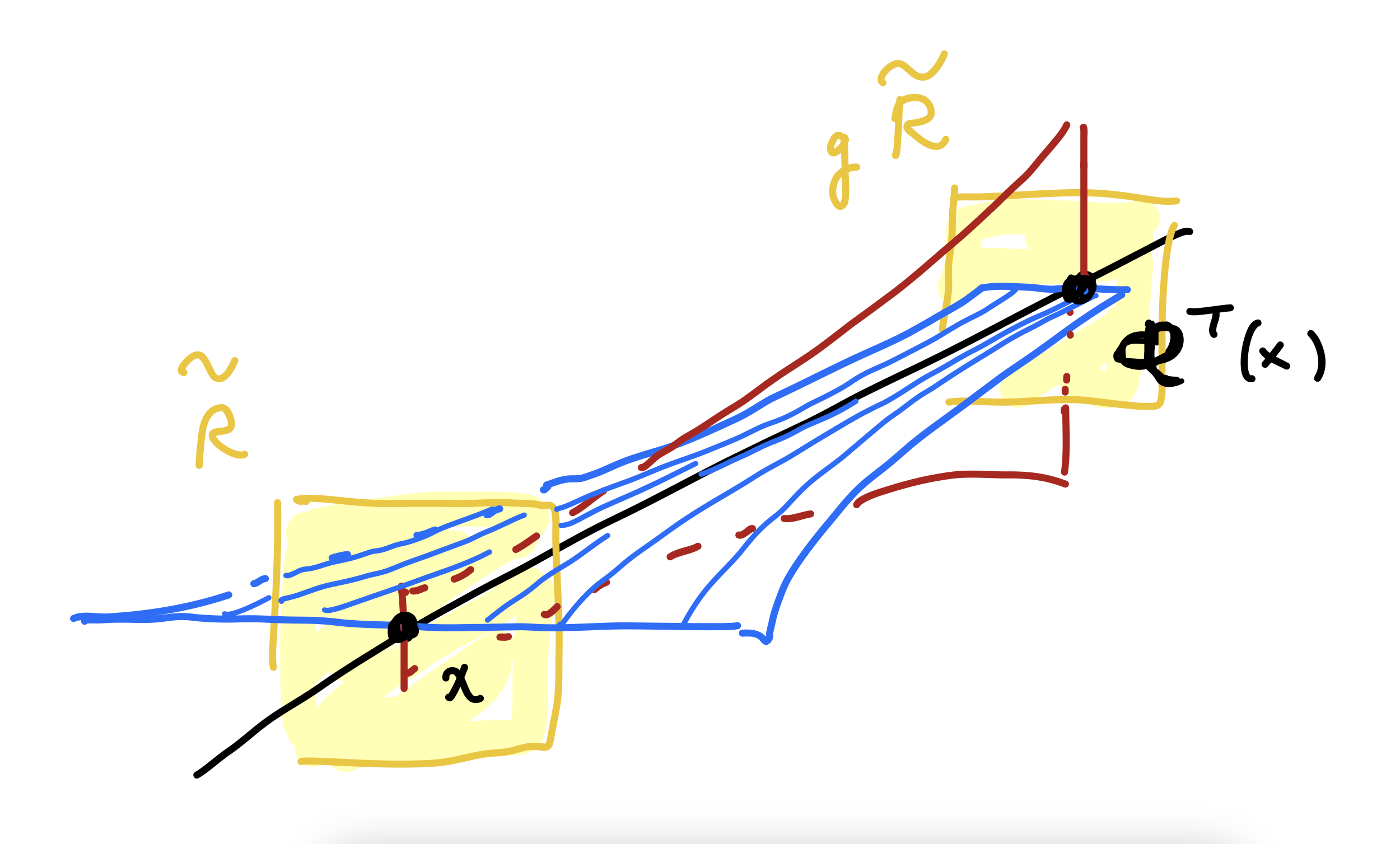}}
\caption{Divergence and convergence of orbits in the proof of the closing lemma} \label{fig_closing}
\end{figure}

\end{proof}

Translating the closing lemma to a statement in the language of the orbit space gives the following.  
\begin{proposition} \label{prop_closing_translation}
Let $\flow$ be a pseudo-Anosov flow.  
Each non-singular point $x \in \orb$ has a neighborhood basis $U_i$ in $\orb$, with the property that for each $U_i$ there is a smaller neighborhood $V_i \subset U_i$ such that, if $g(V_i) \cap V_i \neq \emptyset$, then $g$ has a fixed point in $U_i$. 

Similarly, each singular point $p\in \orb$ has a pair of neighborhood basis $V_i \subset U_i$ in $\orb$, such that if $R^{\pm}$ is any connected component of $V_i\smallsetminus \cF^{\pm}(p)$ and $gR^{\pm}\cap R^{\pm} \neq \emptyset$, then $g$ has a fixed point inside the connected component of $U_i\smallsetminus \cF^{\pm}(p)$ containing $R^{\pm}$ or in one of the connected component of $U_i\smallsetminus \cF^{\mp}(p)$ intersecting $R^{\pm}$.
\end{proposition}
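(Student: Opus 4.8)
The plan is to extract this statement from the proof of Proposition~\ref{prop_pAclosing_lemma}, read through the orbit-space projection. Fix a point $\bar x_0 \in M$ on the $\flow$-orbit corresponding to $x$, a lift $\tilde x_0 \in \wt M$, and for the non-singular case a decreasing sequence of closed ``rectangles'' $\wt R_i \ni \tilde x_0$ in $\wt M$, transverse to $\hflow$, with two sides on leaves of $\hfs$ and two on leaves of $\hfu$, chosen small enough that $R_i := \pi_M(\wt R_i)$ is embedded in $M$ (so distinct $\pi_1(M)$-translates of $\wt R_i$ are disjoint). Set $U_i := \pi_{\cO}(\wt R_i)$, where $\pi_{\cO}\colon \wt M \to \orb$ is the quotient map. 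By the local structure of $\orb$ from Theorem~\ref{thm_orb_is_plane} (a transverse disk maps homeomorphically onto an open subset of $\orb$, and orbits are proper), the $U_i$ form a neighborhood basis of $x$. For the singular case, take the $\wt R_i$ to be standard $p$-prong neighborhoods of $\tilde x_0$; then $\cF^{\pm}(p)$ cuts $U_i$ into sectors. Inside each $U_i$ I would then choose a much smaller rectangle/prong-neighborhood $\wt R_i'$ with $\operatorname{diam}(R_i')$ below the closing-lemma constant $\delta(\bar x_0,\eps)$ for a small $\eps$, chosen (using the uniformity of Proposition~\ref{prop_pAclosing_lemma} on a compact set avoiding the singular orbits, or Remark~\ref{rem_pA_closing_dependancy_delta} in the singular case) so that any closed orbit passing within $\eps$ of $\bar x_0$ has a lift meeting $\wt R_i$; set $V_i := \pi_{\cO}(\wt R_i')$.

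Next I would translate the hypothesis. If $gV_i \cap V_i \neq \emptyset$ there is an orbit $o$ of $\hflow$ meeting both $\wt R_i'$ and $g\wt R_i'$; choosing $\tilde z \in o \cap \wt R_i'$, the first $T$ with $\hflow^T(\tilde z) \in g\wt R_i'$, and $\tilde w := g^{-1}\hflow^T(\tilde z) \in \wt R_i'$, we get that the $\flow$-orbit segment from $\bar z := \pi_M(\tilde z)$ to $\flow^T(\bar z) = \pi_M(\tilde w)$ has both endpoints in $R_i'$, hence $d(\bar z,\flow^T(\bar z)) < \delta(\bar x_0,\eps)$, and closing it up by a short arc in $R_i'$ gives a loop freely homotopic to $g$. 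If $T = 0$ then $g\tilde w = \tilde z$: the case $\tilde z \neq \tilde w$ is excluded (it would force $g\wt R_i' \cap \wt R_i' \neq \emptyset$ with $g \neq \id$, contradicting embeddedness), so $\tilde z = \tilde w$, i.e.\ $o$ is $g$-invariant and $g$ fixes $\pi_{\cO}(o) \in V_i \subset U_i$. Otherwise $|T| > 0$, and I would invoke Proposition~\ref{prop_pAclosing_lemma} to get a closed orbit $\beta$ within $\eps$ of this segment, in particular within $\eps$ of $\bar z$; since $\eps$-shadowing below the injectivity radius forces a free homotopy, $\beta$ is freely homotopic to $g$, so by Proposition~\ref{prop_closed_orbit_fixed_point} the relevant lift of $\beta$ is a $g$-fixed orbit, and by our choice of $\eps$ this lift meets $\wt R_i$, placing the fixed point in $U_i$.

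I expect the main obstacle to be the passage ``$|T| > 0$ $\Rightarrow$ $|T|$ large enough to apply Proposition~\ref{prop_pAclosing_lemma}'': a priori the orbit $o$ could return from $\wt R_i'$ to $g\wt R_i'$ quickly if $g$ is a ``short'' deck transformation. I would handle this as follows. Let $L$ be (a bound on flow speed) times the time threshold needed for the forward-asymptotic overflow step for $\wt R_i$ (the same threshold used implicitly in the proof of Proposition~\ref{prop_pAclosing_lemma}, controlled via Lemma~\ref{lem_bound_distance}). By proper discontinuity only finitely many $g \neq \id$ have $d(g\wt R_i,\wt R_i) < L$. For each of these: either $g$ has no fixed point in $\orb$, in which case — using that the induced action preserves $\bfs,\bfu$, so by Propositions~\ref{prop_hyperbolic} and \ref{prop_closed_orbit_fixed_point} a fixed-point-free $g$ fixes no leaf, hence acts with wandering neighborhoods via Brouwer's plane translation theorem \cite{Bro12} — one can shrink $V_i$ so that $gV_i \cap V_i = \emptyset$; or $g$ has a fixed orbit, in which case shrinking $V_i$ arranges that either $g\wt R_i' \cap \wt R_i' = \emptyset$ or that fixed orbit's lift already meets $\wt R_i$ (so $g$ fixes a point of $U_i$ outright). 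After these finitely many adjustments, any remaining $g$ with $gV_i\cap V_i\neq\emptyset$ satisfies $d(g\wt R_i',\wt R_i') \ge d(g\wt R_i,\wt R_i) \ge L$, forcing the segment from $\bar z$ to $\flow^T(\bar z)$ to have length at least $L$, hence $|T|$ above the required threshold.

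Finally, for the singular case I would record that $\delta(\bar x_0,\eps)$ must depend on the distance from $\bar x_0$ to the singular orbit (Remark~\ref{rem_pA_closing_dependancy_delta}), and that the shadowing orbit $\beta$ may cross the singular leaf inside a sector: its lift is then $g$-invariant and meets $\wt R_i$, but possibly in a sector of $U_i \smallsetminus \cF^{\pm}(p)$ adjacent to — rather than equal to — the one containing $R^{\pm}$, or in a sector of $U_i \smallsetminus \cF^{\mp}(p)$ meeting $R^{\pm}$, which is exactly the conclusion as stated.
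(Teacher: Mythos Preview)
The paper leaves this as an exercise (``Use Proposition~\ref{prop_pAclosing_lemma} to prove Proposition~\ref{prop_closing_translation}''), so your strategy of lifting to $\wt M$, producing a returning orbit segment in $M$, and invoking the closing lemma is exactly what is intended, and your identification of the ``$|T|$ large enough'' subtlety together with the proper-discontinuity fix is the right idea.

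One simplification: your handling of the finitely many ``short'' $g$ is more elaborate than needed --- you invoke Brouwer for fixed-point-free $g$ and then give a separate, somewhat muddled case for $g$ with fixed points elsewhere. A single line covers both: for each such $g$, either $g(x)=x$ (so $g$ already fixes a point of $U_i$), or $g(x)\neq x$, and then continuity of $g$ alone gives a neighborhood $V_g\ni x$ with $gV_g\cap V_g=\emptyset$; take $V_i$ inside the finite intersection of the $V_g$. Note also that a slightly more direct route is to reuse the \emph{proof} (not just the statement) of Proposition~\ref{prop_pAclosing_lemma}: that argument is already carried out in $\orb$ and produces, for the specific deck transformation $g$ represented by the closed-up segment, fixed leaves of $\bfs$ and $\bfu$ through the rectangle $R_{\cO}$, whose intersection is the desired fixed point of $g$ in $R_{\cO}=U_i$. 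This bypasses the extra step of matching the shadowing orbit $\beta$ to the correct $g$-invariant lift via a free-homotopy argument.
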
 
\begin{exercise} 
Use Proposition \ref{prop_pAclosing_lemma} to prove Proposition \ref{prop_closing_translation}.
\end{exercise}

Using the closing lemma, we can now prove the following.  
\begin{proposition} \label{prop_density}
The union of leaves in $\bfs$ (resp.~$\bfu$) with nontrivial stabilizer is dense in $\orb$.
 Moreover, if the flow $\flow$ is transitive, then the union of \emph{points} in $\orb$ fixed by nontrivial elements of $\pi_1(M)$ is dense. 
\end{proposition}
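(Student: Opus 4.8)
The plan is to treat the two statements separately: for the first I would reduce it to a density statement about weak stable leaves of periodic orbits in $M$ and prove that downstairs using the closing lemma (Proposition~\ref{prop_pAclosing_lemma}); for the second I would argue directly in $\orb$ via Proposition~\ref{prop_closing_translation}. For the first statement, start with two reductions. By Proposition~\ref{prop_top_properties} the leaves of $\hfs$ are planes (or components of complements of singular orbits in half-planes) and the fundamental group of a leaf of $\fs$ in $M$ is generated by a periodic orbit; combined with Lemma~\ref{lem_no_compact_leaves} and Proposition~\ref{prop_closed_orbit_fixed_point}, a leaf of $\hfs$ has nontrivial stabilizer in $\pi_1(M)$ iff its image in $M$ contains a periodic orbit, i.e.\ iff that image is $\fs(\gamma)$ for some periodic orbit $\gamma$. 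Since $\Lambda(\bfs)=\Lambda(\hfs)$, the union of such leaves in $\orb$ is a $\pi_1(M)$-invariant set whose image in $M$ is $\bigcup_\gamma \fs(\gamma)$; as small flow-box charts in $\orb$ (equivalently their $\hflow$-saturated preimages in $\wt M$, equivalently flow boxes in $M$) form neighborhood bases, and a $\pi_1(M)$-invariant subset of $\orb$ meets a given flow-box chart iff its image meets the corresponding flow box in $M$, it suffices to show $\bigcup_\gamma \fs(\gamma)$ is dense in $M$. The $\bfu$ case then follows by applying this to the reversed flow $\flow^{-t}$, which is pseudo-Anosov with the same orbit space and with $\fs$ and $\fu$ interchanged.

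To prove $\bigcup_\gamma \fs(\gamma)$ is dense in $M$, fix $\bar y\in M$ and $\epsilon>0$. The forward orbit of $\bar y$ has nonempty $\omega$-limit set (compactness), which contains a minimal set (Zorn); pick $\bar z$ in it, so $\bar z$ is recurrent and $\flow^{s_n}(\bar y)\to\bar z$ for some $s_n\to+\infty$. By Proposition~\ref{prop_pAclosing_lemma} applied to $\bar z$ there is a periodic orbit $\gamma$ with $d(\bar z,\gamma)$ as small as we like (if $\bar z$ lies on a singular orbit, take $\gamma$ to be that orbit). For $n$ large, some $\bar p\in\gamma$ and $\flow^{s_n}(\bar y)$ lie in a common small flow box $B$, and using the local product (or prong) structure in $B$, the local unstable leaf of $\flow^{s_n}(\bar y)$ meets the local weak stable leaf $\fs_{\mathrm{loc}}(\bar p)$ of $\bar p$ in a point $\bar q$ with $d(\bar q,\flow^{s_n}(\bar y))$ small and $\bar q\in\fs(\bar p)=\fs(\gamma)$. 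Then $\flow^{-s_n}(\bar q)\in\fs(\gamma)$, and since $\bar q$ and $\flow^{s_n}(\bar y)$ lie on a common local unstable leaf, flowing backward keeps them uniformly close and on local unstable leaves (the unstable analogues of Lemma~\ref{lem_same_local_stable} and Remark~\ref{rem_forward_converge_implies_local_stable}); hence $d(\bar y,\flow^{-s_n}(\bar q))<\epsilon$, so $\fs(\gamma)$ meets the $\epsilon$-ball about $\bar y$.

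For the second statement, let $\bar o$ be a dense orbit. Singular points of $\orb$ are isolated and are themselves fixed by nontrivial elements of $\pi_1(M)$ (their orbits in $M$ are periodic, Proposition~\ref{prop_closed_orbit_fixed_point}), so it suffices to produce fixed points near a nonsingular $x\in\orb$. Given a neighborhood $U$ of $x$, let $V\subset U$ be as in Proposition~\ref{prop_closing_translation} and let $B_V\subset M$ be the flow box that is the image of a lift $\wt B_V$ of $V$. Since $\bar o$ is dense and $B_V$ has nonempty interior, a lift $\tilde o$ of $\bar o$ in $\wt M$ meets lifts of $B_V$ at infinitely many points; as $\tilde o$ is a properly embedded line (Proposition~\ref{prop_top_properties}) while each lift of $B_V$ is bounded, two of these points lie on distinct lifts of $V$, differing by a nontrivial $g\in\pi_1(M)$, so $gV\cap V\neq\emptyset$ in $\orb$. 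By Proposition~\ref{prop_closing_translation}, $g$ then has a fixed point in $U$. Hence the points of $\orb$ fixed by nontrivial elements of $\pi_1(M)$ are dense.

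I expect the main obstacle to be the ``transport back along the flow'' step in the first statement: one must arrange the constants so that $d(\bar y,\flow^{-s_n}(\bar q))<\epsilon$ holds with a bound \emph{uniform} in $n$ — equivalently, so that the threshold in the unstable analogue of Lemma~\ref{lem_same_local_stable}/Remark~\ref{rem_forward_converge_implies_local_stable}, past which two points on a common local unstable leaf $\delta$-shadow one another, can be chosen uniformly; this should follow from compactness of $M$ but needs care. One must also handle the case where $\bar z$, or the periodic orbit $\gamma$, lies on or very close to a singular orbit, where the relevant constants in Proposition~\ref{prop_pAclosing_lemma} and in the flow-box product structure degenerate and the intersection producing $\bar q$ must be taken inside the appropriate sector of a prong.
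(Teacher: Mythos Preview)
Your proof is correct and takes essentially the same approach as the paper: for the first statement both use the closing lemma at an accumulation point of the forward orbit and then argue that the stable leaf of the resulting periodic orbit reaches back to the original point via backward contraction along the unstable direction (the paper phrases this as ``orbits diverge along unstable leaves, so $\tilde R$ meets the stable leaf of anything close to $\flow^s(x)$'', which is your transport-back step seen from the other end), and for the second statement both invoke the closing lemma---you via its orbit-space form Proposition~\ref{prop_closing_translation}, the paper directly in $M$. The uniformity concern you flag is genuine but resolvable by compactness exactly as you indicate, and the paper is no more careful about it than you are; your parenthetical handling of singular $\bar z$ matches the paper's explicit case split.
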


\begin{remark} We give an independent proof of this result below, but note that for smooth Anosov flows, it is a direct consequence of Smale's {\em spectral decomposition theorem} \cite[Theorem 5.2]{Sma67}  which implies in particular that having a dense set of periodic orbits is in fact {\em equivalent} to being transitive -- see Theorem \ref{thm_characterize_transitive_flows}.  
In the topological and pseudo-Anosov setting this equivalence is shown in \cite{BBM24b}, and proved here in Theorem \ref{thm_characterization_transitive}.  
\end{remark} 

\begin{proof}[Proof of Proposition \ref{prop_density}]
 Let $\tilde x$ be any point in $\wt M$.   Let $x$ denote its projection to $M$ and $\bar{x}$ its projection to $\orb$.  Fix a neighborhood $R$ of $\bar{x}$ in $\orb$, and let $\tilde{R}$ be the full preimage of $R$ in $\wt M$.  
We will find a leaf of $\bfs$ with nontrivial stabilizer that intersects $R$.  A symmetric argument can be used to show that some leaf of $\bfu$ with nontrivial stabilizer also intersects $R$.  

First, if $x$ is on a periodic leaf (for instance, on a singular orbit) there is nothing to show, so we assume this is not the case.  Thus, we can take $R$ to be a rectangular, trivially foliated neighborhood defined by an interval in each of the leaf spaces.  
Since $M$ is compact, the forward orbit $\{\flow^t(x):  t>0\}$ has accumulation points. 
Assume as a first case that no accumulation point is on a singular orbit.  Thus, we can find a compact set $B$ containing the forward orbit of $x$, and bounded away from the union of singular orbits in $M$.  Fix $\epsilon$ small enough so that the $\epsilon$-ball about $\wt x$ is contained in $\tilde{R}$, and choose $\delta$ as in the pseudo-Anosov closing lemma, uniform on $B$.  
 
Because $\{\flow^t(x):  t>0\}$ has accumulation points, we can find arbitrarily large $s>0$ and $T >0$ such that $d(\flow^{s+T}(x), \flow^{s}(x)) < \delta$.  Thus, by the pseudo-Anosov closing lemma, there exists a periodic orbit $\beta$ which 
$\epsilon$-shadows the orbit segment between $\flow^{s}(x)$ and $\flow^{s+T}(x)$, so has stable and unstable leaves meeting an $\epsilon$-ball around $\flow^{s}(x)$.   
Since orbits {\em diverge} along unstable leaves, $\tilde{R}$ will meet the stable leaf of any point passing through the $\epsilon$-neighborhood of $\flow^{s}(x)$, provided $s$ is chosen sufficiently large.  This gives the desired conclusion.  

If instead $\{\flow^t(x):  t>0\}$ has an accumulation point $y$ on a singular orbit $\alpha$, a similar strategy applies but one does not need to apply the closing lemma since $\alpha$ is already periodic: one can find arbitrarily large $t$ such that the local unstable leaf of some $\flow^{t}(x)$ intersects the local stable leaf of $y$, arbitrarily close to $y$. Hence, one can find a lift of $\alpha$ whose stable leaf projects to meet $R$ in $\orb$.

It remains only to prove the last assertion in the statement of the proposition. Assume that $\flow$ is transitive. Consider a point $x\in M$  whose forward-orbit under $\flow$ is dense\footnote{It is classical that the existence of a dense orbit is equivalent to the existence of a forward-dense (and backwards-dense) orbit, see, e.g., \cite[Proposition 1.6.9]{FH19}.}. 
Take any small $\eps$-ball $U = B_\eps(z)$ in $M$; we will find a periodic orbit passing through $U$.  If there is already a singular orbit through $U$, we are done.   By density of $\flow^t(x)$, there is some point $y = \flow^t(x)$ in the smaller ball $B_{\eps/2}(z)$. Choose a constant $\delta$ for the pseudo-Anosov closing Lemma so that $d(\flow^T(y),y) < \delta$ implies the existence of a periodic orbit that $\eps/2$-shadows the segment from $y$ to $\flow^T(y)$.  
This gives the existence of a periodic orbit that passes through $U$.  Since $U$ was arbitrary, we conclude the  set of periodic orbits is dense in $M$.  Proposition \ref{prop_closed_orbit_fixed_point} now gives the desired conclusion. 
\end{proof}

The properties established thus far were all, in effect, the translations of classical facts about hyperbolic dynamical systems to the orbit space of a flow.   By contrast, the next result is a comparatively recent major discovery of Fenley \cite{Fen98}.

\begin{theorem}[Fenley \cite{Fen98}, Theorem B]\label{thm_branching_are_periodic}
Each branching leaf of $\bfs$ or $\bfu$ has nontrivial stabilizer.
\end{theorem}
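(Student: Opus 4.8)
The plan is to turn the non-Hausdorff behaviour of the leaf space into a recurrence in $M$ and feed this into the pseudo-Anosov closing lemma. I treat $\bfs$; the $\bfu$ case is symmetric.

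\textbf{Reduction.} A leaf $l$ of $\hfs$ has nontrivial stabilizer in $\pi_1(M)$ if and only if its projection to $M$ contains a periodic orbit: the restriction $\pi|_l\colon l\to \pi(l)$ is a covering map onto the leaf $\pi(l)$ of $\fs$, and since $l$ is simply connected (Proposition \ref{prop_top_properties}\ref{item_properly_embedded}) it is the universal cover, whence $\Stab_{\pi_1(M)}(l)\cong \pi_1(\pi(l))$; by Proposition \ref{prop_top_properties}\ref{item_generated_periodict} this group is nontrivial exactly when $\pi(l)$ carries a periodic orbit. So it suffices to show that a branching leaf $l$ of $\bfs$ projects to a leaf of $\fs$ containing a periodic orbit. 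If $l$ is singular this is automatic, so I would assume $l$ nonsingular; by Lemma \ref{lem_no_compact_leaves} the leaf $\pi(l)$ is then a plane, an annulus or a M\"obius band, and the only case to eliminate is the plane case, which I assume for contradiction: thus $\pi$ maps $l$ homeomorphically onto a periodic-orbit-free plane leaf $L=\pi(l)$ of $\fs$, so distinct $\pi_1(M)$-translates of $l$ are pairwise disjoint.

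\textbf{Extracting a recurrence.} Let $l'$ be a leaf of $\bfs$ nonseparated from $l$ and fix leaves $l_n\to l$, $l_n\to l'$ in $\Lambda(\bfs)$ (Proposition \ref{prop_sequential_definition}). Pick nonsingular points $p\in l$ and $p'\in l'$. For large $n$ the leaf $l_n$ meets both $\bfu(p)$ and $\bfu(p')$, so I set $x_n:=l_n\cap\bfu(p)\to p$ and $x_n':=l_n\cap\bfu(p')\to p'$, joined by an arc $\alpha_n\subset l_n$ transverse to $\bfu$. Since $p,p'$ lie on the distinct, hence disjoint, leaves $l$ and $l'$, the arcs $\alpha_n$ cannot remain in a compact subset of $\orb$ — a subsequential Hausdorff limit would be an arc inside a single $\bfs$-leaf joining $p$ to $p'$ — so their transverse length tends to infinity. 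Lifting the whole picture to $\wt M$ and projecting to $M$ produces flow-transverse arcs $\beta_n$, each contained in a stable leaf $L_n$, with diverging transverse length but endpoints converging in $M$. Compactness of $M$ then forces such a long arc to return close to itself, yielding $g_n\in\pi_1(M)\smallsetminus\{1\}$ that moves a small flow box met by $\beta_n$ very close to itself.

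\textbf{Closing up, and the main obstacle.} By the orbit-space form of the closing lemma (Proposition \ref{prop_closing_translation}, itself a consequence of Proposition \ref{prop_pAclosing_lemma}), each such near-return gives a fixed point of $g_n$, hence a $g_n$-fixed leaf of $\bfs$, which carries a periodic orbit by Proposition \ref{prop_hyperbolic}. The step I expect to be the real difficulty is to arrange, after renormalizing by the $\pi_1(M)$-action and passing to a subsequential limit, that this periodic leaf is $l$ itself (or $l'$, which is equally good). The idea here is to use the two shadowing relations built into the construction: the orbits over $x_n$ and $p$ are backward-asymptotic, since they share a local unstable leaf, while the orbits over $x_n$ and $x_n'$ are forward-asymptotic on $L_n$ (Lemma \ref{lem_all_converge}); tracking these should force the periodic orbit produced above to remain in the flow boxes chosen near $p$ and $p'$, so that its lift accumulates onto $l$. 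That contradicts the assumption that $L$ carries no periodic orbit, completing the proof. If this limiting argument proves too delicate, the fallback is to first establish that between two nonseparated stable leaves there is a ``product region'' foliated by $\bfs$ in a Reeb-strip fashion (modelled on the universal cover of the Reeb foliation of the torus, cf.\ Figure \ref{fig_tan_square}) and transversely by $\bfu$, then use density of periodic leaves (Proposition \ref{prop_density}) to place a periodic leaf inside it, and finally show that a bounded power of the associated element must fix the two boundary leaves $l$ and $l'$.
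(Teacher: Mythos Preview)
Your reduction and the recurrence set-up are fine, but the step you flag as ``the main obstacle'' is a genuine gap and the sketch does not close it. The closing lemma produces a periodic orbit near the location of the self-return of $\beta_n$, and that location can be \emph{anywhere} along the long arc in $L_n$; there is no mechanism forcing its stable leaf to equal $L$, or even to approach $L$ as $n\to\infty$. The asymptotic relations you invoke concern the \emph{endpoints} $x_n,x_n'$ of $\alpha_n$, not the place where $\beta_n$ nearly self-intersects, and those self-returns can accumulate on periodic leaves completely unrelated to $L$. Your fallback is also problematic in the paper's logical order: the ``Reeb-strip''/line-of-lozenges structure between nonseparated leaves (Proposition~\ref{prop:4weak4strong}) is proved \emph{using} Axiom~\ref{Axiom_nonseparated}, which is exactly the content of the theorem at hand, so appealing to it here would be circular; and even granting such a region, you give no reason why the element fixing a periodic leaf inside it should fix the boundary leaves $l,l'$.

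The paper's argument is structurally different and supplies the missing idea. It does not attack $l$ directly. Instead it picks out a canonical \emph{unstable} leaf $l^u$, the unique boundary leaf of the $\bfu$-saturation of $l$ separating $l$ from $l'$, and first shows $l^u$ is periodic. For this it takes $p\in l^u\cap l^s$ and a sequence $t_n\to-\infty$ chosen so that $\hflow^{t_n}(p)$ stays uniformly away from the orbits $\alpha,\beta$ on $l^s$ used to witness the perfect fit; after translating by $g_n\in\pi_1(M)$ so that $p_n:=g_n\hflow^{t_n}(p)\to p_\infty$, either two $p_n,p_m$ lie on the same $\hfu$-leaf (so $l^u$ is periodic, stabilized by $g_ng_m^{-1}$), or they lie on distinct $\hfu$-leaves and one checks, using only the flow-box picture near $p_\infty$ and the separation of $\alpha,\beta$ from $p_n,p_m$, that $\hfu(p_m)$ separates $g_n(l)$ from $g_n(l')$ --- contradicting nonseparation. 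This last contradiction is precisely where the branching hypothesis does real work, and is the ingredient your approach lacks. Finally, periodicity is transferred from $l^u$ to $l$ by uniqueness of perfect fits (Lemma~\ref{lem:perfect_fit_unique}): $l$ is the unique boundary leaf of the stable saturation of $l^u$ on the appropriate side, hence fixed by the same element (after passing to a power).
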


\begin{proof}
We do the proof for a branching leaf in $\bfs$, the other case being symmetric.
Let $l$ be a branching leaf, and let $l'$ be a leaf nonseparated with $l$ in $\bfs$.
Since $l$ and $l'$ are nonseparated, we can find some $l^s$ containing points $\alpha$ and $\beta$ of $\orb$ sufficiently close to points on $l$ and $l'$ so that $\bfu(\alpha) \cap l \neq \emptyset$ and $\bfu(\beta) \cap l' \neq \emptyset$.  
Let $S \subset \Lambda(\bfu)$ be the set of leaves intersecting both $l$ and $l^s$.     

By Proposition \ref{prop_nonseparated_and_intersection} no leaf of $\bfu$ can intersect both $l$ and $l'$, so the $\bfu$ saturation of $l'$ is disjoint from $S$.  
Thus, there exists a unique leaf $l^u \in \partial S$ such that $l$ and $l'$ lie in different connected components of $\orb \setminus l^u$.  
We will first show that $l^u$ is fixed by some nontrivial element $g$, and then use its relationship with $l$ to show that a power of $g$ fixes $l$. 

\begin{figure}[h]
\centering{
\includegraphics[width=8cm]{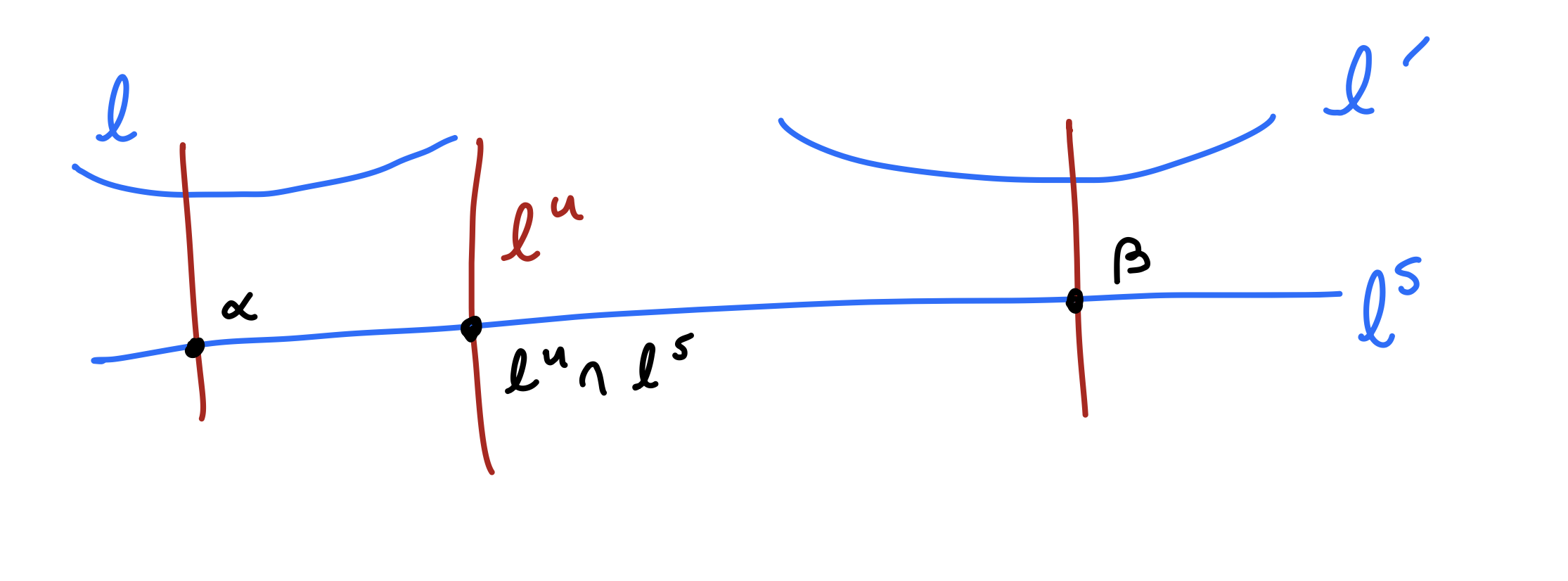}}
\caption{The set-up for the proof of Theorem \ref{thm_branching_are_periodic}} \label{fig_non_separated_setup}
\end{figure}

To show $l^u$ is fixed, we need to leave $\orb$ to work in $\wt M$.   Fix a point $p \in l^u \cap l^s$ in $\wt M$.  
Let $\delta >0$ be the constant given by property \ref{item_PAF_backwards_expansivity} of Definition \ref{def_topPA} (for some fixed choice of $\eps$).  It follows from this property, together with compactness of $M$ and the fact that orbits are properly embedded lines in $\wt M$, that there exists a sequence $t_n \to - \infty$ so that no point on the orbit $\beta$ nor on the orbit $\alpha$ of $\hflow$ lies distance less than $\delta$ from $\hflow^{t_n}(p)$. Note that while this is obvious for smooth (pseudo)-Anosov flows, or flows admitting strong foliations, proving it in full generality requires more work, but also follows readily from the existence of Markov partitions as in \cite{Iak24}. 

Consider the sequence $\{ \hflow^{t_n}(p): n \in \bN\}$.  Since $M$ is compact, after passing to a subsequence and reindexing, the projections of these points to $M$ converge to a limit.  
Passing to a further subsequence if needed, we can assume that they also converge in a finite cover $\hat M$ of $M$ where $\pi_1(\hat{M})$ acts on $\orb$ preserving orientation. 
Let $p_\infty$ be a lift of this limit to $\wt M$, thus there exist $g_n \in \pi_1(M)$ preserving orientation, such that 
\[ p_n := g_n \wt \flow^{t_n}(p) \] 
converges to $p_\infty$.    We can pass to a subsequence so that all $p_n$ lie in a single quadrant of $p_\infty$ (which will be needed if $p_\infty$ is singular), and up to a further subsequence, we can assume that $g_n g_m^{-1}$ all preserve local orientations of $\hfs$ and $\hfu$ in this quadrant.

If some points $p_n$ and $p_m$, for $m \neq n$ lie on the same leaf of $\hfu$, then this leaf is necessarily periodic, stabilized by $g_n g_m^{-1}$.   Since the $\hfu$ leaf containing $p_n$ is the image of $l^u$ under $g_n$, we conclude that $l^u$ is periodic, which is what we needed to show.  
So, we will assume that the unstable leaves of $p_n$ and $p_m$ are all distinct.  From this we will derive a contradiction.  

Let $B$ be a small flow box neighborhood of $p_\infty$ for $\hflow$, of diameter at most $\delta/2$, 
and choose $n, m$ large so that $p_n = g_n \hflow^{t_n}(p) \in B$ and similarly $p_m \in B$.   Since $g_n$ acts by isometries, we know that no point on the orbit $g_n \beta$ or $g_n \alpha$ lies in $B$; similarly for $g_m \beta$ and $g_m \alpha$. 

Up to relabeling $n, m$ we assume that $p_m$ is on the side of $\hfu(p_n)$ containing $\beta$.  Since $g_m \alpha \cap B = \emptyset$ and $\hfu(g_m \alpha) \cap g_m(l) \neq \emptyset$, it follows that $g_m(l) \cap \hfu(p_n) \neq \emptyset$.  See figure \ref{fig_non_separated}. 
Since $g_n \beta \cap B = \emptyset$, it follows that $g_n(l')$ is not contained in the region between $\hfu(p_n)$ and $\hfu(p_m)$.  Thus, $g_m(l)$ %
separates $g_n(l)$ from $g_n(l')$.  This contradicts the nonseparation of $l$ and $l'$.  

\begin{figure}[h]
\centering{
\includegraphics[width=8cm]{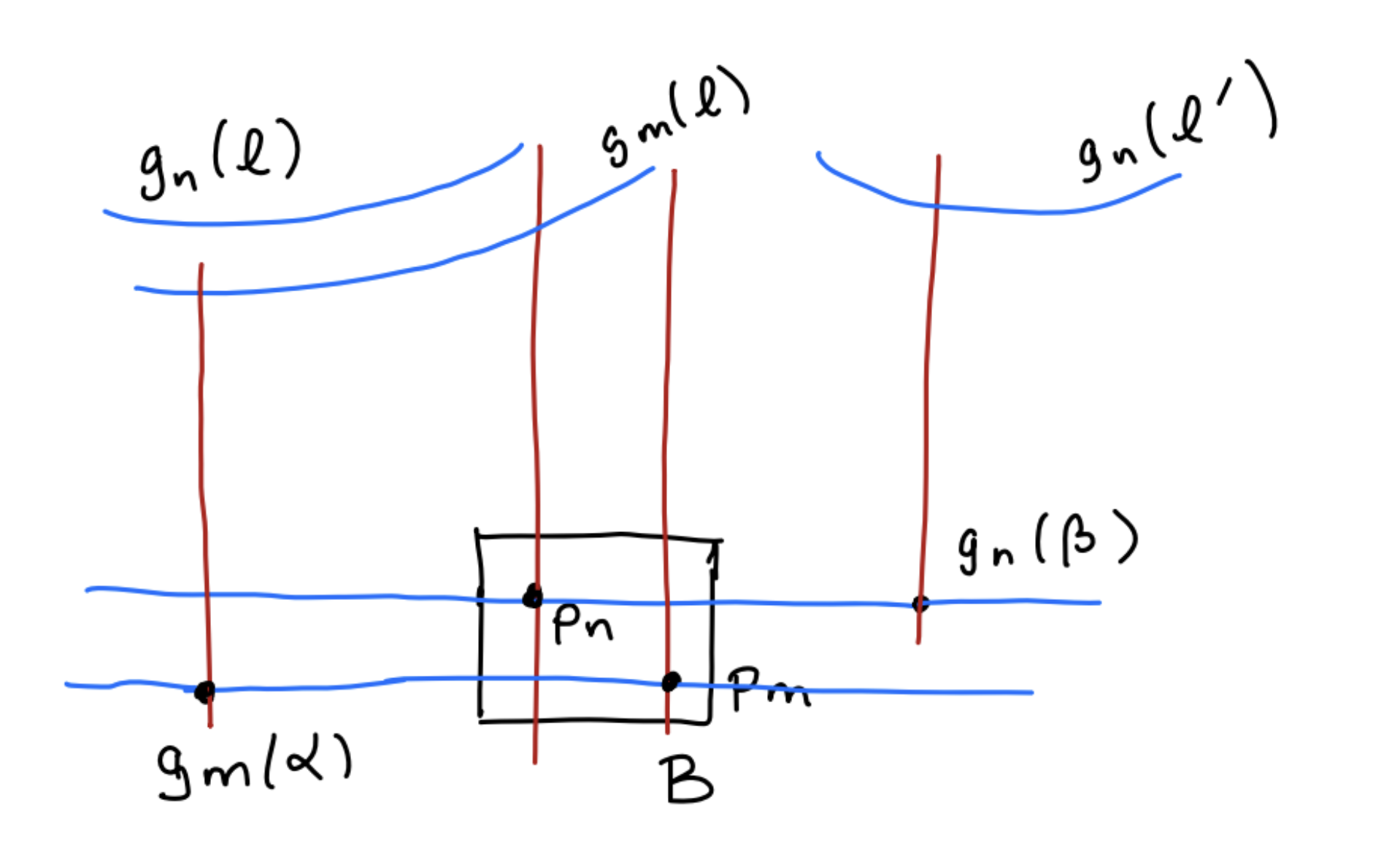}}
\caption{Configuration of images of leaves under $g_n$ and $g_m$.  This picture indicates (with a 2-dimensional cartoon) the configuration of leaves near $B$ in $\wt M$ that contradicts the fact that $l$ and $l'$ are nonseparated.} \label{fig_non_separated}
\end{figure}
This gives the desired contradiction and we conclude that $l^u$ is fixed by some element $g \in \pi_1(M)$.

Now we need to show that a power of $g$ fixes $l$.  
Up to replacing $g$ with a power, we can assume that $g$ fixes each half-leaf of $\bfu$ and $\bfs$ through its unique fixed point $z$ on $l^u$.  
Note that the configuration of $l$ and $l^u$ is symmetric in the sense that $l^u$ is a boundary leaf of the unstable-saturation of $l$.  
In fact, it can be characterized as the {\em unique} boundary leaf with the property that it has non-empty intersection with each stable leaf through any sufficiently small transversal to $\bfs$ based along the ray from $z$ towards $l$. 
This uniqueness means that if $g$ fixes $l^u$ and all its rays, it must fix $l'$ as well.   (This configuration is called a {\em perfect fit} and will be formalized and studied in the next chapter -- see Lemma \ref{lem:perfect_fit_unique}.)
This concludes the proof. 
\end{proof}

Having established some essential properties of the action of $\pi_1(M)$ on $\orb$, our goal in the next chapter is to study an abstract framing of these: a set of axioms for groups acting on the plane which captures the key features of orbit space actions.  We then will use these axioms, along with arguments in one- and two-dimensional topology and dynamics, to develop much of the known structural theory of pseudo-Anosov flows in a more general context.  

With this in mind, we collect the relevant results (that we just proved!) in the following statement, for future reference.  

\begin{theorem}\label{thm_induced_action_satisfy_axioms}
The action on $\orb$ induced by a pseudo-Anosov flow $\flow$ satisfies the following properties:
\begin{enumerate}[label=(\roman*)]
\item \label{item_induced_action_hyperbolic} Let $g\in \pi_1(M)$, $g\neq \id$. If $g$ fixes a leaf $l$ of $\bfs$ or $\bfu$, then it fixes a unique point $x\in l$, and $g$ is topologically expanding on the leaf $\bfs(x)$ and contracting on $\bfu(x)$, or vice versa. 
\item \label{item_induced_action_density} The union of leaves in $\bfs$ (resp.~$\bfu$) with nontrivial stabilizer is dense in $\orb$.
\item \label{item_induced_action_prong}  Each singular leaf of $\bfs,\bfu$ is fixed by some nontrivial element of $\pi_1(M)$.
\item \label{item_induced_action_nonseparated_are_periodic}  If $l$ is a branching leaf then  there exists a nontrivial element of $\pi_1(M)$ that fixes $l$.
\end{enumerate}
\end{theorem}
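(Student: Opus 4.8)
The plan is to assemble the results already established in this section; there is essentially no new content, and the only point needing a separate line of argument is item~\ref{item_induced_action_prong}.

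Item~\ref{item_induced_action_hyperbolic} is exactly Proposition~\ref{prop_hyperbolic}: if $g\neq\id$ fixes a leaf $l$ of $\bfs$ or $\bfu$, then $l$ has nontrivial fundamental group, so by Proposition~\ref{prop_top_properties}, item~\ref{item_generated_periodict} (together with Lemma~\ref{lem_no_compact_leaves}) it contains a single periodic orbit, which projects to the unique $g$-fixed point $x\in l$; the dichotomy between $g$ expanding $\bfs(x)$ and contracting $\bfu(x)$, or vice versa, according to whether the translation direction of $g$ along that orbit agrees with the flow, is proved there using the forward-convergence Lemma~\ref{lem_all_converge} (and the local model of a prong in the singular case). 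Item~\ref{item_induced_action_density} is the first assertion of Proposition~\ref{prop_density}, whose proof goes through the pseudo-Anosov closing lemma (Proposition~\ref{prop_pAclosing_lemma}). Item~\ref{item_induced_action_nonseparated_are_periodic} is Theorem~\ref{thm_branching_are_periodic}.

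It remains to treat item~\ref{item_induced_action_prong}. Let $l$ be a singular leaf of $\bfs$, the case of $\bfu$ being symmetric. Then $l$ contains a singular point $x$ of $\bfs$, and $x$ is the image in $\orb$ of a singular orbit of $\hflow$; by Definition~\ref{def_topPA}, item~\ref{item_PAF_foliations}, the corresponding orbit of $\flow$ in $M$ is periodic, so by Proposition~\ref{prop_closed_orbit_fixed_point} some nontrivial $\gamma\in\pi_1(M)$ fixes $x$. Since $\gamma$ preserves the foliation $\bfs$ and each point of $\orb$ lies on a unique leaf, $\gamma(l)=\gamma(\bfs(x))=\bfs(\gamma(x))=\bfs(x)=l$, so $\gamma$ fixes $l$. (Concretely: $\gamma$ permutes among themselves the prongs of $\bfs$ emanating from the singular point $x$.)

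As noted, there is no genuine obstacle: all the work is contained in Propositions~\ref{prop_hyperbolic} and~\ref{prop_density} and in Theorem~\ref{thm_branching_are_periodic}, and the only small additional point is that an element of $\pi_1(M)$ fixing a prong singularity automatically fixes the entire singular leaf through it.
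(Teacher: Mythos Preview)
Your proof is correct and follows exactly the same approach as the paper: items~\ref{item_induced_action_hyperbolic}, \ref{item_induced_action_density}, and \ref{item_induced_action_nonseparated_are_periodic} are deferred to Proposition~\ref{prop_hyperbolic}, Proposition~\ref{prop_density}, and Theorem~\ref{thm_branching_are_periodic} respectively, and item~\ref{item_induced_action_prong} is deduced from the fact that singular orbits are periodic together with Proposition~\ref{prop_closed_orbit_fixed_point}. Your treatment of item~\ref{item_induced_action_prong} is in fact slightly more explicit than the paper's, spelling out why fixing the singular point forces fixing the whole singular leaf.
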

\begin{proof} 
Item \ref{item_induced_action_prong} is a direct consequence of the definition of singular leaf, which must necessarily contain a closed orbit, and Proposition \ref{prop_closed_orbit_fixed_point}.   The other items were established in Propositions \ref{prop_hyperbolic} item \ref{item_induced_action_hyperbolic}, \ref{prop_density} item \ref{item_induced_action_density}, and Theorem \ref{thm_branching_are_periodic} item \ref{item_induced_action_nonseparated_are_periodic}.  
\end{proof}

\chapter{Bifoliated planes and Anosov-like actions}\label{chap_bifoliated_planes}

In Chapter \ref{chap_pA_to_orbit} we saw that a pseudo-Anosov flow on a 3-manifold $M$ gives rise to an action of $\pi_1(M)$ on the orbit space, a plane with two topologically transverse, possibly singular foliations.  Moreover, 
by Theorem \ref{thm_action_determines_OEflow}, the topological picture of the plane with these foliations and action completely determines the flow.

In this chapter we develop a theory of general {\em Anosov-like group actions on bifoliated planes}.  
Definition \ref{def_action} axiomatizes the essential dynamical features exhibited by 3-manifold groups acting on orbit spaces of pseduo-Anosov flows.  Working from this abstract definition, we develop a general theory of such group actions using one- and two-dimensional dynamics.  Later, we translate our results back to the context of pseudo-Anosov flows. 

\section{Bifoliated planes and leaf spaces}  \label{sec_bifoliated_intro}
To begin, we introduce and recall some basic terminology around topologically transverse foliations.  

\begin{definition} 
A {\em bifoliated plane} is a topological plane $P$ with a pair of transverse, possibly singular, topological foliations $(\cF^+, \cF^-)$.  Singularities are required to be properly embedded $p$-prongs for $p \geq 3$; as in definition \ref{def_model_prong}. 
\end{definition}

Recall that, for a point $x \in P$, the leaves of $\cF^+$ and $\cF^-$ through $x$ are denoted by $\cF^+(x)$ and $\cF^-(x)$ respectively.  We also use the following standard terminology. 

\begin{definition}[Face, ray, quadrant] \label{def_face_ray_quadrant}
A {\em face} of a singular leaf $l$ is a properly embedded $\R$ that bounds a single connected component of $P \setminus l$; each $n$-prong singularity has $n$ distinct faces of each foliation.  
A {\em ray} of $\cF^\pm(x)$ is a properly embedded copy of $[0,\infty)$ based at $x$.
A {\em quadrant} of $x$ is a connected component of $P \setminus (\cF^+(x) \cup \cF^-(x))$.  Note that singular points have more than four quadrants.   
\end{definition} 

Recall from Definition \ref{def_leaf_space} that the {\em leaf space} of a foliation $\cF$ on a manifold $X$, denoted $\Lambda(\cF)$, is the space obtained from $X$ by identifying each leaf to a single point, equipped with the quotient topology. For a nonsingular foliation of the plane, this is a (possibly non-Hausdorff) connected, simply connected, 1-manifold.  See, e.g. Haefliger-Reeb \cite{HR57} for a proof.  
If $\cF$ has prong singularities, but no two on a common leaf, this space is no longer a manifold, but rather an {\em order tree} in the sense of Gabai--Oertel \cite{GO89} and  Gabai-Kazez \cite{GK97}, or a {\em non-Hausdorff tree} in the terminology of Fenley \cite{Fen03}.    

Recall also from Definition \ref{def_non_separated} that two leaves of a foliation $\cF$ are called nonseparated if they are not members of disjoint open subsets of $\Lambda(\cF)$, in which case we say each leaf is a \emph{branching leaf}.  Nonseparated leaves $l$ and $l'$ have the property that there exists a sequence of leaves $l_n$ eventually entering every open neighborhood of $l$ and of $l'$; we say in this case that $l$ and $l'$ are {\em limits} of the sequence. 

\begin{exercise} 
Exhibit an example of a bifoliated plane with a set of $n$ leaves in $\cF^+$ that are pairwise non-separated; for $n = 2, 3, \ldots$ as well as $n = \infty$.  Can there be an uncountably infinite such collection? 
\end{exercise}

\begin{definition} Two bifoliated planes are {\em isomorphic} if there is a homeomorphism between them which takes the  foliations of one to the foliations of the other.  An {\em automorphism} of a bifoliated plane is a self-isomorphism, i.e. a homeomorphism that preserves each of the foliations $\cF^+$ and $\cF^-$.  
\end{definition} 

One easy method to produce a large class of examples of (nonsingular) bifoliated planes is as follows.
\begin{example}  \label{example_bifoliated}
Let $U$ be an open, path connected subset of $\R^2$, equipped with the standard coordinate foliations.  Lift these foliations to the universal cover $\wt{U}$ of $U$.  The result is a topological plane, with a pair of transverse foliations.
\end{example}

\begin{exercise}   
Give examples of nonsingular bifoliated planes which are not isomorphic to those obtained in Example \ref{example_bifoliated}.  For a challenge, can you characterize all of the possible bifoliated planes that can be obtained using this construction?   
\end{exercise}

\section{Anosov-like actions}
Here and going forward, when a group acts on a bifoliated plane, we always assume that the action is by automorphisms. 

\begin{definition}[Anosov-like action] \label{def_action}
An action of a group $G$ on a bifoliated plane $(P, \cF^+, \cF^-)$ is called \emph{Anosov-like} if it satisfies the following: 
\begin{enumerate}[label = (A\arabic*)]
	\item\label{Axiom_A1} 
	If a nontrivial element of $G$ fixes a leaf $l \in \cF^\pm$, then it has a fixed point $x \in l$ and is topologically expanding on one leaf through $x$ and topologically contracting on the other. 
	\item\label{Axiom_dense} The union of leaves of $\cF^{+}$ that are fixed by some element of $G$ is dense in $P$, as is the union of leaves of $\cF^{-}$ that are fixed by some element of $G$.  
	\item \label{Axiom_prongs_are_fixed} Each singular point is fixed by some nontrivial element of $G$.
	\item \label{Axiom_nonseparated} If $l$ is a branching leaf of $\cF^+$ or $\cF^-$, then some nontrivial element $g\in G$ fixes $l$.
	\end{enumerate}
\end{definition}
\begin{remark}
These axioms are not quite the same as those appearing in previous work in the literature.  The paper \cite{BFM22} considered only transitive flows, so had a longer more restrictive set of axioms which describe {\em transitive} Anosov-like actions.  The paper \cite{BBM24b} gives the list above, but with an additional fifth property restricting the topology of the foliations on the plane which we will not use here -- but see Remark \ref{rem_no_ideal_quad}.  
\end{remark} 

As an easy consequence of the definition, observe that Axioms \ref{Axiom_prongs_are_fixed} and \ref{Axiom_A1} together imply that there is at most one singularity on any given leaf.   Additionally, if $g \in G$ fixes some point $x$, then a finite power of $g$ necessarily preserves each of the the rays through $x$, and hence preserves each quadrant of $x$.  Thus, Axiom \ref{Axiom_prongs_are_fixed} gives the existence of elements preserving each ray through a singular point, and Axiom \ref{Axiom_A1} combined with \ref{Axiom_nonseparated} gives elements fixing each ray through a point on a branching leaf.  

Admitting an Anosov-like action places strong constraints on the topology of the bifoliation. 
\begin{exercise} Give examples of bifoliated planes from the construction of Example  \ref{example_bifoliated} which do not admit any Anosov-like action of a group.   Can you find any examples which do?  
\end{exercise}

However, there are indeed many examples; the paradigm case coming from pseudo-Anosov flows. 
Restating Theorem \ref{thm_induced_action_satisfy_axioms} in the language of Anosov-like actions, we obtain:
\begin{theorem}
Let $M$ be a compact 3-manifold with pseudo-Anosov flow $\flow$.  Then the induced action of $\pi_1(M)$ on the orbit space $\orb$ is Anosov-like.  
\end{theorem}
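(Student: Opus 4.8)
The plan is to treat this as a bookkeeping statement: all four axioms have already been verified in Chapter \ref{chap_pA_to_orbit}, so the only genuine content to check is that the ambient object $(\orb,\bfs,\bfu)$ is a bifoliated plane in the precise sense of the definition, and that $\pi_1(M)$ acts on it by automorphisms; after that one simply matches each axiom against Theorem \ref{thm_induced_action_satisfy_axioms}.

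First I would invoke Theorem \ref{thm_orb_is_plane}: the orbit space $\orb$ is homeomorphic to $\bR^2$, and $\hfs,\hfu$ descend to a pair of transverse $1$-dimensional foliations $\bfs,\bfu$ with isolated prong singularities. To see that $(\orb,\bfs,\bfu)$ is a bifoliated plane one only needs the local model of each singularity. But by condition \ref{item_PAF_foliations} of Definition \ref{def_topPA} the singular orbits of $\flow$ are locally modeled on $p$-prongs with $p\geq 3$, and in the construction of the foliation atlas on $\orb$ carried out in the proof of Theorem \ref{thm_orb_is_plane} the chart around a singular orbit is precisely a model $p$-prong; moreover the fact established there that leaves of $\hfs,\hfu$ (and the relevant half-leaves at singular orbits) are properly embedded in $\wt M$ (Proposition \ref{prop_top_properties}\ref{item_properly_embedded}) passes to the quotient, so each prong of $\bfs,\bfu$ is properly embedded. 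Hence $(\orb,\bfs,\bfu)$ is a bifoliated plane. The action of $\pi_1(M)$ on $\wt M$ by deck transformations preserves $\hflow$, hence its invariant foliations $\hfs,\hfu$, and hence (as the orbit-equivalence relation is preserved) descends to an action on $\orb$ by homeomorphisms taking $\bfs$-leaves to $\bfs$-leaves and $\bfu$-leaves to $\bfu$-leaves; that is, an action by automorphisms of the bifoliated plane, as already recorded in Section \ref{sec_pA_to_orbit_space}.

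Finally I would match the four axioms. Axiom \ref{Axiom_A1} is exactly Theorem \ref{thm_induced_action_satisfy_axioms}\ref{item_induced_action_hyperbolic} (coming from Proposition \ref{prop_hyperbolic}), the ``expanding on one, contracting on the other'' dichotomy being literally the statement there. Axiom \ref{Axiom_dense} is item \ref{item_induced_action_density} (Proposition \ref{prop_density}), and Axiom \ref{Axiom_nonseparated} is item \ref{item_induced_action_nonseparated_are_periodic} (Fenley's Theorem \ref{thm_branching_are_periodic}). For Axiom \ref{Axiom_prongs_are_fixed}: a singular point $p\in\orb$ is the projection of a singular orbit $\alpha$ of $\flow$, which is periodic by hypothesis; choosing $\gamma\in\pi_1(M)$ representing (a conjugate adapted to a lift of) $\alpha$, Proposition \ref{prop_closed_orbit_fixed_point} gives that $\gamma$ fixes $p$. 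Equivalently one may quote item \ref{item_induced_action_prong} — the singular leaf $\bfs(p)$ is fixed by some nontrivial $g$ — and then use \ref{Axiom_A1} together with Lemma \ref{lem_no_compact_leaves} (the unique periodic orbit on the singular leaf $\fs(\alpha)$ is $\alpha$ itself) to conclude that the unique fixed point of $g$ on $\bfs(p)$ is $p$. This exhausts Definition \ref{def_action}, so the induced action is Anosov-like.

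The ``hard part'' of this theorem is not in the present argument at all: each of the four axioms packages a substantial theorem about pseudo-Anosov flows, the deepest being Fenley's result that branching leaves are periodic (Theorem \ref{thm_branching_are_periodic}), whose proof already required delicate work in $\wt M$ with backward expansivity and compactness. The only real (and minor) subtlety here is the bookkeeping point above — that the definition of Anosov-like action asks singular \emph{points} to be fixed whereas Theorem \ref{thm_induced_action_satisfy_axioms}\ref{item_induced_action_prong} is phrased in terms of singular \emph{leaves} — which is resolved using \ref{Axiom_A1} and the uniqueness of the periodic orbit on a singular leaf.
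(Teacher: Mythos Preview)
Your proposal is correct and follows the same approach as the paper, which treats this theorem simply as a restatement of Theorem \ref{thm_induced_action_satisfy_axioms} in the language of Definition \ref{def_action}. Your write-up is more thorough than the paper's (which gives no proof at all), in particular explicitly checking the bifoliated-plane hypotheses and noting the minor point-versus-leaf discrepancy in Axiom \ref{Axiom_prongs_are_fixed}; this extra care is fine and accurate.
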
 

We will also see below a few special examples of Anosov-like group actions which do not arise from pseudo-Anosov flows, and in fact for which the group acting is not isomorphic to the fundamental group of a 3-manifold.   As another source of examples and non-examples, the definition of pseudo-Anosov flow can also be generalized to non-compact 3-manifolds, and many (but not all) induce Anosov-like actions on their orbit spaces.

\subsection{A first taste of relating dynamics to group structure} 

The existence of the invariant foliations (as well as their one-dimensional leaf spaces) allows one to use tools from one-dimensional dynamics to study Anosov-like actions.   The following result of H\"older, generalized by Solodov, is a classical result in this direction.  

\begin{theorem}[H\"older] \label{thm:holder}
Suppose $G$ is a group acting freely on $\bR$.  Then, there is a faithful {\em translation number homomorphism} semi-conjugating the action of $G$ to an action by translations.  In particular $G$ is abelian.  
\end{theorem}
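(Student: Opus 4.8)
The plan is to run the classical argument that extracts the group structure from the \emph{order} a free action induces, and then to upgrade the resulting embedding into $(\bR,+)$ to a semi-conjugacy.

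First I would note that a free action on $\bR$ is automatically by orientation-preserving homeomorphisms, since any orientation-reversing homeomorphism of $\bR$ has a fixed point by the intermediate value theorem. Fix a basepoint $x_0\in\bR$. For $g\neq h$ in $G$, applying freeness to $h^{-1}g$ shows $g(x)\neq h(x)$ for \emph{every} $x$, so $x\mapsto g(x)-h(x)$ has constant sign; declare $g\prec h$ when this sign is negative. This defines a total order on $G$, independent of $x_0$, and it is bi-invariant: left-invariant because each element of $G$ is increasing, and right-invariant because comparing $g$ and $h$ at $k(x_0)$ gives the same answer as at $x_0$. Call $g$ \emph{positive} if $e\prec g$, i.e.\ $g(x)>x$ for all $x$. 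I would then verify the order is Archimedean: otherwise there are positive $f,g$ with $g^n\prec f$ for all $n$, whence $g^n(x_0)$ is strictly increasing (as $g$ is positive) and bounded above by $f(x_0)$, so converges to some $x_\ast$; continuity of $g$ forces $g(x_\ast)=x_\ast$, contradicting freeness.

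Next, assuming $G$ nontrivial, fix a positive element $f\in G$ and define $\tau\colon G\to\bR$ by
\[
\tau(g)=\sup\bigl\{\,p/q \;:\; q\geq 1,\ f^{p}(x_0)\leq g^{q}(x_0)\,\bigr\}.
\]
The Archimedean property makes this set nonempty and bounded above, so $\tau$ is well-defined. I would check the standard properties: $\tau$ is order-preserving, $\tau(f)=1$, $\tau(g^k)=k\,\tau(g)$, and $\tau$ is faithful — if $g$ is positive and nontrivial then $f\prec g^n$ for some $n$ by the Archimedean property, so $\tau(g^n)\geq 1$ and $\tau(g)>0$. The remaining point, which I expect to be the genuine work, is additivity $\tau(gh)=\tau(g)+\tau(h)$: this is precisely the content of H\"older's theorem on Archimedean ordered groups, and it forces $\tau$ to be an injective homomorphism $G\hookrightarrow(\bR,+)$, so in particular $G$ is abelian. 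The subtlety here is that $(gh)^q\neq g^qh^q$ in general, so one must combine bi-invariance of the order with the Archimedean property to control powers of products; I would follow the classical order-theoretic argument (as in the standard references on orderable groups).

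Finally, to produce the semi-conjugacy, let $\Omega=G\cdot x_0\subset\bR$, which is in bijection with $G$ by freeness, and define $h\colon\Omega\to\bR$ by $h(g x_0)=\tau(g)$. This is well-defined and order-preserving, since distinct points of $\Omega$ are compared exactly as the corresponding group elements and $\tau$ is a faithful order-preserving map; moreover $h(k\cdot g x_0)=\tau(kg)=\tau(k)+\tau(g)=\tau(k)+h(g x_0)$ by additivity. I would then extend $h$ to a continuous, non-decreasing map $\bR\to\bR$ in the standard way — extending monotonically over $\overline\Omega$ by a supremum, and interpolating affinely (or constantly) over the bounded complementary intervals and the two ends — and check that monotonicity together with density of $\overline\Omega$ forces the identity $h(k\cdot y)=\tau(k)+h(y)$ to persist for all $y\in\bR$, $k\in G$. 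This exhibits $h$ as a semi-conjugacy from the $G$-action to the action of $G$ on $\bR$ by the translations $y\mapsto y+\tau(k)$, so $\tau$ is the desired faithful translation number homomorphism and $G$ is abelian.
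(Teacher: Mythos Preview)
The paper does not give its own proof of this theorem: it states H\"older's theorem (together with Solodov's generalization) as a classical result and refers the reader to \cite[Section 2.2.4]{Nav11} for a proof. Your outline is precisely the classical argument found there---extract a bi-invariant Archimedean order from the free action, apply H\"older's theorem on Archimedean ordered groups to obtain the translation number homomorphism, then build the semi-conjugacy by extending a monotone map defined on an orbit---so there is nothing to compare. Your sketch is correct; the only step you flag as ``genuine work,'' namely additivity of $\tau$, is indeed the heart of the matter and is handled exactly as you describe in the standard references.
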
 

\begin{theorem}[Solodov]  \label{thm:solodov}
Suppose $G$ is a group of orientation-preserving homeomorphisms of the line with the property that each element has at most one fixed point.  Either $G$ has a global fixed point and is abelian; or $G$ is isomorphic to a subgroup of $\mathrm{Aff}(\R)$ and the action is semi-conjugate to one by affine transformations. 

Moreover, if the action of $G$ is minimal, one can replace ``semi-conjugate" by {\em conjugate}.\footnote{Recall that an action of a group on a topological space is said to be {\em minimal} if every orbit is dense.   Two actions $\rho_1, \rho_2$ of $G$ on the line are {\em semi-conjugate} if there is a monotone increasing map $h: \R \to \R$ such that $\rho_1 \circ h = h \circ \rho_2$.  See \cite{Nav11} for further introduction to 1-dimensional dynamics. }
\end{theorem}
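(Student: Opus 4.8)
The plan is to follow the dichotomy in the statement, with Hölder's theorem (Theorem~\ref{thm:holder}) as the workhorse in both branches; the argument is essentially the one in \cite{Nav11}. Suppose first that $\mathrm{Fix}(G):=\bigcap_{g\in G}\mathrm{Fix}(g)$ is nonempty, and fix a point $p$ in it. Then $G$ preserves the ray $(p,+\infty)\cong\R$, and the restricted action is \emph{free and faithful}: a nontrivial $g$ fixes $p$ and at most one point in total, so it has no fixed point in $(p,+\infty)$, while an element acting trivially on $(p,+\infty)$ would have infinitely many fixed points and hence be the identity. Hölder's theorem then forces $G$ to be abelian, which is the first alternative. (If desired, one can splice the translation-number semi-conjugacies on $(-\infty,p)$ and on $(p,+\infty)$ into a global semi-conjugacy to a group of translations, but this is not required by the statement.)

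Now assume $\mathrm{Fix}(G)=\emptyset$; the goal is to produce an affine structure. The first — and most hypothesis-dependent — step is to show that $T:=\{g\in G:\mathrm{Fix}(g)=\emptyset\}\cup\{\mathrm{id}\}$ is a \emph{normal subgroup}. Normality is immediate, since conjugation carries fixed-point sets to fixed-point sets. For closure under the operation, write $T=T^+\sqcup\{\mathrm{id}\}\sqcup T^-$ according to whether a fixed-point-free $g$ satisfies $g(x)>x$ for all $x$ or $g(x)<x$ for all $x$ (one of the two, by the intermediate value theorem applied to $x\mapsto g(x)-x$); each of $T^{\pm}$ is a sub-semigroup because $g,h\in T^+$ gives $gh(x)>h(x)>x$. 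The only case to rule out is a product $fg$ with $f\in T^+$ and $g\in T^-$: if $fg$ had a fixed point $x_0$, then $gf=f^{-1}(fg)f$ would fix $f^{-1}(x_0)<x_0$ while $gf=g(fg)g^{-1}$ would fix $g^{-1}(x_0)>x_0$, so $gf$ would have two distinct fixed points, contradicting the hypothesis. Hence $fg\in T$, and $T$ is a subgroup. One also needs $T\neq\{\mathrm{id}\}$, which follows from a standard ping-pong argument applied to two elements with distinct fixed points, using $\mathrm{Fix}(G)=\emptyset$.

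Since $T$ acts freely on $\R$, Hölder's theorem supplies an injective translation-number homomorphism $\tau\colon T\to(\R,+)$ and a monotone map $\varphi\colon\R\to\R$ with $\varphi(t\cdot x)=\varphi(x)+\tau(t)$ for all $t\in T$ and $x\in\R$. Because $T$ is normal, each $g\in G$ permutes the intervals collapsed by $\varphi$ and therefore descends through $\varphi$ to a homeomorphism $\bar g$ of the range of $\varphi$; moreover conjugation in $G$ scales translation numbers, $\bar g(x+\tau(t))=\bar g(x)+\tau(gtg^{-1})$. Splitting into the cases where $\tau(T)$ is dense in $\R$ or infinite cyclic, and using continuity of $\bar g$, one deduces that $\bar g$ is affine, $\bar g(x)=\lambda(g)x+c(g)$ with $\lambda(g)>0$. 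Thus $g\mapsto\bar g$ is a homomorphism $G\to\mathrm{Aff}(\R)$, faithful because a kernel element is fixed-point-free with zero translation number and hence the identity, and $\varphi$ is the asserted semi-conjugacy. Finally, if the $G$-action is minimal I would check that $\varphi$ collapses no nondegenerate interval: such an interval would be wandering for $T$, and its $G$-orbit would be a proper $G$-invariant open subset of $\R$, impossible by minimality; then $\varphi$ is a homeomorphism and the semi-conjugacy is a conjugacy.

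The main obstacle I anticipate is the passage from the well-understood normal subgroup $T$ to the full group $G$: making the $T$-translation-number map canonical enough that $G$-conjugation must act on $\tau(T)$ by positive scaling, and handling the infinite-cyclic case for $\tau(T)$, where $G$ itself can be ``small'' and one must exploit the dilation homomorphism $\lambda$ to get the density needed for $\bar g$ to come out affine (and, in the minimal case, for $\varphi$ to be a genuine conjugacy). By contrast, the global-fixed-point case and the verification that $T$ is a group are routine once the one-fixed-point hypothesis is used in the form above.
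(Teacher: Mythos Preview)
The paper does not prove this theorem at all: immediately after the statement it writes ``See \cite[Section 2.2.4]{Nav11} for a proof and further discussion.'' So your outline is being compared not to an argument in the paper but to the standard reference, and at that level your plan---global fixed point via H\"older on a half-line; otherwise show the fixed-point-free elements form a normal subgroup $T$, apply H\"older to $T$, and check that $G$-conjugation scales translation numbers---is exactly the route taken there.

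There is, however, a genuine error in your closure argument for $T$. From $gf=g(fg)g^{-1}$ one gets that $gf$ fixes $g(x_0)$, not $g^{-1}(x_0)$: conjugation by $a$ sends $\mathrm{Fix}(h)$ to $a(\mathrm{Fix}(h))$. With $g\in T^-$ this gives $g(x_0)<x_0$, while the other conjugation $gf=f^{-1}(fg)f$ gives $\mathrm{Fix}(gf)=f^{-1}(x_0)<x_0$. These are not two distinct points---the identity $f^{-1}(x_0)=g(x_0)$ is just the fixed-point equation $fg(x_0)=x_0$ rewritten---so no contradiction arises and the argument collapses. The fact that $T$ is a subgroup (and that $T\neq\{\mathrm{id}\}$ when there is no global fixed point) is true, but its proof is more delicate: one typically first shows that under the one-fixed-point hypothesis any fixed point must be a topological attractor or repeller (ruling out ``parabolic'' graphs tangent to the diagonal), and only then runs the product/ping-pong arguments. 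Your sketch does not address this step and your explicit computation does not substitute for it, so as written the proof has a gap precisely at the place you flagged as routine; consult \cite[\S2.2.4]{Nav11} for the missing lemma.
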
 
See \cite[Section 2.2.4]{Nav11} for a proof and further discussion. 

Using Theorem \ref{thm:solodov}, we describe the structure stabilizers of leaves under an Anosov-like action.   
\begin{lemma} \label{lem:common_fixed_point}
Suppose $G$ acts Anosov-like on a bifoliated plane $(P, \cF^+, \cF^-)$.  Let $l$ be a leaf of either $\cF^+$ or $\cF^-$.  
Then the stabilizer of $l$ in $G$ acts on $l$ with a global fixed point, and the finite-index subgroup of the stabilizer that preserves each ray 
 through that fixed point is abelian.  
\end{lemma}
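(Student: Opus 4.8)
The plan is to first produce a global fixed point $x_0 \in l$ for the stabilizer $H := \Stab_G(l)$, and then to analyze the action of the ray-preserving finite-index subgroup $H^\ast \le H$ on a transverse ray emanating from $x_0$, where H\"older's theorem will force abelianness.

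\emph{Global fixed point.} By Axiom \ref{Axiom_A1}, every nontrivial element of $H$ is topologically expanding or contracting on $l$ itself (one of the two leaves through its fixed point on $l$ \emph{is} $l$), and hence has a unique fixed point on $l$. If $l$ carries a singularity $s$, then by the remark following Definition \ref{def_action} it is the unique singular point of $l$, so every automorphism preserving $l$ fixes $s$; thus $H$ fixes $s$ and we take $x_0 = s$. If $l$ is nonsingular ($l \cong \R$), let $H_0 \le H$ be the index-at-most-two subgroup preserving orientation of $l$, a group of orientation-preserving homeomorphisms of $\R$ each of whose nontrivial elements has exactly one fixed point. I would then apply Solodov's Theorem \ref{thm:solodov}: the key point is that Axiom \ref{Axiom_A1} forbids fixed-point-free nontrivial elements, so in the ``affine'' alternative the affine image of $H_0$ contains no nontrivial translation; a translation-free subgroup of $\mathrm{Aff}^+(\R)$ injects via its linear part into $\R_{>0}$, hence is abelian, and commuting affine maps with linear part $\ne 1$ share a fixed point, which pulls back through the monotone semi-conjugacy to a global fixed point of $H_0$ on $l$. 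So in either alternative $H_0$ is abelian and has a global fixed point; by Axiom \ref{Axiom_A1} it cannot fix two points of $l$, so when $H_0 \ne \{1\}$ this fixed point $x_0$ is unique, whence $H$ — which normalizes $H_0$ — fixes $x_0$. (If $H_0 = \{1\}$ then $H$ is trivial or generated by an orientation-reversing involution of $l$ with a fixed point, and the claim is immediate.)

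\emph{Abelianness of the ray-preserving subgroup.} Now $x_0$ is a genuine fixed point of $H$ in $P$, so $H$ fixes the transverse leaf $\cF^-(x_0)$ and permutes the finitely many rays through $x_0$ within each foliation separately — the action being by automorphisms, it cannot send a $\cF^+$-ray to a $\cF^-$-ray. Hence the subgroup $H^\ast$ fixing every ray through $x_0$ has finite index (and by the remark after Definition \ref{def_action} contains a power of each element of $H$). Fix a ray $R$ of $\cF^-(x_0)$ based at $x_0$. For any nontrivial $g \in H^\ast$: since $g$ fixes the leaf $\cF^+(x_0) = l$, Axiom \ref{Axiom_A1} says $g$ is topologically expanding or contracting on $\cF^-(x_0)$ with fixed point $x_0$ (the A1 fixed point on $l$ must be $x_0$, as $g$ already fixes $x_0$ and has a unique fixed point on $l$); in particular $g$ moves every point of $R \smallsetminus \{x_0\}$. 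Thus $H^\ast$ acts freely on $R \smallsetminus \{x_0\} \cong \R$, and H\"older's Theorem \ref{thm:holder} shows $H^\ast$ is abelian.

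The only genuinely delicate step is excluding the non-abelian affine possibility in Solodov's dichotomy for the nonsingular case; this is precisely where Axiom \ref{Axiom_A1} — no nontrivial element is fixed-point-free — is used, together with the elementary fact that a subgroup of $\mathrm{Aff}^+(\R)$ with no nontrivial translations is abelian with a global fixed point. The remaining steps are routine bookkeeping with the foliation structure and the remarks following Definition \ref{def_action}.
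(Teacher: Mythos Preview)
Your proof is correct and follows essentially the same approach as the paper: Solodov's theorem for the global fixed point in the nonsingular case (ruling out the affine alternative via the absence of translations forced by Axiom \ref{Axiom_A1}), the unique singularity in the singular case, and H\"older's theorem for abelianness of the ray-preserving subgroup. The only minor difference is that you apply H\"older uniformly to a \emph{transverse} ray of $\cF^-(x_0)$ in both cases, whereas the paper in the nonsingular case reads off abelianness of the orientation-preserving subgroup directly from Solodov (the ray-preserving subgroup being contained in it), and in the singular case applies H\"older to a ray of $l$ itself; your unified treatment is equally valid.
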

We will later show that leaf stabilizers are not only virtually abelian but virtually cyclic.  See Proposition \ref{prop:cyclic_stabilizers}. 

\begin{proof} 
First, suppose that $l$ is a nonsingular leaf, and consider the subgroup $H$ of the stabilizer of $l$ which acts on $l$ preserving orientation.  By Axiom \ref{Axiom_A1}, each element of this group acts with exactly one fixed point. By Theorem \ref{thm:solodov},  either the group acts with a global fixed point or is semi-conjugate to a group of affine transformations. In this latter case, in the absence of a global fixed point, there is a nontrivial subgroup acting by translations, which would contradict Axiom \ref{Axiom_A1}.   Thus, we conclude that $H$ acts with a global fixed point and is abelian.  Finally, since $H$ is of index at most two in the stabilizer of $l$, any element not in $H$ must fix this unique global fixed point as well. 

In the singular case, Axiom \ref{Axiom_A1} and Axiom \ref{Axiom_prongs_are_fixed} implies that the singular point is unique and fixed by every element of the stabilizer of $l$.  
Consider the finite index subgroup $H$ of the stabilizer that preserves each ray based at the singular point.  The restriction of the action to any such ray is a faithful, free action of $H$ on $\R$.  Theorem \ref{thm:holder} now says that $H$ is abelian. 
 \end{proof}
 
 We also have that point stabilizers can be generated by {\em primitive} elements of $\pi_1(M)$: 
  \begin{proposition} \label{prop_primitive_stabilizer}
 Let $x \in P$.  If $g$ fixes all rays through $x$ and $h^n =g$ then $h(x)=x$.
 \end{proposition}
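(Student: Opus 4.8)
The plan is to reduce everything to showing that $h$ fixes one of the two leaves $\cF^+(x)$ or $\cF^-(x)$; once that is established the conclusion follows at once. Suppose $h$ fixes $\cF^+(x)$. By Axiom \ref{Axiom_A1}, $h$ has a unique fixed point on $\cF^+(x)$; this point is fixed by $g = h^n$ as well. Since $g$ also fixes $\cF^+(x)$ (it fixes $x$), Axiom \ref{Axiom_A1} applied to $g$ — which we may assume is nontrivial, see below — tells us that $x$ is the unique fixed point of $g$ on $\cF^+(x)$. Hence the unique $h$-fixed point of $\cF^+(x)$ equals $x$, so $h(x) = x$; the case where $h$ fixes $\cF^-(x)$ is identical. (If $g = \id$ there is nothing to prove: Anosov-like actions are torsion free, because by Ker\'ekj\'art\'o's theorem any finite-order homeomorphism of $P \cong \bR^2$ has a fixed point and therefore fixes a leaf of $\cF^+$, which by Axiom \ref{Axiom_A1} forces it to be trivial; so $h^n = \id$ gives $h = \id$.)

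Next I would record two elementary consequences of the relation $g = h^n$. First, $h$ and $g$ commute, so for each $i$ the point $y_i := h^i(x)$ is fixed by $g$, and in fact $g$ fixes every ray through $y_i$ (conjugate the hypothesis on $x$ by $h^i$, using that $h^i$ carries rays through $x$ to rays through $y_i$). Second, whenever the $y_i$ are pairwise distinct, the ``crosses'' $C_i := \cF^+(y_i) \cup \cF^-(y_i)$ are pairwise disjoint: a point of $\cF^+(y_i) \cap \cF^-(y_j)$ would be fixed by $g$ and lie on $\cF^-(y_j)$, hence equal $y_j$ by the uniqueness in Axiom \ref{Axiom_A1}, forcing $\cF^+(y_i) = \cF^+(y_j)$, which is impossible since $g$ has a unique fixed point on that leaf; the pairings $\cF^\pm(y_i) \cap \cF^\pm(y_j)$ are handled the same way.

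The proof then proceeds by induction on $n$, the case $n = 1$ being the first paragraph. For $n \geq 2$, assume $h(x) \neq x$ and let $e \geq 2$ be the period of $x$ under $h$ (finite and dividing $n$, since $h^n(x) = g(x) = x$). Then $h^e$ fixes $x$, hence the leaves $\cF^\pm(x)$; after replacing $e$ by a suitable multiple that still divides $n$ — to absorb the permutation $h^e$ may induce on the rays through $x$, using that $g$ fixes all of them — we may assume $h^e$ fixes every ray through $x$. If this exponent is still $< n$, the inductive hypothesis applied with $h^e$ in the role of $g$ gives $h(x) = x$, contradicting $e \geq 2$. So we are reduced to the residual configuration: $h$ cyclically permutes $e \geq 2$ pairwise-disjoint crosses $C_0, \dots, C_{e-1}$, with $g = h^n$ fixing each and acting on it as in Axiom \ref{Axiom_A1} — fixing all its rays and topologically expanding on one of $\cF^\pm(y_i)$, contracting on the other, about $y_i$.

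To dispatch this residual case I would argue with planarity. The leaf $\ell_1 := h(\cF^+(x)) = \cF^+(y_1)$ is disjoint from $\ell_0 := \cF^+(x)$, hence lies entirely in one of the two open half-planes $H$ of $P \smallsetminus \ell_0$. Examining the two possible ways $h$ can carry the pair of half-planes of $\ell_0$ onto the pair of half-planes of $\ell_1$ shows that, after possibly replacing $h$ by $h^{-1}$, $h$ maps $\overline{H}$ into $H$; then $g = h^n$ also maps $\overline{H}$ into $H$, so $g(\ell_0) \subset H$, contradicting $g(\ell_0) = \ell_0 = \partial H$. The one configuration not covered by this dichotomy — where $h$ sends each half-plane of $\ell_0$ partly across $\ell_0$ — is the genuine technical obstacle, and I expect it to take the most care: I would rule it out by iterating the half-plane analysis on $h^2$ and using the expansion/contraction of $g$ along the disjoint crosses $C_i$ (Axiom \ref{Axiom_A1}) to prevent the resulting nested regions from ever wrapping around the plane.
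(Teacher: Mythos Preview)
The paper's proof is a single line: by the Brouwer plane translation theorem, an orientation-preserving fixed-point-free homeomorphism of the plane has all powers fixed-point-free, so if $h$ had no fixed point then neither would $h^n=g$, contradicting $g(x)=x$. Notice this yields only that $h$ has \emph{some} fixed point; the paper in fact only ever uses the proposition in that weaker form (the later reference to it is to justify ``$g$ acts freely iff every power of $g$ does''). Your approach is entirely different and far more elaborate, attempting to pin the fixed point down to $x$ itself via a planar separation argument on the disjoint crosses $C_i=\cF^+(y_i)\cup\cF^-(y_i)$.

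The gap you flag in your final step is genuine and cannot be filled: the conclusion $h(x)=x$ is false as stated. Take $G=\pi_1(T^1\Sigma)$ acting on the skew plane, with $\Sigma$ a closed non-orientable hyperbolic surface, and let $h\in G$ be represented by a one-sided closed geodesic. Then $h$ is orientation-preserving on $P$ but reverses the transverse orientation of each foliation; it has a unique fixed point $z$, while $g:=h^2$ fixes every corner of the bi-infinite string of lozenges through $z$ and fixes all rays through each such corner (since $g$ acts orientation-preservingly on every leaf). Any corner $x\neq z$ satisfies the hypotheses yet has $h(x)\neq x$ --- concretely, $h$ swaps $x$ with the mirror-image corner across $z$. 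This is precisely your residual $e=2$ configuration, and in this example neither $h$ nor $h^{-1}$ carries a closed half-plane of $\cF^+(x)$ into the corresponding open half-plane, so the nesting argument cannot conclude in either direction. Your reduction in the first paragraph (that it suffices to show $h$ fixes one of $\cF^\pm(x)$) and the disjointness of the crosses are correct; they explain cleanly why Brouwer alone does not reach $h(x)=x$.
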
 
 The proof is a direct application of the Brouwer plane translation theorem, exactly as in Corollary  \ref{cor:closed_orbits_primitive} (primitivity of closed orbits in their free homotopy class).

\section{Trivial and skew planes} \label{sec_trivial_skew}
We begin by classifying the Anosov-like actions on the simplest example of a bifoliated plane.   
\begin{definition} A bifoliated plane is {\em trivial } if it is isomorphic to $\R^2$ with the standard coordinate foliations.  
\end{definition} 
The trivial plane is the orbit space of a suspension Anosov flow as in Example \ref{ex_suspension_AF}.  
We next show that all Anosov-like actions on the trivial plane resemble this, in that they are conjugate to actions by affine transformations.  %

\begin{proposition}[Characterization of Anosov-like actions on trivial planes] \label{prop:trivial_affine} 
An action of a group $G$ on a trivially bifoliated plane is Anosov-like if and only if the following conditions are satisfied 
\begin{enumerate} 
\item The action on $P$ is conjugate to an action on $\bR^2$ by affine homeomorphisms of the form $(x,y) \mapsto (ax+b, cy+d)$ where $a \neq -1$, $c \neq -1$ and $|a|>1$ if and only if $|c|<1$ 
\item At least one map has $a \neq 1$, and 
\item The action on each coordinate factor is minimal.
\end{enumerate} 
\end{proposition}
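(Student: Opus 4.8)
The plan is to use that $P$, being trivial, is a product. Identify $P$ with $\bR^2$ with its coordinate foliations; then $\Lambda(\cF^+)$ and $\Lambda(\cF^-)$ are each homeomorphic to $\bR$ (Hausdorff, with no branching) and every automorphism preserves this product structure, so the $G$-action is recorded by homomorphisms $\phi,\psi\colon G\to\Homeo(\bR)$ with $g\cdot(x,y)=(\phi_g(x),\psi_g(y))$; Axioms \ref{Axiom_prongs_are_fixed} and \ref{Axiom_nonseparated} are then vacuous. I would first note that the action (hence each of $\phi,\psi$) is faithful — a nontrivial $g$ with $\phi_g=\id$ fixes every $\cF^+$-leaf yet acts trivially on the transverse leaf through its fixed point, violating \ref{Axiom_A1} — and that $G$ is torsion-free, since a finite-order homeomorphism of $\bR$ is never topologically expanding, so by \ref{Axiom_A1} a torsion element fixes no leaf, hence $\phi_g,\psi_g$ act freely, hence trivially. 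Then \ref{Axiom_A1} gives: each $\phi_g$ has at most one fixed point; $\phi_g$ has a fixed point iff $\psi_g$ does; and when both do, exactly one of the pair is a topological contraction at its fixed point and the other an expansion. And \ref{Axiom_dense} says the set of $x$ fixed by some nontrivial $\phi_g$ is dense, so $\phi(G)$ is not free and has no global fixed point (the latter would, by Theorem~\ref{thm:solodov}, force $\phi(G)$ abelian with that dense set a single point). So Theorem~\ref{thm:solodov} applied to the orientation-preserving subgroup — density transfers to it, replacing an orientation-reversing leaf-fixing $g$ by $g^2\neq\id$ — gives a faithful semi-conjugacy of $\phi(G)$ onto a nonabelian subgroup of $\mathrm{Aff}(\bR)$.

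The crux is upgrading this semi-conjugacy to a conjugacy. A nonabelian subgroup of $\mathrm{Aff}^+(\bR)$ contains a dense group of translations (commutators are translations, and conjugating a translation by a homothety rescales it), hence acts minimally on $\bR$; I would then show $\phi(G)$ itself acts minimally. If not, some orbit closure $C$ is a proper nonempty closed invariant set, and since the fixed-point set of $\phi(G)$ is dense by \ref{Axiom_dense} it meets a component $J$ of $\bR\smallsetminus C$; the nontrivial $\phi_g$ fixing a point of $J$ must preserve $J$, hence fix or swap the endpoints of $J$, so $\phi_g$ or $\phi_g^2$ (nontrivial, by torsion-freeness) has two fixed points, contradicting \ref{Axiom_A1}. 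With minimality in hand, the ``moreover'' clause of Theorem~\ref{thm:solodov} makes the semi-conjugacy a conjugacy on the orientation-preserving subgroup, and the orientation-reversing elements become affine too: the translation subgroup of the affine group is characteristic, and any homeomorphism of $\bR$ conjugating a dense set of translations to translations has constant increments, so is affine. Doing this for $\psi$ as well and conjugating $P$ by the product of the two line-conjugacies, the $G$-action becomes a diagonal affine action with minimal action on each factor — this is condition (3).

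It remains to read off the coefficients and to do the converse. Write $\phi_g\colon x\mapsto a_gx+b_g$ and $\psi_g\colon y\mapsto c_gy+d_g$. If $\phi_g$ has a fixed point, \ref{Axiom_A1} forces exactly one of $\phi_g,\psi_g$ to be a topological expansion at its fixed point, i.e. $|a_g|>1\iff|c_g|<1$, with $a_g,c_g\notin\{\pm1\}$ (the value $-1$ makes an affine map an involution, excluded by torsion-freeness; $1$ with a fixed point is the identity, excluded by faithfulness); if $\phi_g$ is fixed-point free, then $\phi_g$, and likewise $\psi_g$, is a nontrivial translation. This is condition (1), and condition (2) holds because \ref{Axiom_dense} produces some $g$ fixing a $\cF^+$-leaf, whence $|a_g|\neq1$. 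For the converse, given a diagonal affine action satisfying (1)--(3): \ref{Axiom_prongs_are_fixed} and \ref{Axiom_nonseparated} are vacuous; \ref{Axiom_A1} is a direct computation from the normal form, the balance condition $|a|>1\iff|c|<1$ supplying the contraction on the transverse leaf and the absence of $\pm1$ ruling out degenerate cases; and \ref{Axiom_dense} follows from minimality together with the hyperbolic element given by (2), the dense $G$-orbit of whose fixed point consists of $\cF^+$-leaves each fixed by a conjugate of that element.

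The step I expect to be the real obstacle is the minimality claim — ruling out a proper invariant set: this is exactly where \ref{Axiom_dense} and the ``at most one fixed point'' consequence of \ref{Axiom_A1} must be played against each other, and it is what turns Solodov's semi-conjugacy into the conjugacy needed for (1). A secondary, purely bookkeeping, difficulty is carrying orientation-reversing elements through all of the above.
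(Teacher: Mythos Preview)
Your approach matches the paper's: factor through the product structure, deduce the at-most-one-fixed-point property on each factor from \ref{Axiom_A1}, apply Solodov, and upgrade the semi-conjugacy to a conjugacy by proving minimality --- your endpoint argument (a fixed point of $\phi_g$ in a gap interval forces $\phi_g$ or $\phi_g^2$ to fix the interval's endpoints as well) is precisely the content of the paper's terser remark that a complementary interval ``cannot contain a repelling fixed point of an element''. Your auxiliary observations on faithfulness of each factor and torsion-freeness of $G$ are correct and useful additions not made explicit in the paper; note that torsion-freeness in fact rules out $a=-1$ altogether (since $x\mapsto -x+b$ has order two), which is sharper than condition~(1) as literally phrased.
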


Slightly weaker versions of this proposition are proved in \cite[Th\'eor\`eme 2.7]{Bar95a} and \cite{BFM22}.  

\begin{proof} 
Suppose $(P, \cF^+, \cF^-)$ is a trivial plane and $G$ acts Anosov-like on $P$.  We identify with the trivially foliated $\R^2$, so that $\cF^+$ and $\cF^-$ are the coordinate foliations.  Since $G$ preserves these foliations, each element acts on $\R^2$ by maps of the form $g(x, y) = (g_1(x), g_2(y))$.  

If, for some nontrivial $g \in G$, the first-coordinate map $g_1$ has two distinct fixed points, say $x_1$ and $x_2$, then $g$ fixes each of the vertical leaves $\{x_i\} \times \bR$. It follows that $g$ has a fixed point of the form $(x_i, y_i)$.  If $y_1=y_2$, then there are two fixed points on the horizontal leaf $\bR \times \{y_1\}$, otherwise both points $(x_1, y_1)$ and $(x_1, y_2)$ are fixed, again contradicting Axiom \ref{Axiom_A1}.  We conclude $g_1$ (and, by symmetry, $g_2$) each have at most one fixed point.  

Axiom \ref{Axiom_dense} now implies that $G$ does not have a global fixed point or globally fixed leaf.  Thus, by Theorem \ref{thm:solodov}, after restricting to the subgroup preserving orientation, the 
induced action on the first-coordinate is semi-conjugate to an action by affine transformations.   The fact that this semi-conjugacy is in fact a {\em conjugacy} also follows from Axiom \ref{Axiom_dense}: If the semi-conjugacy is not a conjugacy, the action is necessarily \emph{not} minimal, possessing an invariant closed set.  An interval complementary to this set cannot contain a repelling fixed point of an element, contradicting  Axiom \ref{Axiom_dense}.  
The same argument applies to the second coordinate action, giving the desired conjugacy (applied on each factor), for the index at most two subgroup preserving orientation.  It is now easy to see that $G$ itself is conjugate to a group of affine transformations.  %

Axiom \ref{Axiom_A1} is satisfied by maps of the form $(x,y) \mapsto (ax+b, cy+d)$ only when $|a| = |c| = 1$ or 
$|a|>1$ and $|c|<1$, or  vice versa. 
To finish the proof of the converse statement, we note that Axioms \ref{Axiom_prongs_are_fixed} and \ref{Axiom_nonseparated} are vacuously satisfied on trivial planes, so any such action satisfying Axiom \ref{Axiom_dense} will be Anosov like.  This is ensured by having at least one element with a fixed point, along with minimality of the action on each coordinate factor, which ensures a dense set of leaves fixed by the conjugates of this element.  
\end{proof} 

As an application we prove the following theorem, originally shown in the case of Anosov flows by Solodov (in unpublished work) and Barbot \cite{Bar95a}, using work of Plante \cite{Pla81}. 

\begin{theorem}\label{thm_trivial_implies_suspension}
Let $\flow$ be a pseudo-Anosov flow on a compact, 3-manifold $M$ with $\orb$ the trivial plane.  Then $M$ is the mapping torus of a linear  hyperbolic map of the torus, and 
$\flow$ is orbit equivalent to the suspension flow. Furthermore, this orbit equivalence can be taken to be isotopic to the identity.
\end{theorem}

\begin{proof}
Let $G = \pi_1(M)$ and consider the action of $G$ on $\orb$, which is (after conjugacy) an affine action as described in Proposition \ref{prop:trivial_affine}.  
Since $G$ is finitely generated, the group of derivatives of maps in $G$ is finitely generated, so cyclic by the diagonal structure given by Proposition \ref{prop:trivial_affine}.  Thus, there exist $a, b \in G$ with different fixed points and the same derivative, such that $ab^{-1}$ is a translation.  This shows that the translation subgroup of $G$ is nontrivial, we denote it by $T$.
Since $T$ acts freely on $\orb$, the pseudo-Anosov closing lemma (Proposition \ref{prop_closing_translation}) implies that $T$ must be a discrete group of translations.  Thus it is either isomorphic to $\bZ$ or $\bZ^2$.  We show next it is isomorphic to $\bZ^2$. 

By Proposition \ref{prop:trivial_affine} $G$ contains some element $A$ with nontrivial linear part.  Choose coordinates on $\orb$ so that the fixed point of $A$ is the origin, i.e. $A$ is linear and diagonal.  Since $T$ is normal and the conjugate by $A$ of translation by $t$ is translation by the vector $A(t)$, we have that $T \subset \bR^2$ is an $A$-invariant subgroup.  Since it is discrete and nontrivial and $A$ is diagonal, we conclude that it is isomorphic to $\bZ^2$ and $A$ is conjugate into $\mathrm{SL}(2, \bZ)$.  

Thus, the subgroup $G'$ generated by $T$ and $A$ is isomorphic to the fundamental group of the mapping torus of $A$.  Since this is a subgroup of $\pi_1(M)$ and $M$ is an aspherical, closed $3$-manifold, hence a $K(\pi, 1)$ space, we have that $G'$ is finite index in $G$.  
Since $G'$ contains the full translation subgroup of $G$, we conclude that $G$ is generated by $T$ and a root $A^{1/k}$ of $A$ in $\mathrm{PGL}(2, \bZ)$. 
We conclude that $M$ is the mapping torus of $A^{1/k}$ and the orbit space action agrees with that of the suspension flow. By Barbot's theorem \ref{thm_action_determines_OEflow} this shows that $\flow$ is orbit equivalent to the suspension via a map isotopic to identity.  
\end{proof}

\begin{exercise} 
Using the classification above, give explicit examples of Anosov-like actions on trivial planes (in terms of generators of the group) that do not arise as orbit-space actions of Anosov flows on 3-manifolds.    Can you find examples with non-cyclic point stablizers?  What other pathological behavior can occur? 
\end{exercise}

The next simplest example of a bifoliated plane which admits an Anosov-like action is that called the {\em skew plane}.  It is the example that arises as the orbit space of geodesic flow on a surface, as we describe later in Example \ref{ex_geodesic_flow_orbit_space}.

\begin{definition}\label{def_skew} A bifoliated plane is {\em skew} if it is isomorphic to the diagonal strip in $\R^2$ bounded by the lines $y =x $ and $y = x+1$, equipped with the standard coordinate foliations.  
\end{definition} 

\begin{figure}[h]
\includegraphics[width=6cm]{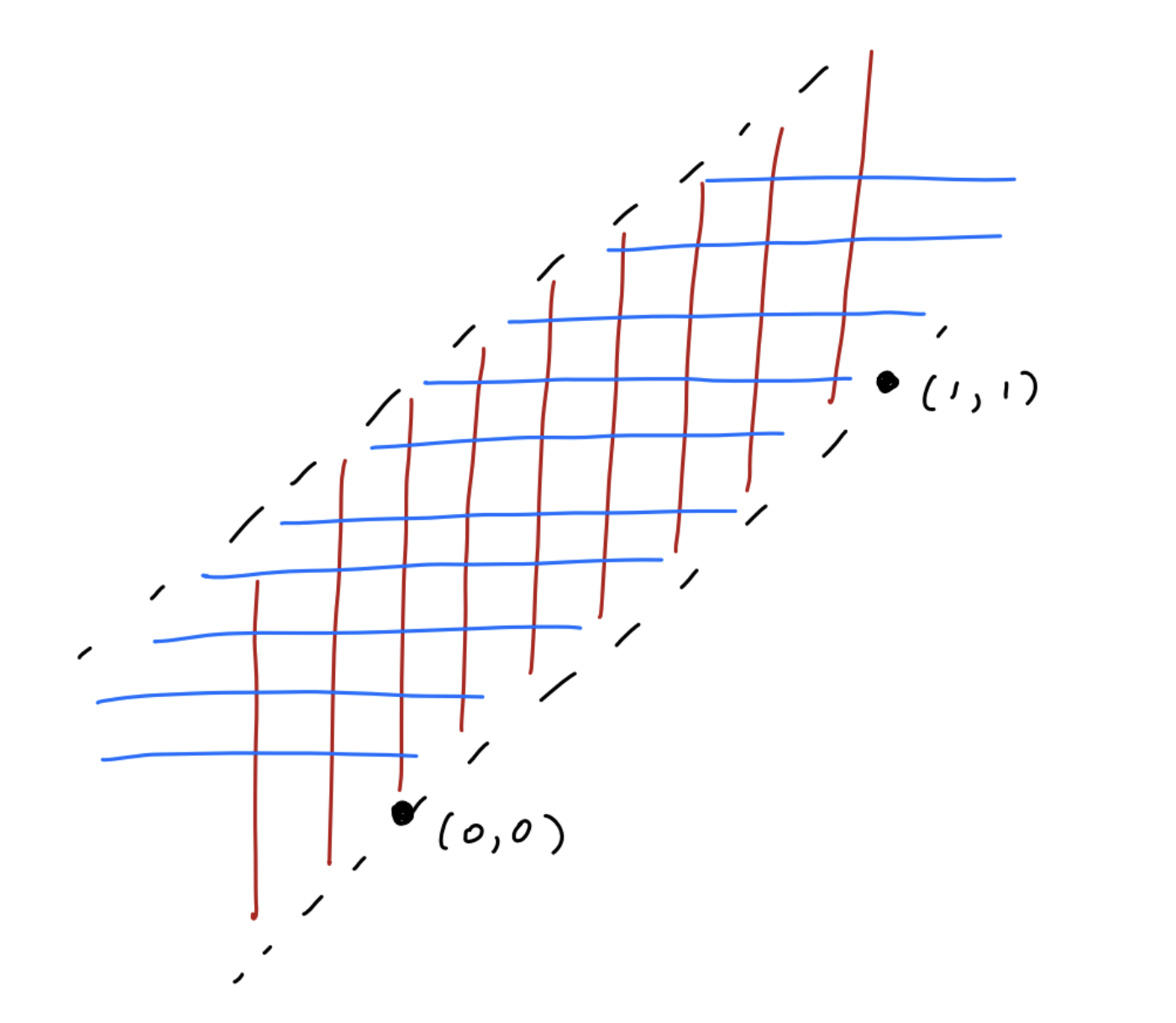}
\caption{The skew bifoliated plane}
\end{figure}

Considering the constant coordinate of each vertical (or horizontal) leaf gives a natural identification of each leaf space with 
 $\R$.  
 
We now describe the structure of automorphisms of the skew plane.  Consider, for concreteness, a leaf $y=c$ of the horizontal foliation.  The set of leaves of the vertical foliation which intersect $y=c$ forms an open interval, bounded below by the leaves $x=c-1$ and $x=c$.  
If $g$ is an automorphism of the skew plane, then it preserves intersections.  Thus, the induced homeomorphism of the horizontal leaf space permutes the sets of leaves of the form $x = \Z + c$.   In the case where an automorphism $g$ preserves orientation on this leaf space, this implies that $g$ commutes with integer translations; if $g$ reverses orientation it will instead anti-commute.
Conversely, it is not hard to show that any group $G$ of homeomorphisms of $\R$ that commute or anti-commute with integer translation (depending on whether orientation is preserved or not) induces an action on the skew plane, where $f \in G \subset \Homeo(\R)$ acts on the skew plane by  $f \cdot (x,y) = (f(x), f(y))$.  

The fact that the translation $(x,y)\mapsto (x+1,y+1)$ is almost central in the group of automorphisms of the skew plane is a useful structural feature. For this reason it has a special name: 
\begin{definition}\label{def_stepup_half_stepup}
The map $\eta\colon (x,y)\mapsto (x+1,y+1)$ on the skew plane is called the \emph{one-step up map}. Notice that $\eta$ has a square root, $\eta_{1/2}$, defined by $(x,y)\mapsto (y,x+1)$; it is sometimes called the \emph{half-step up map}. \footnote{Confusingly, the half-step up map has also been called a one-step up map in some instances in the literature.}
\end{definition}

We can now give a characterization of Anosov-like actions on skew planes, in parallel with Proposition \ref{prop:trivial_affine}.  For this, we leave the proof as an exercise.  

\begin{exercise}[Characterization of Anosov-like actions on skew planes] \label{ex_charac_skew}
Show that $G$ has an Anosov-like action on a skew plane if and only if the induced action of $G$ on $\bR$ (via the canonical identification with either leaf space) has the following three properties: 
\begin{itemize} 
\item Each element commutes (if it preserves orientation) or anti-commutes (otherwise) with the map $x \mapsto x+1$,
\item Each orientation-preserving element either acts freely, or has exactly two fixed points in $[0,1)$, one attracting and one repelling
\item At least one element is not a translation, and
\item The action is minimal.
\end{itemize} 
\end{exercise}  

\begin{rem}
There are many examples of Anosov-like actions on skew planes, both induced from Anosov flows and not. 
For instance, if $H$ is any subgroup of $\PSL(2, \R)$ without parabolic elements, then any lift of $H$ to the universal covering group $\wt{\PSL}(2, \R) \subset \Homeo_+(\R)$ that acts minimally will induce an Anosov-like action. When $H$ is discrete and cocompact, then this construction corresponds to the induced action of a geodesic flow (see Section \ref{ex_geodesic_flow_orbit_space}).

Lifts of the M\"obius-like groups of  Kova\v{c}evi\'c in \cite{Kov99} give examples which are {\em not} isomorphic to lifts of subgroups of $\PSL(2, \R)$.  Many other examples, which are more difficult to describe explicitly, come from Anosov flows on 3-manifolds obtained by surgery constructions.  
\end{rem} 

As a parallel to Theorem \ref{thm_trivial_implies_suspension}, 
the following theorem of Thurston
 characterizes which Anosov-like actions on skew planes come from Anosov flows. 

\begin{theorem}[Thurston \cite{Thu97}] \label{thm_extended_convergence}
Call an {\em extended convergence group} a subgroup of $\Homeo_+(\R)$ such that the induced diagonal action on the space 
$\{(x, y, z) \in \R^3 : x < y < z < x+1 \}$ is properly discontinuous and cocompact. 

Then the actions on the skew plane induced by extended convergence groups are precisely those which arise as orbit space actions (on skew planes) from Anosov flows on compact 3-manifolds.  
\end{theorem}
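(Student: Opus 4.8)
The plan is to prove the two implications separately; in both, the organizing principle is to relate the universal cover $\wt M$, the orbit space $\orb$, and the model space
\[
X := \{(x,y,z)\in\R^3 : x<y<z<x+1\}\cong\R^3,
\]
on which a subgroup $G\subset\Homeo_+(\R)$ acts diagonally. Observe first that for this diagonal action of $G$ on $X$ to be well defined, each $g\in G$ must satisfy $g(x+1)=g(x)+1$ (otherwise $g^{\pm1}$ fails to preserve $X$), so $G$ commutes with the translation $\tau\colon x\mapsto x+1$; consequently $(x,y,z)\mapsto(x,z)$ is a $G$-equivariant fibration $X\to P$ by lines over the skew plane $P=\{(u,s):u<s<u+1\}$ of Definition \ref{def_skew}, where $G$ acts by $g\cdot(u,s)=(g(u),g(s))$.

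For the first implication I would start from an extended convergence group $G$. Proper discontinuity and cocompactness of the action on $X$ are equivalent to the image $\bar G$ of $G$ in $\Homeo_+(S^1)=\Homeo_+(\R)/\langle\tau\rangle$ being a \emph{uniform convergence group}: indeed $X/\langle\tau\rangle$ is the space of distinct cyclically ordered triples of $S^1$, on which $\bar G$ acts, and properness and cocompactness pass up and down the $\bZ$-cover $X\to X/\langle\tau\rangle$. From the convergence property, every nontrivial element of $\bar G$ has exactly one attracting and one repelling fixed point on $S^1$ and no others, and uniformity forces $\bar G$ (hence $G$ on $\R$) to act minimally; lifting, every element of $G$ either acts freely on $\R$ or has, modulo $\tau$, a single attracting and a single repelling fixed point. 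These are precisely the hypotheses of Exercise \ref{ex_charac_skew}, so the induced action of $G$ on $P$ is Anosov-like. Moreover a nontrivial element of a convergence group fixes at most two points of $S^1$, so $G$ acts \emph{freely} on $X$, and $M:=X/G$ is a closed aspherical $3$-manifold with $\pi_1(M)=G$ and $\wt M=X$. Let $\flow$ be the descent to $M$ of a complete reparametrization of the fibrewise flow on $X$; its orbits project to $P$, and the slice foliations $\{z=\mathrm{const}\}$ and $\{x=\mathrm{const}\}$ of $X$ descend to topologically transverse, nonsingular, orbit-saturated $2$-dimensional foliations $\fs$, $\fu$ of $M$. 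To conclude that $\flow$ is a topological Anosov flow it then suffices, by Theorem \ref{thm_expansive_implies_pA}, to verify that $\flow$ is expansive (Definition \ref{def.expflow}): given two orbits $\delta$-fellow-traveling in $M$ for all time, I would lift to $X$, translate by $G$ into a fixed compact set, and argue that distinct $z$-coordinates must diverge under the forward flow and distinct $x$-coordinates under the backward flow, the divergence being visible \emph{in $M$} precisely because $G$ acts cocompactly on $X$; hence the two orbits coincide. (When $\bar G$ is conjugate to a cocompact Fuchsian group one can shortcut this by recognizing $\flow$ as orbit equivalent to a geodesic flow.)

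For the converse, let $\flow$ be a topological Anosov flow on a closed $3$-manifold $M$ with $\orb$ isomorphic to the skew plane $P$. Theorem \ref{thm_induced_action_satisfy_axioms} makes the induced action of $\pi_1(M)$ on $P$ Anosov-like, and the description of $\mathrm{Aut}(P)$ shows that $\pi_1(M)$ acts on the common leaf space $\Lambda(\bfs)=\Lambda(\bfu)\cong\R$ with each element commuting or anti-commuting with $\tau$. Passing to the index-$\le2$ subgroup $G\le\pi_1(M)$ acting with orientation on that $\R$ --- so $G\subset\Homeo_+(\R)$ commutes with $\tau$, and $G=\pi_1(\hat M)$ for a double (or trivial) cover $\hat M\to M$ still carrying a skew Anosov flow --- the goal becomes to construct a $G$-equivariant homeomorphism $\Phi\colon\wt M\xrightarrow{\ \sim\ }X$. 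Granting $\Phi$, proper discontinuity and cocompactness of $G\acts\wt M$ --- automatic, as $\wt M$ is the universal cover of the closed manifold $\hat M$ --- transfer to $G\acts X$, so $G$ is an extended convergence group; and by construction the action of $G$ on $P$, the quotient of $X$ by its fibrewise flow, is exactly the given orbit-space action (up to the usual transverse-orientability double cover), which is the theorem. To build $\Phi$: the orbit projection $\wt M\to\orb=P\cong\{(u,s):u<s<u+1\}$ is an $\R$-bundle over the plane (its fibres are the orbits, lines by Proposition \ref{prop_top_properties}), with $u,s$ the $\hfu$- and $\hfs$-leaf coordinates of the orbit through a point; the missing coordinate $y\in(u,s)$, required to vary monotonically along the flow with limits $u$ and $s$ and to be $\pi_1(M)$-equivariant, I would extract from a strong stable foliation, after first replacing $\flow$ by an orbit equivalent flow admitting one together with an adapted metric (possible by \cite[Proposition 5.3]{Pot25}); the strong stable foliation trivializes the bundle $\wt M\to P$ equivariantly and with the right monotonicity, producing $\Phi$.

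I expect the construction of $\Phi\colon\wt M\to X$ to be the main obstacle: it is the precise form of the statement that a skew orbit space forces $\wt M$ to be modeled on $\wt{\PSL}(2,\R)$ (the geodesic-flow case), and obtaining the third coordinate canonically and equivariantly demands some care with the strong foliation and the adapted metric. The only other nonroutine point is the expansivity check in the first implication, the expected (but not purely formal) cocompactness argument. Everything else amounts to translating between the definition of a uniform convergence group, the Anosov-like axioms via Exercise \ref{ex_charac_skew}, and Barbot's Theorem \ref{thm_action_determines_OEflow}, which both certifies that the flow of the first implication is well defined up to orbit equivalence and renders the double-cover passage in the second implication harmless.
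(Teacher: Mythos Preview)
For the direction ``Anosov flow $\Rightarrow$ extended convergence group'' (your second implication), you and the paper take the same route: the paper identifies $X$ with the space $W^s_+$ of Theorem~\ref{thm_induced_action_model_flow} via $(x,y,z)\mapsto((x,y),(x,z))$ and uses a strong stable foliation together with an adapted metric to build the equivariant homeomorphism $\wt M\to W^s_+\cong X$, exactly your strategy for constructing~$\Phi$.

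For the other direction, the paper gives no argument and simply refers to \cite{BFM25}, so your sketch attempts more than the text does. The construction of $M=X/G$ and of the flow is sound; one small slip is the claim that every nontrivial $\bar g$ has an attracting and a repelling fixed point --- uniform convergence groups may contain torsion, whose elements fix nothing on $S^1$, but these lift to elements acting freely on $\R$, so the Anosov-like verification via Exercise~\ref{ex_charac_skew} survives once you also rule out parabolics (a genuine, though standard, consequence of uniformity: every point of $S^1$ is a conical limit point). The real gap is the expansivity check. Your sentence ``distinct $z$-coordinates must diverge\dots\ the divergence being visible in $M$ precisely because $G$ acts cocompactly'' is the entire content of this direction, and it is not formal: you must show that the elements $g_t\in G$ which pull a point approaching the boundary $\{y=z\}$ back into a fixed compact set necessarily push any second point with a different $z$-coordinate far away. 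This is exactly an appeal to the north--south dynamics of the convergence action on $S^1$, and it must be made uniform in $\delta$. That can be done, but the paper's decision to outsource this direction to \cite{BFM25} reflects that it is where the substantive work lies.
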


A picture showing the relationship between the space of triples 
\[ \mathcal{T}:= \{(x, y, z) \in \R^3 : x < y < z < x+1 \}\] and the orbit space of the flow is given in Section \ref{sec_slithering}.  

One direction of Theorem \ref{thm_extended_convergence} (at least for smooth Anosov flows) follows from the construction used in Theorem \ref{thm_induced_action_model_flow}, as follows:  if $\flow$ is a smooth Anosov flow with skew orbit space, then the finite index 
subgroup of $\pi_1(M^3)$ preserving leaf-space orientations acts properly discontinuously and cocompactly on the space $W^s_+ \subset \orb \times \orb$ defined in  Theorem \ref{thm_induced_action_model_flow}.  Supposing, without loss of generality that the stable foliation is vertical, then $W^s_+$ can be identified with the space of triples $\mathcal{T}$ defined above by associating $(x,y,z) \in \mathcal{T}$ to the pair $((x,y), (x,z)) \in W^s_+ \subset \orb \times \orb$.   

The converse statement is more involved; we do not discuss the proof here and refer to \cite{BFM25} instead.

\section{Perfect fits and lozenges} 
In this section $P$ always denotes a bifoliated plane, with foliations $\cF^+$ and $\cF^-$.  

\begin{definition} 
Let $X \subset P$ be a subset.  The $\cF^+$ {\em saturation} of $X$, denoted $\cF^+(X)$, is the union of all leaves $\cF^+(x)$, for $x \in X$.  Similarly, $\cF^- (X):= \bigcup_{x \in X} \cF^-(x)$.  
\end{definition}  
Note that, if $X \subset P$ is $g$-invariant for some $g \in G$, then $\cF^+(X)$ and $\cF^-(X)$ are also $g$-invariant, as are the boundaries of $\cF^+(X)$ and $\cF^-(X)$.  Here one can take ``boundary" to mean either the union of leaves in the boundary of $\cF^\pm(X)$ considered in the leaf space of $\cF^\pm$, or its topological boundary in $P$.\footnote{The leaf-space boundary and topological boundary can differ only in the case where a boundary leaf is singular, in which case one may see only some {\em faces} of the prong leaf in the topological boundary.}

In our discussion of automorphisms of the skew plane in the previous section, we used the fact that the saturation of a leaf $l$ was an interval, bounded on either side by a ``limit leaf" which did not intersect $l$.   The following definition generalizes this limit idea.  

 \begin{definition}[Perfect fit of leaves]
 Two leaves $l^+, l^-$, of $\cF^+$ and $\cF^-$ respectively, make a {\em perfect fit} if $l^+$ is contained in the boundary of the saturation $\cF^+(l^-)$, and symmetrically $l^-$ is in the boundary of $\cF^-(l^+)$.
 \end{definition} 

The following definition gives a version of perfect fit applicable to rays, rather than full leaves.  Note that two leaves make a perfect fit in the sense above if and only if they contain rays which make a perfect fit in the sense below.  

 \begin{definition}[Perfect fit of rays]
 Two rays $r^+$ and $r^-$ contained in leaves of $\cF^+$ and $\cF^-$, respectively, make a \emph{perfect fit} if there are arcs $\tau^+, \tau^-$ such that 
 \begin{enumerate} 
 \item $\tau^\pm$ is based at a point of $r^\mp$ and transverse to $\cF^\mp$, and 
 \item every leaf $l^\pm$ of $\cF^\pm$ that intersects the interior of ${\tau}^\pm$ intersects $r^\mp$.  
 \end{enumerate} 
\end{definition} 
\begin{figure}[h]
\includegraphics[width=6cm]{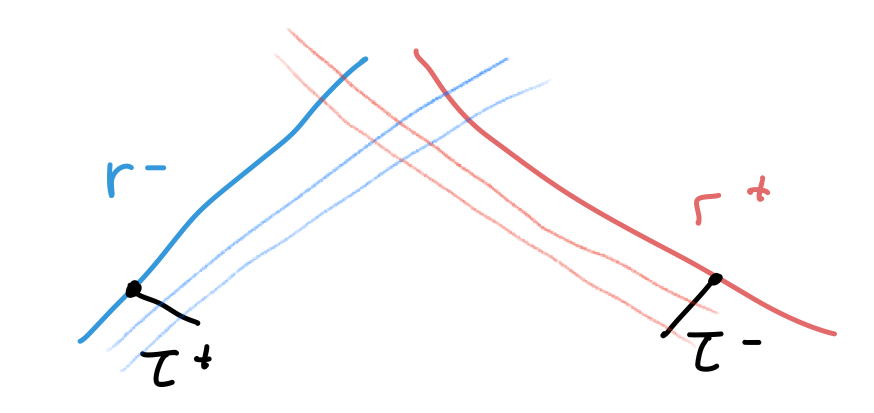}
\caption{A perfect fit of rays. %
}
\end{figure}

Note that if some transverse arcs $\tau^\pm$ satisfy the properties of the definition above, then any sufficiently small transverse arcs on the same side of $r^\pm$ will also.  
Extending this terminology, we say that a leaf $l$ makes a perfect fit with a ray $r$ if some sub-ray of $l$ does.  

\begin{lemma}[Uniqueness of perfect fits] \label{lem:perfect_fit_unique}
If $l_1$ and $l_2$ make a perfect fit with a ray $r$ of a leaf $l$, then $l$ separates $l_1$ from $l_2$ in $P$.  Consequently, there is at most one leaf making a perfect fit with $r$ on each side.  
\end{lemma}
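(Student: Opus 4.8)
The plan is to split the statement into two parts and then recombine. Write $l\in\cF^+$ and $l_1,l_2\in\cF^-$ for the leaves of the perfect fit (the case with the roles of $\cF^+$ and $\cF^-$ exchanged is identical). First I would prove (I): any leaf of $\cF^-$ making a perfect fit with a ray $r$ of $l$ is disjoint from $l$, hence — being connected — lies in a single component of $P\smallsetminus l$, necessarily one of the (at most two) components of $P\smallsetminus l$ bordering $r$. Then I would prove (II): each component of $P\smallsetminus l$ contains at most one leaf of $\cF^-$ making a perfect fit with $r$. Given (I) and (II): if $l_1\neq l_2$ both make a perfect fit with $r$, they must lie in different components of $P\smallsetminus l$, which is precisely the statement that $l$ separates $l_1$ from $l_2$; and (II) is the "at most one on each side" clause.

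For (I), unwind the definition: a perfect fit of $l_1$ with $r$ is witnessed by transverse arcs $\tau^+$ based at a point of $r$ and $\tau^-$ based at a point of a subray $r_1\subset l_1$, such that every leaf of $\cF^+$ meeting $\mathrm{int}(\tau^+)$ crosses $r_1$ and every leaf of $\cF^-$ meeting $\mathrm{int}(\tau^-)$ crosses $r$; these leaf segments fill an embedded product region $R$ (a rectangle with one ideal corner) having a subray of $r$ and a subray of $r_1$ on its boundary, and meeting $l$ only along that subray of $r$. Suppose $l_1$ met $l$. By the standard fact that two transverse leaves of a bifoliated plane meet in at most one point, $l_1\cap l$ is a single point $z$; then the subray of $l$ from $z$ to the ideal endpoint of $r$, together with the subray of $l_1$ from $z$ to that same ideal endpoint (these share an ideal point, by the perfect fit), would bound an embedded half-open disc whose boundary consists of one leaf arc of $\cF^+$, one leaf arc of $\cF^-$, and a single transverse corner at $z$ — a "bigon at infinity". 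A Poincar\'e--Hopf index count forces such a disc to contain a positive-index singularity (equivalently a $1$-prong) of one of the foliations, contradicting the hypothesis that all singularities are $p$-prongs with $p\geq 3$, which have negative index. Hence $l_1\cap l=\emptyset$, so $l_1$ lies in one component $Q_1$ of $P\smallsetminus l$, and likewise $l_2\subset Q_2$. (The no-bigon and no-bigon-at-infinity statements are standard; if they are not available earlier, I would prove them first by exactly the index argument just sketched.)

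For (II), suppose $l_1$ and $l_2$ both make a perfect fit with $r$ and lie in the \emph{same} component $Q$ of $P\smallsetminus l$; I claim $l_1=l_2$. Let $S_r\subset\Lambda(\cF^-)$ be the set of leaves of $\cF^-$ crossing $r$; since $r$ is a connected transversal to $\cF^-$, $S_r$ is a connected open subset of the simply connected $1$-manifold $\Lambda(\cF^-)$. The $\tau^-$-condition says the $\cF^-$ leaves near $l_1$ on the side facing $l$ all cross $r$ while $l_1$ itself does not (by (I)), and since their crossing points march toward the ideal endpoint of $r$, $l_1$ is a boundary leaf of $S_r$ at its ideal end; the same holds for $l_2$. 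If $l_1\neq l_2$, two distinct boundary leaves at the same end of an interval in a $1$-manifold are nonseparated. On the other hand, the $\tau^+$-condition, applied with basepoints chosen far enough along $r$ so that one common small transverse arc $\tau^+\subset Q$ witnesses both fits (here we use that $l_1$ and $l_2$ are on the same side of $l$), produces a single leaf $f$ of $\cF^+$ meeting both $r_1\subset l_1$ and $r_2\subset l_2$. But a leaf of $\cF^+$ cannot meet two nonseparated leaves of $\cF^-$: the arc of $f$ between the two intersection points is a transversal to $\cF^-$, and leaves approaching $l_1$ and $l_2$ would cross it twice, contradicting that transverse leaves meet at most once. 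Hence $l_1=l_2$, proving (II).

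I expect the main obstacle to be making (I) fully rigorous: pinning down the "bigon at infinity", the behavior of the foliations near its ideal corner, and the associated index count, and — more basically — verifying that $l_1$ and $r$ genuinely share an ideal point so that this disc exists. If the no-bigon type statements are already in hand (folklore, or proved earlier in the text), then both (I) and (II) are short; otherwise the index lemma for transverse singular foliations of the plane is the real content, after which (II) is just careful bookkeeping with the transverse arcs and the leaf space $\Lambda(\cF^-)$.
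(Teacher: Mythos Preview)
Your proof is correct, but it takes a more circuitous route than the paper's, and your worry about step (I) is misplaced.

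For (I), no bigon-at-infinity or index argument is needed: disjointness of $l_1$ and $l$ is immediate from the \emph{leaf} definition of perfect fit. The set $\cF^-(l)$ is open (local product structure), so a leaf in its boundary is not in it, i.e.\ does not meet $l$. The paper states the equivalence of the ray and leaf definitions just before this lemma, so you may freely use it.

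For (II), both your argument and the paper's begin the same way: if $l_1,l_2$ lie in the same component $Q$ of $P\smallsetminus l$, then a single small transverse arc $\tau\subset Q$ based on $r$ witnesses both fits, so some $\cF^+$-leaf $l^+$ through $\tau$ meets both $l_1$ and $l_2$. From here the arguments diverge. You show that $l_1,l_2$ are nonseparated (as boundary leaves of the interval $S_r$ at its ideal end) and then invoke that a $\cF^+$-leaf cannot cross two nonseparated $\cF^-$-leaves; this requires the ``transverse leaves meet at most once'' fact in two places. The paper instead observes directly that one of $l_1,l_2$---whichever $l^+$ meets first coming from $\tau$, say $l_1$---separates $l$ from $l_2$ in the plane. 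Then every $\cF^-$-leaf through $l$ lies on the $l$-side of $l_1$, so $l_2\notin\overline{\cF^-(l)}$, contradicting the perfect fit. This is shorter, avoids nonseparatedness entirely, and is more self-contained.
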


\begin{proof}[Proof of Lemma \ref{lem:perfect_fit_unique}]
Suppose $l_1$ and $l_2$ make a perfect fit with a ray $r$ of a leaf $l$.  For concreteness say $l \in \cF^+$, and therefore $l_1$ and $l_2$ are in $\cF^-$.  

Let $\tau$ be a transverse arc to $\cF^+$ based on $l$.  If some leaf $l^+$ through the interior of $\tau$ meets both $l_1$ and $l_2$, we can (up to relabeling) assume $l_1$ separates $\tau \cap l^+$ from $l_2 \cap l^+$, as in Figure \ref{fig_unique_perfect_fit}, left.   This means that $l_2$ does not lie in the boundary of $\cF^-(l)$, so cannot make a perfect fit with $r$.  
It follows that the transverse arcs based on $l$ furnished by the definition of ``makes a perfect fit with $l_1$" and ``makes a perfect fit with $l_2$" must be on opposite sides of $l$ as in Figure \ref{fig_unique_perfect_fit} right.  We conclude that $l$ separates $l_1$ from $l_2$.  \qedhere

\begin{figure}[h] 
\includegraphics[width=9cm]{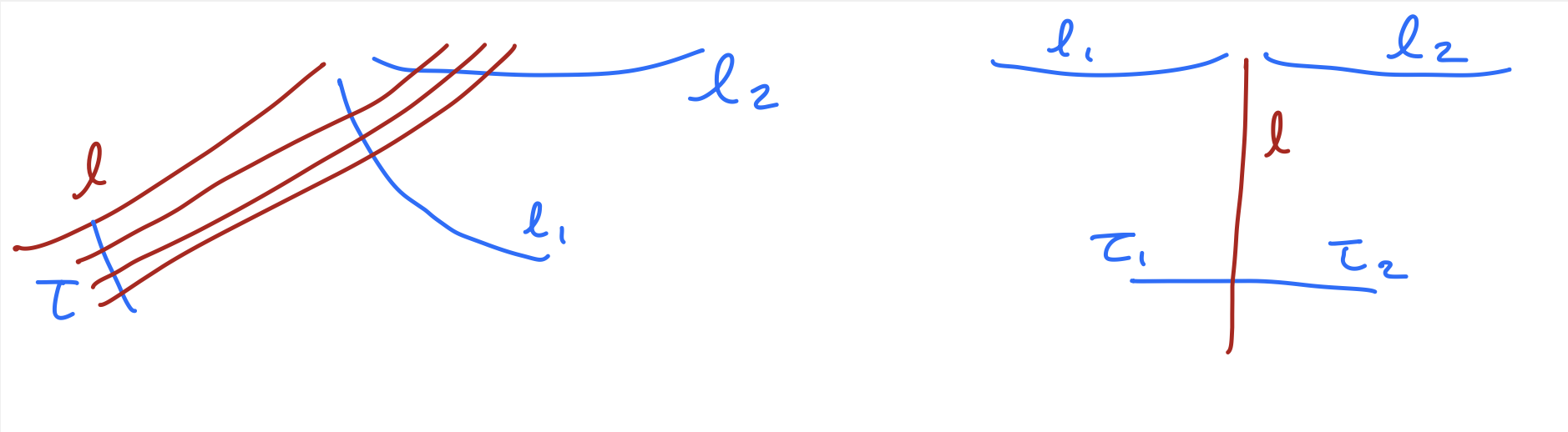}
\caption{The configuration on the left is impossible. The leaf $l$ must separates $l_1$ from $l_2$.}
\label{fig_unique_perfect_fit}
\end{figure}
\end{proof} 

If, for some $x$ and $y$, both of the leaves $\cF^\pm(x)$ make perfect fits with $\cF^{\mp}(y)$ the resulting configuration is called a {\em lozenge}.\footnote{The term ``lozenge" refers to the shape in our sketched drawings.  Technically, a lozenge is a rhombus with all sides the same length, here the term is used very loosely to suggest a rhombus-like configuration.}
Precisely, we have: 

\begin{definition}[Lozenges]
Suppose $x,y \in P$ are points such that there exist rays $r_x^{+}$ and $r_y^{-}$ of the leaves $\cF^+(x)$ and $\cF^-(y)$, respectively, that make a perfect fit, and also a pair of rays $r_x^{-}$ and $r_y^{+}$ of $\cF^-(x)$ and $\cF^+(y)$ which make a perfect fit.  Then the set
\begin{equation*}
 L := \lbrace p \in P \mid \cF^-(p) \cap r_x^+ \neq \emptyset \text{ and } \cF^+(p) \cap r_x^- \neq \emptyset \rbrace
\end{equation*}
is called a {\em lozenge} with {\em corners} $x$ and $y$, and
the rays $r_x^{\pm}$ and $r_y^{\pm}$ are called its \emph{sides}. 
\end{definition} 
\begin{figure}[h]
\includegraphics[width=6cm]{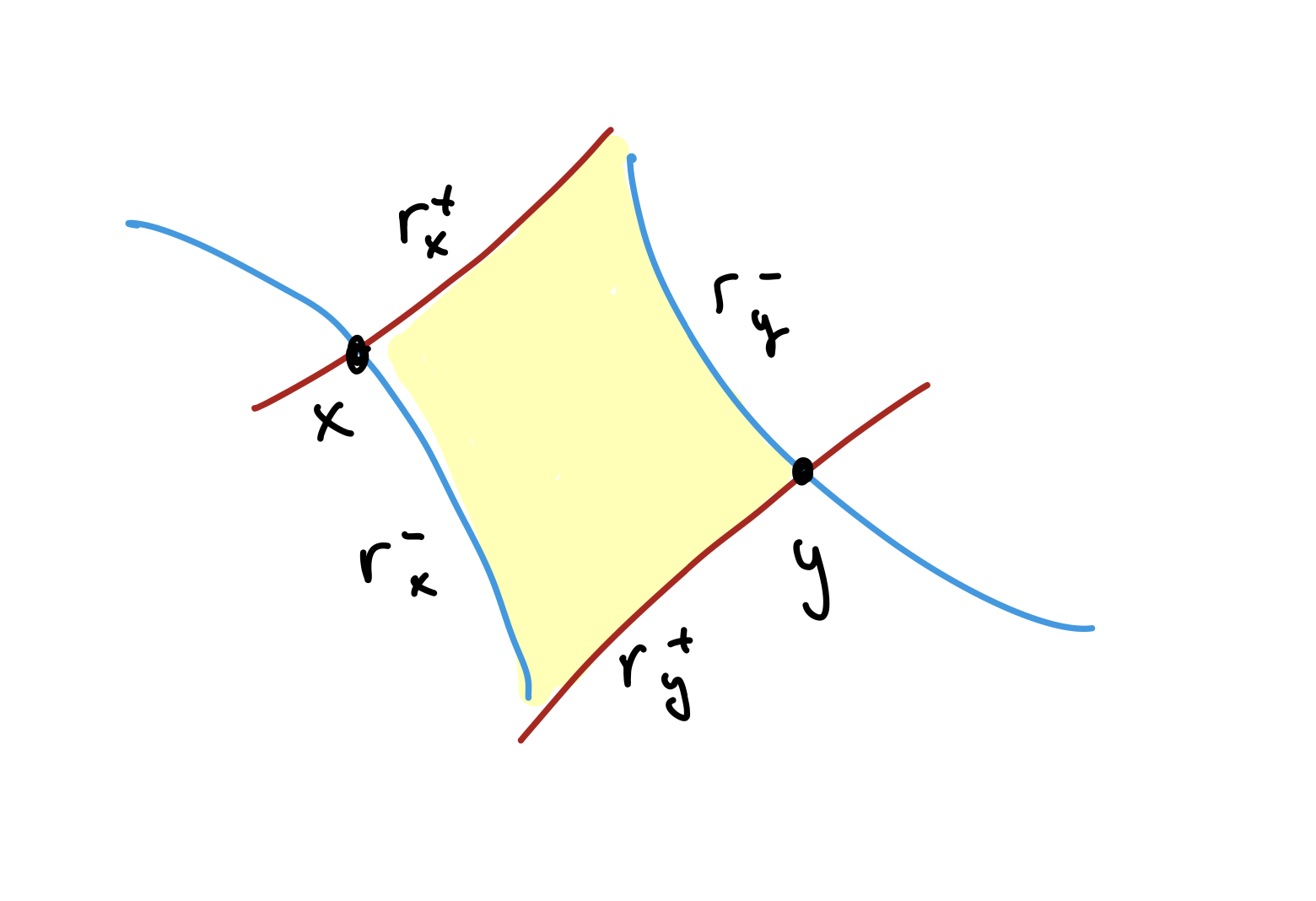}
\caption{A lozenge.}
\end{figure}

A lozenge is by definition an open subset of $P$.  The closure of a lozenge in $P$ (i.e., the union of the lozenge with its sides and corners) is called a {\em closed lozenge}. 
We will sometimes refer to the two perfect fits made by the rays of a lozenge as its {\em ideal corners}. 

\begin{rem}
The interior of a lozenge is \emph{trivially foliated}, meaning that it is isomorphic to an open rectangle foliated by horizontal and vertical segments.  However, the closed lozenge is not isomorphic to a closed rectangle with vertical and horizontal leaves! 
We will see time and time again in this text the appearance of trivially foliated open sets whose closure is not trivially foliated.  Understanding the boundaries of such sets is a key tool in the study of bifoliated planes.  
\end{rem}

Next, we record for future use a few basic properties about the possible arrangement of lozenges in a bifoliated plane.  The first is a simple condition which prevents a point from being the corner of a lozenge in a particular quadrant.  
 
   \begin{lemma}[Non-corner criterion] \label{lem_no_corner_criterion}  
  Let $x\in P$, and let $r^{\pm}_x$ be two rays bounding a quadrant $Q$ of $x$. 
  If $r^+_x$ and $r^-_x$ intersect a pair of leaves making a perfect fit, or if they each intersect leaves of a singular point, then there are no lozenges in $Q$ with $x$ as a corner.
  \end{lemma}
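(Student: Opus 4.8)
The plan is to argue by contradiction. Suppose $L$ is a lozenge contained in $Q$ with $x$ as a corner, and let $y$ be its other corner. Then the two sides of $L$ emanating from $x$ are exactly the rays $r_x^+$ and $r_x^-$ bounding $Q$, the remaining two sides are rays $r_y^+\subset\cF^+(y)$ and $r_y^-\subset\cF^-(y)$, and the defining perfect fits are: $r_x^+$ with $r_y^-$, and $r_x^-$ with $r_y^+$. From each half of the hypothesis I will extract a leaf $\ell^+\in\cF^+$ that crosses the interior of the side $r_x^-$ and a leaf $\ell^-\in\cF^-$ that crosses the interior of the side $r_x^+$, whose intersection is incompatible with a further intersection point sitting inside $L$; then I will produce exactly such a point from the product structure of $L$.

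The geometric core is the following observation. Recall that $L$ (an open set, by definition) is trivially foliated, so I would fix an identification $L\cong(0,1)^2$ carrying $\cF^+|_L$ to the horizontal slices and $\cF^-|_L$ to the vertical slices, arranged (after passing to $\overline L$) so that the side $\{u=0\}$ lies on $r_x^-$, $\{u=1\}$ on $r_y^-$, $\{v=0\}$ on $r_x^+$, and $\{v=1\}$ on $r_y^+$. Now suppose $\ell\in\cF^+$ crosses $r_x^-$ at an interior point $a$. Since $\overline L$ lies locally on one side of $r_x^-$ and $\ell$ crosses $r_x^-$ transversally (as $r_x^-$ is tangent to $\cF^-$), the leaf $\ell$ enters $L$; the connected component of $\ell\cap L$ carrying the germ at $a$ is a leaf of $\cF^+|_L$, hence a full horizontal slice, whose closure joins an interior point of $r_x^-$ to an interior point of $r_y^-$. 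Symmetrically, a leaf $\ell'\in\cF^-$ crossing $r_x^+$ at an interior point enters $L$ in a full vertical slice joining $r_x^+$ to $r_y^+$. A horizontal slice and a vertical slice of $(0,1)^2$ always intersect, so $\ell$ and $\ell'$ meet at a point $q\in L$.

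It remains to feed the two hypotheses into this. If $r_x^+$ and $r_x^-$ cross a perfect-fit pair $(l^+,l^-)$ with $l^+\in\cF^+$ and $l^-\in\cF^-$, then necessarily $r_x^-$ crosses $l^+$ and $r_x^+$ crosses $l^-$ (two leaves of the same foliation are disjoint unless equal, and $l^\pm=\cF^\pm(x)$ is impossible since $\cF^+(x)$ and $\cF^-(x)$ cross at $x$ and so cannot make a perfect fit). These crossings occur at interior points of the rays: a crossing at the endpoint $x$ would force $l^+$ (or $l^-$) to be $\cF^+(x)$ (resp.\ $\cF^-(x)$), already excluded. Taking $\ell^\pm=l^\pm$, the core construction yields $q\in l^+\cap l^-\subset L$; but a perfect-fit pair of leaves is disjoint (a leaf in the boundary of the saturation of the other is not contained in it), a contradiction. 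If instead $r_x^+$ crosses a leaf of $\cF^-$ through a singular point $\xi$ and $r_x^-$ crosses a leaf of $\cF^+$ through $\xi$, I would set $\ell^+=\cF^+(\xi)$ and $\ell^-=\cF^-(\xi)$ (the crossings are again at interior points, since a crossing at $x$ would force $x=\xi$ to be a singular point whose own separatrices $r_x^\pm$ cannot transversally cross). The core construction gives $q\in\cF^+(\xi)\cap\cF^-(\xi)\cap L$; but $\cF^+(\xi)$ and $\cF^-(\xi)$ already meet at $\xi$, and $q\neq\xi$ because $L$ contains no singular point, so these two leaves meet in two distinct points, contradicting that a leaf of $\cF^+$ and a leaf of $\cF^-$ meet in at most one point (cf.\ Proposition~\ref{prop_top_properties}(iii); this holds for any bifoliated plane).

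The step I expect to require the most care is the bookkeeping in the geometric core: identifying precisely which faces of the \emph{topological} boundary of the closed lozenge are tangent to $\cF^+$ and which to $\cF^-$, and checking that $L\cong(0,1)^2$ extends over the four sides with the stated correspondence — here one must remember that the two ``ideal corners'' of the lozenge are not points of $P$, so $\overline L$ is not a closed rectangle. Everything after that (the crossing argument and the two contradictions) is immediate. As an alternative to invoking the ``at most one intersection'' fact in the singular case, one could instead run a short index/Euler-characteristic argument on the bigon cut out by sub-arcs of $\cF^+(\xi)$ and $\cF^-(\xi)$ between $\xi$ and $q$; but citing the standard fact is cleaner.
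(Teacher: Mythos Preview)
Your proof is correct and takes a genuinely different route from the paper's.

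The paper argues as follows: if a lozenge in $Q$ with corner $x$ existed, its two opposite sides $l_1^+$ (making a perfect fit with $r_x^-$) and $l_1^-$ (making a perfect fit with $r_x^+$) would have to lie, respectively, in the component of $P\smallsetminus l^+$ not containing $x$ and the component of $P\smallsetminus l^-$ not containing $x$; since those two components are disjoint (whether $l^\pm$ make a perfect fit or share a prong), $l_1^+$ and $l_1^-$ cannot meet to form the opposite corner. Your argument instead assumes the lozenge exists and uses its product structure to force $l^+$ and $l^-$ themselves to meet \emph{inside} $L$, then derives a contradiction from the nature of the pair $(l^+,l^-)$.

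Both approaches are short and natural. The paper's separation argument is a bit more direct in that it never needs the trivially foliated structure of $L$ nor the ``at most one intersection'' fact; it works purely at the level of how the leaves $l^\pm$ separate the plane. Your approach is more concrete and makes the obstruction very visible, but it leans on one extra ingredient in the singular case: that $\cF^+(\xi)\cap\cF^-(\xi)$ is a single point. Your citation of Proposition~\ref{prop_top_properties}(iii) is technically a statement about orbit spaces of flows; the parenthetical claim that it ``holds for any bifoliated plane'' is correct (it follows from a standard bigon/index argument, using that all prongs have index $\le -\tfrac12$), but it is not stated or proved in the paper for abstract bifoliated planes, so you should either supply the short argument or note it as a standing fact. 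The analogous informality in the paper's proof is the phrase ``half-space'' when $l^\pm$ may be singular prongs, but there the intended component is clear from context and the disjointness is immediate once one notes that (in both hypotheses) $l^+$ and $l^-$ do not cross away from $\xi$.
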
 

This criterion is in fact both necessary and sufficient, provided $x$ is fixed by a nontrivial element of a group acting Anosov-like on $P$.  See Exercise \ref{ex_noncorner_converse}. 
  
  \begin{proof}
   Assuming the set-up of the lemma, let $l^{\pm}$ denote the leaves of $\cF^{\pm}$ that intersect $r^{\pm}_x$ and either make a perfect fit or are the leaves of a prong.  Suppose that there exists some leaf $l_1^+$ making a perfect fit with $r^-_x$ and a leaf $l_1^-$ making a perfect fit with $r^+_x$ in $Q$, as shown in Figure \ref{fig:no_corner_lem}.   
   
The leaf $l_1^+$ must be inside the half-space $P\smallsetminus l^+$ that does not contain $x$, and $l_1^-$ is inside the half-space $P\smallsetminus l^-$ that does not contain $x$.  By assumption, these two half-spaces are disjoint, so these leaves cannot intersect, as would be required to form the other corner of a lozenge in $Q$.  We conclude that $x$ cannot be a corner of a lozenge in $Q$. 

  \begin{figure}[h]
   \labellist 
   \small\hair 2pt
      \pinlabel $x$ at 80 93 
     \pinlabel $x$ at 350 90 
     \pinlabel $l^+$ at 160 45 
     \pinlabel $l^-$ at 30 140 
          \pinlabel $l^-$ at 330 135 
     \pinlabel $l^+$ at 444 45 
  \endlabellist
     \centerline{ \mbox{
\includegraphics[width=8cm]{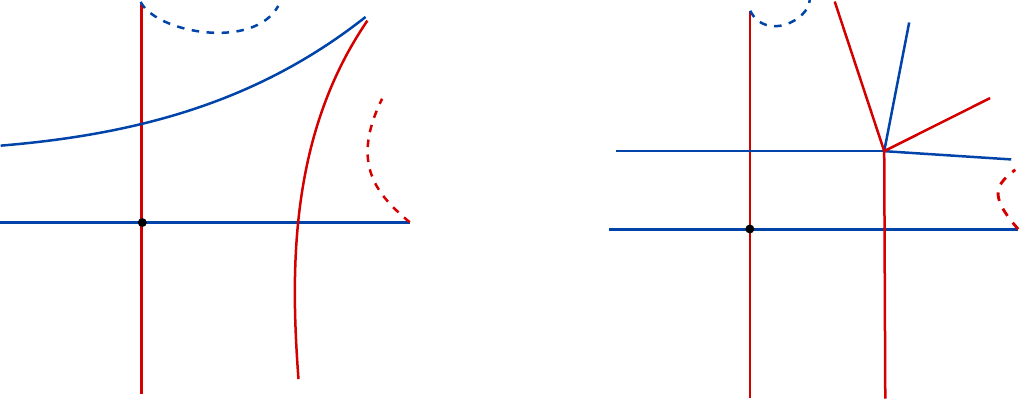}} }
\caption{The dotted leaves making perfect fits cannot intersect to form a lozenge with $x$ as a corner.}
 \label{fig:no_corner_lem} 
\end{figure}
  \end{proof}
  
The next lemma describes how lozenges can intersect each other.   
\begin{lemma} \label{lem_markovian_or_corner} 
Let $L, L'$ be two lozenges.  If $L \cap L' \neq \emptyset$, then (up to exchanging the labels of $\cF^+$ and $\cF^-$), either 
\begin{itemize} 
\item $L \subset \cF^+(L')$ and $L' \subset \cF^-(L)$ or 
\item One corner of $L$ lies in $L'$ (and thus one corner of $L'$ lies in $L$).  
\end{itemize} 
Moreover, the two possibilities are exclusive unless $L=L'$
\end{lemma}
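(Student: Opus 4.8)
The plan is to translate the statement into one about the relative position of two pairs of arcs in the leaf spaces $\Lambda(\cF^+)$ and $\Lambda(\cF^-)$, and then to run a short case analysis powered by the uniqueness of perfect fits. To a lozenge $L$ with corners $x_L,y_L$ I associate the open arc $I^+_L\subseteq\Lambda(\cF^+)$ of leaves of $\cF^+$ meeting $L$, and likewise $I^-_L\subseteq\Lambda(\cF^-)$. Since the interior of a lozenge is trivially foliated, each $I^\pm_L$ is genuinely an open arc; its endpoint leaves are $\cF^+(x_L),\cF^+(y_L)$ (resp. $\cF^-(x_L),\cF^-(y_L)$), and the defining perfect fits record that $\cF^+(x_L)\pitchfork\cF^-(y_L)$ and $\cF^+(y_L)\pitchfork\cF^-(x_L)$. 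Using that two transverse leaves of a bifoliated plane meet in at most one point, one checks the identity $L=\{p\in P:\cF^+(p)\in I^+_L\text{ and }\cF^-(p)\in I^-_L\}$, and that every leaf of $I^+_L$ crosses every leaf of $I^-_L$ at a point of $L$. It follows that $L\cap L'=\{p:\cF^+(p)\in I^+_L\cap I^+_{L'}\text{ and }\cF^-(p)\in I^-_L\cap I^-_{L'}\}$, so $L\cap L'\neq\emptyset$ exactly when $I^+_L\cap I^+_{L'}\neq\emptyset$ and $I^-_L\cap I^-_{L'}\neq\emptyset$.

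The rigidity comes from two facts. First, if a leaf $l$ lies in the \emph{interior} of $I^+_L$, it crosses every leaf of $I^-_L$, hence makes a perfect fit with none of them (symmetrically with $\pm$ swapped). Second, uniqueness of perfect fits on each side of a ray, Lemma~\ref{lem:perfect_fit_unique}. Together these show that no lozenge is properly contained in another: if $I^+_{L'}\subseteq I^+_L$ and $I^-_{L'}\subseteq I^-_L$ then $L'\subseteq L$, and then either a corner of $L'$ sits on a side-leaf of $L$ — contradicting the first fact via one of its perfect fits — or a corner of $L'$ coincides with a corner of $L$, whereupon uniqueness of perfect fits forces the remaining corners to match and $L=L'$. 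The same circle of ideas yields the key coherence statement: if the $\cF^+$-leaf of a corner $c$ of $L'$ is interior to $I^+_L$, then the $\cF^-$-leaf of the \emph{other} corner of $L'$ cannot lie in $I^-_L$ (and likewise with $\pm$ swapped).

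Now assume $L\cap L'\neq\emptyset$ and $L\neq L'$. The nonempty arc $I^+_L\cap I^+_{L'}$ puts $(I^+_L,I^+_{L'})$ in one of the positions: equal, one properly inside the other, or crossing (each sticking out past an endpoint of the other); similarly for $(I^-_L,I^-_{L'})$. Equality of either pair is excluded by the previous paragraph (it gives $L=L'$). If the two proper containments run in opposite senses, say $I^+_L\subsetneq I^+_{L'}$ and $I^-_{L'}\subsetneq I^-_L$, then $L\subseteq\cF^+(L')$ and $L'\subseteq\cF^-(L)$, which is the first alternative; there the $\cF^-$-leaves of the corners of $L$ lie outside $I^-_{L'}$ and the $\cF^+$-leaves of the corners of $L'$ lie outside $I^+_L$, so no corner lies in the other lozenge and the alternatives are exclusive. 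Same-sense containments are impossible by the previous paragraph, so in all remaining cases at least one of the two pairs is a crossing; an endpoint of $I^\pm_{L'}$, i.e. the $\cF^\pm$-leaf of a corner $c$ of $L'$, is then interior to $I^\pm_L$. If the transverse leaf $\cF^\mp(c)$ is also interior to $I^\mp_L$, then $c\in L$ and we are in the second alternative; the coherence statement rules out the only other possibility. Finally, since the configuration is symmetric under exchanging $L$ and $L'$, "one corner of $L'$ lies in $L$" gives "one corner of $L$ lies in $L'$" as well, and the first alternative precludes either, so the two alternatives are exclusive whenever $L\neq L'$.

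The main obstacle is the bookkeeping in the last step: for each relative position of the two arcs in each leaf space, one must track which perfect-fit endpoint becomes interior to which arc — hence which corner lands strictly inside the other lozenge — and rule out the degenerate configurations in which a corner sits exactly on a side of the other lozenge (handled by the coherence statement and the non-existence of proper containments of lozenges). A secondary technical point is that the leaf spaces are non-Hausdorff trees rather than lines, so one must verify that the "interval" trichotomy still applies to the arcs $I^\pm_L$; here the fact that their endpoints are the specific leaves $\cF^\pm(x_L),\cF^\pm(y_L)$, together with the perfect-fit structure, prevents the extra branching from interfering. The reductions in the first two paragraphs are essentially formal.
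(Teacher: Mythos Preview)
Your approach is correct and is essentially the paper's argument recast in leaf-space language: asking whether the endpoints of $I^\pm_{L'}$ lie in $I^\pm_L$ is exactly asking whether the side leaves of $L'$ meet $L$. The paper's version is more direct --- it simply notes that if both $\cF^+$-sides of $L$ meet $L'$ then the perfect fits force the $\cF^-$-sides to flank $L'$ (first alternative), and otherwise exactly one side of each type meets $L'$, whose common corner then lies in $L'$ --- bypassing the explicit no-proper-containment and coherence lemmas you set up.
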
 

\begin{proof} 
Let $l^\pm_1, l^\pm_2$ denote its sides of $L$ that lie in $\cF^\pm$, with $l^+_i$ and  $l^-_i$ making perfect fits. %
Assume $L \cap L' \neq \emptyset$.   

First suppose $L \subset \cF^+(L')$, so $l_i^+$ each intersect $L'$.  Since the leaves $l^-_i$ make perfect fits with $l^+_i$, it follows that $l^-_i$ cannot meet the interior of $L$, and instead one lie on either side.  Thus, $L' \subset \cF^-(L)$.  
The same argument applies if $L \subset \cF^-(L')$, showing that this implies $L' \subset \cF^+(L)$,  and symmetrically if we reverse the roles of $L$ and $L'$.  

So, the remaining case to treat is if (up to relabeling) that $l_1^+ \cap L' \neq \emptyset$ but $l_2^+ \cap L' = \emptyset$; and similarly exactly one (not both) of $l_i^-$ meet $L'$, as well as the symmetric condition with the roles of $L$ and $L'$ reversed.  From this it follows that one corner of $L$ lies in $L'$, and one corner of $L'$ is in $L$. 
\end{proof} 

\subsubsection{Lozenges in the presence of Anosov-like actions} 
Up to this point, we have not assumed that $P$ admits an Anosov-like group action, nor have we used any dynamics.  We will now see how the dynamics given by the axioms influence the structure of the bifoliation.  

We start with an easy observation about group actions satisfying Axiom \ref{Axiom_A1}.
\begin{observation} \label{obs:all_rays} 
Suppose that a group $G$ acts on $(P, \cF^+, \cF^-)$ satisfying Axiom \ref{Axiom_A1}.
If an element $g\in G$ fixes two distinct points $x,y$, then $g$ fixes all the rays through $x$ and $y$.
\end{observation}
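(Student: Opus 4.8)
The plan is to suppose $g$ fixes two distinct points $x$ and $y$, and argue that $g$ cannot act nontrivially on any ray through $x$ or through $y$. First I would invoke Axiom \ref{Axiom_A1}: since $g$ fixes $x$, some leaf through $x$, say $\cF^+(x)$, is fixed with $g$ topologically expanding on it, and $\cF^-(x)$ is fixed with $g$ topologically contracting on it (or vice versa); and the same at $y$. By Axiom \ref{Axiom_A1}, $x$ is the \emph{unique} fixed point of $g$ on each of $\cF^+(x)$ and $\cF^-(x)$, and similarly $y$ is the unique fixed point on each leaf through $y$. In particular the four leaves $\cF^\pm(x)$, $\cF^\pm(y)$ are pairwise distinct (if $\cF^+(x) = \cF^+(y)$, then this fixed leaf would contain two distinct fixed points of $g$, contradicting Axiom \ref{Axiom_A1}); moreover $\cF^+(y)$ cannot meet $\cF^+(x)$ in more than a point, and being distinct leaves of the same foliation they are disjoint, and likewise for the other same-type pairs.

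The key geometric step: consider the leaf $\cF^+(y)$. It is fixed by $g$, hence a $g$-invariant properly embedded line (or half-line, if $y$ is singular — treat the singular case ray-by-ray) disjoint from $\cF^+(x)$. It therefore lies entirely in one of the two (nonsingular case) complementary half-planes of $P \setminus \cF^+(x)$, and this half-plane is $g$-invariant. Now restrict attention to a ray $r^+_x$ of $\cF^+(x)$. I claim $g$ fixes $r^+_x$ rather than swapping it with the opposite ray. Indeed, on $\cF^+(x)$ the element $g$ is topologically expanding with unique fixed point $x$; a topological expansion of $\R$ fixing only the origin necessarily preserves each of the two rays (it cannot be orientation-reversing, since an orientation-reversing homeomorphism of $\R$ with a single fixed point is a contraction there, not an expansion — more carefully, an expansion means $g^{-n} \to x$ locally, which forces the two ends to be preserved). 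Hence $g$ fixes each ray of $\cF^+(x)$, and symmetrically each ray of $\cF^-(x)$, and the same reasoning applied at $y$ gives that $g$ fixes each ray of $\cF^\pm(y)$. This is exactly the assertion.

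The main obstacle I anticipate is the bookkeeping around \emph{orientation} of the action of $g$ on a fixed leaf: Axiom \ref{Axiom_A1} says $g$ is topologically expanding (resp.\ contracting) on one leaf through the fixed point, but "topologically expanding" as defined just before Proposition \ref{prop_hyperbolic} means there is a point toward which $g^{-n}$ pushes everything — one must verify this is incompatible with $g$ interchanging the two rays. The clean way to see it: if $g$ reversed the orientation of the fixed leaf $\ell \cong \R$ with its unique fixed point at $0$, then $g^2$ would fix $\ell$ orientation-preservingly with $0$ still the unique fixed point; but then $g^2$ is either expanding or contracting on $\ell$ and $g$ swaps the two rays, which is impossible since $g$ maps the attracting/repelling dynamical behavior on one ray to that same behavior on the other ray while a single ray $[0,\infty)$ under an expansion has $g^{-n} \to 0$ on all of it — so $g$ restricted to $\ell \setminus \{0\}$ preserves the two components. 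Once this orientation lemma is pinned down, the rest is immediate from Axiom \ref{Axiom_A1} and does not even require $x \ne y$ for the conclusion at a single point; the hypothesis that $g$ fixes two points is used only implicitly (it is automatically satisfied in the situations where the observation is applied) and the statement as phrased follows directly. I would keep the write-up to a few lines, citing Axiom \ref{Axiom_A1} and the definition of topologically expanding, and relegating the orientation point to a parenthetical.
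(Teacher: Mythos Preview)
Your argument has a genuine gap at the key step. You claim that a topologically expanding (or contracting) homeomorphism of a leaf $\ell \cong \bR$ with unique fixed point $x$ must preserve each ray, but this is false: the map $t \mapsto -2t$ on $\bR$ is topologically expanding in the paper's sense ($g^{-n}(t) = (-1)^n t/2^n \to 0$ for all $t$), has unique fixed point $0$, and yet swaps the two rays. Your parenthetical attempts to pin this down do not work --- the assertion that ``an orientation-reversing homeomorphism of $\bR$ with a single fixed point is a contraction there, not an expansion'' is simply wrong, and the $g^2$ argument is circular. The tell is your last paragraph: you conclude that the hypothesis of two distinct fixed points is ``used only implicitly,'' but in fact it is essential. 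Without it the conclusion fails: in the orbit space of a flow, an element representing a periodic orbit with a M\"obius-band weak leaf swaps the two rays of that leaf at its unique fixed point, while still satisfying Axiom~\ref{Axiom_A1}.

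The paper's proof is different and uses the second fixed point $y$ in an essential way: if $g$ fixes $x$ but not all of its rays, then $g$ nontrivially permutes the quadrants of $x$ (the components of $P \setminus (\cF^+(x)\cup\cF^-(x))$). Since this permutation preserves the pair of foliations and preserves or reverses cyclic order, no quadrant can be fixed. But $y$ cannot lie on $\cF^\pm(x)$ (Axiom~\ref{Axiom_A1} forces $x$ to be the unique fixed point there), so $y$ lies in some quadrant, and that quadrant would have to be $g$-invariant --- contradiction. Your half-plane observation in the second paragraph was in fact heading toward a correct variant of this idea (a $g$-invariant component of $P\setminus\cF^+(x)$, witnessed by $\cF^+(y)$, pins down the rays of $\cF^-(x)$), but you abandoned it for the incorrect leaf-dynamics route.
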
 

\begin{proof}
If $g$ fixes $x$, but not all of its rays, then it nontrivially permutes the quadrants of $x$.  This permutation preserves or reverses cyclic order (and preserves foliations) so cannot preserve any quadrant.  Thus, the only possibility for fixed points of $g$ lie on the leaves $\cF^\pm(x)$; but $x$ is the unique fixed point on these leaves by 
 Axiom \ref{Axiom_A1}.
\end{proof}

The next important lemma gives a condition for the existence of lozenges.
\begin{lemma} \label{lem:fixed_is_corner} 
Suppose a group $G$ acts on $(P, \cF^+, \cF^-)$ satisfying Axiom \ref{Axiom_A1}.  
Suppose $g \in G$ fixes $x$ and each of its rays, and some ray $r$ of $\cF^\pm(x)$ makes a perfect fit with a leaf $l$.  Then $g(l) = l$ and $x$ is the corner of a lozenge whose opposite corner lies on $l$. 
\end{lemma}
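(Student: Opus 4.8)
The plan is to exploit the uniqueness of perfect fits (Lemma \ref{lem:perfect_fit_unique}) together with the fact that $g$ preserves the whole bifoliated structure, hence carries perfect fits to perfect fits. First I would fix notation: say $r \subset \cF^+(x)$ is a ray based at $x$ making a perfect fit with a leaf $l \in \cF^-$. Since $g$ fixes $x$ and each ray through $x$, it fixes $r$ setwise (it fixes the leaf $\cF^+(x)$ and the endpoint $x$, and preserves the orientation of the ray because it fixes each ray through $x$). Because $g$ is an automorphism of $(P,\cF^+,\cF^-)$, it sends the pair $(r, l)$ to $(g(r), g(l)) = (r, g(l))$, and this is again a perfect fit. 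By Lemma \ref{lem:perfect_fit_unique}, there is at most one leaf making a perfect fit with $r$ on each side, and $g$ preserves the two sides of $r$ (again since it fixes all rays at $x$, it cannot swap the two half-planes bounded by $\cF^+(x)$). Hence $g(l)$ and $l$ lie on the same side of $r$ and both make a perfect fit with $r$, so $g(l) = l$. This gives the first assertion.

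For the second assertion I would use Axiom \ref{Axiom_A1} applied to the element $g$ acting on the leaf $l$: since $g$ fixes $l$ nontrivially, it has a unique fixed point $y \in l$, on which it is topologically contracting on one of the two leaves through $y$ and expanding on the other. The candidate opposite corner of the lozenge is this point $y$. What remains is to produce the second perfect fit: I need a ray $r^-_x$ of $\cF^-(x)$ (the one bounding, together with $r$, the appropriate quadrant $Q$ of $x$) and a ray $r^+_y$ of $\cF^+(y)$ which make a perfect fit, so that $x$ and $y$ become corners of a lozenge in the sense of the definition. The geometric picture is forced: $r$ limits onto $l$, and $l$ is a leaf of $\cF^-$ transverse to the $\cF^+$ leaves; the $\cF^+$-saturation of the relevant sub-ray of $l$ starting at $y$ has $\cF^-(x)$'s ray $r^-_x$ in its boundary. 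The cleanest way to nail this down is a dynamical argument: consider the nested family of leaves $g^n(\cF^-(x))$ (or $g^n$ applied to a small transversal), use the contraction/expansion of $g$ near $y$ guaranteed by Axiom \ref{Axiom_A1} together with the contraction/expansion of $g$ near $x$ (here $x$ is itself a fixed point of $g$ with all rays preserved, so Axiom \ref{Axiom_A1}, via Observation \ref{obs:all_rays} and the expanding/contracting dichotomy, tells us the behavior of $g$ on the two leaves through $x$), and conclude that the forward or backward $g$-orbit of a transversal to $\cF^-$ based near $x$ limits onto $\cF^+(y)$; this yields the required perfect fit of $r^-_x$ with a ray of $\cF^+(y)$.

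The main obstacle I anticipate is bookkeeping the four rays and the correct quadrant $Q$ at $x$: one must check that the perfect fit of $r$ with $l$ forces $l$ to meet only one of the two faces at $x$ and that $y$ sits in the right position so that the second perfect fit occurs in the \emph{same} quadrant, producing a genuine lozenge rather than some degenerate configuration (e.g.\ $r$ itself could a priori make perfect fits on both sides, or $l$ could be singular, etc.). The singular case requires a little extra care since then $l$ has several faces, but Axiom \ref{Axiom_prongs_are_fixed} and \ref{Axiom_A1} guarantee the fixed point on $l$ is the singular point and $g$ preserves each face, so the argument goes through face by face. I would also need to invoke Lemma \ref{lem_no_corner_criterion} implicitly or explicitly to rule out the degenerate possibilities, and the contraction/expansion dichotomy of Axiom \ref{Axiom_A1} is what ultimately pins down that the opposite corner genuinely lies \emph{on} $l$ (at the unique fixed point $y$) rather than merely that $l$ is $g$-invariant.
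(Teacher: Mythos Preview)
Your first paragraph (proving $g(l)=l$ via uniqueness of perfect fits) is correct and is exactly what the paper does.

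The second part has the right strategy---use the dynamics of $g$ at its two fixed points $x$ and $y$---but your execution is muddled in one key spot. You write ``consider the nested family of leaves $g^n(\cF^-(x))$''; this family is not nested, it is constant, since $g$ fixes $x$ and hence $\cF^-(x)$. The paper avoids limiting arguments entirely and instead verifies the perfect-fit condition directly: letting $r'$ be the ray of $\cF^+(y)$ on the correct side, it shows that \emph{every} $\cF^-$-leaf $l_0^-$ meeting $r$ also meets $r'$. The trick is to take one leaf $l_1^-$ meeting both $r$ and $r'$ (which exists by the original perfect fit), note that all iterates $g^n(l_1^-)$ still meet both $r$ and $r'$ (since $g$ fixes both rays), and observe that the intersection points $g^n(l_1^-)\cap r$ sweep across all of $r$ as $n$ ranges over $\bZ$ (because $g$ acts as a contraction or expansion on $r$). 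Hence $l_0^-$ is trapped between two consecutive iterates and must cross $r'$. A symmetric argument with $l$ in place of $r$ handles the other half of the perfect-fit condition. One also checks $r'\cap\cF^-(x)=\emptyset$, since an intersection point would be a second fixed point of $g$ on $\cF^-(x)$, contradicting Axiom~\ref{Axiom_A1}.

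Two further remarks: you invoke Observation~\ref{obs:all_rays} for $x$, but it is already a hypothesis that $g$ fixes each ray of $x$; where the observation is actually needed is at $y$, to ensure $g$ fixes the ray $r'$. And you need neither Lemma~\ref{lem_no_corner_criterion} nor any separate treatment of singularities---only Axiom~\ref{Axiom_A1} is used, and the argument is insensitive to whether $x$ or $y$ is a prong.
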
 

\begin{proof} 
To fix notation, we assume that $r$ is a ray in $\cF^+(x)$.  Suppose $g$ fixes all rays through $x$, and hence preserves quadrants of $x$.  By uniqueness of perfect fits (Lemma \ref{lem:perfect_fit_unique}), we deduce that $g(l)=l$. 
 By Axiom \ref{Axiom_A1}, $g$ fixes a unique point $y$ on $l$ and by Observation \ref{obs:all_rays}, it also fixes all the rays through $y$. 
Consider the ray $r'$ of $\cF^+(y)$ which contains a segment $\tau'$ so that leaves passing through $\tau'$ meet $r$, as given by the definition of perfect fit. 

We claim $r'$ makes a perfect fit with a ray of $\cF^-(x)$, forming a lozenge with corners $x$ and $y$.  
First note that $r' \cap \cF^-(x) = \emptyset$, because any point of intersection would need to be fixed by $g$, contradicting uniqueness of fixed points give by Axiom \ref{Axiom_A1}.  

Next, let $l_0^-$ be a leaf intersecting $r$; we will show it also intersects $r'$.  (Thus, $r$ can be used as the required transverse segment to $\cF^+(x)$ in the definition of perfect fit.)  Up to replacing $g$ with $g^{-1}$, we may assume that $g$ is topologically contracting on $\cF^+(x)$.   Since $r$ and $l$ make a perfect fit, there exists a leaf $l_1^-$ intersecting $r$ and $r'$.  Then for all $n$ we have $g^n(l_1^-) \cap r' \neq \emptyset$ and $g^n(l_1^-) \cap r \neq \emptyset$, and as $n \to \infty$ the point $g^n(l_1^-) \cap r$ approaches $x$.  Thus, for some $n$, the leaves $g^n(l_1^-)$ and $g^{n+1}(l_1^-)$ lie on either side of $l_0^-$, forming a rectangle bounded on either side by $r$ and $r'$.  We conclude $l_0^- \cap r' \neq \emptyset$ as desired.  

A symmetric argument shows that each leaf that intersects $l$ between $y$ and the perfect fit with $r$ also intersects $\cF^-(x)$; thus, we have found a lozenge with corners $x$ and $y$.  
\end{proof}

As an application of Lemma \ref{lem:fixed_is_corner}, we prove that the skew and trivial planes are the only bifoliated planes where both leaf spaces are homeomorphic to $\bR$ which admit Anosov-like actions.  Later, in Theorem \ref{thm:trichotomy} we will improve this statement to show that, in the presence of an Anosov-like action, even {\em one} leaf space being a Hausdorff manifold implies the plane is skew or trivial.  

\begin{proposition}\label{prop:both_R}
Let $(P, \cF^+,\cF^-)$ be a bifoliated plane where the leaf spaces of $\cF^+$ and $\cF^-$ are both homeomorphic to $\R$.  If $P$ admits an Anosov-like action of a group, then $P$ is either trivial or skew. 
\end{proposition}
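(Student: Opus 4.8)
The plan is to realise $P$ concretely as an open region of $\R^2$, and then to split into two cases according to whether every leaf of $\cF^+$ meets every leaf of $\cF^-$.

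\emph{Step 1: realising $P$ inside $\R^2$.} Since $\Lambda(\cF^+)$ and $\Lambda(\cF^-)$ are Hausdorff $1$--manifolds, neither foliation can have a $p$--prong leaf (a prong makes the leaf space fail to be a manifold), so $P$ is nonsingular, and, the leaf spaces being Hausdorff, has no branching leaves; hence Axioms \ref{Axiom_prongs_are_fixed} and \ref{Axiom_nonseparated} are vacuous and only \ref{Axiom_A1} and \ref{Axiom_dense} are relevant. Because $P$ is nonsingular, a leaf of $\cF^+$ and a leaf of $\cF^-$ meet in at most one point: two intersection points would cobound a bigon foliated by $\cF^+$, which is impossible, as such a bigon would be a product $[0,1]^2$ whose boundary contains two distinct $\cF^+$--leaves, whereas it contains only one. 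Therefore $\iota\colon x\mapsto\bigl(\cF^+(x),\cF^-(x)\bigr)$ is a continuous injection $P\to\Lambda(\cF^+)\times\Lambda(\cF^-)\cong\R^2$; reading $\iota$ in foliation boxes shows it is a local homeomorphism, so it identifies $P$ with an open, connected, simply connected region $\Omega\subseteq\R^2$ on which $\cF^+,\cF^-$ become the two coordinate foliations and the $G$--action becomes the coordinatewise action coming from $G\acts\Lambda(\cF^+)\cong\R$ and $G\acts\Lambda(\cF^-)\cong\R$. Since leaves are connected, $\Omega$ meets each vertical and each horizontal line of $\R^2$ in an interval.

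\emph{Step 2: the trivial case.} If every leaf of $\cF^+$ meets every leaf of $\cF^-$, then $\iota$ is onto $\R^2$, so $(P,\cF^+,\cF^-)$ is the trivial plane.

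\emph{Step 3: the skew case.} Assume instead that some leaf of $\cF^+$ misses some leaf of $\cF^-$. Using interval-convexity of $\Omega$, the set $A\subseteq\Lambda(\cF^+)$ of $\cF^+$--leaves missing some $\cF^-$--leaf has no bounded component, hence is all of $\Lambda(\cF^+)$, a ray, or a union of two opposite rays; in particular its boundary is non-empty with at most two points. As $A$ is $G$--invariant and, by Axiom \ref{Axiom_dense}, $G$ is not virtually trivial, some non-trivial $g\in G$ fixes a $\cF^+$--leaf $a$ that is extreme for $A$, and the Hausdorffness of $\Lambda(\cF^-)$ lets one verify that a ray $r$ of $a$ through the $g$--fixed point $p\in a$ makes a perfect fit with a leaf $b\in\cF^-$ (the $\cF^-$--leaves crossed by $r$ form a monotone family with an honest limit leaf, against which $r$ closes up perfectly). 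Lemma \ref{lem:fixed_is_corner}, applied to a power of $g$ fixing every ray through $p$, then gives $g(b)=b$ and shows $p$ is a corner of a lozenge $L_0$ with opposite corner $q_0\in b$; moreover $q_0$ is again $g$--fixed with all its rays fixed by a power of $g$. Iterating the construction at $q_0$ and at $p$ produces a bi-infinite chain of lozenges $\{L_n\}_{n\in\Z}$, consecutive members sharing a corner — a ``staircase''. That the configuration is exactly this clean staircase, with no overlap, wrap-around, or stalling, is forced by the absence of branching: Lemmas \ref{lem_markovian_or_corner} and \ref{lem_no_corner_criterion} control how the lozenges meet and which quadrants can carry further lozenges, and uniqueness of perfect fits (Lemma \ref{lem:perfect_fit_unique}) pins down the successive sides. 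Finally, the corners and sides of this staircase are cofinal and coinitial in both $\Lambda(\cF^+)$ and $\Lambda(\cF^-)$ — a leaf lying strictly outside the staircase would, when pushed toward it, violate Axiom \ref{Axiom_dense} or uniqueness of perfect fits — so the staircase exhausts $P$, and reading coordinates off it exhibits an isomorphism of $(P,\cF^+,\cF^-)$ with the skew plane of Definition \ref{def_skew}.

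\emph{Main obstacle.} The heart of the argument is Step 3: producing the first lozenge (equivalently, a \emph{fixed} leaf involved in a perfect fit), checking that the lozenge iteration never stalls (each new corner again has a ray making a perfect fit), and verifying that the resulting staircase is genuinely bi-infinite, non-overlapping, and exhausts $P$. This is precisely where the hypothesis that \emph{both} leaf spaces are honest lines — Hausdorff, branch-free, with genuine boundary leaves for intersection intervals — does the work; relaxing it to a single Hausdorff leaf space is what makes Theorem \ref{thm:trichotomy} substantially harder.
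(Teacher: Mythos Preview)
Your overall plan (embed $P$ in $\R^2$, split into trivial vs.\ non-trivial) matches the paper's, but Step~3 has real gaps.

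\textbf{The $\partial A$ argument is wrong.} In the skew plane every leaf of $\cF^+$ misses some leaf of $\cF^-$, so $A=\Lambda(\cF^+)\cong\R$ and $\partial A=\emptyset$. Your claim ``its boundary is non-empty with at most two points'' fails precisely in the case you are trying to analyse. In fact, if $A$ were a proper ray or pair of rays, each boundary point would be a globally $G$--fixed leaf, contradicting Axiom~\ref{Axiom_dense}; so in Step~3 one always has $A=\R$. The fix is easy --- use Axiom~\ref{Axiom_dense} directly to pick any $g$--fixed leaf $a$, which then automatically lies in $A$ --- but you have not written this.

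\textbf{The perfect fit is asserted, not proved.} You write that Hausdorffness of $\Lambda(\cF^-)$ ``lets one verify'' that the ray $r$ and the limit leaf $b$ make a perfect fit, and that $r$ ``closes up perfectly'' against $b$. This is the crux of the whole proposition and you have not argued it. Hausdorffness gives you an honest limit leaf $b=s(a)$, i.e.\ $b\in\partial\cF^-(a)$; but a perfect fit also requires $a\in\partial\cF^+(b)$, and nothing you wrote rules out a gap between $a$ and $\cF^+(b)$. The paper's proof handles exactly this point (Claim~\ref{lem_s_pf}): if there were an intermediate boundary leaf $l'$ of $\cF^+(b)$ strictly between $a$ and $b$, then $l'$ is $g$--invariant with fixed point $x'$, and iterating $g^{\pm1}$ on a common transversal $k\in\cF^-$ crossing both $a$ and $l'$ produces a sequence limiting onto both $\cF^-(x)$ and $\cF^-(x')$, contradicting Hausdorffness of $\Lambda(\cF^-)$. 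This dynamical use of $g$ is not optional --- without an action you can easily cook up $\Omega\subset\R^2$ with both leaf spaces $\R$ that are neither trivial nor skew.

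\textbf{The exhaustion argument is too vague.} ``A leaf lying strictly outside the staircase would \ldots\ violate Axiom~\ref{Axiom_dense} or uniqueness of perfect fits'' is not a proof that the corner coordinates go to $\pm\infty$. The paper sidesteps the staircase entirely: it defines $s(l)=\sup\{l^-:l\cap l^-\neq\emptyset\}$ and $i(l)=\inf\{\cdots\}$, shows (via the perfect-fit claim above plus Axiom~\ref{Axiom_dense}) that $s$ and $i$ are everywhere finite and \emph{strictly} monotone, and then a reparametrisation identifies the region between their graphs with the skew strip. This is both shorter and avoids the bookkeeping of Lemmas~\ref{lem_markovian_or_corner}, \ref{lem_no_corner_criterion}, \ref{lem:perfect_fit_unique} that you invoke.
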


\begin{proof}
Fix identifications of $\Lambda(\cF^+)$ and $\Lambda(\cF^-)$ with $\bR$.  Then, one can realize $P$ (isomorphically) as a subset of the trivially foliated $\bR \times \bR = \Lambda(\cF^+) \times \Lambda(\cF^-)$ by assigning a point $p$ to $(\cF^+(p), \cF^-(p))$.  
The image of $P$ in $\bR^2$ under this realization is bounded above and below by the graphs of the functions 
 $s, i: \Lambda(\cF^+) \to \Lambda(\cF^-) \cup \{\pm\infty\}$  defined by 
\[ s(l) = \sup\{ l^- \in \Lambda(\cF^-) : l \cap l^- \neq \emptyset \}\]
\[ i(l) = \inf \{ l^- \in \Lambda(\cF^-) : l \cap l^- \neq \emptyset \}.\] 

Suppose first that $s$ is not identically $\infty$, so that some leaf $l_0$ has $s(l_0)$ a leaf.  Fix an orientation on $\Lambda(\cF^+)$ so that $s(l_0)$ lies to the right of $l_0$. %
Since $\Lambda(\cF^-) \cong \bR$, no leaf of $\cF^+$ to the left of $l_0$ can intersect the horizontal line $y = s(l_0)$.   Thus, the function $s$ is monotone increasing.  In particular, if $s$ is not always finite, then the set on which it takes the value $\infty$ is an interval of the form $[a, \infty)$ in $ \Lambda(\cF^+)$.  However, this implies that the leaf $a$ is $G$-invariant, which easily contradicts Axiom \ref{Axiom_dense} (take any $g$ fixing some leaf crossing $a$ with fixed point not on $a$, and apply iterates of $g$ to move $a$).   We conclude that either $s \equiv \infty$, or $s$ is always finite and is weakly monotone (increasing, with our choice of orientation).   A similar argument applies to $i$.  

We next claim the following: 
\begin{claim}   \label{lem_s_pf}
For any $l \in \Lambda(\cF^+)$ fixed by a nontrivial element $g \in G$, if $s(l)$ or $i(l)$ is a leaf (rather than $\pm \infty$), then it makes a perfect fit with $l$.  Consequently, $l$ is the side of a lozenge, and its other $\cF^+$-side is a leaf $l'$ making a perfect fit with $i(l')$.  
\end{claim} 

\begin{proof} 
We prove the assertion for $s(l)$, the other case being symmetric.  
By definition $s(l)$ lies in the boundary of $\cF^-(l)$.  Note also that, up to replacing $g$ with a power that preserves orientations of leaf spaces, $s(l)$ is invariant by $g$.  We need to show that $l$ is in the boundary of $\cF^+(s(l))$.  

Assume for contradiction that this is not the case.  Let $l'$ denote the boundary leaf of $\cF^+(s(l))$ on the side of $l$.  
Since $s(l)$ is $g$-invariant, and $g$ preserves orientations, we have $g(l')=l'$.  Let $x$ and $x'$ denote the fixed points of $g$ on $l$ and $l'$ respectively.  By definition of $s(l)$, there exists a sequence of leaves intersecting $l$ that limit to $s(l)$, so we can find some leaf $k \in \cF^-$ intersecting both $l$ and $l'$ above the points $x$ and $x'$.   Then, either as $n \to \infty$ or $n \to -\infty$, the sequence $g^n(k)$ 
limits to both $\cF^-(x)$ and $\cF^-(x')$, contradicting that $\Lambda(\cF^-) \cong \bR$ so has no nonseparated leaves.    See Figure \ref{fig_s_lozenges}. We conclude that $s(l)$ makes a perfect fit with $l$, so by Lemma \ref{lem:fixed_is_corner}, $l$ and $s(l)$ form two sides of a lozenge.  If $l'$ denotes the other $\cF^+$ side of this lozenge, then it must make a perfect fit with a leaf below it, necessarily $i(l')$.  This proves the claim.  \end{proof} 

 \begin{figure}[h]
     \centerline{ \mbox{
\includegraphics[width=5cm]{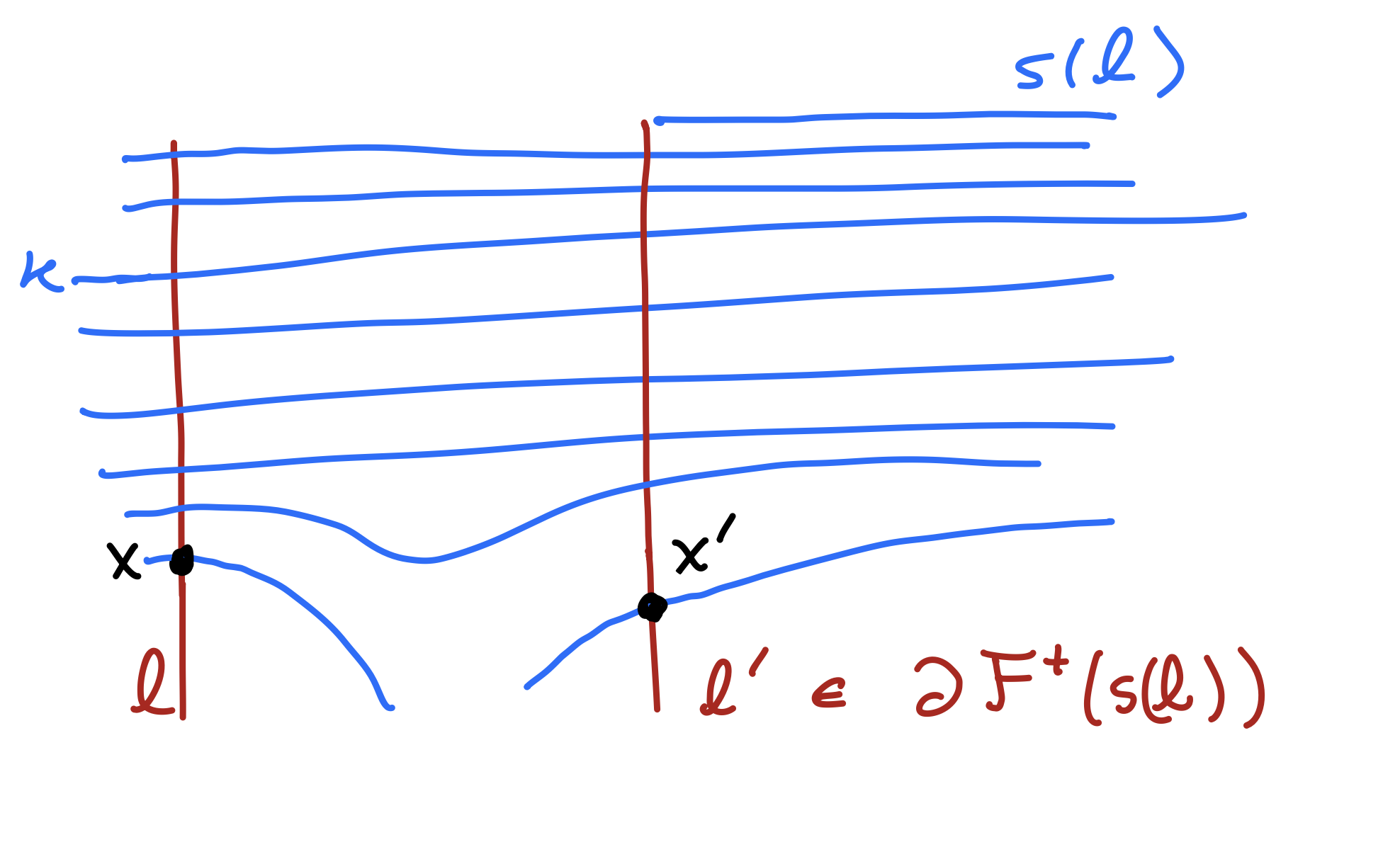}} }
\caption{If $s(l)$ doesn't make a perfect fit with $l$, then we find branching leaves.}
 \label{fig_s_lozenges} 
\end{figure}

As a consequence of the claim, if $s$ is finite, then $i$ is finite.  Furthermore, for any lozenge with sides $l < l'$ in $\cF^+$, arranged according to our orientation so that $s(l)$ lies to the right of $l$, then $i(l')$ lies to the left of $l'$.  Consequently, if $k$ is a leaf between $l$ and $l'$, it must meet both sides of the lozenge so we have $s(l)<s(k)$ and $i(k)<i(l')$.    
This shows that $s$ and $i$ are both increasing functions.  Finally, we argue that this also implies they are strictly increasing.  For concreteness we do the argument for $s$.  If there was a nontrivial open set on which $s$ was constant,  Axiom \ref{Axiom_dense} says that we could find a leaf $l$ fixed by some element of $g$ in this set, but we just argued that $s(k)>s(l)$ for all $k$ to the right of $l$, a contradiction.   The argument for $i$ is exactly the same.  

We can now quickly conclude the proof.   Either $s \equiv \infty$ and $i \equiv -\infty$, in which case the plane is trivial, or up to choice of orientation, the realization of $P$ in $\bR^2$ is bounded above by the graph of $s$ and below by the graph of $i$, which are both strictly increasing, unbounded functions.  After a reparameterization of $\bR$, we obtain the desired homeomorphism with the skew plane.  
\end{proof}

\section{Nonseparated leaves and lines of lozenges} 

In this section we study the structure of sets of pairwise nonseparated leaves in bifoliated planes.  In Proposition \ref{prop:4weak4strong} we show that, that in the presence of an Anosov-like group action, these always arise as sides of adjacent lozenges and give an important strengthening of Axiom \ref{Axiom_nonseparated}.  

Note that ``nonseparated with" is not a transitive relation: it is possible to have $l_0$ nonseparated with $l_1$ and $l_1$ nonseparated with $l_2$, but $l_0$ and $l_2$ are separated.   This can occur, for example when $l_1$ is a prong singularity, or when $l_0$ and $l_2$ are on opposite sides of $l_1$.  
However, for any collection of {\em pairwise nonseparated leaves}, there does exist a sequence $l_n$ eventually entering every open neighborhood of each.   This allows us to put a linear order on the leaves in any such set, as follows.  

 \begin{observation} \label{obs:nonseparated_implies_pf}
Suppose $(P, \cF^+,\cF^-)$ is a bifoliated plane and $l$ and $l'$ are nonseparated in $\Lambda(\cF^+)$.  Consider the set $S$ of leaves of $\cF^-$ that separate $l$ from $l'$ in $P$, as illustrated in Figure \ref{fig_linear_order}.
If $l_n$ is a sequence of leaves converging to a union of leaves with both $l$ and $l'$ in the limit, then each leaf of $S$ will intersect $l_n$ for all sufficiently large $n$, hence $S$ inherits a linear order.  The supremum and infimum of $S$ are leaves of $S$ that make a perfect fit with $l$ and $l'$ (respectively, up to reversing the order). 
 \end{observation}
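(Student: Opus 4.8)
\textbf{Proof proposal for Observation \ref{obs:nonseparated_implies_pf}.}

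The plan is to verify three things in turn: (i) that each leaf of $S$ intersects $l_n$ for all sufficiently large $n$; (ii) that this yields a linear order on $S$; and (iii) that $\sup S$ and $\inf S$ (in this order) are attained by leaves of $S$ making perfect fits with $l$ and $l'$. For (i), recall (Proposition \ref{prop_sequential_definition}) the sequential characterization of nonseparated leaves: since $l_n$ converges to a limit containing both $l$ and $l'$, for any point $p \in l$ there are points $p_n \in l_n$ with $p_n \to p$, and similarly points $q_n \in l_n$ converging to a point $q \in l'$. Given a leaf $m \in S$ separating $l$ from $l'$, the points $p$ and $q$ lie in distinct components of $P \smallsetminus m$; since $p_n \to p$ and $q_n \to q$, for $n$ large the points $p_n, q_n$ also lie in those two distinct components. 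The leaf $l_n$ is connected and contains both $p_n$ and $q_n$, so it must cross $m$; as $\cF^+$ and $\cF^-$ are transverse, $l_n \cap m$ is then a single point (using Proposition \ref{prop_top_properties}\ref{item_unique_intersection} when working in $\orb$, or directly from transversality of the bifoliation in general).

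For (ii), fix $n$ large enough that every leaf of $S$ meets $l_n$, and orient $l_n$. Each leaf $m \in S$ meets $l_n$ in exactly one point, so we may order the leaves of $S$ by the position of these intersection points along $l_n$. This order is independent of the choice of (large) $n$ and of the orientation up to reversal: if $m_1, m_2 \in S$, then the component of $P \smallsetminus m_1$ containing $l$ either contains $m_2$ or not, and this dichotomy is exactly the order relation, intrinsic to $P$. So $S$ is linearly ordered, order-isomorphic to an interval of $\R$ via its embedding in $l_n$.

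For (iii), consider $\overline{m} := \sup S$ (taken in the direction so that, say, moving up in $S$ corresponds to moving toward $l'$). The key point is that $\overline{m}$ is again a leaf of $\cF^-$ belonging to $S$: if $m_k$ is an increasing sequence in $S$ converging to the supremum, then because $\Lambda(\cF^-)$ is a non-Hausdorff tree (Proposition \ref{prop_leaf_spaces}) and the $m_k$ are nested — each separating $l$ from $l'$ — their limit set in $\Lambda(\cF^-)$ contains a leaf $\overline m$; monotonicity of the nesting rules out branching in the limit, so $\overline{m}$ is well-defined and still separates $l$ from $l'$ (or coincides with the relevant one), hence $\overline m \in S$. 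Now I claim $\overline m$ makes a perfect fit with $l'$. Indeed, no leaf of $\cF^-$ lies strictly between $\overline m$ and $l'$ in $S$ by maximality, so every $\cF^-$-leaf through a small transversal to $l'$ based just on the $\overline m$-side must intersect $l'$ — no, more carefully: pick a transverse arc $\tau$ to $\cF^-$ based at a point of $l'$ pointing toward $\overline m$; a leaf of $\cF^-$ crossing the interior of $\tau$ either separates $l$ from $l'$, hence lies in $S$ below $\overline m$ and so on the far side of $\overline m$ — impossible for small $\tau$ — or it must cross $\overline m$'s side appropriately; running this through the definition gives that $\overline m$ and (a ray of) $l'$ make a perfect fit. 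The argument that $\inf S$ makes a perfect fit with $l$ is symmetric. One should also check $\overline m \neq \inf S$, i.e.\ that $l \neq l'$ forces $S$ to genuinely separate them so the sup and inf are distinct leaves making perfect fits with the two different nonseparated leaves; this follows since $l$ and $l'$ are distinct and nonseparated, so at least one $\cF^-$-leaf separates them.

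\textbf{Main obstacle.} The delicate point is step (iii): showing that the supremum and infimum of $S$ are \emph{attained} by actual leaves (not a branch point or a gap in the leaf space) and that they make \emph{perfect fits} rather than merely bounding $S$. This requires care with the non-Hausdorff topology of $\Lambda(\cF^-)$ — one must use that the relevant family of separating leaves is nested (totally ordered), which prevents the limit from branching and forces a genuine boundary leaf, and then unwind the definition of perfect fit of rays using a transverse arc. Everything else is a routine application of the sequential description of nonseparation and of transversality of the foliations.
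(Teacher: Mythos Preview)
The paper does not give a proof of this observation; it explicitly says ``We leave the proof as an exercise in applying the definitions.'' So there is nothing to compare against, and your task is just to get the argument right.

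Your parts (i) and (ii) are fine. Part (iii) has two real gaps.

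First, your reason for why $\sup S$ is attained is wrong. You write that ``monotonicity of the nesting rules out branching in the limit,'' but this is precisely what monotonicity does \emph{not} do: a monotone sequence of leaves in a non-Hausdorff $1$-manifold can perfectly well limit onto several nonseparated leaves --- that is the very phenomenon under discussion. The correct argument stays on $l_n$: each $m \in S$ meets $l_n$ at a single point lying between $p_n$ and $q_n$, so $S$ corresponds to a bounded subset of the segment $[p_n,q_n] \subset l_n$. Its supremum is therefore an honest point $x_0 \in l_n$, and $\overline m := \cF^-(x_0)$ is a single well-defined leaf. A short check (using that $\overline m$ does not meet $l'$, since intersecting $l'$ is an open condition, and that $\overline m$ separates the two halves of $l_n$ on either side of $x_0$) shows $\overline m \in S$.

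Second, your perfect fit argument never lands. Use the definition of perfect fit of leaves directly. One half is immediate: $l_n \cap \overline m \neq \emptyset$ and $l_n \to l'$ with $l' \cap \overline m = \emptyset$, so $l'$ lies in the boundary of $\cF^+(\overline m)$. For the other half, you must show $\overline m$ lies in the boundary of $\cF^-(l')$. Here is where the maximality of $\overline m$ is used: any $\cF^-$ leaf through a point of $l_n$ just past $x_0$ toward $q_n$ is not in $S$, yet it still separates $p_n$ from $q_n$ and hence (for $n$ large) cannot have $l'$ on the $p_n$-side; since it cannot meet $l$ either (being above $\overline m$), the only way it can fail to separate $l$ from $l'$ is to intersect $l'$. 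Thus $\cF^-$ leaves arbitrarily close to $\overline m$ meet $l'$, giving the second half of the perfect fit. Your write-up gestures at this but never carries it through; the sentence ``running this through the definition gives \ldots'' is exactly the step that needs to be written out.
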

 
   \begin{figure}[h]
     \centerline{ \mbox{
\includegraphics[width=8cm]{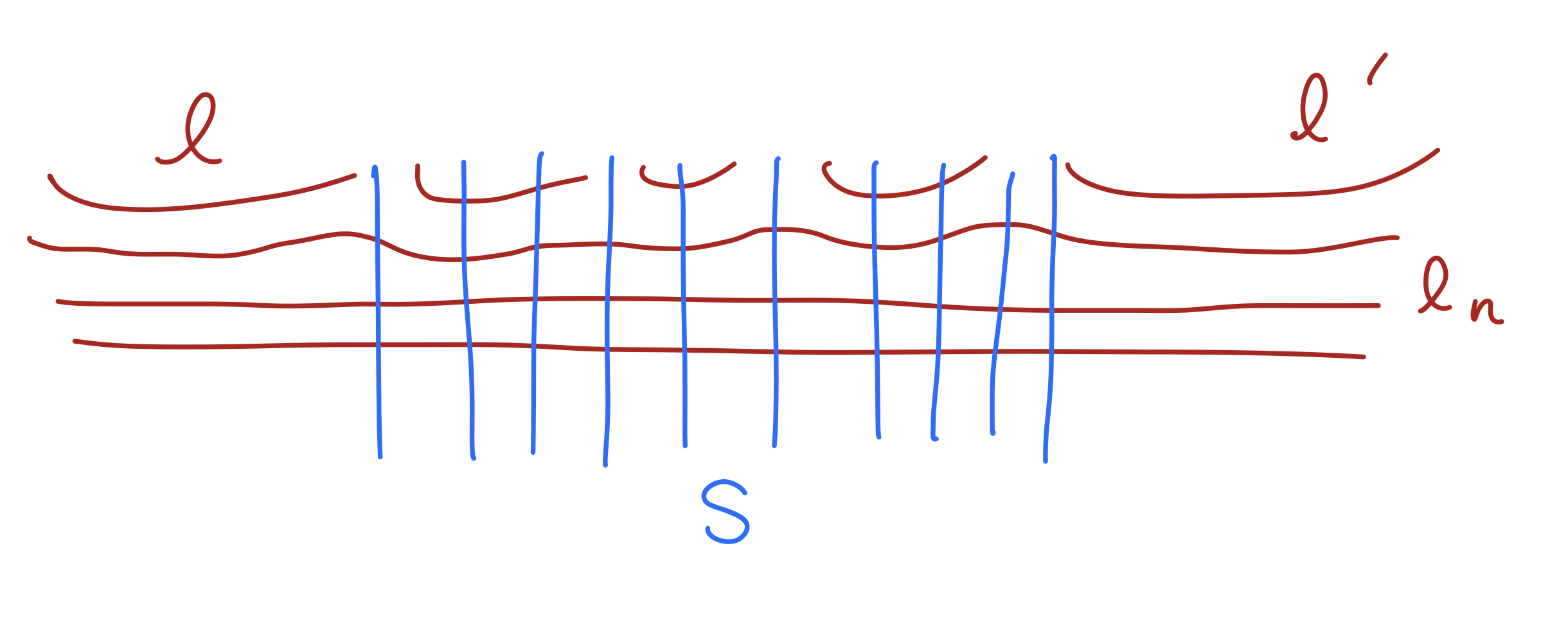}} }
\caption{}
 \label{fig_linear_order} 
\end{figure}
 
 We leave the proof as an exercise in applying the definitions.

In a general bifoliated plane, one can realize any order type of a countable ordered set by pairwise nonseparated leaves.  As one example, let $C \subset \mathbb{R}$ be a Cantor set and consider $P = \mathbb{R}^2 \setminus ( C \times [0, \infty))$, equipped with (the restriction of) the horizontal and vertical foliations from $\mathbb{R}^2$.  Then the horizontal leaves $I \times \{0\}$ where $I$ is a connected component of $\mathbb{R} \setminus C$ are all pairwise nonseparated.  

\begin{exercise}
Generalize the example above to justify the assertion that any order type of countable ordered set can be realized as a set of pairwise nonseparated leaves of a bifoliated plane.  
\end{exercise} 

The next proposition says that, by contrast, for planes with Anosov-like actions, sets of pairwise nonseparated leaves must have the order type of subsets of $\bZ$ and these leaves are adjacent to each other along perfect fits.  It also gives an important strengthening of Axiom \ref{Axiom_nonseparated}. 

\begin{proposition} \label{prop:4weak4strong}
Suppose a group $G$ acts on $(P, \cF^+, \cF^-)$ satisfying \ref{Axiom_A1} and \ref{Axiom_nonseparated}.  Let $S$ be a maximal\footnote{Following the terminology introduced in \cite{CD03}, 
a maximal set of pairwise nonseparated leaves is sometimes called a \emph{cataclysm}.} set of pairwise nonseparated leaves in $\cF^+$ or $\cF^-$. 
Then, the leaves of $S$ can be indexed by a subset of $\bZ$, with consecutive leaves appearing on lozenges sharing a side.  Moreover,  
any $g\in G$ fixing all rays of any leaf $l$ of $S$ also fixes every leaf of $S$.  
\end{proposition}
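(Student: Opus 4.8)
The plan is to build up the structure of $S$ inductively, starting from an adjacent pair of nonseparated leaves and the perfect fits they determine via Observation~\ref{obs:nonseparated_implies_pf}. First I would fix two consecutive (in the inherited linear order on $S$) leaves $l, l'$ of $S$. By Observation~\ref{obs:nonseparated_implies_pf}, the sup/inf of the set of $\cF^{\mp}$-leaves separating them gives leaves $m, m'$ making perfect fits with $l$ and $l'$ respectively; since $l$ and $l'$ are consecutive, these perfect-fit leaves should coincide (call it $m$), so $m$ makes a perfect fit with both $l$ and $l'$. Now apply Axiom~\ref{Axiom_nonseparated}: some nontrivial $g \in G$ fixes $l$. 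Up to a power, $g$ fixes all rays through its fixed point on $l$ (by Axiom~\ref{Axiom_A1} and the remarks following Definition~\ref{def_action}). By Lemma~\ref{lem:fixed_is_corner}, $g$ must then fix $m$ (the leaf making a perfect fit with the relevant ray of $l$), and $l$ together with $m$ span a lozenge. Applying Lemma~\ref{lem:fixed_is_corner} again with the fixed point of $g$ on $m$ and the perfect fit $m$–$l'$, we get that $g$ fixes $l'$ as well, and $m$ and $l'$ span a second lozenge sharing the side $m$ with the first. This is the ``consecutive leaves on lozenges sharing a side'' picture, and crucially it shows that a single element $g$ fixing $l$ (with all its rays) already fixes the adjacent leaf $l'$ and the bridging leaf $m$.

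Next I would propagate this. Having shown that an element fixing $l$ and all its rays also fixes each neighbor of $l$ in $S$, together with the bridging $\cF^\mp$-leaves and lozenges, I would iterate: the same $g$ (or the same power of it, chosen once and for all so that it preserves leaf-space orientations and all rays at every fixed point it encounters) fixes the neighbor's neighbor, etc. Since $g$ acts on the one-dimensional leaf space $\Lambda(\cF^+)$ (or $\Lambda(\cF^-)$) and $S$ consists of leaves fixed by $g$, and each fixed leaf of $g$ in a non-Hausdorff tree setting is isolated among $g$-fixed leaves in the appropriate sense — here I would use Axiom~\ref{Axiom_A1}, which says $g$ has a \emph{unique} fixed point on each leaf it fixes and is a topological contraction/expansion transverse to it, so the $g$-fixed leaves form a discrete set — we conclude that the leaves of $S$ can be indexed by a subset of $\bZ$. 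The discreteness argument is where I'd be most careful: I need to rule out a $g$-fixed leaf being an accumulation point of other $g$-fixed leaves, which follows from the local contraction/expansion dynamics transverse to a fixed leaf guaranteed by Axiom~\ref{Axiom_A1} (near a fixed leaf, on the side where $g$ contracts, no other leaf can be fixed).

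Finally, for the ``moreover'' clause: suppose $g \in G$ fixes all rays of some leaf $l_0 \in S$. By the inductive argument above applied starting at $l_0$, $g$ fixes each neighbor $l_{\pm 1}$, hence by induction every $l_k \in S$. I expect the \textbf{main obstacle} to be the bookkeeping around orientations and finite powers: the element furnished by Axiom~\ref{Axiom_nonseparated} fixes $l$ but a priori might permute its rays or reverse the leaf-space orientation, and one wants a \emph{single} element (not just a power) fixing \emph{all} of $S$. The statement as phrased only requires ``$g$ fixing all rays of some $l \in S$'' in the hypothesis of the moreover clause, which sidesteps the ``root'' issue — so the real work is the lozenge-chaining via Lemma~\ref{lem:fixed_is_corner} and verifying that consecutive perfect-fit leaves match up (the sup/inf leaves from Observation~\ref{obs:nonseparated_implies_pf} for consecutive elements genuinely agree), plus the discreteness/indexing-by-$\bZ$ step. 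I would also double-check that maximality of $S$ is used exactly where needed: it guarantees the chain of lozenges does not terminate prematurely at a leaf that secretly has a further nonseparated partner not yet in $S$.
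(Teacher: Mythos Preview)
There is a genuine gap: your argument is circular in the way it uses ``consecutive.''  You begin by fixing two \emph{consecutive} leaves of $S$ and claiming that the sup/inf leaves $m, m'$ from Observation~\ref{obs:nonseparated_implies_pf} must coincide.  But the existence of consecutive leaves is exactly the discreteness you are trying to prove, and even granting that, the coincidence $m = m'$ is unjustified: there is no a~priori reason the leaf making a perfect fit with $l$ is the same as the one making a perfect fit with $l'$, even when no other element of $S$ lies between them.  Your later discreteness argument (``the $g$-fixed leaves form a discrete set'') presupposes you already know a single $g$ fixes all of $S$, which you establish only by propagating through consecutive leaves --- so the logic loops back on itself.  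Concretely: if the first lozenge $L$ with side on $l$ and perfect-fit side $m$ has its other $\cF^+$-side $l_1$, there is no reason $l_1$ equals $l'$ or even lies in $S$; the sequence $l_n \to l, l'$ crosses $m$ but need not accumulate on $l_1$.

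The paper avoids this by taking an \emph{arbitrary} pair $l_0, l_0' \in S$ (no consecutiveness assumed), building lozenges from both ends via Lemma~\ref{lem:find_next_lozenge} (which you do not invoke), and then proving the two chains must meet after finitely many steps by a contradiction argument: if the chain were infinite, the sequence of $\cF^-$-sides $f_i$ would accumulate on a $g$-fixed leaf $f_\infty$ whose $\cF^+$-leaf would then intersect infinitely many $g$-fixed leaves, violating Axiom~\ref{Axiom_A1}.  This termination argument is what replaces your assumed consecutiveness, and Lemma~\ref{lem:find_next_lozenge} is what replaces your assumed $m = m'$.  Your second application of Lemma~\ref{lem:fixed_is_corner} (at the fixed point on $m$, using the perfect fit $m$--$l'$) also silently assumes $m = m'$, since otherwise the relevant ray of $m$ does not make a perfect fit with $l'$ at all.
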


The proof of Proposition \ref{prop:4weak4strong} is adapted from Theorem 4.3 in \cite{Fen98} where a related result is proved for the orbit spaces of Anosov flows.  A slightly weaker version is given in \cite{BBM24b}, with a similar proof.  We start with a useful lemma, illustrated in Figure \ref{fig_nonseparated_forces_lines}.

\begin{lemma}  \label{lem:find_next_lozenge}
Suppose $g \in G$ fixes both corners of a lozenge $L$.  Denote the sides of $L$ by $l^\pm$ and $f^\pm$, with $l^+ \cap l^-$ and $f^+ \cap f^-$ the two corners.  
Suppose there is a leaf $l' \in \cF^+$ nonseparated with $l^+$ such that $l'$ and $l^+$ lie in distinct connected components of $P \setminus f^-$, and some leaf $f'$ intersects $l'$ but not $f^+$.  Then there exists a lozenge adjacent to $L$ with corner $f^+ \cap f^-$.
\end{lemma}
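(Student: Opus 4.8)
The plan is to build the new lozenge step by step, using the element $g$ to "close up" the perfect-fit configuration, exactly as in the proof of Lemma \ref{lem:fixed_is_corner}. First I would fix notation: write $x = l^+ \cap l^-$ and $y = f^+ \cap f^-$ for the two corners of $L$, both fixed by $g$; up to passing to a power of $g$ (which does not change the hypotheses) I may assume $g$ preserves the orientation of every leaf space and fixes all rays through $x$ and $y$. Since $l'$ is nonseparated with $l^+$ and $l' \cap l^+ = \emptyset$, Axiom \ref{Axiom_nonseparated} gives a nontrivial element fixing $l'$; but in fact I want to show $g$ itself fixes $l'$. For this I would use Axiom \ref{Axiom_A1}: $g$ fixes $y$ and all its rays, hence fixes the ray of $\cF^-(y)$ contained in $f^-$, and $l'$ lies in the boundary of $\cF^+$ of that side; by the uniqueness of nonseparated/perfect-fit boundary leaves (Observation \ref{obs:nonseparated_implies_pf} together with Lemma \ref{lem:perfect_fit_unique}) this boundary leaf must be $g$-invariant, so $g(l') = l'$. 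Consequently $g$ has a unique fixed point $z$ on $l'$, and fixes all rays through $z$ by Observation \ref{obs:all_rays}.

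Next I would identify which ray of $\cF^-(z)$ is going to be the new side. Since $l'$ and $l^+$ are nonseparated and separated (in the sense of Observation \ref{obs:nonseparated_implies_pf}) by a linearly ordered set $S$ of $\cF^-$-leaves whose infimum/supremum makes a perfect fit with each of $l^+$ and $l'$, and since $f^-$ is a side of $L$ with $l^+$ in its boundary, the leaf $f^-$ is precisely the extreme element of $S$ abutting $l^+$; the extreme element abutting $l'$ is then a leaf $f'_0$ of $\cF^-$ making a perfect fit with a ray $r'$ of $l'$, and this $f'_0$ must be $g$-invariant (it is the boundary leaf of $\cF^-(l')$ on the side toward $l^+$, hence unique, hence fixed). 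Now the hypothesis that some leaf $f'$ meets $l'$ but not $f^+$ guarantees that $r'$ lies on the side of $z$ pointing back toward $L$, i.e. toward the corner $y$, rather than the opposite side — this is the point where the extra hypothesis in the statement is used, and I should be careful to check it forces the perfect fit to "point the right way."

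The last step is to run the closing-up argument of Lemma \ref{lem:fixed_is_corner} verbatim: $g$ fixes all rays through $z$ and the ray $r'$ of $\cF^+(z)$ (a subray of $l'$) makes a perfect fit with $f^-$ — indeed $f^-$ is $g$-invariant and in the boundary of $\cF^-(l')$ — so by Lemma \ref{lem:fixed_is_corner} the point $z$ is a corner of a lozenge whose opposite corner lies on $f^-$; by the uniqueness of fixed points on $f^-$ (Axiom \ref{Axiom_A1}), that opposite corner is forced to be $y = f^+ \cap f^-$. Unwinding, this exhibits a lozenge with one $\cF^+$-side on $l'$, one $\cF^-$-side on $f^-$, and corner $y$, i.e. a lozenge adjacent to $L$ sharing the side $f^-$ and having corner $y = f^+ \cap f^-$, as claimed. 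I expect the main obstacle to be bookkeeping the orientations and the "which side" issues in the middle paragraph — making sure that the nonseparation hypothesis plus the existence of the leaf $f'$ genuinely pin down that the new perfect fit occurs in the correct quadrant of $z$ (so that the resulting lozenge is adjacent to $L$ rather than on the far side of $l'$). Everything after that is a direct citation of Lemmas \ref{lem:perfect_fit_unique} and \ref{lem:fixed_is_corner} and the axioms.
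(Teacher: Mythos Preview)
Your argument contains a genuine gap in the final paragraph. You correctly identify in the middle paragraph that the extreme separating leaf on the $l'$ side is some leaf $f'_0$ making a perfect fit with a ray $r'$ of $l'$. But in the last paragraph you switch to asserting that $r'$ makes a perfect fit with $f^-$, justifying this by ``$f^-$ is \ldots in the boundary of $\cF^-(l')$''. That is false in general: the boundary of $\cF^-(l')$ on the side toward $l^+$ is $f'_0$, not $f^-$. The two coincide only when $S$ consists of a single leaf, which is not assumed. So applying Lemma~\ref{lem:fixed_is_corner} at $z$ yields a lozenge with corners $z$ and a point on $f'_0$; there is no reason for that point to be $y=f^+\cap f^-$. (A secondary issue: your argument that $g(l')=l'$ invokes ``uniqueness'' of boundary leaves, but there may be several leaves nonseparated with $l^+$ on that side of $f^-$, so this step is also unjustified as written.)

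The paper's proof avoids this by working from $y$ rather than from $l'$. It considers the $\cF^-$-saturation of the ray $r^+$ of $f^+$ \emph{not} in $L$; the hypothesis on $f'$ and $l'$ is used only to show this saturation is bounded away from $f^-$, yielding a $g$-invariant boundary leaf $k$. One then checks $k$ makes a perfect fit with $r^+$, and Lemma~\ref{lem:fixed_is_corner} applied at $y$ produces the adjacent lozenge directly. The leaf $l'$ need not be a side of this new lozenge at all --- it merely witnesses boundedness. Your approach could be salvaged, but it would require building a whole chain of lozenges from $z$ down to $y$ (essentially redoing Proposition~\ref{prop:4weak4strong}), which is more than is needed here.
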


\begin{figure}[h]
\includegraphics[width=10cm]{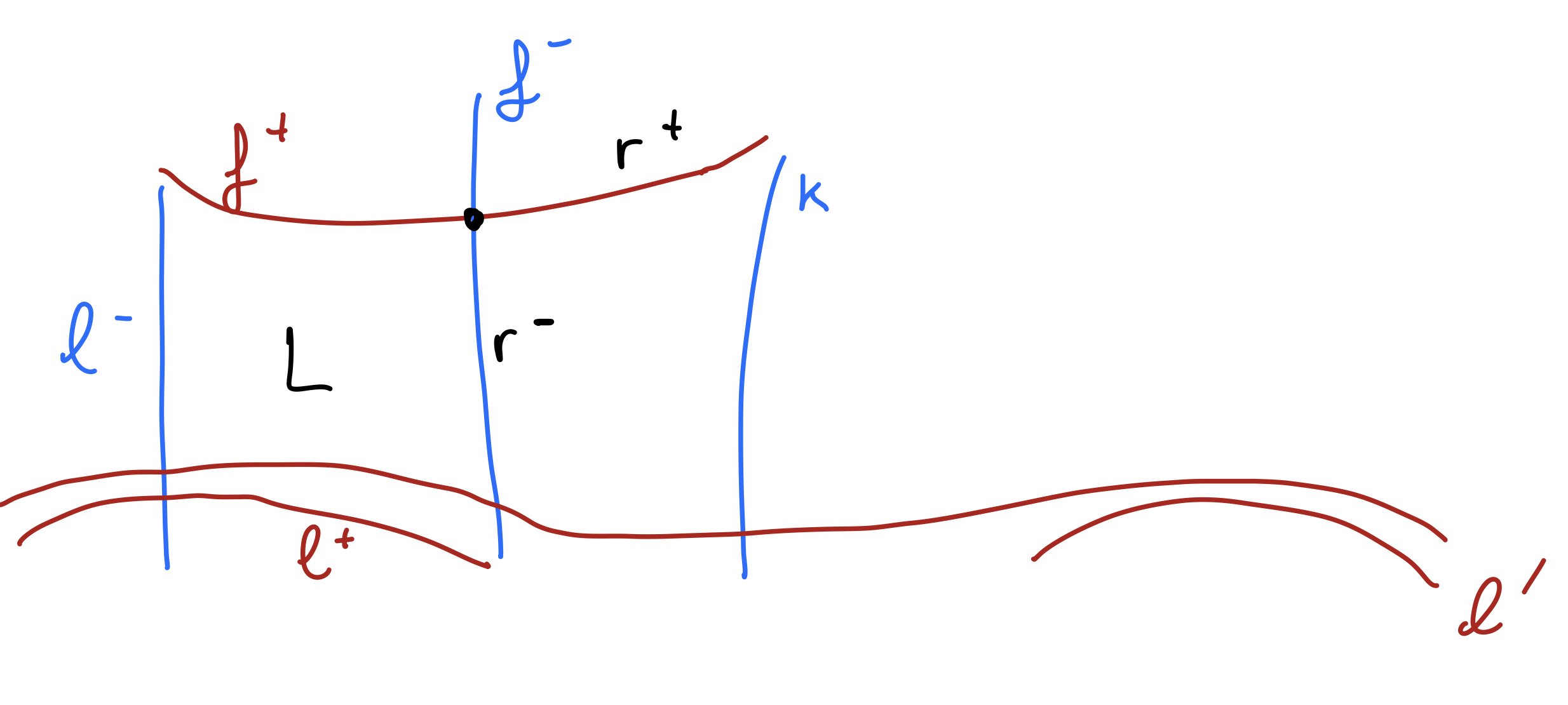}
\caption{Having a side of a lozenge nonseparated with another leaf creates an adjacent lozenge}
\label{fig_nonseparated_forces_lines}
\end{figure}

\begin{proof} Since $l$ and $l'$ are nonseparated, we can take a sequence of leaves 
 $l_n$ limiting to both $l^+$ and $l'$.  
Let $r^+$ denote the ray of $f^+$ that is \emph{not} a side of $L$, and $r^-$ the ray of $f^-$ that forms one side of $L$.  

Consider the saturation $\cF^-(r^+)$.  
This set is bounded on one side by $f^-$, and, since we assumed some leaf $f'$ intersects $l'$ but not $f^+$, it is bounded on the other side as well.  This boundary is either a single $g$-invariant leaf or a $g$-invariant union of leaves.   There is a unique leaf $k$ of this limit that either intersects $l'$, or separates $l'$ from $l^+$.   Since the set of leaves nonseparated with $l$ is a $g$-invariant set, we have $g(k) = k$.  
We will show $k$ makes a perfect fit with $r^+$. 

Half the definition is already satisfied because $k$ is in the boundary of $\cF^-(r^+)$.  For the other half, we use the fact that $\cF^+(k)$ is also $g$-invariant.   The union of leaves in $\cF^+(k)$ contains an infinite segment of $r^-$ because $k$ meets each leaf $l_n$.  Thus, by $g$-invariance, $\cF^+(k)$ contains the full interior of $r^-$, so its boundary contains $r^+$.  This is what we needed to show.  
We conclude that $k$ and $r^+$ make a perfect fit.  
Since $r^+$ is on a fixed leaf of $g$,
Lemma \ref{lem:fixed_is_corner} implies that $k$ and $r^+$ form two sides of a lozenge, necessarily with $r^-$ as another side. 
\end{proof}

\begin{proof}[Proof of Proposition \ref{prop:4weak4strong}] 
To fix notation, suppose $S$ is a set of $\cF^+$ leaves.  Let $l_0\in S$ and let $g \in G$ be a nontrivial element fixing $l_0$, as given by Axiom \ref{Axiom_nonseparated}.  
 By Axiom \ref{Axiom_A1}, there exists a unique $x_0\in l_0$ fixed by $g$. Up to taking a power, we further assume that $g_0$ fixes all the rays of $\cF^\pm(x_0)\smallsetminus \{x_0\}$.

If $S$ contains singular leaves, we instead consider only the {\em faces} of these singular leaves that are pairwise nonseparated with the other non-singular leaves or faces.  Abusing notation, we denote this set of leaves and faces by $S$ and call an element of $S$ a {\em leaf}, as this simplifies terminology and does not affect the proof.  

Let $l_0'\in S$ be another leaf.  
By Observation \ref{obs:nonseparated_implies_pf}, there exists $f_0\in \cF^-$ that makes a perfect fit with $l_0$ and separates $l_0$ from $l'_0$.  By Lemma \ref{lem:fixed_is_corner}, $f_0$ and $l_0$ are two sides of a lozenge $L_0$ with corners fixed by $g$. 

Similarly, reversing the roles of $l_0$ and $l'_0$, there exists $f'_0 \in \cF^-$ making a perfect fit with $l_0'$, separating it from $l_0$ in $P$, and forming two sides of a lozenge $L_0'$ fixed by some $g' \in G$.  Call $S_0$ the subset of $S$ between $f_0$ and $f_0'$.   We will show that $S_0$ is finite and consists of sides of adjacent lozenges.  
 To do this, we will apply Lemma \ref{lem:find_next_lozenge} to one of $L_0$ or $L_0'$ in order to find an adjacent lozenge to one of them that lies in between the two.  We will do this repeatedly, and show that the process terminates after finitely many steps, producing a finite chain of lozenges between $L_0$ and $L_0'$.  

Let $r_0$ be the ray of $\cF^+(y_0)\smallsetminus \{y_0\}$ that is contained in the side of $f_0$ that contains $l'_0$, and $r_0'$ the ray of the corresponding side of $L_0'$.  
To apply Lemma \ref{lem:find_next_lozenge}, it suffices to show that either $l_0$ is not in the $\cF^-$ saturation of $r_0'$, or $l_0'$ is not in the $\cF^-$ saturation of $r_0$.  
We argue this as follows:  
Suppose that $l_0' \subset \cF^-(r_0)$ holds.  Let $y'$ be the fixed point of $g'$ on $f'_0$. Let $x'$ be the fixed point of $g'$ on $l'_0$.  
Since $g'$ has a unique fixed point on $\cF^+(y')$, by Axiom \ref{Axiom_A1}, the leaves $\cF^-(x')$ and  $\cF^+(y')$ cannot intersect.  Thus, $\cF^+(y_0)$ separates  $\cF^+(y')$ from $l'_0$ and $l_0$.   
Similarly, $\cF^-(x_0)$ cannot intersect $\cF^+(y_0)$, and therefore won't intersect $\cF^+(y')$.  It follows that the $\cF^-$-saturation of $r'_0$ does not contain $l_0$.   
See Figure \ref{fig:4weak4strong}.

\begin{figure}[h]
   \labellist 
  \small\hair 2pt
     \pinlabel $y'$ at 60 60 
     \pinlabel $y_0$ at 90 94 
   \pinlabel $l_0$ at 152 145
    \pinlabel $f_0$ at 152 90
    \pinlabel $f'_0$ at 152 70
  \pinlabel $l'_0$ at 152 15  
 \endlabellist
     \centerline{ \mbox{
\includegraphics[width=5cm]{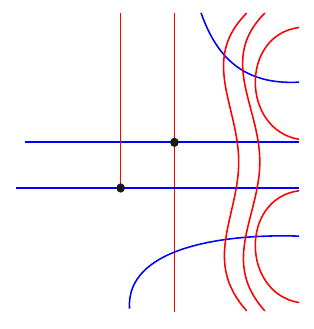} }}
\caption{}
\label{fig:4weak4strong}
\end{figure}

Thus, for at least one of $L_0$ and $L_0'$, Lemma \ref{lem:find_next_lozenge} applies and we may find an adjacent lozenge which contains a side nonseparated with $l_0$ or $l'_0$ and lies in $s_0$.  Denote this lozenge by $L_1$ or $L_1'$ (depending on whether it shared a side with $L_0$ or $L'_0$), and the leaf nonseparated with $l_0$ by $l_1$ or $l_1'$.  
It is possible that $L_1 = L_0'$ (or $L_1' = L_0$ in the latter case)  but in this case we are already done.  Otherwise, working now with the new pair of nonseparated leaves $l_0$ and $l_1'$ (or $l_0'$ and $l_1$) we can start the argument again, finding leaves which make perfect fits and another adjacent lozenge between $L_0$ and $L_0'$.  
Iterating the process, we produce sequences of lozenges $L_1, L_2, \ldots$ and $L_1', L_2', \ldots$.  If for some $k, j$ we have $L_j = L_k'$, then we are done.  

It remains to argue that the process terminates.  If not, at least one sequence is infinite; for concreteness, suppose the sequence $\{L_i \}$ is infinite.  Let $\{f_i\}$ be distinct sides of $L_i$ in $\cF^-$, these are all fixed by $g$.  
The leaves $f_i$ lie in a bounded region of the leaf space, so must accumulate somewhere, i.e., limits onto a leaf or union of leaves. A unique leaf in that limit, call it $f_\infty$, separates the lozenges $L_i$ from the $L'_j$,  so $f_\infty$ is also fixed by $g$.  
This contradicts Axiom \ref{Axiom_A1} since the leaf of $\cF^+$ through the fixed point of $g$ on $f_\infty$ would intersect (infinitely many) of the fixed leaves $f_i$.  This contradiction concludes the proof.
\end{proof} 

The configuration of adjacent lozenges obtained in the proof above will play an important role, so we introduce the following terminology.  
\begin{definition}[Line of lozenges] \label{def_line_lozenges}
A {\em line of lozenges} is a finite or infinite set $\{L_i\}$ of lozenges, indexed by some set of consecutive integers, such that each conseutive pair $L_i$ and $L_{i+1}$ share a common side, and some leaf of $\cF^\pm$ meets all lozenges simultaneously.\end{definition} 

Note that as a consequence of the definition, the shared sides of the lozenges are necessarily always in $\cF^+$ or always in $\cF^-$, and the union of lozenges in the line together with their shared sides gives a trivially foliated region.  

We record a useful lemma about infinite lines of lozenges: 
\begin{lemma}  \label{lem:half_scalloped} 
Suppose $G$ acts on $P$ satisfying Axiom \ref{Axiom_A1} and \ref{Axiom_dense}, and suppose $L_0, L_1, L_2, \ldots$ is an infinite line of lozenges in $P$.  Let $f_i$ denote the leaf that is a common side of $L_i$ and $L_{i+1}$.  Either the sequence $f_i$ leaves all compact sets in $P$, or it limits onto a bi-infinite union of pairwise nonseparated leaves.
\end{lemma}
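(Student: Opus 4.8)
The statement is a dichotomy for an infinite line of lozenges $L_0, L_1, L_2, \ldots$ with shared sides $f_i$ (the common side of $L_i$ and $L_{i+1}$). Since a line of lozenges is trivially foliated (Definition \ref{def_line_lozenges}), the $f_i$ all lie in one of the two foliations, say $\cF^-$, and they are linearly ordered along the leaf space $\Lambda(\cF^-)$ in the same order as their indices. The proof splits on whether this monotone sequence $f_i$ in $\Lambda(\cF^-)$ escapes to infinity or not. If it leaves every compact set of $P$, we are in the first case and there is nothing more to prove. So assume it does not escape, i.e. the $f_i$ stay in a bounded region of the leaf space $\Lambda(\cF^-)$; we must show the $f_i$ then limit onto a bi-infinite union of pairwise nonseparated leaves.

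\textbf{Step 1: Extract a limit.} Since the $f_i$ are monotone in $\Lambda(\cF^-)$ and remain in a bounded region, they converge in $\Lambda(\cF^-)$ to a point or, if that point is non-Hausdorff, to a collection of leaves; in any case the sequence $f_i$ limits onto some nonempty union $S$ of leaves of $\cF^-$, and by the standard fact that a monotone sequence in a simply connected non-Hausdorff 1-manifold limits onto a set of pairwise nonseparated leaves (combine Proposition \ref{prop_sequential_definition} with the linear order), the leaves in $S$ are pairwise nonseparated with one another. Each such limit leaf is separated from $L_i$'s shared-side $f_i$'s by nothing, so $S$ is "between" the whole line of lozenges in the appropriate sense.

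\textbf{Step 2: Invariance and periodicity.} Here is where the group action enters. Fix the element $g$ produced in the construction of a line of lozenges (in the proof of Proposition \ref{prop:4weak4strong} each $f_i$ is fixed by a common $g$); more robustly, since consecutive lozenges share sides and each $f_i$ is the side of a lozenge whose corners are fixed points (Lemma \ref{lem:fixed_is_corner}), there is a nontrivial $g \in G$ fixing all the $f_i$ simultaneously. Then $g$ preserves the sequence $f_i$, hence preserves its limit set $S$; so every leaf of $S$ is $g$-invariant. By Axiom \ref{Axiom_A1} each leaf of $S$ carries a unique fixed point of $g$, and the $\cF^+$-leaf through that fixed point is also $g$-invariant.

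\textbf{Step 3: Bi-infinitude of the limit set.} This is the step I expect to be the main obstacle. I must rule out the possibility that $S$ is a single leaf, or a finite chain of pairwise nonseparated leaves, while the line of lozenges is infinite. Suppose $S$ were a single $g$-invariant leaf $f_\infty$, with fixed point $x_\infty$ and $g$-invariant transverse leaf $\cF^+(x_\infty)$. For $i$ large, $f_i$ is close to $f_\infty$, so the leaf $\cF^+(x_\infty)$ must intersect $f_i$ for all large $i$ — but $\cF^+(x_\infty)$ would then meet infinitely many distinct $g$-fixed leaves, and since each $f_i$ carries its own unique $g$-fixed point, $\cF^+(x_\infty)$ contains infinitely many fixed points of $g$, contradicting Axiom \ref{Axiom_A1}. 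The same argument rules out any finite limit set: some $\cF^+$-leaf through a fixed point on an extreme leaf of $S$ would have to cross cofinally many of the $f_i$. Thus if the $f_i$ do not escape, $S$ must be infinite in the relevant direction. To get that $S$ is \emph{bi}-infinite (a bi-infinite union of pairwise nonseparated leaves, not just a one-sided sequence), I observe that the line of lozenges, being infinite and having a leaf meeting every $L_i$, the corners of the $L_i$ march off in one direction while the shared sides $f_i$ converge; the limit leaves $S$ are accumulated from both sides by translates $g^n(f_j)$ — here one uses that $g$ acts on the index set of $S$ with no fixed index (again by Axiom \ref{Axiom_A1}, applied to $S$ itself viewed as a set of pairwise nonseparated leaves), forcing $S$ to be unbounded in both directions, hence bi-infinite. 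This matches the indexing-by-a-subset-of-$\bZ$ conclusion of Proposition \ref{prop:4weak4strong} applied to $S$: a $g$-invariant maximal set of pairwise nonseparated leaves acted on freely (on indices) by $g$ must be all of $\bZ$, i.e. bi-infinite.

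\textbf{Remark on sources.} Much of this is a repackaging of arguments already present in the proof of Proposition \ref{prop:4weak4strong} (the accumulation-onto-a-fixed-leaf contradiction with Axiom \ref{Axiom_A1}) and of Observation \ref{obs:nonseparated_implies_pf} (limits of monotone sequences are pairwise nonseparated), so I expect the write-up to be short; the only genuinely delicate point is the bi-infinitude in Step 3, which requires carefully tracking how $g$ acts on the limit set and invoking freeness of that action on the index set.
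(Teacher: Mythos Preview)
Your approach is essentially the paper's: obtain a nontrivial $g$ fixing all the $f_i$ (via Proposition~\ref{prop:4weak4strong}), note that the limit set $S$ is $g$-invariant, and rule out any leaf of $S$ being fixed by a power of $g$ via the transverse-leaf contradiction with Axiom~\ref{Axiom_A1}. The paper organizes this last step more directly than your single-leaf/finite/bi-infinite cascade: it proves in one stroke that every nonzero power of $g$ acts freely on $S$ (if $g^n$ fixed some $f_\infty \in S$ with fixed point $x$, then the leaf of the other foliation through $x$ meets cofinitely many of the $g^n$-fixed leaves $f_i$, each intersection a distinct $g^n$-fixed point, contradicting~\ref{Axiom_A1}); bi-infinitude then follows immediately, since a free order-preserving $\bZ$-action on a linearly ordered set forces it to be unbounded in both directions.

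One correction: your Step~2 assertion that ``every leaf of $S$ is $g$-invariant'' does not follow from $g$ preserving $S$ setwise, and is in fact the \emph{negation} of what you (correctly) argue in Step~3, namely that $g$ acts with no fixed index on $S$. Likewise, the phrase ``translates $g^n(f_j)$'' in Step~3 is a slip, since $g$ already fixes each $f_j$. These are presentation errors rather than gaps: once Step~2 is reduced to ``$g$ preserves $S$ as a set'' and Step~3 is phrased as ``no power of $g$ fixes any leaf of $S$'', your argument coincides with the paper's.
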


\begin{proof}  For concreteness, assume $f_i$ are leaves of $\cF^+$.   By Proposition \ref{prop:4weak4strong} there exists some nontrivial $g \in G$ fixing all the corners of all $L_i$, so $g$ fixes all the leaves $f_i$.  
Since the sequence $f_i$ is monotone along an embedded copy of $\R$ in $\Lambda(\cF^+)$, it either escapes all compact sets or converges to a limit.  This  limit is invariant under $g$ (because all $f_i$ are fixed by $g$) and consists of either a finite (possibly unique) or infinite union of nonseparated leaves. We will show that the union of leaves is in fact bi-infinite.  

To do that we prove that any power of $g$ must act freely on that union of leaves.  For contradiction, suppose not.  Then there exists a leaf $f_{\infty}$ in the limit that is fixed by $g^n$ for some $n\neq 0$.
By Axiom \ref{Axiom_A1}, $g^n$ fixes a point $x\in f_{\infty}$.  Then, $f_i$ intersects $\cF^-(x)$ for all large enough $i$ and each intersection point is fixed by $g^n$.  This gives infinitely many fixed points for $g^n$ on $\cF^-(x)$, a contradiction with the uniqueness of fixed point given by Axiom \ref{Axiom_A1}.
\end{proof}

The orbit space of the Bonatti--Langevin flow (introduced in Section \ref{sec_example_bonatti_langevin}) is one example where a sequence of leaves converges to a bi-infinite collection of leaves in the limit.  See Section \ref{BL_orbit_space} for a description of this orbit space.  The configuration appearing there is part of a larger structure called a {\em scalloped region}.  Informally, a scalloped region is a trivially foliated open set $U \subset P$ whose boundary $\partial U$ consists in four infinite families of pairwise nonseparated leaves, which can be realized as two distinct bi-infinite lines of lozenges: one sharing sides along leaves of $\cF^+$ and the other sharing $\cF^-$-sides. See Figure \ref{fig_scalloped}.  The formal definition, originally due Fenley \cite{Fen98}, is given below.  

We will show in Proposition \ref{prop:infinite_lozenge_is_scalloped} that any infinite line of lozenges in a bifoliated plane with an Anosov-like action is in fact part of a scalloped region.  

\begin{definition}[Scalloped region] \label{def_scalloped}
  A \emph{scalloped region} is an open, unbounded set $U \subset P$ with the following properties:
  \begin{enumerate}[label=(\roman*)]
   \item The boundary $\partial U$ consists of the union of four families of leaves $l_k^{1,+}, l_k^{2,+}$ in $\cF^+$ and $l_k^{1,-},l_k^{2,-}$ in $\cF^-$, indexed by $k\in \bZ$.
   \item The leaves of each family $l_k^{i,\pm}$, $k \in \bZ$ are pairwise nonseparated.
   \item The boundary leaves are ordered so that there exists a (unique) leaf $f_k^{1,-}$ that makes a perfect fit with $l_k^{1,+}$ and $l_{k+1}^{1,+}$. Moreover, $f_k^{1,-}$ accumulates on the leaves $\cup_{i\in \bZ} l_i^{1,-}$ as $k \to \infty$, and  on $\cup_{i\in \bZ} l_i^{2,-}$ as $k \to -\infty$.  The analogous statement holds for leaves making perfect fits with the other families $l_k^{i,\pm}$.
   \item The bifoliation is trivial inside $U$, i.e., for all $x \neq y\in U$, $\cF^+(x)\cap \cF^-(y) \neq \emptyset $ and  $\cF^+(y)\cap \cF^-(x) \neq \emptyset $ and $U$ contains no singular points.
  \end{enumerate}
 \end{definition}

\begin{figure}[h]
     \centerline{ \mbox{
\includegraphics[width=10cm]{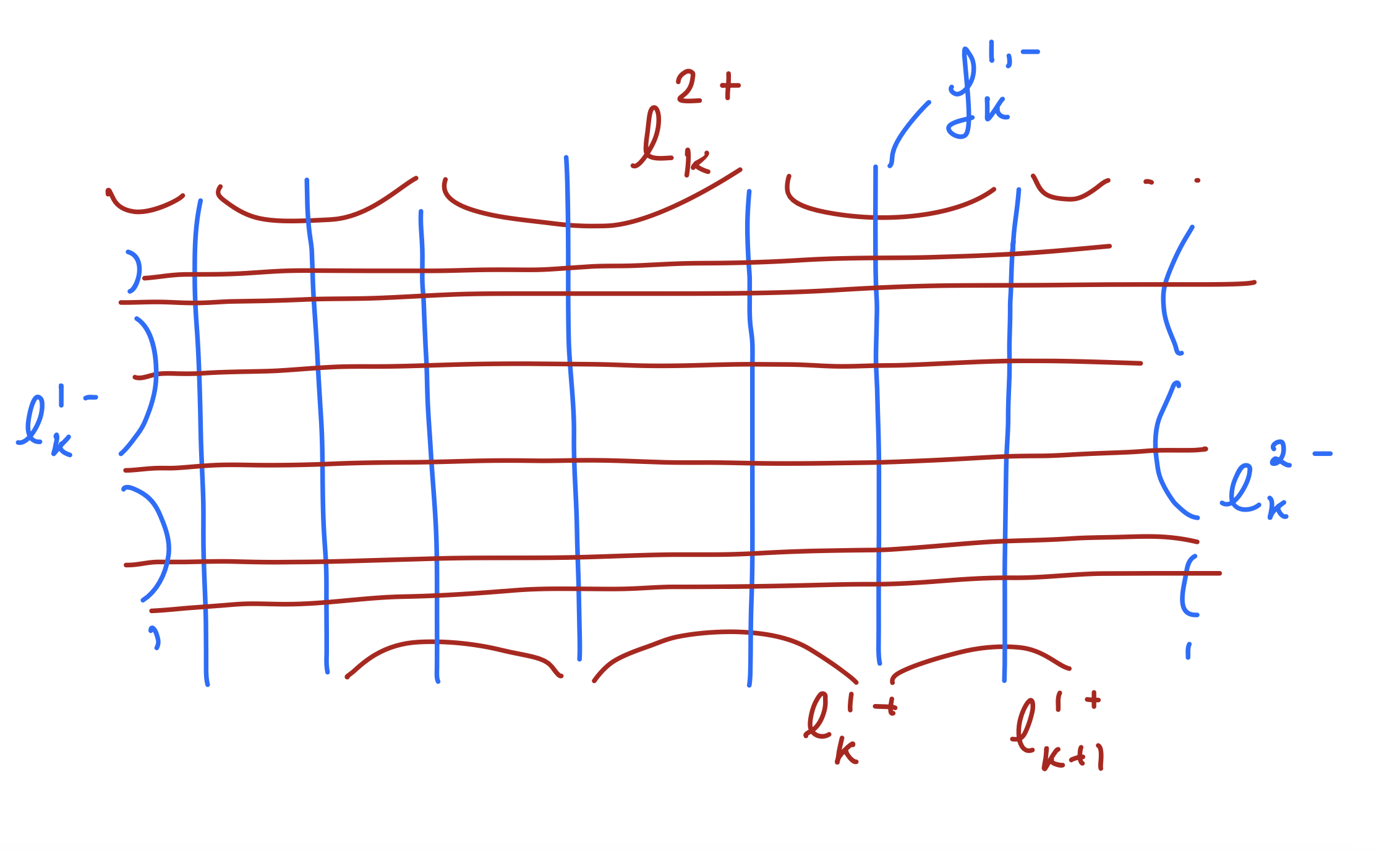} }}
\caption{A scalloped region}%
\label{fig_scalloped}
\end{figure}

Our next lemma shows that the presence of a scalloped region in a bifoliated plane $P$ puts strong algebraic constraints on any group that acts Anosov-like on $P$.  

\begin{lemma}\label{lem_action_of_stabilizer_scalloped}
Let $U$ be a scalloped region in $P$.  Then the stabilizer of $U$ under any Anosov-like group action of $G$ is virtually isomorphic to a subgroup of $\bZ^2$. More precisely, there is a $\bZ^2$-subgroup of the stabilizer, of index at most two, generated by elements $g,h$, with $g$ fixing one of the lines of lozenges making up the scalloped region and freely permuting lozenges in the other, and vice versa for $h$.
\end{lemma}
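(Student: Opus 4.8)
The plan is to extract the two commuting elements directly from the combinatorics of the two lines of lozenges that make up the boundary of $U$. Write $\partial U$ using the four families $l_k^{i,\pm}$ as in Definition \ref{def_scalloped}, and let $\{L_j^{+}\}$ denote the bi-infinite line of lozenges whose shared sides lie in $\cF^+$, and $\{L_j^{-}\}$ the bi-infinite line whose shared sides lie in $\cF^-$; every corner of $U$ is a corner shared by one lozenge of each line. By Proposition \ref{prop:4weak4strong} the corners of $\{L_j^{+}\}$ form a set of pairwise nonseparated data, so there is a nontrivial $g\in G$ fixing all corners (hence all sides) of every $L_j^{+}$; after passing to a power we may assume $g$ fixes all rays through all these corners. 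Symmetrically there is an element $h\in G$ fixing every side of every $L_j^{-}$ and all rays through their corners. The key claim is then: \emph{$g$ freely permutes the lozenges $L_j^{-}$, shifting the index $j\mapsto j+1$ (after possibly replacing $g$ by $g^{-1}$), and $h$ freely permutes the $L_j^{+}$.}

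To prove this claim I would argue as follows. By Axiom \ref{Axiom_A1}, $g$ acts on each fixed $\cF^+$-leaf $f_k$ (a common side of $L_k^{+}$ and $L_{k+1}^{+}$) with a unique fixed point, and is topologically expanding/contracting there; in particular $g$ acts nontrivially on $U$. Since $g$ preserves $\partial U$ and fixes the families $l_k^{i,+}$ pointwise-as-leaves (these are exactly the nonseparated limits of the $f_k$, which are $g$-fixed), it must map the family $\{l_k^{1,-}\}$ to itself and $\{l_k^{2,-}\}$ to itself (it cannot swap them, as that would reverse a coherent transverse orientation of $\cF^+$ along $\partial U$ — here one uses that $g$ was chosen to fix all rays). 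A leaf $l_k^{1,-}$ is fixed by $g$ only if $g$ fixes a point on it, and that point would be a corner of $U$ lying on $L_m^{+}$ for some $m$; but a corner of the $\cF^-$-line that also fixes an $\cF^+$-line leaf would, by Axiom \ref{Axiom_A1} applied to the $\cF^+$-leaf through it, force that $\cF^+$-leaf to meet infinitely many $g$-fixed $\cF^-$-leaves $\{l_k^{1,-}\}$, a contradiction exactly as in the proof of Lemma \ref{lem:half_scalloped}. Hence $g$ has no fixed leaf among $\{l_k^{1,-}\}$, so it acts on this $\bZ$-indexed, linearly ordered set of pairwise nonseparated leaves without fixed points and preserving the order, i.e. as a nontrivial translation $j\mapsto j+n$; replacing $g$ by a suitable power (and then absorbing that power — see below) we arrange $n=1$. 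This is the free permutation of lozenges $L_j^{-}$ claimed. The symmetric argument gives $h$ shifting the $L_j^{+}$ by $1$.

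Now $g$ fixes every $L_j^{+}$ (hence its corners) while $h$ shifts the $L_j^{+}$ by one; conversely $h$ fixes every $L_j^{-}$ while $g$ shifts them. Consider the commutator $[g,h]$: it fixes every corner of $\partial U$ (each corner lies on some $L_j^{+}$, moved to $L_{j+1}^{+}$ by $h$ and back by $h^{-1}$, all while $g$'s fix all $L^{+}$'s; tracking the four-fold bookkeeping shows each corner returns to itself). Having at least two fixed points on some leaf, Observation \ref{obs:all_rays} forces $[g,h]$ to fix all rays through those corners, and then Axiom \ref{Axiom_A1} (uniqueness of the expanding/contracting fixed point on each invariant leaf) forces $[g,h]=\id$: indeed $[g,h]$ fixes a leaf and two distinct points on a transverse leaf, which is impossible unless it is trivial. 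So $\langle g,h\rangle\cong \bZ^2$ (it is abelian, and $g,h$ are independent since their actions on the index sets of the two lines of lozenges are independent nontrivial translations — a relation $g^ah^b=\id$ would force $a=b=0$ by looking at the two shift actions). Finally, to see this $\bZ^2$ has index at most two in $\Stab(U)$: any $\phi\in \Stab(U)$ permutes the two lines of lozenges, either preserving each or swapping them. The swap can happen for at most one coset (a scalloped region has a $\bZ/2$ symmetry exchanging $\cF^+$ and $\cF^-$ roles, cf. the half-step up map $\eta_{1/2}$), so it suffices to treat $\phi$ preserving both lines. Such a $\phi$ shifts the $L_j^{+}$ by some amount $b$ and the $L_j^{-}$ by some amount $a$ (again each shift is free and order-preserving by the fixed-leaf argument above, unless it is trivial), hence $\phi g^{-a}h^{-b}$ acts as the identity on both index sets, fixes all corners of $\partial U$, and therefore equals the identity by the same Observation \ref{obs:all_rays} / Axiom \ref{Axiom_A1} argument as for the commutator. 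So $\phi\in\langle g,h\rangle$, and $\langle g,h\rangle$ has index at most $2$ in $\Stab(U)$.

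The main obstacle I anticipate is making rigorous the assertion that $g$ (resp. $h$) acts on the relevant family of nonseparated leaves \emph{without fixed leaves} and hence as a genuine nontrivial translation of $\bZ$ — this is where all the axioms genuinely interact (one needs $g$ to move something nontrivially, which comes from Axiom \ref{Axiom_A1} and the trivial foliation inside $U$; one needs it to not fix a boundary leaf of the other family, which is the Lemma \ref{lem:half_scalloped}-style argument; and one needs to rule out index-$2^k$ phenomena by the orientation/ray bookkeeping). The step where I replace $g$ by a power to make the shift equal $1$ must be done carefully: I should instead \emph{define} $g$ at the outset to be a generator of the (infinite cyclic) subgroup of $\Stab(U)$ consisting of elements fixing all $L^{+}$'s and acting orientation-preservingly, and similarly for $h$; that this subgroup is infinite cyclic (not just nontrivial) follows because its action on the $\bZ$-indexed family $\{l_k^{1,-}\}$ is faithful by the uniqueness-of-fixed-point axiom, so it embeds in $\mathrm{Homeo}_+(\bZ)\cong\bZ$. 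With that framing the "virtually $\bZ^2$, index at most two" conclusion drops out cleanly.
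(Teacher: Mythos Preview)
Your core construction is correct and follows the same idea as the paper: produce $g$ fixing all corners of the $\cF^+$-line of lozenges (via Proposition~\ref{prop:4weak4strong}) and $h$ fixing all corners of the $\cF^-$-line, then show they commute and generate a $\bZ^2$. The paper packages this more cleanly by defining $H\subset\Stab(U)$ to be the subgroup preserving each of the four boundary families setwise, and observing that the action on the $\bZ$-indexed families $\{l_k^{1,+}\}$ and $\{l_k^{1,-}\}$ gives a homomorphism $\Phi\colon H\to\bZ\times\bZ$. Your ``$[g,h]$ fixes all corners, hence is the identity'' step is exactly injectivity of $\Phi$, and your existence of $g,h$ is the virtual surjectivity of $\Phi$; commutativity then comes for free from the target being abelian, avoiding your explicit commutator bookkeeping.

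Your index-two analysis, however, has a genuine gap. You claim the only possible extra coset comes from an element swapping the two lines $\{L_j^{+}\}\leftrightarrow\{L_j^{-}\}$, citing the half-step-up map $\eta_{1/2}$. But such a swap would interchange $\cF^+$ and $\cF^-$, and elements of $G$ are automorphisms preserving \emph{each} foliation; so every $\phi\in\Stab(U)$ already preserves both lines, and your case split is vacuous. The real obstruction is that $\phi$ may swap the two $\cF^+$ boundary families $\{l_k^{1,+}\}\leftrightarrow\{l_k^{2,+}\}$ (or the two $\cF^-$ families), acting on the corresponding line of lozenges as an order-\emph{reversing} reflection rather than a shift. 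Your claim that ``each shift is free and order-preserving'' simply does not cover this case. The paper's argument for index at most two is that one cannot swap \emph{both} the $\cF^+$ pair and the $\cF^-$ pair simultaneously: such an element would be forced to have a fixed point in the interior of $U$, and its square would lie in $H$ with a fixed point, contradicting Axiom~\ref{Axiom_A1} via the injectivity of $\Phi$. So at most one reflection-type swap can occur, giving $[\Stab(U):H]\le 2$.
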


This was proved for orbit spaces of (pseudo)-Anosov flows proved by Barbot and Fenley, see e.g., \cite[Corollary 4.8]{Fen98}, and for Anosov-like actions in \cite[Lemma 2.33]{BFM22}.
\begin{proof}
Suppose $U$ is a scalloped region and let $H$ be the finite-index subgroup which preserves,
setwise, each of the four bi-infinite families of leaves forming the boundary of $U$.   We first discuss the action of $H$, and then will argue at the end of the proof that $H$ has index at most 2.  

 The action of $H$ on each family preserves the linear ordering of the indices so can be considered an integral translation group.  In other words, there is a homomorphism $\Phi\colon H \to \bZ \times \bZ$ by considering the translation actions of $H$ on the families $l_k^{1,+}$ and  $l_k^{1,-}$, in the notation from Definition \ref{def_scalloped}. 
If $h$ is in the kernel of this homomorphism, then $h$ preserves each leaf $f_k^{1,-}$ and $f_j^{1,+}$.  Each leaf $f_k^{1,-}$  intersects all of the infinitely many $f_j^{1,+}$; these intersection points must be fixed by $h$ and so by Axiom \ref{Axiom_A1}, $h$ is the identity.  

Thus, we have shown that $\Phi$ is injective.  
We now conclude the proof by showing that $\Phi$ is virtually surjective.  Since the leaves $l_k^{1,+}$ are nonseparated, Proposition \ref{prop:4weak4strong} says there is some nontrivial $g \in G$ fixing all leaves in this family simultaneously.  We may also take $g$ to fix all rays of these leaves, and hence all lozenges in one of the lines comprising $U$.  In particular, we have $g \in H$.  Since $g$ is nontrivial and $\Phi$ is injective, its image under $\Phi$ must be a nontrivial element of $\bZ \times \{0\}$.  A symmetric argument produces a nontrivial element of the image in $\{0\} \times \bZ$. Thus $\Phi$ is virtually surjective. 

Finally, to see $H$ has index at most 2, suppose that $H$ contains an element $h$ which nontrivially permutes two boundary families (say, for concreteness, those comprised of stable leaves).  We claim that, in this case, no element $h'$ can nontrivially permute the unstable boundary families:  indeed, if this were the case, then $hh' \neq \id $ would be forced to have a fixed point in $U$, so $(hh')^2$ has a fixed point and preserves all boundaries, hence is the identity, a contradiction.  
\end{proof}

A converse statement to Lemma \ref{lem_action_of_stabilizer_scalloped} will be shown in Proposition \ref{prop_stabilizer_scalloped}.  

\section{A trichotomy} 

The following theorem, in the case of flows, is a combination of results of Barbot \cite[Théorème 4.1]{Bar95a} and Fenley \cite[Theorem 3.4]{Fen94} and \cite[Main Theorem]{Fen95}. 
We give here an easier argument based on Axiom \ref{Axiom_nonseparated}, which was only discovered later.  

 \begin{theorem}[Trichotomy for Anosov-like actions] \label{thm:trichotomy}
 Let $(P,\cF^+, \cF^-)$ be a bifoliated plane with a Anosov-like action of a group $G$. Then exactly one of the following holds:
  \begin{enumerate}[label=(\roman*)]
   \item  $(P,\cF^+, \cF^-)$ is trivial.
   \item  $(P,\cF^+, \cF^-)$ is skew. 
   \item There is either a singular point in $P$, or the leaf spaces of $\cF^+$ and $\cF^-$ are both non-Hausdorff with two-sided branching. \footnote{Two-sided branching is a definition which applies to transversely orientable foliations. This is always the case when $\cF^\pm$ have no prongs.  We say that branching is two-sided if, for a fixed orientation, there exist an increasing sequence $l_n$ with more than one limit leaf, and a decreasing sequence $k_n$ with more than one limit.}
  \end{enumerate}
 \end{theorem}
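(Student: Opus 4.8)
## Proof Plan

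The plan is to establish the trichotomy by first showing the three cases are mutually exclusive, then showing that the failure of (i) and (ii) forces (iii). Mutual exclusivity is immediate: the trivial and skew planes have Hausdorff leaf spaces with no branching and no singular points, and they are not isomorphic to each other (e.g., in the trivial plane every leaf of $\cF^+$ meets every leaf of $\cF^-$, while in the skew plane this fails). So the substantive content is the dichotomy: if $P$ is neither trivial nor skew, then either $P$ has a singular point, or both leaf spaces are non-Hausdorff with two-sided branching.

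So suppose $P$ has no singular points and is neither trivial nor skew; I must produce two-sided branching in both $\Lambda(\cF^+)$ and $\Lambda(\cF^-)$. First I would observe that, by Proposition \ref{prop:both_R}, since $P$ is not trivial or skew, at least one of the two leaf spaces — say $\Lambda(\cF^+)$ — is not homeomorphic to $\R$; being a connected, simply connected (possibly non-Hausdorff) $1$-manifold, this means $\Lambda(\cF^+)$ has branching, i.e., there is a branching leaf $l \in \cF^+$. By Axiom \ref{Axiom_nonseparated}, some nontrivial $g \in G$ fixes $l$, and by the strengthening in Proposition \ref{prop:4weak4strong}, $l$ sits in a maximal set $S$ of pairwise nonseparated leaves whose consecutive members are sides of adjacent lozenges forming a line of lozenges, all with corners fixed by a single element. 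The key structural input is then Lemma \ref{lem:find_next_lozenge} (and the mechanism in the proof of Proposition \ref{prop:4weak4strong}): each lozenge in this line has a second pair of sides in $\cF^-$, and these $\cF^-$-sides are themselves pairwise nonseparated whenever the line of lozenges is infinite — which is the content needed to pass branching from one foliation to the other. The main work is therefore to argue that the branching in $\cF^+$ cannot be "one-sided" and cannot fail to propagate to $\cF^-$.

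The heart of the argument — and what I expect to be the main obstacle — is ruling out one-sided branching. Concretely: suppose $\Lambda(\cF^+)$ branches but only on one side (every monotone increasing sequence has a unique limit, only decreasing sequences branch, or vice versa). I would derive a contradiction as follows. Take a branching leaf $l$; by Proposition \ref{prop:4weak4strong} and Observation \ref{obs:nonseparated_implies_pf}, $l$ makes a perfect fit with some $\cF^-$-leaf $f$ on the side where branching occurs, and by Lemma \ref{lem:fixed_is_corner} this gives a lozenge $L_0$ with both corners fixed by some $g$. Now consider the functions "sup" and "inf" of intersecting $\cF^-$-leaves along a transversal, as in the proof of Proposition \ref{prop:both_R}: one-sided branching of $\Lambda(\cF^+)$ would force these to behave too rigidly (monotone, with the non-branched side forcing a genuine $\R$-boundary behavior), and combined with Axiom \ref{Axiom_dense} — which prohibits $G$-invariant boundary leaves of saturations — one can push the configuration until the realization of $P$ inside $\Lambda(\cF^+)\times\Lambda(\cF^-)$ forces $P$ to be skew or trivial after all. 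More directly, one shows: if $\cF^+$ has a nonseparated pair $l, l'$ with $f$ separating them, and the line of lozenges through them is finite, then the endpoint leaf of the line produces, via Lemma \ref{lem:find_next_lozenge} applied in the opposite quadrant, branching of $\cF^+$ on the *other* side of its fixed point, hence two-sided branching in $\cF^+$; and if the line is infinite, then by (the argument in) Lemma \ref{lem:half_scalloped} the common $\cF^-$-sides $f_i$ limit onto a bi-infinite family of pairwise nonseparated leaves, giving branching of $\cF^-$, which by symmetry (now $\cF^-$ is not $\R$) forces two-sided branching of $\cF^-$ and, running the argument back, of $\cF^+$. Either way both leaf spaces end up with two-sided branching.

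Finally I would assemble these pieces: the failure of Proposition \ref{prop:both_R}'s hypotheses means at least one leaf space is non-Hausdorff; the lozenge/line-of-lozenges machinery of Propositions \ref{prop:4weak4strong} and the non-corner and adjacency lemmas transfers non-Hausdorffness to the other foliation and upgrades it to two-sidedness on both; and the whole dichotomy is conditioned on the absence of singular points, so the "or there is a singular point" alternative absorbs the remaining case. The one technical point to handle with care is the definition of two-sided branching in the transversely orientable setting (footnote in the theorem): when $\cF^\pm$ have no prongs they are transversely orientable on the plane, so one fixes orientations and checks that the increasing and decreasing sequences constructed above genuinely have multiple limits — this follows because the limit leaves are $G$-fixed (Proposition \ref{prop:4weak4strong}) and two distinct $G$-fixed leaves on the same side would violate Axiom \ref{Axiom_A1} only if they shared an intersecting fixed leaf, which the perfect-fit geometry arranges they do not.
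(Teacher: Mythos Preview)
Your overall reduction is correct --- mutual exclusivity is immediate, and the real content is that ``no singular points'' plus ``not trivial or skew'' forces two-sided branching in both leaf spaces. But the core case analysis has a genuine gap. You take the line of lozenges attached to a maximal set $S$ of pairwise nonseparated $\cF^+$-leaves (via Proposition~\ref{prop:4weak4strong}) and split on whether this line is finite or infinite. In the finite case you assert that ``the endpoint leaf of the line produces, via Lemma~\ref{lem:find_next_lozenge} applied in the opposite quadrant, branching of $\cF^+$ on the other side of its fixed point.'' This does not follow: Lemma~\ref{lem:find_next_lozenge} \emph{consumes} an already-existing nonseparated leaf to locate an adjacent lozenge; it does not manufacture new nonseparated pairs from a bare corner. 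Two adjacent lozenges with nonseparated $\cF^+$-sides give no branching on the opposite side of their shared $\cF^-$-side without further input, and this is the generic situation. In the infinite case you invoke Lemma~\ref{lem:half_scalloped} to conclude that the shared $\cF^-$-sides $f_i$ limit onto nonseparated $\cF^-$-leaves, but that lemma explicitly allows the $f_i$ to escape every compact set, and you supply no mechanism to exclude this (Proposition~\ref{prop:no_product_region} is proved only later and you never invoke it).

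The paper's argument avoids both problems by not working with the original line at all. It fixes just two adjacent lozenges $L, L'$ sharing a $\cF^-$-side $f$, with nonseparated $\cF^+$-sides $s, s'$, and uses Axiom~\ref{Axiom_dense} to find a $\cF^-$-leaf through $L$ fixed by some nontrivial element, with fixed point $x$. Two short claims show: if $x$ lies to the right (resp.\ left) of $L$, then either one finds $\cF^-$-branching from below immediately, or $x$ is the corner of a new lozenge whose opposite corner sits on the \emph{other} side of $L$; and if $x$ lies inside $L$, one conjugates a corner of $L$ by the fixing element to push it outside and reduce to the previous case. Iterating this ping-pong builds an \emph{infinite} line of lozenges whose $\cF^-$-sides all meet the single lozenge $L$ --- hence they are bounded in $\Lambda(\cF^-)$, and now Lemma~\ref{lem:half_scalloped} is decisive. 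This yields $\cF^-$-branching from below; a vertical reflection gives it from above; then swapping the roles of $\cF^\pm$ handles $\cF^+$. The idea missing from your sketch is precisely this use of Axiom~\ref{Axiom_dense} to produce fixed points \emph{outside} the initial lozenge pair and thereby build a \emph{bounded} new line, rather than trying to extract the conclusion from the original (possibly finite, possibly unbounded) line attached to $S$.
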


We note that Axiom \ref{Axiom_prongs_are_fixed} is not used in the proof, since if there is a singular leaf, this theorem is trivially true.
 
\begin{proof}
Suppose that there exists a pair of nonseparated leaves in one foliation. Up to exchanging the labels, say that this is $\cF^+$.  We will show that $\cF^-$ has two sided branching.  Thus, reversing the roles of $\cF^+$ and $\cF^-$ in the argument, we can then conclude that $\cF^+$ has two sided branching as well.  
This argument is enough to prove the theorem because if neither foliation contain nonseparated leaves, then there is either a prong singularity in $P$, or both leaf spaces are Hausdorff manifolds, hence the plane is skew or trivial by Proposition \ref{prop:both_R}. 

Suppose that $\cF^+$ has nonseparated leaves.  
By Proposition \ref{prop:4weak4strong}, two such leaves are sides of adjacent lozenges.  Call these leaves $s$ and $s'$ and denote the respective lozegnes by $L$ and $L'$.  Let $f$ be the common side of $L$ and $L'$.  
For concreteness, we fix an orientation so that $\cF^-$ leaves through $L \cup L'$ are horizontal, the lozenge $L'$ lies above $L$, and the leaves $s$ and $s'$ are on the right side of $L \cup L'$, as shown in Figure \ref{fig_proof_trichotomy}. 
\begin{figure}[h]
\includegraphics[width=5cm]{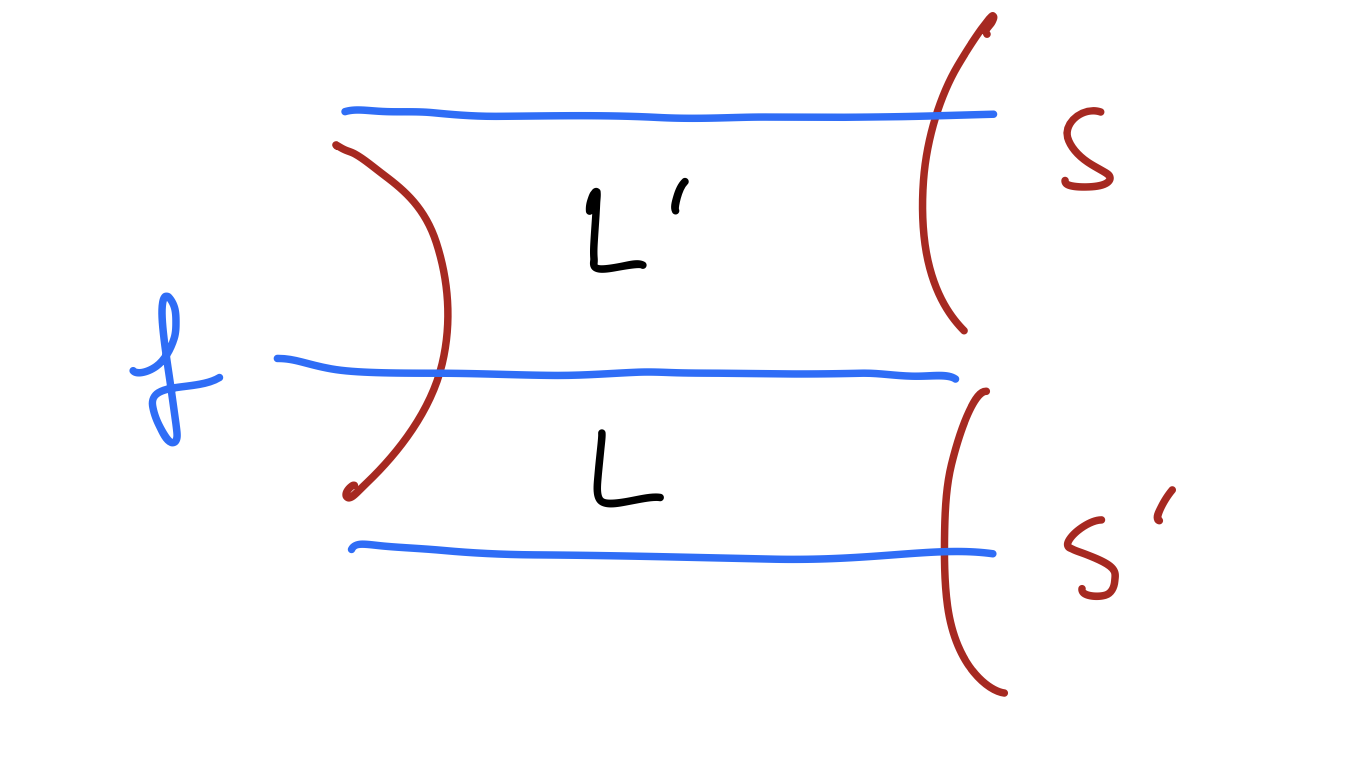}
\caption{}\label{fig_proof_trichotomy}
\end{figure}

Thus, with this orientation, we say a leaf $l$ of $\cF^-$ branches from below (respectively above) if there is a sequence $l_n$ approaching $l$ from below (resp.~above) with more than one leaf in its limit. 

We will find leaves branching from below.  Since the situation is symmetric by vertical reflection, the same argument gives the existence of leaves branching from above.  
We begin with a claim, illustrated in Figure \ref{fig_trichotomy_first_claim}. 

\begin{claim} \label{claim:fix_point_on_right}
Suppose $l_0$ is a leaf of $\cF^-$, $l_0 \cap L \neq \emptyset$, and $l_0$ contains a point $x$ to the right of $L$ fixed by a nontrivial element of $G$. 
Then, either some $\cF^-$-leaf intersecting $L$ branches from below, or $x$ is the corner of a lozenge with one side on $l_0$, the opposite side above $l_0$, and opposite corner to the left of $L$.  
\end{claim}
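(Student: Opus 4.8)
Since $x$ is fixed by a nontrivial $g\in G$, Axiom \ref{Axiom_A1} makes $x$ the unique $g$-fixed point on each of $\cF^+(x)$ and $l_0=\cF^-(x)$; after replacing $g$ by a power I may assume (Observation \ref{obs:all_rays}) that $g$ fixes every ray through $x$ and hence preserves each of its quadrants. Let $\rho\subset l_0$ be the ray from $x$ that enters $L$ — it does enter $L$ because $l_0\cap L\neq\emptyset$ and $x$ lies to the right of $L$ — and let $\rho_u\subset\cF^+(x)$ be the ray lying on the same side of $l_0$ as $L'$ (the ``upward'' ray); these bound the quadrant $Q$ of $x$ facing $L$ and $L'$. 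The plan is to prove the following dichotomy: either some $\cF^-$-leaf crossing $L$ branches from below, or one of the two rays $\rho$, $\rho_u$ makes a perfect fit with a leaf. Granting this, if (say) $\rho$ makes a perfect fit with a leaf $m\in\cF^+$, then Lemma \ref{lem:fixed_is_corner} gives $g(m)=m$ together with a lozenge $M$ having $x$ as a corner and opposite corner $z\in m$; and I would then check, using uniqueness of perfect fits (Lemma \ref{lem:perfect_fit_unique}), the fact that $\rho$ exits $L$ through its left side, and the non-corner criterion (Lemma \ref{lem_no_corner_criterion}) together with Axiom \ref{Axiom_A1} (two distinct $g$-fixed leaves cannot cross), that $M$ lies in $Q$, has its $\cF^-$-side along $\rho\subset l_0$, lies above $l_0$, and has its opposite corner $z$ to the left of $L$ — exactly the asserted conclusion. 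The case where $\rho_u$ makes a perfect fit with a leaf of $\cF^-$ is symmetric.

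\textbf{The dichotomy.} To establish it I would look at the region $R\subset Q$ swept out by the $\cF^+$-rays over $\rho$ on the side of $\rho_u$, i.e.\ $R=\bigcup_{p\in\rho}\rho_u(p)$ where $\rho_u(p)$ is the ray of $\cF^+(p)$ on the $\rho_u$-side. This $R$ is an open $g$-invariant subset of $Q$ whose boundary consists of $\rho$, a subray of $\rho_u$, and a $g$-invariant ``far'' boundary family contained in $\cF^+$. If that family is a single leaf $n$, then by construction $n$ lies in the boundary of $\cF^+(\rho)$ and $\rho$ lies in the boundary of $\cF^-(n)$, so $n$ and $\rho$ make a perfect fit and the reduction above applies. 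Otherwise the far boundary family is a set of at least two pairwise nonseparated leaves of $\cF^+$; by Proposition \ref{prop:4weak4strong} these are sides of adjacent lozenges all fixed by a single nontrivial element, and the $\cF^-$-leaves separating consecutive members of the family form a monotone sequence that limits, from below, onto more than one leaf. Tracking that these separating $\cF^-$-leaves cross $\rho$, and hence $L$, would give a leaf meeting $L$ that branches from below.

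\textbf{Main obstacles.} Two points will need real care. First, I must rule out that the far boundary of $R$ escapes every compact subset of $P$ instead of converging onto leaves; here I expect to use $g$-invariance of $R$ together with the presence of $L$ and the leaf $s$ bounding it on the right to trap the boundary, in the spirit of the argument for Lemma \ref{lem:half_scalloped}. Second — and this is the genuinely delicate step — in the branching alternative I need the branching $\cF^-$-leaf to \emph{actually intersect} $L$, not merely to lie just beyond it; this is precisely where the hypothesis $l_0\cap L\neq\emptyset$ must be exploited, via careful bookkeeping of the positions of $\rho$, the four sides of $L$, the common side $f$ of $L$ and $L'$, and the nonseparated leaves $s,s'$. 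The same positional bookkeeping is what pins down, in the first alternative, that the lozenge $M$ sits in the upper-left quadrant $Q$ (with opposite corner left of $L$) rather than in the upper-right quadrant of $x$.
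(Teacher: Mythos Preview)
Your dichotomy argument has a real gap in both alternatives. First, you saturate the ray $\rho\subset l_0$ in the $\cF^+$ direction and study the far boundary in $\Lambda(\cF^+)$; but $\rho$ exits $L$ through its \emph{left} side and continues into territory where neither $L$ nor $s$ nor $f$ provides any bound, so the escape-to-infinity obstacle you flag is genuine and is not resolved by ``$g$-invariance together with the presence of $L$''. Even if the far boundary is a single leaf $n\in\cF^+$, you have only verified half of the perfect-fit condition: $n\in\partial\cF^+(l_0)$ holds, but $l_0\in\partial\cF^-(n)$ does not follow ``by construction'', since the leaves $\cF^+(p)$ may converge to $n$ far from $l_0$ as $p\to\infty$ along $\rho$. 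More seriously, in your branching alternative you produce a family of nonseparated leaves of $\cF^+$, whereas the claim needs a $\cF^-$ leaf through $L$ that branches from below. Your attempt to extract $\cF^-$-branching --- by looking at the shared $\cF^-$ sides of the adjacent lozenges furnished by Proposition~\ref{prop:4weak4strong} --- does not work: those shared $\cF^-$ sides may be finitely many and have no reason to limit onto a nonseparated $\cF^-$ family.

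The paper avoids all of this by bounding in the $\cF^-$ direction instead. Consider the interval of $\cF^-$ leaves that meet both $\cF^+(x)$ and $s$ (the right $\cF^+$-side of $L$). This interval contains $l_0$ and is \emph{automatically} bounded above by the shared side $f$, since $f$ makes a perfect fit with $s$. One checks the upper limit cannot be $f$ itself (else $x$ would be the corner of a lozenge adjacent to $L$, contradicting $l_0\cap L\neq\emptyset$). Hence the limit is either a single $\cF^-$ leaf --- which then makes a perfect fit with $\cF^+(x)$, not with $\rho$, and Lemma~\ref{lem:fixed_is_corner} produces the required lozenge with opposite corner on the far side of $L$ --- or a union of nonseparated $\cF^-$ leaves approached from below; these, being strictly below $f$, still meet $s$ and hence $L$.
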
 

\begin{figure}[h]
\includegraphics[width=10cm]{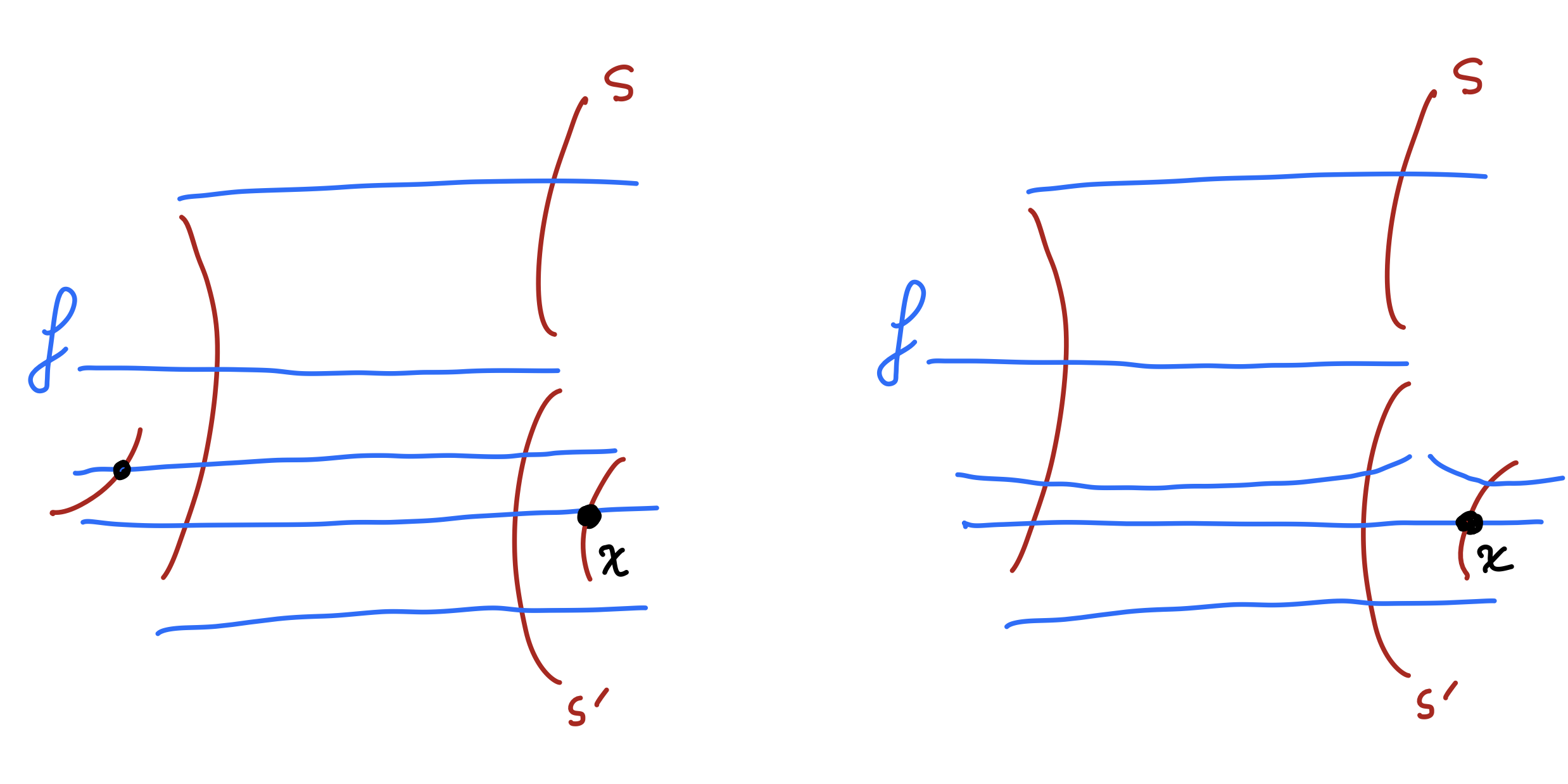}
\caption{The two possibilities in Claim \ref{claim:fix_point_on_right}.}\label{fig_trichotomy_first_claim}
\end{figure}

\begin{proof} 
Consider the set of leaves of $\cF^-$ which intersect both $\cF^+(x)$ and $s'$.  This is bounded above by $f$, so limits on to a leaf or union of nonseparated leaves.  Such a union cannot contain $f$: if it did, $x$ would be the corner of a lozenge adjacent to $L$, contradicting that $l_0$ intersects $L$.  If the limit is a union of nonseparated leaves, then we are done.  If the limit is a single leaf, it makes a perfect fit with $\cF^+(x)$.  Since $x$ is fixed by $g$, this is the corner of a lozenge with opposite corner on the opposite side of $L$.    
\end{proof} 

Similarly, we have: 
\begin{claim} \label{claim:fix_point_on_left}
Suppose $l_1$ is a leaf of $\cF^-$ that intersects $L$ and contains a point $y$ to the left of $L$ fixed by a nontrivial element of $G$. 

Then either some $\cF^-$-leaf meeting $L \cup L'$  branches from below, or $y$ is the corner of a lozenge with side $l_1$, opposite side above $l_1$, and opposite corner to the right of $L$.  
\end{claim}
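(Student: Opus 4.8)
The proof will be symmetric in spirit to the one just given for Claim \ref{claim:fix_point_on_right}, now working on the left-hand side of $L$; the one genuine asymmetry is that the left sides of $L$ and $L'$ are not controlled the way the nonseparated right sides $s, s'$ are, which is why the branching alternative in the statement is phrased for $L \cup L'$ rather than for $L$ alone.

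First I would normalize: up to replacing the element fixing $y$ by a power, assume it is an element $g$ fixing $y$ together with all four rays of $\cF^\pm(y)$. Since $y \in l_1$ lies to the left of $L$ and $l_1$ meets $L$, the leaf $l_1$ crosses $L$ horizontally and $\cF^+(y)$ is a vertical leaf lying to the left of $L$. Consider the set $\mathcal{S}$ of leaves of $\cF^-$ that meet both $\cF^+(y)$ and the right $\cF^+$-side $s$ of $L$; this set contains $l_1$, consists of horizontal leaves crossing all of $L$, and, in the vertical orientation fixed before the claims, is bounded above by the leaf $f$, since every $\cF^-$-leaf meeting $L$ lies below $f$. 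Pushing $\mathcal{S}$ upward, it limits onto a single leaf or onto a maximal union of pairwise nonseparated leaves of $\cF^-$, all weakly above $l_1$ and weakly below $f$.

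Then I would run the same dichotomy as in Claim \ref{claim:fix_point_on_right}. If this limit is a union of several pairwise nonseparated leaves, then, being approached from below, they branch from below; since they lie below $f$ and accumulate from the region crossing $L$ --- or, in the borderline situation where the accumulation set meets $f$ and one is pushed into the adjacent lozenge $L'$, from the region crossing $L'$ --- one obtains an $\cF^-$-leaf meeting $L \cup L'$ that branches from below, which is the first alternative. The degenerate possibility that the limit is exactly $f$ is ruled out exactly as before: it would force $\cF^+(y)$ into the boundary of $\cF^-(f)$, making $y$ the corner of a lozenge adjacent to $L$, which is incompatible with $l_1 \cap L \neq \emptyset$. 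In the remaining case the limit is a single leaf $m$ with $l_1 \leq m < f$, lying in the boundary of $\cF^-(\cF^+(y))$; using $g$-invariance of $\cF^+(y)$ and the fact that $m$ is the unique accumulation leaf, one shows $m$ makes a perfect fit with $\cF^+(y)$, and Lemma \ref{lem:fixed_is_corner} then produces a lozenge with corner $y$, with $\cF^-$-side the ray of $l_1$ pointing toward (and across) $L$, with $\cF^+$-side at $y$ pointing upward, and with opposite corner on $m$ --- which, because the lozenge contains the portion of $l_1$ crossing $L$ and sits above $l_1$, lies to the right of $L$, as required.

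The main obstacle I expect is not conceptual but bookkeeping: tracking the orientations and the configuration of the leaves $s$, $s'$, $f$, $l_1$, $\cF^+(y)$, $m$ carefully enough to certify that the lozenge produced in the last case genuinely has its free $\cF^+$-side above $l_1$ and its far corner strictly to the right of $L$, and to pin down precisely the borderline case where the branching leaves are detected against $L'$ rather than $L$ (this is the source of the ``$L \cup L'$'' in the statement). All the analytic input is supplied by Lemma \ref{lem:perfect_fit_unique}, Lemma \ref{lem:fixed_is_corner}, and Axiom \ref{Axiom_A1}, exactly as in the companion claim, so no new machinery is needed.
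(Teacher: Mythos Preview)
Your approach---looking at the set $\mathcal{S}$ of $\cF^-$-leaves meeting both $\cF^+(y)$ and $s$ and pushing upward---is the right idea for part of the argument, but there is a genuine case you have not handled, and it is precisely the source of the asymmetry between this claim and Claim~\ref{claim:fix_point_on_right}.

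The problem is your treatment of the ``degenerate'' case where the upper limit of $\mathcal{S}$ is the single leaf $f$. You assert this forces $\cF^+(y)$ into the boundary of $\cF^-(f)$, hence a perfect fit and an adjacent lozenge, which you rule out as before. But this inference only holds when $\cF^+(y) \cap f = \emptyset$. If instead $\cF^+(y)$ actually \emph{crosses} $f$---which nothing forbids, since $y$ sits to the left of $L$ and $f$ may extend there---then $l_1$, $f$, $\cF^+(y)$ and the left side of $L$ bound a trivially foliated rectangle, so every $\cF^-$-leaf between $l_1$ and $f$ lies in $\mathcal{S}$; the limit is exactly $f$, and yet $\cF^+(y)$ makes no perfect fit with it. Your dichotomy simply produces nothing in this subcase.

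The paper splits into two cases on exactly this condition. When $\cF^+(y) \cap f = \emptyset$, your argument (equivalently, the argument of Claim~\ref{claim:fix_point_on_right}) goes through. When $\cF^+(y) \cap f \neq \emptyset$, the paper instead considers $\cF^-$-leaves meeting both $\cF^+(y)$ and $s'$ (the right side of $L'$), pushes upward toward the top of $L'$, and then uses the non-corner criterion (Lemma~\ref{lem_no_corner_criterion}): since the upper-right quadrant rays of $y$ cross $f$ and $s$ respectively, and $f$, $s$ make a perfect fit (the upper-right ideal corner of $L$), $y$ cannot be a lozenge corner in that quadrant. Hence the single-leaf alternative is impossible here, and only branching survives. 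This is also the true reason the statement reads $L \cup L'$: the branching in this case is detected against $L'$, not against $L$. Your ``borderline'' remark points roughly there but does not supply the mechanism; the non-corner criterion is the missing ingredient.
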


\begin{proof} 
Suppose as a first case that $\cF^+(y) \cap f \neq \emptyset$.  Consider the set of leaves of $\cF^-$ which intersect both $\cF^+(y)$ and $s$.  This set is bounded, more precisely, it is bounded above by the top side of $L'$.   If its upper boundary consists of a nontrivial union of leaves, then we are done, having found an example of branching from below.  Otherwise, it has a unique boundary leaf, which must make a perfect fit with $\cF^+(y)$.  Since $y$ is fixed by a nontrivial element of $G$, Lemma \ref{lem:fixed_is_corner} says  $y$ is the corner of a lozenge, and the location of the perfect fit means this lozenge is in the upper right quadrant of $y$.  But this contradicts the non-corner criterion (Lemma \ref{lem_no_corner_criterion}).  We conclude the only possibility is that we have branching from below.  

If instead $\cF^+(y) \cap f \neq \emptyset$, then we may run the argument from Claim \ref{claim:fix_point_on_right}, and find either a branching leaf or a lozenge, as desired.  This concludes the proof of the claim.
\end{proof} 

Thus, as soon as we find a leaf of $\cF^-$ intersecting $L$ with a fixed point to the right (or left) of $L$, we can use these two claims to either find branching, or produce an infinite line of lozenges all contained in the $\cF^-$-saturation of $L$. 
In detail, suppose for concreteness that $l_0$ has a fixed point to the right.  Claim \ref{claim:fix_point_on_right} produces a lozenge above it with opposite side $l_1$ having a fixed point (the corner) on the left of $L$, Claim \ref{claim:fix_point_on_left} then produces an adjacent lozenge above that with opposite side $l_2$ having a fixed point on the right, etc.   Lemma \ref{lem:half_scalloped} says that the sequence $l_i$ limits to a union of branching leaves, and we have the desired branching from below. 

It remains only to find such a fixed point.  Consider any $\cF^-$ leaf $l$ intersecting $L$ fixed by some nontrivial $g \in G$ (which exists by Axiom \ref{Axiom_dense}).  If $l$ has a fixed point to the right or left, we are done.  If not, it fixes a point in $L$. 
Since $L$ has a branching side, it is fixed by some nontrivial element $h \in G$. 
Up to replacing $g$ with $g^{-1}$, we can suppose $g$ is contracting on the $\cF^+$ leaf of its fixed point.  Thus $g(f)$ intersects $L$ and the image of the corner that lies on leaf $f$ is to the left of $L$.  The conjugate $ghg^{-1}$ fixes this corner, and we have the desired fixed point. 
\end{proof}

\section{Infinite product and infinite skew regions}  

At this point, we have built up enough structure theory for Anosov-like actions that we no longer restrict our use of the axioms and make the following standing assumption.  

\begin{convention} Going forward, $P$ always denotes a bifoliated plane with foliations $\cF^+, \cF^-$, equipped with an Anosov-like action of a group $G$.
\end{convention} 

Using the structure of nonseparated leaves, we next prove an important local-to-global type result.  It says that if $P$ contains an unbounded region resembling part of a trivial plane (called an {\em infinite product region}), then $P$ is in fact globally a trivial plane. 

 \begin{definition}[Infinite product region]
 An {\em infinite product region} is a subset of a bifoliated plane isomorphic via a proper homeomorphism to $I \times [0, \infty) \subset \bR^2$ with its product foliation; here $I$ may be any %
nontrivial interval, finite or infinite.    We say the interval $I$ is the {\em base} of the region.  
 \end{definition}

\begin{figure}[h]
	\labellist 
	\small\hair 2pt
	\pinlabel $r_1^-$ at 160 130
	\pinlabel $r_2^-$ at 160 20 
	\pinlabel $I^+$ at 50 70 
	 \pinlabel $\ldots$ at 310 75 
	\endlabellist
	\centerline{ \mbox{
			\includegraphics[width=6cm]{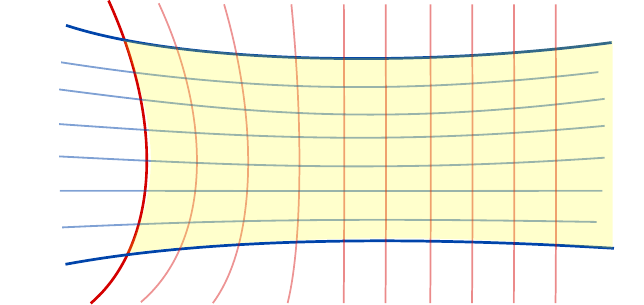}}}
	\caption{An infinite product region with base $I^+$} 
		\label{fig:product_region} 
\end{figure}

The following property was first proved by Fenley \cite{Fen98} for Anosov flows (and, previously, in \cite{Fen95} for transitive Anosov flows), and then in \cite{BFM22,BBM24b} for planes with Anosov-like actions.
 \begin{proposition}\label{prop:no_product_region}
 $P$ contains an infinite product region if and only if $P$ is trivial. 
 \end{proposition}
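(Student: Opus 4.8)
The "if" direction is immediate: the trivial plane $\R^2$ with its standard coordinate foliations contains the infinite product region $\R\times[0,\infty)$. So the content is the "only if" direction: assuming $P$ contains an infinite product region $R$, I want to conclude $P$ is trivial. The natural tool is the trichotomy, Theorem \ref{thm:trichotomy}: it is enough to rule out that $P$ is skew, and that $P$ has a singular point or two-sided branching. Throughout, fix coordinates $R\cong I^+\times[0,\infty)$ with base $I^+$ a segment of a leaf $l_0\in\cF^+$, and sides $r_1^-,r_2^-$ the two extreme rays $\{s\}\times[0,\infty)$, which are rays of leaves of $\cF^-$; the leaves of $\cF^+$ crossing $R$ are the slices $I^+\times\{t\}$, $t\ge 0$.

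\textbf{Ruling out the skew plane.}
In the standard model $\{x<y<x+1\}$ of the skew plane, every leaf $\{x=c\}$ of $\cF^-$ meets exactly the leaves $\{y=d\}$ of $\cF^+$ with $c<d<c+1$, i.e.\ a \emph{bounded} sub-interval of $\Lambda(\cF^+)\cong\R$. On the other hand, in $R$ each side-ray $\{s\}\times[0,\infty)$ meets every slice $I^+\times\{t\}$, $t\ge 0$; these slices lie on pairwise distinct leaves of $\cF^+$ (a single leaf meeting $R$ in two slices would meet a $\cF^-$-ray twice, impossible in the skew plane where leaves of transverse foliations meet at most once), and as $t\to\infty$ the points $(s,t)$ leave every compact subset of $P$ by properness of the embedding, which in the skew model (where $|x-y|<1$) forces the $\cF^+$-coordinate to escape to $\pm\infty$. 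So the leaf of $\cF^-$ containing $\{s\}\times[0,\infty)$ meets an unbounded family of leaves of $\cF^+$, a contradiction. Hence $P$ is not skew.

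\textbf{Ruling out singularities and branching (the crux).}
Having excluded the skew case, by Theorem \ref{thm:trichotomy} it remains to show that if $P$ contains an infinite product region then $P$ has no singular point and no branching leaf; together with the trichotomy this gives that $P$ is trivial. The strategy is a local-to-global one: first enlarge $R$ to a \emph{maximal} infinite product region $\widehat R$ (along a nested union), with base a maximal interval $\widehat I^+\subseteq l_0$ and side-rays $\widehat r_1^-,\widehat r_2^-$. The key claims are (i) no boundary leaf of $\widehat R$ is a branching leaf, and (ii) no boundary leaf of $\widehat R$ makes a perfect fit with another boundary leaf of $\widehat R$. Granting (i)--(ii): the $\cF^+$-leaves and $\cF^-$-leaves meeting $\widehat R$ sweep out intervals in the leaf spaces whose endpoints are neither branching leaves nor involved in perfect fits, so one can always enlarge $\widehat R$ across such a boundary — contradicting maximality unless $\widehat R=P$, in which case $P$ is a trivially foliated plane. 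Claim (i) follows because a branching boundary leaf would, by Axiom \ref{Axiom_nonseparated} and Proposition \ref{prop:4weak4strong}, be a side of a lozenge whose interior is trivially foliated and which overlaps the product $\widehat R$ in a forbidden pattern (by Lemma \ref{lem_markovian_or_corner}, lozenges intersecting a trivially foliated product region must have a corner inside it, which is incompatible with the boundary leaf being non-separated from leaves inside $\widehat R$). Claim (ii) uses Axiom \ref{Axiom_dense} to produce a nontrivial $g\in G$ fixing a leaf crossing $\widehat R$ near the putative perfect fit, and then Lemma \ref{lem:fixed_is_corner} produces a lozenge with a side inside $\widehat R$ — again contradicting that $\widehat R$ is a product region. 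The main obstacle is exactly establishing (i) and (ii): the singularity or branching we must exclude is not a priori located anywhere near $R$, so the real work is showing that the trivial product structure is forced to propagate across the whole plane, which is where Axioms \ref{Axiom_A1}, \ref{Axiom_dense} and the lozenge machinery of the previous sections carry the argument.
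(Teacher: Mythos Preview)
Your route via the trichotomy is different from the paper's, which never case-splits. The paper argues directly: by Axiom~\ref{Axiom_dense} there is a $\cF^-$-leaf through $U$ fixed by some nontrivial $g$, with fixed point $x$; then $\bigcup_{k\in\bZ} g^k(U)$ is an infinite product region $V$ based on the \emph{entire} leaf $\cF^+(x)$. One then shows $V$ is a full connected component of $P\smallsetminus\cF^+(x)$ by analyzing the possible limits of the $\cF^-$-leaves through a monotone escaping sequence on $\cF^+(x)$ (using Axiom~\ref{Axiom_A1}, Proposition~\ref{prop:4weak4strong} and Lemma~\ref{lem:half_scalloped}) and seeing that any limit leaf is forced to the \emph{other} side of $\cF^+(x)$. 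Repeating once with a fixed $\cF^+$-leaf inside $V$ shows both sides of some $\cF^-$ leaf are product half-planes, hence $P$ is trivial. The group action is what drives the extension.

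Your sketch has two real problems. First, a minor slip in the skew case: along a side-ray in the leaf $x=c$ the $\cF^+$-coordinate $y$ stays in $(c,c+1)$ and does \emph{not} escape to $\pm\infty$; points leave compact sets by $y\to c^+$ or $y\to (c{+}1)^-$, and the genuine contradiction with properness is that the limiting leaf $y=c+1$ still meets the other side-ray $x=c'$, so the image of $R$ has limit points in $P$ outside itself. Second, and more seriously, your ``crux'' is not a proof. Claims (i) and (ii) are asserted but not established, and the lemmas you invoke do not apply: Lemma~\ref{lem_markovian_or_corner} concerns two lozenges, not a lozenge and a product region, and Lemma~\ref{lem:fixed_is_corner} requires the perfect fit to be along a leaf through a \emph{fixed point} of $g$, not along an arbitrary boundary leaf of $\widehat R$. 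Even granting (i)--(ii), they would not let you widen $\widehat R$: the obstruction to extending past $\widehat r_1^-$ is that some $\cF^+$-slice high up in $\widehat R$ may fail to meet nearby $\cF^-$-leaves (for instance by making a perfect fit with $\widehat r_1^-$ itself), which your (i)--(ii) say nothing about. And $\widehat R=P$ is impossible, since $\widehat R\cong I\times[0,\infty)$ is at best a half-plane. What is missing is exactly the mechanism in the paper's proof: iterates of an element of $G$ are what force the product structure to propagate to a half-plane and then to all of $P$.
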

 
 \begin{proof} 
 Suppose $U$ is an infinite product region in $P$, based on an interval $I$. 
 Up to switching the labels $+$ and $-$, we assume $I$ lies in a leaf of $\cF^+$.  
 By axiom \ref{Axiom_dense} there is a leaf $l^-$ of $\cF^-$ intersecting $U$ and fixed by some nontrivial $g \in G$, which we assume also fixes all rays through its fixed point.    Let $x$ be the unique fixed point of $g$ on $l^-$.  Then $V:= \bigcup_{k \in \mathbb{Z}} g^k(U)$ is an infinite product region based on $\cF^+(x)$.  
Our first goal is to show that $V$ is a connected component of $P \setminus \cF^+(x)$.  (This is not automatic, for instance it would fail if $\cF^+(x)$ made a perfect fit with a leaf of $\cF^-$ on the side of $U$.) 
 
Let $y_i$ be a monotone sequence of points along  $\cF^+(x)$ which eventually leaves every compact set.  The leaves containing $y_i$ either escape to infinity or limit onto a finite or infinite union of leaves.  We claim that any leaf in the limit (if the limit exists) must be separated from $U$ by $\cF^+(x)$.   To see this, suppose first the limit is a finite collection.  Then any leaf $l_\infty$ of the limit is fixed by $g$, so contains a fixed point $z$ of $g$.   By Axiom \ref{Axiom_A1}, $\cF^+(z) \cap \cF^-(x) = \emptyset$, forcing $l_\infty$ to be on the opposite side of $\cF^+(x)$ from $U$.  
In the case where there is an infinite union of leaves in the limit, Proposition \ref{prop:4weak4strong} says these are sides of adjacent lozenges.  By Lemma \ref{lem:half_scalloped}, the parallel sides of these lozenges cannot accumulate anywhere in $U$ or onto $\cF^+(x)$, thus there is a first lozenge to the right of $\cF^+(x)$, which is therefore fixed by $g$. We conclude as in the previous case. 

We deduce that $V$ is a connected component of $P \setminus \cF^+(x)$.  Thus, the ray of $\cF^-(x)$ through $V$ is also the base of an infinite product region (on each side).   Consider a leaf $l$ of $\cF^+$ that is contained inside $V$ and has a fixed point $y$ (necessarily nonsingular) of some nontrivial element $g' \in G$.  Repeating the argument above shows that {\em both} connected components of $P \setminus \cF^-(y)$ are infinite product regions, and thus the plane is trivial.  
 \end{proof} 
 
 Using Proposition \ref{prop:no_product_region}, one can improve the non-corner criterion to the following characterization for fixed points; we leave the proof as an exercise.  
 
 \begin{exercise} \label{ex_noncorner_converse}
 Prove the following {\em non-corner characterization} for fixed points: 
 {\emph  Suppose $P$ is not the trivial plane and $x\in P$ is fixed by some nontrivial element of $G$. 
 Then $x$ is the corner of a lozenge in $Q$ if and only if the rays bounding this quadrant  do not intersect a pair of leaves which make a perfect fit or are prongs of a singular point.}
\end{exercise}

Proposition \ref{prop:no_product_region} also allows us to improve Lemma \ref{lem:half_scalloped} to the following important statement.  
\begin{proposition}[Infinite lines are scalloped regions]  \label{prop:infinite_lozenge_is_scalloped}
If $L_0, L_1, L_2, \ldots$ is an infinite line of lozenges in $P$, then it belongs to a bi-infinite line of lozenges forming a scalloped region. 
\end{proposition}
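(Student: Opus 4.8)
The plan is to build the scalloped region outward from the given infinite line $L_0, L_1, L_2, \ldots$ by producing the bi-infinite line of lozenges in the transverse direction and then checking the four boundary families and triviality. First I would fix notation: say the lozenges $L_i$ share sides along leaves of $\cF^+$, so $f_i$ denotes the common side of $L_i$ and $L_{i+1}$, and let $s_i^\pm$ denote the sides of $L_i$ in $\cF^\mp$; by Proposition \ref{prop:4weak4strong} there is a nontrivial $g \in G$ fixing every corner, hence every $f_i$ and $s_i^\pm$. By Lemma \ref{lem:half_scalloped} the leaves $f_i$ either escape all compact sets or limit onto a bi-infinite union of pairwise nonseparated leaves. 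I would first dispatch the escaping case: if $f_i$ escaped all compact sets, then the union $\bigcup_i (L_i \cup f_i)$ would be a trivially foliated region whose closure on that side is unbounded with no branching, and one can exhibit an infinite product region inside it (base an $\cF^+$ leaf crossing all $L_i$, take the half of the leaf-space picture on the escaping side). By Proposition \ref{prop:no_product_region} this forces $P$ trivial, but trivial planes have no lozenges at all — contradiction. So $f_i$ limits onto a bi-infinite collection $\{l_k^{2,-}\}_{k \in \bZ}$ of pairwise nonseparated leaves of $\cF^-$.

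Next I would run the same argument in the other direction along the line: the leaves $s_i^+$ (the $\cF^-$-sides on, say, the "bottom" of each lozenge — one needs to be careful that the common $\cF^+$-sides force the $\cF^-$-sides to march monotonically in $\Lambda(\cF^-)$) also form a monotone sequence, and by the same dichotomy plus Proposition \ref{prop:no_product_region} they must limit onto a second bi-infinite collection $\{l_k^{1,-}\}$ of pairwise nonseparated $\cF^-$-leaves rather than escaping. Now I have two of the four boundary families. To get the $\cF^+$-families, I would use Proposition \ref{prop:4weak4strong} applied to the collection $\{l_k^{1,-}\}$: these nonseparated leaves are themselves sides of a bi-infinite line of lozenges $\{L_k'\}$, sharing sides along leaves of $\cF^-$. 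The key point is to verify that this new line is "transverse" to the original, i.e. that a single $\cF^-$ leaf crosses all the $L_i$ and a single $\cF^+$ leaf crosses all the $L_k'$, and that the corners interlock correctly — this is where Lemma \ref{lem:fixed_is_corner}, the uniqueness of perfect fits (Lemma \ref{lem:perfect_fit_unique}), and the non-corner criterion (Lemma \ref{lem_no_corner_criterion}) get used repeatedly to pin down exactly which quadrant each new lozenge occupies. The two outermost $\cF^+$-sides of the $L_k'$, together with their nonseparated partners, give the remaining two families $\{l_k^{1,+}\}, \{l_k^{2,+}\}$.

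Then I would assemble the scalloped region $U$ as the open trivially foliated set bounded by these four families. Checking Definition \ref{def_scalloped}: property (i) and (ii) are what we just built; property (iii) — the perfect-fit leaves $f_k^{1,-}$ accumulating on one $\cF^-$ family as $k \to \infty$ and the other as $k \to -\infty$ — is exactly the content of Lemma \ref{lem:half_scalloped} applied to each of the two lines of lozenges; property (iv), triviality of the bifoliation inside $U$ and absence of singular points, follows because $U$ is a union of lozenge interiors and their shared sides, each trivially foliated, glued along leaves, and any singular point inside would be fixed by a nontrivial element (Axiom \ref{Axiom_prongs_are_fixed}) forcing an extra fixed leaf crossing the infinitely many $f_i$, contradicting Axiom \ref{Axiom_A1} — the same trick already used in the proof of Lemma \ref{lem:half_scalloped}.

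The main obstacle I anticipate is the bookkeeping in the second-to-last step: showing the transverse line $\{L_k'\}$ genuinely closes up the region rather than, say, drifting off or producing nested lozenges that overlap the original line in the "Markovian" way allowed by Lemma \ref{lem_markovian_or_corner}. Concretely, one must rule out that the $\cF^+$-saturation of the new line spills past $\cF^+(x_\infty)$-type barriers, and this requires carefully tracking, for the common fixing element $g$ (and possibly a second element coming from Lemma \ref{lem_action_of_stabilizer_scalloped}-type considerations), which rays are preserved and which quadrants the corners sit in. I expect this to reduce to a finite case analysis using the non-corner criterion and uniqueness of perfect fits, but it is the part of the argument that demands the most care; everything else is a repackaging of Lemmas \ref{lem:half_scalloped}, \ref{lem:fixed_is_corner} and Proposition \ref{prop:no_product_region}.
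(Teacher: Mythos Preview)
Your overall strategy---iterate Lemma \ref{lem:half_scalloped} and Proposition \ref{prop:no_product_region}, invoke Proposition \ref{prop:4weak4strong} to produce new lines---is the right one and matches the paper. But there is a genuine confusion in the middle of the argument that you should fix.

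First a labeling slip: if $f_i \in \cF^+$, their limit in $\Lambda(\cF^+)$ is a collection of nonseparated $\cF^+$ leaves, not $\cF^-$ leaves as you write.

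More seriously, your second step is off. You say the $\cF^-$-sides $s_i^+$ ``march monotonically in $\Lambda(\cF^-)$'' and then ``limit onto'' a bi-infinite nonseparated family. But the $\cF^-$-sides of a line sharing $\cF^+$-sides do not converge anywhere: on each side of a common transversal they are \emph{already} pairwise nonseparated (each makes a perfect fit with the shared $f_i$). So there is no limit to take---they are themselves the kind of object things converge \emph{to}. What you actually need to show is that these half-infinite nonseparated families extend to \emph{bi}-infinite ones, and your argument as written does not do this.

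The paper's route around this is cleaner than the case analysis you anticipate. After $f_i \to \{l_i^\infty\}$ (a bi-infinite nonseparated family in $\cF^+$), apply Proposition \ref{prop:4weak4strong} to \emph{that} family to produce a transverse bi-infinite line of lozenges with shared $\cF^-$-sides $f_i^-$. Now run Lemma \ref{lem:half_scalloped} and Proposition \ref{prop:no_product_region} on the sequence $f_i^-$ as $i \to \pm\infty$: each direction limits onto a bi-infinite nonseparated $\cF^-$-family, and these limits automatically contain the $\cF^-$-sides of your original $L_i$ (because the union of the $L_i$ is trivially foliated). This simultaneously gives you the two $\cF^-$ boundary families \emph{and} shows the original line extends to a bi-infinite one, with no separate bookkeeping needed to rule out Markovian overlaps. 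One more application of the same argument to the extended line (now as $i \to -\infty$) yields the fourth family.
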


\begin{proof}
Suppose $L_0, L_1, L_2, \ldots$ is an infinite line of lozenges, and let $f_i$ denote the side shared by $L_i$ and $L_{i+1}$.  Without loss of generality, suppose  $f_i$ are leaves of $\cF^+$.  

If the sequence $f_i$ does not converge, then $\bigcup_i L_i$ is an infinite product region with base along a side of $L_0$.  By Proposition \ref{prop:no_product_region}, this is impossible.  
Thus, by Lemma \ref{lem:half_scalloped}, the $f_i$ must limit to a bi-infinite union of leaves.  Denote the leaves in the bi-infinite collection by $l_i^\infty$, indexed by $i \in \bZ$.  
By Proposition \ref{prop:4weak4strong}, the leaves $l_i^\infty$ form the sides of a line of adjacent lozenges.  Let $f_i^{-}$ denote the shared side of the adjacent lozenges bounded by leaves $l_i^\infty$ and $l_{i+1}^\infty$.  

Applying Lemma \ref{lem:half_scalloped} and Proposition \ref{prop:no_product_region}, we conclude that as $i \to \infty$  the sequence of leaves $f_i^{-}$ limits onto a bi-infinite union of leaves; and similarly as $i \to - \infty$.  Denote these two sets of limit leaves by $\{k^{+\infty}_j\}$ and $\{k^{-\infty}_j\}$, respectively.   These limits necessarily contain the $\cF^-$--sides of all the lozenges $L_i$ because the union of the lozenges $L_i$ is trivially foliated. 

By Proposition \ref{prop:4weak4strong}, the bi-infinite set $\{k^{+\infty}_j\}$ is the boundary of a bi-infinite line of lozenges containing the lozenges $L_i$, so we may extend our indexing from $\bN$ to include all $\bZ$.   Applying again the same reasoning as before, as $i \to - \infty$, the sides of $L_i$ limit to a bi-infinite union of leaves, say $\{l_i^{-\infty}\}$.  One can now verify easily that, together with the union of $\{l_i^\infty\}$ and $\{k^{\pm \infty}_j\}$, these leaves bound a product foliated region satisfying the definition of scalloped region.  
\end{proof} 

Using Proposition \ref{prop:4weak4strong}, which says that nonseparated leaves are associated to line of lozenges, one can restate Proposition \ref{prop:infinite_lozenge_is_scalloped} as follows.  
\begin{corollary}\label{cor_infinite_non_sep_implies_scalloped}
Any infinite collection of pairwise nonseparated leaves is part of the boundary of a scalloped region.
\end{corollary}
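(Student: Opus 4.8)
The plan is to deduce this corollary directly from Proposition \ref{prop:infinite_lozenge_is_scalloped} together with Proposition \ref{prop:4weak4strong}, essentially by a bookkeeping argument about how an infinite collection of pairwise nonseparated leaves organizes itself into a line of lozenges. First I would let $S$ be an infinite collection of pairwise nonseparated leaves, which without loss of generality lie in $\cF^+$ (after possibly passing to faces of singular leaves, exactly as in the proof of Proposition \ref{prop:4weak4strong}). By Proposition \ref{prop:4weak4strong}, the leaves of $S$ can be indexed by a subset of $\bZ$ with consecutive leaves appearing as sides of lozenges $L_i$ sharing a side; since $S$ is infinite, this gives (after reindexing) an infinite line of lozenges $L_0, L_1, L_2, \dots$ in the sense of Definition \ref{def_line_lozenges}, where $L_i$ and $L_{i+1}$ share a common side $f_i \in \cF^-$, and each $L_i$ has one of its $\cF^+$-sides in the family $S$.

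Next I would invoke Proposition \ref{prop:infinite_lozenge_is_scalloped}: this infinite line of lozenges belongs to a bi-infinite line of lozenges forming a scalloped region $U$. It remains to check that the leaves of $S$ appear among the boundary leaves of $U$ as described in Definition \ref{def_scalloped}. But the lozenges $L_i$ are exactly a sub-collection of the lozenges comprising one of the two lines making up $U$ (the one whose members share $\cF^-$-sides), so their $\cF^+$-sides — which include the leaves of $S$ — are precisely the leaves $l_k^{1,+}$ (or $l_k^{2,+}$, on the appropriate side) from the definition of scalloped region. Hence the infinite collection $S$ is contained in the family of pairwise nonseparated boundary leaves of $U$ on one side, as claimed.

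The one point requiring a little care — and the place I'd expect the only real friction — is verifying that the consecutive lozenges produced by Proposition \ref{prop:4weak4strong} genuinely form a \emph{line of lozenges} in the technical sense of Definition \ref{def_line_lozenges}, i.e.\ that some single leaf of $\cF^-$ meets all of them simultaneously, so that Proposition \ref{prop:infinite_lozenge_is_scalloped} applies verbatim. This follows from the construction in the proof of Proposition \ref{prop:4weak4strong}: the lozenges $L_i$ are obtained as adjacent lozenges sandwiched between two fixed leaves, all with corners fixed by a common element $g$ (by Axiom \ref{Axiom_A1} and Axiom \ref{Axiom_nonseparated}), and the leaves separating $l_i$ from $l_{i+1}$ lie in a bounded family; the $\cF^-$-saturation being trivially foliated forces the existence of a common transversal leaf. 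Once this is observed the rest is immediate from the cited propositions, and no further estimates are needed.
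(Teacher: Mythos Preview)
Your proposal is correct and follows exactly the approach the paper intends: the paper states this corollary immediately after Proposition~\ref{prop:infinite_lozenge_is_scalloped} with only the remark that, via Proposition~\ref{prop:4weak4strong}, it is a restatement of that proposition. Your extra care about verifying the ``line of lozenges'' condition is warranted but easily resolved, as you note: since the nonseparated leaves lie in $\cF^+$, the shared sides of the adjacent lozenges produced by Proposition~\ref{prop:4weak4strong} all lie in $\cF^-$, so any $\cF^+$ leaf in the interior of one lozenge crosses each shared $\cF^-$ side and hence meets every lozenge simultaneously.
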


To conclude this section, we give a variation on the ``local-to-global" statement that infinite product regions only occur in trivial planes.  Here, we show that a plane with Anosov-like group action which contains a ``large enough" subset isomorphic to a piece of the skew plane is necessarily skew.  
 
\begin{definition} \label{def:biinfinite_skew} 
A {\em bi-infinite skew region} is a subset $S$ of a bifoliated plane properly isomorphic to a subset of the skew plane of the form $\{(x, y) : c_1 < y < c_2 \}$ for some $c_1 < c_2$ (possibly with $c_1 = -\infty$ and/or $c_2 = \infty$).  
\end{definition} 

\begin{proposition}  \label{prop_biinfinite_skew} 
If $P$ contains a bi-infinite skew region, then $P$ is the skew plane.   
\end{proposition}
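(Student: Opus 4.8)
The plan is to use the trichotomy (Theorem \ref{thm:trichotomy}): it suffices to rule out the possibilities that $P$ is trivial and that $P$ falls in the third case (a singular point, or both leaf spaces non-Hausdorff with two-sided branching), so that only the skew alternative remains. Let $\iota\colon S_0\to P$ be the proper, foliation-preserving embedding onto the bi-infinite skew region $S$, where $S_0=\{c_1<y<c_2\}$ sits inside the skew plane.

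Ruling out the trivial plane is easy. In $S_0$, for each $c\in(c_1,c_2)$ the horizontal leaf $\{y=c\}$ and (the component of $S_0$ meeting) the vertical leaf $\{x=c\}$ make a perfect fit: $\{x=c\}$ lies in the boundary of the $\cF^+$-saturation of $\{y=c\}$, and symmetrically $\{y=c\}$ lies in the boundary of the $\cF^-$-saturation of $\{x=c\}$. A proper, foliation-preserving embedding carries the rays and transverse arcs witnessing a perfect fit of rays to rays and transverse arcs doing the same in $P$, so $P$ contains a pair of leaves making a perfect fit. The trivial plane has none, since there each $\cF^+$-leaf meets each $\cF^-$-leaf and every saturation is the whole plane. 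Hence $P$ is not trivial.

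To rule out the third case I would follow the architecture of the proof of Proposition \ref{prop:no_product_region}. After exchanging the roles of $\cF^+$ and $\cF^-$ if necessary, say the $\cF^-$-leaf space of $S_0$ is the interval $(c_1,c_2)$. By Axiom \ref{Axiom_dense}, pick an $\cF^-$-leaf $l\subset S$ fixed by a nontrivial $g\in G$; after passing to a power, $g$ fixes its unique fixed point $x\in l$ and all rays at $x$, and after possibly replacing $g$ by $g^{-1}$, $g$ is topologically expanding on the $\cF^-$-leaf space near $[l]$. The $\cF^-$-leaves of $P$ meeting $V:=\bigcup_{k\ge 0}g^k(S)$ then form an interval of $\Lambda(\cF^-)$, namely the increasing union of the intervals $g^k\big((c_1,c_2)\big)$ about $[l]$. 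Arguing as in Proposition \ref{prop:no_product_region}, each of the two ends of this interval either escapes every compact set of $\Lambda(\cF^-)$, or it limits onto a $g$-invariant leaf or bi-infinite family of leaves which, by Axiom \ref{Axiom_A1}, Proposition \ref{prop:4weak4strong} and Lemma \ref{lem:half_scalloped}, is pinned down to a single $g$-fixed leaf $m$; in the latter case one restarts the extension from a suitable fixed $\cF^+$-leaf inside $V$ and iterates. The goal of this induction is to show that the $\cF^-$-leaves meeting the union of all translates so constructed exhaust $\Lambda(\cF^-)$, so that $\Lambda(\cF^-)$ is an honest interval, hence homeomorphic to $\bR$. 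Once $\Lambda(\cF^-)\cong\bR$, the foliation $\cF^-$ has no branching and no prong leaves, so $P$ has no singular points, and by the trichotomy $P$ is then trivial or skew; being non-trivial, $P$ is skew. (Equivalently, one could invoke Proposition \ref{prop:both_R} after also checking $\Lambda(\cF^+)\cong\bR$ by the symmetric argument.)

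The main obstacle is the boundary bookkeeping in the previous paragraph — the skew analog of the step ``$V$ is a connected component of $P\setminus\cF^+(x)$'' in Proposition \ref{prop:no_product_region}. A bi-infinite skew region is not the complement of a single leaf, so one must instead follow the staircase of lozenges forming its boundary, showing at each stage that the boundary is a $g$-fixed leaf (absorbed into the region by a further extension), or a bi-infinite line of leaves — and hence the boundary of a scalloped region, which is impossible here since neither the skew plane nor any bi-infinite skew region contains an unbounded trivially-foliated region — or escapes to infinity. Getting this induction to close, and to account for every $\cF^-$-leaf of $P$, is the heart of the argument.
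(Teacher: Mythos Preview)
Your proposal is incomplete by your own admission: you identify that ``getting this induction to close\ldots is the heart of the argument'' but do not carry it out. The analogy with Proposition~\ref{prop:no_product_region} breaks down at exactly the place you flag. In the product-region proof, once $V$ is shown to be a full half-plane, a ray of $\cF^-(x)$ is itself the base of a new infinite product region, so one can iterate cleanly by restarting at a fixed point on that ray. For a skew region there is no such clean restart: the region is not a half-plane, its boundary is a staircase of perfect fits, and the ``suitable fixed $\cF^+$-leaf inside $V$'' you would restart from does not obviously yield a new bi-infinite skew region. Your side remark ruling out scalloped limits is also not justified as stated --- the scalloped region would sit \emph{outside} the skew piece, not inside it.

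The paper's argument bypasses this entirely with a single clean idea: take a \emph{maximal} bi-infinite skew region $S$ from the outset. Pick a $\cF^+$-leaf $l_0\subset S$ with a fixed point $x$ of some $g\in G$ fixing all rays through $x$ (Axiom~\ref{Axiom_dense}). If $\cF^-(x)\subset S$, its two ends make perfect fits inside $S$, so by Lemma~\ref{lem:fixed_is_corner} $x$ is the corner of two lozenges in $S$; their opposite $\cF^+$-sides $l_{\pm1}$ are $g$-fixed, with new fixed points $x_{\pm1}$. Iterating produces a staircase of fixed points $x_i$ running both directions through $S$. Either the staircase never reaches the boundary of $S$, in which case $S$ was already the whole skew plane and we are done; or some $l_i\subset S$ but $\cF^-(x_i)\not\subset S$, in which case $g(S)$ or $g^{-1}(S)$ produces a strictly larger bi-infinite skew region, contradicting maximality.

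So the missing insight is simply to invoke maximality and let a single group element do the enlarging, rather than trying to directly prove $\Lambda(\cF^-)\cong\bR$ via a boundary induction.
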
 

\begin{proof} 
Suppose $P$ contains an infinite skew region, and let $S$ be a {\em maximal} infinite skew region in $P$.  
Up to relabeling we assume that in $S$, $\cF^+$ corresponds to the horizontal foliation $y=c$ of the skew plane.  
Let $l_0$ be a $\cF^+$ leaf in $S$ containing a fixed point $x$ of some nontrivial $g \in G$, fixing all rays through $x$.  If $\cF^-(x)$ is contained in $S$, then its endpoints make perfect fits, and so one obtains two lozenges sharing a corner in $S$, and the sides opposite $l_0$, say $l_{-1}$ and $l_1$, are also fixed by $g$ containing fixed points $x_1$ and $x_{-1}$.  Iterating this process, one obtains sequences of lozenges sharing corners, with sides $l_i \in \cF^+$, for $i = -k, -k+1, \ldots, 0, \ldots k-1, k$ and fixed points $x_i$.  

Either $S$ is unbounded in both directions and $P$ is isomorphic to the skew plane, or for some $i$, we have that $l_i$ is contained in $x$ but $\cF^-(x_i)$ is not contained in $S$.  But, in this latter case, either $g(S) \cap S$ or $g^{-1}(S) \cap S$ is a strictly larger infinite skew region, contradicting maximality.  
\end{proof} 

In fact, with a bit more work one can do slightly better: 

\begin{exercise}
Define an {\em infinite skew region} to be an open, nonempty subset of the skew plane of the form $\{(x, y) : c_1 < y < c_2 \text{ and } x > d \}$.   Show that if $P$ contains a subset properly isomorphic to an infinite skew region, then $P$ is skew.  
\end{exercise} 

\section{Chains of lozenges} 
We make the following definition to capture other possible configurations of lozenges. 

\begin{definition}[Chain of lozenges]\label{def_chain_lozenges}
A chain of lozenges is a union of closed lozenges that satisfies the following connectedness property: for any two lozenges $L$, $L'$ in the chain, there exist lozenges $L_0, L_1, \ldots L_k$ of the chain such that $L = L_0, L'= L_k$ and for all $i$ the pair of lozenges $L_i$ and $L_{i+1}$ share a corner.
\end{definition}
Note that $L_i$ and $L_{i+1}$ may also (but are not required to) share a side.  The reason we take {\em closed lozenge} in the definition of chain is precisely so that a chain forms a connected subset of the plane.  It is also convenient when describing the global topology of a chain, for instance, if the lozenges in the chain form a line, then the interior of the chain is a trivially foliated region.  It also gives the following connectedness property on the level of leaves.   
\begin{exercise}  \label{ex_leaf_cap_chain_connected} 
If $\cC$ is a chain of lozenges and $l$ a leaf of $\cF^+$ or $\cF^-$, then $l \cap \cC$ is connected. 
\end{exercise} 

The configuration of lozenges in a chain can be described combinatorially with a (fat) graph.  The following definition appears in \cite{BF13}; we will make use of it in the proof of Proposition \ref{prop_Z2_stabilize_minimal_chain} and in Section \ref{sec_periodic_seifert}. 
\begin{definition} \label{def:chain_tree} 
Let $\cC$ be a chain of lozenges.  The {\em tree of $\cC$} denoted $\cT(\cC)$ is the graph whose vertices are corners of lozenges in $\cC$, and two vertices are connected by an edge if they are corners of the same lozenge.  
\end{definition} 
\begin{exercise} Show that $\cT(\cC)$ is in fact a tree, and can also be endowed with a fatgraph structure induced from $P$. 
\end{exercise}
Of particular use is that automorphisms of $P, \cF^\pm$ preserving $\cC$ induce automorphisms of $\cT(\cC)$. 
Note that $\cT(\cC)$ does not determine $\cC$.  For example, the tree associated to an infinite line of lozenges is a regular valence two graph, as is the tree associated to an infinite chain where lozenges share only vertices, not sides, called a {\em string}. 

\begin{definition} \label{def_string_lozenges}
A {\em string of lozenges} is a chain of lozenges where no two share a side.  That is, each adjacent pair share only a single corner.  
\end{definition} 

If two chains share a common lozenge, then their union is again a chain.  Thus, each chain of lozenges is contained in a maximal (with respect to inclusion) chain.   Connectedness of chains has the following useful consequence. 
\begin{observation} \label{obs:fixes_all_corners}
Suppose $g \in G$ fixes a corner $c$ of a lozenge $L$.  Then, $g$ preserves all rays of $c$ if and only if it fixes both corners of $L$.  Thus (inductively) $g$ fixes each corner of the maximal chain containing $L$.
\end{observation}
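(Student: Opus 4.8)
The plan is to first establish the biconditional for the single lozenge $L$ and then run an easy induction along the chain. Write $c$ and $y$ for the two corners of $L$ (they are distinct, as corners of a lozenge always are), with sides $r_c^\pm \subset \cF^\pm(c)$ and $r_y^\pm \subset \cF^\pm(y)$ arranged so that $r_c^+$ makes a perfect fit with $r_y^-$ and $r_c^-$ makes a perfect fit with $r_y^+$; thus $L$ sits in the quadrant $Q$ of $c$ bounded by $r_c^+$ and $r_c^-$. For the implication ``$g$ preserves all rays of $c$ $\Rightarrow$ $g$ fixes both corners of $L$'': the ray $r_c^+$ makes a perfect fit with the leaf $\cF^-(y)$ and $r_c^-$ makes a perfect fit with $\cF^+(y)$, so applying Lemma \ref{lem:fixed_is_corner} twice (with the fixed point $x = c$) gives $g(\cF^-(y)) = \cF^-(y)$ and $g(\cF^+(y)) = \cF^+(y)$; since these two transverse leaves meet only in the point $y$, we conclude $g(y) = y$. (Alternatively, since $g$ preserves $r_c^\pm$, the formula defining $L$ shows directly that $g(L) = L$, after which $g$ must fix the corner of $L$ other than $c$.)

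For the converse, suppose $g$ fixes both corners $c$ and $y$. As $c \neq y$, Observation \ref{obs:all_rays} --- which applies because the action satisfies Axiom \ref{Axiom_A1} --- immediately gives that $g$ fixes every ray through $c$ (and through $y$); this finishes the biconditional. Now let $\cC$ be the maximal chain containing $L$, and assume $g$ fixes both corners of $L$. Every corner of $\cC$ is a corner of some lozenge $L' \in \cC$, and by the connectedness property in Definition \ref{def_chain_lozenges} there are lozenges $L = L_0, L_1, \dots, L_k = L'$ with $L_i$ and $L_{i+1}$ sharing a corner $w_i$. I would then induct on $i$ on the assertion ``$g$ fixes both corners of $L_i$'': granted it for $L_i$, the already-proven converse shows $g$ preserves every ray through the corner $w_i$ of $L_i$, and since $w_i$ is also a corner of $L_{i+1}$, the already-proven forward direction shows $g$ fixes the opposite corner of $L_{i+1}$ as well, closing the induction. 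Hence $g$ fixes every corner of $\cC$.

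I do not expect a serious obstacle here. The only step requiring genuine input (rather than pure formal manipulation) is the forward direction of the biconditional, where one must invoke uniqueness of perfect fits --- conveniently packaged inside Lemma \ref{lem:fixed_is_corner} --- to promote ``$g$ preserves the sides of $L$'' to ``$g$ fixes the opposite corner $y$''. The remaining work is bookkeeping, relying on the fact that a chain is connected precisely because it is built from \emph{closed} lozenges, so that consecutive lozenges genuinely share a point that the induction can be carried through.
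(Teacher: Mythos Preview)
Your proof is correct and is precisely the argument the paper has in mind; indeed the paper states this as an observation without proof, the ``Thus (inductively)'' in the statement being the only hint. The forward direction via Lemma~\ref{lem:fixed_is_corner} (or equivalently the direct $g(L)=L$ argument from the defining formula of a lozenge) and the converse via Observation~\ref{obs:all_rays} are exactly the intended ingredients, and the chain induction is the routine closure.
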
 

Remarkably, Observation \ref{obs:fixes_all_corners} has a kind of converse.  This is due to Fenley \cite[Theorem 3.3]{Fen95b} for Anosov flows;  the statement and proof below for the case of Anosov-like actions is adapted from Fenley and appears in \cite[Proposition 2.24]{BFM22}. 

\begin{theorem} \label{thm:distinct_fix_is_chain}
If $g$ fixes distinct points $x, y \in P$, then $x$ and $y$ are corners of a chain of lozenges.
\end{theorem}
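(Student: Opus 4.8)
The plan is to reduce to the case where $g$ fixes all rays through both $x$ and $y$, then analyze the leaves $\cF^\pm(x)$ and $\cF^\pm(y)$ and the regions they cut $P$ into, showing that a lozenge (or chain of lozenges) must appear "between" the two fixed points, and finally induct. First I would replace $g$ by a power so that $g$ fixes each ray of $\cF^+(x), \cF^-(x), \cF^+(y), \cF^-(y)$ and preserves orientation on every leaf space it touches; by Observation \ref{obs:all_rays} (applied with the original $g$, which already fixes the two distinct points $x,y$), $g$ actually fixes all rays through $x$ and $y$ before taking any power, so the orientation adjustment is the only cost. Since $g\neq\id$ and $P$ admits an Anosov-like action, $P$ is not the trivial plane unless $G$ is the corresponding affine group — in the trivial case one checks directly from Proposition \ref{prop:trivial_affine} that an affine map with two fixed points is the identity, so the theorem is vacuous there; thus I may assume $P$ is nontrivial and use the non-corner characterization of Exercise \ref{ex_noncorner_converse}.

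The core step: I would consider the four leaves through $x$ and the four through $y$, and look at how $\cF^+(y)$ (say) sits relative to $\cF^\pm(x)$. If $y \in \cF^+(x) \cup \cF^-(x)$ then by Axiom \ref{Axiom_A1} ($g$ has a unique fixed point on each of its fixed leaves) this is impossible — so $y$ lies in the open interior of one of the quadrants $Q$ of $x$. Now I claim $x$ is a corner of a lozenge in the quadrant $Q$ (or a chain emanating into $Q$). By Exercise \ref{ex_noncorner_converse}, if $x$ were *not* the corner of a lozenge in $Q$, then the two rays $r_x^+, r_x^-$ bounding $Q$ intersect a pair of leaves making a perfect fit, or leaves of a prong. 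I would then examine $\cF^-(y)$ and $\cF^+(y)$: they are $g$-invariant, fixed rays, lying inside $Q$, and I would take the $\cF^-$-saturation of the ray $r^+_x$ of $\cF^+(x)$ bounding $Q$. Since $y$ lies in $Q$ and $\cF^-(y)$ is $g$-invariant, tracking where $\cF^-(y)$ and $\cF^+(y)$ "end" (do they cross $r_x^\pm$? do they make perfect fits? do they limit on branching leaves?) and using Lemma \ref{lem:fixed_is_corner} each time a fixed ray makes a perfect fit with another leaf, one manufactures a lozenge with one corner fixed by $g$. The case analysis splits according to whether $\cF^+(y)$ crosses $\cF^+(x)$'s leaf, whether the relevant saturations are bounded (giving perfect fits and hence lozenges via Lemma \ref{lem:fixed_is_corner}) or unbounded (which, after Proposition \ref{prop:no_product_region}, would force an infinite product region and hence the trivial plane, contradiction), and whether the limiting boundary is a single leaf or a nonseparated family (handled by Proposition \ref{prop:4weak4strong} and Lemma \ref{lem:half_scalloped}).

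Once a lozenge $L$ with corner $x$ fixed by $g$ is produced, by Observation \ref{obs:fixes_all_corners} $g$ fixes both corners of $L$, hence all corners of the maximal chain $\cC$ containing $L$. I would then show $y$ lies in this chain's saturation and "closer" to $\cC$ in a suitable sense — e.g. the opposite corner $y_1$ of $L$ separates $x$ from $y$, or $y \in \overline{\cC}$ already — and induct on some complexity measure (for instance the number of leaves among $\{\cF^\pm(x)\}$ separating $x$ from $y$, which strictly decreases). The induction terminates because at each stage the new corner $y_1$ fixed by $g$ is "between" the previous one and $y$, and the number of nonseparated-leaf families and prongs in a bounded region is locally finite by Axiom \ref{Axiom_A1}. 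I expect the main obstacle to be the bookkeeping in the case analysis of the core step — precisely, ruling out the possibility that $\cF^-(y)$ and $\cF^+(y)$ "spiral" or that the relevant saturation is unbounded in a way not immediately giving a product region; this is where one must carefully invoke properness of leaves, Proposition \ref{prop:no_product_region}, and Lemma \ref{lem:half_scalloped} to convert every escape-to-infinity into a contradiction and every bounded limit into a perfect fit, so that Lemma \ref{lem:fixed_is_corner} applies and a genuine lozenge appears.
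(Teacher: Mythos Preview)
Your core strategy matches the paper's: place $y$ in a quadrant $Q$ of $x$, study the $\cF^-$-saturation of the ray $r_x^+$ bounding $Q$ (the paper calls this the set $S$ of leaves of $\cF^-$ intersecting both $\cF^+(x)$ and $Q$), note that $S$ is $g$-invariant and bounded because $\cF^\mp(x)\cap\cF^\pm(y)=\emptyset$, pick the unique boundary leaf $l$ separating $x$ from $y$ (or containing $y$), get a fixed point $x_1$ on $l$ via Axiom~\ref{Axiom_A1}, and then use Proposition~\ref{prop:no_product_region} together with Lemma~\ref{lem:fixed_is_corner} and Proposition~\ref{prop:4weak4strong} to conclude that $x$ and $x_1$ are corners of a lozenge or a line of lozenges. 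The detour through Exercise~\ref{ex_noncorner_converse} is unnecessary and does not shortcut anything: you still have to carry out the saturation analysis to exhibit the relevant perfect fit, and the exercise only tells you whether a corner exists, not where it is.

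The genuine gap is your termination step. The complexity measure you propose (``the number of leaves among $\{\cF^\pm(x)\}$ separating $x$ from $y$'') is not well-defined --- those leaves do not separate $x$ from $y$ to begin with --- and the appeal to ``locally finite by Axiom~\ref{Axiom_A1}'' does not bound the length of the chain; nothing prevents infinitely many $x_n$ from accumulating. The paper's argument is the following: if the process failed to terminate, the leaves $\cF^+(x_n)$ would lie in a compact subset of $\Lambda(\cF^+)$ (trapped on the $y$-side of each previous $\cF^+(x_k)$, disjoint from $\cF^\pm(y)$), so they accumulate; exactly one leaf $l'$ of the limit separates the $x_n$ from $y$, hence is $g$-invariant and carries a fixed point $x_\infty$. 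But then $\cF^-(x_\infty)$ intersects the fixed leaves $\cF^+(x_n)$ for all large $n$, producing infinitely many fixed points of $g$ on a single leaf and contradicting Axiom~\ref{Axiom_A1}. That accumulation-and-contradiction step is the idea missing from your outline.
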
 

\begin{proof} 
Assume that $g$ fixes distinct points $x$ and $y$.
Let $Q$ be the quadrant of $x$ containing $y$, and consider the set $S$ of leaves of $\cF^{-}$ that intersect both 
$\cF^{+}(x)$ and $Q$.  
This set is $g$-invariant, indexed by points along the half-leaf of $\cF^{+}(x)$ bounding $Q$ and bounded on one side by $\cF^-(x)$. 
It must also have some additional boundary because $\cF^{\mp}(x) \cap \cF^{\pm}(y) = \emptyset$. 

There is a unique leaf $l$ of the boundary such that either $y\in l$, or $l$ separates $x$ from $y$.  Since $g$ fixes $x$ and $y$ and preserves $S$ (hence preserves its boundary), we also have $g(l) = l$.  By \ref{Axiom_A1} $g$ fixes some point $x_1$ on $l$.  The set $\{l^- \in S : \cF^+(x_1) \cap l^- \neq \emptyset \}$ is $g$-invariant and contains an open neighborhood of $l$ in $S$ (considered as a subset of the leaf space), so by the hyperbolicity of the action of $g$ on $\cF^{+}(x)$ given by  \ref{Axiom_A1}, the set is equal to $S$. 

Since there are no infinite product regions (Proposition \ref{prop:no_product_region}), $\cF^+(x_1)$ makes a perfect fit with either $\cF^-(x)$ or with a leaf nonseparated with $\cF^-(x)$. By Lemma \ref{lem:fixed_is_corner}, in the first case $x$ and $x_1$ are the two corners of a lozenge; in the latter case, by Proposition \ref{prop:4weak4strong}  they are corners of a line of lozenges. 

If $x_1=y$, then we are done. Otherwise we can iterate the process above and find a sequence $x_n$ of corners, all fixed by $g$, in a chain of lozenges starting at $x$ and disjoint from $\cF^{\pm}(y)$.   We need only show that this process must terminate.  
If not, one obtains an infinite sequence of leaves $\cF^+(x_n)$, disjoint from $\cF^+(y)$ and $\cF^-(y)$, and lying in a compact subset of the leaf space of $\cF^+$.   Consider a convergent subsequence limiting to a leaf or union of leaves.  As in the first part of the argument, 
exactly one of these limit leaves, call it $l'$, separates $x_n$ from $y$.  Since $\cF^+(x_n)$ and $\cF^+(y)$ are all fixed by $g$, the leaf $l'$ is as well, and thus contains a fixed point $x_\infty$ of $g$.  But then $\cF^-(x_\infty)$ will intersect the fixed leaf $\cF^+(x_n)$ for sufficiently large $n$, contradicting uniqueness of fixed points given by Axiom \ref{Axiom_A1}.
\end{proof} 

Combining this with Observation \ref{obs:fixes_all_corners} we immediately obtain:
\begin{corollary} \label{cor:maximal_fixed}
If $g$ fixes all rays through a point $x$, then the set of fixed points of $g$ is either equal to the singleton $\{x\}$, or is the set of corners of the maximal chain of lozenges containing $x$. 

Furthermore, for any element $g$ preserving some lozenge in $P$, there is a {\em unique} maximal chain of lozenges invariant under $g$.  
\end{corollary}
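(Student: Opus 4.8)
The first statement should follow quickly by combining Theorem \ref{thm:distinct_fix_is_chain} with Observation \ref{obs:fixes_all_corners}, once we check that ``the maximal chain of lozenges containing $x$'' is well defined. For that I would use the elementary fact that if two chains of lozenges share a corner then their union is again a chain; hence any two maximal chains sharing a corner coincide, so there is at most one maximal chain having $x$ as a corner. Now suppose $g$ fixes all rays through $x$. If $\Fix(g)=\{x\}$ we are in the first alternative. Otherwise pick $y\in\Fix(g)$ with $y\neq x$; by Observation \ref{obs:all_rays}, $g$ fixes all rays through $y$ as well, and by Theorem \ref{thm:distinct_fix_is_chain} the points $x$ and $y$ are corners of a chain of lozenges, so in particular $x$ is a corner of some lozenge $L$. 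Let $\cC$ be the maximal chain containing $L$ (equivalently, containing $x$). Since $g$ fixes all rays through $x$, Observation \ref{obs:fixes_all_corners} gives that $g$ fixes both corners of $L$, and then inductively every corner of $\cC$, so $\Fix(g)$ contains the corner set of $\cC$. Conversely, if $z\in\Fix(g)\setminus\{x\}$, Theorem \ref{thm:distinct_fix_is_chain} puts $x$ and $z$ on a common chain $\cC'$; as $\cC$ and $\cC'$ share the corner $x$, maximality of $\cC$ forces $\cC'\subseteq\cC$, so $z$ is a corner of $\cC$. Hence $\Fix(g)$ is exactly the corner set of $\cC$.

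For the second statement, let $g$ preserve a lozenge $L$. Then $g$ permutes the two corners of $L$, so $g^2$ fixes both of them, and by Observation \ref{obs:all_rays} fixes all rays through each. By the first statement, $\Fix(g^2)$ is the corner set of the maximal chain $\cC$ containing $L$. I would then identify $\cC$ intrinsically as the set of all lozenges both of whose corners lie in $\Fix(g^2)$: the inclusion $\supseteq$ uses Observation \ref{obs:fixes_all_corners} together with well-definedness of maximal chains exactly as above, and $\subseteq$ is immediate. Since $g$ centralizes $g^2$, the set $\Fix(g^2)$ --- and therefore $\cC$ --- is $g$-invariant. This produces the required maximal $g$-invariant chain.

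The remaining point, which I expect to be the main obstacle, is uniqueness: one must show every $g$-invariant chain $\cC'$ is contained in $\cC$. I would argue via the action of $g$ (equivalently $g^2$) on the fatgraph tree $\cT(\cC')$. If $g^2$ fixes a vertex of $\cT(\cC')$, that vertex is a corner of $\cC'$ lying in $\Fix(g^2)=\,$(corners of $\cC$), so $\cC$ and $\cC'$ share a corner and maximality gives $\cC'\subseteq\cC$. If $g^2$ inverts an edge of $\cT(\cC')$, a short bootstrap --- comparing $\Fix(g^2),\Fix(g^4),\dots$, all of which must be the corner set of a single maximal chain by the first statement, hence all equal --- reduces this to the previous case. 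If $g^2$ acts hyperbolically on $\cT(\cC')$, it translates along a bi-infinite line of lozenges inside $\cC'$; by Proposition \ref{prop:infinite_lozenge_is_scalloped} this line lies in a scalloped region, and by Lemma \ref{lem_action_of_stabilizer_scalloped}, $g^2$ fixes the corners of the complementary line of that scalloped region, which again meet $\cC$, so $\cC'$ is contained in $\cC$. Carefully justifying that a scalloped region forms a single chain, and that the various $\Fix(g^{2^k})$ stabilize, are the technical points needing attention; everything else is bookkeeping with the definitions of chain, lozenge, and perfect fit.
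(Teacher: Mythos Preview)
Your proof of the first statement, and of the existence half of the second, is correct and is exactly what the paper has in mind: the paper simply says the corollary follows by combining Theorem~\ref{thm:distinct_fix_is_chain} with Observation~\ref{obs:fixes_all_corners}, and you have spelled this out accurately (including the well-definedness of the maximal chain through a given corner).

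For uniqueness you are right to flag that more is needed than the paper indicates, and your case split on the action of $g^2$ on $\cT(\cC')$ is a natural strategy. The fixed-vertex case is fine. Your bootstrap for the edge-inversion case is also correct: indeed, once $g^2$ fixes all rays through some point, the first part of the corollary gives $\Fix(g^2)=\Fix(g^4)=\Fix(g^{2^k})$ for every $k$, so if $g^4$ fixes a corner of $\cC'$ then so does $g^2$, and you are back to the first case.

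The hyperbolic case, however, has a genuine gap. An axis of $g^2$ in $\cT(\cC')$ is a bi-infinite path of vertices, i.e.\ a sequence of corners $c_i$ with $c_i,c_{i+1}$ the two corners of a lozenge $L_i$. Consecutive lozenges $L_i,L_{i+1}$ then share the \emph{corner} $c_{i+1}$, but there is no reason they share a \emph{side}. So the axis corresponds to a bi-infinite \emph{chain}, not a ``line of lozenges'' in the sense of Definition~\ref{def_line_lozenges}; it could just as well be a string (Definition~\ref{def_string_lozenges}) or any mixture. Consequently Proposition~\ref{prop:infinite_lozenge_is_scalloped} does not apply, and your appeal to Lemma~\ref{lem_action_of_stabilizer_scalloped} is unjustified. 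You would need a separate argument ruling out an element with fixed points in $P$ that nevertheless translates along a bi-infinite chain of this more general type, and none of the results you cite supplies one.

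It is worth noting that the paper itself offers no argument for this uniqueness beyond ``immediately''; as literally stated it is not an immediate consequence of Theorem~\ref{thm:distinct_fix_is_chain} and Observation~\ref{obs:fixes_all_corners}. So you have identified a real subtlety --- but your proposed resolution of it does not yet go through.
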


\subsection{Pivots and finite branching}  \label{subsec_pivot}
We conclude this section with a discreteness result for lozenges sharing sides, which has as a consequence an important theorem of Fenley, \cite[Theorem F]{Fen98}.  Fenley's theorem says that, for any pseudo-Anosov flow on a compact 3-manifold, there are only finitely many branching leaves in the weak foliations $\cF^s$ and $\cF^u$.  

Following \cite{Fen98}, we call the common corner of two lozenges sharing a side a \emph{pivot} point.

\begin{proposition}\label{prop_pivots_discrete}
The set of pivot points is a closed, discrete subset of $P$.
\end{proposition}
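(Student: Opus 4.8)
The plan is to prove the equivalent statement that no sequence of pairwise distinct pivot points converges in $P$; this yields both assertions at once, since a limit point of the pivot set is a limit of distinct pivots, and an accumulation of pivots at a pivot would contradict discreteness.

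We first record the relevant structure. Every pivot $p$ is fixed by a nontrivial element of $G$: a pivot is the common corner of two lozenges $L_1,L_2$ sharing a side $r$, and the sides of $L_1,L_2$ opposite $r$ are nonseparated leaves (they appear as limits of the leaves crossing $r$ from each side), so by Axiom \ref{Axiom_nonseparated} together with Proposition \ref{prop:4weak4strong} there is a nontrivial $g\in G$ fixing those leaves and hence every corner of $L_1$ and $L_2$, in particular $p$. After replacing $g$ by a power it fixes all rays through $p$, and by Corollary \ref{cor:maximal_fixed} its fixed set is exactly the set of corners of the maximal chain of lozenges through $p$. Consequently the stabilizer of any leaf through a pivot $p$ contains such a $g$, so by Lemma \ref{lem:common_fixed_point} that stabilizer fixes $p$ and has no other fixed point on that leaf.

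Now suppose $p_n\to p$ with the $p_n$ pairwise distinct pivots. Pass to a subsequence so that all the pivot rays $r_n$ lie in the same foliation, say $\cF^+$ (so $r_n\subset\cF^+(p_n)$ is based at $p_n$), and --- using that $\cF^+(p)$ and $\cF^-(p)$ are closed with finitely many complementary components --- so that all $p_n$ lie in the closure of a single quadrant $Q$ of $p$. \emph{If infinitely many $p_n$ lie on $\cF^+(p)$ or on $\cF^-(p)$}, then that leaf is fixed by all the corresponding $g_n$, so by Lemma \ref{lem:common_fixed_point} its stabilizer has a global fixed point, which must be the unique fixed point of each $g_n$ on it; hence all those $p_n$ coincide, a contradiction. \emph{So we may assume all $p_n$ lie in the interior of $Q$.} By continuity of the foliations $\cF^\pm(p_n)\to\cF^\pm(p)$, and after a further subsequence $r_n$ converges to a ray $r\subset\cF^+(p)$ based at $p$. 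For each $n$ the two lozenges at $p_n$ have outer sides $a_n,b_n\in\cF^-$ making perfect fits with $r_n$ from the two sides of $\cF^+(p_n)$. Either one of $a_n,b_n$ converges, in which case its limit makes a perfect fit with $r$ and, tracking fixed leaves as in the proof of Lemma \ref{lem:fixed_is_corner}, one deduces that $\cF^+(p)$ is fixed by a nontrivial element and that $p$ itself is a pivot with shared side $r$; or both $a_n,b_n$ escape every compact set, in which case the lozenges at $p_n$ grow without bound while $p_n\to p$. In either situation, for large $n$ the lozenge $L_n$ of which $p_n$ is a corner lies in a definite configuration (Lemma \ref{lem_markovian_or_corner}) relative to a fixed lozenge near $p$ that contains infinitely many of the $p_m$. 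Iterating this --- and using Proposition \ref{prop:no_product_region} to rule out an infinite product region and Proposition \ref{prop:infinite_lozenge_is_scalloped} to control infinite lines of lozenges --- produces an infinite chain of lozenges whose corners accumulate at a point lying on a leaf fixed by a single element $g$, so that a $\cF^\pm$-leaf through the accumulation point meets infinitely many fixed leaves of $g$; this contradicts the uniqueness of fixed points in Axiom \ref{Axiom_A1}, exactly as in the termination arguments of Proposition \ref{prop:4weak4strong} and Theorem \ref{thm:distinct_fix_is_chain}.

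The heart of the argument --- and the main obstacle --- is this last case: making precise the limiting behaviour of the lozenges at the $p_n$ and extracting the accumulating-chain contradiction, keeping careful track of which quadrants the relevant lozenges occupy. The case where $p$ or some $p_n$ is a prong point is handled in the same manner, working quadrant by quadrant and invoking Axiom \ref{Axiom_prongs_are_fixed} where needed.
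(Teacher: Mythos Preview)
Your proposal has a genuine gap: you explicitly flag ``the heart of the argument --- and the main obstacle'' as the last case, and then do not carry it out. What you write there is a list of tools (Lemma~\ref{lem_markovian_or_corner}, Proposition~\ref{prop:no_product_region}, Proposition~\ref{prop:infinite_lozenge_is_scalloped}) and a hoped-for endgame (an accumulating chain contradicting Axiom~\ref{Axiom_A1}), but no actual construction. Two specific problems: first, perfect fits do not in general pass to limits, so the claim ``its limit makes a perfect fit with $r$'' needs justification you have not given. Second, and more seriously, to get a contradiction with Axiom~\ref{Axiom_A1} you need a \emph{single} element $g$ fixing infinitely many leaves along one transversal; but the elements $g_n$ fixing the $p_n$ are a priori all different, and you give no mechanism for producing a common $g$. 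The termination arguments you cite from Proposition~\ref{prop:4weak4strong} and Theorem~\ref{thm:distinct_fix_is_chain} work precisely because a single $g$ is fixed from the outset.

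The paper's proof avoids all of this by a much more direct route: once the $p_n$ are in a single trivially foliated quadrant of $p$ with shared sides all in (say) $\cF^+$ and perfect fits all on the same side, the perfect fit attached to $p_1$ lies in the appropriate quadrant of every $p_n$ with $n$ large, so the non-corner criterion (Lemma~\ref{lem_no_corner_criterion}) immediately forbids $p_n$ from having a lozenge in that quadrant --- contradicting that $p_n$ is a pivot. No limiting behaviour of lozenges, no chains, no scalloped regions are needed; the whole argument is a finitary obstruction coming from a single earlier pivot $p_1$. You should replace your limiting-chain strategy with this application of the non-corner criterion.
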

\begin{proof}
Suppose that $x_n$ is a sequence of pivot points converging to $x$.   After passing to a subsequence, we can assume that $x_n$ lie in a single quadrant $Q$ of $x$, and passing to the tail end we assume that all are in a trivially foliated subset of this quadrant, in particular they are nonsingular. Up to taking a further subsequence (and potentially relabelling the foliations) we may assume that the leaves $\cF^+(x_n)$ are always the shared sides of two lozenges. 

For concreteness, fix a local orientation on $Q$ in a product-foliated neighborhood of $x$ so that $Q$ is the upper-left quadrant of $x$ and the local foliations are represented with $\cF^+$ as vertical and $\cF^-$ horizontal.  After passing to a further subsequence, we can assume that the pivots all have the same orientation, that is to say that the 
perfect fits of all the $\cF^+(x_n)$ leaves are either all above $\cF^-(x)$ or all below $\cF^-(x)$, as illustrated in Figure \ref{fig_pivot_discrete}. We will show that each case contradicts the non-corner criterion. 

Suppose first that the perfect fits are above.   Note that the fact that all $x_i$ are in a trivially foliated neighborhood of $x$ in $Q$ implies that each of $\cF^\pm(x_i)$ intersects $\cF^\mp(x)$ (respectively). 
Take $n$ large enough so that $x_n$ lies in the rectangle bounded by $\cF^\pm(x_1)$ and $\cF^\pm(x)$.  Then the perfect fit formed by $\cF^+(x_1)$ and the side of the rightmost lozenge with corner $x_1$ is in the top left quadrant of $x_n$, so, by Lemma \ref{lem_no_corner_criterion}, $x_n$ cannot have a lozenge there, a contradiction.  Similarly, if the perfect fits are all below, $x_n$ cannot have a lozenge in its lower left quadrant, again a contradiction.  

\begin{figure}[h]
\includegraphics[width=10cm]{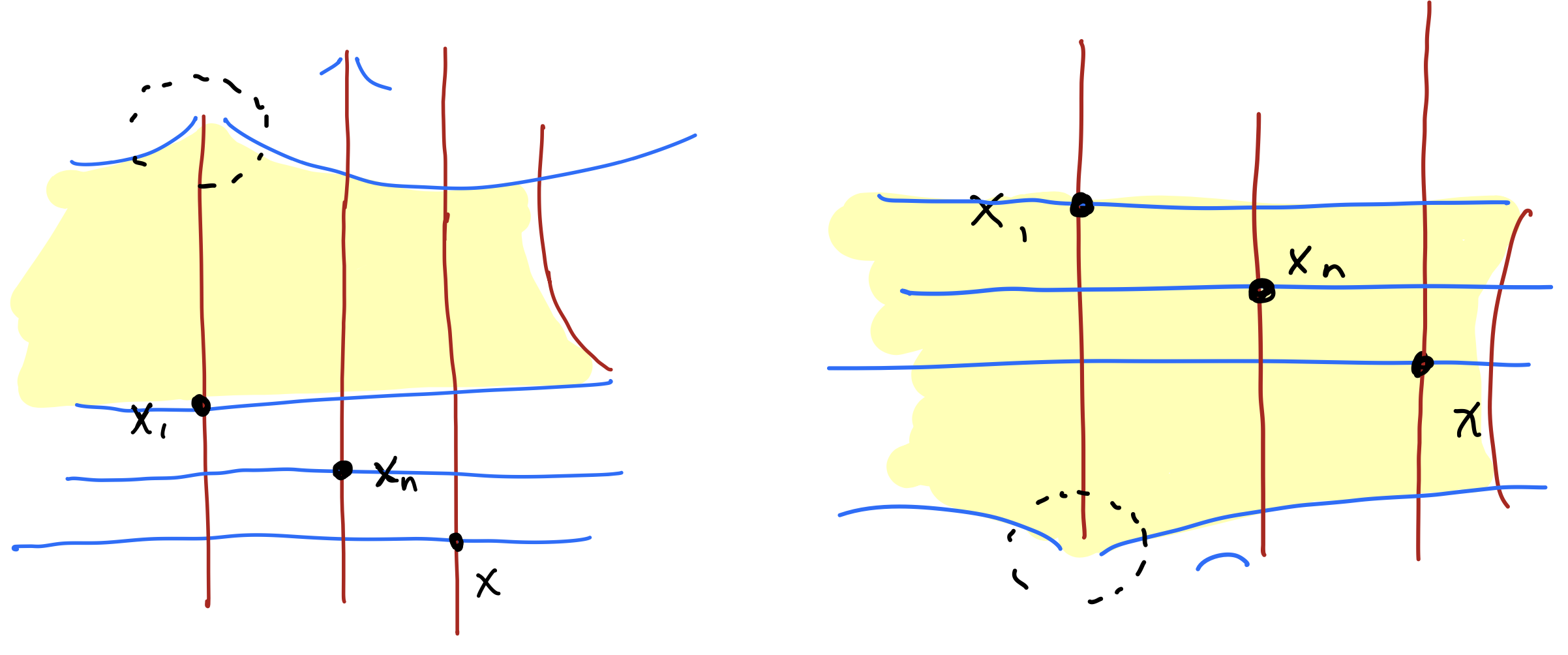} 
\caption{Two cases for the proof of Proposition \ref{prop_pivots_discrete}}
\label{fig_pivot_discrete}
\end{figure}

\end{proof}

A direct corollary of Proposition \ref{prop_pivots_discrete} is as follows.  
\begin{corollary}\label{cor_cocompactness_imply_finitely_many_branching}
Suppose $G$ acts cocompactly on $P$, then there are finitely many distinct orbits of $\mathrm{Pivot}$ under the action of $G$.
\end{corollary}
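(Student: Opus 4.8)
The statement to prove is Corollary \ref{cor_cocompactness_imply_finitely_many_branching}: if $G$ acts cocompactly on $P$, then there are only finitely many orbits of pivot points under $G$. The plan is to combine the discreteness of the pivot set (Proposition \ref{prop_pivots_discrete}) with cocompactness of the $G$-action in a standard way. First I would recall that $\mathrm{Pivot} \subseteq P$ is a closed, discrete, $G$-invariant subset: it is $G$-invariant because automorphisms of $(P, \cF^+, \cF^-)$ carry lozenges sharing a side to lozenges sharing a side, hence pivots to pivots; it is closed and discrete by Proposition \ref{prop_pivots_discrete}.

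The core of the argument is then purely topological. Since $G$ acts cocompactly on $P$, there is a compact set $K \subseteq P$ with $G \cdot K = P$. Every pivot point is of the form $g \cdot p$ for some $g \in G$ and some $p \in \mathrm{Pivot} \cap K$; thus the number of $G$-orbits of pivots equals the number of $G$-orbits meeting $\mathrm{Pivot} \cap K$, which is at most $|\mathrm{Pivot} \cap K|$. But $\mathrm{Pivot} \cap K$ is the intersection of a closed discrete set with a compact set, hence finite. This gives the conclusion: there are at most $|\mathrm{Pivot} \cap K|$ distinct orbits, in particular finitely many.

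The only point requiring a small amount of care is making sure $\mathrm{Pivot}$ is genuinely $G$-invariant and genuinely closed-and-discrete in the relevant topology, but both are already in hand: invariance is immediate from the fact that $G$ acts by automorphisms of the bifoliated plane (our standing convention), and the closed-discrete property is exactly Proposition \ref{prop_pivots_discrete}. I do not anticipate any real obstacle here — this is a direct corollary, and the main content was already absorbed into the proof of Proposition \ref{prop_pivots_discrete}. One could phrase the final count slightly differently (e.g.\ covering $P$ by finitely many translates of a small ball, each containing at most finitely many pivots by discreteness), but the cleanest route is the compact fundamental-domain argument above.
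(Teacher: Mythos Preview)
Your argument is correct and matches the paper's approach: the paper states this as a direct corollary of Proposition~\ref{prop_pivots_discrete} without further proof, and the standard ``closed discrete $G$-invariant set meets a compact fundamental domain in finitely many points'' argument you give is exactly what is intended.
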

The cocompactness hypothesis always holds for the orbit space action induced from a pseudo-Anosov flow on a compact manifold.  Thus, we recover Fenley's finiteness result of \cite[Theorem F]{Fen98}.

\section{Transitivity} \label{sec_transitive}
A group action is called \emph{topologically transitive} if it admits a dense orbit.  This applies both to flows (actions of $\bR$), and to actions of discrete groups. In the setting of pseudo-Anosov flows on 3-manifolds and their induced actions on orbit spaces, these notions of transitivity are closely related.  

We start by recalling a classical result for smooth Anosov flows (see e.g., \cite[Theorem 5.3.50]{FH19}):
\begin{theorem} \label{thm_characterize_transitive_flows}
Let $\flow$ be a smooth Anosov flow on $M$. The following are equivalent: 
\begin{enumerate}[label=(\roman*)]
\item $\flow$ is transitive.
\item The set of periodic orbits of $\flow$ is dense.
\item Each leaf of $\fs$ (resp.~$\fu$) is dense in $M$.
\end{enumerate}
\end{theorem}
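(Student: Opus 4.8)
The plan is to prove the three-way equivalence by a cyclic chain of implications, working with the standard machinery of hyperbolic dynamics: stable/unstable manifolds, the local product structure, and the Anosov closing lemma (Proposition~\ref{prop_pAclosing_lemma}), all of which are available for smooth Anosov flows. Throughout, recall that density of a set is equivalent to meeting every nonempty open set, and that the local product structure means every point has a neighborhood in which the intersection of a local stable plaque with a local unstable plaque is a single orbit segment.

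\textbf{(i) $\Rightarrow$ (ii).} Suppose $\flow$ has a dense orbit $\{\flow^t(x)\}$. Given any nonempty open $U \subset M$, density gives arbitrarily large $t_1 < t_2$ with $\flow^{t_1}(x), \flow^{t_2}(x)$ both lying in a small ball inside $U$; in particular $d(\flow^{t_2}(x), \flow^{t_1}(x))$ can be made smaller than the $\delta$ from the closing lemma applied with $\eps$ small enough that the $\eps$-neighborhood of the relevant orbit segment stays in $U$. The closing lemma then produces a periodic orbit $\eps$-shadowing the segment from $\flow^{t_1}(x)$ to $\flow^{t_2}(x)$, hence meeting $U$. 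Since $U$ was arbitrary, periodic orbits are dense. (This is essentially the last paragraph of the proof of Proposition~\ref{prop_density}, specialized to the smooth Anosov case, where $\delta$ is uniform.)

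\textbf{(ii) $\Rightarrow$ (iii).} Assume periodic orbits are dense; I will show every stable leaf $\fs(z)$ is dense (the unstable case is symmetric, reversing time). First, a standard fact: the stable manifold $\fss(p)$ of a periodic point $p$ is dense in $M$. Indeed, by the Anosov closing lemma together with density of periodic orbits and a connectedness/bootstrapping argument one shows the closure of $\fss(p)$ is open and closed and nonempty, hence all of $M$; alternatively this follows from the inclination ($\lambda$-)lemma, since iterating a small disc transverse to $\fuu(p)$ forward under $\flow$ causes it to $C^1$-accumulate on $\fss(p)$, and since forward orbits are dense the backward saturation fills $M$. Granting density of $\fss(p)$: since $\fss(p) \subset \fs(p)$, the weak stable leaf of the periodic orbit through $p$ is dense. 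Now take an arbitrary point $z$ and a nonempty open set $U$. Pick a periodic point $p$ whose weak stable leaf is dense, so $\fs(p)$ meets $U$; and using the local product structure near a point of $\fs(p) \cap \fs(z)$-proximity — more precisely, pick a point $q$ of the dense orbit-closure argument, or directly: by density of periodic orbits choose $p$ close to $z$, so that $\fs(z)$ and $\fss(p)$ meet in a local chart, giving a point of $\fss(p)$ on $\fs(z)$; then flowing/saturating shows $\fs(z) \supset \fss(p')$ for a suitable periodic point on the unstable side, whose stable manifold is dense. Chasing this through, $\overline{\fs(z)}$ contains a dense set, hence $\fs(z)$ is dense. (The clean way to organize this: show $\overline{\fs(z)}$ is $\flow$-invariant, saturated by full stable leaves, and, using the closing lemma to insert periodic orbits and the $\lambda$-lemma to propagate density along unstable directions, that it is also open; being nonempty and closed it is all of $M$.)

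\textbf{(iii) $\Rightarrow$ (i).} Suppose every stable and every unstable leaf is dense. Let $U, V$ be nonempty open sets; I must find $t$ with $\flow^t(U) \cap V \neq \emptyset$, which (by a Baire category argument over a countable basis) yields a dense orbit. Shrinking, take small flow boxes $U, V$ with local product structure. Pick $x \in U$; since $\fu(x)$ is dense it meets $V$, so there is $y \in \fu(x) \cap V$. Because $y$ and $x$ lie on a common unstable leaf, $d(\flow^{-t}(x), \flow^{-t}(y)) \to 0$, so for suitable large $t$, $\flow^{-t}(y)$ is very close to $\flow^{-t}(x)$; flowing this back and using the local product structure inside $V$, one finds a point whose forward orbit enters $U$ and later returns near $V$ — more directly: by density of $\fs$-leaves the unstable manifold of a point of $U$ and the stable manifold of a point of $V$ both being dense lets one connect $U$ to $V$ by a ``stable then unstable'' jog localized in charts, which translates to $\flow^t(U)\cap V\neq\emptyset$ for some $t$. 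Then the standard Baire argument produces a dense orbit.

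\textbf{Main obstacle.} The delicate step is (ii) $\Rightarrow$ (iii): proving that the stable leaf (or at least the strong stable manifold of a periodic orbit) is dense. The cleanest rigorous route is the $\lambda$-lemma / inclination lemma combined with the shadowing-type consequences of the closing lemma to show that $\overline{\fs(z)}$ is open, but carefully setting up the local product structure coordinates and verifying openness requires some care; all other implications are comparatively routine. I would cite \cite[Theorem 5.3.50]{FH19} or the surrounding discussion for the parts that are genuinely classical, and only spell out the closing-lemma input in the orbit-space spirit of the rest of the text.
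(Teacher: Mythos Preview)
The paper does not actually prove Theorem~\ref{thm_characterize_transitive_flows}: it is stated as a classical result with a reference to \cite[Theorem 5.3.50]{FH19}, and the paper's contribution is instead Theorem~\ref{thm_characterization_transitive}, which recovers the same equivalences in the broader setting of Anosov-like actions on bifoliated planes. That proof proceeds quite differently from yours: it introduces Smale classes and a Smale order (Definition~\ref{def:smale_class}), establishes a local product structure on each Smale class (Proposition~\ref{prop_basic_product_structure}), proves topological transitivity on each class via a Baire argument (Theorem~\ref{thm:transitive_on_Smale_class}), and then shows the three conditions are each equivalent to there being a unique Smale class. The closing lemma enters only indirectly (and is in fact bypassed for the general Anosov-like case via Lemma~\ref{lem_wandering_lozenge}).

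Your proposal is the classical hyperbolic-dynamics route and is a reasonable sketch, but it is not self-contained where it matters most. The step (ii)~$\Rightarrow$~(iii) is the heart of the argument, and you essentially defer it: the claim that $\fss(p)$ is dense for a periodic $p$ is asserted via ``a connectedness/bootstrapping argument'' or the $\lambda$-lemma without carrying it out, and the passage from density of periodic stable leaves to density of \emph{all} stable leaves is muddled (the sentence beginning ``using the local product structure near a point of $\fs(p)\cap\fs(z)$-proximity'' does not parse into an argument). Your (iii)~$\Rightarrow$~(i) is similarly gestural. Since you end by citing \cite[Theorem 5.3.50]{FH19} anyway, your write-up amounts to the same thing the paper does---invoke the reference---just with more scaffolding around it. If you want a genuine alternative proof in the spirit of this text, the orbit-space route via Smale classes in Section~\ref{sec_transitive} is what to follow.
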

Translated to the orbit space, Theorem \ref{thm_characterize_transitive_flows} says that topological transitivity of the induced action of a smooth Anosov flow on its orbit space is equivalent to having a dense set of points in $\orb$ fixed by nontrivial elements of $\pi_1(M)$, and also equivalent to the density of the orbit of every leaf of $\bfs$ and of $\bfu$.  

In this section, we recover this characterization for general Anosov-like actions.  Translating this back to the flows setting, it shows that Theorem \ref{thm_characterize_transitive_flows} is true not only for smooth Anosov flows, but for topological pseudo-Anosov flows as well.  We prove the following.  

\begin{theorem}\label{thm_characterization_transitive}
Suppose $G$ acts Anosov-like on $P$.  The following are equivalent:
\begin{enumerate}[label=(\roman*)]
\item \label{item_charac_top_transitive} The action of $G$ is topologically transitive;
\item \label{item_charac_dense_fixed_points} The set of points fixed by nontrivial elements of $G$ is dense in $P$;
\item \label{item_charac_dense_leaves} For any leaf $l$ of $\cF^\pm$, $G\cdot l$ is dense in $P$.
\end{enumerate}
\end{theorem}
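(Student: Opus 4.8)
The plan is to establish the cyclic implications $\ref{item_charac_top_transitive} \Rightarrow \ref{item_charac_dense_leaves} \Rightarrow \ref{item_charac_dense_fixed_points} \Rightarrow \ref{item_charac_top_transitive}$, using the structure theory built up in the chapter, particularly the density Axiom \ref{Axiom_dense}, the trichotomy (Theorem \ref{thm:trichotomy}), and the fact that infinite product regions force triviality (Proposition \ref{prop:no_product_region}). The first step, transitivity implies density of leaf-orbits, should be relatively soft: if $G\cdot x$ is dense for some $x\in P$, then since every point lies on a leaf of $\cF^+$ and of $\cF^-$, the $G$-orbit of $\cF^+(x)$ (as a subset of $P$) is dense. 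To promote this to \emph{every} leaf, I would use minimality-type arguments together with Axiom \ref{Axiom_dense}: the closure of $G\cdot l$ is a closed, $G$-invariant, $\cF^+$-saturated set; if it were not all of $P$, its boundary would consist of leaves, and one would analyze a boundary leaf to get a contradiction — either via a fixed leaf contradicting Axiom \ref{Axiom_dense} (applying iterates of an element fixing a transverse leaf to push the boundary), or by producing an infinite product region, contradicting Proposition \ref{prop:no_product_region}. The trichotomy reduces matters to a case analysis: the trivial plane case is handled directly by Proposition \ref{prop:trivial_affine} (minimality of the coordinate actions), the skew case by the characterization in Exercise \ref{ex_charac_skew} (again minimality), and the branching/singular case is where the real work lies.

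For $\ref{item_charac_dense_leaves} \Rightarrow \ref{item_charac_dense_fixed_points}$: given density of the orbit of every leaf, fix a nonempty open set $U\subset P$. By Axiom \ref{Axiom_dense} there is a leaf $l$ of $\cF^+$ fixed by a nontrivial $g\in G$, with a fixed point $x\in l$. Since $G\cdot l$ is dense, some $h(l)$ meets $U$; but I need a fixed \emph{point}, not just a fixed leaf, inside $U$. The idea is to use $h g h^{-1}$, which fixes $h(l)$ and has fixed point $h(x)$ — but $h(x)$ need not be in $U$. To fix this, I would combine density of $\cF^+$-leaf orbits with density of $\cF^-$-leaf orbits: find $h$ with $h(l)$ crossing $U$ and $k$ with $k(l')$ crossing $U$ for $l'$ a fixed $\cF^-$-leaf, arranged so that $h(l)\cap k(l')\in U$; then since these are fixed leaves of $hgh^{-1}$ and $kg'k^{-1}$ respectively (which are topologically expanding/contracting on them by Axiom \ref{Axiom_A1}), I can use the contracting dynamics to push a fixed point into $U$. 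Concretely, applying a high power of the relevant conjugated element contracts a neighborhood of the intersection toward its fixed point, which can be arranged to lie in $U$. This is morally the same move used in the proof of Proposition \ref{prop_density} via the closing lemma, just transcribed to the abstract setting.

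For $\ref{item_charac_dense_fixed_points} \Rightarrow \ref{item_charac_top_transitive}$: this is a Baire category / Birkhoff-transitivity style argument. Since $P$ is a separable, completely metrizable space (it is homeomorphic to $\R^2$), topological transitivity is equivalent to the statement that for every pair of nonempty open sets $U,V$ there exists $g\in G$ with $g(U)\cap V\neq\emptyset$. So it suffices to verify this ``topological transitivity condition'' and then invoke the Birkhoff transitivity theorem to get an actual dense orbit. Given nonempty open $U,V$, pick a fixed point $p$ of a nontrivial $g\in G$ inside $U$, which (after replacing $g$ by a power) fixes all rays through $p$ and is topologically expanding on one leaf, contracting on the other — more usefully, by Corollary \ref{cor:maximal_fixed} the dynamics of $g$ near $p$ is a topological saddle. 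Then pick a fixed point $q$ of a nontrivial $h\in G$ inside $V$. Using density of fixed points again, I can find fixed points whose stable/unstable leaves are positioned to ``connect'' $U$ to $V$; iterating: a point near $p$ on the unstable leaf of $g$ flows under $g^n$ out along that unstable leaf, and density lets me chain through finitely many such saddles to reach $V$. The cleanest version: show the action is minimal on a dense $G_\delta$, or directly that closures of orbits of fixed points are all of $P$ (using that the unstable leaf of a fixed point $p$ limits onto a dense set by \ref{item_charac_dense_leaves}, which we already proved is equivalent).

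The main obstacle I anticipate is the branching/singular case of $\ref{item_charac_top_transitive}\Rightarrow\ref{item_charac_dense_leaves}$ — ensuring that the closure of $G\cdot l$ has no boundary without the crutch of a global product or affine structure. The key technical tool is that a boundary leaf of a $G$-invariant $\cF^+$-saturated set, if it is not itself $G$-invariant, produces either nonseparated leaves (handled by Proposition \ref{prop:4weak4strong} and the scalloped-region structure of Proposition \ref{prop:infinite_lozenge_is_scalloped}, whose stabilizer is virtually $\Z^2$ by Lemma \ref{lem_action_of_stabilizer_scalloped}, contradicting the existence of a dense orbit for the full group unless $P$ is small) or a perfect fit leading to a lozenge via Lemma \ref{lem:fixed_is_corner}, and in either case one derives a contradiction with density of the orbit. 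I would structure this carefully, isolating a lemma: \emph{if $G$ acts Anosov-like on $P$ with a dense orbit, then every closed $G$-invariant $\cF^\pm$-saturated proper subset is empty}, and prove it by the boundary analysis above. Everything downstream then follows from the equivalences and Baire category.
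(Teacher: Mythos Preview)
Your cycle $\ref{item_charac_top_transitive}\Rightarrow\ref{item_charac_dense_leaves}\Rightarrow\ref{item_charac_dense_fixed_points}\Rightarrow\ref{item_charac_top_transitive}$ differs from the paper's route, which introduces \emph{Smale classes} and the \emph{Smale order} (Definition~\ref{def:smale_class}) and proves $\ref{item_charac_dense_fixed_points}\Leftrightarrow$(unique Smale class)$\Rightarrow\ref{item_charac_top_transitive}$ via a local product-structure argument (Proposition~\ref{prop_basic_product_structure}), proves $\ref{item_charac_dense_fixed_points}\Rightarrow\ref{item_charac_dense_leaves}$ by an easy connectedness argument, and proves $\ref{item_charac_top_transitive}\Rightarrow\ref{item_charac_dense_fixed_points}$ by importing a nontrivial structural result from \cite{BBM24b} (Lemma~\ref{lem_wandering_lozenge}) about wandering lozenges and totally ideal quadrilaterals. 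The genuinely hard implication is $\ref{item_charac_top_transitive}\Rightarrow\ref{item_charac_dense_fixed_points}$ (equivalently $\ref{item_charac_top_transitive}\Rightarrow\ref{item_charac_dense_leaves}$, since the other implications link $\ref{item_charac_dense_fixed_points}$ and $\ref{item_charac_dense_leaves}$ without it); the paper does not prove this from the axioms alone.

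Your sketch of $\ref{item_charac_top_transitive}\Rightarrow\ref{item_charac_dense_leaves}$ has a real gap. You correctly observe that $C=\overline{G\cdot l}$ is closed, $G$-invariant, $\cF^+$-saturated, and (if proper) nowhere dense. But your boundary analysis is not an argument: the dichotomy ``a boundary leaf either produces nonseparated leaves or a perfect fit'' is unjustified, and even granting it, neither branch yields a contradiction with transitivity. A scalloped region having virtually-$\bZ^2$ stabilizer does not preclude a dense $G$-orbit, nor does the existence of a lozenge. What you would actually need is a statement like: if there is more than one Smale class, then some lozenge (or totally ideal quadrilateral) is wandering --- and this is exactly the content of Lemma~\ref{lem_wandering_lozenge}, which the paper treats as a black box. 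Your remark that this is ``morally the same move'' as Proposition~\ref{prop_density} is misleading: that proposition uses the \emph{closing lemma}, which is unavailable for abstract Anosov-like actions (it appears only as the additional Axiom~\ref{Axiom_closing} in the next chapter).

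Your $\ref{item_charac_dense_leaves}\Rightarrow\ref{item_charac_dense_fixed_points}$ also has a gap, though smaller. You cannot ``push a fixed point into $U$'': the fixed point of $hgh^{-1}$ on $h(l)$ is $h(x)$, and no power of that element moves it. What is needed is to \emph{create} a new fixed point near the intersection $q=h(l)\cap k(l')\in U$ by composing high powers of the two elements (one contracting an interval of $\cF^+$-leaves, the other an interval of $\cF^-$-leaves) together with a connecting element --- this is the heteroclinic/product-structure argument of Proposition~\ref{prop_basic_product_structure}. Under $\ref{item_charac_dense_leaves}$ the required connecting element exists (density of leaf orbits gives $a\lG b$ and $b\lG a$ for any fixed points $a,b$), so this step can be repaired. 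Your $\ref{item_charac_dense_fixed_points}\Rightarrow\ref{item_charac_top_transitive}$ via Baire and saddle-chaining is correct and essentially matches the paper's Lemma~\ref{lem_intervals_have_dense_orbit} and Proposition~\ref{prop_unique_class_transitive}.
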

Here $G \cdot l$ denotes the set $\bigcup \{ g(l) : g \in G\} \subset P$. 

When $P$ is not skew, one can make condition \ref{item_charac_dense_fixed_points} even more specific:
\begin{proposition}[\cite{BFM22}]\label{prop_transitive_non_corner_dense}
Suppose $P$ is not a skew plane. An Anosov-like action of $G$ is topologically transitive if and only if the set of \emph{non-corner} points\footnote{Here and in what follows we say a point is {\em non-corner} if it is not the corner of any lozenge} fixed by nontrivial elements of $G$ is dense in $P$.  
\end{proposition}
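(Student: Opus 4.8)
\emph{Easy direction and reduction.} The ``if'' direction is immediate from Theorem \ref{thm_characterization_transitive}: if the non-corner fixed points are dense then, a fortiori, the set $F$ of all points fixed by nontrivial elements of $G$ is dense, so the action is transitive. For the converse, the plan is to split the statement into two independent pieces: (a) transitivity implies $F$ is dense (this is exactly Theorem \ref{thm_characterization_transitive}), and (b) if $P$ is not skew, then the set $C$ of points that are a corner of some lozenge has empty interior. Granting (b), the set $P \smallsetminus \overline{C}$ is dense and open, so $F \smallsetminus \overline{C}$ is dense in $P$ (a dense set meets every nonempty relatively open subset of a dense open set), and every point of $F \smallsetminus \overline{C}$ is a non-corner point fixed by a nontrivial element of $G$. (If $P$ is trivial there are no perfect fits, hence $C = \emptyset$ and (b) is vacuous, so I only need to prove (b) when $P$ is neither trivial nor skew.)

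\emph{The iteration for (b).} Assume, for contradiction, that $C$ is dense in some nonempty open set; since singular points are isolated, I may take this set to be a trivially foliated box $B_0$ with compact closure disjoint from the singular locus. I would then build nested trivially foliated boxes $B_0 \supset B_1 \supset B_2 \supset \cdots$ with diameters tending to $0$, together with lozenges $L_1, L_2, \dots$ with $B_n \subseteq B_{n-1} \cap L_n$, as follows: given $B_{n-1}$, density of $C$ supplies a point $x_n \in C \cap B_{n-1}$ which is a corner of a lozenge $L_n$; since $x_n$ lies in the closure of the open set $L_n$, the intersection $B_{n-1}\cap L_n$ is nonempty and open, and I take $B_n$ to be a box inside it. Then $B_n \subseteq L_1 \cap \dots \cap L_n$, so the corner $x_{n+1}\in B_n$ lies in the \emph{interior} of every $L_i$ with $i\le n$; as the corners of a lozenge lie on its boundary, $L_{n+1}$ is distinct from $L_1,\dots,L_n$. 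Thus $\{L_n\}$ is an infinite family of pairwise-distinct lozenges, and the common point $z = \bigcap_n \overline{B_n}$ --- a nonsingular point of $P$ --- lies in $\overline{L_n}$ for all $n$.

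\emph{The contradiction, and the main obstacle.} It remains to rule out that infinitely many pairwise-distinct lozenges have closures all containing a fixed nonsingular point $z$. At most one $L_n$ can have $z$ as a corner in a given quadrant of $z$ (uniqueness of perfect fits, Lemma \ref{lem:perfect_fit_unique}), so after discarding finitely many I may assume $z$ is never a corner of the $L_n$, i.e.\ $z$ lies in the interior or on a non-corner side of each. Controlling how consecutive $L_n$ overlap via the dichotomy of Lemma \ref{lem_markovian_or_corner}, I expect to extract one of: (i) an infinite line of lozenges whose common sides escape every compact set of $P$, which exhibits an infinite product region and contradicts Proposition \ref{prop:no_product_region}; (ii) an infinite family of leaves accumulating on a union of pairwise nonseparated leaves, which by Proposition \ref{prop:infinite_lozenge_is_scalloped} and Corollary \ref{cor_infinite_non_sep_implies_scalloped} is part of a scalloped region, whence --- via the order structure inside it, or Proposition \ref{prop_biinfinite_skew} and Lemma \ref{lem_action_of_stabilizer_scalloped} --- the plane $P$ would be skew, contrary to hypothesis; or (iii) an infinite family of pivot points accumulating at $z$, contradicting the discreteness of pivots (Proposition \ref{prop_pivots_discrete}). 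Any of these is a contradiction, so $C$ has empty interior. The genuinely delicate step is precisely this last one: converting ``infinitely many distinct lozenges piling up at a nonsingular non-corner point'' into one of the three forbidden configurations, handling the two alternatives of Lemma \ref{lem_markovian_or_corner} uniformly and checking that the limiting leaves one produces are the sides of honest lines of lozenges so that the scalloped-region machinery applies. This is also where the non-skewness of $P$ is used essentially rather than formally --- in the skew plane corner points really are dense, and it is exactly the absence of product regions, the rigidity of scalloped regions, and the discreteness of pivots that stop the iteration from running forever.
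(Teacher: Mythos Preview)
Your approach diverges substantially from the paper's, and the contradiction step contains a genuine error.

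\medskip
\textbf{The paper's argument.} The paper does not attempt to show that the set $C$ of all corner points is nowhere dense by an accumulation argument. Instead it directly \emph{exhibits} a single open set containing no corners, using the non-corner criterion (Lemma~\ref{lem_no_corner_criterion}). Since $P$ is neither trivial nor skew, there exist two lozenges $L_1,L_2$ sharing a side (Proposition~\ref{prop:4weak4strong}). Using density of $\Fix_G$ (which follows from transitivity), pick $x\in L_1$ fixed by some $g$; then $g(L_1)\cap L_2$ is a nonempty open set, and every point in it has, in each of its four quadrants, a pair of leaves making a perfect fit (coming from the sides of $L_1$, $L_2$, or $g(L_1)$). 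By Lemma~\ref{lem_no_corner_criterion} no such point can be a corner. Since $C$ is $G$-invariant and the action is transitive, the $G$-orbit of $g(L_1)\cap L_2$ is a dense open set free of corners. This is short and constructive; it uses transitivity twice (to find $x$ and to spread the open set), and no delicate limiting analysis is needed.

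\medskip
\textbf{Gaps in your argument.} First, a small slip: claim~(b) as stated (``$C$ has empty interior'') does not give that $P\smallsetminus\overline{C}$ is dense; you need $C$ nowhere dense, which is indeed what your contradiction hypothesis (``$C$ dense in some open set'') negates. More seriously, the contradiction step is not carried out and scenario~(ii) is simply wrong: the existence of a scalloped region does \emph{not} force $P$ to be skew. Scalloped regions occur in many non-skew planes (see the Bonatti--Langevin orbit space in Section~\ref{BL_orbit_space}), and Proposition~\ref{prop_biinfinite_skew} concerns bi-infinite skew regions, which are a different object. So even if your iteration produced a scalloped region, you would have no contradiction. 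Scenario~(iii) is also not available without further work: your points $x_n$ are corners of single lozenges, not pivots (common corners of two lozenges sharing a side), so Proposition~\ref{prop_pivots_discrete} does not apply to them directly. Thus the heart of your argument --- passing from ``infinitely many distinct lozenges accumulating at $z$'' to a contradiction --- remains open, and at least one of the three exits you propose is closed off.
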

This proposition is proved in Section \ref{sec_noncornerdense}, after the proof of Theorem \ref{thm_characterization_transitive} in Section \ref{sec_char_transtitive}.  

The implication \ref{item_charac_dense_fixed_points}$\Rightarrow$\ref{item_charac_dense_leaves} in Theorem \ref{thm_characterization_transitive} is a simple consequence of the connectedness of the plane: Given any leaf $l$, the closure of $G\cdot l$ is a $G$-invariant subset of $P$.    If the closure of $G\cdot l$ is not all of $P$, then we can find an open set $U$ in the complement such that $\cF^+(U) \cap (G\cdot l) \neq \emptyset$.   If also the set of fixed points of elements is dense in $P$, some nontrivial $g \in G$ has a fixed point in $U$, applying iterates of $g$ to $G \cdot l$, one sees that in fact $G \cdot l$ must in fact accumulate on the fixed point of $g$, giving a contradiction.  

In order to prove the more difficult equivalence between \ref{item_charac_dense_fixed_points} and \ref{item_charac_top_transitive}, as well as the implication \ref{item_charac_dense_leaves}$\Rightarrow$\ref{item_charac_dense_fixed_points}, we will need to introduce the notion of {\em Smale classes} and the {\em Smale order}. 

\subsection{Smale order and Smale classes}

Smale \cite{Sma67} proved that the non-wandering set of a hyperbolic dynamical system can be decomposed into a family of \emph{basic sets} with a partial order, called the Smale order.   See \cite[Section 5.3]{FH19} for a description of this theory.  Here we describe the generalization of these structures to the setting of bifoliated planes with Anosov-like actions, which was introduced in \cite{BBM24b}. 
In the case of orbit space of flows, when $x$ and $y$ represent periodic orbits, then $x \leq y$ in the Smale order if there is an orbit which spirals onto $x$ in the past and $y$ in the future.  %
This is extended to points in the closure of such fixed points:

\begin{notation} 
Let $\Fix_G := \{x \in P : \exists g \neq \id \text{ with } g(x) =x\}$, and denote the closure of $\Fix_G$ in $P$ by $\Fixbar_G$.
\end{notation} 

Classically, the theory of Smale classes was only developed for smooth Anosov rather than (topological) pseudo-Anosov, flows.  Certain unexpected pathologies can arise in the presence of prongs -- see Remark \ref{rem_nonwandering_vs_chain_recurrent} below.  In order to define Smale classes in a general setting, we need to restrict our attention to the points of $\Fixbar_G$ that are not on singular leaves.

\begin{definition} 
The {\em regular set}  $\cR_G$ is the subset of $\Fixbar_G$ consisting of points $x\in \Fixbar_G$ such that no leaf of $\cF^{\pm}(x)$ is a singular leaf. Equivalently, $\cR_G$ is obtained from $\Fixbar_G$ by removing all prongs and their leaves.
\end{definition}

\begin{rem} \label{rem_nonwandering_vs_chain_recurrent}
When $P$ is the orbit space of an Anosov flow on a 3-manifold $M$, and  $G =\pi_1(M)$ with the induced action, then 
 $\Fixbar_G = \cR_G$ and it is equal to the non-wandering set of the action of $G$. It is also equal to the projection to the orbit space of the non-wandering set of the flow, which is the chain-recurrent set.
 
However, in the presence of prong singularities (even for actions coming from pseudo-Anosov flows on compact manifolds)
the non-wandering set may be strictly smaller than the chain recurrent set. Some examples are given in \cite[Section 8]{BBM24b}.  In these cases, the closure of $\Fixbar_G$ corresponds to the projection of the non-wandering set. 
\end{rem}

For Anosov-like actions, the interesting dynamics happens in the set $\Fixbar_G$.  Working in the regular set $\cR_G$ is necessary to deal with the unusual behavior that arise in the presence of prongs (as noted in Remark \ref{rem_nonwandering_vs_chain_recurrent} above), but it may happen that the closure of $\cR_G$ misses parts of $\Fixbar_G$, and thus misses parts of the dynamics.  Thus, we introduce some terminology for points of $\Fixbar_G\smallsetminus \bar{\cR}_G$ and will keep track of them separately.  

\begin{definition}
A prong $p\in P$ is called an \emph{isolated prong singularity} if $p$ is an isolated point of $\Fixbar_G$. Equivalently, isolated prong singularities are the points in $\Fixbar_G\smallsetminus \bar{\cR}_G$.
\end{definition}

\begin{definition}[Smale order, Smale class] \label{def:smale_class}
For $x,y\in \cR_G$, we write $x\lG y$ if there exists $g\in G$ such that $\cF^+(x)\cap \cF^-(gy)\neq\emptyset$.
If $x\lG y$ and $y\lG x$, we say that $x,y\in \cR_G$ are in the same {\em Smale class}, and write $x\sim_G y$. 

We extend this relation to isolated prong singularities, by saying $x \sim_G y$ if $x = gy$ for some $g \in G$.  
\end{definition}

Notice that, by definition, if $x \lG y$, then $x \lG gy$ for all $g \in G$.  In particular, Smale classes are $G$-invariant sets.

\begin{lemma}\label{lem_lG_reflexive_transitive}
The relation $\lG$ is reflexive and transitive on $\cR_G$.  Consequently, the relation $\sim_G$ is an equivalence relation, and $\lG$ induces a partial order on the Smale classes that do not consist of the orbit of an isolated prong singularity.
\end{lemma}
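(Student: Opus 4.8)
The plan is to verify the three claimed properties — reflexivity, transitivity, and then the descent to a partial order on Smale classes — essentially by unwinding Definition \ref{def:smale_class} and invoking the structural results already established in the excerpt.

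\textbf{Reflexivity.} For $x \in \cR_G$, I need $g \in G$ with $\cF^+(x) \cap \cF^-(gx) \neq \emptyset$. The point of $\cR_G$ lies in the closure of $\Fix_G$, so there is a sequence of points $x_n$, each fixed by a nontrivial $g_n \in G$, converging to $x$. I would argue that this forces $x$ itself to be a corner of a chain of lozenges or to be fixed, or more directly: by Axiom \ref{Axiom_dense} the set of leaves of $\cF^+$ (resp. $\cF^-$) with nontrivial stabilizer is dense, so I can find a leaf $l^+$ of $\cF^+$ close to $\cF^+(x)$ fixed by some nontrivial $g$; then $g$ has a unique fixed point $z$ on $l^+$ (Axiom \ref{Axiom_A1}), and $g$ is topologically expanding or contracting on $\cF^-(z)$. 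Applying powers of $g$ to a leaf of $\cF^-$ crossing $l^+$ near $x$, one sees that $g^n$ of that leaf sweeps across, and in particular for suitable $n$ the leaf $\cF^-(g^n x)$ (or a translate) meets $\cF^+(x)$. The cleanest route is probably: since $x \in \bar{\Fix}_G$ and $x$ is regular, $x$ lies in the closure of corners of chains of lozenges, and each such corner $c$ satisfies $\cF^+(c) \cap \cF^-(gc) \neq \emptyset$ for appropriate $g$ by Lemma \ref{lem:fixed_is_corner} (the corner of a lozenge sees the opposite corner along a $\cF^+$-then-$\cF^-$ path). A limiting argument using that the relevant intersections are open conditions then gives reflexivity for $x$ itself. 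For isolated prong singularities reflexivity is immediate by taking $g = \id$ in the extended definition.

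\textbf{Transitivity.} Suppose $x \lG y$ and $y \lG z$ with all three in $\cR_G$. Then there are $g, h \in G$ with $\cF^+(x) \cap \cF^-(gy) \neq \emptyset$ and $\cF^+(y) \cap \cF^-(hz) \neq \emptyset$; applying $g$ to the second (using $G$-invariance of the relation, or just that $g$ preserves the foliations) gives $\cF^+(gy) \cap \cF^-(ghz) \neq \emptyset$. So I have a point $a$ with $a \in \cF^+(x)$ and $a \in \cF^-(gy)$, and a point $b$ with $b \in \cF^+(gy)$ and $b \in \cF^-(ghz)$. Both $a$ and $b$ lie on the leaf $\cF^+(gy) = \cF^-(gy)$'s transverse partner... more precisely $a$ lies on the $\cF^-$-leaf through $gy$ and $b$ lies on the $\cF^+$-leaf through $gy$; together these two leaves through $gy$ let me travel from $x$'s stable direction to $z$'s unstable direction. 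Concretely: $\cF^+(x)$ meets $\cF^-(gy)$ at $a$; $\cF^+(gy)$ meets $\cF^-(ghz)$ at $b$. Consider the point $a$: its $\cF^+$-leaf is $\cF^+(x)$. Does $\cF^+(x)$ meet $\cF^-(ghz)$? Not obviously — this is exactly the standard subtlety in proving transitivity of the Smale order, and it is the step I expect to be the main obstacle. The resolution, as in the classical smooth case and as done in \cite{BBM24b}, is to use the density axiom \ref{Axiom_dense} together with the fact that $x, y, z$ are approximated by (or are) corners of chains of lozenges, and to ``push'' $gy$ along its own leaves: since $gy \in \cR_G$, there are fixed points arbitrarily close to it, and the element $g'$ fixing a nearby leaf can be used to translate $ghz$ so that $\cF^-$ of the translate does meet $\cF^+(x)$. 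More carefully, I would show: if $\cF^+(x) \cap \cF^-(gy) \neq \emptyset$ then for every leaf $l^-$ of $\cF^-$ sufficiently close to $\cF^-(gy)$ on the appropriate side, $\cF^+(x) \cap l^- \neq \emptyset$ as well (openness of transverse intersection plus properness), and symmetrically $\cF^+(gy) \cap \cF^-(ghz) \neq \emptyset$ means $\cF^-(ghz)$ is close to — in fact crosses — a whole interval of $\cF^+$-leaves near $\cF^+(gy)$; combining, using that $gy$ is regular so these neighborhoods contain genuine leaves of both foliations crossing in the expected product pattern, one finds a $G$-translate of $\cF^-(ghz)$ — i.e. $\cF^-(g''ghz)$ for some $g'' \in G$ fixing a nearby leaf — meeting $\cF^+(x)$. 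This gives $x \lG z$.

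\textbf{Descent to a partial order.} Once $\lG$ is reflexive and transitive, $\sim_G$ defined by ($x \lG y$ and $y \lG x$) is automatically an equivalence relation — reflexivity gives $x \sim_G x$, symmetry is built into the definition, transitivity of $\sim_G$ follows from transitivity of $\lG$. The relation $\lG$ then descends to a well-defined relation on equivalence classes: if $x \sim_G x'$, $y \sim_G y'$, and $x \lG y$, then $x' \lG x \lG y \lG y'$ so $x' \lG y'$. The descended relation is reflexive and transitive by construction; antisymmetry on classes is exactly the statement that $[x] \leq [y]$ and $[y] \leq [x]$ imply $[x] = [y]$, which is the definition of $\sim_G$. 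The only subtlety flagged in the statement is the exclusion of classes consisting of the orbit of an isolated prong singularity: for such a class the extended definition ($x \sim_G y \iff x = gy$) is used rather than the Smale-order definition, and one must check there is no clash — i.e. that an isolated prong singularity is never $\sim_G$-equivalent (in the Smale-order sense) to a regular point or to a different prong orbit. This holds because an isolated prong is by definition isolated in $\Fixbar_G$, hence cannot be $\lG$-comparable in the requisite two-sided way to points of $\cR_G$ outside its own orbit (any spiraling orbit would force nearby fixed points, contradicting isolation). I would state this last point as a short remark and conclude that $\lG$ induces a genuine partial order on the set of Smale classes excluding the isolated-prong orbits. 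I expect the reflexivity and the openness-of-intersection lemma feeding transitivity to be the technical heart; everything after that is formal.
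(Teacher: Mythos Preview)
Your reflexivity argument is badly off track: the definition of $x \lG y$ requires the existence of \emph{some} $g \in G$ with $\cF^+(x) \cap \cF^-(gx) \neq \emptyset$, and $g = \id$ works immediately since $x \in \cF^+(x) \cap \cF^-(x)$. The paper records this in one word (``trivial''). Your proposed argument via nearby fixed leaves, corners of lozenges, and limiting procedures is not only unnecessary but also incomplete as written --- for instance, a corner $c$ of a lozenge has opposite corner $c'$ \emph{fixed} by the same element, but there need not be any $g$ with $gc = c'$, so the claim ``each such corner $c$ satisfies $\cF^+(c) \cap \cF^-(gc) \neq \emptyset$ for appropriate $g$'' does not follow from Lemma~\ref{lem:fixed_is_corner} in the way you suggest; and the ``limiting argument'' at the end is suspect since nonempty transverse intersection is open but not closed under taking limits of leaves.

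Your transitivity argument, by contrast, is essentially the paper's approach and is sound at the level of ideas. The paper phrases it a bit more cleanly: given $\cF^+(x) \cap \cF^-(g_1 y) \neq \emptyset$ and $\cF^+(g_1 y) \cap \cF^-(g_1 g_2 z) \neq \emptyset$, one picks a nearby \emph{fixed point} $y'$ (not just a fixed leaf) of some nontrivial $g$, close enough to $g_1 y$ that both intersection conditions persist with $y'$ in place of $g_1 y$; then powers of $g$ contract $\cF^-(g_1 g_2 z)$ toward $\cF^-(y')$ along $\cF^+(y')$, and eventually $\cF^-(g^n g_1 g_2 z)$ meets $\cF^+(x)$. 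This is exactly the ``push via nearby hyperbolic element'' idea you describe, just stated more directly. The formal descent to a partial order on classes is fine.
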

\begin{proof}
Reflexivity is trivial, taking $g = id$, and symmetry holds by definition. 
For transitivity, there is nothing to check on isolated prongs, so 
suppose $x,y,z\in \cR_G$ satisfy $\cF^+(x)\cap \cF^-(g_1y)\neq\emptyset$ and $\cF^+(y)\cap \cF^-(g_2 z)\neq\emptyset$, so also $\cF^+(g_1 y)\cap \cF^-(g_1 g_2 z)\neq\emptyset$.  Let $y' \in \cR_G$ be a point fixed by some nontrivial element $g \in G$ and sufficiently close to $g_1 y$ so that $\cF^+(x)\cap \cF^-(y') \neq \emptyset$ and $\cF^+(y')\cap \cF^-(g_1 g_2 z)\neq\emptyset$.  Then (up to replacing $g$ with $g^{-1}$) we will have
that $\cF^-(g^n g_1 g_2 z) \cap \cF^+(x) \neq \emptyset$ as desired. 
\end{proof}

What will be important for us is that the Smale classes of an Anosov-like action have a {\em product structure} in the same sense that classical hyperbolic basic sets do -- see e.g. \cite[Definition 6.2.6]{FH19}. 

\begin{proposition}[Product structure on Smale classes] \label{prop_basic_product_structure}
 Let $a,b\in \cR_G$ be in the same Smale class $\Lambda$. Assume that $\cF^+(a)\cap \cF^-(b) \neq \emptyset$. 
Then, for any neighborhood $U$ of $\cF^+(a)\cap \cF^-(b)$, we have $U\cap \Lambda \neq \emptyset$. 
In particular, the intersection point $x= \cF^+(a)\cap \cF^-(b)$ is in $\cR_G$ and in the same Smale class as $a$ and $b$.
\end{proposition}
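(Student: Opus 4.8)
The plan is to exploit the density of fixed leaves (Axiom \ref{Axiom_dense}) together with the transitivity and reflexivity of $\lG$ established in Lemma \ref{lem_lG_reflexive_transitive}, and the characterization of leaf stabilizers from Axiom \ref{Axiom_A1}. Write $x := \cF^+(a) \cap \cF^-(b)$; this is a well-defined single point since $\cF^+(a)$ and $\cF^-(b)$ can meet in at most one point (a consequence of Proposition \ref{prop_top_properties}(iii), which descends to the plane). I would first observe that $x \lG a$ is immediate: indeed $\cF^+(x) = \cF^+(a)$ so $\cF^+(x) \cap \cF^-(b) \neq \emptyset$, and since $b$ is in the Smale class of $a$ we have $b \lG a$, whence $x \lG b \lG a$ by transitivity. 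Symmetrically, $\cF^-(x) = \cF^-(b)$ meets $\cF^+(a)$, so $a \lG x$, giving $a \lG x \lG a$, i.e. $x$ is in the same Smale class as $a$ (once we know $x \in \cR_G$). The real content is therefore showing $x \in \cR_G$, i.e. that $x$ lies in $\Fixbar_G$ and that neither leaf through $x$ is singular, and simultaneously producing, in any neighborhood $U$ of $x$, a point of $\Lambda$.

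The main step is to produce points of $\Fix_G$ arbitrarily close to $x$ lying in the same Smale class. Here is the approach. Since $a \in \cR_G$, by definition of $\cR_G$ it lies in the closure of $\Fix_G$, and moreover $\cF^+(a)$ is nonsingular; similarly $b \in \cR_G$ with $\cF^-(b)$ nonsingular. Fix a small trivially foliated rectangle $R \subset U$ around $x$, whose sides lie along $\cF^+$ and $\cF^-$ leaves, chosen small enough to avoid all singular leaves (possible since singular points are discrete by Axiom \ref{Axiom_prongs_are_fixed} combined with Axiom \ref{Axiom_A1}, and $x$ itself may a priori be singular — but in that degenerate case one argues directly that the singular point is fixed and already lies in $\Lambda$, so assume $x$ nonsingular). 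Using Axiom \ref{Axiom_dense}, pick a leaf $l^-$ of $\cF^-$ meeting $R$ near $x$ and fixed by a nontrivial $g \in G$; replacing $g$ by a power we may assume $g$ fixes all rays through its unique fixed point $z_1 \in l^-$, and by Axiom \ref{Axiom_A1}, $g$ is topologically expanding on one of $\cF^\pm(z_1)$ and contracting on the other. Likewise pick a $\cF^+$ leaf $l^+$ meeting $R$ fixed by $h \in G$ with fixed point $z_2$. The crucial claim is that, after suitable choices, a high iterate of a ping-pong combination of $g$ and $h$ — or more directly a single fixed point produced by the pseudo-Anosov closing lemma (Proposition \ref{prop_closing_translation}, in its orbit-space form) applied to a suitable $V_i \subset U_i \subset R$ — has a fixed point inside $R$, and this fixed point $w$ satisfies $a \lG w \lG b \lG a$. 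To see $w$ is in the Smale class: $\cF^+(w)$ is close to $\cF^+(x) = \cF^+(a)$, so it meets $\cF^-(b)$ (continuity of foliations on the compact transversal side of $R$ meeting $\cF^-(b)$), giving $b \lG w$; and $\cF^-(w)$ meets $\cF^+(a)$, giving $w \lG a$; combining with $a \lG x$-type relations and transitivity, $w \sim_G a$. Hence $w \in U \cap \Lambda$, and letting $R \to \{x\}$ we get $x \in \overline{\Lambda}$; but since $\Lambda$ is $G$-invariant and $\Fixbar_G$ is closed and $G$-invariant, and $x$ itself satisfies the $\lG$ relations above, one upgrades this to $x \in \cR_G$ and $x \in \Lambda$ (a Smale class is closed in $\cR_G$ because $\lG$ is defined by an open-closed condition combined with density of fixed points, as in the transitivity argument of Lemma \ref{lem_lG_reflexive_transitive}).

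I expect the main obstacle to be the passage from ``$x$ is a limit of Smale-class points'' to ``$x$ itself lies in $\cR_G$ and in the Smale class''. The relation $\lG$ is defined on $\cR_G$, so one must first know $x \in \cR_G$; since $\cR_G$ is $\overline{\Fix_G}$ with singular leaves removed and we have arranged $x$ nonsingular, it suffices that $x \in \overline{\Fix_G}$, which is exactly what the ping-pong / closing-lemma argument gives. Then the product-structure conclusion $x \in \Lambda$ follows because the relations $a \lG x$ and $x \lG b \lG a$ hold verbatim from $\cF^+(x) = \cF^+(a)$, $\cF^-(x) = \cF^-(b)$ and transitivity — so in fact once $x \in \cR_G$ is secured, membership in $\Lambda$ is formal and needs no limiting argument at all. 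A second, more technical, point to handle carefully is the appeal to the closing lemma in the orbit space: one must check the hypothesis $g(V_i) \cap V_i \neq \emptyset$ is met, which follows by choosing $l^\pm$ close enough to $x$ and iterating the expanding/contracting element to ``cross'' the small rectangle — this is precisely the kind of argument already used in the proof of Proposition \ref{prop_density} and I would model it on that. The case where $x$ happens to be singular is easy and disjoint: then the singular point is fixed by a nontrivial element (Axiom \ref{Axiom_prongs_are_fixed}), but a singular point cannot be in $\cR_G$, so one must instead show this case cannot occur given $a, b \in \cR_G$ — indeed $\cF^+(x) = \cF^+(a)$ is nonsingular and $\cF^-(x) = \cF^-(b)$ is nonsingular, so $x$ is nonsingular and the degenerate case is vacuous. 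This last observation actually streamlines the whole argument considerably.
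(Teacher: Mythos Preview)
Your structural framing is correct: since $\cF^+(x)=\cF^+(a)$ and $\cF^-(x)=\cF^-(b)$ are nonsingular (as $a,b\in\cR_G$), the point $x$ is automatically nonsingular, and once you know $x\in\Fixbar_G$ the Smale-class membership $x\sim_G a$ is indeed formal from transitivity. So the entire content is producing a point of $\Fix_G$ in an arbitrary neighborhood $U$ of $x$.

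This is where your argument has a genuine gap. You offer two routes and neither works in the Anosov-like setting:

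\textbf{(1) The closing lemma.} Proposition~\ref{prop_closing_translation} is stated and proved only for orbit spaces of pseudo-Anosov flows; it is not available for general Anosov-like actions (indeed, Chapter~\ref{chap_closing_lemma} introduces a separate Axiom~\ref{Axiom_closing} precisely because closing does \emph{not} follow from \ref{Axiom_A1}--\ref{Axiom_nonseparated}). Proposition~\ref{prop_basic_product_structure} is stated in the abstract setting, so you cannot invoke it.

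\textbf{(2) Ping-pong with arbitrary fixed-leaf stabilizers.} Picking a $\cF^-$-leaf $l^-$ through $R$ fixed by some $g$ gives you a fixed point $z_1\in l^-$, but $z_1$ can lie anywhere on $l^-$, arbitrarily far from $R$. Iterates of $g$ contract $\cF^+$-leaves toward $\cF^+(z_1)$, not toward $\cF^+(a)$, so there is no mechanism forcing a fixed leaf into your interval $I\subset\cF^+(U)$. The same obstruction applies to $h$ and $J$.

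The paper's argument supplies exactly the missing ingredient: it uses the Smale-class hypothesis a second time, taking the element $g\in G$ witnessing $b\lG a$ (so $g\cF^+(b)\cap\cF^-(a)\neq\emptyset$), together with elements $\alpha,\beta$ fixing $a,b$ themselves (after first replacing $a,b$ by nearby genuine fixed points, which is legitimate since $a,b\in\Fixbar_G$). Then $h=\alpha^m g\beta^n$ satisfies $h(I)\subset I$ and $h^{-1}(J)\subset J$ for large $m,n$: $\beta^n$ pushes $I$ toward $\cF^+(b)$, $g$ carries this to leaves meeting $\cF^-(a)$, and $\alpha^m$ contracts back into $I$; symmetrically for $J$. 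This produces the required fixed point in $U$. The element $g$ coming from the Smale relation is essential---your approach never uses the hypothesis $a\sim_G b$ beyond the formal transitivity step, which is why it cannot succeed.
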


A slightly more general version of this is proved in \cite[Section 3]{BBM24b}.

\begin{proof}
Suppose $a \sim_G b$ and $\cF^+(a)\cap \cF^-(b) \neq \emptyset$.  Let $U$ be a small product foliated neighborhood of $\cF^+(a)\cap \cF^-(b)$.   Since $a \sim_G b$, there exists $g \in G$ with $g( \cF^+(b)) \cap \cF^-(a) \neq \emptyset$.  

We will find a point of $\Fix_G$ in $U$.  Since $U$ is trivially foliated, this point will be nonsingular, and its leaves will intersect both $\cF^+(a)$ and $\cF^-(b)$, so it will be in the same Smale class as $a$ and $b$, proving the Proposition. 

First we reduce to the case where $a$ and $b$ are in $\Fix_G$.  If either is not in $\Fix_G$, we can replace them by arbitrarily nearby (nonsingular) points which are in $\Fix_G$, while preserving the fact that $\cF^+(a)\cap \cF^-(b)\neq \emptyset$ and $\cF^-(a)\cap g\cF^+(b)\neq\emptyset$. Thus, going forward we make the assumption that $a, b \in \Fix_G$.  
 
 Let $\alpha$ and $\beta$ be elements fixing $a$ and $b$, respectively.  Up to replacing with powers and inverses, we assume they fix all rays of $\cF^\pm(a)$ and $\cF^\pm(b)$, respectively, and are expanding on $\cF^+(a)$ and $\cF^+(b)$. 
 Let $I$ be a small interval of leaves of $\cF^+$ containing $\cF^+(a)$ in its interior, and choose this interval small enough so that  $I \subset \cF^+(U)$.  
 Then, for any large enough $n,m>0$, we will have that $g \beta^n(I)$ consists of the leaves passing through a small interval of $\cF^-(a)$.  Thus, $\alpha^m g \beta^n(I) \subset I$.  We conclude that  $h= \alpha^m g \beta^n$ fixes a leaf of $\cF^-$ in $I$. 
 
Similarly, if we consider an interval $J$ of leaves of $\cF^-$ that contains $\cF^-(b)$ in its interior and lies inside $\cF^-(U)$, we will have that (up to increasing $n,m$ if necessary) $h^{-1}(J) \subset J$. Therefore $h$ has a fixed point in $U$, which is what we needed to show. 
\end{proof}

\begin{figure}
     \centerline{ \mbox{
\includegraphics[width=8cm]{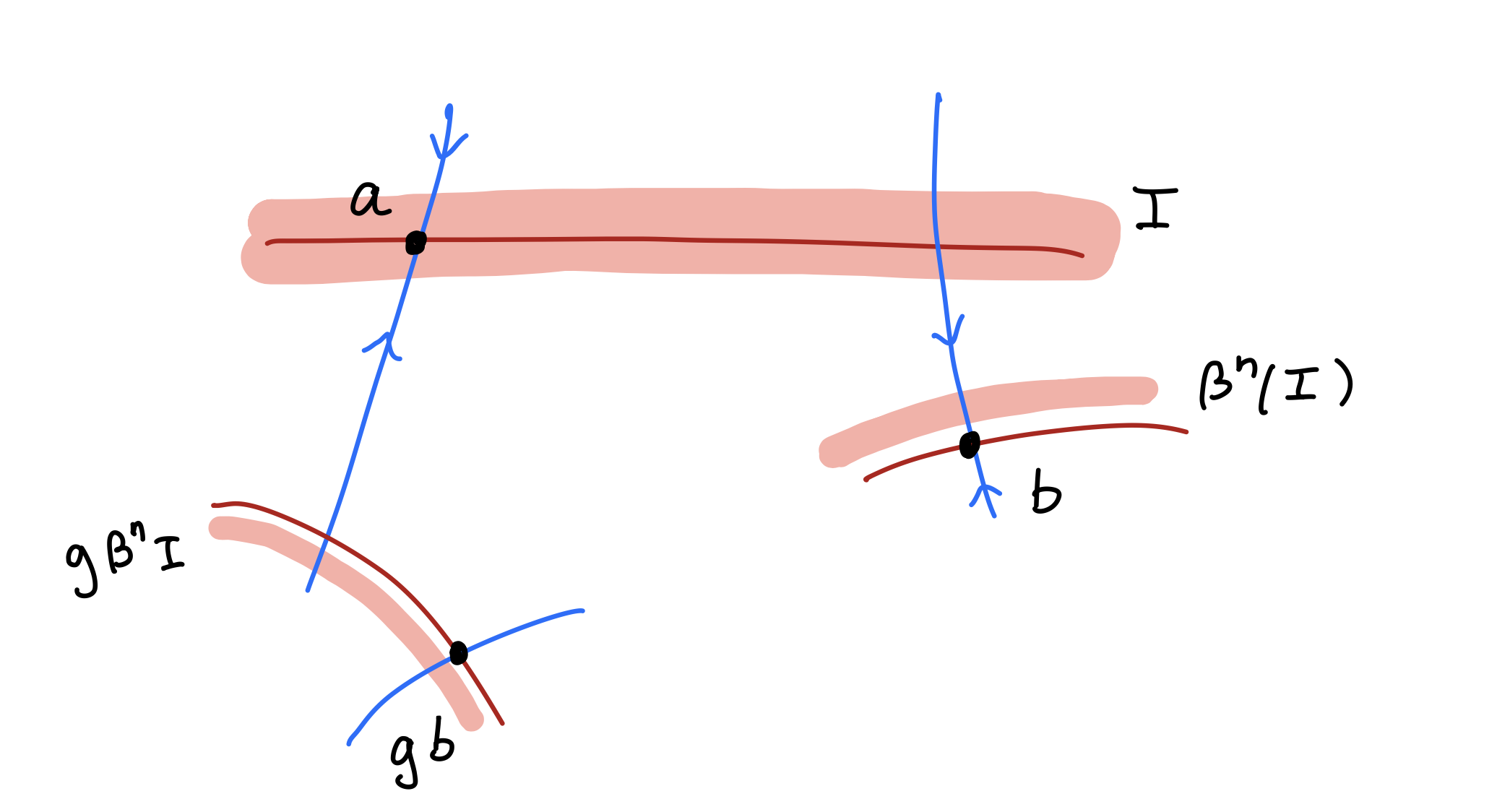} }}
\caption{Proof of local product structure}
\label{fig:product_structure}
\end{figure}

\subsection{Proof of Theorem \ref{thm_characterization_transitive}} \label{sec_char_transtitive}
With the tools above, we can now prove the remaining implications in the theorem.  We start by showing  \ref{item_charac_dense_fixed_points}$\Rightarrow$\ref{item_charac_top_transitive}, while also showing that density of fixed points is equivalent to having a unique Smale class.

\begin{theorem}\label{thm:transitive_on_Smale_class} 
If $G$ has an Anosov-like action on a bifoliated plane, then $G$ acts topologically transitively on each Smale class.   
\end{theorem}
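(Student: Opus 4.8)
The plan is to reduce ``existence of a dense orbit'' to an open set condition, and then verify that condition using the product structure of Smale classes (Proposition~\ref{prop_basic_product_structure}) together with the contraction/expansion dynamics of Axiom~\ref{Axiom_A1}. First I would dispose of the degenerate case: if $\Lambda$ is the Smale class of an isolated prong singularity $p$, then $\Lambda = G\cdot p$ is a single orbit and transitivity is automatic, so I may assume $\Lambda\subseteq\cR_G$ contains no isolated prongs. Since $\Lambda$ is second countable and $G$-invariant, a standard Baire category argument reduces the theorem to the following \emph{open set condition}: for every pair of nonempty relatively open sets $U,V\subseteq\Lambda$ there exists $g\in G$ with $g(U)\cap V\neq\emptyset$. (Given a countable basis $\{V_n\}$ of $\Lambda$, each $\bigcup_{g\in G}g^{-1}(V_n)$ is open and, by the open set condition, dense, so any point in their intersection has dense orbit.)

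Next I would show that $\Fix_G\cap\Lambda$ is dense in $\Lambda$ --- this is what produces periodic points inside $U$ and $V$. Fix $x\in\Lambda$. Since $x\lG x$, there is $g\in G$ with $\cF^+(x)\cap\cF^-(gx)\neq\emptyset$, and since $x\in\cR_G\subseteq\Fixbar_G$ every neighborhood of $x$ contains some $y\in\Fix_G$. Using the (topological) transversality of $\cF^+$ and $\cF^-$ --- concretely, that the set of $\cF^+$-leaves meeting a given $\cF^-$-leaf is open, and vice versa --- for $y$ sufficiently close to $x$ one gets $\cF^+(y)\cap\cF^-(gx)\neq\emptyset$ and $\cF^+(x)\cap\cF^-(gy)\neq\emptyset$, whence $y\lG x$ and $x\lG y$, i.e.\ $y\in\Lambda$. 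Thus arbitrarily close to $x$ there is a point of $\Fix_G\cap\Lambda$.

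For the open set condition, let $U,V\subseteq\Lambda$ be nonempty open and pick $a\in U\cap\Fix_G\cap\Lambda$ and $b\in V\cap\Fix_G\cap\Lambda$, with $\alpha,\beta\in G$ fixing $a,b$ respectively; by Axiom~\ref{Axiom_A1} (and replacing by inverses if needed) I may assume $\alpha$ is topologically expanding on $\cF^+(a)$ and $\beta$ is topologically expanding on $\cF^+(b)$, so that $h\beta h^{-1}$ is topologically contracting on $\cF^-(hb)$ towards its unique fixed point $hb$ there. Since $a\sim_G b$ there is $h\in G$ with $z:=\cF^+(a)\cap\cF^-(hb)\neq\emptyset$, and Proposition~\ref{prop_basic_product_structure} applied to the pair $(a,hb)$ (both in the $G$-invariant set $\Lambda$) gives $z\in\Lambda$. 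Because $z\in\cF^+(a)$ and $\alpha$ is expanding on $\cF^+(a)$, the points $\alpha^{-n}(z)$ converge to $a$, so for $n$ large $p:=\alpha^{-n}(z)$ lies in $U$ (and $p\in\Lambda$). Because $z\in\cF^-(hb)$ and $h\beta h^{-1}$ contracts $\cF^-(hb)$ towards $hb\in hV$, with $hV$ open in $\Lambda$, for $m$ large the point $(h\beta h^{-1})^m(z)=(h\beta h^{-1})^m\alpha^n(p)$ lies in $hV$. Then $g:=h^{-1}(h\beta h^{-1})^m\alpha^n$ satisfies $g(p)\in V$, so $g(U)\cap V\neq\emptyset$, as required.

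The real engine is Proposition~\ref{prop_basic_product_structure}: it is exactly what forces the connecting point $z$ coming from $a\lG b$ to lie back in $\Lambda$, so that the subsequent expansion/contraction moves $U$ onto $V$ \emph{without leaving the Smale class}. The two places I expect to need the most care are (i) the density of $\Fix_G\cap\Lambda$ in $\Lambda$, where one must check that the approximating periodic points genuinely lie in the same Smale class, and (ii) the Baire-category reduction, which requires knowing that $\Lambda$ --- which is open in $\cR_G$ --- is a Baire space; in fact I expect the bookkeeping of keeping \emph{every} point- and leaf-manipulation inside the $G$-invariant set $\Lambda$ to be the main (if mostly routine) obstacle, rather than any single hard idea.
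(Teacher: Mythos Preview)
Your proposal is correct and follows essentially the same strategy as the paper: reduce to an open-set condition via Baire category, then use the product structure of Proposition~\ref{prop_basic_product_structure} together with the expansion/contraction dynamics of Axiom~\ref{Axiom_A1} to move one open set onto another inside $\Lambda$.

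The only noteworthy difference is organizational. The paper picks arbitrary $a\in U\cap\Lambda$ and $b\in V\cap\Lambda$, uses Proposition~\ref{prop_basic_product_structure} to produce a single auxiliary periodic point $z$ near $\cF^+(a)\cap\cF^-(hb)$, and then uses the dynamics of the element fixing $z$ to push $U$ into $hV$, invoking the product structure once more at the end to land in $\Lambda$. You instead first prove the auxiliary fact that $\Fix_G\cap\Lambda$ is dense in $\Lambda$, pick $a,b$ to be periodic from the start, and track a specific point $p=\alpha^{-n}(z)\in\Lambda$ into $V$ using the dynamics at $a$ and $b$. Your bookkeeping is slightly heavier (two applications of hyperbolic dynamics and one of the product structure, plus the density lemma) but has the mild advantage that $G$-invariance of $\Lambda$ immediately keeps everything inside $\Lambda$; the paper's version is a bit more streamlined but needs a second appeal to Proposition~\ref{prop_basic_product_structure} at the end. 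One small point to tighten in your density argument: when approximating $x\in\Lambda$ by $y\in\Fix_G$, you should note explicitly that $y$ can be taken in $\cR_G$ (i.e.\ off all singular leaves), since the Smale relation is only defined there; this is routine but worth stating.
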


In the proof we use the following standard characterization of topologically transitive actions. 
\begin{lemma} \label{lem_dense_baire}
Let $X$ be a second countable Baire space with an action of a countable group $G$ by homeomorphisms.  Suppose that for any open sets $U, V$ there exists $g \in G$ such that $gU \cap V \neq \emptyset$.  Then $G$ has a dense orbit in $X$. 
\end{lemma}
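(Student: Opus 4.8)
The plan is to run the standard Baire category argument. Fix a countable basis $\{U_n\}_{n \in \bN}$ for the topology of $X$, consisting of nonempty open sets (possible since $X$ is second countable). The first step is to observe that for each fixed $n$, the set $W_n := \bigcup_{g \in G} g(U_n)$ is open, being a union of open sets (the action is by homeomorphisms), and moreover it is \emph{dense} in $X$: given any nonempty open $V$, the hypothesis produces some $g \in G$ with $g(U_n) \cap V \neq \emptyset$, and since $g(U_n) \subset W_n$ this shows $W_n \cap V \neq \emptyset$.

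Next, since $G$ is countable the family $\{W_n\}_{n \in \bN}$ is a countable collection of dense open subsets of the Baire space $X$, so $\bigcap_{n} W_n$ is dense in $X$; in particular it is nonempty. I would then pick any point $x \in \bigcap_n W_n$ and check that its orbit $G \cdot x$ is dense: for each $n$, membership $x \in W_n$ means there is $g_n \in G$ with $x \in g_n(U_n)$, equivalently $g_n^{-1}(x) \in U_n$, so $G \cdot x$ meets every basic open set $U_n$. As $\{U_n\}$ is a basis, $G \cdot x$ is dense, which is the desired conclusion.

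There is no real obstacle here; the only points requiring a moment's care are (i) that the hypothesis is exactly the statement that each $W_n$ is dense (note the mild asymmetry between ``$gU \cap V \neq \emptyset$'' and needing this for the specific pair $(U_n, V)$ with $V$ arbitrary, which is fine), and (ii) the harmless convention that the basis elements are taken nonempty, so that ``$G\cdot x$ meets $U_n$ for all $n$'' genuinely implies density. When this lemma is applied in the proof of Theorem~\ref{thm:transitive_on_Smale_class}, the ambient space $X$ will be a Smale class (or its closure) with the subspace topology, which is second countable and — being a closed subset of the plane $P$, or more generally a completely metrizable subspace — is Baire, so the hypotheses are met.
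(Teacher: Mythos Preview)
Your proof is correct and follows exactly the same approach as the paper's: define $W_n = G U_n$ for a countable basis, observe each is open and dense, apply Baire, and verify that any point in the intersection has dense orbit. One tiny slip: the countability of the family $\{W_n\}$ comes from the countability of the basis, not of $G$ (indeed the countability of $G$ is never actually used); but this does not affect correctness.
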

\begin{proof} 
Let $\{U_i\}$ be a countable basis for $X$, and let $W_i = GU_i$.  Since $G$ is transitive, each $W_i$ is an open dense set.  Since $X$ is a Baire space, the intersection of the $W_i$ is nonempty, and it is now straightforward to check each point in the intersection has dense orbit.  
\end{proof}

\begin{proof}[Proof of Theorem \ref{thm:transitive_on_Smale_class}]
Let $\Lambda$ be a Smale class.   If $\Lambda$ consists of isolated prongs singularities, then it is a single orbit, and the action of $G$ is transitive, so we assume this is not the case.  
By definition, the set of singular points is a closed, discrete set of $P$ so there are only countably many singular leaves. 
Thus the complement of the singular leaves is a dense, $G_\delta$-subset.   $\Lambda$ is a closed subset of the complement of the singular leaves, hence a Baire space.  Thus, we can apply the criterion from Lemma \ref{lem_dense_baire}: to obtain a dense orbit, it suffices to show that for any open sets $U, V$ such that $U\cap \Lambda \neq \emptyset$ and $V\cap \Lambda \neq \emptyset$, there exists $g\in G$ such that $gU \cap V \cap \Lambda \neq \emptyset$.    We now do this.   

Let $U$ and $V$ be open subsets of the complement of the singular points in $P$.  Shrinking these if necessary we can assume both are product foliated.  
Let $a\in U \cap \Lambda$ and $b\in V\cap\Lambda$. Since $a\sim_G b$, there exists $h \in G$ such that $\cF^{+}(a) \cap \cF^-(hb) \neq \emptyset$. If $a = hb$, then we are done. Otherwise, Proposition \ref{prop_basic_product_structure} gives us the existence of a nonsingular point fixed by some element of $G$ in any neighborhood of $\cF^{+}(a) \cap \cF^-(hb)$.  In particular, we can find such a point $z$ with $\cF^+(z) \cap U \neq \emptyset$ and $\cF^-(z) \cap hV \neq\emptyset $.   If $g$ is the element fixing $z$, then for some $n$ sufficiently large (either positive or negative), we will have $g^n U \cap hV \neq\emptyset$. 
Since $U$ and $V$ are both product foliated, $g^n U \cap hV$ contains the intersection of $\cF^-(g^n a)$ with $\cF^+(hb)$.  Proposition \ref{prop_basic_product_structure} now implies that $g^n U \cap hV$ must contain points of $\Lambda$.  This shows $G$ acts topologically transitively on $\Lambda$.
\end{proof} 

Next we will prove that an action has a unique Smale class if and only if $\Fixbar_G = P$.  For this we need the following lemma to treat the case where the only fixed points are isolated prong singularities.  This behavior can in fact occur for Anosov-like actions, in examples constructed from pseudo-Anosov flows on non-compact 3-manifolds.  

\begin{lemma}  \label{lem_only_prongs}
If $\Fixbar_G$ consists only of isolated prong singularities, then these lie in at least two $G$-orbits.  Consequently, there are at least two Smale classes. 
\end{lemma}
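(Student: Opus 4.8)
I would argue by contradiction: suppose that $\Fixbar_G$ consists entirely of isolated prong singularities which all lie in a single $G$-orbit, say the orbit of a prong $p$. By Axiom \ref{Axiom_prongs_are_fixed}, $p$ is fixed by some nontrivial $g \in G$; replacing $g$ by a power, I may assume $g$ fixes all rays (hence all faces of both foliations) through $p$. By Axiom \ref{Axiom_A1}, $g$ is topologically expanding on one of the leaves through $p$ — say on $\cF^+(p)$ — and contracting on $\cF^-(p)$. The strategy is to produce, from this dynamics, a fixed point of some group element which is \emph{not} a prong (it will lie on a leaf through $p$ but away from $p$ itself), contradicting the hypothesis that the only fixed points are the isolated prongs.

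First I would use Axiom \ref{Axiom_dense}: the union of $\cF^-$-leaves fixed by elements of $G$ is dense, so I can find a leaf $m$ of $\cF^-$ fixed by some nontrivial $h \in G$ with $m$ crossing a face of $\cF^+(p)$ transversally at a point $q \neq p$. By Axiom \ref{Axiom_A1}, $h$ fixes a unique point on $m$, and since every fixed point of every group element is by hypothesis a prong in the orbit of $p$, that fixed point must be some translate $g_0 p$ of $p$ lying on $m$. Thus $m$ is itself a singular leaf (a face of some $\cF^-(g_0 p)$). Now I would look at how $g$ acts on the picture: $g$ expands $\cF^+(p)$, so $g^n(q) \to p$ along the face of $\cF^+(p)$ containing $q$ as $n \to -\infty$ (or $+\infty$, depending on orientation). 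The leaves $g^n(m) = g^n(\cF^-(g_0 p)) = \cF^-(g^n g_0 p)$ are singular $\cF^-$-leaves through the prongs $g^n g_0 p$, and these accumulate on $\cF^-(p)$. This forces infinitely many distinct prong singularities $g^n g_0 p$ to pile up near $p$ along $\cF^+(p)$ — but the set of singular points is closed and discrete (an easy consequence of the definition of bifoliated plane, since prongs are properly embedded), so the $g^n g_0 p$ can accumulate only by escaping to infinity, not by converging to $p$. If instead they do escape, I would play $g$ and $h$ (or a conjugate $h' = g^n h g^{-n}$, which is nontrivial and fixes $g^n g_0 p$) against each other: the commutator-type element $h' g$ or $g h'$ is nontrivial, and a Brouwer-plane-translation / index argument as in Corollary \ref{cor:closed_orbits_primitive} combined with the local product picture near $p$ shows it must have a fixed point in a trivially foliated quadrant away from all prongs — a non-prong fixed point, the desired contradiction.

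A cleaner route, which I think is the one to write, avoids the case analysis: since $\Fixbar_G$ is assumed to be exactly the orbit $G\cdot p$ and this is discrete, $p$ is in particular an isolated prong singularity in the sense of Definition preceding \ref{def:smale_class}. Suppose for contradiction this orbit is the \emph{only} Smale class. Pick any leaf $l^+$ of $\cF^+$ fixed by some nontrivial $a \in G$ (Axiom \ref{Axiom_dense}) and any leaf $l^-$ of $\cF^-$ fixed by some nontrivial $b \in G$, chosen so that $l^+$ and $l^-$ cross at a nonsingular point $x$ — such a crossing exists by density and the fact that singular leaves are countable. By Axiom \ref{Axiom_A1}, $a$ fixes a unique point $x_a$ on $l^+$ and $b$ fixes a unique point $x_b$ on $l^-$; by hypothesis $x_a, x_b \in G\cdot p$, so both $l^+$ and $l^-$ are singular leaves, contradicting that $x = l^+ \cap l^-$ was chosen nonsingular. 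This contradiction shows $\Fixbar_G$ cannot consist solely of isolated prongs in a single orbit, so either there are at least two orbits of prongs (giving at least two Smale classes, since each orbit of an isolated prong is its own Smale class by Definition \ref{def:smale_class}), or $\Fixbar_G$ contains a nonsingular fixed point — but in the latter case $\Fixbar_G \neq G\cdot p$, contradicting the hypothesis of the lemma.

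\textbf{Main obstacle.} The delicate point is ruling out the degenerate possibility that \emph{every} leaf fixed by a group element is a singular leaf — i.e., that through every prong the two singular leaves are fixed and there is genuinely nothing else. The clean argument above handles this by exploiting that singular leaves are countable (since the singular set is discrete) while the fixed leaves of $G$ are dense by Axiom \ref{Axiom_dense}, so a dense set cannot be contained in a countable union of leaves; hence there must be a nonsingular fixed leaf, and its (unique, by \ref{Axiom_A1}) fixed point is a nonsingular point of $\Fixbar_G$. I would need to be careful that "the fixed leaves are dense" really does imply "some fixed leaf is nonsingular" — this uses that a single leaf is nowhere dense in $P$ and there are countably many singular ones, which is the Baire-category input. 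Once that is pinned down, the conclusion that there are $\geq 2$ Smale classes is immediate from the definitions.
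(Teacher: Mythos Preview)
Your ``cleaner route'' contains a genuine logical error. You argue that if $l^+$ and $l^-$ are fixed leaves crossing at a nonsingular point $x$, then the fixed points $x_a \in l^+$ and $x_b \in l^-$ being prongs forces $l^+, l^-$ to be singular leaves, ``contradicting that $x = l^+ \cap l^-$ was chosen nonsingular.'' But there is no contradiction: a \emph{singular leaf} is a leaf that \emph{contains} a prong, not one whose every point is singular. Two singular leaves of $\cF^+$ and $\cF^-$ can perfectly well meet at a nonsingular point away from their respective prongs.

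Your fallback Baire-category step has the same kind of problem. You want to conclude from ``the union of fixed $\cF^+$-leaves is dense'' (Axiom \ref{Axiom_dense}) and ``singular $\cF^+$-leaves are countable'' that some fixed leaf must be nonsingular. This fails: a countable union of leaves is meager, but meager sets can be dense (think of rationals in the line, transported to the leaf space). More to the point, notice that your Baire step nowhere uses the single-orbit assumption; if it worked, it would show the hypothesis of the lemma is \emph{impossible}. The paper explicitly remarks just after the lemma that this hypothesis \emph{is} realized in certain Anosov-like actions coming from flows on non-compact 3-manifolds, so any argument with that conclusion is necessarily flawed.

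The paper's proof is substantially different and genuinely uses the single-orbit assumption. It first shows that under the hypothesis every quadrant of every prong is a lozenge, by analyzing the boundary of a certain saturated region and using a ping-pong style argument (playing two elements against each other to produce a fixed point in a trivially foliated rectangle) to eliminate the other configurations. This forces every lozenge to sit inside a scalloped region. Then, assuming a single $G$-orbit, one uses the scalloped structure to find overlapping lozenges $L, L'$ with upper-right corners $a$ and $b = g_1(a)$; either $g_1(L) = L'$ (giving a nonsingular fixed point in $L \cap L'$, contradiction) or $L'$ lies in some other quadrant $g_1(Q_j)$. Iterating and using that a prong has only finitely many quadrants, pigeonhole eventually forces the first case. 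Your first paragraph gestures toward the ping-pong idea but does not engage with the lozenge/scalloped structure or the quadrant-counting that makes the single-orbit assumption bite.
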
 

\begin{proof} 
Suppose that $\Fixbar_G$ consists only of isolated prong singularities.  Let $x$ be such an isolated prong, fixed by 
 $g \in G$.  We claim that each quadrant of $x$ must be a lozenge.  To see this, take two rays $r^+$ and $r^-$ at $x$ bounding a quadrant $Q$.  Let $r^+_1$ be another ray intersecting $Q$ and fixed by some element $g_1\in G$, which exists by Axiom \ref{Axiom_dense}.  The set  $\cF^-(r^+)\cap \cF^-(r^+_1) \cap Q$ is bounded on one side by $\cF^-(x)$.  For the other boundary, there are four possible cases, as shown in Figure \ref{fig:four_cases}: either it contains a set of non-separated leaves, or a singular leaf intersecting both $r^+$ and $r^+_1$, or a leaf making a perfect fit with $r_1^+$, or a leaf making a perfect fit with $r^+$.  In that last case, we deduce (from Lemma \ref{lem:fixed_is_corner}) that $x$ is a corner of a lozenge contained in $Q$ as claimed. So we need to eliminate each of the other options.

\begin{figure}[h]
\includegraphics[width=6cm]{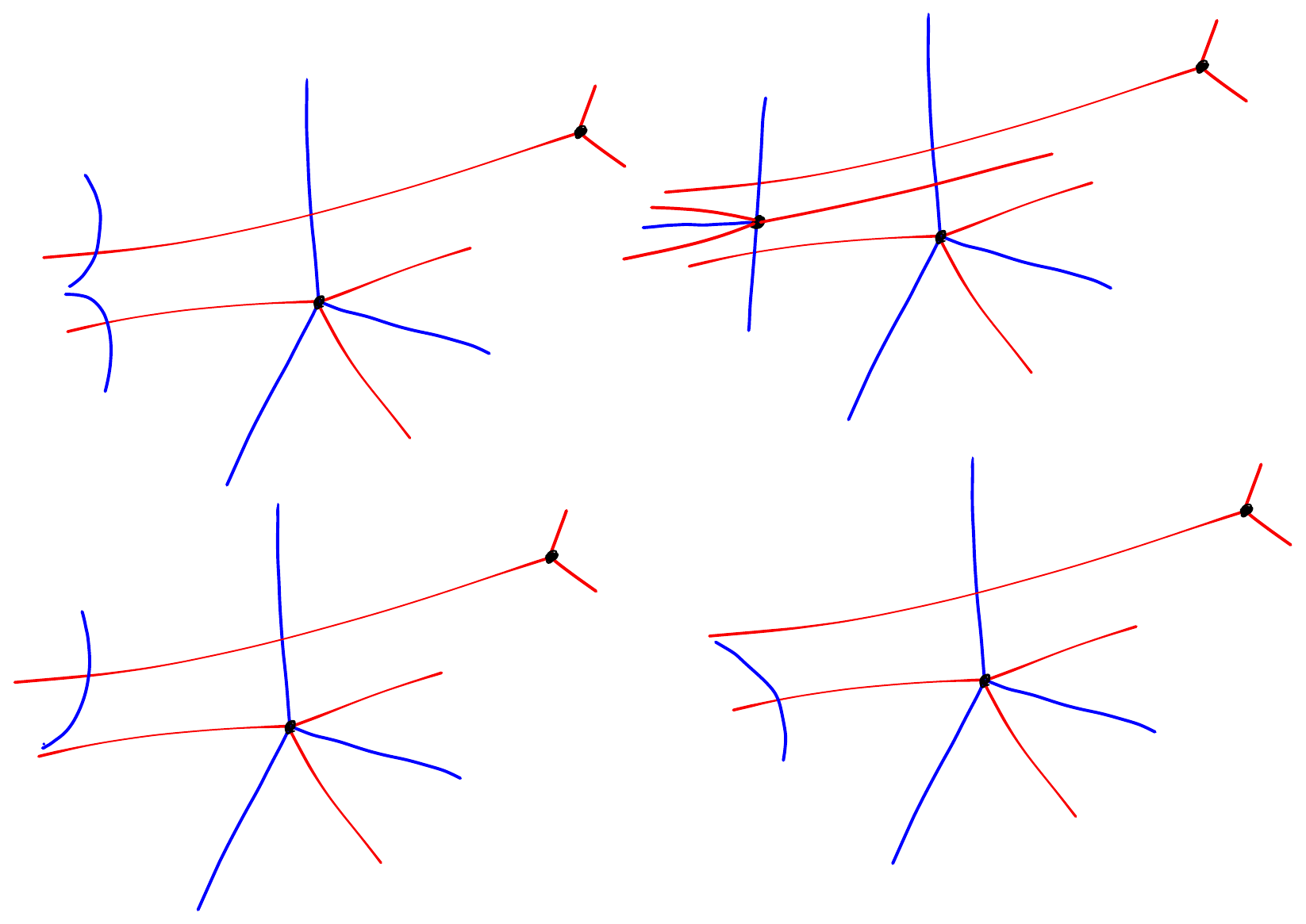}
\caption{The four possible cases}
\label{fig:four_cases}
\end{figure}

First suppose the boundary contains a set of nonseparated leaves.  Then each contains a fixed point of some $h \in G$.  Consider such a fixed point $y$ so that $\cF^+(y) \cap r^-(x) \neq \emptyset$ and $y$ is the closest such point to $x$.  Up to passing to powers, we can assume that $g$ preserves $Q$ and contracts $r^-(x)$, and $h$ preserves the quadrant containing $x$, contracting $\cF^-(y)$.  Then $gh$ contracts the set of $\cF^-$ leaves between $x$ and $r^- \cap \cF^+(y)$, so has a fixed leaf in the interior of this segment, and $h^{-1}g^{-1}$ similarly has a fixed leaf in $\cF^-(r^+)$.  These must meet at a fixed point for $gh$ in a trivially foliated region, contradicting the assumption that $\Fixbar_G$ consists only of isolated prong singularities. 

Similarly, if the boundary of $\cF^-(r^+)\cap \cF^-(r^+_1) \cap Q$ contains a singular leaf $l^-$ with fixed point $y$, that intersects both $r^+$ and $r^+_1$, then $\cF^\pm(y)$ and $\cF^\pm(x)$ bound a trivially foliated rectangle, and the argument from the previous case produces a fixed point in this rectangle.   Finally, if a leaf makes a perfect fit with $r^+_1$, %
this perfect fit is a part of a lozenge $L$ fixed by some $h \in G$. Since $x$ is a prong singularity, $x \notin L$ and thus one corner $c$ of $L$ must satisfy that $\cF^\pm(c)$ and $\cF^\pm(x)$ bound a trivially foliated rectangle, and the contradiction proceeds again as above.   We thus conclude that each singular point has a lozenge in each quadrant.

Proceeding now with the main argument of the proof, since each fixed point is a lozenge in all of its quadrants, each lozenge is included in a scalloped region in $P$.  Fix such a lozenge $L$ with corner $a$, in scalloped region $S$ and enumerate the quadrants of $a$ by $Q_1, Q_2, \ldots Q_k$, so that $L \subset Q_1$.   For concreteness, fix an orientation on this scalloped region $S$ so that $a$ is the upper right corner of $L$.  Since $S$ can be realized in two ways as a line of lozenges, there exists $L' \neq L$ in $S$ with $L \cap L' \neq \emptyset$, and also with a corner, say $b$ in the upper right, as shown in Figure \ref{fig_singular_point_orbit}. 

\begin{figure}[h]
\includegraphics[width=10cm]{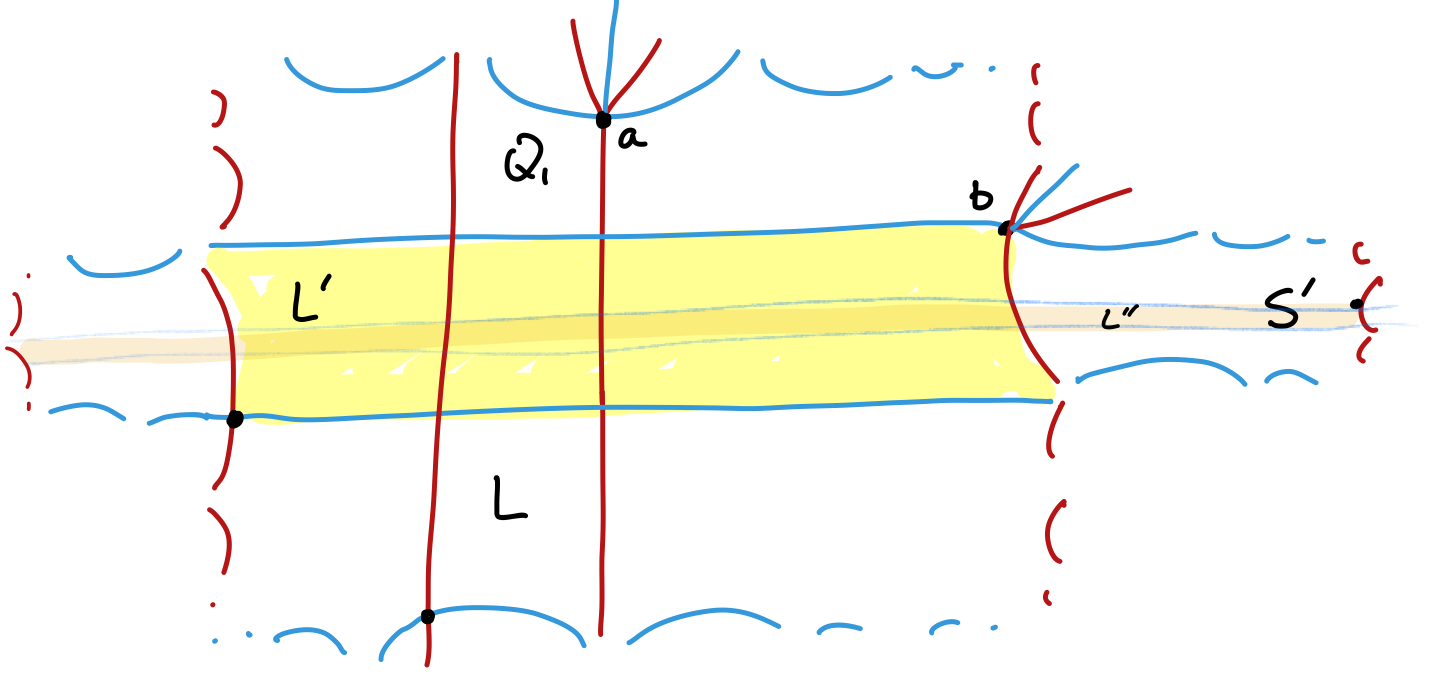}
\caption{Configuration of lozenges $L$ and $L'$}
\label{fig_singular_point_orbit}
\end{figure}

Assume now for contradiction that there is only one $G$-orbit of singular point.  Then $b = g_1(a)$ for some $g_1 \in G$.  If $g_1(L) =L'$ then we already have a contradiction: $g_1(L) \cap L$ would contain a (necessarily nonsingular) fixed point for $g_1$.  Thus, $L'$ is contained in $g_1(Q_j)$ for some $j \neq 1$.  We now repeat this process: $L'$ is contained in a scalloped region $S'$ and we may find a lozenge $L''$ in $S'$, with $L'' \cap L' \cap L \neq \emptyset$, and with corner $c$ on the upper right.  Then there is some $g_2$ such that $g_2(a) = c$.  If $g_2(Q_1)$ or $g_2(Q_j)$ contains $L''$, then we arrive at the same contradiction.  Otherwise, we may iterate the process again.  Since $a$ has only finitely many quadrants, this process eventually terminates by producing a nonsingular fixed point, giving the desired contradiction.  
\end{proof}

\begin{proposition}  \label{prop_unique_class_transitive}
An Anosov-like action of $G$ has a unique Smale class if and only if the set of points fixed by group elements is dense in $P$.  In this case, the action of $G$ is topologically transitive.  
\end{proposition}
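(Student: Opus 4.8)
The plan is to prove the two implications separately and then read off topological transitivity from Theorem \ref{thm:transitive_on_Smale_class}. The key preliminary observation is that \emph{each Smale class is open in $\cR_G$}: if $x\in\cR_G$ lies in a Smale class $\Lambda$ then $x$ is nonsingular, so it has a product–foliated neighborhood $U$; for any $y\in U\cap\cR_G$ one has $\cF^+(x)\cap\cF^-(y)\neq\emptyset$ and $\cF^+(y)\cap\cF^-(x)\neq\emptyset$ (the intersections occurring inside $U$), whence $x\lG y$ and $y\lG x$, so $U\cap\cR_G\subseteq\Lambda$. Since $\cR_G$ is partitioned into Smale classes (Lemma \ref{lem_lG_reflexive_transitive}), each is therefore also closed in $\cR_G$. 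I will also use that \emph{all points of a single leaf lying in $\cR_G$ belong to the same Smale class}: if $\cF^+(p)=\cF^+(q)$ (or $\cF^-(p)=\cF^-(q)$) with $p,q\in\cR_G$, then $p\lG q$ and $q\lG p$ by taking the group element to be the identity.

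For the implication $\Fixbar_G=P\Rightarrow$ unique Smale class: under this hypothesis there are no isolated prong singularities, so $\cR_G$ is exactly $P$ minus the union of the prong leaves, a comeager (hence dense, Baire) subset of $P$ on which every Smale class is clopen. Inside each connected component (``chamber'') of $\cR_G$ there is then a single Smale class. It remains to see that adjacent chambers lie in the same class: given a prong $p$, which is fixed by some nontrivial $h$ (Axiom \ref{Axiom_prongs_are_fixed}), the expanding/contracting behaviour of $h$ near $p$ (Axiom \ref{Axiom_A1}) sweeps the $G$–translates of a leaf through a point in one quadrant of $p$ across the leaves through points in a cyclically adjacent quadrant; this shows that all chambers abutting $p$ lie in one Smale class. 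Since every prong leaf emanates from a prong, one concludes that $\cR_G$ is a single Smale class $\Lambda$. By Theorem \ref{thm:transitive_on_Smale_class}, $G$ acts topologically transitively on $\Lambda$, and since $\Lambda$ is dense in $P$ this produces a dense $G$–orbit in $P$.

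For the converse, suppose $G$ has a unique Smale class $\Lambda$. By Lemma \ref{lem_only_prongs}, $\Lambda$ is not the orbit of an isolated prong; hence there is a nonsingular fixed point, it lies in a regular Smale class, so $\Lambda$ equals that class, there are no isolated prongs at all, $\cR_G=\Lambda$, and $\Fixbar_G=\overline{\cR_G}=\overline{\Lambda}$. One must show $\overline{\Lambda}=P$. Suppose not and set $W=P\setminus\overline{\Lambda}\neq\emptyset$, an open $G$–invariant set. By Axiom \ref{Axiom_dense} some $\cF^+$–leaf $l$ fixed by a nontrivial $g$ meets $W$; combining density of fixed leaves for \emph{both} foliations with the hyperbolic dynamics of $g$ on $l$ and on the transverse direction (Axiom \ref{Axiom_A1}) and the local product structure of Proposition \ref{prop_basic_product_structure}, one locates a genuine fixed point — equivalently a point of $\cR_G$ — inside $W$. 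Such a point lies in $\cR_G=\Lambda\subseteq\overline{\Lambda}$, contradicting that it lies in $W=P\setminus\overline{\Lambda}$. Hence $\overline{\Lambda}=P$, i.e. $\Fix_G$ is dense, and topological transitivity on $P$ follows as above.

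The main obstacle, and the step needing the most care, is exactly this last one: producing a fixed point inside the $G$–invariant open set $W$. The naive attempt — take the unique fixed point $x_l$ of a fixed leaf $l$ meeting $W$ — fails, since $x_l$, being a fixed point, automatically lies in $\Fixbar_G=\overline{\Lambda}$ and hence \emph{outside} $W$; moreover iterating $g$ only pushes points of $l\cap W$ toward $x_l\in\partial W$ without ever entering $\Fixbar_G$, so the dynamics along a single leaf is not enough. Closing this gap is where one must genuinely exploit Axiom \ref{Axiom_dense} for both foliations simultaneously — surrounding a point of $W$ by fixed leaves of both types and using the product structure to build an honest fixed point there — rather than arguing along one leaf.
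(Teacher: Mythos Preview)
Your overall architecture matches the paper's, and your first implication (\(\Fixbar_G=P\Rightarrow\) unique Smale class) is correct, though you are working harder than necessary. You cut \(P\) along all singular \emph{leaves}, producing chambers you must then reconnect via dynamics at each prong. The paper instead removes only the (discrete) singular \emph{points}: the complement is connected, any product box not containing a prong meets \(\cR_G\) (singular leaves are one\nobreakdash-dimensional, hence contain no open set), and two points of \(\cR_G\) in a common product box are automatically \(\sim_G\)-related even if a singular leaf lies between them. A chain of overlapping boxes then gives a single Smale class with no appeal to the dynamics of the prong stabilizer.

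For the converse you correctly isolate the crux --- producing a point of \(\cR_G\) inside an arbitrary open set \(U\) --- but you do not close it, and the tools you list (``hyperbolic dynamics of \(g\) on \(l\) and on the transverse direction'') are not what does the job. The missing observation is purely algebraic: \emph{if \(l^+\) is a nonsingular \(\cF^+\)-leaf with nontrivial stabilizer, then its unique fixed point \(a\) already lies in \(\cR_G\)}. Indeed, suppose \(\cF^-(a)\) were singular with prong \(p\). Any \(g\) fixing \(l^+\) fixes \(a\), hence fixes \(\cF^-(a)\); by Lemma~\ref{lem:common_fixed_point} the stabilizer of a singular leaf fixes its prong, so \(g\) fixes \(p\); Axiom~\ref{Axiom_A1} then forces \(a=p\), making \(l^+=\cF^+(p)\) singular --- a contradiction. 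Now, since nonsingular fixed leaves are dense (Axiom~\ref{Axiom_dense} minus countably many singular leaves), choose such \(l^+,l^-\) crossing a product box \(V\subset W\); their fixed points \(a,b\) lie in \(\cR_G=\Lambda\), so Proposition~\ref{prop_basic_product_structure} applies verbatim and yields a point of \(\Lambda\) in \(V\subset W\), the desired contradiction. This is exactly what the paper's one\nobreakdash-line invocation of Axiom~\ref{Axiom_dense} together with Proposition~\ref{prop_basic_product_structure} is encoding.
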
 

\begin{proof} 
Assume there is a unique Smale class $\Lambda$.  By Lemma \ref{lem_only_prongs}, this means that there are {\em no} isolated 
prong singularities, since each $G$-orbits of such produces a Smale class. 
Thus,  $\Fixbar_G = \bar \cR_G = \bar \Lambda$. 
The fact that leaves containing fixed points are dense (Axiom \ref{Axiom_dense}) together with the product structure given by Proposition \ref{prop_basic_product_structure} implies that $\bar \Lambda = P$.  As shown above, $G$ acts topologically transitively on $\Lambda$, hence on $P$.  

Conversely, suppose $\Fix_G$ is dense in $P$.  As remarked above, there are only countably many singular leaves, so the complement of their union (namely, $\cR_G$) is dense in $P$. 
Any product neighborhood of any point intersects at most one Smale class.  Since the complement of the (discrete set of) singular points is connected, there can therefore only be one Smale class.    Transitivity is a consequence of Theorem \ref{thm:transitive_on_Smale_class}.  
\end{proof}

It is also easy to see from the definition that trivial and skew bifoliated planes can admit only one Smale class.  Thus, as a further consequence we have: 
\begin{corollary}[Hausdorff leaf space implies transitive]
\label{cor_skew_trivial_are_transitive}
Suppose $G$ acts anosov-like on a plane $(P,\cF^+,\cF^-)$ such that one foliation has Hausdorff leaf space.  Then the action is topologically transitive and $\Fixbar_G=P$. 
\end{corollary}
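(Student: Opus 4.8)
The plan is to read off the corollary from the trichotomy (Theorem~\ref{thm:trichotomy}), the fact that trivial and skew planes admit only one Smale class, and Proposition~\ref{prop_unique_class_transitive}, which converts the existence of a unique Smale class into the density of $\Fix_G$ together with topological transitivity. The only step requiring real care is the first: deducing from a \emph{one-sided} Hausdorff hypothesis that $P$ is trivial or skew.

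So, suppose without loss of generality that $\Lambda(\cF^-)$ is Hausdorff, and apply Theorem~\ref{thm:trichotomy}. Its third alternative cannot hold: the sub-case in which both $\Lambda(\cF^+)$ and $\Lambda(\cF^-)$ are non-Hausdorff contradicts the hypothesis directly, and the sub-case in which $P$ contains a singular point $p$ forces $\cF^-(p)$ to be a singular leaf, so $\Lambda(\cF^-)$ is a non-Hausdorff tree and not a Hausdorff $1$-manifold. (If one prefers to read ``Hausdorff leaf space'' strictly as ``Hausdorff topological space'', then one must instead check by hand that a prong --- necessarily fixed by a nontrivial group element by Axiom~\ref{Axiom_prongs_are_fixed} --- cannot occur in a bifoliated plane whose leaf space is Hausdorff; this should follow from Axiom~\ref{Axiom_dense} together with Lemma~\ref{lem:fixed_is_corner}, producing nonseparated leaves near the prong. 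I expect this to be the main technical point of the proof.) We conclude that $P$ is trivial or skew.

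It remains to observe that both of these planes carry exactly one Smale class. On a trivial plane every leaf of $\cF^+$ meets every leaf of $\cF^-$, so $\cF^+(x)\cap\cF^-(y)\neq\emptyset$ for all $x,y\in\cR_G$, giving $x\lG y$ with $g=\id$; since there are no prongs, the action has a single Smale class. On a skew plane, identify each leaf space with $\bR$ in the canonical way; by Exercise~\ref{ex_charac_skew} the induced action on $\bR$ is minimal, so given $x,y\in\cR_G$ one can send the leaf $\cF^+(x)$ by some $g\in G$ into the nonempty open interval of $\cF^+$-leaves crossing $\cF^-(y)$, which yields $\cF^+(x)\cap\cF^-(g^{-1}y)\neq\emptyset$ and hence $x\lG y$; again there are no prongs, and so there is a single Smale class. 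Now Proposition~\ref{prop_unique_class_transitive} applies: having a unique Smale class is equivalent to $\Fixbar_G=P$ and implies topological transitivity, which is precisely the assertion of the corollary.
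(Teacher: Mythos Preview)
Your approach matches the paper's: reduce via the trichotomy to the trivial or skew case, observe (as the paper states in one line, without elaboration) that these have a single Smale class, and apply Proposition~\ref{prop_unique_class_transitive}. Your explicit verification of the one-Smale-class step for trivial and skew planes is correct and fills in what the paper leaves implicit.

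On the prong subtlety you flag: your proposed fix does not work. Suspensions of pseudo-Anosov surface homeomorphisms give Anosov-like actions on planes with prongs whose leaf spaces are Hausdorff $\bR$-trees with no nonseparated leaves; so Axiom~\ref{Axiom_dense} together with Lemma~\ref{lem:fixed_is_corner} cannot in general manufacture branching near a prong. The paper's one-line proof does not address this either; the intended reading appears to be that ``Hausdorff leaf space'' means the leaf space is a line (the setting of Barbot's original result for Anosov flows), in which case the trichotomy immediately yields trivial or skew and your argument goes through. Under the strict topological reading the corollary remains true (pseudo-Anosov suspensions are transitive), but the reduction to trivial/skew fails and a separate argument would be required.
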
 
This was originally proved for Anosov flows in \cite[Th\'eor\`eme 2.5]{Bar95a}.

Our next proposition gives the implication \ref{item_charac_top_transitive}$\Rightarrow$\ref{item_charac_dense_fixed_points}. 
\begin{proposition} \label{prop_transitive_implies_fix_dense}
If $G$ acts topologically transitively on $P$, then there is a unique Smale class.  Consequently, $\Fixbar_G=P$.
\end{proposition}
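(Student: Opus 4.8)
The plan is to reduce everything to the assertion $\Fixbar_G=P$, which is exactly the ``consequently'' in the statement: indeed, once the set of points fixed by nontrivial elements of $G$ is dense, Proposition \ref{prop_unique_class_transitive} gives at once that there is a unique Smale class. So from now on assume $G$ acts topologically transitively, fix a point $p$ with dense orbit $G\cdot p$, and aim to show that every point of $P$ lies in the closed, $G$-invariant set $\Fixbar_G$.

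First I would record that transitivity makes every point of $P$ \emph{non-wandering} for the $G$-action, in the sense that for every nonempty open product box $U$ there is a nontrivial $g\in G$ with $gU\cap U\neq\emptyset$. This is elementary: since $G\cdot p$ is dense it meets $U$ in infinitely many points $g_n p$, and for $n\neq m$ the element $g_m g_n^{-1}\neq\id$ satisfies $g_m g_n^{-1}(g_n p)=g_m p\in U$. (Here one uses that $p$ is non-singular, since the orbit of a singular point is a discrete subset of $P$, hence not dense, and that singular points are isolated.) Thus the real content is a ``closing lemma in the orbit space'': a non-singular point $x$ that is non-wandering for the $G$-action must lie in $\Fixbar_G$.

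To prove this, fix a non-singular, non-wandering $x$. Using Axiom \ref{Axiom_dense}, choose a leaf $l^+=\cF^+(u)$ fixed by a nontrivial element, with $u$ its unique (hence non-singular, by Axiom \ref{Axiom_A1}) fixed point, arbitrarily close to $\cF^+(x)$, and likewise a leaf $l^-=\cF^-(v)$ fixed by a nontrivial element, arbitrarily close to $\cF^-(x)$; set $x^\ast=l^+\cap l^-$, a non-singular point close to $x$. The crux is to show that $u$ and $v$ lie in the same Smale class. Granting this, Proposition \ref{prop_basic_product_structure} applied with $a=u$, $b=v$ (whose stable and unstable leaves meet exactly at $x^\ast$) shows $x^\ast\in\cR_G\subseteq\Fixbar_G$; shrinking $l^\pm$ down onto $\cF^\pm(x)$ so that $x^\ast\to x$ and using that $\Fixbar_G$ is closed then gives $x\in\Fixbar_G$. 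Since by the previous paragraph every non-singular point of $P$ is non-wandering, this forces $\Fixbar_G=P$ (in particular there are then no isolated prong singularities, so Lemma \ref{lem_only_prongs} poses no obstruction), and Proposition \ref{prop_unique_class_transitive} finishes the argument.

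The main obstacle, and the one place where topological transitivity is genuinely used rather than just Axiom \ref{Axiom_dense}, is establishing that the two hyperbolic fixed points $u$ and $v$ above lie in a common Smale class, i.e.\ $u\lG v$ and $v\lG u$. The idea is to use that the dense orbit $G\cdot p$ recurs near $x^\ast$, near $u$, and near $v$, and to combine this recurrence with the product structure of a small box to produce a finite chain of intersections of the form $\cF^+(\cdot)\cap\cF^-(\cdot)\neq\emptyset$ linking a $G$-translate of $\cF^+(u)$ to $\cF^-(v)$, and one the other way, then to conclude by transitivity of the relation $\lG$ (Lemma \ref{lem_lG_reflexive_transitive}). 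Making this rigorous requires care: a priori the leaves in these intersections pass through points not known to lie in $\cR_G$, so one must interleave the chaining with further applications of Axiom \ref{Axiom_dense} and Proposition \ref{prop_basic_product_structure} to stay inside the regular set, and one must track foliation charts carefully since group elements preserve no metric. This is the technical heart of the proof; the rest is bookkeeping.
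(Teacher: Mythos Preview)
Your reduction to $\Fixbar_G=P$ and the observation that every nonempty open set is $G$-recurrent are both correct, but the ``closing lemma in the orbit space'' that you isolate as the technical heart has a genuine gap that your sketch does not close. The difficulty is exactly the circularity you flag in your last paragraph: to chain $v\lG\cdots\lG u$ via Lemma~\ref{lem_lG_reflexive_transitive} you need every intermediate point to lie in $\cR_G\subset\Fixbar_G$, and to place intermediate points in $\cR_G$ via Proposition~\ref{prop_basic_product_structure} you need the endpoints of that step to already be $\sim_G$-equivalent. Axiom~\ref{Axiom_dense} only supplies fixed \emph{leaves}, whose fixed points may lie far from your boxes, so you cannot populate the chain with points of $\Fix_G$ directly. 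Knowing that some $g$ carries an open box near $u$ to overlap a box near $v$ tells you nothing about where $g$ sends $\cF^-(u)$ itself, since $g$ may distort wildly. Concretely, what you need is that $G\cdot\cF^-(u)$ is dense in $P$, but topological transitivity only gives that the complement of its closure is either empty \emph{or dense}; ruling out the second alternative is the whole problem.

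The paper takes a completely different route: it proves the contrapositive, assuming at least two Smale classes and producing a wandering open set. The key input is the structural Lemma~\ref{lem_wandering_lozenge} (imported as a black box from \cite{BBM24b}), which says that multiple Smale classes force either a wandering totally ideal quadrilateral or a lozenge with corner in $\Fixbar_G$ and interior disjoint from $\Fixbar_G$; in either case a short argument using Lemma~\ref{lem_markovian_or_corner} shows the region is genuinely wandering. The paper notes explicitly that for orbit-space actions of flows one can instead use the closing lemma (Proposition~\ref{prop_closing_translation}) to prove your ``non-wandering $\Rightarrow\Fixbar_G$'' step directly --- that is the argument you are reaching for --- but in the general Anosov-like setting without Axiom~\ref{Axiom_closing} this step is not elementary and requires the structure theory of \cite{BBM24b}.
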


In the case of orbit-space actions of (pseudo)-Anosov flows, this can be obtained as an easy consequence of the closing lemma (Proposition \ref{prop_pAclosing_lemma}) as follows: 
Suppose $G$ acts transitively.  Recall that the closing lemma, translated into the language of the orbit space (see Proposition \ref{prop_closing_translation}), says that for any open set $U$, there exists $V \subset U$ such that $gV \cap V \neq \emptyset$ implies that $g$ has a fixed point in $U$.  Transitivity gives the existence of some nontrivial $g$ with $gV \cap V \neq \emptyset$, hence $\Fixbar_G=P$.  

The proof below avoids use of the closing lemma by instead using a key result of \cite{BBM24b} about the structure of {\em boundary leaves} of Smale classes.   

\begin{lemma}[See Theorem 4.13 of \cite{BBM24b}]  \label{lem_wandering_lozenge}
Suppose $G$ has an Anosov like action on a bifoliated plane, with more than one Smale class.  Either $P$ contains a {\em wandering totally ideal quadrilateral}, or there exists a lozenge $L$ in $P$ with a corner in $\Fixbar_G$ and
with interior in $P \setminus \Fixbar_G$.  
\end{lemma}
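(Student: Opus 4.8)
The plan is to analyze a connected component of the \emph{wandering set} $\mathcal{W} := P \setminus \Fixbar_G$. Since the action has more than one Smale class, Proposition \ref{prop_unique_class_transitive} gives $\Fixbar_G \neq P$, so $\mathcal{W}$ is a nonempty, open, $G$-invariant set, and by Axiom \ref{Axiom_prongs_are_fixed} it contains no singular points. Moreover $P$ is neither trivial nor skew, since those planes have Hausdorff leaf spaces and are therefore transitive with $\Fixbar_G=P$ (Corollary \ref{cor_skew_trivial_are_transitive}); hence by the trichotomy (Theorem \ref{thm:trichotomy}) $P$ has genuine two-sided branching or a prong, and by Propositions \ref{prop:no_product_region} and \ref{prop_biinfinite_skew} it contains no infinite product region and no bi-infinite skew region. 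As a preliminary, I would dispose of the case in which $\Fixbar_G$ consists only of isolated prong singularities: the argument in the proof of Lemma \ref{lem_only_prongs} shows each such prong $p$ has a lozenge $L$ in every quadrant, and $\mathrm{int}(L)$ is trivially foliated, hence contains no prong and (as $\cR_G=\emptyset$ in this case) no other fixed point, so $L$ has corner $p\in\Fixbar_G$ and interior in $P\setminus\Fixbar_G$. Thus assume $\cR_G\neq\emptyset$, and fix a connected component $U$ of $\mathcal{W}$.

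Next I would show that $U$ is a trivially foliated region whose topological boundary is assembled from leaf-rays of $\cF^+$ and $\cF^-$. That $U$ is trivially foliated: if not, there are $a,b\in U$ with $\cF^+(a)\cap\cF^-(b)=\emptyset$, and chasing the configuration of leaves between them one finds inside $U$ either a singular point (impossible), or a pair of nonseparated leaves, or a perfect fit; in the latter two cases, using density of fixed leaves (Axiom \ref{Axiom_dense}), the lozenge-creation Lemma \ref{lem:fixed_is_corner}, and the local product structure (Proposition \ref{prop_basic_product_structure}), one produces a point of $\Fix_G$ inside $U$, a contradiction. Consequently $\partial U\subseteq\Fixbar_G$ and, realizing $U$ via its now Hausdorff leaf spaces as an open coordinate rectangle $(a,b)\times(c,d)$, each of its four sides limits onto a boundary leaf and each of its four corners is of one of two types: either a genuine point $p\in P$, necessarily in $\Fixbar_G$, at which $U$ occupies a single quadrant $Q$ of $p$ with the two bounding rays of $Q$ lying in $\partial U$; or an \emph{ideal} corner, i.e.\ a perfect fit between the two adjacent side-leaves. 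That the sides cannot run off to infinity in any other way is exactly where the absence of infinite product and bi-infinite skew regions is used.

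This produces the dichotomy. If all four corners of $U$ are ideal, then $U$ is itself a wandering totally ideal quadrilateral (trivially foliated, no singular points, four perfect fits), and we are done. Otherwise some corner is a genuine point $p\in\Fixbar_G$ with $U$ in a quadrant $Q$ of $p$ bounded by rays in $\partial U$. Using density of $\Fix_G$-points near $p$, Theorem \ref{thm:distinct_fix_is_chain} (distinct fixed points of an element are corners of a chain of lozenges), and the non-corner criterion and its converse (Lemma \ref{lem_no_corner_criterion}, Exercise \ref{ex_noncorner_converse}), one shows that $Q$ is, or shares its corner $p$ with, a lozenge $L$; and since $\mathrm{int}(L)$ is trivially foliated and no point of $\Fixbar_G$ lies in $Q$ on the $U$-side of its bounding rays, $\mathrm{int}(L)\subseteq\mathcal{W}$, giving the required lozenge.

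The main obstacle is the boundary analysis of $U$ sketched in the second and third paragraphs: precisely controlling how $\partial U$ is built from leaves, ruling out the degenerate "escape" behavior, and — in the second case — upgrading "$p$ is a corner of $U$ in $\Fixbar_G$" to "$Q$ is a lozenge with wandering interior." This is the substance of the structure theorem for boundary leaves of Smale classes (\cite{BBM24b}, Theorem 4.13); its proof rests on a careful local study near $\Fixbar_G$ of which leaves through fixed points can bound the wandering set, organized via chains and lines of lozenges (Proposition \ref{prop:4weak4strong}, Lemma \ref{lem:half_scalloped}, Corollary \ref{cor_infinite_non_sep_implies_scalloped}). One must also be attentive to the correct meaning of "totally ideal quadrilateral" — a trivially foliated open region with four perfect-fit ideal corners and no singular points — matching its usage around Remark \ref{rem_no_ideal_quad}.
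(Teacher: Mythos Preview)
The paper does not prove this lemma; it explicitly omits the argument and uses the statement as a black box citing \cite{BBM24b}, so there is no in-text proof to compare against. Your outline heads in a sensible direction, and you are candid that the boundary analysis is where the real work lies, but the sketch does not close that gap.

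The key unproved step is that a connected component $U$ of $\mathcal{W}=P\setminus\Fixbar_G$ is a trivially foliated coordinate rectangle with four well-defined (genuine or ideal) corners. Your justification---locate a branching leaf or a perfect fit inside $U$ and manufacture a fixed point via Axiom~\ref{Axiom_nonseparated} or Lemma~\ref{lem:fixed_is_corner}---does not force that fixed point to lie in $U$: it lies on the relevant leaf, which can pass through $U$ while the fixed point itself sits in $\Fixbar_G$. Nor is it clear that $U$ is even shaped like a rectangle; $\Fixbar_G$ is merely a closed set (with local product structure only on $\cR_G$), and a complementary component could a priori meet several quadrants of a boundary fixed point, or be assembled from several lozenge-interiors glued along open rays. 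The final step has the same flavor of gap: a ``genuine corner'' $p$ lies only in $\Fixbar_G$, not necessarily in $\Fix_G$, so Lemma~\ref{lem:fixed_is_corner} and the non-corner criterion do not apply to $p$ directly, and passing to nearby actual fixed points produces lozenges whose interiors you have not shown to be wandering. These are exactly the difficulties that make the cited result in \cite{BBM24b} nontrivial.
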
 
A {\em wandering totally ideal quadrilateral} is a trivially foliated region whose boundary consists of four leaves (one on each side) which pairwise make perfect fits, and which does not contain any point of $\Fixbar_G$. See \cite[Remark 4.19.]{BBM24b} and the discussion of totally ideal quadrilaterals below in Remark \ref{rem_no_ideal_quad}.  

While Lemma \ref{lem_wandering_lozenge} relies only on the axioms for Anosov-like actions, it is rather involved.  We omit it here, using the Lemma above as a black box.  

\begin{proof}[Proof of Proposition \ref{prop_transitive_implies_fix_dense}]
We prove the contrapositive: We assume that there are more than one Smale class and will deduce using Lemma \ref{lem_wandering_lozenge} that the action is not topologically transitive.

Assume that $G$ has more than one Smale class.  By Lemma \ref{lem_wandering_lozenge}, either $P$ contains a wandering totally ideal quadrilateral $Q$, or there is a lozenge $L$ in $P$ with a corner in $\Fixbar_G$ and
with interior in $P \setminus \Fixbar_G$.  

In the first case, the fact that $Q$ is ``wandering" (as defined above) in fact implies that all translates of $Q$ are disjoint: if a quadrilateral bounded by four leaves making perfect fits satisfied $gQ \cap Q \neq \emptyset$, then as in the proof of \ref{lem_markovian_or_corner}, we must either have $gQ \subset \cF^+(Q)$ or $g^{-}Q \subset \cF^+(Q)$; and either way one obtains a fixed leaf for $g$ in $\cF^+(Q)$, replacing $g$ with $g^{-1}$ one finds a fixed $\cF^-$ leaf, so $Q$ could not be wandering.   Thus, $Q$ is wandering in the standard sense of the definition, and in particular the action of $G$ is not transitive.  

In the case where $L$ has a corner $c$ in  $\Fixbar_G$ and interior in $P \setminus \Fixbar_G$, then for any $g \in G$ we cannot have $g(c) \in L$.  But by Lemma  \ref{lem_markovian_or_corner}, we also cannot have $g(L) \cap L \neq \emptyset$ because such an intersection forces a point of $\Fixbar_G$ in $L$.   Thus, $L$ is wandering and so the action is not transitive.  
\end{proof}

Next we show a stronger version of the implication \ref{item_charac_dense_fixed_points}$\Rightarrow$\ref{item_charac_dense_leaves}:
\begin{lemma}\label{lem_intervals_have_dense_orbit}
Suppose $\Fixbar_G=P$ and assume that $I$ is any non-degenerate interval inside a leaf of $\cF^\pm$. Then $G\cdot I$ is dense in $P$.  
\end{lemma}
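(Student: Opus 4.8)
The statement to prove is: if $\Fixbar_G = P$ and $I$ is a non-degenerate interval inside a leaf of $\cF^+$ or $\cF^-$, then $G \cdot I$ is dense in $P$. The key point is that $I$, being a non-degenerate interval, contains in its interior a point that is fixed by a nontrivial element of $G$, and more: since $\Fixbar_G = P$ and there is a unique Smale class (by Proposition \ref{prop_unique_class_transitive}), the nearby dynamics and product structure let us ``spread" $I$ across all of $P$.

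Here is the approach I would take. Say $I \subset l$ where $l$ is a leaf of $\cF^+$ (the other case is symmetric). First, since $\Fixbar_G = P$ and there are only countably many singular leaves (they form a discrete set by Proposition \ref{prop_pivots_discrete} and its consequences), I can find in the interior of $I$ a point $x_0$ that is nonsingular and fixed by some nontrivial $g_0 \in G$; up to a power, $g_0$ fixes all rays through $x_0$ and is topologically expanding on $\cF^+(x_0) = l$ (Axiom \ref{Axiom_A1}). Then $\bigcup_n g_0^n(I)$ exhausts all of $l$, so after replacing $I$ by this union we may assume $G \cdot I \supseteq l$, i.e. $l$ itself is a union of $G$-translates of $I$; hence it suffices to show $G \cdot l$ is dense, which is exactly the implication \ref{item_charac_dense_fixed_points}$\Rightarrow$\ref{item_charac_dense_leaves} already discussed in the text after the statement of Theorem \ref{thm_characterization_transitive}.

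Let me instead argue directly to keep things self-contained. Let $U$ be any nonempty open subset of $P$; I want to show $g(I) \cap U \neq \emptyset$ for some $g \in G$. Shrinking $U$, assume it is product-foliated and nonsingular, and pick a nonsingular point $b \in U$ fixed by a nontrivial $h \in G$ (using $\Fixbar_G = P$), with $h$ fixing all rays through $b$. Take $x_0 \in \mathrm{int}(I)$ fixed by nontrivial $g_0$ as above, expanding along $l = \cF^+(x_0)$; up to a power assume $g_0$ fixes all rays through $x_0$. Since there is a unique Smale class (Proposition \ref{prop_unique_class_transitive}, as $\Fixbar_G = P$), we have $x_0 \sim_G b$, so there exists $k \in G$ with $\cF^+(x_0) \cap \cF^-(k b) \neq \emptyset$; call this intersection point $z$. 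By the product structure on Smale classes (Proposition \ref{prop_basic_product_structure}), any neighborhood of $z$ contains a nonsingular point fixed by an element of $G$; choose such a point $w$ fixed by $g_1 \in G$ with $\cF^+(w)$ meeting $\cF^-(kb) \cap kU$ and with $\cF^-(w)$ meeting $l = \cF^+(x_0)$, and (up to a power) $g_1$ fixing all rays through $w$. Now: iterating $g_1$ along $\cF^-(w)$ contracts (or expands) toward $w$, so for large $|n|$ the leaf $g_1^n(l)$ passes arbitrarily close to $w$ along $\cF^-(w)$; combined with the fact that $g_0$-iterates of $I$ cover all of $l$ (including a neighborhood of the point $l \cap \cF^-(w)$), a suitable $g_1^n g_0^m (I)$ lands inside the product rectangle $\cF^+(\cdot) \cap \cF^-(\cdot)$ near $w$, and then $k^{-1} g_1^n g_0^m(I) \cap U \neq \emptyset$. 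This gives an element of $G$ sending $I$ into $U$, proving density.

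The main obstacle — and the step requiring the most care — is bookkeeping the two independent hyperbolic dynamics ($g_0$ along $l$, and $g_1$ along $\cF^-(w)$) and verifying that one can simultaneously push $I$ far enough along $l$ (via $g_0$) and then transport a small sub-arc near $l \cap \cF^-(w)$ close to $w$ along $\cF^-(w)$ (via $g_1$), landing inside a prescribed small product-foliated neighborhood; this is a routine but slightly delicate ``ping-pong in a product chart" argument that I would state precisely using the topological contraction/expansion from Axiom \ref{Axiom_A1} and the product structure from Proposition \ref{prop_basic_product_structure}. No new ideas beyond those already developed in the section are needed; it is essentially a packaging of the proof of Theorem \ref{thm:transitive_on_Smale_class} applied to an interval rather than an open set.
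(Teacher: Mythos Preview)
There is a genuine gap. You claim that since $\Fixbar_G = P$, you can find a point $x_0 \in \mathrm{int}(I)$ fixed by some nontrivial $g_0 \in G$. But $I$ lies in a single leaf $l$, and by Axiom~\ref{Axiom_A1} together with Lemma~\ref{lem:common_fixed_point}, all nontrivial elements stabilizing $l$ share a \emph{unique} common fixed point on $l$. Thus $\Fix_G \cap l$ is at most a single point, which need not lie in $I$; and if $l$ has trivial stabilizer (which Axiom~\ref{Axiom_dense} certainly allows), then $\Fix_G \cap l = \emptyset$. Density of $\Fix_G$ in the two-dimensional plane $P$ says nothing about density in a one-dimensional leaf. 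Both your shortcut and your direct argument rest on this step: without $x_0 \in \mathrm{int}(I)$, the iterates $g_0^n(I)$ do not exhaust $l$, and the later ``ping-pong'' using $g_1$ never gets access to the point $l \cap \cF^-(w)$.

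The paper's proof avoids this by never asking for a fixed point on $l$. Instead it connects $I$ to the target open set $U$ by a stair-step path through a finite sequence of nonsingular fixed points $x_1,\dots,x_n$ lying off $l$, arranged so that $\cF^-(x_1)$ crosses $I$, $\cF^+(x_n)$ crosses $U$, and consecutive $x_i$ are linked via $\cF^-(x_{i+1})\cap\cF^+(x_i)\neq\emptyset$. The hyperbolic dynamics of each $g_i$ at $x_i$ then pulls $U$ back step by step until $g_1^{k_1}\cdots g_n^{k_n}(U)$ meets $I$. The crucial difference from your attempt is that the first contact with $I$ is made through the \emph{transverse} leaf $\cF^-(x_1)$, not through a fixed point sitting inside $I$.
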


\begin{proof}
To fix notation, we assume that $I$ is a subset of a leaf $l^+$ of $\cF^+$.
Let $U$ be an open set in $P$.   We claim first that there exist nonsingular points $x_i$ fixed by nontrivial $g_i \in G$, and a path $\gamma$  from $I$ to $U$ consisting of a sequence of intervals $J_1^-, J_1^+, J_2^-, J_2^+, \ldots J_n^-, J_n^+$ where $J_i^-$ and $J_i^+$ are segments of $\cF^-(x_i)$ and $\cF^+(x_i)$, respectively.   
To see this, take an arbitrary path from a point in $I$ to a point of $U$, cover it with finitely many flow-box neighborhoods, and then use density of $\Fix_G$ to iteratively modify it on each flow-box neighborhood to a finite sequence of segments of this form.  

\begin{figure}[h]
\includegraphics[width=10cm]{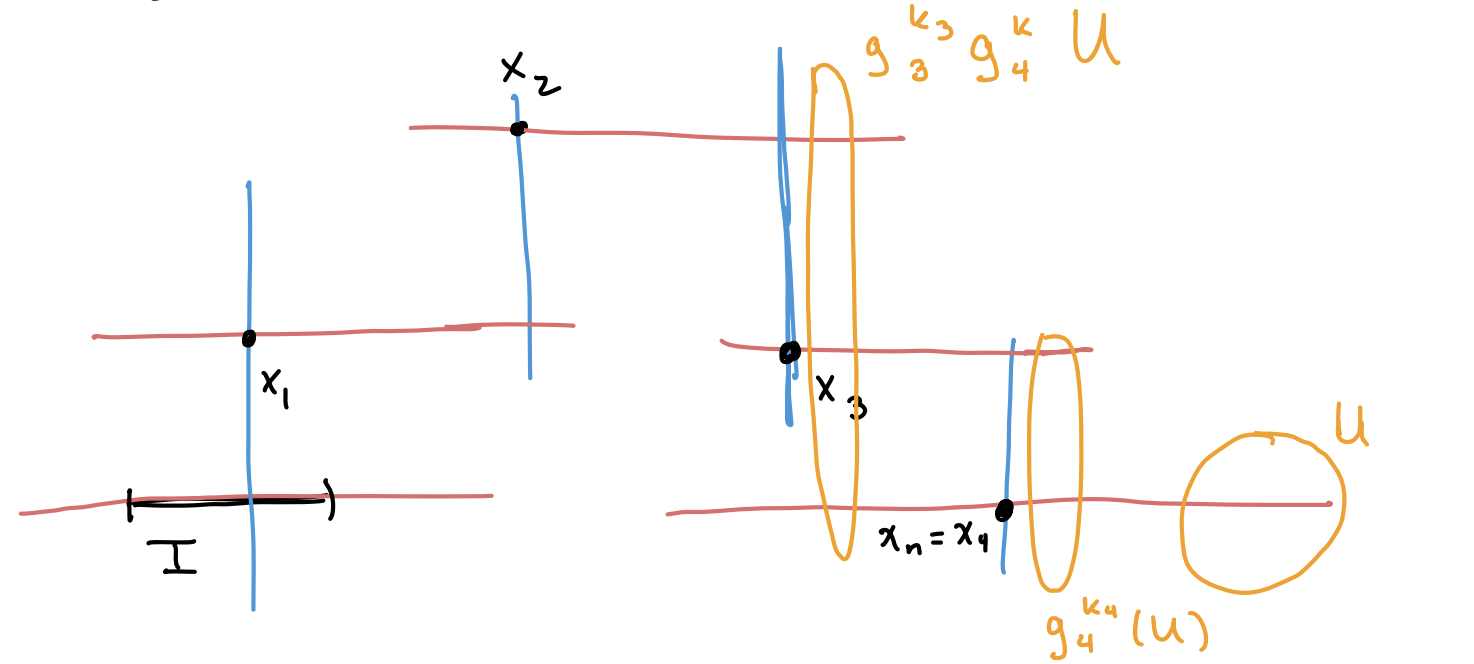}
\caption{Iterates of $U$ eventually will intersect $I$.} 
\label{fig_path_of_segments}
\end{figure} 

We have in particular that 
$\cF^-(x_1)\cap l^+\neq\emptyset$, and $\cF^+(x_n)\cap U\neq\emptyset$, and $\cF^-(x_{i+1})\cap \cF^+(x_{i})\neq \emptyset$ for each $i=1,\dots, n-1$.  See Figure \ref{fig_path_of_segments}.   Since $U \cap \cF^+(x_n) \neq \emptyset$ , there exists $k_n$ (in fact, for any sufficiently large $k_n$) such that  $g_n^{k_n}(U) \cap \cF^+(x_{n-1}) \neq \emptyset$.   Iteratively, we can find $k_{n-1}, \ldots k_1 \in \bZ$ such that $g_1^{k_1}\dots g_n^{k_n}U \cap l \neq\emptyset$;  equivalently
\[ g_n^{-k_n}\ldots g_1^{-k_1} l \cap U \neq \emptyset, \] 
which is what we needed to show.  
\end{proof}

Finally, to show that \ref{item_charac_dense_leaves}$\Rightarrow$\ref{item_charac_top_transitive}, we show the contrapositive:
\begin{lemma}
Suppose that $G$ is not topologically transitive. Then there exists leaves of $\cF^+$ and $\cF^-$ whose orbit under $G$ is not dense in $P$.
\end{lemma}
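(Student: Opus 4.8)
This lemma is the contrapositive of the remaining implication \ref{item_charac_dense_leaves}$\Rightarrow$\ref{item_charac_top_transitive} of Theorem \ref{thm_characterization_transitive}, and the plan is to prove it using the theory of Smale classes. First, combining Propositions \ref{prop_unique_class_transitive} and \ref{prop_transitive_implies_fix_dense}, the action of $G$ is topologically transitive if and only if it has a unique Smale class; so, assuming $G$ is not transitive, there are at least two Smale classes and $\Fixbar_G\neq P$.

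The heart of the argument is the claim that, if the action has at least two Smale classes that are not orbits of isolated prong singularities, then at least one such class $\Lambda_0$ satisfies $\overline{\cF^+(\Lambda_0)}\neq P$ and, symmetrically, at least one class $\Lambda_0'$ satisfies $\overline{\cF^-(\Lambda_0')}\neq P$. Granting this, pick any $x\in\Lambda_0$. Since $\Lambda_0$ is $G$-invariant, $G\cdot\cF^+(x)=\bigcup_{g\in G}\cF^+(gx)=\cF^+(G\cdot x)\subseteq\cF^+(\Lambda_0)$, so the $G$-orbit of the leaf $\cF^+(x)$ is contained in the proper closed set $\overline{\cF^+(\Lambda_0)}$, hence not dense; and the leaf $\cF^-(x')$ for $x'\in\Lambda_0'$ furnishes the required leaf of $\cF^-$.

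To prove the claim I would argue by contradiction. Suppose $\overline{\cF^+(\Lambda)}=P$ for every Smale class $\Lambda$ that is not an isolated prong orbit, and let $\Lambda_1\neq\Lambda_2$ be two such classes. Fix $z\in\Lambda_2$; this is a nonsingular point, because Smale classes lie in $\cR_G$, which avoids singular leaves, so $z$ has a product-foliated neighborhood $U$ disjoint from all singular leaves. By density of $\cF^+(\Lambda_1)$ there is $a\in\Lambda_1$ with $\cF^+(a)\cap U\neq\emptyset$, and since in a product-foliated neighborhood every $\cF^+$-plaque meets every $\cF^-$-plaque, we get $\cF^+(a)\cap\cF^-(z)\neq\emptyset$, i.e.\ $a\lG z$, so $\Lambda_1\lG\Lambda_2$. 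By symmetry $\Lambda_2\lG\Lambda_1$, whence $\Lambda_1\sim_G\Lambda_2$; but distinct Smale classes are disjoint, a contradiction. The same argument with the roles of $\cF^+$ and $\cF^-$ exchanged proves the statement for $\Lambda_0'$.

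It remains to treat the case where at most one Smale class, say $\Lambda^\ast$, is not an isolated prong orbit. If $\overline{\cF^{\pm}(\Lambda^\ast)}\neq P$ the argument above still applies; otherwise --- in particular when $\Fixbar_G$ is a discrete union of prong orbits --- one appeals to Lemma \ref{lem_wandering_lozenge}. Its ``lozenge'' alternative is impossible in this situation: a lozenge with a prong corner $c\in\Fixbar_G$ would be preserved by a suitable power of an element fixing $c$ (using Axiom \ref{Axiom_prongs_are_fixed} together with uniqueness of perfect fits, Lemma \ref{lem:perfect_fit_unique}), contradicting that it is wandering. So one is left with a wandering totally ideal quadrilateral $Q$, all of whose boundary leaves avoid $\Fixbar_G$, and one shows directly --- using that the $G$-translates of $Q$ are pairwise disjoint, together with the perfect-fit structure of $\partial Q$ --- that the $G$-orbit of a boundary leaf of $Q$ in $\cF^+$ (resp.\ $\cF^-$) cannot enter $\mathrm{int}(Q)$, hence is not dense. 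I expect this last verification --- establishing non-density of a boundary leaf of a wandering region --- to be the main obstacle; by contrast the Smale-order bookkeeping in the generic case is routine.
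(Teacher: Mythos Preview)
Your core argument is correct and matches the paper's approach exactly: once you know there are at least two distinct Smale classes, the fact that they are not $\sim_G$-equivalent means one of the two relations $\Lambda_1\leq_G\Lambda_2$, $\Lambda_2\leq_G\Lambda_1$ fails, and the failing relation immediately gives a $\cF^+$-leaf (through one class) whose $G$-orbit misses the $\cF^-$-saturation of the other, hence is not dense. The paper does this in two sentences without the contradiction wrapper, but the content is identical.

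Where you go astray is in the degenerate case. The paper does not separate out isolated prong classes at all; it simply picks two distinct classes and uses the order. Your detour through Lemma~\ref{lem_wandering_lozenge} is both unnecessary and flawed: that lemma does \emph{not} assert that the lozenge $L$ is wandering---only the totally ideal quadrilateral is called wandering. In fact, if the corner $c$ lies in $\Fix_G$ (as it certainly does when $c$ is a prong, by Axiom~\ref{Axiom_prongs_are_fixed}), then a power of the stabilizing element fixes both corners and preserves $L$, so $L$ is manifestly \emph{not} wandering. This does not contradict the lemma, so you cannot rule out the lozenge alternative this way. Your sketched treatment of the quadrilateral alternative is also incomplete, as you acknowledge.

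The clean fix is to drop the case split entirely. If you are worried about isolated prong classes, note that the relation ``$\cF^+(\Lambda_1)\cap\cF^-(\Lambda_2)\neq\emptyset$'' makes sense regardless, and by the $\cF^+/\cF^-$ symmetry of the axioms it suffices to produce a single non-dense $\cF^+$-leaf; the $\cF^-$-leaf then comes for free.
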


\begin{proof}
Since $G$ is not topologically transitive, by Proposition \ref{prop_transitive_implies_fix_dense}, there exists at least two distinct Smale classes $\Lambda_1$ and $\Lambda_2$.  Label these so that $\Lambda_1$ is not less than $\Lambda_2$ in the Smale order. In particular, this means that $\cF^+(\Lambda_2)$ does not intersect $\cF^-(\Lambda_1)$ -- otherwise $\Lambda_1$ would also be smaller than $\Lambda_2$ for the Smale order and thus equal, contradicting the assumption that they are distinct. Since $\Lambda_2$ is $G$-invariant, we deduce that no $\cF^+$-leaf of $\Lambda_2$ can have a dense orbit in $P$. Similarly, no $\cF^-$-leaf of $\Lambda_1$ has a dense orbit.
\end{proof}

This is the last required implication so concludes the proof of Theorem \ref{thm_characterization_transitive}.  

\subsection{Proof of Proposition \ref{prop_transitive_non_corner_dense}} \label{sec_noncornerdense}

Here we want to show that, if $P$ is not skew, then topological transitivity of an Anosov-like action of $G$ is equivalent to the density of non-corner points fixed by elements of $G$. Given Theorem \ref{thm_characterization_transitive}, all we have left to show is the following:
\begin{lemma}\label{lem_transitive_implies_dense_noncorner}
Assume $P$ is not skew. If an Anosov-like action of $G$ on $P$ is transitive, then there is a nonempty, open subset of $P$ which contains no corner fixed points.  Consequently, the non-corner fixed points are dense in $P$.
\end{lemma}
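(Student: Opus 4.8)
The plan is to reduce the statement to finding a single nonempty open set $U\subset P$ with no corner fixed point, and then to produce such a $U$ via the trichotomy of Theorem~\ref{thm:trichotomy}. For the reduction, recall that by Proposition~\ref{prop_transitive_implies_fix_dense} transitivity gives $\Fixbar_G=P$, and by Theorem~\ref{thm_characterization_transitive} the set $\Fix_G$ is dense in $P$. Suppose we have found a nonempty open $U$ containing no corner fixed point. Since $G$ acts by automorphisms of $(P,\cF^+,\cF^-)$, every $g\in G$ carries lozenges to lozenges and hence corner points to corner points; combined with the equivariance $g\,\Fix_G=\Fix_G$, this shows that each translate $gU$ again contains no corner fixed point. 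As the action has a dense orbit, that orbit meets $U$, so $G\cdot U$ is a \emph{dense} open subset of $P$. Then $\Fix_G\cap(G\cdot U)$ is dense in $P$ and consists only of non-corner fixed points, which is exactly the conclusion of Lemma~\ref{lem_transitive_implies_dense_noncorner}.

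It remains to build $U$. If $P$ is trivial it contains no perfect fits, hence no lozenges and no corner points at all, and $U=P$ works; so we may assume $P$ is non-trivial. Combined with the hypothesis that $P$ is not skew, Theorem~\ref{thm:trichotomy} then guarantees that $P$ has a singular point, or that both $\cF^+$ and $\cF^-$ have two-sided branching. The key observation is that if $l$ is a leaf of $\cF^+$ such that \emph{neither} ray of $l$ makes a perfect fit with any leaf, then by the definition of a lozenge no point of $l$ can be the corner of a lozenge (a corner has a ray of its $\cF^+$-leaf making a perfect fit with a ray of the opposite corner's $\cF^-$-leaf); the same holds with $\cF^-$ in place of $\cF^+$. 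Hence it suffices to exhibit a nonempty open set $V$ in $\Lambda(\cF^+)$ or in $\Lambda(\cF^-)$ consisting of leaves that make no perfect fit: the union $U$ of the leaves in $V$ is then open in $P$ and contains no corner point whatsoever. In other words, the lemma follows once we show that, when $P$ is not skew, the leaves that make a perfect fit are \emph{not} dense in at least one of the two leaf spaces.

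We would prove this by contradiction, assuming the perfect-fit leaves are dense in both $\Lambda(\cF^+)$ and $\Lambda(\cF^-)$ and deriving that $P$ is skew or trivial. First, using Axiom~\ref{Axiom_dense} together with Lemma~\ref{lem:fixed_is_corner}, density of perfect-fit leaves upgrades to density of leaves that are sides of lozenges with group-fixed corners; by Proposition~\ref{prop:4weak4strong}, Proposition~\ref{prop:infinite_lozenge_is_scalloped}, and Corollary~\ref{cor_infinite_non_sep_implies_scalloped} every such leaf lies in a finite chain of lozenges or in a scalloped region, and by Proposition~\ref{prop_pivots_discrete} the relevant corners form a closed discrete set. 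Therefore a dense family of such lozenge-sides cannot be accounted for inside finitely many chains, and one can extract an accumulating family of lozenges belonging to distinct maximal chains. Analyzing the limit of their sides, using the uniqueness of perfect fits (Lemma~\ref{lem:perfect_fit_unique}) and Axiom~\ref{Axiom_A1} to exclude accumulation onto a single fixed leaf, one is forced into a configuration that has already been excluded or classified: an infinite product region, impossible by Proposition~\ref{prop:no_product_region}, or a bi-infinite skew region, which by Proposition~\ref{prop_biinfinite_skew} forces $P$ to be the skew plane. When instead $P$ has a singular point, the same scheme applies, now additionally invoking the non-corner criterion (Lemma~\ref{lem_no_corner_criterion}) and the discreteness of singular leaves to control the configuration near the prong. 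Either way one reaches a contradiction, so the perfect-fit leaves are non-dense in one of the two leaf spaces and the desired $U$ exists.

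The main obstacle is precisely this last step: controlling the limiting configuration of an accumulating family of lozenges whose underlying maximal chains are all distinct. A priori such lozenges can be oriented inconsistently, and showing that their limit must be one of the ``forbidden'' configurations — an infinite product region or an infinite skew region — rather than some more intricate arrangement compatible with the axioms of a non-skew, non-trivial plane, is where the structure theory of chains, scalloped regions, the discreteness of pivots, and the non-corner criterion must be brought to bear in full.
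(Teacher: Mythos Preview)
Your reduction in the first paragraph is fine and matches the paper's final step. The divergence is in how you produce the open set $U$.

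The paper's argument is much more direct and does not go through your claimed intermediate statement. After disposing of the trivial case, the trichotomy yields (in the nonsingular case) two lozenges $L_1,L_2$ sharing a common side. Transitivity gives a fixed point $x\in L_1$ for some nontrivial $g$; choosing the sign so that $g$ contracts $\cF^-(x)$, the translate $g(L_1)$ overlaps both $L_1$ and $L_2$. Now for any point $p\in g(L_1)\cap L_2$, each of the four quadrants at $p$ contains a perfect fit formed by sides of $L_1$, $L_2$, or $g(L_1)$, so the non-corner criterion (Lemma~\ref{lem_no_corner_criterion}) rules out a lozenge in every quadrant of $p$. Hence the open set $U=g(L_1)\cap L_2$ contains no corner points at all.

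Your route instead tries to find an open set of leaves making \emph{no} perfect fit whatsoever. This is strictly stronger than what is needed: points in the paper's set $g(L_1)\cap L_2$ may well sit on leaves that do make perfect fits --- what prevents them from being corners is the \emph{configuration} of perfect fits around them, not their absence. So your reduction to ``perfect-fit leaves are non-dense in some leaf space'' is not the right target, and your own assessment is accurate: the sketched contradiction argument for this claim is where the real gap lies. The invocations of Propositions~\ref{prop:infinite_lozenge_is_scalloped}, \ref{prop_pivots_discrete}, and \ref{prop_biinfinite_skew} do not obviously combine to force an infinite product or bi-infinite skew region from a mere accumulation of lozenge-sides in distinct maximal chains; one would need a much finer analysis of how such lozenges can limit, and it is not even clear the statement is true at that level of generality. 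The paper's approach avoids all of this by working pointwise with the non-corner criterion rather than leafwise with the absence of perfect fits.
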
 
The hypothesis that $P$ is not skew is obviously necessary -- in the skew plane every point is the corner of a lozenge!

\begin{proof}
We will do the proof in the case when the foliations of $P$ are non-singular. The general case is left as an exercise, and is also done in \cite[Lemma 2.30]{BFM22}.
Assuming $\cF^+$ and $\cF^-$ are nonsingular, Theorem \ref{thm:trichotomy} implies that either $P$ is trivial (in which case there are no corner points and we are done) or there exist nonseparated leaves. 
In the case of nonseparated leaves,  Proposition \ref{prop:4weak4strong}) implies that there are at least two lozenges in $P$ that share a side.  Denote these lozenges by $L_1$ and $L_2$, and suppose for concreteness their shared side is a leaf of $\cF^-$. 

Since $G$ acts transitively, Theorem \ref{thm_characterization_transitive} implies that $\Fix_G$ is dense.  Thus,  we can pick a point $x$ in $L_1$ fixed by some nontrivial $g\in G$.  Up to replacing $g$ with its inverse, we may assume that $g$ contracts $\cF^-(x)$ and so $g(L_1)$ will intersect both $L_1$ and $L_2$, as shown in Figure \ref{fig:non-corner-dense}. 

If $p \in g (L_1) \cap L_2$ is a fixed point of some element, then each of its quadrants will contain a perfect fit formed by a leaf of a side of $L_1$ or $g(L_1)$ or $L_2$.  See Figure \ref{fig:non-corner-dense}.  
Thus, $p$ satisfies the conditions of Lemma \ref{lem_no_corner_criterion} in each of its quadrants, so cannot be a corner of a lozenge.  This shows that the open set  $g (L_1) \cap L_2$ contains no corners, as desired.  

The consequence that non-corner fixed points are dense in $P$ now follows directly from density of $\Fixbar_G$ (hence density of non-corner fixed points in $g (L_1) \cap L_2$, and the fact that for transitive actions the orbit of any open set is dense.  
 \begin{figure}
   \labellist 
  \small\hair 2pt
     \pinlabel $L_1$ at 135 165 
    \pinlabel $L_2$ at 180 165 
    \pinlabel $g(L_1)$ at 35 100 
    \pinlabel $p$ at 195 120 
 \endlabellist
     \centerline{ \mbox{
\includegraphics[width=7cm]{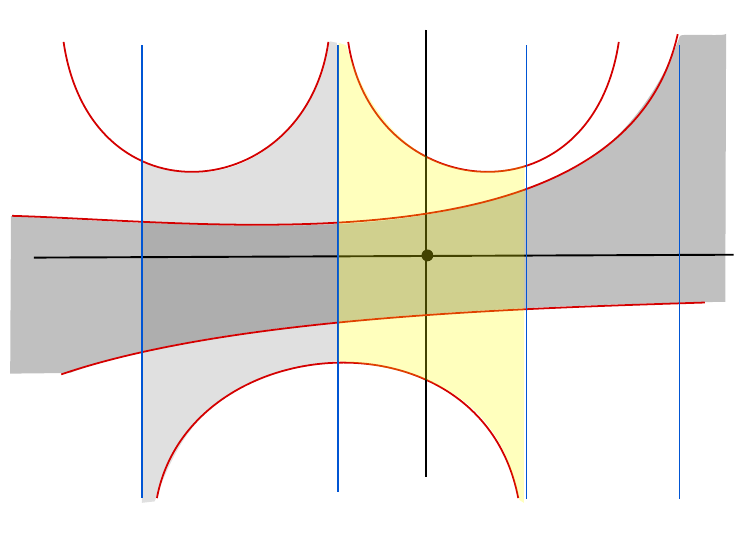}}}
\caption{$L_2 \cap g( L_1)$ contains no corner points}
 \label{fig:non-corner-dense} 
\end{figure}

\end{proof}

\begin{exercise} \label{ex_noncorner} 
Finish the proof of Lemma \ref{lem_transitive_implies_dense_noncorner} by extending the argument to the case where $\cF^+$ and $\cF^-$ have prong singularities.  
\end{exercise}

\section{A generating set for Anosov-like groups}
Building on the argument of Lemma \ref{lem_intervals_have_dense_orbit}, we can show that a group with a transitive Anosov-like action is generated by the set of elements acting with fixed points.  
This result was proved by Toshiaki Adachi \cite{Ada87} for transitive Anosov flows (in any dimension). Thurston \cite[Proposition 3.5]{Thu97} gave a different argument in the case of Anosov flows on 3-manifolds whose orbit space is the skew plane.  We are not aware of any proof in the literature that obtains this for pseudo-Anosov flows, although it seems reasonable that the strategy of Adachi could extend to this case. 

\begin{theorem}\label{thm_generating_set}
Suppose that $G$ is a topologically transitive Anosov-like action. Then the set of elements in $G$ that fixes points in $P$ is a generating set for $G$.
\end{theorem}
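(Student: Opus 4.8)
The plan is to mimic and extend the argument used in the proof of Lemma \ref{lem_intervals_have_dense_orbit}, exploiting the ``paths of stable/unstable segments'' construction together with the density of fixed points guaranteed by transitivity (Theorem \ref{thm_characterization_transitive}). Fix a basepoint $x_0 \in P$ which is fixed by some nontrivial $g_0 \in G$; such a point exists by Axiom \ref{Axiom_dense}. Let $H \leq G$ be the subgroup generated by all elements of $G$ that fix some point of $P$. The goal is to show $H = G$, i.e., that for an arbitrary $\gamma \in G$ we have $\gamma \in H$.

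First I would show that $H$ acts transitively enough on $P$ in the following sense: for any $y \in P$ there exists $h \in H$ with $h(x_0)$ arbitrarily close to $y$, or more precisely that the $H$-orbit of $x_0$ is dense. This follows by the same argument as Lemma \ref{lem_intervals_have_dense_orbit}: given $y$, take a path from $x_0$ to a neighborhood of $y$, cover it by flow-box (product-foliated) neighborhoods, and use density of $\Fix_G$ to replace the path by a finite concatenation of segments $J_1^\pm, \ldots, J_n^\pm$ where $J_i^\pm$ lies in $\cF^\pm(x_i)$ for points $x_i$ fixed by nontrivial $g_i \in G$. Each $g_i \in H$ by definition. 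Then, exactly as in that lemma, suitable powers $g_n^{-k_n}\cdots g_1^{-k_1}$ carry a neighborhood of $y$ onto a set meeting a neighborhood of $x_0$; equivalently an element of $H$ carries $x_0$ close to $y$. Care is needed to verify that a single element of $H$ (not just ``$H$ moves points close'') works, but since $x_0$ is itself a fixed point with expanding/contracting behavior on its two leaves, one can finish by composing with a high power of $g_0$ to land exactly at a chosen fixed point in any prescribed neighborhood of $y$.

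Next, given an arbitrary $\gamma \in G$, consider the point $\gamma(x_0)$. By the density statement above, there is $h \in H$ and, after composing with powers of $g_0$ and of the element fixing a nearby fixed point, an element $h' \in H$ with $h'(x_0) = \gamma(x_0)$ — here I am using that the set of points fixed by nontrivial elements of $G$ is dense (Theorem \ref{thm_characterization_transitive}), so $\gamma(x_0)$ itself, being fixed by $\gamma g_0 \gamma^{-1}$, is such a point; then $(h')^{-1}\gamma$ fixes $x_0$. But any element fixing $x_0$ is in $H$ by the very definition of $H$. Hence $(h')^{-1}\gamma \in H$, and therefore $\gamma = h' \cdot ((h')^{-1}\gamma) \in H$. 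This would complete the proof.

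The main obstacle I anticipate is the ``landing exactly on a fixed point'' step: the argument of Lemma \ref{lem_intervals_have_dense_orbit} only produces elements of $G$ moving one open set to meet another, which a priori might not lie in $H$ unless we are careful that every element used in the construction fixes a point. The construction does only use the $g_i$ (which fix points, hence lie in $H$) and $g_0$, so closure under $H$ should hold, but one must check that no ``stray'' element of $G$ sneaks in when promoting ``$h'(x_0)$ close to $\gamma(x_0)$'' to ``$h'(x_0) = \gamma(x_0)$'' — this is handled by observing that $\gamma(x_0)$ is a fixed point of a conjugate of $g_0$, so we may choose our path in the orbit-of-segments construction to terminate exactly at $\gamma(x_0)$ rather than merely near it, using density of $\Fix_G$ to route through fixed points and the hyperbolic dynamics at the endpoint to close up exactly. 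A secondary technical point is the presence of prong singularities: the flow-box covering argument must use the singular flow boxes of Definition \ref{def_flow_box} near prongs, but since singular points are fixed by nontrivial elements (Axiom \ref{Axiom_prongs_are_fixed}) and form a discrete set, the paths can be chosen to avoid them or to pass through them using those stabilizing elements, which again lie in $H$.
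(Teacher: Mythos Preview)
Your proposal has a genuine gap at exactly the place you flagged: the ``landing exactly on $\gamma(x_0)$'' step. The argument of Lemma \ref{lem_intervals_have_dense_orbit} only shows that a product of point-fixing elements can carry an open set to meet another open set; this gives $h'(x_0)$ \emph{close} to $\gamma(x_0)$, not equal to it. Your proposed fix --- using the hyperbolic dynamics of $\gamma g_0 \gamma^{-1}$ at the endpoint $\gamma(x_0)$ to ``close up exactly'' --- does not work: powers of an element fixing $\gamma(x_0)$ move nearby points along hyperbola-like trajectories and never hit $\gamma(x_0)$ itself. And knowing only that $(h')^{-1}\gamma$ moves $x_0$ a small distance is not enough to force a fixed point; Brouwer translations of the plane can displace every point by an arbitrarily small amount yet be fixed-point free.

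The paper's proof avoids this difficulty by never trying to hit a specific point. Instead, it runs the same path-of-boxes construction from $B$ to $g(B)$, using only point-fixing elements $h_i$, and observes that a suitable product $h_1^{N}\cdots h_n^{N} g$ maps the interval of $\cF^+$-leaves through $B$ strictly inside itself. The one-dimensional Brouwer fixed point theorem (intermediate value theorem) then gives a fixed leaf, and Axiom \ref{Axiom_A1} promotes this to a fixed point. Thus $h_1^{N}\cdots h_n^{N} g$ lies in your subgroup $H$, and so does $g$. The key idea you are missing is to exploit the foliation structure by working in the leaf space, where ``close'' plus expansion yields ``fixed'' via a one-dimensional argument --- something you cannot get from proximity in the plane alone.
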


\begin{proof} 
Let $g \in G$ be given.  We will find $h_1, h_2 \ldots h_n$ such that the composition $h:= h_1 h_2 \ldots h_n g$ has a fixed point, thus $g = h_n^{-1}\ldots h_1^{-1} h$ is a composition of elements acting with fixed points.  

If $g$ has a fixed point, we are already done, so assume not.  Let $B$ be a small product-foliated box in $P$, chosen small enough so that $B$ and $g(B)$ are disjoint.   
Take a path between $B$ and $g(B)$ that avoids all singular points, and cover this path by finitely many trivially foliated boxes $B_1, \ldots B_{n}$, with $B = B_1$ and $g(B) = B_{n}$ and $B_i \cap B_{i+1} \neq \emptyset$.  Using density of fixed points, choose $x_i \in B_i$ fixed by some nontrivial $h_i \in G$ such that $\cF^-(x_i) \subset \cF^-(B_{i-1})$.  Thus, $\cF^-(x_i) \cap \cF^+(x_{i-1}) \neq \emptyset$.  (This is why we are working with box neighborhoods).  
See Figure \ref{fig_boxes_path}, left for an illustration.  

 \begin{figure}[h]
     \centerline{ \mbox{
\includegraphics[width=12cm]{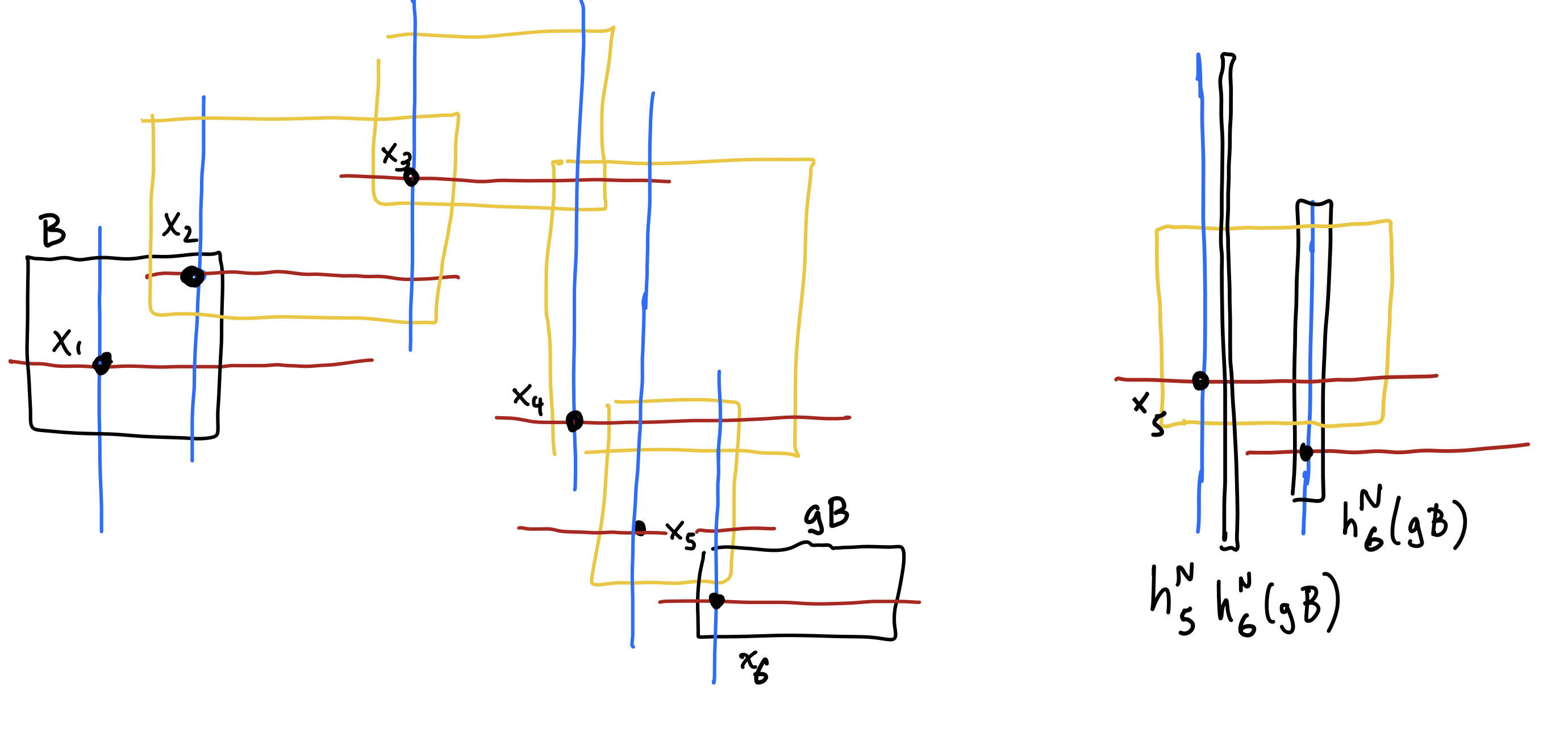}}}
\caption{Left: a path from $B$ to $g(B)$ along boxes containing elements with fixed points.  Right: the image of $g(B)$ under $h_5^N h_g^N$ has $\cF^+$-saturation covering $B_4$.}
 \label{fig_boxes_path} 
\end{figure}

Up to replacing some $h_i$ with their inverses, we assume that all $h_i$ are contracting on $\cF^+(x_i)$ and expansive on $\cF^-(x_i)$.  Then for all $i$, for any sufficiently large $N$, we will have that $\cF^+(h_i^N)(B_i) \supset \cF^+(B_{i-1})$. Schematically, this is very similar to the argument as we made in Lemma \ref{lem_intervals_have_dense_orbit}.  The first iterations are show in Figure \ref{fig_boxes_path}, right.  
It follows that  $\cF^+(h_1^N \ldots h_n^N g)(B) \supset \cF^+(B)$.  Thus, we have found an interval of $\Lambda(\cF^+)$ that is expanded by $h_1^N \ldots h_n^N g$, hence by the Brouwer fixed point theorem, there is a fixed leaf for $h_1^N \ldots h_n^N g$ in this interval, and by Axiom \ref{Axiom_A1} this element has a fixed point in $P$.  This is what we needed to show.  
\end{proof}

\chapter{Actions on lower-dimensional spaces}\label{chap_low_dimension}

A group acting on a bifoliated plane induces actions on three associated 1-dimensional spaces: the {\em leaf space} of each foliation, which was introduced in Section \ref{sec_bifoliated_intro}, and a compactification of the plane by a {\em circle at infinity}, which we describe here. 
Constraining the action to be Anosov-like puts constraints on these one-dimensional actions, which can be used to establish algebraic properties for the group, such as exponential growth (Theorem \ref{thm_exponential_growth} below) or rigidity of examples with nontrivial normal subgroups (Theorem \ref{thm_normal_abelian_implies_rigidity}, or show structural properties of the bifoliated plane, such as the existence of skew-like regions (Theorem \ref{thm_both_axes_R}).  
This chapter introduces a few of these tools and their applications.  

\section{The circle at infinity} \label{sec_circle}
The idea to compactify a (bi)-foliated plane by a circle has been independently discovered and re-discovered a number of times. 
Kaplan's classification of planar foliations in \cite{Kap41} gave a realization of leaves as level sets of pseudoharmonic functions in the closed disc, thus compactifying the plane (considered as the open disc) by a boundary circle. See \cite[Theorem 29]{Kap41}.  In the early 1980s, Mather \cite{Mat82} gave a purely topological approach to compactify a foliation of a surface minus a point at the missing point, which applies directly to a foliated or bifoliated plane by considering the plane as a sphere minus a point.  Fenley \cite{Fen12} gave a construction in the specific context of orbit spaces of pseudo-Anosov flows, and Frankel \cite{Fra13} gave a similar construction in the related context of ``sprig decompositions" (rather than bi-foliations) of the orbit spaces of quasi-geodesic flows. 
Bonatti \cite{Bon23} recently proved the following general result, using Mather's strategy of proof.  It also, through a uniqueness statement, shows that the constructions of Mather, Fenley and Frankel all gives the same object in the context of bifoliated planes.\footnote{Kaplan's compactification is different as it will generally fail condition \ref{item_circle_uncountable_in_open}.} 

\begin{theorem}[See  \cite{Bon23}] \label{thm_compactification}
Let $P$ be a plane with a countable collection of foliations $\cF_i$, either pairwise transverse or, more generally, with any two leaves intersecting along a compact set.  Then $P$ admits a compactification by a circle $\partial P$ such that $P \cup \partial P$ is homeomorphic to a closed disc and the following properties hold: 
\begin{enumerate}[label=(\arabic*)]
\item each ray limits to a unique point of $\partial P$ 
\item the set of rays limiting to any point of $\partial P$ is at most countable, and 
\item\label{item_circle_uncountable_in_open} the set of rays limiting to any open subset of $\partial P$ is uncountable. 
\end{enumerate} 
Moreover, there is a unique compactification with the three properties above and if a group $G$ acts on $P$ preserving the foliations $\cF_i$, then this action extends (uniquely) to the compactification. 
\end{theorem}  

Here, we describe the construction of $\partial P$, following Mather and Bonatti, in the special case of bifoliated planes with Anosov-like actions.  This specialization allows us to shorten some arguments by using work from Chapter \ref{chap_bifoliated_planes}.  The general proof follows the same outline, but requires a bit more care.  To start, we recall some basic definitions and describe the {\em circular order} on ends of leaves.   

Recall a {\em ray} of a leaf $l$ of a foliation is a properly embedded copy of $[0, \infty)$ into $l$.  An {\em end} of $l$ is an equivalence class of rays, where two rays are equivalent if their images coincide on a noncompact set.   Each nonsingular leaf has exactly two ends and a $k$-prong has $k$ ends.   

A {\em circular order} on a set $S$ is an abstract version of the way points are ordered on the circle, in the same way that a total order $<$ is a generalization of the way points are arranged on the real line.  Formally: 

\begin{definition} A {\em circular order} on a set $S$ is a map $c$ from distinct triples of points in $S$ to $\{+1, -1\}$ satisfying the following {\em cocycle relation} on 4-tuples: 
\[ c(s_1, s_2, s_3) - c(s_1, s_2, s_4) + c(s_1, s_3, s_4) - c(s_2, s_3, s_4) = 0\]
\end{definition} 
\noindent The reader can check as an exercise that the {\em orientation order} when $S = S^1$, assigning $1$ (resp. $-1$) to a triple that is oriented counter clockwise (resp.~clockwise) is an example.  

Since two inequivalent rays (of the same or different leaves, on any bifoliated plane) intersect along a compact set, the set of ends of leaves can be given a circular order.  To define this formally, fix an origin $O$ in the plane and let $C_\rho$ denote the boundary circle of the ball of radius $\rho$ about $O$.   Because rays are properly embedded, any ray $r_i$ intersects $C_\rho$, for all $\rho$ sufficiently large.   Since intersections occur along compact sets, given any three rays $r_1, r_2, r_3$, the points $C_\rho \cap r_1, C_\rho \cap r_2, C_\rho \cap r_3$, read in order, are oriented either clockwise or counter clockwise around $C_\rho$ and this orientation remains constant over all sufficiently large circles $C_\rho$.  Note that this order descends to ends of leaves since (up to increasing $\rho$) it is unchanged by replacing $r_i$ with any equivalent ray of the same leaf.  This gives the following: 

\begin{definition} \label{def_order_rays}
Let $(P, \cF^+, \cF^-)$ be a bifoliated plane.  
Define a circular order $c_P$ on ends of leaves by setting $c(e_1, e_2, e_3) = 1$ if for any rays $r_i$ representing ends $e_i$, and all $\rho$ sufficiently large,  the points
$C_\rho \cap r_1, C_\rho \cap r_2, C_\rho \cap r_3$ can be read in order counterclockwise along $C_\rho$, and $c(e_1, e_2, e_3) = -1$ otherwise.  
\end{definition} 
It is easy to verify that this satisfies the required cocycle relation, so defines a circular order.  

This order is the starting point to create the compactification of $P$ by a circle, but there are two difficulties. 
The first is a problem of embedding.  Consider first the more familiar case of total (linear) orders rather than circular orders.  Not every totally ordered set with the same cardinality as $\mathbb{R}$ can be embedded in $\mathbb{R}$ in an order-preserving way. For instance, any set with uncountably many distinct pairs which are {\em immediate successors} cannot be embedded.  (An element $b$ is an immediate successor of $a$ if $a<b$ and there exists no $c$ with $a<c<b$.)  
Similarly, we say that a pair of points $a, b$ in a circularly ordered set are immediate neighbors if there exists $c$ such that $c(a,b,c) = \pm1$ and there does not exist any point $d$ with $c(a,d,b) = c(a,b,c)$. The same argument as the linear case shows that uncountably many distinct immediate neighbors is an obstruction to embedding a circularly ordered set into $S^1$ in an order-preserving way. 
This is a problem already in the case of ends for the skew plane: each pair of rays $r, r'$ making a perfect fit define a pair of immediate neighbors.  

The second difficulty is that, even in cases where an order-preserving embedding to $S^1$ exists, it may fail to be surjective.  For instance, in the case of the trivial plane the reader can check that the cyclic order on ends is isomorphic to that of four disjoint open intervals, so any embedding into the circle will necessarily miss at least four points.  

To resolve these issues, we will use a series of lemmas to first pass to a quotient of the set of ends which does admit an embedding to $S^1$, and then {\em complete} it to a topological circle, similarly to how one uses Dedekind cuts to complete the rationals (as a totally ordered set) to the real line.   Going forward, we assume $P$ is a bifoliated plane with Anosov-like action, and $c_P$ the circular order on ends of leaves.  

\begin{lemma} \label{lem_finite_or_uncountable}
For any inequivalent rays $r, r'$ either there are finitely many equivalence classes of rays $s$ such that $c_P(r, s, r') = 1$, or uncountably many equivalence classes of rays $s$ such that $c_P(r, s, r') = 1$.  Furthermore, in the finite case, all of the rays $r, r'$ and all $s$ satisfying $c_P(r, s, r') = 1$ are part of sequence of sides $s_i$ of a chain of lozenges such that for all $i$, $s_i$ and $s_{i+1}$ make a perfect fit.  
\end{lemma}

\begin{proof} 
Let $r, r'$ be rays of leaves $l$ and $l'$ respectively. Without loss of generality we assume $l \in \cF^+$.  Fix $\rho$ large enough so that $l \cap l'$, if nonempty,  lies inside the open ball of radius $\rho$.  Since $l, l'$ are properly embedded, up to replacing $r$ and $r'$ with equivalent rays (respectively), we assume that they are based on $C_\rho$ and each intersect $C_\rho$ at a single point.  
The complement of $C_\rho \cup r \cup r'$ has two unbounded connected components, one containing rays ``between" $r$ and $r'$ in the clockwise sense (i.e. such that $c_P(r, s, r') = -1$ for any ray $s$ in this connected component) and one such that $c_P(r, s, r') = 1$ for all rays $s$ in the component.  Let $A$ denote this latter connected component.   

We will use the following elementary observation.  

\begin{observation} \label{obs_uncountable}
If $A$ contains a full leaf $l$ of $\cF^+$ or $\cF^-$, or if $A$ contains a ray $l'$ based at a point on $r$ or $r'$, 
then $A$ contains uncountably many distinct rays, consequently the set of equivalence classes of rays $s$ satisfying $c_P(r, s, r') = 1$ is uncountable. 
\end{observation} 
This uncountable family is obtained by considering rays of leaves based at points along $l$ or $l'$.  

Assume now that the number of equivalence classes of rays $s$ satisfying $c_P(r, s, r') = 1$ is countable.  Let $y = r \cap C_\rho$. Since $C_\rho$ is locally transverse to $\cF^+$ at $y$, each point near $y$ on the boundary of $A$ in $C_\rho$ defines a ray, no two of which are equivalent.  Thus, all but countably many of these rays must eventually exit $A$, so do not approach $r$.   Let $s_n$ be such a sequence of rays (starting in $A$ but exiting $A$), based at points $y_n$ approaching $y$.  Then $s_n$ limits to a set of nonseparated leaves, one of which, call it $l_1$ contains $r$.  In particular, $r$ is branching, so there is a leaf $f_1^-$ making a perfect fit with $r$ and another nonseparated leaf $l_2$.    See Figure \ref{fig_finite_or_uncountable}

If either of $f_1^-$ or $l_2$ contains $r'$, then we are done.  Otherwise, 
we apply Observation \ref{obs_uncountable} to conclude that $l_2$ cannot be contained in $A$, nor can it intersect $r'$.  Thus, it exits $A$ through $C_\rho$, and we may repeat the argument above using a ray $r_2$ of $l_2$ in place of $r$ (perturbing $C_\rho$ if needed for our simplifying transversality assumption), producing leaves $f_2^-, l_3$. 

\begin{figure}[h]
\includegraphics[width=6cm]{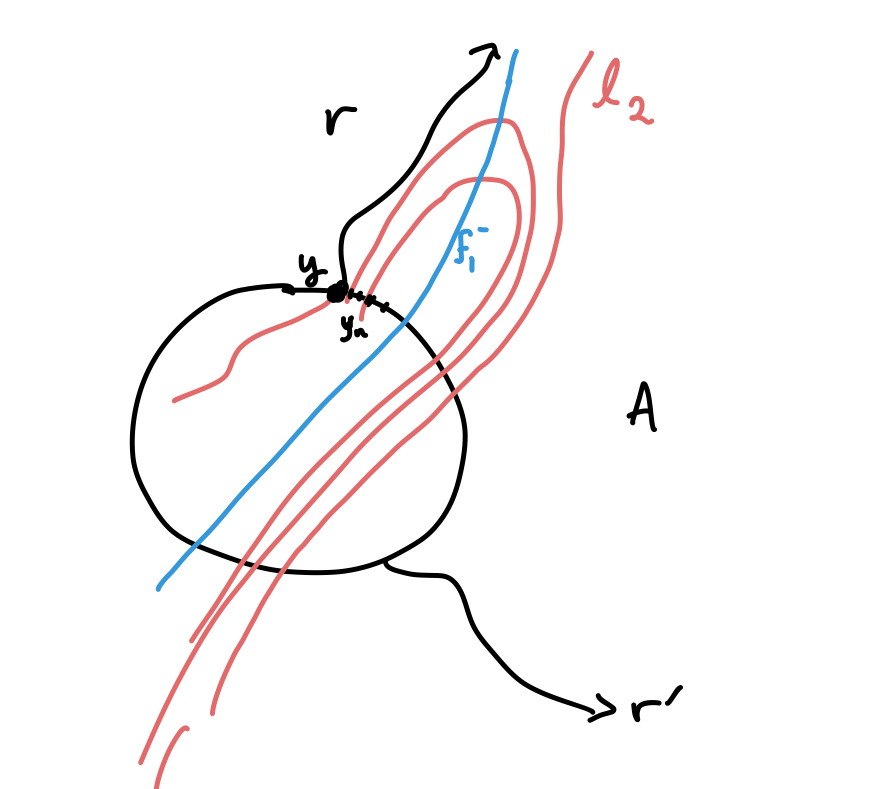}
\caption{The proof of Lemma \ref{lem_finite_or_uncountable}}
\label{fig_finite_or_uncountable}
\end{figure}

Iterating this argument, if it terminates in finitely many steps we have finished the proof.  If not, consider the sequences of leaves $l_n$ in $\cF^+$ and $f_n^-$ in $\cF^+$. These leaves are all pairwise disjoint due to the configuration of perfect fits, but all meet a compact set (of $C_\rho$ or a small perturbation) so must have an accumulation point.  This gives a contradiction with topological transversality of the foliations, proving the process must terminate.
\end{proof} 

As an easy consequence, we have: 
\begin{lemma}  \label{lem_no_successor}
Let $E'$ be the quotient of the set of ends of leaves by the relation $e_1 \sim e_2$ if $e_1$ and $e_2$ are represented by rays of leaves making a perfect fit.  Then the circular order $c_P$ descends to an order $c'_P$ on $E'$, and on this quotient no two points are immediate successors. 
\end{lemma} 

\begin{proof}
Passing to the quotient collapses finite or countable families coming from chains of lozenges.  By definition, each equivalence class is at most countable so by Lemma \ref{lem_finite_or_uncountable}, in the induced order on the quotient there are in fact uncountably many elements between any two distinct equivalence classes, hence no immediate successors. 
\end{proof}

The next lemma is the key step needed for the completion process, showing that $E'$ has a countable set which is dense for the order topology. 
\begin{lemma}  \label{lem_countable_dense}
There is a countable set $X \subset E'$ such that for any distinct $e_1, e_2 \in E'$, there exists $x \in X$ such that $c_P(e_1, x, e_2) = 1$.  
\end{lemma} 

\begin{proof}
Let $C_n$ be the circle of radius $n$.  Let $X_n$ be a dense, countable subset on $C_n$, and let $S_n$ denote the set of all rays of leaves through points of $X_n$.  Then $\bigcup_n S_n$ is dense for the order topology on rays, hence descends to a dense set for the order on $E'$.  
\end{proof} 

\begin{exercise} 
Show the following improved version of Lemma \ref{lem_countable_dense}, using the fact that there are no infinite product regions: For a non-trivial bifoliated plane with Anosov-like action, each of $\cF^+$ and $\cF^-$ contain a set of rays which are dense for the order topology in the sense above. 
\end{exercise} 

Given Lemma \ref{lem_no_successor} and \ref{lem_countable_dense}, the completion of $E'$ to a set homeomorphic to $S^1$ in the order topology is now a standard argument in topology, via a Dedekind-cut style construction.  See, e.g., \cite[Proposition 2.1]{Bon23} for details. 

We next define a topology on $P \cup \partial P$.

\begin{definition}
A basis of open sets of $P \cup \partial P$ is given by open sets of $P$, and for any two rays $r, r'$ and segment $\gamma$ connecting the basepoints of $r$ to $r'$ and intersecting them only at the basepoints, the sets $O^1_{r, r', \gamma}$ and $O^2_{r,r',\gamma}$ consisting of points of $P$ in one of the open regions bounded by $r \cup \gamma \cup r'$ together with the equivalence classes of ends of all rays contained in that region. 
\end{definition}

\begin{exercise} 
Check from this definition that the topology restricts to the standard topology on $P$ and that induced from the completed circular order on $\partial P$, and that each end of leaf limits onto a single point of $\partial P$.  
\end{exercise} 

Note also that, by construction, the set of rays limiting to any point of $\partial P$ is at most countable, and the set of rays limiting to any open subset is uncountable.
To show that $P \cup \partial P$ is homeomorphic to a disc, one can use a limiting argument, taking an exhaustion of $P$ as an increasing union of balls, and defining a sequence of homeomorphisms from these balls to concentric balls in the disc, so that the process converges to a homeomorphism from $P \cup \partial P$ to the closed disc.  An explicit realization of this is sketched in \cite[Section 2.3]{Bon23}, following the arguments of Mather from \cite[Section 12-13]{Mat82}.  We leave this as an exercise with these papers as reference. 
Given this, to complete the proof of Theorem \ref{thm_compactification} in our setting, we need only to show uniqueness and extension of the group action.  

\begin{lemma} 
A compactification of a bifoliated plane satisfying the conditions in Theorem \ref{thm_compactification} is unique, and any group of homeomorphisms of the plane preserving the foliations extends to act by homeomorphisms of $P \cup \partial P$. 
\end{lemma} 

\begin{proof} 
Suppose $Z$ is some abstract compactification by a circle satisfying the properties given in Theorem \ref{thm_compactification}.  Since rays limit to unique points and the topology on $P \cup Z$ is that of a disc, the order of limit points of rays on $Z$ must agree with the order $c_P$ defined above, whenever three rays have distinct limit points on $Z$.   Since the set of rays limiting to any point is countable, the only distinct equivalence classes of rays that can possibly limit to the same point are those making perfect fits as in Lemma \ref{lem_finite_or_uncountable}.   Since the set of rays limiting to any interval is uncountable, the quotient described in Lemma \ref{lem_no_successor} must be made.  Thus, there is a natural order-preserving bijection from the (dense) set of endpoints of rays in $\partial P$ to the same dense set in $Z$, thus these two compactifications agree.  

Density of endpoints of rays also gives the only possible candidate for the extension of an action of automorphisms of $P$ to $\partial P$: the action of an element $g$ is determined by its action on this dense set.  One verifies directly from the definition of the topology that this action is continuous.  
\end{proof}

\begin{exercise} 
As an exercise in using the definition, verify that the compactification of the skew plane is by the two boundary components of the standard realization as an infinite strip in $\mathbb{R}^2$, together with one additional point at each end.  
Describe also the compactificaiton of the trivial plane as a union of four open intervals consisting of endpoints of leaves, together with four additional points not realized by endpoints of leaves, as hinted in Figure \ref{fig_cartoon_boundary}. 
\end{exercise}

\begin{figure}[h]
\includegraphics[width=8cm]{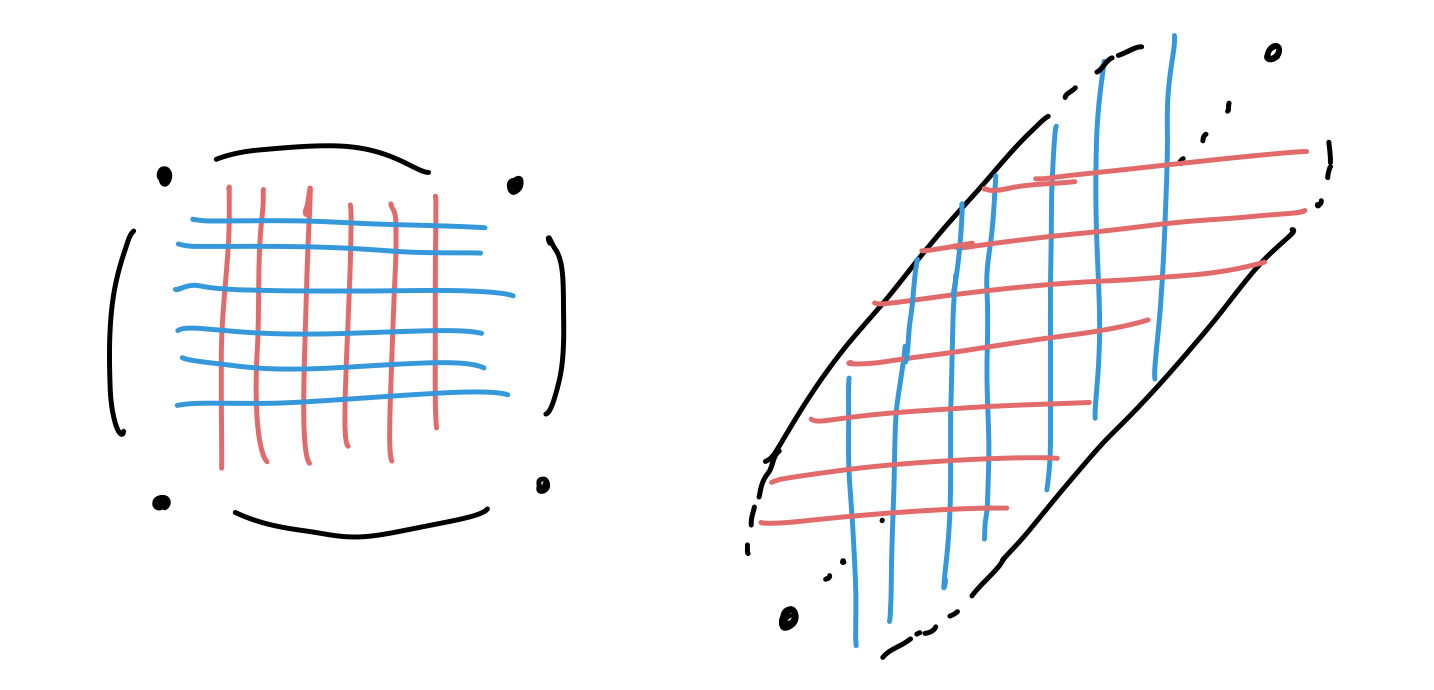}
\caption{A cartoon of the compactificaiton of the trivial and skew planes}
\label{fig_cartoon_boundary}
\end{figure}

\subsection{Dynamics on $\partial P$} 
Since Fenley's work in \cite{Fen12}, the boundary $\partial P$ has become an important tool in the topological study of pseudo-Anosov flows.  More recently was it shown that, in parallel to Barbot's theorem \ref{thm_action_determines_OEflow}, the induced action on $\partial P$ (up to conjugacy) determines the flow up to orbit equivalence.  
\begin{theorem}[See Theorem 1.10 of \cite{BBM24b}] \label{thm_boundary_action_determines_flow}
Suppose $G$ acts Anosov-like on bifoliated planes $P_1$ and $P_2$, with compact fundamental domains.  
Let $\rho_i \colon G \to \Homeo(S^1)$, $i=1, 2$ be the induced actions on $\partial P_i$.  These actions are conjugate in $\Homeo(S^1)$ if and only if there is a conjugacy of the actions on $P_1$ and $P_2$, sending foliations to foliations. 

Consequently, in the special case of two actions induced from pseudo-Anosov flows on a 3-manifold with fundamental group $G$, the induced actions on $S^1$ are conjugate if and only if the flows are orbit equivalent via a map isotopic to the identity on $M$. 
\end{theorem} 

The proof of Theorem \ref{thm_boundary_action_determines_flow} proceeds by ``reconstructing" the bifoliation out of the dynamics of the action, using the results of \cite{BBM24a}.  This procedure is concrete enough that one can in fact rephrase the definition of Anosov-like action purely in terms of the dynamics of elements acting on the circle, although the statements become more cumbersome.  We do not pursue this here, but instead describe the dynamics of individual elements to give some idea of how to work with the boundary.  In the next section, we apply this framework to give a new proof of exponential growth.  

 \begin{proposition}[Boundary action of elements on a nontrivial bifoliated plane] \label{prop:boundary_action_general}
Let $(P,\cF^+, \cF^-)$ be a nontrivial bifoliated plane with Anosov-like action of $G$, and $g \neq 1 \in G$.  
\begin{enumerate} 
\item If $g$ fixes a non-corner point $x$, then the only points of $\Pbound$ fixed by $g$ are endpoints of $\cF^\pm(x)$. 

\item If $g$ fixes all corners in a maximal chain of lozenges $\cC$, then the set of fixed points of $g$ on $\Pbound$ is the closure of the set of endpoints of leaves of the sides of lozenges in $\cC$.  

\item If $g$ acts freely on $P$, then it either has one or two fixed points on $\Pbound$, or has exactly four fixed points and preserves a scalloped region.  \end{enumerate} 
\end{proposition} 

\begin{proof} 
Suppose $g$ fixes some point in $P$, so we are in the setting of cases 1 or 2 of the statement.  If this is a unique fixed point, we may pass to a power to assume that $g$ fixes all half-leaves through the point. (This is automatically satisfied in the setting of case 2.)  Note that this does not affect the conclusion in case 1, since the set of fixed points of an element is a subset of the fixed set of any of its powers.   Let $\partial(\cF_g)$ denote the set of endpoints of leaves fixed by $g$. 

To establish the conclusion in these cases, it suffices to prove the following. 

\begin{lemma} If $\xi, \eta \in \partial(\cF_g)$ are adjacent points, i.e. we have $(\xi, \eta) \cap  \partial(\cF_g) = \emptyset$, then the interval $(\xi, \eta)$ contains no points fixed by $g$.  
\end{lemma}
\begin{proof}
Let $\xi$ and $\eta$ be two such points.  Consider first the case where $g$ has a unique fixed point $x$ in $P$.  Then $\xi$ and $\eta$ are the endpoints of leaves of $\cF^\pm(x)$, necessarily bounding a quadrant.  Let $l^+ \in \cF^+$ be a leaf intersecting this quadrant and $\cF^-(x)$.
 If $g$ has a fixed point $\zeta$ between $\xi$ and $\eta$, then either as $n$ to $+\infty$ or $-\infty$, $g^n(l)$ will converge to some union of leaves with an endpoint equal to $\zeta$ or whose endpoints limit to $\zeta$.  This union of leaves is necessarily $g$-invariant, contradicting the fact that $g$ has a single fixed point in $P$.
Hence, there are no fixed points of $g$ in the interval $(\xi, \zeta)$.

Now consider the case where $g$ does not have a unique fixed point, but instead preserves each corner of a maximal chain $\cC$ of lozenges.  
Then $\xi$ is the endpoint of a side $l_0$ of a lozenge.  It is possible that other (at most countably many) sides of lozenges also terminate at $\xi$.  Consider those in the connected component of $P \setminus  l_0$ whose boundary contains $\xi$, and 
 enumerate them in order by $l_0, l_1, \dots$.  See Figure \ref{fig_boundary_action}.
 Since these leaves share a common endpoint in $\partial P$, they cannot belong to a scalloped region (any two ends of leaves of a scalloped region have uncountably many ends between them, so cannot be identified), thus the set is finite, terminating with some $l_k$.  Let $c$ be the corner in $l_k$, and $f$ the other leaf through $c$.  Since $l_k$ is terminal, there is no lozenge in the quadrant of $c$ bounded by the ray of $l_k$ ending at $\xi$ and $f$, and thus a similar analysis to the case above shows that the endpoint of the ray of $f$ bounding this quadrant is $\nu$ and no points between $\xi$ and $\nu$ are fixed by $g$.  
 \begin{figure}[h]
\includegraphics[width=5cm]{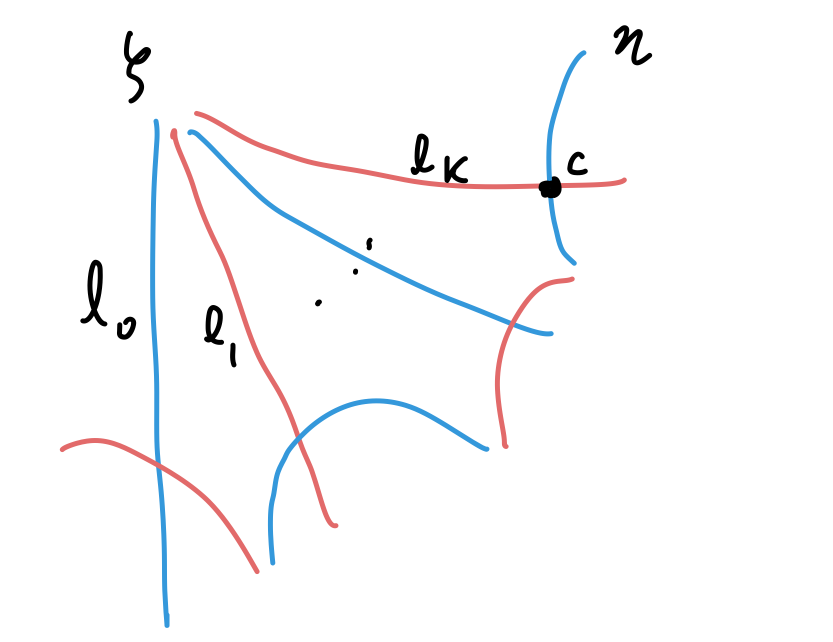}
\caption{Adjacent fixed points from the leaves of the sides of a lozenge}
\label{fig_boundary_action}
\end{figure}

\end{proof}

We return to prove the third statement of the proposition, where $g$ acts freely on $P$.  Then $g$ acts freely on the leaf spaces of $\cF^+$ and $\cF^-$ by \ref{Axiom_A1}, and thus has an {\em axis} $A$ consisting of leaves $l$ such that $l$ separates $g^{-1}(l)$ from $g(l)$ -- see Section \ref{sec_axes} for more on axes.  Let $l \in A$ be a nonsingular leaf.

 Let $\xi, \zeta$ be the endpoints of $l$.   If $g^n(\xi)$ and $g^n(\zeta)$ approach each other as $n \to \infty$, and also approach each other as $n \to -\infty$, then $g$ can have at most two fixed points on $\Pbound$. %
 (Note that it is possible that $g$ can have exactly one fixed point -- but this occurs only when it preserves an infinite chain of lozenges all making a common perfect fit, as occurs in the Franks-William example, illustrated in Figure \ref{fig_FW_torus_reeb}.  In particular, such elements always lie in some $\mathbb{Z}^2$ subgroup.) %
 So we can now assume that $g$ has more than two fixed points, and thus conclude that in at least one case (say, without loss of generality as $n \to \infty$) the endpoints of  $g^n(l)$ converge to distinct points.  Call  the two distinct limits $\xi_\infty,\zeta_\infty \in \Pbound$.   
Since $P$ is not trivial, the sequence $g^n(l)$ converges to a union of leaves, either finite or infinite.  
  If the union is finite, then a unique leaf, say $l_1$ has one of its endpoint equal to $\xi_\infty$, which is fixed by $g$. Thus $l_1$ must itself be fixed by $g$ and so $g$ has a fixed point in $P$, contrary to our assumption.  
Otherwise, the limiting union is infinite. In particular, $\xi_\infty$ is a fixed point of $g$ which is accumulated by endpoints of pairwise non-separated leaves and thus is the corner of a $g$-invariant scalloped region. 

It remains to note that $g$ cannot have additional fixed points other than the corners of the scalloped region.
For this, take $\eta \in \Pbound$ not a corner of the scalloped region $U$ preserved by $g$.  Then $\eta$ is separated from the scalloped region by a unique leaf $l'$ in the boundary of $U$.  Since $g$ acts freely on $P$, $g(l') \neq l'$, i.e. it is a different leaf in the boundary of $U$, and thus $g(\eta) \neq \eta$.  
 \end{proof}

\section{Application: exponential growth} \label{sec_exp_growth}
The {\em growth} of a finitely generated group $G$ is the asymptotic growth rate of the function $b_S(n)$ that counts the number of elements which can be written as words of length at most $n$ in the generating set $S$.  While $b_S(n)$ depends on the choice of generating set $S$, whether its growth is polynomial (and of what degree) or exponential, does not.   When $G$ is the fundamental group of a manifold $M$, one can replace $b_S(n)$ with the volume of the ball of radius $n$ about a basepoint $x$ in the universal cover $\wt M$.  The precise value of this volume function depends on the metric, but its growth rate does not, and agrees with the growth rate of $b_S$.  

An early result relating the dynamics of Anosov flows on 3-manifolds with the topology of the supporting manifold was obtained by Margulis (in the appendix of \cite{AS67}) and says that any manifold $M$ supporting an Anosov flow must have exponential growth of $\pi_1(M)$. 
Plante and Thurston gave a different proof, generalizing to codimension 1 Anosov flows in higher dimension \cite{PT72}, and Paternain \cite{Pat93} gave a proof for the pseudo-Anosov (expansive) $3$-dimensional case.  
Here, by using the action on $\partial P$, we give a different proof that all groups acting Anosov-like on bifoliated planes have exponential growth, and in fact in most cases contain nonabelian free subgroups (and in all cases free semi-groups).  

\begin{theorem} \label{thm_exponential_growth}
Suppose that $G$ is a finitely generated group that acts Anosov-like on a bifoliated plane $P$.  Then $G$ has exponential growth.  If $P$ is nontrivial, then $G$ in fact contains a free group on two generators. 
\end{theorem}

The proof of Theorem \ref{thm_exponential_growth} uses different arguments depending on whether the plane is skew, trivial, or not.  
The trivial case follows from the description of such actions as groups of affine transformations -- it is a straightforward exercise to see such groups contain free semi-groups.  (In fact, in general, solvable groups are either virtually nilpotent or contain free semi-groups.  But note that they cannot contain free groups because of solubility).  We now give the proof in the skew case, followed by the non $\mathbb{R}$-covered case.  

\begin{proof}[Proof of Theorem \ref{thm_exponential_growth}, skew case]
Assume $P$ is skew, $G$ acts Anosov-like, and let $g, h$ be elements preserving orientation and with disjoint fixed points in $P$.  Let $\eta$ be the one step up map.  Then $\eta$ induces a translation on the leaf space $\Lambda(\cF^+) \equiv \mathbb{R}$, and the action of 
$g$ and $h$ descends to the circle $\Lambda(\cF^+)/\langle \eta \rangle  \equiv S^1$. 
The induced actions of $g$ and $h$ each have disjoint fixed sets on $S^1$, each with one attractor and one repeller.  We now apply the following general result, which is a consequence of the classical Ping-Pong Lemma. 

\begin{proposition} \label{prop_ping_pong}
Suppose $g, h$ are homeomorphisms of $S^1$ with disjoint sets of fixed points.  Then, for any sufficiently large $k, m$ the powers $g^k$ and $h^m$ freely generate a free group.  
\end{proposition}

\begin{proof}
Since $g$ and $h$ have disjoint fixed sets, and these are closed subsets of $S^1$, we can find disjoint open sets $U$ and $V$, each consisting of finitely many intervals, such that $\Fix(g) \subset U$ and $\Fix(h) \subset V$.  For instance, one can take $U$ and $V$ to be $\epsilon$-neighborhoods of $\Fix(g)$ and $\Fix(h)$, respectively, where $\epsilon$ is less than half the Hausdorff distance between the closed sets $\Fix(g)$ and $\Fix(h)$.  
If $I$ is a connected component of the complement of the fixed set of a homeomorphism of the circle, then that homeomorphism attracts all points of $I$ towards one of its endpoints.  Thus, by taking $k$ very large, we can ensure that both $g^k$ and $g^{-k}$ map $V$ into $U$.  Similarly, for large $M$, we have $(h^m(U) \cup h^{-m}(U)) \subset V$.  By the {\em ping-pong lemma} (see, e.g., \cite[Chap. II.B]{dlH00}), we conclude that the group generated by $g^k$ and $h^m$ is a free product $\langle g^k \rangle \ast \langle h^m \rangle$.  
\end{proof}
This shows that $G$ contains a nonabelian free group, hence has exponential growth. 
\end{proof} 

We now discuss the case of planes with prongs or branching.  One might hope to apply Proposition \ref{prop_ping_pong} to the action of elements on $\partial P$.  As soon as one can find two elements with disjoint fixed sets, the proposition applies and gives a free group.  
If the action of $G$ is transitive, the existence of such elements follows directly from Proposition \ref{prop_transitive_non_corner_dense}, which states that non-corner points are dense in $P$, together with the description of dynamics given by Proposition \ref{prop:boundary_action_general}.  However, 
if the action of $G$ is not transitive, it is much harder to establish the existence of any elements with non-corner fixed points.  These do in fact exist -- this is shown in \cite{BBM24b}, but to keep this work self-contained and to give an illustration of another tool in low-dimensional dynamics, we give a different proof of exponential growth, using Margulis' ``Tits alternative" for $\Homeo(S^1)$.  

\begin{theorem}[Margulis' Tits alternative \cite{Mar00}]  \label{thm_tits_alternative}
Let $G$ be a group of homeomorphisms of $S^1$.  Either $G$ preserves a probability measure on $S^1$, or $G$ contains a free group on two generators. 
\end{theorem} 
In the measure-preserving case, one can further describe the algebraic structure of $G$ and the invariant probability measure, depending on the dynamics of the action.  We summarize this here; see \cite{Ghy01} for a detailed description as well as a proof of Theorem \ref{thm_tits_alternative}.  First, if the action of $G$ is minimal, then (up to change of coordinates) the invariant measure is Lebesgue and $G$ acts by rigid motions (rotation and reflection).  If the minimal set for $G$ is a Cantor set, then there is a surjection $S^1 \to S^1$ semiconjugating the action to a minimal one, reducing to the previous case.  If the minimal set is finite, then $G$ has a finite index subgroup fixing this set, hence acting on the line.  

Before the main proof, we need first a definition and lemma about configurations of lozenges limiting to a common boundary point.   

\begin{definition} \label{def_intertwined}
Chains of lozenges $\cC$ and $\cC'$ are said to be {\em intertwined} if each contains a subchain given by an infinite ray 
$R$ in the associated tree $\cT(\cC)$ (respectively, $R' \subset \cT(\cC')$) such that the lozenges $L_1, L_2, ...$ and $L'_1, L'_2, ... $ of $R$ and $R'$ are arranged in one of the two following patters:
\begin{itemize}
\item {\em Diagonal}: For all $i$, the corners of $L_i$ lie in $L'_{i}$ and $L'_{i+1}$. 
\item {\em Markovian}: $R$ and $R'$ are partitioned into finite lines of lozenges $\mathcal{L}_j$,  $\mathcal{L'}_j$ (allowing for the possibility of trivial lines consisting of a single lozenge) so that for all $j$, $\mathcal{L}_j$ and $\mathcal{L}_{j+1}$ share exactly one corner $\mathcal{L'}_j$ and $\mathcal{L'}_{j+1}$ share exactly one corner, $\mathcal{L}_{2j} \subset \cF^\pm(\mathcal{L}'_{2j})$ and $\mathcal{L}_{2j+1} \subset \cF^{\mp} (\mathcal{L}_{2j+1})$.
\end{itemize} 
See Figure \ref{fig_intertwined} for an illustration.
\end{definition}

\begin{figure}[h]
\includegraphics[width=12cm]{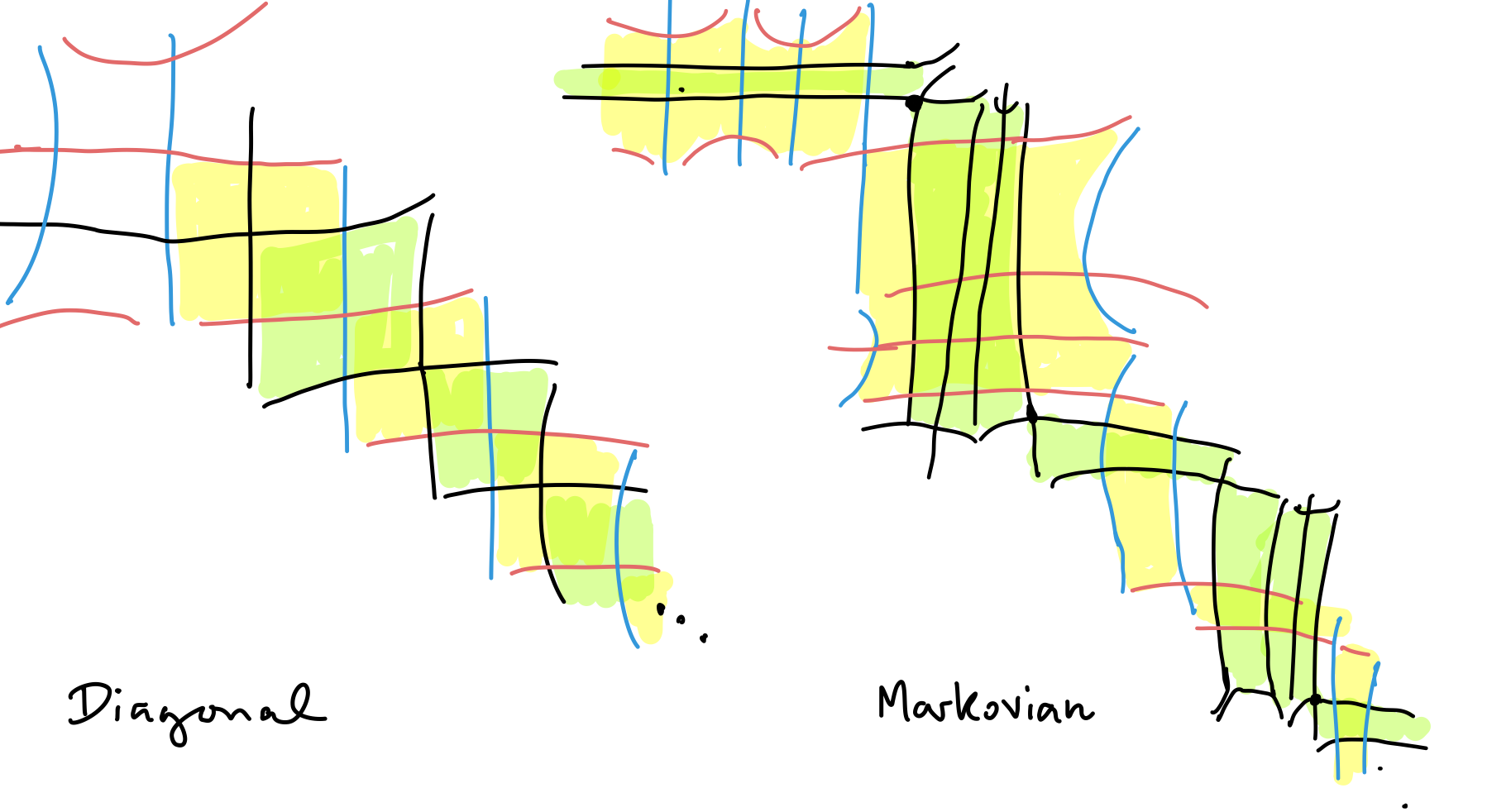}
\caption{Intertwined chains of lozenges}
\label{fig_intertwined}
\end{figure}

\begin{lemma} \label{lem_intertwined} 
Suppose $G$ acts Anosov-like on a bifoliated plane $P$ and $g$ and $h$ are elements preserving distinct maximal chains of lozenges $\cC_g$ and $\cC_h$, respectively, containing lozenges $L_g \subset \cC_g$ and $L_h \subset \cC_h$ with $L_g \cap L_h \neq \emptyset$.  
If $g$ and $h$ fix a common point $\xi \in \partial P$, then either $\cC_g$ and $\cC_h$ contain a common scalloped region, realized as lozenges in two different ways, or $\cC_g$ and $\cC_h$ are intertwined and $\xi$ is a limit of corners of lozenges in the rays $R$ and $R'$ given by Definition \ref{def_intertwined}.  
\end{lemma}

\begin{proof}
Since $\cC_g \neq \cC_h$, and they are assumed maximal, they have no lozenge sides in common and so $\xi$ cannot be the endpoint of a side of some lozenge in the chains, but is a limit of endpoints of sides.  
Let $l_i$ be sequence of leaves of $\cC_g$ whose endpoints limit to $\xi$.  If $l_i$ does not escape to infinity in $P$, then it limits to one side of a scalloped region fixed by $g$ and $\xi$ is a corner of this scalloped region.  The fact that $h$ also fixes $\xi$ implies that $h$ preserves this scalloped region as well, since two distinct scalloped regions share no common limit points of leaves.  Thus, $\cC_g$ and $\cC_h$ contain a common scalloped region.  Since we assumed $\cC_g$ and $\cC_h$ are distinct, these chains realize the scalloped region as a line of lozenges in two different ways. 

Thus, we can assume that the sequence of leaves $l_i$ (and by symmetry, any sequence of leaves of $\cC_h$ whose endpoints limit to $\xi$) escapes all compact sets.   
Let $\cC'_g \subset \cC_g$ and $\cC'_h \subset \cC_h$ be subchains containing $L_g$ and $L_h$ and each containing leaves limiting to $\xi$, chosen {\em minimal} with respect to inclusion among chains with this property.  We will show these define rays in $\cT(\cC_g), \cT(\cC_h)$ and represent intertwined chains.  

Consider first the case where some corner $c_g$ of $L_g$ is contained in the interior of $L_h$.  By Lemma \ref{lem_markovian_or_corner}, some corner of $c_h$ of $L_h$ is contained in $L_g$.  By the non-corner criterion (Lemma \ref{lem_no_corner_criterion}), only one other quadrant of $c_g$ (that diagonal to $L_g$) can contain a lozenge of $\cC_g$.  Let $Q_g$ denote this diagonal quadrant. Similarly, only the diagonal quadrant $Q_h$ can contain another lozenge with corner $c_h$.  The boundary point $\xi$ must be either in $Q_g$ or $Q_h$; up to relabeling we assume it is in $Q_g$, which must then be a lozenge $L'_g$.  Repeating this argument with $L'_g$ and $L_h$, one obtains a diagonal lozenge $L'_h$, and iteratively an infinite pair of intertwined strings limiting to $\xi$. 

Now suppose that no corner of $L_g$ is contained in $L_h$, so by Lemma \ref{lem_markovian_or_corner} they have markovian intersection.  Let $\mathcal{L}_h$ be the maximal line of lozenges containing $L_h$, and $\mathcal{L}_g$ the maximal line containing $L_g$, which are necessarily finite and intersect markovianly. See Figure \ref{fig_finding_chain}.  Consider the extremal corners $c_g$ and $c_g'$ of $\mathcal{L}_g$ and $c_h$ and $c_h'$ of $\mathcal{L}_h$.  We now analyze which (non-adjacent, since we are considering maximal lines) quadrants of  these corners may contain the boundary point $\xi$:  Since $\xi$ is a limit of both $\cC_g$ and $\cC_h$, it must be in quadrants with nonempty intersection.  Up to choice of orientation, these must both have a common orientation (lower right, in the choice of the figure). Thus we can run the above argument again, with $L_g$ and $L_h$ replaced by the lozenges in these intersecting quadrants and their maximal lines.  Iterating as before, one obtains an intertwined chain with markovian intersection.  
\begin{figure}[h]
\includegraphics[width=12cm]{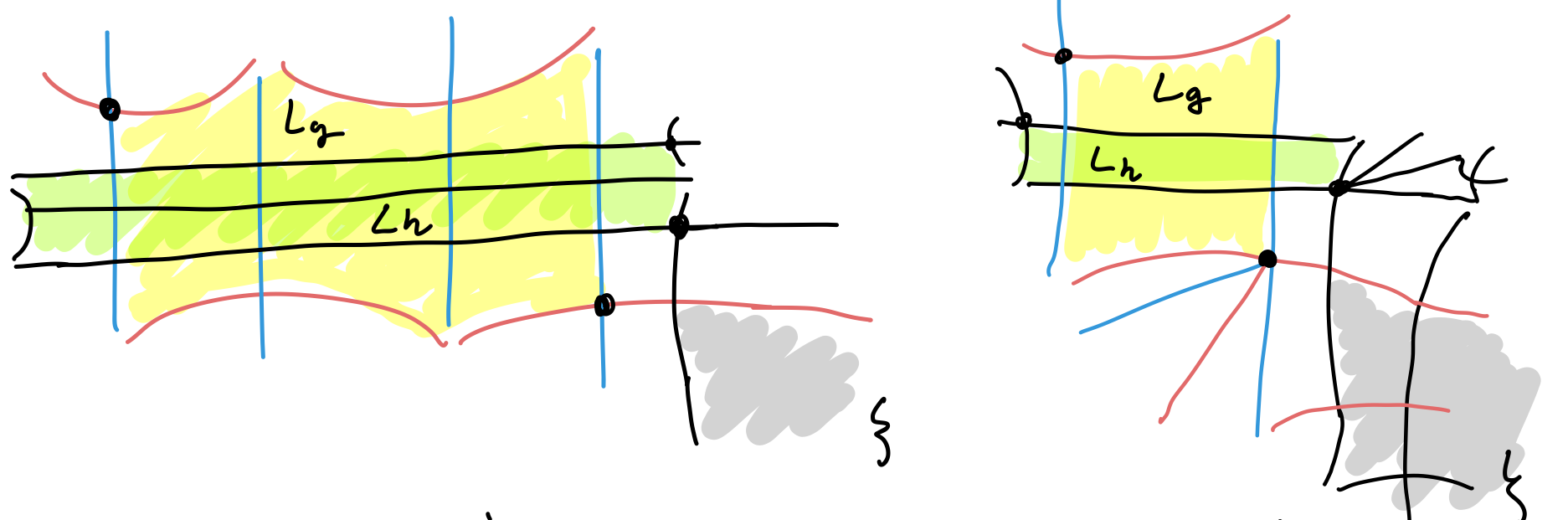}
\caption{Finding a chain of intertwined lozenges.  Note that, even if lozenges have singular corners as in the right side of the figure, there is still only one possible quadrant of overlap where the next diagonal lozenges can be.}
\label{fig_finding_chain}
\end{figure}
\end{proof} 
Now we can finish the proof of Theorem \ref{thm_exponential_growth}.  

\begin{proof}[Proof of Theorem \ref{thm_exponential_growth}, pronged or branching case]
The strategy in this case is to show that the action of $G$ has no invariant measure on $\partial P$, then apply Theorem \ref{thm_tits_alternative}.   Since $G$ is not abelian, it is not conjugate to an action by rotations.  
If $G$ were semiconjugate to an action by rotations on the Cantor set, then the kernel of this action, which is a subgroup of $G$ would still act Anosov-like: indeed, the only axiom that does not pass to subgroups is the density axiom, but this is immediate in this case, since any element fixing a leaf has a power fixing all rays of the leaf, and so in the kernel of the rotation action. Thus, we can replace $G$ with the subgroup acting trivially on the invariant Cantor set.  
 Similarly, if $G$ has a finite orbit on $S^1$, one can pass to a finite index subgroup to reduce to the case where $G$ has a global fixed point.  
Thus in order to apply the Tits' alternative and obtain a free subgroup, we need only prove the following general proposition: 

\begin{proposition} \label{prop_global_fixed_implies_R_covered}
Let $G$ be a group acting Anosov-like on a bifoliated plane.  If $G$ has a global fixed point on $\partial P$, then $P$ is skew or trivial.
\end{proposition}
\begin{proof} 
Most of the work of the proof has already been done in Lemma \ref{lem_intertwined}.  
Suppose $G$ acts with global fixed point $\xi$.   If $\xi$ is the endpoint of a leaf, then this leaf is fixed by $G$, which easily contradicts Axiom \ref{Axiom_dense} and Theorem \ref{thm:distinct_fix_is_chain}.   Similarly, $\xi$ cannot be the corner of a scalloped region, as that would also need to be preserved by $G$, which is absurd.  
Thus, by Proposition \ref{prop:boundary_action_general}, every element of $G$ must either act freely or preserve some infinite chain of lozenges, which contains a sequence of sides whose endpoints limit to $\xi$.   

Suppose for contradiction that $P$ is neither skew nor trivial.  As a first case, suppose there is some branching so there exists a pair of adjacent lozenges $L_1$ and $L_2$ whose corners are fixed by some nontrivial $g$ in $G$.  For concreteness assume their shared side $l$ lies in $\cF^+$, and let $c$ denote their shared corner.  Switching the labels of $L_1$ and $L_2$ if needed, we assume $\xi$ lies on the side of $l$ containing $L_2$, and choose a local orientation so that $L_2$ is below $L_1$, as in Figure \ref{fig_no_fix_point}.  By Axiom \ref{Axiom_dense}, we can find a leaf $f$ of $\cF^+$ passing through $L_1$, which contains a point fixed by a nontrivial element $h$ of $G$.  Since $h$ and $g$ both fix $\xi$, Lemma \ref{lem_intertwined} implies that they each preserve an infinite chain of lozenges, which are necessarily intertwined, and limit to $\xi$.  The position of $L_2$ prevents the possibility of diagonal lozenges, by the non-corner criterion (Lemma \ref{lem_no_corner_criterion}).   Thus, the intersection is markovian and $\xi$ is in the quadrant of $c$ diagonal from $L_1$.  (If $c$ is singular, $\xi$ is more precisely in the quadrant of $c$ not containing $L_2$ but adjacent to the quadrant adjacent to $L_1$.)

\begin{figure}[h]
\includegraphics[width=12cm]{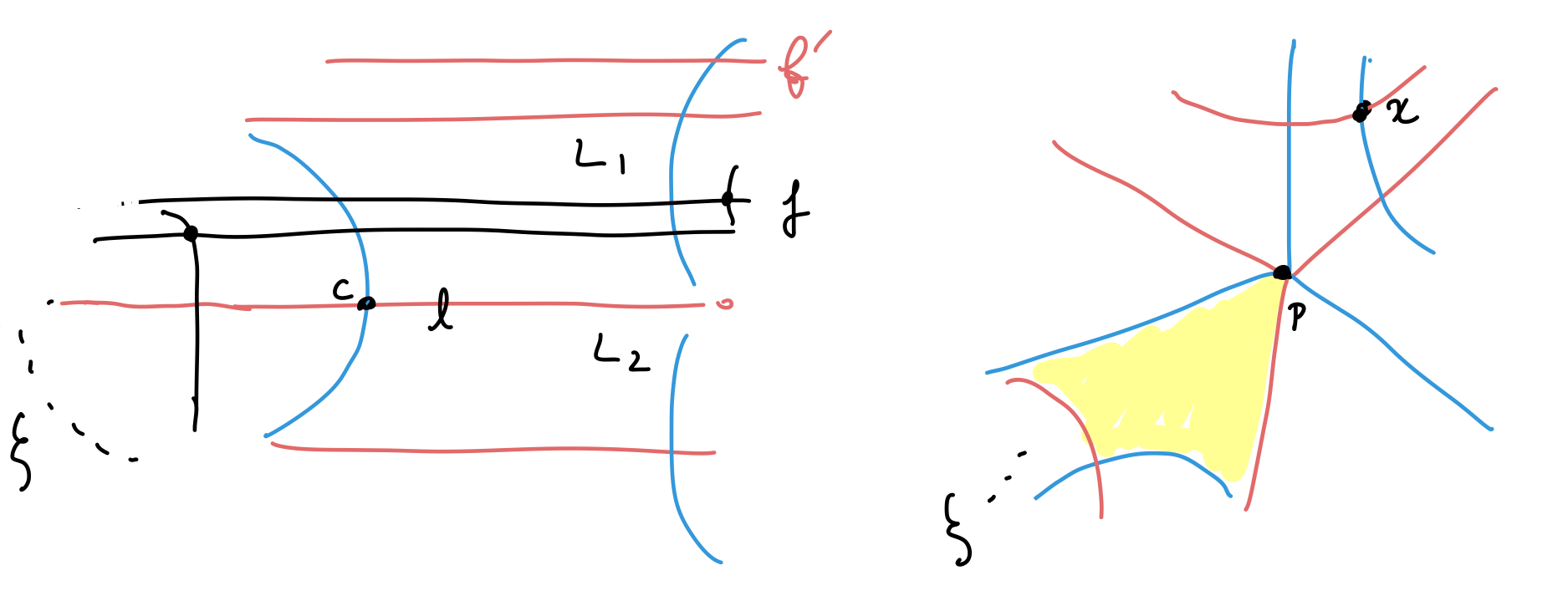}
\caption{The two cases to show no boundary point $\xi$ can be fixed, in the presence of branching or prongs}
\label{fig_no_fix_point}
\end{figure}

Let $f'$ be any leaf fixed by a nontrivial element of $G$ and above the top side of $L_1$.  Applying again Lemma \ref{lem_intertwined} and the same reasoning as above, we conclude that it must be part of an intertwined chain that intersects markovianly with the chain invariant by $g$.  But such a chain cannot limit to a point in the quadrant of $c$ containing $\xi$. See again Figure \ref{fig_no_fix_point}. This gives the desired contradiction.  

It remains to treat the case where $P$ has prong singularities.  Again, since $\xi$ is not the endpoint of a leaf, each prong (which is by Axiom \ref{Axiom_prongs_are_fixed} fixed by a nontrivial element of $G$) must be a corner of an infinite chain of lozenges which contains sides limiting to $\xi$.   Take such a prong $p$, and let $x$ be a point fixed by any nontrivial element and separated from $\xi$ by both $\cF^+(p)$ and by $\cF^-(p)$.  Such a point exists by Axiom \ref{Axiom_dense}.  Again, it is easy to verify by the non-corner criterion that $x$ cannot be part of a chain intertwined with the chain containing $p$ as a corner and limiting to $\xi$, giving a contradiction in this case.  

\end{proof} 
This concludes the proof of Proposition \ref{prop_global_fixed_implies_R_covered}, and thus of Theorem \ref{thm_exponential_growth}.
\end{proof} 

\section{Actions on leaf spaces and axes of elements}  \label{sec_axes}
We have already exploited the induced actions on leaf spaces in some of the results of Chapter \ref{chap_bifoliated_planes}.  Here we describe an important property for elements acting freely, namely, the existence and behavior of axes.  

If $g$ acts freely on a (bi)-foliated plane, then it acts freely on the leaf space(s) of the foliation(s).  
It is a classical result of  Tits \cite{Tit70} that a free automorphism of a (Hausdorff) tree has an {\em axis}: an invariant, embedded copy of $\bR$ along which it acts by translation.  Barbot \cite{Bar98b} generalized the notion of axis to automorphisms of simply connected non-Hausdorff 1-manifolds, and Roberts--Stein \cite{RS01} and Fenley \cite{Fen03} considered the case of non-Hausdorff trees, which include all leaf spaces of the bifoliated planes we consider here. 

\begin{definition}
For an element $g$ acting freely on a non-Hausdorff tree $T$, the \emph{axis} of $g$, denoted by $A(g)$, is defined to be the set of all points $x \in T$ such that $g(x)$ separates $x$ from $g^2(x)$.   
\end{definition}

\begin{theorem}[Theorem A of \cite{Fen03}] \label{thm_axes_exist}
If $g$ acts freely on a non-Hausdorff tree, then $A(g) \neq \emptyset$.  
\end{theorem}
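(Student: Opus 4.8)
The plan is to adapt the classical argument of Tits \cite{Tit70} for free automorphisms of (Hausdorff) trees, taking care of the pathologies specific to non-Hausdorff trees; this is precisely the content of \cite{Fen03}, so what follows is an outline of that argument. Since $g$ acts freely it has no fixed point, so it suffices to exhibit one point $x$ such that $x$ and $g^2(x)$ lie in distinct connected components of $T\smallsetminus\{g(x)\}$ — which is exactly the assertion $x \in A(g)$. A convenient reformulation, obtained by applying $g^{-1}$, is to find $x$ which separates $g^{-1}(x)$ from $g(x)$.

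First I would establish the basic ``coarse geometry'' of non-Hausdorff trees in the sense of \cite{Fen03, RS01, Bar98b}: if two points are joined by an arc then that arc is unique; two arcs sharing an endpoint intersect in a sub-arc containing that endpoint; any three pairwise-joinable points admit a well-defined median; and separation is detected by arcs wherever arcs exist. I would also isolate the one feature to watch for, namely that \emph{distinct points of $T$ need not be joined by an arc at all} — this is precisely the non-separated/branching behaviour — and that deleting a single branch leaf from $T$ may fail to disconnect it.

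With this in place, the core loop is as follows. Fix any $x_0$. If $x_0$ and $g^2(x_0)$ are joined by an arc $J$ with $g(x_0)$ in the interior of $J$, then $g(x_0)$ separates them and $x_0\in A(g)$, so we are done. If $J$ exists but $g(x_0)$ is ``off to the side'', let $m$ be the median of $\{x_0,g(x_0),g^2(x_0)\}$, and replace $x_0$ by $m$ — which lies strictly between consecutive orbit points, and morally closer to the prospective axis — then iterate. The crux is to show this iteration terminates in a point of $A(g)$. In a simplicial or $\mathbb{R}$-tree one minimises the translation length $d(x,g(x))$; since $T$ carries no compatible metric, the plan is instead to argue directly, using only freeness of $g$, that no infinite strictly-descending sequence of ``overlaps'' can occur, so that after finitely many steps $g(x)$ is forced into the interior of the arc joining $x$ and $g^2(x)$.

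The main obstacle — and where the genuine work lies — is the case in which $x_0$ and $g^2(x_0)$ (or even $x_0$ and $g(x_0)$) are joined by no arc. Then they straddle a maximal family of pairwise non-separated leaves, which $g$ permutes; one analyses this action, using that either $g$ fixes some leaf of the family (reducing to the arc-wise case already handled) or $g$ translates the family, and then relocates the basepoint to a nearby point on the correct side so that the relevant arcs exist, without changing whether an axis point can be found. Alternatively one reformulates the whole iteration so that it only ever uses arcs of the form $[x,g(x)]$ together with their $g$-translates, never an arc spanning two orbit steps. This case analysis around branching is delicate, and is the reason Theorem \ref{thm_axes_exist} requires a new argument rather than a transcription of the Hausdorff proof.
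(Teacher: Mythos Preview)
The paper does not give a proof of this theorem; it is quoted as Theorem~A of \cite{Fen03}, followed only by the remark that Fenley's argument ``can be simplified somewhat in the situation of leaf spaces on bifoliated planes'' (using that all powers of a freely-acting element act freely, via Brouwer, and the known structure of branching sets). No details of either Fenley's proof or the promised simplification are supplied, so there is nothing in the paper to compare your proposal against.

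As for the proposal itself: your outline is a plausible Tits-style sketch, but two points deserve comment. First, in your non-arc case you write ``either $g$ fixes some leaf of the family (reducing to the arc-wise case already handled) or $g$ translates the family'' --- but $g$ acts freely by hypothesis, so the first alternative is vacuous; the relevant dichotomy is whether $g$ preserves the family of nonseparated leaves \emph{setwise} (and then how it permutes them) or moves it off itself. Second, the termination of your median iteration is exactly the hard step, and ``no infinite strictly-descending sequence of overlaps can occur'' is an assertion rather than an argument: in a non-Hausdorff tree there is no metric to minimise, and making this descent rigorous is where the genuine work of \cite{Fen03} lies. Your sketch correctly identifies this as the crux but does not yet indicate how to resolve it.
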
 
Fenley's proof of Theorem \ref{thm_axes_exist}
can be simplified somewhat in the situation of leaf spaces on bifoliated planes, where we know that $g$ acts freely if and only if all powers of $g$ act freely (this follows from the Brouwer plane translation theorem, see Proposition \ref{prop_primitive_stabilizer}), and we have more information on the structure of branching sets.  

When a tree is non-Hausdorff, axes need not be homeomorphic to $\bR$, but they do always have a linearly ordered structure, as follows. 

\begin{proposition}[See Lemma 3.5 and Proposition 3.10 of \cite{Fen03}] \label{prop:axis_structure}  
Suppose $g$ acts freely on a non-Hausdorff tree.  Then $A(g)$ is either an embedded (but not necessarily properly embedded) copy of $\R$, or has the following structure: 
there exists a collection of embedded closed intervals $[a_i, b_i]$ indexed by $i \in \bZ$ such that for all $i$, $a_i$ and $b_{i-1}$ are nonseparated points, and
\[A(g) = \bigcup_{i \in \bZ} [a_i, b_i].\]

In the first case, $g$ acts on the immersed copy of $\R$ by translations, and in the second case there exists $k \in \bZ$ such that $g([a_i, b_i]) = [a_{i+k}, b_{i+k}]$
\end{proposition}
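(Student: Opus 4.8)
\textbf{Proof plan for Proposition \ref{prop:axis_structure}.}

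The plan is to deduce this statement as closely as possible from Fenley's work (Lemma 3.5 and Proposition 3.10 of \cite{Fen03}), but to record a self-contained argument adapted to our setting, where $g$ acts freely on a leaf space $T = \Lambda(\cF^\pm)$ of a bifoliated plane and we know from Proposition \ref{prop_primitive_stabilizer} that all powers of $g$ act freely. First I would establish the basic combinatorial fact that $A(g)$ is \emph{connected} and $g$-invariant: invariance is immediate from the definition ($x$ separates $g^{-1}$ of the defining condition), and connectedness follows because if $x, y \in A(g)$ then the unique arc between them in $T$ can be shown to lie in $A(g)$ using that $g$ translates "along" $A(g)$ — one checks that the image $g(x)$ of any interior point still separates its neighbors. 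The key input is that in a non-Hausdorff tree, between any two points there is a unique arc, and the "direction of translation" of $g$ is well-defined: for $x \in A(g)$, the arc from $x$ to $g(x)$ is disjoint from the arc from $g^{-1}(x)$ to $x$ except possibly at endpoints, and these concatenate to give a $g$-invariant path.

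Next I would analyze what obstructs $A(g)$ from being a properly embedded $\R$. A connected subset of a non-Hausdorff simply connected $1$-manifold which is "linearly ordered by separation" is either an embedded copy of an interval or fails to be properly embedded precisely at \emph{branch points} — pairs of nonseparated points that both lie in the closure of $A(g)$. So the plan is: if $A(g)$ contains no branch points of $T$ in its closure, it is an embedded $\R$ and $g$ acts by translation (here one uses that $g$ is fixed-point-free, together with the fact that an order-preserving free homeomorphism of $\R$ is conjugate to a translation by Hölder's theorem, Theorem \ref{thm:holder}). Otherwise, $A(g)$ decomposes as a union of maximal closed "embedded-interval" pieces $[a_i, b_i]$, where the endpoints $b_{i-1}$ and $a_i$ are the nonseparated pairs. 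The indexing by $\bZ$ comes from: (1) there are infinitely many such pieces, since $g$ permutes them without fixed points (a single piece or finitely many would force a periodic point, contradicting that all powers act freely via the Brouwer argument / Proposition \ref{prop_primitive_stabilizer}); (2) along the linear separation-order the pieces are discrete, because each branch point is isolated within $A(g)$ — if branch points accumulated we would violate the local structure of a $1$-manifold, or produce (via Proposition \ref{prop:4weak4strong}, translated to $T$) an infinite set of pairwise nonseparated leaves forcing a scalloped region whose stabilizer is virtually $\bZ^2$ (Lemma \ref{lem_action_of_stabilizer_scalloped}), which would again obstruct freeness.

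Finally, for the dynamical statement: in the embedded-$\R$ case, $g$ acts freely and order-preservingly, hence by "translation" in the topological sense. In the branching case, $g$ permutes the discrete, linearly ordered collection $\{[a_i, b_i]\}_{i \in \bZ}$ freely and order-preservingly, so it acts as a nontrivial translation $i \mapsto i + k$ on the index set for some $k \neq 0$; the sign of $k$ matches the translation direction of $g$ on $A(g)$. I expect the main obstacle to be the verification that $A(g)$ is connected and that the branch points occurring in it are isolated — both require a careful treatment of the non-Hausdorff topology, since the naive "unique arc between two points" and "discreteness of branching" claims are exactly where non-Hausdorff trees differ from ordinary trees. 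The cleanest route is probably to cite \cite[Lemma 3.5, Proposition 3.10]{Fen03} for these two technical points and only spell out the consequences in our language, noting that our additional hypothesis (all powers act freely) removes the finite-orbit case that Fenley has to handle separately.
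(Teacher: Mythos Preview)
The paper gives no proof of this proposition at all: it is stated with the citation ``See Lemma 3.5 and Proposition 3.10 of \cite{Fen03}'' and then immediately used. So your plan to ``cite \cite[Lemma 3.5, Proposition 3.10]{Fen03} for these two technical points'' is in fact exactly what the paper does, and everything else in your outline is extra.

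That extra material has a real problem, though. The proposition is stated for a single homeomorphism $g$ acting freely on an abstract non-Hausdorff tree; there is no ambient Anosov-like action in the hypotheses. Your proposed argument for discreteness of branch points invokes Proposition~\ref{prop:4weak4strong} (nonseparated leaves lie on lines of lozenges) and Lemma~\ref{lem_action_of_stabilizer_scalloped} (scalloped stabilizers are virtually $\bZ^2$), both of which require the full Anosov-like axioms --- in particular Axiom~\ref{Axiom_nonseparated}, which gives a \emph{group} element fixing each branching leaf. A single free element $g$ on an abstract non-Hausdorff tree gives you none of this structure, so that step does not go through. Similarly, Proposition~\ref{prop_primitive_stabilizer} (Brouwer) is a statement about actions on the plane, not on the leaf space, so invoking it for ``all powers act freely'' on $T$ requires first translating back to $P$. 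If you want a self-contained proof, it has to work in the abstract tree setting, which is what Fenley does; if you are content to specialize to leaf spaces of bifoliated planes with Anosov-like actions, you should say so explicitly and then the applications in the paper still go through --- but that is a weaker statement than the one written.
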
 

In the case of a free element of an Anosov-like action, one can upgrade the above proposition to separate the cases where the copy of $\bR$ is properly embedded versus embedded, see Lemma \ref{lem_commuting_axis_2}.

We note also the following observation, which is an immediate consequence of the definition. 

\begin{observation} \label{obs_conjugate_axis} 
Let $h, g$ be homeomorphisms of a non-Hausdorff tree $T$, with $g$ acting freely.  Then $hgh^{-1}$ acts freely and $A(hgh^{-1}) = hA(g)$. 
\end{observation}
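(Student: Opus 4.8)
The statement to prove is Observation~\ref{obs_conjugate_axis}: for homeomorphisms $h, g$ of a non-Hausdorff tree $T$ with $g$ acting freely, the element $hgh^{-1}$ acts freely and $A(hgh^{-1}) = hA(g)$. Since this is labeled an ``Observation'' and the text explicitly calls it an immediate consequence of the definition, the proof should be short and essentially formal, resting only on the fact that $h$ is a homeomorphism (hence preserves the ``separation'' relation on $T$) together with the definition of the axis.

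\begin{proof}[Proof of Observation \ref{obs_conjugate_axis}]
First, $hgh^{-1}$ acts freely: if $hgh^{-1}(y) = y$ for some $y \in T$, then $g(h^{-1}y) = h^{-1}y$, contradicting that $g$ acts freely.

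For the statement about axes, recall that by definition $A(g)$ is the set of points $x \in T$ such that $g(x)$ separates $x$ from $g^2(x)$ in $T$, and similarly $A(hgh^{-1})$ is the set of points $y \in T$ such that $(hgh^{-1})(y)$ separates $y$ from $(hgh^{-1})^2(y) = hg^2h^{-1}(y)$. Since $h$ is a homeomorphism of $T$, it carries connected sets to connected sets and preserves the relation ``$z$ separates $p$ from $q$'': indeed, $z$ separates $p$ from $q$ precisely when $p$ and $q$ lie in distinct connected components of $T \smallsetminus \{z\}$, and $h$ induces a bijection between the components of $T \smallsetminus \{z\}$ and those of $T \smallsetminus \{h(z)\}$.

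Now fix $y \in T$ and set $x = h^{-1}(y)$. Applying $h$ to the triple $(x, g(x), g^2(x))$, the point $g(x)$ separates $x$ from $g^2(x)$ if and only if $h(g(x))$ separates $h(x)$ from $h(g^2(x))$, that is, if and only if $(hgh^{-1})(y)$ separates $y$ from $(hg^2h^{-1})(y) = (hgh^{-1})^2(y)$. The first condition says $x \in A(g)$, i.e.\ $y \in hA(g)$; the second says $y \in A(hgh^{-1})$. Hence $A(hgh^{-1}) = hA(g)$.
\end{proof}

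The only subtlety worth being careful about is whether the separation relation one uses in the definition of the axis is exactly the ``distinct components of $T \smallsetminus \{z\}$'' relation; since that is how separation is understood throughout the text (e.g.\ in the definitions of lozenges and in Proposition~\ref{prop:axis_structure}), and since a homeomorphism manifestly preserves it, there is no genuine obstacle here — this really is a one-line consequence of functoriality of the construction under homeomorphisms. If one wanted to be maximally self-contained one could instead observe directly that $x \mapsto h(x)$ conjugates the dynamical system $(T, g)$ to $(T, hgh^{-1})$, and that $A(\cdot)$ is defined purely in terms of that dynamical data, so it is automatically natural; I would include the short explicit argument above rather than appeal to naturality informally.
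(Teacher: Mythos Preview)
Your proof is correct and is exactly the kind of direct verification the paper has in mind: the text does not give an explicit argument but simply notes that the observation is ``an immediate consequence of the definition,'' and your unpacking of that---conjugates of free maps are free, and a homeomorphism preserves the separation relation defining the axis---is precisely what this amounts to.
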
 
In particular, $A(g)$ is invariant under all elements commuting with $g$.   
Less obvious, though well known for Hausdorff trees, is the following: 
\begin{lemma} \label{lem_commute_same_axis}
If $h$ and $g$ commute and both act freely, then $A(h) = A(g)$  
\end{lemma}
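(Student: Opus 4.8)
The goal is to show that if $h$ and $g$ commute and both act freely on a non-Hausdorff tree $T$ (which we apply to the leaf spaces $\Lambda(\cF^\pm)$), then $A(h) = A(g)$. First I would recall the key general-position fact: by Observation \ref{obs_conjugate_axis}, $A(g)$ is invariant under any element commuting with $g$, so $h(A(g)) = A(g)$ and likewise $g(A(h)) = A(h)$. Thus we have two $g$-invariant, $h$-invariant subsets, and the task is to see they coincide. The plan is to argue by contradiction: suppose $A(g) \neq A(h)$, and derive a configuration incompatible with freeness of the commuting pair, using the structure of axes from Proposition \ref{prop:axis_structure}.

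The main step is a ``bridge'' argument. If $A(g)$ and $A(h)$ are disjoint (or, more precisely, if some point $x$ lies on one but not the other), consider the shortest path in $T$ connecting the two sets — in a non-Hausdorff tree one must be slightly careful, but since both axes are connected and closed, there is a well-defined ``projection'' of $A(h)$ onto the subtree $A(g)$, landing on a point $p \in A(g)$, and symmetrically a point $q \in A(h)$; the arc $[p,q]$ (or its analog through branching points) is the bridge. Since $h(A(g)) = A(g)$ and $h$ fixes $q$'s side setwise, $h$ must fix $p$ — but $h$ acts freely, contradiction. The symmetric argument with $g$ and $q$ gives the same contradiction. Hence the bridge has length zero, i.e. $A(g) \cap A(h) \neq \emptyset$.

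Once the axes intersect, I would show they are equal. Pick $x \in A(g) \cap A(h)$. Then $g^n(x)$ and $h^m(x)$ all lie in $A(g) \cap A(h)$ for all $n, m$ since each axis is invariant under both elements; so the intersection contains an ``axis-like'' bi-infinite subset accumulating in both directions. Using the linear-order structure of Proposition \ref{prop:axis_structure} (the decomposition into intervals $[a_i,b_i]$ with $a_i, b_{i-1}$ nonseparated), one checks that a $g$-invariant connected subset of $A(g)$ which is unbounded in both the forward and backward translation directions must be all of $A(g)$: $A(g)$ is the ``smallest'' closed connected $g$-invariant set with this translation behavior, essentially because any point of $A(g)$ lies between two translates $g^n(x), g^m(x)$ for suitable $n,m$, hence on the arc joining them, which lies in the intersection. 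The same reasoning with $h$ gives $A(h) \subseteq A(g) \cap A(h)$ as well, so $A(g) = A(h)$.

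\textbf{Main obstacle.} The delicate point is handling the non-Hausdorff branching: in a Hausdorff tree the ``projection onto a subtree'' and the ``smallest invariant axis'' statements are standard, but here the projection of $A(h)$ onto $A(g)$ may not be a single point, or the bridge may pass through nonseparated points rather than an honest embedded arc. I expect to need Proposition \ref{prop:axis_structure} carefully, together with the fact (used repeatedly in this chapter) that in our setting $g$ acts freely iff all its powers do, to rule out the bad configurations — in particular to ensure that a commuting element which preserves $A(g)$ and has no room to translate along the bridge is forced to fix a point. Fenley's \cite{Fen03} treatment of axes in non-Hausdorff trees should supply the technical lemmas needed to make the bridge/projection argument rigorous.
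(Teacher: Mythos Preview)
The paper does not actually prove this lemma: immediately after the statement it says ``We omit the proof here, but do prove a special case in Lemma \ref{lem_commuting_axis_2} below. A proof of the general statement can be found in \cite[Lemma 3.11]{Fen03}.'' So there is no in-paper proof to compare against, only the deferred reference to Fenley and the special case.

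That special case (both axes homeomorphic to $\bR$) is handled by an argument different from yours: rather than a bridge/projection argument, the paper observes that if $A(h) \cap A(g) = \emptyset$ then $A(h)$ sits in a single connected component of $\Lambda(\cF^+) \setminus A(g)$; but $g$ preserves $A(h)$ while translating along $A(g)$ and hence permuting those components, a contradiction. Once the axes meet, the intersection is a connected $g$-invariant subset of $A(g) \cong \bR$, hence all of $A(g)$, and by symmetry all of $A(h)$. This is cleaner than a bridge argument because it needs no metric or ``closest point'' notion.

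Your bridge approach is the classical Hausdorff-tree argument and the outline is correct in that setting. The obstacle you flag is real: when $A(g)$ has the interval-decomposition form of Proposition \ref{prop:axis_structure}, it is not obviously path-connected as a subspace, so ``the bridge $[p,q]$'' and ``projection onto $A(g)$'' need a genuine definition, and your claim that $A(g)$ is the \emph{smallest} closed connected $g$-invariant set is not something established in the text. You are right that Fenley's \cite{Fen03} supplies the machinery; your sketch is an accurate high-level description of how that argument goes, but it is not self-contained from what is in the paper. If you wanted a route closer to what the paper already contains, you could try to push the complementary-component argument of Lemma \ref{lem_commuting_axis_2} through in the general case rather than the bridge argument.
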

We omit the proof here, but do prove a special case in Lemma \ref{lem_commuting_axis_2} below.  
A proof of the general statement can be found in \cite[Lemma 3.11]{Fen03}.

\section{Abelian normal subgroups and rigidity} \label{sec_abelian_normal}
As a first application of axes, we will prove the following rigidity result, which gives strong constraints on Anosov-like actions for any group with a nontrivial abelian normal subgroup. 

\begin{theorem}\label{thm_normal_abelian_implies_rigidity}
Suppose $G$ has a nontrivial abelian normal subgroup and acts Anosov-like on a bifoliated plane $(P, \cF^+, \cF^-)$.   Then exactly one of the following holds: 
\begin{enumerate}[label=(\roman*)]
\item $P$ is skew and the index at most $2$ subgroup of $G$ that preserves leafwise orientation has nontrivial center.  This center  is isomorphic to $\bZ$ and generated by a power of the one-step-up map. 
\item $P$ is trivial, $G$ is virtually solvable with trivial center, and its action is by affine transformations.
\end{enumerate} 
\end{theorem}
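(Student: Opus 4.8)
The plan is to use the trichotomy (Theorem~\ref{thm:trichotomy}), which says $P$ is trivial, skew, or has a singular point / two-sided branching; the existence of the abelian normal subgroup $A\trianglelefteq G$ will rule out the third case, and then the two remaining cases correspond exactly to outcomes (ii) and (i). For the trivial case: by Proposition~\ref{prop:trivial_affine} the action of $G$ is conjugate to one by affine maps $(x,y)\mapsto(ax+b,cy+d)$, which embeds in $\mathrm{Aff}(\bR)\times\mathrm{Aff}(\bR)$; the latter is metabelian, so $G$ is virtually solvable. For the triviality of the center one argues that any $g\in Z(G)$ must commute with an element of nontrivial linear part (which exists by Proposition~\ref{prop:trivial_affine}), and comparing translation parts forces $g$ to be linear; but $Z(G)$ is an abelian normal subgroup, so by the freeness statement in the next paragraph (which applies equally to the trivial plane) every nontrivial element of $Z(G)$ acts freely, while a nontrivial diagonal linear map fixes the origin — hence $Z(G)=1$. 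Since a trivial plane is not skew, this also gives the ``exactly one'' clause.

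The next step is: \emph{every nontrivial element of $A$ acts freely on $P$.} Suppose $1\ne a\in A$ has a fixed point; replacing $a$ by a power, assume it fixes all rays through a fixed point $x$. By Corollary~\ref{cor:maximal_fixed}, $\Fix(a)$ is either $\{x\}$ or the corner set of the maximal chain of lozenges through $x$. Since $A$ is abelian, every $b\in A$ commutes with $a$ and so preserves $\Fix(a)$; since $A\trianglelefteq G$, for each $g\in G$ we have $\Fix(gag^{-1})=g\,\Fix(a)$, again an $A$-invariant set. When $\Fix(a)=\{x\}$ this forces $A$ to fix every point of $G\cdot x$, hence of $\overline{G\cdot x}$; but this closed $G$-invariant set is contained in the single point $\Fix(a)$, so $x$ is a global fixed point of $G$, contradicting Axiom~\ref{Axiom_dense}. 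In the chain case one runs the analogous argument along the $A$-invariant chain, using Proposition~\ref{prop_pivots_discrete} (discreteness of pivots) and Corollary~\ref{cor_infinite_non_sep_implies_scalloped} to control the $G$-orbit of the chain and again contradict Axiom~\ref{Axiom_dense}/Axiom~\ref{Axiom_A1}. Thus every nontrivial element of $A$ acts freely on $P$, hence freely on $\Lambda(\cF^+)$ and $\Lambda(\cF^-)$; by Proposition~\ref{prop_primitive_stabilizer} so do all of their powers.

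Now suppose $P$ is not trivial; I claim it is skew. Fix $1\ne a\in A$. By Theorem~\ref{thm_axes_exist} $a$ has an axis in each leaf space, and since $A$ is abelian all elements share a common axis $\mathcal A^\pm\subset\Lambda(\cF^\pm)$ (Lemma~\ref{lem_commute_same_axis}). For $g\in G$, $gag^{-1}\in A$ commutes with $a$, so $A(gag^{-1})=A(a)$ while $A(gag^{-1})=gA(a)$ (Observation~\ref{obs_conjugate_axis}); hence $\mathcal A^\pm$ is $G$-invariant. One then argues that the third alternative of Theorem~\ref{thm:trichotomy} is impossible: by Proposition~\ref{prop:axis_structure}, branching along $\mathcal A^+$ would produce (via Proposition~\ref{prop:4weak4strong} and Proposition~\ref{prop:infinite_lozenge_is_scalloped}) a $G$-invariant scalloped region, whose stabilizer — and hence all of $G$ — is virtually $\bZ^2$ by Lemma~\ref{lem_action_of_stabilizer_scalloped}; but a virtually $\bZ^2$ group cannot act Anosov-like on a non-trivial plane, since its action on a skew leaf space would be a minimal abelian action, conjugate to a translation group by Hölder's theorem (Theorem~\ref{thm:holder}), contradicting the existence of fixed leaves demanded by Axiom~\ref{Axiom_dense}; and the off-axis branching / singular sub-cases are handled by the same mechanism together with Axiom~\ref{Axiom_prongs_are_fixed} and Proposition~\ref{prop:no_product_region}. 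So neither leaf space branches and there is no singular point; both leaf spaces are Hausdorff, and Proposition~\ref{prop:both_R} gives that $P$ is skew. \emph{This step — converting the algebraic input into a geometric contradiction with the lozenge/scalloped-region structure — is the main obstacle, together with the chain-of-lozenges sub-case of the freeness statement above.}

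Finally, in the skew case, work with the index-at-most-two subgroup $G_0\le G$ preserving leafwise orientations; in the canonical identification $\Lambda(\cF^+)\cong\bR$, every $g\in G_0$ commutes with the translation $\sigma\colon t\mapsto t+1$ induced by the one-step-up map $\eta$ (Definition~\ref{def_stepup_half_stepup}), and the $G_0$-action on $\bR$ is minimal (Exercise~\ref{ex_charac_skew}). The group $A_0:=A\cap G_0$ is abelian, normal in $G_0$, and nontrivial — if it were trivial, $A$ would have an element of order two, but an Anosov-like action admits none (an involution fixing a leaf contradicts the expansion in Axiom~\ref{Axiom_A1}, and a free involution would translate along an axis, which is impossible). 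By the freeness statement $A_0$ acts freely on $\bR$, and (using minimality of $G_0$ to upgrade the Hölder semiconjugacy to a conjugacy) Hölder's theorem gives an injective translation-number homomorphism $\tau$ on $\langle A_0,\sigma\rangle$ with $\tau(\sigma)=1$. Conjugation by any $g\in G_0$ normalizes $\langle A_0,\sigma\rangle$, fixes $\sigma$, and acts on $\tau(\langle A_0,\sigma\rangle)\subset\bR$ by an orientation-preserving automorphism fixing $\tau(\sigma)=1$; such an automorphism is the identity, so $A_0\subseteq Z(G_0)$ and $Z(G_0)\ne 1$. Repeating the argument with $Z(G_0)$ in place of $A_0$: $Z(G_0)$ acts freely on $\bR$, and $\tau$ has discrete image (a dense image would, with minimality and centrality, make $G_0$ a group of translations, contradicting the fixed leaves from Axiom~\ref{Axiom_dense}); hence $Z(G_0)\cong\bZ$, generated by an element $w$ with $w^n=\eta^m$ for suitable integers $n,m$ — i.e. $Z(G_0)$ is generated by a power (more precisely a root) of the one-step-up map, as claimed. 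The remaining one-dimensional translation-number bookkeeping here is routine.
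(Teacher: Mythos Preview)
Your overall architecture matches the paper's: show the abelian normal subgroup $A$ acts freely, pass to a common $G$-invariant axis, deduce $P$ is trivial or skew, and then analyze each case. But the two steps you yourself flag as ``the main obstacle'' are where your argument breaks down.

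\textbf{Freeness, chain case.} When some $a\in A$ has more than one fixed point, you gesture at Proposition~\ref{prop_pivots_discrete} and Corollary~\ref{cor_infinite_non_sep_implies_scalloped}; neither is needed, and it is unclear how they would yield the contradiction. The paper's argument is a one-liner: the \emph{maximal} chain through a fixed point of $a$ is $A$-invariant and, by Corollary~\ref{cor:maximal_fixed}, is the \emph{unique} $A$-invariant maximal chain; normality of $A$ then forces this chain to be $G$-invariant, which is immediately incompatible with Axiom~\ref{Axiom_dense} (any $g$ fixing a leaf that crosses, but is not a side of, some lozenge of the chain cannot preserve it).

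\textbf{Ruling out the third alternative.} This is the real gap. You never establish that the $G$-invariant axis $\mathcal A^+$ equals all of $\Lambda(\cF^+)$; you only try to analyze its internal structure. Your claim that ``branching along $\mathcal A^+$'' produces a $G$-invariant scalloped region misapplies Proposition~\ref{prop:infinite_lozenge_is_scalloped}: the nonseparated pairs $(b_{i-1},a_i)$ in the interval description of the axis give lines of lozenges, but there is no reason these assemble into a single infinite line. Your subsequent contradiction (``a virtually $\bZ^2$ group cannot act Anosov-like on a non-trivial plane, since its action on a skew leaf space\ldots'') assumes the plane is skew, which is exactly what you are trying to prove. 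And ``off-axis branching / singular sub-cases'' are not handled at all. The paper's route avoids all of this: once the axis is $G$-invariant, it must equal the whole leaf space (a proper $G$-invariant axis is incompatible with Axiom~\ref{Axiom_dense}); then the linear-order structure of an axis forces $\Lambda(\cF^+)\cong\bR$, and Proposition~\ref{prop:both_R} gives trivial or skew.

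\textbf{Skew case, description of the center.} Your translation-number argument shows $Z(G_0)\cong\bZ$ with generator $w$ satisfying $w^n=\eta^m$, but that only says $w$ is commensurable with $\eta$, not that $w$ is a \emph{power} of $\eta$ as the theorem asserts. The paper closes this gap by a different mechanism: any central $g$ must preserve, for every $h\in G_0$ with fixed points, the $\eta$-orbit $\{(x_0+n,y_0+n):n\in\bZ\}$ of attracting fixed points of $h$; since such $x_0$ are dense by Axiom~\ref{Axiom_dense}, $g$ is forced to be an integer translation, i.e.\ an honest power of $\eta$.
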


After the proof, we apply this to the setting where $G = \pi_1(M)$ and the action is that induced on the orbit space of a pseudo-Anosov flow.  We show that, with the hypotheses on $G$ above, the flow is either a suspension of a hyperbolic linear map of the torus, or a lift to a fiberwise cover of geodesic flow on a hyperbolic orbifold.  
A version of this result for Anosov flows is originally due to combined work of Plante and Ghys.  

\begin{proof}[Proof of Theorem \ref{thm_normal_abelian_implies_rigidity}]
Let $G$ be as in the statement of the theorem and let $H \subset G$ be a nontrivial abelian normal subgroup.  Consider the action of $H$ on $P$.  If some element of $H$ has fixed points, then this set of fixed points is preserved by $H$, so there is either a unique global fixed point for $H$ or an $H$-invariant chain of lozenges $\cC$. 

In the first case, since $H$ is normal, the global fixed point must also be invariant by $G$, which contradicts Axiom \ref{Axiom_dense}. 
In the second case, consider the maximal chain $\cC'$  containing $\cC$.  Then $\cC'$ is also $H$-invariant, and the {\em unique} $H$-invariant  {\em maximal} chain.  (Uniqueness follows from Corollary \ref{cor:maximal_fixed}.)
Uniqueness and the fact that $H$ is normal implies that $G$ preserves $\cC'$.  This is also easily seen to contradict Axiom \ref{Axiom_dense}: any element $g \in G$ which fixes a leaf that meets, but is not contained in, the interior of a lozenge of $\cC'$, cannot preserve the chain.  

Thus, we conclude that $H$ acts freely.   Since $H$ is abelian, Lemma \ref{lem_commute_same_axis} implies that all elements of $H$ share a common axis.  Denote this axis by $A$.   Since $H$ is normal, \ref{obs_conjugate_axis} implies that $A$ is invariant under $G$.  Thus, $A$ must be equal to $\Lambda(\cF^+)$.  By Proposition \ref{prop:axis_structure}, the axis has a linearly ordered structure; the fact that it is equal to $\Lambda(\cF^+)$ means it must be isomorphic to $\bR$ and thus the plane is either skew or trivial. We treat these two cases separately.

\vspace{.2cm}
\noindent {\bf Case 1: $P$ trivial.}
By the classification of Anosov-like actions on trivial planes (Proposition \ref{prop:trivial_affine}), $G$ is isomorphic to a subgroup of $\mathrm{Aff}(\bR)$.  In particular, it is solvable.  Moreover, any subgroup $\mathrm{Aff}(\bR)$ with nontrivial center is either a group of translations or has a global fixed point.  This is ruled out by Axioms \ref{Axiom_A1} and \ref{Axiom_dense}, so we conclude $G$ has trivial center.

\vspace{.2cm}
\noindent {\bf Case 2: $P$ skew.}
If $P$ is skew, the action of $G$ on $\Lambda(\cF^+)$ realizes it as a group of homeomorphisms of $\bR$ that commute or anti-commute with translation by 1, depending on whether orientation-preserving or not.  
(See Exercise \ref{ex_charac_skew} and the preceding discussion.)
Let $G_0$ be the subgroup preserving orientation.  Since $H$ acts freely, we have $H \subset G_0$, and it is again a normal subgroup.  

We next show that $H$ is central in $G_0$, by using the {\em translation number}.  
It is a classical construction (used in the definition of the \emph{rotation number} of Poincaré), that homeomorphisms of $\bR$ commuting with integer translations have a well-defined, real-valued {\em translation number} given by $\tau(g):= \lim_{n \to \infty} g^n(0)/n$.  This number $\tau$ is conjugation invariant.  Moreover, restricted to any abelian group, $\tau$ is a homomorphism, and $\tau(g) \neq 0$ if $g$ acts freely.  See e.g., \cite[Proposition 5.3]{Ghy01} and \cite[Proposition 2.2.11]{Nav11}.  Thus, $\tau$ gives an isomorphism between $H$ and a subgroup of $\bR$.
As $H$ is normal in $G_0$, $G_0$ acts by conjugation on $H$, and since the translation number is conjugation invariant, the conjugation action of $G_0$ on $H$ must be trivial, i.e., $H$ is in the center of $G_0$.

It remains to show that the center of $G$ is cyclic and generated by a power of the one-step-up map.
Let $g$ be an element of the center. For each $h \in G_0$ acting with fixed points, since $g$ commutes with $h$, it must preserve the set of attracting fixed points of $h$.  In the standard model of the skew plane, such a set has the form $\{(x_0 + n, y_0 + n) : n \in \Z\}$, thus $g$ acts as translation by some integral pair $(k, k)$, when restricted to this set.  The set of all such attracting fixed points is dense in each coordinate by Axiom \ref{Axiom_dense},  so $g$ is globally an integral translation, i.e., a power of the one-step-up map. In particular, the center is cyclic and generated by a power of the one-step-up map, as claimed.
\end{proof}

Specializing Theorem \ref{thm_normal_abelian_implies_rigidity} to the setting of pseudo-Anosov flows, we obtain the following:  

\begin{theorem}\label{thm_Ghys_Plante_theorem} 
Let $M$ be a compact 3-manifold with pseudo-Anosov flow $\flow$. Suppose that $\pi_1(M)$ has a nontrivial abelian normal subgroup.
Then either $\flow$ is a suspension of an Anosov diffeomorphism or $M$ is a fiberwise cover of the unit tangent bundle of a hyperbolic surface or orbifold, and $\flow$ is orbit equivalent to a lift of the geodesic flow.  
\end{theorem}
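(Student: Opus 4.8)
The plan is to deduce Theorem \ref{thm_Ghys_Plante_theorem} from the abstract rigidity result Theorem \ref{thm_normal_abelian_implies_rigidity} applied to $G = \pi_1(M)$ acting on the orbit space $P = \orb$, together with the classification of flows with trivial or skew orbit space. First I would note that by the Theorem of Barbot and Fenley (Theorem \ref{thm_orb_is_plane}) the orbit space $\orb$ is a bifoliated plane and, by Theorem \ref{thm_induced_action_satisfy_axioms}, the induced action of $\pi_1(M)$ on it is Anosov-like. Since by hypothesis $\pi_1(M)$ has a nontrivial abelian normal subgroup, Theorem \ref{thm_normal_abelian_implies_rigidity} applies directly, and tells us that $\orb$ is either trivial or skew (and gives further structural information on the action, which we will not strictly need for the statement, though it is reassuring).

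In the trivial case, we are done immediately by Theorem \ref{thm_trivial_implies_suspension}: if $\orb$ is the trivial plane then $\flow$ is orbit equivalent to the suspension flow of a linear hyperbolic map of the torus, which is an Anosov diffeomorphism, giving the first alternative.

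In the skew case, the remaining work is to identify the manifold and flow. Theorem \ref{thm_normal_abelian_implies_rigidity} tells us that the index-at-most-$2$ subgroup $G_0 \leq \pi_1(M)$ preserving leafwise orientations has nontrivial center, generated by a power $\eta^k$ of the one-step-up map $\eta$ on the skew plane. The idea is that the induced action of $G_0$ on $\Lambda(\bfs)\cong\bR$ is, by Theorem \ref{thm_extended_convergence}, an extended convergence group action (since it comes from an Anosov flow on a compact manifold with skew orbit space), and the existence of a central element acting as a nontrivial translation forces the group to be, up to finite index, a lift of a cocompact Fuchsian group: indeed, quotienting the extended convergence action by the central $\bZ$ generated by $\eta^k$ gives a properly discontinuous cocompact action of $G_0/\langle \eta^k\rangle$ on the circle $\mathcal T/\langle\eta\rangle$, which one recognizes (via the convergence property) as a cocompact Fuchsian group acting on $S^1 = \partial \bH^2$; hence $G_0/\langle\eta^k\rangle$ is a surface group (or virtually so, an orbifold group). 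Then $G_0$ is a central extension of such a group by $\bZ$, i.e. $\pi_1$ of a Seifert-fibered space, and comparing the Euler number of the extension with that of $\pi_1(T^1\Sigma)$ identifies $M$ (up to finite fiberwise cover and finite cover of the base) with the unit tangent bundle of a hyperbolic surface or orbifold $\Sigma$. Finally, since the induced action of $\pi_1(T^1\Sigma)$ on the orbit space of geodesic flow is also a skew plane action with the same extended-convergence structure (Example \ref{ex_geodesic_flow_orbit_space} / the discussion following Theorem \ref{thm_extended_convergence}), Barbot's Theorem \ref{thm_action_determines_OEflow} shows that $\flow$, lifted to the appropriate fiberwise cover so that the supporting manifold matches $T^1\Sigma$, is orbit equivalent to (a lift of) the geodesic flow.

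The main obstacle I expect is the group-theoretic identification in the skew case: turning ``$G_0$ has infinite cyclic center generated by a power of $\eta$ and acts as an extended convergence group on $\bR$'' into the statement that $M$ is a \emph{fiberwise cover} of $T^1\Sigma$ and $\flow$ is a lift of geodesic flow. This requires (i) recognizing the quotient $G_0/Z(G_0)$ as a cocompact Fuchsian (orbifold) group via the convergence-group property of its action on the circle — essentially Tukia–Gabai–Casson–Jungreis convergence group rigidity, or the more elementary argument of Thurston in the relevant special case; (ii) a careful bookkeeping of the $\bZ$ central extensions and their Euler classes to pin down which Seifert fibered space over $\Sigma$ one gets, and to see it is a fiberwise cover of $T^1\Sigma$; and (iii) checking that the orbit-equivalence furnished by Barbot's theorem can be arranged to cover the given flow after passing to this cover. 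Each of these is standard in the Anosov-flow literature (this is the Plante–Ghys theorem), but assembling them cleanly from the orbit-space perspective developed here is where the real content lies; the trivial case and the reduction to the dichotomy are essentially immediate given the machinery already in place.
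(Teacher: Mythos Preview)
Your proposal is correct and follows essentially the same route as the paper: reduce via Theorem \ref{thm_normal_abelian_implies_rigidity} to the trivial/skew dichotomy, handle the trivial case by Theorem \ref{thm_trivial_implies_suspension}, and in the skew case pass to the quotient by the central $\langle\eta^k\rangle$, apply the convergence group theorem of Tukia--Gabai--Casson--Jungreis to identify the quotient as a Fuchsian orbifold group, and conclude via Theorem \ref{thm_action_determines_OEflow}. The only cosmetic difference is that the paper avoids the Euler-class bookkeeping by arguing directly that the group generated by $G$ and $\eta$ is $\pi_1(T^1\Sigma)$, so that $G$ itself is the $k$-fold fiberwise cover subgroup; also note that $\mathcal T/\langle\eta\rangle$ is the space of distinct triples on $S^1$ (on which the action is properly discontinuous and cocompact), not the circle itself.
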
 

This statement combines a theorem due to Plante \cite{Pla81}\footnote{There is a caveat: one crucial step was not in fact proven in \cite{Pla81}, see \cite[Chapitre 3]{Bar05} for a discussion of the history of this result.} and one due to Ghys \cite{Ghy84} (both proved originally for Anosov flows).  
The perspective we give here uses the convergence group theorem as well as Theorem \ref{thm_extended_convergence}.  Barbot's {\em Mémoire d'habilitation} presents 
a similar result with a proof following the same strategy, see \cite[Théorème 3.1]{Bar05}.

\begin{proof}[Proof of Theorem \ref{thm_Ghys_Plante_theorem}]
Suppose $M$ has a pseudo-Anosov flow $\flow$ and  
let $G = \pi_1(M)$.  Suppose that $G$ contains a nontrivial normal abelian subgorup. 
By Theorem \ref{thm_normal_abelian_implies_rigidity}, the orbit space for $\flow$ is either trivial or skew.  
In the trivial case, Theorem \ref{thm_trivial_implies_suspension} gives the desired result: all such flows are suspensions of linear hyperbolic maps.  

In the skew case, Theorem \ref{thm_normal_abelian_implies_rigidity} further tells us that $G$ has a cyclic center generated by a power of the one-step-up map.  Let $\eta$ denote the induced action of one-step-up map on $\Lambda(\cF^s) \cong \bR$. 

By Theorem \ref{thm_extended_convergence}, $G$ acts properly discontinuously on the space of triples 
\[ T:= \{(x, y, z) \in \Lambda(\cF^s)^3 : x < y < z < \eta(x) \}.\]  Fix a minimal $k>0$ so that $\eta^k \in G$.  
Since $\eta^k \in G$, the action of $G$ on the quotient of $T$ by the diagonal action of $\eta$ is also properly discontinuous.  This quotient is precisely the space of distinct triples in $\Lambda(\cF^s) / \langle \eta \rangle  \cong S^1$.  Since $G$ acts faithfully on $\Lambda(\cF^s)$, the kernel of the action of $G$ is precisely the subgroup $\langle \eta^k \rangle$. 

The {\em convergence group theorem} of Pekka Tukia \cite{Tuk88}, David Gabai \cite{Gab92} and Andrew Casson and Douglas Jungreis \cite{CJ94} states that $G/\langle \eta^k \rangle$ is the fundamental group of some hyperbolic orbifold $\Sigma$ and the action on $S^1$ is conjugate to the standard action of this group on the boundary circle of $\bH^2 \cong \wt \Sigma$.   Lifting the action to $\Lambda(\cF^s) \cong \bR$ and using the description of actions on the skew plane,  it follows that the action of the group generated by $G$ and $\eta$ is isomorphic to the fundamental group of the unit tangent bundle of the orbifold $\Sigma$, and its action is  
conjugate to the standard action of $\pi_1(T^1 \Sigma)$ on the orbit space of geodesic flow.  
If $k = \pm1$ and $\eta = \eta^k \in G$, then $G = \pi_1(T^1\Sigma)$.  For general $k$, when 
 $\eta^k$ is the minimal power of $\eta$ in  $G$, we have instead that $G$ is the fundamental group of the $k$-fold fiberwise cover of $T^1 \Sigma$, and its action on the orbit space is conjugate to the lift of geodesic flow to this cover.  By Theorem \ref{thm_action_determines_OEflow}, $\flow$ is orbit equivalent to this lifted geodesic flow.  
\end{proof}

\section{Axes homeomorphic to $\bR$ and skew-like pieces}  \label{sec_skew-like}
If $g$ acts freely on $P$, then it acts freely on both $\Lambda(\cF^+)$ and $\Lambda(\cF^-)$, thanks to Axiom \ref{Axiom_A1}, so has axes in both leaf spaces.  
A particular case of interest  
is when both axes are homeomorphic to $\bR$, as it forces a surprising amount of additional structure.   We develop this structure now, proving a ``trichotomy" style result (Theorem \ref{thm_both_axes_R}), weaker versions of which for flows appeared in \cite{Fen03} and \cite{BF21}.

As a warm-up, we prove the following (extension of a) special case of Lemma \ref{lem_commute_same_axis} for commuting elements with axes homeomorphic to $\bR$.  

\begin{lemma}  \label{lem_commuting_axis_2}
Suppose $G$ acts Anosov-like on a bifoliated plane, and $h, g$ are two commuting elements, each acting freely.  Let $A(h), A(g)$ denote the axes of $h$ and $g$ respectively in $\Lambda(\cF^+)$ 
If $A(h)$ and $A(g)$ are both homeomorphic to $\bR$, then $A(h) = A(g)$, and are either properly embedded copies of $\bR$ or the union of leaves in a scalloped region.  
\end{lemma}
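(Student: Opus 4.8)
The plan is to first pin down the coarse structure of $A(g)$ when it is homeomorphic to $\bR$, namely to show it is either properly embedded or is exactly the set of leaves of $\cF^+$ appearing on the boundary of a scalloped region. Suppose $A(g)$ is homeomorphic to $\bR$ but not properly embedded. Then, using the description of $\Lambda(\cF^+)$ as a non-Hausdorff tree and the structure of axes from Proposition \ref{prop:axis_structure}, the failure of proper embeddedness produces a sequence of leaves $l_n$ along $A(g)$ converging to a leaf $l_\infty$ not on $A(g)$, which (after a subsequence) must be a branching leaf. By Axiom \ref{Axiom_nonseparated} and Proposition \ref{prop:4weak4strong}, the set of leaves nonseparated with $l_\infty$ forms the sides of a line of lozenges; since $g$ translates along $A(g)$ and permutes the nonseparated collection, an infinite line of lozenges appears, and by Proposition \ref{prop:infinite_lozenge_is_scalloped} this line is contained in a scalloped region $U$. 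One then checks that $A(g)$ must coincide with one of the two bi-infinite families of $\cF^+$-leaves bounding $U$: every leaf of $A(g)$ meets every leaf in the complementary $\cF^-$-family of $U$ (since $g$ preserves both families), and conversely the $\cF^+$-boundary leaves of $U$ satisfy the separation property defining the axis. The case where $A(g)$ is properly embedded is the remaining alternative.

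Next I would show $A(h) = A(g)$. Since $h$ commutes with $g$, Observation \ref{obs_conjugate_axis} gives $A(h) = A(hgh^{-1}) \cdot$-invariance arguments: more precisely $h$ preserves $A(g)$ because $A(hgh^{-1}) = hA(g)$ and $hgh^{-1}=g$. So $h$ acts on the ordered set $A(g) \cong \bR$. In the properly embedded case, $A(g)$ is a genuine embedded line in $\Lambda(\cF^+)$ preserved by $h$; since $h$ acts freely on $\Lambda(\cF^+)$, its restriction to this invariant line is a fixed-point-free homeomorphism, so $A(g)$ is itself an invariant embedded copy of $\bR$ along which $h$ translates — hence, by the definition of axis (the set of $x$ with $h(x)$ separating $x$ from $h^2(x)$), we get $A(g) \subseteq A(h)$, and the symmetric argument with the roles of $g,h$ reversed gives equality. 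In the scalloped case, $h$ preserves the bi-infinite family of $\cF^+$-leaves forming $A(g)$; by Lemma \ref{lem_action_of_stabilizer_scalloped} the stabilizer of the scalloped region is virtually $\bZ^2$, and $h$ lies in it, so either $h$ fixes each leaf of this family (impossible, as $h$ acts freely on $\Lambda(\cF^+)$) or it translates them, in which case $A(h)$ equals the same family by the same boundary-characterization as above.

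The main obstacle I anticipate is the careful verification in the non-properly-embedded case that $A(g)$ is \emph{exactly} the $\cF^+$-boundary family of a scalloped region and not something larger or a proper subset — this requires matching the purely order-theoretic/dynamical definition of the axis against the geometric structure of the scalloped region's boundary, and handling the possibility that $g$ permutes the lozenges of the line with some nontrivial translation number $k$ (as in Proposition \ref{prop:axis_structure}). One must also rule out that $A(g)$ could be homeomorphic to $\bR$ yet wander between two different scalloped regions or between a scalloped region and a properly embedded arc; here connectedness of $A(g)$ together with the discreteness of pivots (Proposition \ref{prop_pivots_discrete}) and Lemma \ref{lem:half_scalloped} should force a single scalloped region. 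Once the structural dichotomy for a single axis is established, the equality $A(h)=A(g)$ follows fairly formally from commutativity, freeness, and Axiom \ref{Axiom_A1}, so I would expect that part to be routine.
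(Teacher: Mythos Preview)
Your overall strategy is close to the paper's, but there is a genuine error in the scalloped case: you misidentify what $A(g)$ is. You write that ``$A(g)$ must coincide with one of the two bi-infinite families of $\cF^+$-leaves bounding $U$,'' but those boundary families $\{l_k^{i,+}\}_{k\in\bZ}$ are \emph{discrete} sets indexed by $\bZ$, not homeomorphic to $\bR$. The correct conclusion (and what the paper proves) is that $A(g)$ equals the $\cF^+$-\emph{saturation} of the scalloped region $U$, i.e.\ the set of all leaves of $\cF^+$ that intersect $U$. This is a continuous interval of leaves, an embedded but not properly embedded copy of $\bR$ in $\Lambda(\cF^+)$, whose two ``ends'' accumulate onto the two discrete boundary families. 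Your justification (``every leaf of $A(g)$ meets every leaf in the complementary $\cF^-$-family of $U$'') actually supports this correct identification, not the one you stated; and your subsequent argument for $A(h)=A(g)$ in the scalloped case is built on the wrong object, so it does not go through as written.

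There is also a structural difference worth noting. You first establish a dichotomy for $A(g)$ alone and then argue equality. The paper instead uses the commutation immediately: it assumes $A(h)$ is not properly embedded, observes that the limit set of $A(h)$ is $g$-invariant (since $g$ preserves $A(h)$), and uses freeness of $g$ to force that limit to be infinite, hence a scalloped boundary; from the stabilizer description (Lemma~\ref{lem_action_of_stabilizer_scalloped}) and freeness of $g$ it then reads off directly that $A(g)$ is the $\cF^+$-saturation of $U$. This sidesteps the issue you flag as your ``main obstacle'' (matching the axis definition against the scalloped geometry for a single element). In the properly embedded case your inclusion argument is fine and essentially equivalent to the paper's connectedness-of-intersection argument.
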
 

\begin{proof}
Suppose $A(h)$ and $A(g)$ are both homeomorphic to $\bR$, so in particular they are connected.  
First consider the case where $A(h)$ is not properly embedded.  Then, in at least one direction, the leaves of $A(h)$ limit onto a leaf or nontrivial union of nonseparated leaves.   Since $A(h)$ is $g$-invariant, this limit is invariant under $g^2$.  

If this limit is a single leaf or finite union, then some leaf is fixed by $g^2$, contradicting that $g$ acts freely.  So the limit is an infinite union of nonseparated leaves. By Corollary \ref{cor_infinite_non_sep_implies_scalloped}, it is the boundary of a scalloped region, which therefore must also be invariant under $g^2$.  The description of the stabilizer of a scalloped region in Lemma \ref{lem_action_of_stabilizer_scalloped} and the fact that $g$ acts freely now implies that the union of leaves in $A(g)$ is equal to the $\cF^+$ saturation of this scalloped region.  

A symmetric argument applies if $A(g)$ is not properly embedded.  
Thus, we can now suppose that both axes are properly embedded copies of $\bR$.  We first show they intersect.  
If $A(h) \cap A(g) = \emptyset$, then $A(h)$ lies in a connected component of $\Lambda(\cF^+) \setminus A(g)$, which is impossible since $g$ translates along its axis, permuting connected components of the complement.   Thus, $A(h) \cap A(g) \neq \emptyset$.  This intersection forms a connected, $g$-invariant subset of $A(g)$ (because $g$ preserves $A(h)$), so it is necessarily equal to $A(g)$.  By symmetry $A(g) \subset A(h)$, so they are equal.  
\end{proof}

Thus far we have only made use of one leaf space at a time.  However, when an element acts freely on $P$ it has axes in both $\Lambda(\cF^+)$ and $\Lambda(\cF^-)$.   We will show that, in the case where both axes are homeomorphic to $\bR$, their intersection is a nontrivial subset of $P$ and this subset satisfies the following generalization of Proposition \ref{prop:both_R}.  

\begin{theorem}  \label{thm_both_axes_R}
Suppose $g$ acts freely on $P$ with both axes $A^\pm$ of $g$ homeomorphic to $\bR$.  
Let $\Omega := \{p \in P : p = l^+ \cap l^- \text{ for some } l^+ \in A^+, l^- \in A^-\}$. 
Either 
\begin{enumerate}
\item $\Omega = P$ and the plane is trivial, or 
\item $\Omega$ is a scalloped region, or 
\item The functions $s, i\colon A^+ \to A^- \cup \{-\infty, +\infty\} \cong \bR \cup \{-\infty, +\infty\}$ defined by 
\[s(l^+) = \sup\{l^- \in A^- : l^+ \cap l^- \neq \emptyset \}\]
\[i(l^+) = \inf\{l^- \in A^- : l^+ \cap l^- \neq \emptyset \} \] take only finite values and are weakly monotone. 
Furthermore, $s$ and $i$ are strictly monotone if and only if $P$ is skew. 
\end{enumerate} 
\end{theorem} 
Note the parallel between the third case and the construction of the skew plane in Proposition \ref{prop:both_R}.  We say in the third case that $\Omega$ is a {\em skew-like piece} of $P$.  

\begin{rem}
Having one axis homeomorphic to $\bR$ does not imply that the other axis is $\bR$. One can build examples of elements $g$ acting freely on a chain of lozenges and such that the topological types of the axes can be any combinations of $\bR$ and/or union of intervals, see Figure \ref{fig_different_axis}. 
\end{rem}
\begin{figure}[h]
\includegraphics[width=14cm]{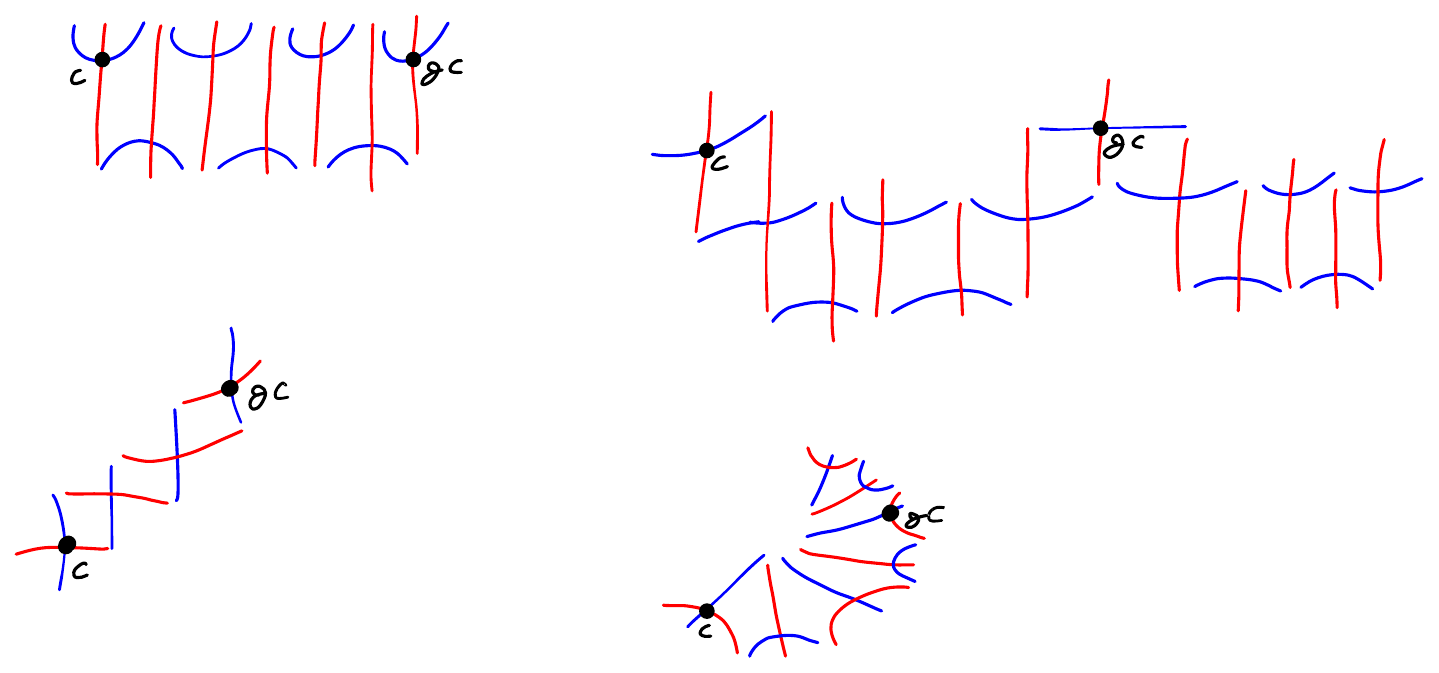}
\caption{An element $g$ acting freely on chain of lozenges. The string and infinite line on left gives $\bR$ axis for both leaf space, the top right chain gives $A^+\simeq \bR$ while $A^-$ is a union of disjoint single leaves, and the bottom right chain give both axis as a disjoint union of leaves.}
\label{fig_different_axis}
\end{figure}

A version of  Theorem \ref{thm_both_axes_R} for pseudo-Anosov flows appears in Section 5 of \cite{BF21}.  It is a key step in describing the structure of orbits in the so-called {\em free Seifert pieces} of 3-manifolds admitting pseudo-Anosov flows.   We discuss these further in Section \ref{sec_free_seifert}.   Other versions of this result were also used previously by Fenley, for instance to prove the main result of \cite{Fen03}.  

The proof we give below is somewhat different and shorter than that of \cite{BF21}, using the leaf space rather than the plane as the primary object of study, and the properties of Anosov-like actions.   We begin with a series of Lemmas.  
As notation, we let $\cF^+(A^-)$ denote the set of leaves in $\Lambda(\cF^+)$ that contain points of leaves in $A^-$, and similarly with the roles of $-$ and $+$ reversed.  

\begin{lemma} \label{lem_intersections_axes}
Suppose $g \in G$ acts freely on $P$, and let $A^\pm$ denote the axis of $g$ in the leaf space $\Lambda(\cF^\pm)$.   If both $A^+$ and $A^-$ are homeomorphic to $\bR$, then $\cF^+(A^-) \supset A^+$ and $\cF^-(A^+) \supset A^-$.
\end{lemma}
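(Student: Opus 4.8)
\textbf{Proof plan for Lemma \ref{lem_intersections_axes}.}

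The plan is to work entirely in the leaf spaces and use the characterization of the axis: a point $x$ of a non-Hausdorff tree lies in $A(g)$ precisely when $g(x)$ separates $x$ from $g^2(x)$. First I would set up notation: let $A^+\subset\Lambda(\cF^+)$ and $A^-\subset\Lambda(\cF^-)$ be the two axes, assumed homeomorphic to $\bR$, and fix a leaf $l^-\in A^-$. I want to show that every leaf of $\cF^+$ meeting $l^-$ lies in $A^+$. So let $l^+\in\cF^+$ with $l^+\cap l^-\neq\emptyset$, say $p=l^+\cap l^-$. The element $g$ preserves $A^-$ and translates along it (Proposition \ref{prop:axis_structure}, the $\bR$ case), so the three leaves $l^-$, $g(l^-)$, $g^2(l^-)$ are distinct points of $A^-\cong\bR$ appearing in that linear order (up to reversing $g$); in particular $g(l^-)$ separates $l^-$ from $g^2(l^-)$ in $\Lambda(\cF^-)$, hence the leaves $g(l^-)$ separate $l^-$ from $g^2(l^-)$ in $P$.

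The key step is to transfer this separation statement from the $\cF^-$-picture to the $\cF^+$-picture. Since $A^+\cong\bR$, proper embeddedness will need to be handled (Lemma \ref{lem_commuting_axis_2} tells us $A^+$ is either a properly embedded $\bR$ or the $\cF^+$-saturation of a scalloped region — but here we only have a single $g$, not a commuting pair, so I would instead argue directly). The cleanest route: consider the leaves $l^+$, $g(l^+)$, $g^2(l^+)$ of $\cF^+$. Each meets $l^-$, $g(l^-)$, $g^2(l^-)$ respectively. Because $g(l^-)$ separates $l^-$ from $g^2(l^-)$ in $P$, and $l^+$ (resp. $g^2(l^+)$) is a properly embedded line meeting $l^-$ (resp. $g^2(l^-)$), the leaf $g(l^+)$ — which meets the separating leaf $g(l^-)$ — must itself separate $l^+$ from $g^2(l^+)$ in $P$. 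Here I would use that $l^+$ cannot cross $g(l^-)$ (leaves of the same foliation are disjoint) together with uniqueness of intersections (Proposition \ref{prop_top_properties}\ref{item_unique_intersection}, or rather its analogue for abstract bifoliated planes, which is built into the definition), so $l^+$ lies entirely in the closed half-space of $P\setminus g(l^-)$ containing $l^-$, and symmetrically $g^2(l^+)$ lies in the other closed half-space; since $g(l^+)$ passes through $g(l^-)$ it is "between" them. This forces $g(l^+)$ to separate $l^+$ from $g^2(l^+)$ in $\Lambda(\cF^+)$, which is exactly the condition $l^+\in A^+$.

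Running the symmetric argument with the roles of $+$ and $-$ exchanged gives $\cF^-(A^+)\subset A^-$, completing the proof. The main obstacle I anticipate is making the "betweenness" argument in $P$ rigorous: I need to be careful that $l^+$ and $g^2(l^+)$ really do lie on opposite sides of $g(l^-)$, and that a leaf of $\cF^+$ through a point of $g(l^-)$ genuinely separates two leaves lying on opposite sides of $g(l^-)$. This uses that $g(l^-)$ is a properly embedded line separating $P$ into two components (Proposition \ref{prop_top_properties}\ref{item_properly_embedded} / the definition of bifoliated plane), that distinct $\cF^+$-leaves are disjoint, and a short connectedness argument that any $\cF^+$-leaf meeting one component and one boundary leaf is separated from any $\cF^+$-leaf meeting the other component. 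I would also double-check the degenerate possibility that $l^+$ itself equals $g(l^+)$ or that the three translates coincide — but freeness of $g$ on $\Lambda(\cF^+)$ (a consequence of Axiom \ref{Axiom_A1} and freeness on $P$) rules this out.
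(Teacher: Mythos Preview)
There is a genuine gap in your separation argument. You write that ``$l^+$ cannot cross $g(l^-)$ (leaves of the same foliation are disjoint)'', but $l^+$ is a leaf of $\cF^+$ while $g(l^-)$ is a leaf of $\cF^-$: these lie in \emph{different} foliations and may certainly intersect (transversally, in a single point). Once $l^+$ is allowed to cross $g(l^-)$, your conclusion that $l^+$ lies entirely in the half-plane of $P\setminus g(l^-)$ containing $l^-$ collapses, and with it the betweenness claim that $g(l^+)$ separates $l^+$ from $g^2(l^+)$. Concretely, nothing in your outline excludes a configuration where $l^+$ sits in a branch $C$ of $\Lambda(\cF^+)\setminus A^+$, so that $l^+, g(l^+), g^2(l^+)$ lie in the three distinct branches $C, g(C), g^2(C)$; the geodesic in the tree from $l^+$ to $g^2(l^+)$ then runs along $A^+$ and never enters $g(C)$, so $g(l^+)$ does \emph{not} separate. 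The transfer from ``$g(l^-)$ separates in $\Lambda(\cF^-)$'' to the analogous statement in $\Lambda(\cF^+)$ is exactly where real work is needed, and the justification you give rests on a foliation mix-up.

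The paper's approach is entirely different and never attempts this leaf-by-leaf separation transfer. It argues in the non-Hausdorff tree $\Lambda(\cF^+)$: both $A^+$ and $\cF^+(A^-)$ are connected and $g$-invariant there, and if they were disjoint then $\cF^+(A^-)$ would sit in a single complementary component of $A^+$, impossible since $g$ translates along $A^+$ and permutes those components freely. From nonempty intersection together with the tree structure and $g$-invariance one extracts the inclusion. (A close reading shows the argument as written actually yields $A^+\subset\cF^+(A^-)$ rather than the stated $\cF^+(A^-)\subset A^+$; it is this direction, equivalent to ``every leaf of $A^\pm$ meets a leaf of $A^\mp$'', that is invoked in Lemma~\ref{lem_nondegenerate_interval} and Proposition~\ref{prop_omega}.)
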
 

In particular, this lemma states that every leaf $l$ of $A^\pm$ meets a leaf of $A^\mp$.

\begin{proof} 
The statement is easily seen true if $P$ is skew or trivial, so we can assume $P$ is not.  
By Lemma \ref{lem_commuting_axis_2}, if $A^+$ is not a properly embedded copy of $\bR$, then it is a scalloped region $U$; the fact that $g$ acts freely implies that it translates both of the lines of lozenges forming $U$, and so $A^-$ is the same scalloped region, which gives the desired conclusion.   Thus, we can assume that the axes are properly embedded.  

The proof that $\cF^+(A^-) \subset A^+$ is very similar to the argument at the end of Lemma \ref{lem_commuting_axis_2}.  
Both $A^+$ and $\cF^+(A^-)$ are connected, $g$-invariant subsets of $\Lambda(\cF^+)$.   If $A^+ \cap \cF^+(A^-) = \emptyset$, then $\cF^+(A^-)$ would be contained in a single connected component of $\Lambda(\cF^+) \setminus A^+$.  However, since $g$ translates along the axis $A^+$, the connected components are permuted by $g$, contradicting $g$-invariance of $\cF^+(A^-)$.  This shows $A^+ \cap \cF^+(A^-) \neq \emptyset$.  Since $\Lambda(\cF^+)$ is a (non-Hausdorff) tree, $A^+ \cap \cF^+(A^-)$ is connected.  Finally, 
 $g$-invariance also means that $A^+ \cap \cF^+(A^-)$ is unbounded in $A^+$, in each direction   It follows that $A^+ \cap \cF^+(A^-) = A^+$, which is what we needed to show.  

By symmetry, the same is true with the roles of $\cF^+$ and $\cF^-$ reversed, completing the proof. 
\end{proof}

Keeping the notation and assumptions from above, we prove the following.  
\begin{lemma} \label{lem_nondegenerate_interval}
Let $\cup{A^\pm} \subset P$ denote the union of leaves in $A^\pm$.  
If $l$ is a leaf of $A^\pm$, then $l \cap (\cup A^\mp)$ is a nondegenerate sub-interval of $l$. 
\end{lemma}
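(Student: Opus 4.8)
The statement to prove is Lemma \ref{lem_nondegenerate_interval}: if $g$ acts freely with both axes $A^+$ and $A^-$ homeomorphic to $\bR$, then for every leaf $l$ of $A^\pm$, the intersection $l \cap (\cup A^\mp)$ is a nondegenerate sub-interval of $l$. By Lemma \ref{lem_intersections_axes} we already know $\cF^+(A^-)\subset A^+$ and $\cF^-(A^+)\subset A^-$; in particular every leaf $l\in A^+$ meets at least one leaf of $A^-$, so $l\cap(\cup A^-)$ is nonempty. The two things to check are therefore (i) that $l\cap(\cup A^-)$ is \emph{connected} (an interval, possibly a single point), and (ii) that it is \emph{nondegenerate}, i.e. contains more than one point. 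By symmetry between $\cF^+$ and $\cF^-$, it suffices to treat $l\in A^+$.

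\textbf{Connectedness.} For connectedness I would argue as follows. Fix $l\in A^+$ and suppose $x,y\in l$ both lie on leaves of $A^-$, say $x\in l_x^-$ and $y\in l_y^-$ with $l_x^-,l_y^-\in A^-$. Since $A^-$ is homeomorphic to $\bR$, it is linearly ordered; the leaves of $A^-$ strictly between $l_x^-$ and $l_y^-$ form a subinterval $J$ of this $\bR$. Any leaf $l'^-\in J$ separates $l_x^-$ from $l_y^-$ in $P$ (here one uses that $A^-$, being an embedded $\bR$, is an ``order-compatible'' family of leaves: consecutive leaves in a properly embedded $\bR\subset\Lambda(\cF^-)$ are separated in $P$ by the leaves between them, or — if the embedding is not proper — by Lemma \ref{lem_commuting_axis_2} $A^-$ is a scalloped region and the claim is immediate from product-foliation of the scalloped region). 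Since $l$ is a leaf of $\cF^+$ running between $x\in l_x^-$ and $y\in l_y^-$, it must cross every such separating leaf $l'^-$. Hence every point of $l$ between $x$ and $y$ lies on a leaf of $A^-$, so $l\cap(\cup A^-)$ is convex in $l$, i.e. an interval.

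\textbf{Nondegeneracy.} This is the step I expect to be the main obstacle. The issue is to rule out that $l$ meets $\cup A^-$ in exactly one point. Suppose for contradiction $l\cap(\cup A^-)=\{x\}$, with $x$ on the single leaf $l^-\in A^-$. Since $g$ acts freely and $A^+$ is $g$-invariant, $g(l)$ is again a leaf of $A^+$ meeting $\cup A^-$ in a single point $g(x)$ on $g(l^-)\in A^-$; iterating, the leaves $g^n(l)$ meet $\cup A^-$ each in one point and these points march monotonically along $A^-$. Now I would use Lemma \ref{lem_intersections_axes} together with the structure of $A^+$ as a properly embedded $\bR$ (or scalloped region): consider a leaf $k^-\in A^-$ that is \emph{not} $l^-$ but close to it in $A^-$; by Lemma \ref{lem_intersections_axes} applied with the roles reversed, $\cF^-(A^+)\subset A^-$, so $\cF^+(k^-)$ — every $\cF^+$-leaf crossing $k^-$ — lies in $A^+$. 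Take such a leaf $m\in A^+$ crossing $k^-$ just past $x$; since $m$ and $l$ are both in $A^+$ and $A^+\cong\bR$ is an interval, the leaves of $A^+$ between $m$ and $l$ form a subinterval, and each of those must cross the $\cF^-$-leaves between $l^-$ and $k^-$ that separate them — but all those $\cF^-$-leaves lie in $A^-$ by Lemma \ref{lem_intersections_axes}, forcing $l$ itself to cross a nondegenerate family of $A^-$-leaves, contradicting $l\cap(\cup A^-)=\{x\}$. The precise bookkeeping of ``which leaves separate which'' and the proper-embedding vs.\ scalloped dichotomy is where care is needed; in the scalloped case one falls back on product-foliation of the scalloped region (Definition \ref{def_scalloped}(iv)), which makes nondegeneracy transparent. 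I would also double-check that the degenerate possibility cannot be hidden at a prong: but prong leaves are fixed by nontrivial elements (Axiom \ref{Axiom_prongs_are_fixed}, and Theorem \ref{thm_induced_action_satisfy_axioms}), hence cannot lie on the axis of the freely-acting $g$, so no prong leaf occurs in $A^\pm$ and the foliations restricted to $\cup A^\pm$ are nonsingular. This cleans up the argument.
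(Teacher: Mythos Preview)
Your plan and the connectedness argument are essentially what the paper does (the paper verifies the axis property directly for any $\cF^-$ leaf through $l$ between two $A^-$-intersections; your separation-in-$P$ phrasing amounts to the same thing once one notes that the segment of $l$ between the two intersections projects to the unique arc in the tree $\Lambda(\cF^-)$ joining $l_x^-$ to $l_y^-$, which lies in $A^-$).

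The nondegeneracy step, however, has a genuine gap. Your argument hinges on the inclusion ``$\cF^+(A^-)\subset A^+$'' (and its mate $\cF^-(A^+)\subset A^-$) from Lemma~\ref{lem_intersections_axes}, which you use to conclude that every $\cF^+$-leaf through a nearby $k^-\in A^-$ lies in $A^+$. Unfortunately that inclusion is a typo in the paper: what is actually proved (and used) is the \emph{reverse} inclusion $A^+\subset\cF^+(A^-)$, i.e.\ every $A^+$-leaf meets some $A^-$-leaf. The inclusion you invoke is in general \emph{false}: in the ``skew-like piece'' case of Proposition~\ref{prop_omega}, a leaf $l^-\in A^-$ typically contains points whose $\cF^+$-leaves are \emph{not} in $A^+$ (these are exactly the points of $l^-$ outside $\Omega$). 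So the step ``every $\cF^+$-leaf crossing $k^-$ lies in $A^+$'' cannot be justified, and the rest of your nondegeneracy argument collapses. Your handling of the singular case is also incorrect: a prong leaf being fixed by \emph{some} nontrivial element does not prevent it from lying on the axis of the freely-acting $g$; the axis condition concerns only how $g$ moves the leaf, not whether other elements fix it.

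The paper's nondegeneracy argument is much more direct and avoids all of this. If the intersection point $l\cap l^-$ is nonsingular, a short segment of $l$ there is a transversal to $\cF^-$, hence gives a local chart for $\Lambda(\cF^-)$ around $l^-$; since $A^-$ is an embedded line through $l^-$ and $\Lambda(\cF^-)$ is a $1$-manifold near any nonsingular leaf, $A^-$ must locally coincide with this chart, so nearby $\cF^-$-leaves through $l$ are in $A^-$. The genuinely singular case (where $l$ is a prong leaf and the intersection is at the singularity $p$) is handled separately: the line $A^-$ must approach $\cF^-(p)$ along at least one face, and the half-plane bounded by that face contains a ray of $l$, which is then hit by the accumulating $A^-$-leaves near $p$.
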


\begin{proof}
For concreteness, suppose $l$ is a leaf of $A^+$.  If $l_1^-$ and $l_2^-$ are leaves of $A^-$ that intersect $l$, then $l_i$ separates $f(l_i^-)$ from $f^{-1}(l_i^-)$, and so the same is true for any leaf through $l$ between them.  This shows $l \cap \cup A^-$ is connected.  
Lemma \ref{lem_intersections_axes} says that it is nonempty.  

Now suppose $l^- \in A^-$ is a leaf such that $l^- \cap l$ is nonempty. If $l^-$ is nonsingular, then a small segment of $l$ near the intersection point gives a local transversal to $\cF^-$ and so projects nontrivially to $A^-$, thus $l \cap (\cup A^-)$ is nondegenerate.
If instead $l^-$ is a singular leaf, consider a sequence $l^-_n$ of nonsingular leaves in $A^-$ that converge to $l^-$ in such a way that $l^-_{2n}$ and $l^-_{2n+1}$ are in distinct connected components of $A^-\smallsetminus \{l^-\}$ for all $n$. If, for some $n$, the leaf $l^-_n$ intersects $l$, then the first part of the argument lets us conclude. So we must have that for all $n$, $l^-_n$ does not intersect $l$. This implies that $l^-_{2n}$ and $l^-_{2n+1}$ converges to two faces $f_1,f_2$, respectively, of $l^-$ such that neither $f_1$ nor $f_2$ intersects $l$.
In particular, $f_1\cup f_2$ separates $\cup A^- \smallsetminus \{l^-\}$ from $l$, and $g$ maps $l^-$ to a leaf contained in one of the quadrants bounded by $f_1$ and $f_2$, and $g^{-1}$ maps $l^-$ into the other.  This implies that $l$ cannot separate $g l$ from $g^{-1} l$, contradicting that $l$ is in the axis of $g$.
\qedhere
\begin{figure}[h]
\includegraphics[width=10cm]{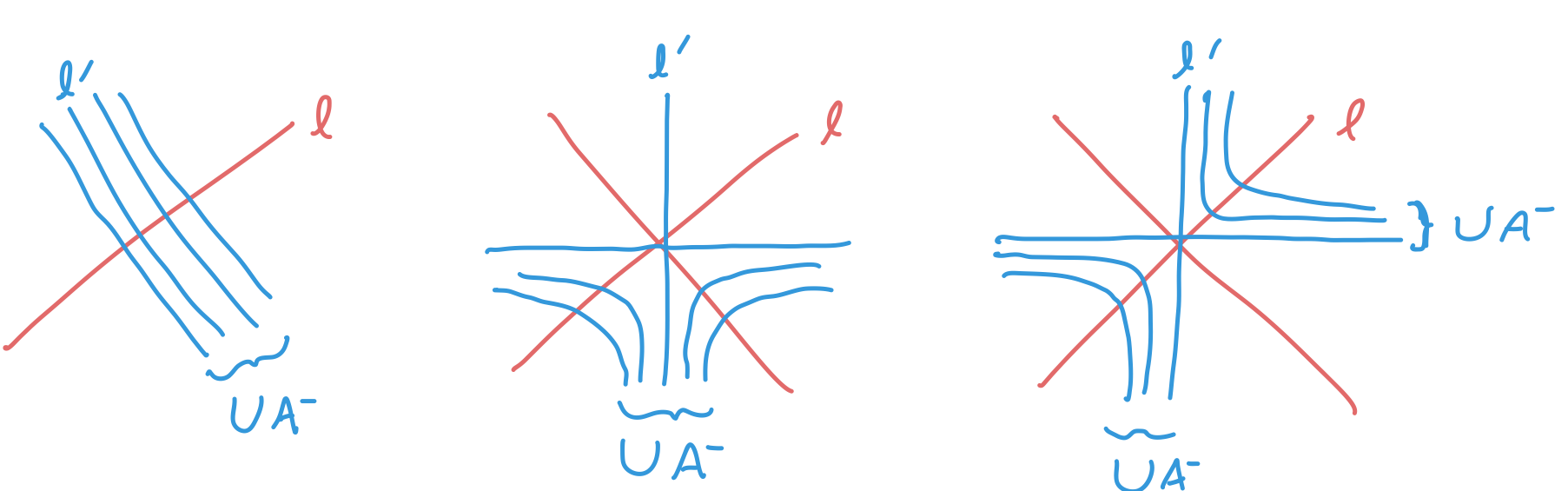}
\caption{The possible cases of intersection of $l$ with $\cup A^-$.}
\label{fig_nondegenerate_interval}
\end{figure}
\end{proof}

To prove Theorem \ref{thm_both_axes_R}, we will show the following slightly more precise result. 
\begin{proposition} \label{prop_omega}
Suppose $g$ acts freely on $P$ with both axes $A^\pm$ of $g$ homeomorphic to $\bR$.  Let $\Omega$, $s$, and $i$ as defined in Theorem \ref{thm_both_axes_R}.  
Exactly one of the following holds: 
\begin{enumerate} 
\item  $s$ and $i$ take only finite values and are strictly monotone, in which case $P$ is skew.
\item  $s$ and $i$ take only finite values and are weakly (not strictly) monotone, in which case $P$ is not skew and $\Omega$ is a proper subset of $P$. 
\item $s \equiv \infty$,  $i \equiv -\infty$, the plane is trivial and $\Omega = P$, or 
\item $s \equiv \infty$,  $i \equiv -\infty$, and $\Omega$ is a scalloped region.
\end{enumerate} 
\end{proposition}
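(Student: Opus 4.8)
The plan is to prove Proposition~\ref{prop_omega}, from which Proposition~\ref{prop_both_axes_R} follows immediately by collecting cases. Throughout, $g$ acts freely on $P$ with axes $A^+$ and $A^-$ in $\Lambda(\cF^+)$ and $\Lambda(\cF^-)$ respectively, both assumed homeomorphic to $\bR$. The first step is to set up coordinates: by Lemma~\ref{lem_intersections_axes} every leaf of $A^+$ meets a leaf of $A^-$ and vice versa, and by Lemma~\ref{lem_nondegenerate_interval} each such intersection is a nondegenerate sub-interval of the leaf. Fix orientations on $A^+ \cong \bR$ and $A^- \cong \bR$ compatible with the translation direction of $g$, so $g$ acts as an increasing map on each; then $s$ and $i$ are $g$-equivariant in the sense that $s(g(l)) = g(s(l))$ (interpreting $g(\pm\infty) = \pm\infty$), and likewise for $i$. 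I would first establish the \emph{monotonicity} of $s$ and $i$ exactly as in the proof of Proposition~\ref{prop:both_R}: if $l_1 < l_2$ in $A^+$ and $l^- \in A^-$ meets $l_1$, I claim $l^-$ must meet every leaf of $A^+$ between $l_1$ and whichever leaf realizes an intersection above it, so the set of $A^-$-leaves meeting a given $A^+$-leaf is an interval in $A^-$ that moves monotonically; the key point (using that $A^-$ is an honest copy of $\bR$, hence has no branching \emph{within} it) is that a leaf of $A^+$ to the left of $l_1$ cannot jump over $l^-$. This shows $s$ and $i$ are weakly monotone wherever finite.

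The second step handles the dichotomy between $s$ finite and $s \equiv \infty$. Suppose $s(l_0)$ is a genuine leaf of $A^-$ for some $l_0 \in A^+$. By $g$-equivariance, $s$ is finite on all of $g^{\bZ}(l_0)$, and by monotonicity $\{l : s(l) < \infty\}$ is an interval; if this interval were a proper half-line $[a,\infty)$ or $(-\infty,a]$, the endpoint leaf $a$ would be $g^2$-invariant (as the boundary is $g$-equivariant and the function jumps to $\infty$ there), contradicting that $g$ acts freely. Hence $s$ is either everywhere finite or everywhere $\infty$, and similarly for $i$; moreover I would argue, again as in Proposition~\ref{prop:both_R} via the perfect-fit Claim~\ref{lem_s_pf} adapted to the setting of a fixed free element, that $s$ finite forces $i$ finite (and conversely $s \equiv \infty$ forces $i \equiv -\infty$), since a one-sided escape to $\infty$ would again produce a $g$-invariant leaf or, via nonseparated leaves and Corollary~\ref{cor_infinite_non_sep_implies_scalloped}, a $g^2$-invariant scalloped region that $g$ cannot translate freely — leading into the structural cases.

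The third step identifies $\Omega$ in each case. If $s$ and $i$ are everywhere finite and \emph{strictly} monotone, then realizing $P$ inside $\Lambda(\cF^+)\times\Lambda(\cF^-)$ as in Proposition~\ref{prop:both_R} shows $\Omega$ is bounded above and below by graphs of strictly increasing unbounded functions, so $\Omega$ is a bi-infinite skew region in the sense of Definition~\ref{def:biinfinite_skew}; by Proposition~\ref{prop_biinfinite_skew}, $P$ itself is the skew plane (and then in fact $\Omega = P$). If $s,i$ are finite but only weakly monotone, I must show $P$ is not skew and $\Omega \neq P$: weak-but-not-strict monotonicity means $s$ is constant on some nondegenerate interval $J$ of $A^+$; the endpoints of a maximal such plateau give leaves related by perfect fits (arguing as in Claim~\ref{lem_s_pf}, using Axiom~\ref{Axiom_dense} to find a fixed leaf in $J$ and Lemma~\ref{lem:fixed_is_corner}), so $\Omega$ has a genuine lozenge in its boundary and cannot be all of $P$, while if $P$ were skew every point would be a corner and one checks this is incompatible with a plateau. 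If instead $s \equiv \infty$ and $i \equiv -\infty$, then every leaf of $A^+$ meets every leaf of $A^-$, so $\Omega$ is the image of $\Lambda(\cF^+)\times\Lambda(\cF^-)$-worth of intersections, hence trivially foliated; then either $\Omega = P$ and $P$ is trivial by Proposition~\ref{prop:no_product_region} (an infinite product region), or $\Omega$ is a bounded trivially-foliated region whose boundary, being $g$-invariant with $g$ acting freely, must consist of infinite families of nonseparated leaves, so by Proposition~\ref{prop:infinite_lozenge_is_scalloped} / Corollary~\ref{cor_infinite_non_sep_implies_scalloped} it is a scalloped region, and indeed by Lemma~\ref{lem_commuting_axis_2} it equals $A^+ = A^-$ as a scalloped region. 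The cases are manifestly mutually exclusive. \textbf{The main obstacle} I anticipate is the second step: cleanly ruling out one-sided escape (i.e., showing $s$ finite on a half-line is impossible) and the implication ``$s$ finite $\Rightarrow$ $i$ finite'' require carefully reproducing the perfect-fit/nonseparated-leaf analysis of Proposition~\ref{prop:both_R} in the presence of the free element $g$ rather than for a leaf-space-wide orientation; the bookkeeping of which $g$-power fixes which limit leaf, and invoking the scalloped-region stabilizer structure (Lemma~\ref{lem_action_of_stabilizer_scalloped}) to contradict freeness, is where the argument is most delicate. Everything else is a fairly direct adaptation of already-proved results.
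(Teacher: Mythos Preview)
Your proposal is essentially correct and follows the same broad architecture as the paper's proof: establish monotonicity of $s$ and $i$, show each is either everywhere finite or identically $\pm\infty$ by $g$-equivariance, then split into the four cases. The ingredients you cite (Lemmas~\ref{lem_intersections_axes}, \ref{lem_nondegenerate_interval}, \ref{lem_commuting_axis_2}, Proposition~\ref{prop:no_product_region}, Proposition~\ref{prop_biinfinite_skew}) are exactly the right ones.

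However, you are overcomplicating precisely the step you flag as the ``main obstacle.'' The paper does \emph{not} prove ``$s$ finite $\Rightarrow$ $i$ finite'' by adapting the perfect-fit Claim~\ref{lem_s_pf}. Instead, it argues directly that if \emph{either} $s\equiv\infty$ or $i\equiv-\infty$, then one immediately lands in case~3 or~4: if the axes are properly embedded, any half-infinite strip inside $\Omega$ is an infinite product region, so Proposition~\ref{prop:no_product_region} forces $P$ trivial (hence both infinite); if an axis is not properly embedded, Lemma~\ref{lem_commuting_axis_2} (with $h=g$) says it is already the $\cF^\pm$-saturation of a scalloped region, and the free $g$-action on its four boundary families forces the other axis to coincide with it (hence again both infinite). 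The implication ``one finite $\Rightarrow$ both finite'' is then just the contrapositive. No nonseparated-leaf bookkeeping is needed at this stage, and Claim~\ref{lem_s_pf} is invoked only at the very end, to handle the strictly monotone case~1. Relatedly, the paper distinguishes cases~3 and~4 by whether the axes are properly embedded rather than by whether $\Omega=P$, which sidesteps your somewhat hand-wavy ``the boundary must consist of infinite families of nonseparated leaves'' step. Your monotonicity argument is also phrased differently: the paper uses a clean three-leaf contradiction via Lemma~\ref{lem_nondegenerate_interval} rather than the ``can't jump over'' picture from Proposition~\ref{prop:both_R}, though both work.
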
 

\begin{remark} 
One can adapt the proof of Proposition \ref{prop_omega} to show this statement is true not only for axes of an element acting freely, but when $A^+$, $A^-$ are {\em any} embedded copies of $\bR$ in $\Lambda(\cF^+)$ and $\Lambda(\cF^-)$ (respectively) which are invariant under some subgroup $H \subset G$ acting without global fixed points on $A^+$ and $A^-$.   (The statement for axes is simply the case $H \cong \bZ$.) 
See Exercise \ref{ex_general_omega}. 
\end{remark}

\begin{proof}[Proof of Proposition \ref{prop_omega}]
Allowing for the possibility that $s$ and $i$ take values $\pm \infty$, we will first show that they are weakly monotone. 
First, fix a leaf $a \in A^-$.  We show the restriction of $s$ to $\cF^+(a) \cap A^+$ is monotone.  
Suppose for contradiction that this is not the case. Then we can find $l_1, l_2, l_3 \in \cF^+(a) \cap A^+$ such that $l_2$ lies between $l_1$ and $l_3$, and $s(l_2) < \min \{s(l_1), s(l_3) \}$.  Choose $b \in A^- \cong \bR$ such that $s(l_2) < b < \min \{s(l_1), s(l_3) \}$.  By Lemma \ref{lem_nondegenerate_interval}, every leaf in the (closed) interval of leaves between $a$ and $b$ in $A^-$ meets $l_1$ and $l_3$, in particular $b$ does; which contradicts the fact that $b$ does not intersect $l_2$ and $l_2$ separates $l_1$ from $l_3$ in $P$. See Figure \ref{fig_prop_omega_monotone}.  

\begin{figure}[h]
\includegraphics[width=6cm]{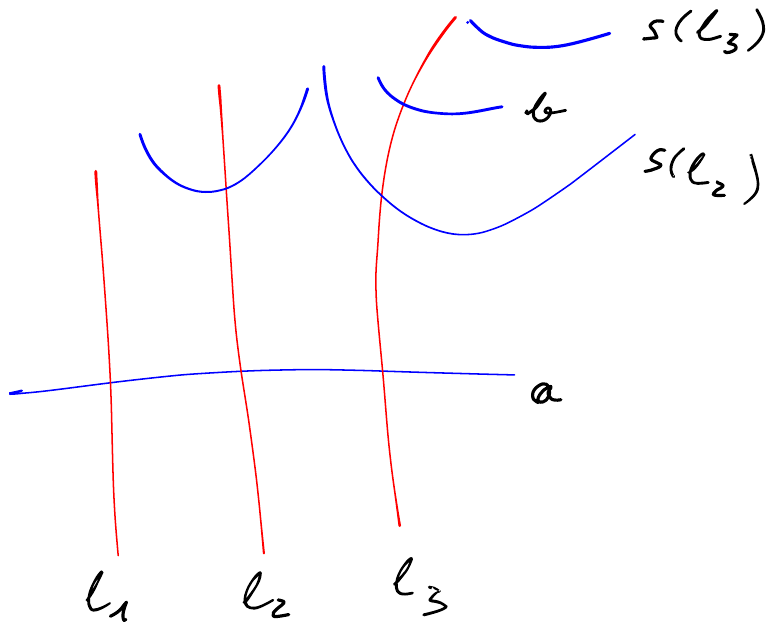}
\caption{The leaf $b$ cannot intersect $l_1$.}
\label{fig_prop_omega_monotone}
\end{figure}

This proves local monotonicity of $s$, which is sufficient since $A^+$ is connected.  The argument for monotonicity of $i$ is the same.  

Note also that the set of leaves of $A^-$ on which $s$ (respectively $i$) takes the value $+\infty$ (resp. $-\infty$) is invariant under $f$, so is either empty or unbounded in both directions.  Thus, if $s(l) =\infty$ for some $l$, then $s\equiv \infty$ by monotonicity.   If $A^+$ and $A^-$ are properly embedded and either $s \equiv \infty$ or $i \equiv - \infty$, one gets an infinite product region, so, by Proposition \ref{prop:no_product_region}, the plane is necessarily trivial (and thus both $s$ and $i$ are infinite).  

If instead $A^+$ is not properly embedded and either $s \equiv \infty$ or $i \equiv - \infty$,, then by Lemma \ref{lem_commuting_axis_2} (taking $h=g$), the axis $A^+$ consists of the $\cF^+$ leaves of a scalloped region.  Thus, $g$ preserves the boundaries of this scalloped region, acting freely on each of its four ``sides" consisting of an infinite union of leaves.  It follows that $A^-$ is equal to the union of $\cF^-$ leaves intersecting the scalloped region; and thus $s$ and $i$ are both infinite. 

Finally, in the case where $s$ and $i$ are finite valued, if they are strictly monotone then by Lemma \ref{lem_s_pf}, each leaf of $A^+$ makes a perfect fit with a leaf of $A^-$ on each side, and (as in Proposition \ref{prop:both_R}) $\Omega$ is an open subset of $P$ properly homeomorphic to the skew plane, hence equal to $P$.  
\end{proof}

While we worked with an axis of a free element in this section, having this free element is not strictly necessary, as described in the following exercise:
\begin{exercise}  \label{ex_general_omega}
Suppose $A^+$, $A^-$ are embedded copies of $\bR$ in $\Lambda(\cF^+)$ and $\Lambda(\cF^-)$ invariant by a subgroup $H \subset G$ acting on $P$ without global fixed points. 

Define $\Omega = \{p\in P : p = l^+\cap l^- \text{ for some } l^+\in A^+, l^-\in A^-\}$ and show that the conclusions of Theorem \ref{thm_both_axes_R} and Proposition \ref{prop_omega} still hold.
\end{exercise}

\chapter{A closing lemma}\label{chap_closing_lemma}

Proposition \ref{prop_closing_translation} gave a translation of the statement of the closing lemma for pseudo-Anosov flows (Proposition \ref{prop_pAclosing_lemma}) into the language of the orbit space.  Around nonsingular points, it says that the action of $G$ on $\orb$ has the following property: 
{\em 
Each nonsingular point $x \in \orb$ has a pair of neighborhood bases $U_i$ and $V_i$ with $U_i \subset V_i$ for each $i$, such that  if $g \in G$ satisfies $g(V_i) \cap V_i \neq \emptyset$, then $g$ has a fixed point in $U_i$.}
\footnote{Proposition \ref{prop_closing_translation} also had a more technical statement for behavoir around singular points, but we will not need to make use of that here.}
In this chapter, we add a weaker version of this closing property for nonsingular points to the axioms for Anosov-like actions, assuming there is a nearby fixed leaf but not necessarily a nearby fixed {\em point}.  
This weak statement ends up being sufficient to recover many essential algebraic-dynamical results, such as the fact that no $\bZ^2$ subgroup of an Anosov-like group on a nontrivial plane can act freely.  
These algebraic-dynamical results are used in an essential way in Barbot and Fenley's structure theory for essential tori in 3-manifolds with (pseudo-)Anosov flows, which we explain in Chapter \ref{chap_features_and_3manifold}.  

\begin{definition} 
An Anosov-like group action satisfies the {\em weak closing property} if the following axiom holds
\begin{enumerate}[label = (C)]
	\item\label{Axiom_closing} Each nonsingular $x \in P$ has a neighborhood basis $U_i$ 
	and smaller neighborhoods $V_i \subset U_i$  with the following property: 
	if $g \in G$ satisfies $gV_i \cap V_i \neq \emptyset$, then $g$ preserves a leaf of either $\cF^+(U_i)$ or $\cF^-(U_i)$.
\end{enumerate}
\end{definition}

\section{Cyclic point stabilizers and consequences} 
We keep the standard conventions, using $(P, \cF^+, \cF^-)$ to denote a bifoliated plane with an Anosov-like action of a group $G$.  

In \cite[Theorem 1.4]{BBM24b}, it was shown that for any Anosov-like action on a nontrivial, non-skew plane, point stabilizers are virtually cyclic.  Here we give a shorter proof (applicable also to skew planes) by using weak closing. 

\begin{proposition}[Cyclic point stabilizers] \label{prop:cyclic_stabilizers}
If $G$ acts Anosov-like and satisfies Axiom \ref{Axiom_closing}, then the stabilizer of any point $x$ in $P$ is either trivial or virtually isomorphic to $\bZ$. More precisely, in the latter case, the finite index subgroup of the stabilizer fixing all rays of $x$ is isomorphic to $\bZ$. 
\end{proposition}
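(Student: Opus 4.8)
The plan is to first cut $\Stab_G(x)$ down to the finite-index subgroup $H$ consisting of elements fixing every ray (equivalently, every quadrant) at $x$; since $\Stab_G(x)/H$ embeds in a finite permutation group, it suffices to prove that $H$ is trivial or $\cong\bZ$. The subgroup $H$ fixes each of the leaves $\cF^+(x)$ and $\cF^-(x)$ and acts on each of them, so by Axiom \ref{Axiom_A1} every nontrivial $h\in H$ is topologically expanding on one of these leaves and contracting on the other, hence of infinite order; thus $H$ (and in fact all of $\Stab_G(x)$) is torsion-free. Moreover $H$ lies in the ray-fixing subgroup of $\Stab_G(\cF^+(x))$, which is abelian by Lemma \ref{lem:common_fixed_point}, so $H$ is a torsion-free abelian group.

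Next I would observe, using Corollary \ref{cor:maximal_fixed}, Observation \ref{obs:fixes_all_corners}, and the fact that $H$ fixes all rays at $x$, that all nontrivial elements of $H$ have the same fixed-point set $F$, which is either $\{x\}$ or the discrete corner set of the maximal chain of lozenges through $x$. Restricting the action to the ray $J^+$ of $\cF^+(x)$ between $x$ and the nearest point of $F$ gives a faithful free action of $H$ on a line, so by Hölder's Theorem \ref{thm:holder} the translation number is a faithful homomorphism $\tau^+\colon H\to\R$; likewise one gets $\tau^-\colon H\to\R$ from the corresponding ray of $\cF^-(x)$, and with compatible orientations Axiom \ref{Axiom_A1} gives the sign relation $\mathrm{sign}\,\tau^+(h)=-\,\mathrm{sign}\,\tau^-(h)$ for all $h$. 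If $\tau^+(H)$ is cyclic then $H\cong\bZ$ and we are done, so assume $\tau^+(H)$ is dense in $\R$ and seek a contradiction.

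The argument then splits. If $\tau^+$ and $\tau^-$ are not proportional, pick $h_1,h_2$ generating a subgroup $\cong\bZ^2$ on which $\tau^+,\tau^-$ are linearly independent; then $(\tau^+,-\tau^-)$ carries this $\bZ^2$ to a rank-two lattice in $\R^2$ which, by the sign relation, lies inside the union of the closed first and third quadrants — impossible. So $\tau^+=\lambda\tau^-$ with $\lambda<0$. Now Axiom \ref{Axiom_closing} enters: using density, choose nontrivial $h_n\in H$ with $\tau^+(h_n)\to 0$, hence $\tau^-(h_n)\to 0$, and pick a nonsingular point $y$ in a quadrant of $x$ in ``general position'' — off $\cF^+(x)\cup\cF^-(x)$, off every leaf through $F$, and so that the points where $\cF^-(y)$ and $\cF^+(y)$ meet $J^+$ and the corresponding ray of $\cF^-(x)$ are not endpoints of gaps of the $H$-minimal sets on those rays (such $y$ exists because the bad loci are countable and the minimal sets are whole rays or Cantor sets — a minimal set of order type $\bZ$ would already force $H\cong\bZ$). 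Then $\tau^\pm(h_n)\to 0$ forces $h_n$ to move those two intersection points, and therefore the leaves $\cF^\pm(y)$, by amounts tending to $0$, so that $h_nV\cap V\neq\emptyset$ for any fixed neighborhood $V$ of $y$ once $n$ is large. Applying Axiom \ref{Axiom_closing} at $y$ with a neighborhood basis $U_i\supset V_i$ whose members avoid the bad loci, $h_n$ must preserve a leaf meeting $U_i$; but every leaf meeting a small $U_i$ is close to $\cF^+(y)$ or $\cF^-(y)$, hence not one of the leaves fixed by the nontrivial element $h_n$ — a contradiction. Hence $H$ is trivial or $\cong\bZ$, and since $\Stab_G(x)$ is torsion-free and virtually $H$, it is trivial or virtually $\bZ$.

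The step I expect to be the main obstacle is the last one: upgrading ``$\tau^\pm(h_n)\to 0$'' to ``$h_n$ moves a fixed two-dimensional neighborhood of $y$ off itself by an amount tending to $0$''. One cannot simply invoke Hölder's semiconjugacy here — it must be injective at the chosen parameter values — which is precisely why the care about minimal sets and non-gap-endpoints is needed. Everything preceding it (the reduction to $H$, torsion-freeness, the Hölder embedding, the sign relation, and the lattice argument) is routine once the axioms are available.
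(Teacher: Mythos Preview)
Your approach is essentially the paper's: reduce to the ray-fixing subgroup $H$, produce translation-number homomorphisms on the two rays through $x$, show they are proportional (your lattice argument is equivalent to the paper's Exercise~\ref{ex_pleasant}), and derive a contradiction from indiscreteness via Axiom~\ref{Axiom_closing}. One minor confusion: no corner of the maximal chain other than $x$ itself lies on $\cF^+(x)$ (distinct corners lie on distinct leaves), so your interval $J^+$ ``between $x$ and the nearest point of $F$'' is not well-defined; you should simply take the full open ray.

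The step you correctly flag as delicate has a genuine gap. Requiring that $p^\pm$ be ``not endpoints of gaps'' is insufficient: if $p^+$ lies in the \emph{interior} of a gap $I$ of the minimal set $C^+\subset r^+$, then for every nontrivial $h\in H$ the image $h(I)$ is a distinct gap disjoint from $I$, so $|h(p^+)-p^+|\ge\dist(p^+,\partial I)>0$ and $h_n(p^+)\not\to p^+$ no matter how small $\tau^+(h_n)$ is. Avoiding the countable set of gap endpoints does not help --- when $C^+$ is a Cantor set, the \emph{good} locus (points of $C^+$) is nowhere dense, not the complement of a countable set. What you need is to take $p^\pm$ \emph{in} the minimal sets $C^\pm$, on which the H\"older semiconjugacy restricts to a genuine conjugacy, so that $\tau^\pm(h_n)\to 0$ really forces $h_n(p^\pm)\to p^\pm$. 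This is possible because $C^\pm$ are dense in the rays, and the local product structure near $x$ then yields a point $y$ in a quadrant of $x$ with both projections in the minimal sets and $h_n(y)\to y$; applying Axiom~\ref{Axiom_closing} at $y$ then gives the contradiction exactly as you describe.
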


If one does not assume the weak closing property, then one can produce (using the characterizations given in Proposition \ref{prop:trivial_affine} and Exercise \ref{ex_charac_skew}) many examples of Anosov-like actions on skew and trivial planes for which Proposition \ref{prop:cyclic_stabilizers} fails.  For instance, any group of affine transformations generated by hyperbolic diagonal matrices in $\SL(2,\bR)$ and translations will be Anosov-like, but if the subgroup of diagonal matrices is not virtually cyclic the point stabilizer of the origin can have arbitrary rank.  
Examples for skew planes can be constructed by taking subgroups of $\PSL(2, \bR)$ with non-discrete point stabilizers (but no parabolic elements), and lifting them to the universal covering group $\wt{\PSL}(2, \bR) \subset \Homeo(\R)$.  Thinking of the leaf space as $\bR$ and the covering transformation of $\wt{\PSL}(2, \bR) \to \PSL(2,\bR)$ as given by the one-step-up maps, this induces an action on a skew plane, and such a subgroup without parabolics will induce an
Anosov-like action on the plane.  

\begin{proof}[Proof of Proposition \ref{prop:cyclic_stabilizers}]
Let $x \in P$ be a point with nontrivial stabilizer, and let $H$ denote the finite index subgroup of the stabilizer of $x$ fixing all rays through $x$.  We will first show that either $H$ is cyclic, or some (in fact any) point $y$ satisfying $\cF^+(y) \cap \cF^-(x) \neq \emptyset$ and 
$\cF^+(x) \cap \cF^-(y) \neq \emptyset$ has indiscrete orbit under $H$.  

Let $r^+$ and $r^-$ be rays of $\cF^+(x)$ and $\cF^-(x)$, respectively, bounding a quadrant $Q$.
Choose some nontrivial $h_0 \in H$, acting by expansion on $r^+$ and contraction on $r^-$.  
Fix an identification of each half leaf $r^\pm$ with $\R$, oriented and scaled so that $h_0$ acts by $t \mapsto t+1$ on each.
With this choice, the proof of H\"older's theorem (see e.g., \cite[Section 2.2.4]{Nav11})  shows that there are canonical, well-defined, morphisms $\Phi^\pm\colon H \to \R$ given by the {\em translation number} of elements on each factor; so that $\Phi^+(h_0) = \Phi^-(h_0)=1$, and the actions of $H_x$ on $r^+$ and on $r^-$ are semiconjugate to the translation actions of $\Phi^+(H)$ and $\Phi^-(H_x)$, respectively.  

Our choice of normalization implies that an element $h\in H$ is a topological expansion on $r^+$ if and only if $\Phi^+(h) >0$, and a contraction on $r^-$ if and only if $\Phi^-(h) > 0$.  
Consider the map $\Phi^- \circ (\Phi^+)^{-1}$.  This is a homomorphism between subgroups of $\R$, and the hyperbolicity condition (Axiom \ref{Axiom_A1}) implies that $\Phi^- \circ (\Phi^+)^{-1}(t) > 0$ iff $t>0$. 
Thinking of $\R$ as a vector space over $\mathbb{Q}$, it is a pleasant exercise (stated as Exercise \ref{ex_pleasant} below) to show that this can only hold if $\Phi^- \circ (\Phi^+)^{-1}(r)$ is given by multiplication by some positive constant $c$.  Since $\Phi^+(h_0) = \Phi^-(h_0) = 1$, we also have $c = 1$.  
Thus, $\Phi^+ = \Phi^-$.  

If $H$ is not cyclic, the image of $\Phi$ is indiscrete, and so the orbit of any point $y$ satisfying $\cF^+(y) \cap \cF^-(x) \neq \emptyset$ and 
$\cF^+(x) \cap \cF^-(y) \neq \emptyset$ will have accumulation points.   
In particular, that means that no singular point satisfies this condition.  
Let $q$ be an accumulation point of the orbit of some $y$.  Let $U$ be a small trivially foliated neighborhood of $q$ small enough so that $\cF^\pm(x) \cap U = \emptyset$.    Let $V \subset U$ be a smaller neighborhood as in the conditions of Axiom \ref{Axiom_closing}.   %

Since $q$ is an accumulation point of an orbit, there exists some nontrivial $g \in H$ such that $gV \cap V \neq \emptyset$.  By the weak closing property, $g$ has a fixed leaf intersecting $U$, which by choice of $U$ meets either $\cF^+(x)$ or $\cF^-(x)$ at a point other than $x$.  This contradicts Axiom \ref{Axiom_A1} and concludes the proof.  
\end{proof} 

\begin{exercise} \label{ex_pleasant}
Suppose $\rho$ is an homeomorphism between two subgroups of $\bR$ (as an additive group) which satisfies the property that $\rho(t)>0$ iff $t>0$.  Show that $\rho$ is given by multiplication by a positive constant.  
\end{exercise} 

For Anosov-like actions satisfying the weak closing property (or more generally, with discrete point stabilizers), we can completely characterize the stabilizers of scalloped regions.  These correspond precisely to $\bZ^2$-subgroups generated by elements that act with fixed points.  

\begin{proposition} \label{prop_stabilizer_scalloped_withC}
Suppose $G$ acts Anosov-like on a bifoliated plane and satisfies Axiom \ref{Axiom_closing} or has discrete point stabilizers.   Then every subgroup isomorphic to $\bZ^2$ generated by elements with fixed points stabilizes a (unique) scalloped region.  Conversely, the stabilizer of any scalloped region is virtually a $\bZ^2$ subgroup generated by elements with fixed points. 
\end{proposition}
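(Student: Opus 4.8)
The converse direction is already established: Lemma \ref{lem_action_of_stabilizer_scalloped} shows that the stabilizer of a scalloped region is virtually a $\bZ^2$ subgroup, and the explicit generators $g, h$ produced there each act with fixed points (they fix all the corners of one of the two lines of lozenges comprising the region). So the work is entirely in the forward direction: given a $\bZ^2$ subgroup $H = \langle g, h\rangle$ with both $g$ and $h$ acting with fixed points, I must produce a scalloped region preserved by $H$, and show it is unique.

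First I would note that, since $g$ acts with a fixed point and $H$ is abelian, $g$ cannot act freely (if it did, Lemma \ref{lem_commute_same_axis} would force $h$ to share the same axis and act freely too, but $h$ has a fixed point). So both $g$ and $h$ fix points. Up to passing to a finite-index subgroup (still isomorphic to $\bZ^2$), I may assume $g$ and $h$ fix all rays through their respective fixed points. By Corollary \ref{cor:maximal_fixed}, $g$ has a unique maximal invariant chain of lozenges $\cC_g$, whose corners are exactly $\Fix(g)$; likewise for $h$. Because $h$ commutes with $g$, $h$ preserves $\Fix(g)$, hence preserves $\cC_g$ (by uniqueness); symmetrically $g$ preserves $\cC_h$. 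The key claim will be that $\cC_g \cap \cC_h$ contains a lozenge, and that this forces the presence of a scalloped region. Here is where I expect the main obstacle: ruling out the possibility that $g$ and $h$ each have a single fixed point (so $\cC_g, \cC_h$ are empty) or that their chains meet only combinatorially in a way that does not produce adjacent lozenges. To handle the first: if $g$ has a unique fixed point $x$, then since $h$ preserves $\{x\}$, $h$ also fixes $x$; but then $\langle g, h\rangle$ is a point stabilizer, which by Proposition \ref{prop:cyclic_stabilizers} is virtually cyclic — contradicting $\bZ^2$. So both chains are genuinely nonempty chains of lozenges.

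Now $h$ acts on the tree $\cT(\cC_g)$ (Definition \ref{def:chain_tree}) by automorphisms, fixing it setwise; since $h$ must fix a vertex or invert an edge of this tree (because $h$ fixes points of $P$ lying in $\cC_g$ — indeed $\Fix(h)$ meets $\cC_g$ as both are $H$-invariant and $\Fix(h)$ is the corner set of $\cC_h$, which shares corners with $\cC_g$), there is a common corner $c$ fixed by both $g$ and $h$. Using Axiom \ref{Axiom_A1}, at $c$ one of $g, h$ expands $\cF^+(c)$ and the other expands $\cF^-(c)$, or one of them expands the same leaf the other does — in the latter degenerate case $gh^{-k}$ for suitable $k$ would fix a leaf and act on it without fixed points, contradiction; so we get genuinely "transverse" expansion/contraction behavior at $c$. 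Then I would argue that the quadrant of $c$ on which $g$ contracts $\cF^+$ and $h$ contracts $\cF^-$ must be a lozenge: by the non-corner characterization (Exercise \ref{ex_noncorner_converse}), if it were not a lozenge, a ray bounding it would meet a perfect fit or a prong, forcing one of the rays of $c$ to be "capped"; but then iterating $g$ (or $h$) appropriately and using density (Axiom \ref{Axiom_dense}) together with weak closing produces a fixed point on $\cF^\pm(c)$ distinct from $c$, contradicting Axiom \ref{Axiom_A1}. Having located a lozenge $L$ with corner $c$ on which $g$ acts nontrivially on one pair of sides and $h$ on the other, $g$ (or $g^{-1}$) translates the $\cF^+$-sides of $L$ and hence produces an infinite line of lozenges sharing $\cF^+$-sides; by Proposition \ref{prop:infinite_lozenge_is_scalloped} this line sits inside a scalloped region $U$. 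Finally $H$ preserves $U$: $g$ fixes all leaves of one line of $U$, $h$ fixes all leaves of the other, and uniqueness of $U$ as the scalloped region containing the line through $L$ (which follows since a scalloped region is determined by any one of its lozenges, by its definition as the maximal trivially-foliated set with that boundary structure) gives $H$-invariance and also the asserted uniqueness. I would close by remarking that the finite-index passage at the start is harmless: the original $\bZ^2$ still preserves $U$ because it normalizes the finite-index subgroup and $U$ is the unique scalloped region invariant under that subgroup.
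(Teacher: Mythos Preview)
Your converse direction is fine. The forward direction has a fatal error at the step where you assert ``there is a common corner $c$ fixed by both $g$ and $h$.'' This is precisely what \emph{cannot} happen under the hypotheses: if $g$ and $h$ both fix $c$, then the stabilizer of $c$ contains $\langle g,h\rangle\cong\bZ^2$, contradicting Proposition \ref{prop:cyclic_stabilizers} (which uses Axiom \ref{Axiom_closing} or discreteness of stabilizers). Your justification for the common corner --- that $\Fix(h)$ meets $\cC_g$ because ``both are $H$-invariant'' and $\cC_h$ ``shares corners with $\cC_g$'' --- is circular: two $H$-invariant sets need not intersect, and you have given no reason for the chains to share corners. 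Consequently the entire subsequent analysis of dynamics at $c$ is built on a nonexistent point. The picture is the opposite of what you describe: $h$ acts \emph{freely} on the tree $\cT(\cC_g)$, translating along an axis, not fixing a vertex.

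The paper's route is to reduce to the more general Proposition \ref{prop_stabilizer_scalloped} (where $\Fix(g_1)$ and $\Fix(g_2)$ are assumed disjoint --- which here follows from cyclic stabilizers). One then shows the closed chains $\cC_1,\cC_2$ must intersect as subsets of $P$ by a separating-leaf argument: if disjoint, a unique leaf in the boundary of the saturation of one separates them, and invariance forces both $g_i$ to fix it, giving a common fixed point, contradiction. Once $\cC_1\cap\cC_2\neq\emptyset$, take lozenges $L_i\in\cC_i$ with $L_1\cap L_2\neq\emptyset$; since the chains share no corner, Lemma \ref{lem_markovian_or_corner} forces the ``Markovian'' intersection $L_1\subset\cF^+(L_2)$, $L_2\subset\cF^-(L_1)$. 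Then $\{g_1^n(L_2):n\in\bZ\}$ is an infinite line of lozenges in $\cC_2$, which by Proposition \ref{prop:infinite_lozenge_is_scalloped} is a scalloped region; uniqueness is Lemma \ref{lem_scalloped_off_of_themselves}.
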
 

This is an immediate consequence of the following more general statement, applicable to actions that do not necessarily have discrete point stabilizers. 

\begin{proposition} \label{prop_stabilizer_scalloped} 
Let $G$ act Anosov-like on a bifoliated plane.  
If $g_1, g_2$ have disjoint, nonempty sets of fixed points in $P$ and generate a subgroup isomorphic to $\bZ^2$, then they stabilize a (necessarily unique) scalloped region.  
Conversely, the stabilizer of any scalloped region is virtually isomorphic to a subgroup of $\bZ^2$, generated by elements with nonempty, disjoint sets of fixed points. 
\end{proposition}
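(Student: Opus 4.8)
The plan is to treat the two directions separately, using Lemma \ref{lem_action_of_stabilizer_scalloped} as the main input for the converse and the theory of chains of lozenges (Corollary \ref{cor:maximal_fixed}, Observation \ref{obs:all_rays}, and Proposition \ref{prop:infinite_lozenge_is_scalloped}) for the forward implication.

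\emph{Converse direction.} Let $U$ be a scalloped region. By Lemma \ref{lem_action_of_stabilizer_scalloped} there is an index-at-most-two subgroup $\langle g,h\rangle\cong\bZ^2$ of $\Stab(U)$ with $g$ fixing one of the two bi-infinite lines of lozenges making up $U$ (call it $\ell_g$) and freely permuting the lozenges of the other line $\ell_h$, and symmetrically for $h$. Since $g$ fixes all the pairwise nonseparated leaves forming $\ell_g$, Proposition \ref{prop:4weak4strong}, Observation \ref{obs:fixes_all_corners} and Corollary \ref{cor:maximal_fixed} show that $\Fix(g)$ is exactly the (nonempty) set of corners of $\ell_g$; likewise $\Fix(h)$ is the set of corners of $\ell_h$, and these two corner sets are disjoint in a scalloped region. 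To see $\Fix(g)\cap\Fix(h)=\emptyset$, suppose $p$ lay in both; then $p$ is a corner of a lozenge $M$ of $\ell_h$, while $g$ fixes $p$ together with all its rays (as $g$ fixes $p$ and all the corners of $\ell_g$, hence at least two distinct points, so Observation \ref{obs:all_rays} applies). Then by Observation \ref{obs:fixes_all_corners} $g$ fixes both corners of $M$, hence preserves $M\in\ell_h$, contradicting that $g$ freely permutes the lozenges of $\ell_h$. This gives the converse.

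\emph{Forward direction, reduction.} Suppose $g_1,g_2$ generate a group isomorphic to $\bZ^2$ with $\Fix(g_1),\Fix(g_2)$ nonempty and disjoint. If $\Fix(g_1)$ were a single point $x$, then since $g_2$ commutes with $g_1$ it preserves $\Fix(g_1)$, so $g_2(x)=x$, contradicting disjointness; hence $g_1$ fixes at least two distinct points and, by Observation \ref{obs:all_rays}, fixes all rays through every one of its fixed points. By Corollary \ref{cor:maximal_fixed}, $\Fix(g_1)$ is precisely the set of corners of a unique maximal chain of lozenges $\cC_1$ (which contains at least one lozenge, so $P$ is not trivial). Because $g_2$ commutes with $g_1$ it preserves $\Fix(g_1)$, so $g_2(\cC_1)$ is again a maximal $g_1$-invariant chain, whence $g_2(\cC_1)=\cC_1$; and $g_2$ fixes no corner of $\cC_1$, again by disjointness.

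\emph{Forward direction, locating a scalloped region.} The heart of the proof is to extract from $\cC_1$ an infinite \emph{line} of lozenges. Since $g_2$ preserves $\cC_1$ and fixes none of its corners, $g_2$ acts on the tree $\cT(\cC_1)$ with no fixed vertex; one must first rule out the "inversion'' possibility (where $g_2$ swaps the two corners of a lozenge of $\cC_1$, so that $g_2^2$ fixes two adjacent corners), which forces one to analyse $\langle g_1,g_2^2\rangle$ and conclude $\cC_1$ is still infinite. Granting that $\cC_1$ is infinite, I would show a bi-infinite $g_2$-invariant subchain of $\cC_1$ is an honest line of lozenges: in a "string'' (where consecutive lozenges meet only at a corner) the two lozenges at a shared corner occupy opposite quadrants, and propagating this along an infinite string by $g_2$-equivariance, combined with the fact that all corners are fixed by $g_1$ together with all their rays and the non-corner criterion (Lemma \ref{lem_no_corner_criterion}, Exercise \ref{ex_noncorner_converse}), produces a $g_1$-fixed leaf meeting a second $g_1$-fixed leaf, contradicting Axiom \ref{Axiom_A1}. \emph{I expect this "string versus line'' step (together with the inversion bookkeeping) to be the main obstacle, and it is where the detailed casework sits.} Once an infinite line of lozenges inside $\cC_1$ is in hand, Proposition \ref{prop:infinite_lozenge_is_scalloped} produces a scalloped region $U$ containing it; $g_1$ preserves this line (it fixes all its corners) and $g_2$ preserves $\cC_1$, hence both preserve $U$, and $\langle g_1,g_2\rangle\leq\Stab(U)$. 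Uniqueness of $U$ follows since a scalloped region is determined by any bi-infinite line of lozenges it contains, and this line is canonically attached to $\langle g_1,g_2\rangle$: the chain $\cC_1$ is determined by $\Fix(g_1)$, and if $\langle g_1,g_2\rangle$ also stabilized a scalloped region $U'$, the description of $\Stab(U')$ in Lemma \ref{lem_action_of_stabilizer_scalloped} would force its two distinguished lines to have corner sets $\Fix(g_1)$ and $\Fix(g_2)$, hence $U'=U$.
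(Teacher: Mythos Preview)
Your converse direction is fine and in fact more detailed than the paper, which simply invokes Lemma~\ref{lem_action_of_stabilizer_scalloped}.

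The forward direction has a genuine gap at exactly the point you flag: the ``string versus line'' step. After setting up $\cC_1$ and the free action of $g_2$ on $\cT(\cC_1)$, you never again use the hypothesis that $\Fix(g_2)\neq\emptyset$. Your sketched argument (non-corner criterion plus Axiom~\ref{Axiom_A1}) would therefore, if it worked, show that \emph{any} element acting freely on $\cT(\cC_1)$ has a line of lozenges as its axis. That is false: in a skew plane the step-up map $\eta$ commutes with any $g_1$ having fixed points, acts freely on $\cT(\cC_1)$, and its axis is a bi-infinite \emph{string}. Indeed Proposition~\ref{prop_Z2_stabilize_minimal_chain} (proved just after) shows that a general $\bZ^2$-invariant minimal chain need not be a line. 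So the hypothesis $\Fix(g_2)\neq\emptyset$ must enter in an essential way, and your outline does not indicate how.

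The paper takes a different route that makes this hypothesis do the work directly. It introduces the \emph{second} maximal chain $\cC_2$ (corners $=\Fix(g_2)$) and analyzes how $\cC_1$ and $\cC_2$ sit relative to one another. If they intersect, two lozenges $L_1\in\cC_1$, $L_2\in\cC_2$ have overlapping interiors; since their corners are disjoint, Lemma~\ref{lem_markovian_or_corner} forces the ``Markovian'' configuration $L_1\subset\cF^+(L_2)$, and then the iterates $g_1^n(L_2)$ form an infinite line in $\cC_2$, giving the scalloped region via Proposition~\ref{prop:infinite_lozenge_is_scalloped}. The bulk of the argument is ruling out $\cC_1\cap\cC_2=\emptyset$: one shows no leaf can separate the two chains (else it would be fixed by both $g_1,g_2$, contradicting disjointness via Lemma~\ref{lem:common_fixed_point}), and then a ``walk towards $\cC_2$ through quadrants of $\cC_1$'' produces an accumulating sequence of $g_1$-fixed leaves and hence a separating leaf after all. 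Uniqueness follows from Lemma~\ref{lem_scalloped_off_of_themselves} rather than the argument you sketch.
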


The ``conversely..." statement was already proved in Lemma \ref{lem_action_of_stabilizer_scalloped}. What we show here is the first assertion.  
We begin with an easy lemma.

\begin{lemma}\label{lem_scalloped_off_of_themselves}
Let $U$ and $U'$ be two distinct scalloped regions in a bifoliated plane. Then there exists an element $g$ of the stabilizer of $U$ such that $gU'\cap U' =\emptyset$
\end{lemma}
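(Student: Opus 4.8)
The plan is to exploit the structure theory of scalloped regions developed above, in particular Lemma \ref{lem_action_of_stabilizer_scalloped}, which tells us that $\Stab(U)$ contains a $\bZ^2$ subgroup $\langle g, h\rangle$ of index at most two, with $g$ fixing one line of lozenges forming $U$ and freely permuting the lozenges of the other line, and vice versa for $h$. So it suffices to find such an element $g$ (or $h$) that moves $U'$ off itself. First I would reduce to the case where $U \cap U' \neq \emptyset$, since otherwise $U' \subset P \smallsetminus U$, and applying any nontrivial $g \in \Stab(U)$ that translates one of the boundary lines of lozenges of $U$ will push $U'$ into a different complementary region; more carefully, the boundary leaves of $U$ are permuted nontrivially, so $g^n U' $ lies in a distinct component of $P \smallsetminus \partial U$ from $U'$ for appropriate $n$, giving $g^n U' \cap U' = \emptyset$.

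Now assume $U \cap U' \neq \emptyset$. Since $U$ and $U'$ are distinct and both trivially foliated, open, unbounded sets, their boundaries differ. I would use the description of a scalloped region as a union of two bi-infinite lines of lozenges (one sharing $\cF^+$-sides, one sharing $\cF^-$-sides) together with Lemma \ref{lem_markovian_or_corner} describing how lozenges intersect. The key point: because $U'$ is scalloped it cannot be contained in $U$ (a scalloped region is not contained in a trivially foliated proper subset preserved by its own stabilizer — more precisely the boundary families of $U'$ would have to fit inside $U$, contradicting that $U$ is trivially foliated with its own distinct boundary structure). So there is a boundary leaf $l'$ of $U'$ that meets the interior of $U$. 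This leaf $l'$ is one of a bi-infinite family of pairwise nonseparated leaves bounding $U'$, and it is a side of lozenges in $U'$. Using the generators $g, h$ of the $\bZ^2$ subgroup of $\Stab(U)$: the element $g$ is topologically expanding on the $\cF^+$-leaves through $U$ near one boundary line and contracting near the other. Iterating $g$ (or $g^{-1}$), the leaf $l' \cap U$ — which is a proper subinterval of $l'$ trapped between two boundary leaves of $U$ — gets pushed toward a boundary leaf of $U$. I would argue that for large $|n|$, the image $g^n(U')$ has all its relevant boundary leaves squeezed so close to $\partial U$ that $g^n(U') \cap U' = \emptyset$: the interior of $U'$ contains points whose $\cF^+$-coordinate (in the product structure on $U$) is bounded away from the extremes, but $g^n$ drives these toward an extreme, so the images eventually avoid $U'$ entirely.

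The main obstacle I anticipate is the bookkeeping in the case $U \cap U' \neq \emptyset$: one has to carefully track how the boundary leaves of $U'$ sit relative to the product structure of $U$ and verify that no "drift-invariant" configuration is possible — i.e., rule out that $U'$ could be arranged symmetrically with respect to the translation generated by $g$. This is where one uses that $U'$ is itself \emph{unbounded} in the "transverse" direction to $g$'s translation axis, forcing some boundary leaf of $U'$ to exit every compact set, combined with the non-corner criterion (Lemma \ref{lem_no_corner_criterion}) and the uniqueness of perfect fits (Lemma \ref{lem:perfect_fit_unique}) to pin down the interaction. Once it is established that $g$ acts with positive "translation length" in a suitable coordinate on $U'$ (or its boundary), a Brouwer-type or direct compactness argument finishes the claim. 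Alternatively, a cleaner route: both $\Stab(U)$ and $\Stab(U')$ are virtually $\bZ^2$, and if every element of $\Stab(U)$ fixed $U'$ setwise then $\Stab(U) \subset \Stab(U')$, forcing (by the uniqueness clause in Lemma \ref{lem_action_of_stabilizer_scalloped} applied to $U'$) that the two virtual $\bZ^2$'s share generators fixing the same lines of lozenges, which would force $U = U'$ — a contradiction. I would likely present this last argument as the main line, as it avoids the delicate coordinate estimates, with the displacement statement $gU' \cap U' = \emptyset$ then following because a nontrivial element of $\Stab(U)$ not in $\Stab(U')$ must move $U'$, and by the structure of the $\bZ^2$-action (translating a line of lozenges) some power moves it disjointly off itself.
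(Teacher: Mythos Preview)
Your disjoint case is essentially the paper's argument, though stated less precisely: the paper notes there is a \emph{unique} boundary leaf $l$ of $U$ separating $U'$ from $U$, and any stabilizer element moving $l$ off itself moves $U'$ off itself.

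The intersecting case has a genuine gap. Your ``cleaner route'' shows at most that $\Stab(U) \not\subset \Stab(U')$, hence there exists $g \in \Stab(U)$ with $gU' \neq U'$. But the lemma asks for $gU' \cap U' = \emptyset$, which is much stronger. Your closing sentence --- that ``by the structure of the $\bZ^2$-action some power moves it disjointly off itself'' --- is exactly the statement to be proved, not a consequence of what came before. There is no general principle that a non-stabilizing homeomorphism has a power making a set disjoint from itself; that is precisely what a Brouwer-type argument would need to establish, and you have not set one up. Your first approach (iterating $g$ to squeeze $U'$ toward $\partial U$) is closer in spirit to something workable, but as you yourself note, the bookkeeping is not done, and there is no clear reason the ``transverse'' part of $U'$ outside $U$ behaves well under iteration.

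The paper's argument avoids all of this with a single geometric observation you are missing: when $U \cap U' \neq \emptyset$ and $U \neq U'$, there are \emph{exactly two} boundary leaves $l, l'$ of $U$ that meet $U'$ (see Figure~\ref{fig_intersection_scalloped}). Once you see this, $U'$ is trapped in the ``slab'' of $P$ determined by $l$ and $l'$, and any element of $\Stab(U)$ that translates $l$ off itself (such elements exist by Lemma~\ref{lem_action_of_stabilizer_scalloped}) shifts this slab to a disjoint one, hence $gU' \cap U' = \emptyset$ immediately. No iteration, no stabilizer-containment argument, no dynamics on $U'$ is needed. The whole proof is three sentences.
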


\begin{proof}
If $U\cap U'=\emptyset$, then there exists a unique leaf $l$ in the boundary of $U$ that separates $U'$ from $U$. By Lemma \ref{lem_action_of_stabilizer_scalloped}, there exists an element $g$ of the stabilizer of $U$ that moves $l$ off itself, thus $gU'\cap U' =\emptyset$.

Similarly, if $U\cap U'\neq \emptyset$ (and since $U\neq U'$), there exist \emph{exactly} two leaves $l,l'$ in the boundary of $U$ that intersects $U'$ (see Figure \ref{fig_intersection_scalloped}). Again, an element of the stabilizer of $U$ taking $l$ off itself will take $U$ off itself as well. \qedhere 
\begin{figure}[h]
     \centerline{ \mbox{
\includegraphics[width=8cm]{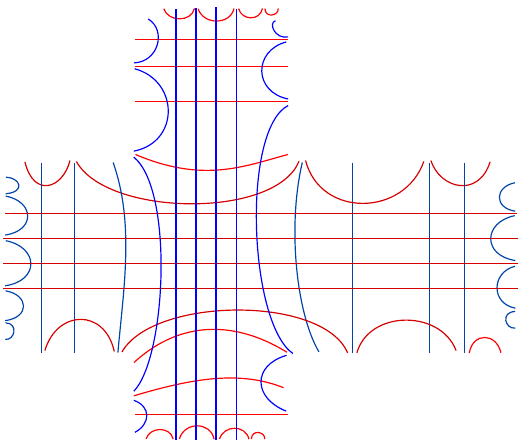}}}
\caption{The intersection of two scalloped regions.}
\label{fig_intersection_scalloped}
\end{figure}
\end{proof}

\begin{proof}[Proof of Proposition \ref{prop_stabilizer_scalloped}]
Suppose $g_1$ and $g_2$ have disjoint, nonempty sets of fixed points and generate a subgroup of $G$ isomorphic to $\bZ^2$. 
Let $\Fix(g_i)$ denote the set of points fixed by $g_i$.  
We assume first that $g_i$ each fix all rays through their fixed points, and will remove this assumption at the end. 
By Theorem \ref{thm:distinct_fix_is_chain}, $\Fix(g_i)$ consists of the corners of a maximal chain $\cC_i$ of lozenges.  Because $g_1$ and $g_2$ commute, $\Fix(g_i)$ is invariant by $g_j$.  
 
Assume first that $\cC_1$ and $\cC_2$ have nonempty intersection, so there exist lozenges $L_i$ of $\cC_i$ with $\overline{L_1} \cap \overline{L_2} \neq \emptyset$. (Recall a chain is, by definition, a union of \emph{closed} lozenges.)  If $L_1$ and $L_2$ meet along a side or corner, then $\cC_1 = \cC_2$ by maximality, and the $g_i$  would have a common fixed point, contradicting our assumption.  Thus, $L_1 \cap L_2 \neq \emptyset$.  By Lemma \ref{lem_markovian_or_corner}, up to relabeling, we have that $L_1$ is contained in the $\cF^+$ saturation of $L_2$ (and $L_2$ in the $\cF^-$ saturation of $L_1$).  
The set $\{ g_1^n(L_2) : n \in \bZ \}$ is an infinite collection of lozenges in $\cC_2$ meeting a common leaf of $\cF^+$.  Thus, it is an infinite line, so contained in a scalloped region $U$ contained in $\cC_2$.  Similarly, the lozenges $\{g_2^n(L_1) : n \in \bZ \}$ lie in a line, realizing this scalloped region as a line of lozenges in a different way.  Thus $U$ is invariant by both $g_1$ and $g_2$.
By Lemma \ref{lem_scalloped_off_of_themselves}, such a scalloped region is necessarily unique.

To conclude the proof, we need to show that the case where $\cC_1 \cap \cC_2 = \emptyset$ leads to a contradiction. This is done in several steps. 

Supposing that $\cC_1 \cap \cC_2 = \emptyset$, consider first the case where their interiors are separated by some leaf $l^+$ of $\cF^+$; meaning that the interiors lie in distinct connected components of $P\smallsetminus l^+$. Then there exists a \emph{unique} $\cF^+$-leaf, call it $l_0^+$, in the closure of the $\cF^+$-saturation of $\cC_1$ that separates the interior of $\cC_1$ and $\cC_2$. By invariance of each $C_i$ by both $g_1$ and $g_2$, we deduce that $l_0^+$ is stabilized by both $g_1$ and $g_2$. By Lemma \ref{lem:common_fixed_point}, this gives a common fixed point for the $g_i$, contradicting our assumption.  By the same argument, we deduce that no leaf of $\cF^-$ can separate the interiors of $\cC_1$ and $\cC_2$. 

As a second step, we argue that if $x$ is any corner in $\cC_1$, then either $\cC_2$ intersects a lozenge with corner $x$, or $\cC_2$ does not intersect the leaves $\cF^\pm(x)$. 
For contradiction, suppose that, say, $C_2\cap \cF^+(x)\neq \emptyset$ and that $\cC_2$ does not intersect any of the lozenges with corner $x$.  By $g_1$-invariance of $\cC_2$, either $x$ is contained in $\cC_2$, and we have a contradiction by the maximality of the chains, or $x$ is in the closure of $\cC_2$. In that latter case, since $\cC_2$ does not intersect any lozenges with corner $x$, we must have that $\cF^-(x)$ separates $\cC_2$ from (the interior of) $\cC_1$, which we ruled out above.
Thus we deduce that, if  $C_1 \cap C_2 = \emptyset$, then, for any corner $x$ of $\cC_1$, the chain $\cC_2$ lies in one quadrant of $x$.

Let $x_0$ be any corner of $\cC_1$, and call $Q_0$ the quadrant containing $\cC_2$. If $Q_0$ does not contain a lozenge of $\cC_1$, then $\cF^\pm(x_0)$ separates $\cC_1$ from $\cC_2$, which contradicts the above. So there exists a corner $x_1$ of $\cC_1$ in $Q_0$. By induction, we find an infinite sequence $x_n$ of corners of $\cC_1$ such that $\cF^+(x_n)$ separates $x_{n-1}$ from $\cC_2$. Hence $\cF^+(x_n)$ must accumulate, as $n\to \infty$, to a leaf or union of leaves, one of which separates the interior of $\cC_1$ from the interior of $\cC_2$. This final contradiction proves that we must have $C_1 \cap C_2 \neq \emptyset$. 

Finally, we remove the assumption that the $g_i$ fix all rays through their fixed points.  If not, we can pass to powers $g_1^k$ and $g_2^l$ which do; and apply the proof above to show that these stabilize a unique scalloped region.  Since $g_1$ and $g_2$ commute with their powers, they also stabilize this scalloped region.  In fact, more is true -- using the trivially foliated structure of the scalloped region, one can easily show that the property that $g_1$ and $g_2$ generate a $\bZ^2$ subgroup implies that $g_1$ and $g_2$ must also preserve local orientations, and so they themselves fix all rays through 
their fixed points.
\end{proof}

\section{No free $\bZ^2$} 
We now prove an important result linking the dynamics to the algebraic structure of an Anosov-like group. 

\begin{proposition} \label{prop:no_free_Z2}
Suppose $G$ acts Anosov like on a nontrivial bifoliated plane $P$, and satisfies Axiom \ref{Axiom_closing}.  If $G$ contains a subgroup isomorphic to $\bZ^2$, then some element of this subgroup acts with a fixed point. 
\end{proposition}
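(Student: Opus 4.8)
The plan is to argue by contradiction: suppose $A \cong \bZ^2$ is a subgroup of $G$ all of whose nontrivial elements act freely on $P$. Since every nontrivial element acts freely, in particular it acts freely on both leaf spaces $\Lambda(\cF^+)$ and $\Lambda(\cF^-)$ (using Axiom \ref{Axiom_A1}, which forces that any element fixing a leaf also fixes a point), so each nontrivial $g \in A$ has well-defined axes $A^+(g) \subset \Lambda(\cF^+)$ and $A^-(g) \subset \Lambda(\cF^-)$ by Theorem \ref{thm_axes_exist}. Because $A$ is abelian, Lemma \ref{lem_commute_same_axis} gives that all nontrivial elements of $A$ share a common axis $A^+$ in $\Lambda(\cF^+)$ and a common axis $A^-$ in $\Lambda(\cF^-)$.

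The next step is to pin down the topological type of these common axes. I would fix a generating pair $g, h$ of $A$. If $A^+(g)$ is a properly embedded copy of $\bR$: then $\Omega := \{p \in P : p = l^+ \cap l^- \text{ for some } l^+ \in A^+, l^- \in A^-\}$ is available to analyze via Proposition \ref{prop_both_axes_R} — but only once we know $A^-$ is also homeomorphic to $\bR$, which is not automatic (see the remark after Proposition \ref{prop_both_axes_R} and Figure \ref{fig_different_axis}). So the first real case split is on whether the axes are lines or unions of intervals. If an axis, say $A^+$, is \emph{not} homeomorphic to $\bR$, then by Proposition \ref{prop:axis_structure} it is a union $\bigcup_i [a_i, b_i]$ with $a_i, b_{i-1}$ nonseparated; the nonseparated leaves at the ``breakpoints'' are branching leaves, so by Axiom \ref{Axiom_nonseparated} (strengthened via Proposition \ref{prop:4weak4strong}) they are sides of lozenges fixed by nontrivial elements of $G$. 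Since the $a_i$ are permuted by $g$ (and by all of $A$), the breakpoint leaves form a bi-infinite family of pairwise nonseparated leaves, hence by Corollary \ref{cor_infinite_non_sep_implies_scalloped} they bound a scalloped region $U$, which is $A$-invariant. Then Lemma \ref{lem_action_of_stabilizer_scalloped} (or Proposition \ref{prop_stabilizer_scalloped}) says the stabilizer of $U$ is virtually $\bZ^2$ \emph{generated by elements acting with fixed points}, and $A$ (being a finite-index $\bZ^2$ inside this virtually-$\bZ^2$ stabilizer, up to passing to powers) must contain an element with a fixed point — contradiction.

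So the remaining case is that both $A^+$ and $A^-$ are properly embedded copies of $\bR$. Now I apply Proposition \ref{prop_both_axes_R} / Proposition \ref{prop_omega} to $\Omega$. The scalloped case and the trivial case both immediately produce elements with fixed points (scalloped: as above; trivial: $P$ would be the trivial plane, contradicting the hypothesis that $P$ is nontrivial), so we are forced into the ``skew-like piece'' conclusion: $s, i$ are finite-valued and weakly monotone. I would then subdivide once more: if $s$ and $i$ are \emph{strictly} monotone, Proposition \ref{prop_omega} says $P$ is skew — but on a skew plane every point is the corner of a lozenge, and by Theorem \ref{thm_normal_abelian_implies_rigidity} (applied with the abelian normal subgroup $A$, noting $A$ is trivially normal in itself — more carefully, one uses the structure of the skew plane directly) the center-like behavior forces a contradiction, or more directly: the action of $A$ on $\Lambda(\cF^+) \cong \bR$ commutes with the one-step-up map, so the translation-number homomorphism $\tau: A \to \bR$ is injective with dense image unless $A$ is cyclic, and a dense-image $\bZ^2$ acting freely contradicts the weak closing property \ref{Axiom_closing} exactly as in the proof of Proposition \ref{prop:cyclic_stabilizers} (an accumulation point of an orbit gives an element $\gamma$ with $\gamma V \cap V \neq \emptyset$, hence a fixed leaf, hence a fixed point). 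If instead $s, i$ are weakly but not strictly monotone, then there is a nondegenerate interval $J$ of leaves of $A^+$ on which $s$ is constant equal to some leaf $m \in A^-$; then $m$ makes a perfect fit with the boundary leaf of $J$, and the interval $J$ together with the constancy of $s$ produces an $A$-invariant configuration — I expect this to force, via Lemma \ref{lem:fixed_is_corner} once we locate a fixed leaf (the boundary leaves of maximal constancy intervals are permuted by $A$, and if there are finitely many in a bounded region some power fixes one), a fixed leaf for a power of a generator, hence a fixed point, the final contradiction.

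The main obstacle I anticipate is the last sub-case: weakly-but-not-strictly-monotone $s$ and $i$ with both axes genuinely lines. Here one must carefully extract an $A$-invariant leaf (and hence, by Axiom \ref{Axiom_A1}, a fixed point) from the combinatorics of the level sets of $s$ and $i$ — the constancy intervals of $s$ are permuted by $A$ but there may be infinitely many accumulating, so one needs a compactness/discreteness input. The cleanest route is probably to show that a ``corner'' of the skew-like piece (a leaf $l^+ \in A^+$ with $s(l^+)$ making a perfect fit with $l^+$, which exists by Lemma \ref{lem_s_pf} applied to leaves with nontrivial $G$-stabilizer inside $A^+$, which are dense by Axiom \ref{Axiom_dense}) is the corner of a lozenge, and then run the chain-of-lozenges machinery: $A$ preserves the maximal chain through this corner (Corollary \ref{cor:maximal_fixed}), and a $\bZ^2$ preserving a chain of lozenges must, by Proposition \ref{prop_stabilizer_scalloped} together with the structure of $\cT(\cC)$, contain an element with a fixed point. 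Assembling these reductions into a clean case analysis, with the weak closing property \ref{Axiom_closing} doing the work precisely in the ``dense translation'' situations, is the bulk of the write-up.
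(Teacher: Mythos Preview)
Your overall architecture matches the paper's, but there are two genuine gaps.

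\textbf{The non-$\bR$ axis case.} Your claim that the breakpoint leaves $\{a_i\}$ form a bi-infinite \emph{pairwise} nonseparated family is not justified by Proposition \ref{prop:axis_structure}, which only gives that $a_i$ and $b_{i-1}$ are nonseparated for each $i$; there is no reason $a_i$ and $a_j$ should be nonseparated for $|i-j|>1$, so the appeal to Corollary \ref{cor_infinite_non_sep_implies_scalloped} fails. The paper's argument here is much simpler and you should use it instead: the $\bZ$-indexed interval structure of $A^+$ gives a homomorphism $A \to \bZ$ (recording the shift), which has nontrivial kernel since $A \cong \bZ^2$; any nontrivial kernel element fixes each interval, hence fixes the leaf $a_0$, hence by Axiom \ref{Axiom_A1} has a fixed point in $P$ --- contradiction.

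\textbf{The skew-like case.} Your split into strictly versus weakly monotone is unnecessary, and the weakly-monotone sub-case (which you flag as the obstacle) is exactly where your proposed routes via perfect fits and chains bog down. The paper treats both at once with a single argument that you almost have in the skew sub-case but do not extend: since $A$ acts freely on $A^+ \cong \bR$, H\"older's theorem makes it semi-conjugate to an indiscrete translation group, so there is a minimal set $X \subset A^+$ (either all of $\bR$ or an invariant Cantor set) on which the action is minimal. The set $S \subset A^+$ where $i$ is locally constant is open and $A$-invariant, hence disjoint from $X$; thus $i$ is \emph{strictly} increasing on $X$. Now pick two-sided accumulation points $l_0, l_1 \in X$ close enough that $i(l_1)$ intersects $l_0$, set $x = i(l_1) \cap l_0 \in P$, and choose $h_k \in A$ with $h_k(l_1) \to l_1$ from the side making $i(h_k(l_1)) > i(l_1)$; then $h_k(x) \to x$ in $P$, and Axiom \ref{Axiom_closing} produces a fixed leaf (hence fixed point) for some $h_k$ --- contradiction. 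This is the missing idea: pass to the minimal set to recover strict monotonicity of $i$, rather than trying to analyze the constancy intervals directly.
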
 

This has significant consequence for the theory of pseudo-Anosov flows on 3-manifolds.  If $M$ is either Seifert fibered, or has nontrivial JSJ decomposition, then its fundamental group contains $\bZ^2$ subgroups.  Proposition \ref{prop:no_free_Z2} has consequences in both cases.  For instance, it directly implies that some element of the fundamental group of any JSJ torus is represented by a periodic orbit.  We will discuss this, along with further structural consequences, in Chapter \ref{chap_features_and_3manifold}.

\begin{proof}[Proof of Proposition \ref{prop:no_free_Z2}]
Suppose $g$ and $h$ generate a subgroup $H \subset G$ isomorphic to $\bZ^2$, acting freely on $P$. 
By Lemma \ref{lem_commute_same_axis}, $g$ and $h$ have the same axis in each leaf space.  Let $A^+$ and $A^-$ denote their axes in $\Lambda(\cF^+)$ and $\Lambda(\cF^-)$, respectively.  

If $A^+$ is not an embedded copy of $\bR$, then by Proposition \ref{prop:axis_structure} it contains a countable, discrete set of points which come in non-separated pairs in $A^+$.  This set is permuted by elements of $H$, preserving an order indexed by $\bZ$.  This gives a morphism from $H$ to $\bZ$, which must have nontrivial kernel.  Thus, some nontrivial element of $H$ fixes each of these leaves, contradicting the assumption that $H$ acts freely.  Similarly, $A^-$ must be homeomorphic to $\bR$.  

We now are in the setting of Proposition \ref{prop_omega}: either $\Omega=\{p\in P \mid \cF^+(p)\in A^+ \text{ and } \cF^-(p)\in A^-\}$ is a scalloped region preserved by $H$ (contradicting that $H$ acts freely, by Lemma \ref{lem_action_of_stabilizer_scalloped}), or the plane is trivial (which we assumed not to be the case), or we have two monotone functions $s, i\colon A^+ \to A^- \cong \bR$.   Without loss of generality, up to changing the orientation on $\bR$, we assume $i$ is increasing.  

Since $H$ acts freely on $P$, it acts freely on each axis.  By H\"older's theorem (Theorem \ref{thm:holder}), the action is semi-conjugate to an action by an {\em indiscrete} group of translations.   In particular, it cannot have a discrete orbit so is either minimal, or has an invariant Cantor set on which the action is minimal.  Let $X$ denote a minimal set for the action, that is, $X$ is either $\bR$ in the first case or the invariant Cantor set in the second.  

Let $S \subset A^+$ denote the (possibly empty) set of leaves on which $i$ is locally constant.  This is an open, invariant set for the action of $H$, so is disjoint from $X$.   
Let $l_0$ be a leaf of $A^+$ that is a two-sided accumulation point in $X$, and let $l_1$ be another leaf that is a two-sided accumulation point, taken sufficiently close to $l_0$ so that some leaf $l^-$ of $A^-$ intersects both.   If $l_1$ is chosen on the appropriate side of $l_0$, then by monotonicity we have $i(l_1) > i(l_0)$ so $i(l_1) \cap l_0 \neq \emptyset$.  Let $x = i(l_1) \cap l_0$.  
Since $l_0$ and $l_1$ are both two-sided accumulation points, there exist elements $h \in H$ moving $l_0$ and $l_1$ (simultaneously) an arbitrarily small amount in $A^+$, to either side.  Choose such a sequence of elements $h_k$ so that $h_k(l_1) \to l_1$, on the right; i.e. so that  $i(h_k(l_1))> i(l_1)$.  Note that in fact $i(h_k(l_1))$ approaches $i(l_1)$, since intersecting $l_1$ is an open condition on $A^-$.  %
Thus, we have $i(h_k(l_1)) \to i(l_1)$, and $h_k(l_0) \to l_0$, hence $h_k(x) = h_k(l_0) \cap  i(h_k(l_1))$
 converges to $x$ in $P$. In particular, $x$ is not a singular point (since these do not accumulate), so, using Axiom \ref{Axiom_closing}, we find some nontrivial $h_k \in H$ that has a fixed point on $P$, contradicting the assumption that $H$ acts freely.  
\end{proof} 

As a consequence of Proposition \ref{prop:no_free_Z2}, we have: 

\begin{corollary}
If $G$ acts Anosov-like on a bifoliated plane and satisfies Axiom \ref{Axiom_closing}, then $G$ contains no subgroup isomorphic to $\bZ^3$.  
\end{corollary}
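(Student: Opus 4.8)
The plan is to reduce the statement about $\bZ^3$ to the already-proven Proposition \ref{prop:no_free_Z2} about $\bZ^2$. Suppose for contradiction that $G$ contains a subgroup $H \cong \bZ^3$. The bifoliated plane $P$ is either trivial, skew, or one of the cases of the trichotomy (Theorem \ref{thm:trichotomy}) with a singular point or two-sided branching. First I would dispose of the trivial case: by Proposition \ref{prop:trivial_affine}, any Anosov-like action on a trivial plane is conjugate to an action by affine maps of the form $(x,y)\mapsto(ax+b,cy+d)$, and such a group is metabelian with abelian quotient and abelian kernel (the translations); a short computation shows it cannot contain $\bZ^3$ — indeed the translation subgroup is at most $\bZ^2$ and the quotient acting by scaling is abelian torsion-free of rank at most one after accounting for the hyperbolicity constraint $|a|>1 \iff |c|<1$. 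So we may assume $P$ is not trivial, and Proposition \ref{prop:no_free_Z2} applies to any $\bZ^2$ subgroup.

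Now consider $H\cong\bZ^3$ acting on the nontrivial plane $P$. By Proposition \ref{prop:no_free_Z2}, $H$ does not act freely, so some nontrivial $g\in H$ has a fixed point $x\in P$. Up to replacing $g$ by a finite power, $g$ fixes all rays through $x$. Since $H$ is abelian, $\Fix(g)$ is an $H$-invariant set; by Theorem \ref{thm:distinct_fix_is_chain} (together with Corollary \ref{cor:maximal_fixed}) it is either the single point $\{x\}$ or the set of corners of the unique maximal chain of lozenges $\cC$ containing $x$, and in either case $\cC$ (or $\{x\}$) is preserved by all of $H$. The key step is then to bound the rank of a group preserving such a configuration. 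If $\Fix(g)=\{x\}$, then all of $H$ fixes $x$, so $H$ is a point stabilizer; Proposition \ref{prop:cyclic_stabilizers} (using Axiom \ref{Axiom_closing}) says point stabilizers are virtually $\bZ$, contradicting $H\cong\bZ^3$. If $\Fix(g)=\cC$ is an infinite chain, I would use the tree $\cT(\cC)$ of Definition \ref{def:chain_tree}: $H$ acts on $\cT(\cC)$ fixing every vertex (since it fixes every corner pointwise — each element of $H$ commutes with $g$ and preserves $\Fix(g)$, and one checks the action on corners is trivial because adjacent corners are forced to be fixed once rays are). Hence $H$ fixes a vertex $x'$ pointwise and acts on the link, again landing in a point stabilizer, contradicting Proposition \ref{prop:cyclic_stabilizers}.

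The main obstacle I anticipate is the bookkeeping in the infinite-chain case: showing that $H$ does not merely permute the corners of $\cC$ but actually fixes at least one corner pointwise (i.e., fixing all its rays), so that Proposition \ref{prop:cyclic_stabilizers} applies. To handle this cleanly, I would argue as follows. Pick any corner $y\in\cC$. Every $h\in H$ commutes with $g$, hence $h(y)\in\Fix(g)=\cC$, so $H$ permutes the corners. The induced $H$-action on the tree $\cT(\cC)$ is by fatgraph automorphisms; since $H$ is finitely generated abelian and $\cT(\cC)$ is a tree, either $H$ fixes a vertex or $H$ has an invariant line (an axis) on which it acts by translations. In the first case we are done as above. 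In the second case, the translation action gives a surjection $H\to\bZ$ with kernel $K$ a $\bZ^2$ subgroup that fixes a vertex pointwise — but then $K$ is contained in a point stabilizer, contradicting Proposition \ref{prop:cyclic_stabilizers} applied to $K\cong\bZ^2$. Either way we reach a contradiction, proving $G$ contains no $\bZ^3$. An alternative, perhaps slicker route avoiding the tree entirely: directly apply Proposition \ref{prop:no_free_Z2} to find a fixed point, then apply Proposition \ref{prop:cyclic_stabilizers} to the stabilizer of that fixed point inside $H$, after observing that a finite-index subgroup of $H$ (still $\cong\bZ^3$) fixes all rays; I would present this shorter argument as the main proof and relegate the chain analysis to a remark if needed.
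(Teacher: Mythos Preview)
Your tree-based argument (the one in the penultimate paragraph, not the first attempt where you claim $H$ fixes every vertex of $\cT(\cC)$ --- that claim is wrong, though you immediately retract it) is valid and takes a genuinely different route from the paper. The paper argues via scalloped regions: after changing generators so that two of them, say $g_1,g_2$, each have fixed points (applying Proposition~\ref{prop:no_free_Z2} to two rank-two subgroups), Proposition~\ref{prop_stabilizer_scalloped_withC} says $\langle g_1,g_2\rangle$ stabilizes a \emph{unique} scalloped region $U$; since $g_3$ commutes with both, $g_3(U)$ is also stabilized by $\langle g_1,g_2\rangle$, forcing $g_3(U)=U$ by uniqueness, which puts $\bZ^3$ inside $\Stab(U)$, contradicting that this stabilizer is virtually $\bZ^2$. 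Your approach avoids scalloped regions entirely, using only virtual cyclicity of point stabilizers together with the standard dichotomy for abelian group actions on trees (global fixed vertex versus invariant axis). The paper's route is shorter but leans on the heavier Proposition~\ref{prop_stabilizer_scalloped_withC}; yours is more elementary once the tree machinery is in hand.

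Two cautions. First, your ``slicker route'' at the end does not work: once $\Fix(g)$ is the corner set of a nontrivial chain, there is no reason a finite-index subgroup of $H$ fixes the point $x$ (let alone its rays) --- $H$ merely permutes the corners. So the tree analysis cannot be bypassed; present it as the main proof, not as a remark. Second, your handling of the trivial-plane case is shaky: the linear-part quotient lies in a one-dimensional subspace of $\bR^2$ but need not have abelian rank at most one. What actually does the work is Axiom~\ref{Axiom_closing}: it makes point stabilizers virtually cyclic (Proposition~\ref{prop:cyclic_stabilizers}) and forces the translation subgroup to be discrete, hence of rank at most two. A clean argument is: if any element of $H$ has a fixed point then (by commutativity and uniqueness of affine fixed points) all of $H$ fixes it, contradicting Proposition~\ref{prop:cyclic_stabilizers}; otherwise a finite-index subgroup of $H$ consists of translations, which by Axiom~\ref{Axiom_closing} must be discrete in $\bR^2$, hence of rank at most two.
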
 

\begin{proof} 
Suppose for contradiction that $G$ contains a subgroup isomorphic to $\bZ^3$, generated by $g_1, g_2, g_3$.  
By Proposition \ref{prop:no_free_Z2}, at least two of these generators act with fixed points; up to relabeling we assume these are $g_1$ and $g_2$.  By Corollary \ref{prop_stabilizer_scalloped},
$g_1$ and $g_2$ must stabilize a scalloped region $U$.  Furthermore, each of $g_1$ and $g_2$ act by translation along one of the two lines of lozenges making up $U$.  
Since $g_3$ commutes with $g_1$ and $g_2$, either $g_3$ stabilizes $U$, or $g_3(U)$ is another scalloped region stabilized by $g_1$ and $g_2$. The first possibility contradicts Proposition \ref{prop_stabilizer_scalloped}, which says that stabilizers of scalloped regions are virtually $\bZ^2$.  The second contradicts the uniqueness assertion in Proposition \ref{prop_stabilizer_scalloped}.   Thus, we conclude that $G$ contains no $\bZ^3$ subgroup.  
\end{proof}

Another consequence of Proposition \ref{prop:no_free_Z2} is that $\bZ^2$ subgroups must preserve a \emph{minimal} (bi-infinite) chain of lozenges, where minimality is considered in the following sense:
\begin{lemma}\label{lem_charac_minimal_chain}
Let $\cC=\{L_i\}_{i\in \bZ}$ be a bi-infinite chain of lozenges. The following are equivalent
\begin{enumerate}[label=(\roman*)]
\item\label{item_charac_minimal_no_3_shared} For all $i$, $L_{i-1}$, $L_i$ and $L_{i+1}$ never share a corner;
\item $\cC$ does not contain any proper bi-infinite chain of lozenges.
\end{enumerate}
If either of these conditions are satisfied, then we say the chain $\cC$ is \emph{minimal}.
\end{lemma}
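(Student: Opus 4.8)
The plan is to prove the equivalence of the two characterizations \ref{item_charac_minimal_no_3_shared} and the no-proper-sub-chain condition by arguing each direction in contrapositive form, using the structural results on lines of lozenges (Proposition \ref{prop:4weak4strong}, Proposition \ref{prop:infinite_lozenge_is_scalloped}) and the combinatorial bookkeeping of chains via their trees (Definition \ref{def:chain_tree}).

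First I would handle the implication ``not \ref{item_charac_minimal_no_3_shared} $\Rightarrow$ contains a proper bi-infinite sub-chain''. Suppose three consecutive lozenges $L_{i-1}, L_i, L_{i+1}$ all share a common corner $c$. In a chain, consecutive lozenges $L_j, L_{j+1}$ share a corner by definition; the extra hypothesis is that this shared corner is the \emph{same} point $c$ for the two pairs $(L_{i-1},L_i)$ and $(L_i, L_{i+1})$. Since all three lozenges have $c$ as a corner, and a lozenge is determined by its two corners, the ``other'' corners $c_{i-1}, c_i, c_{i+1}$ of $L_{i-1}, L_i, L_{i+1}$ are three distinct points each forming a lozenge with $c$. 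I claim one can then delete $L_i$ from $\cC$ and still have a connected chain: indeed $L_{i-1}$ and $L_{i+1}$ both have $c$ as a corner, so they share a corner and the connectedness property of Definition \ref{def_chain_lozenges} is preserved. The resulting set $\cC' = \cC \smallsetminus \{L_i\}$ is still bi-infinite (we removed only one lozenge from a bi-infinite family, and the index set $\bZ \smallsetminus \{i\}$ still supports a bi-infinite consecutive-sharing structure after reindexing), and it is a proper sub-chain. So \ref{item_charac_minimal_no_3_shared} fails to hold implies condition (ii) fails.

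Conversely, I would show ``\ref{item_charac_minimal_no_3_shared} $\Rightarrow$ no proper bi-infinite sub-chain''. Assume no three consecutive lozenges of $\cC$ share a corner, and suppose for contradiction that $\cC' \subsetneq \cC$ is a proper bi-infinite chain. Let $L_i \in \cC \smallsetminus \cC'$. Since $\cC'$ is bi-infinite and its lozenges are drawn from the indexed family $\{L_j\}$, there exist indices $a < i < b$ with $L_a, L_b \in \cC'$; pick $a$ maximal and $b$ minimal with this property, so that $L_{a+1}, \dots, L_{b-1}$ are all outside $\cC'$ and in particular $b - a \geq 2$. Now $\cC'$ is connected, so there is a finite sub-chain of $\cC'$ joining $L_a$ to $L_b$; but all lozenges strictly between them in the indexing of $\cC$ are missing, so this path must ``jump'' — there is a step in $\cC'$ from some $L_{a'} $ ($a' \leq a$) to some $L_{b'}$ ($b' \geq b$) with $L_{a'}$ and $L_{b'}$ sharing a corner $c$. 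Chase how the lozenges $L_{a'}, L_{a'+1}, \dots, L_{b'}$ of $\cC$ are arranged: consecutive ones share corners, and since $L_{a'}$ and $L_{b'}$ also share the corner $c$, the tree $\cT(\cC)$ (Exercise after Definition \ref{def:chain_tree}, which asserts $\cT(\cC)$ is a tree) would contain a cycle through the vertices corresponding to $c$ and the intermediate corners, unless all the intermediate lozenges degenerate onto $c$ — i.e. $L_{a'}, \dots, L_{b'}$ all share the corner $c$. If $b' - a' \geq 2$ this directly violates \ref{item_charac_minimal_no_3_shared} (three consecutive lozenges sharing a corner). If $b' - a' = 1$ then $L_{a'} = L_a$ and $L_{b'} = L_b$ with $b = a+1$, contradicting $b - a \geq 2$. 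This yields the contradiction, so no proper bi-infinite sub-chain exists.

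The main obstacle I anticipate is the careful combinatorial argument in the second direction: turning ``$\cC'$ is connected but skips lozenges of $\cC$'' into the statement ``a run of consecutive lozenges of $\cC$ all share one corner'', which requires knowing that the only way two lozenges $L_p, L_q$ with $|p-q|$ large can share a corner is if every lozenge between them shares that same corner. This is where the tree structure of $\cC$ (and the fact that a leaf meets a chain in a connected set, Exercise \ref{ex_leaf_cap_chain_connected}) does the real work, ruling out cycles; I would need to state precisely that distinct lozenges in a chain share at most one corner and that the ``corner graph'' of any finite sub-chain is a path, so that two lozenges sharing a corner while being $k$ apart in the path forces the whole intermediate segment to pivot about that corner. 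Everything else is bookkeeping with the indexing and the definition of bi-infinite chain.
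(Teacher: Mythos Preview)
Your approach is correct. The paper itself leaves this lemma as ``a simple exercise, left to the reader'' and gives no proof, so there is nothing to compare against directly.

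Two comments to tighten the argument. First, the references to Proposition \ref{prop:4weak4strong} and Proposition \ref{prop:infinite_lozenge_is_scalloped} are irrelevant: the statement is purely combinatorial and needs no Anosov-like dynamics. Second, your direction ``(i) $\Rightarrow$ (ii)'' is slightly overcomplicated and has a small unjustified step: the assertion that there exist $a<i<b$ with $L_a,L_b\in\cC'$ is not automatic, since ``bi-infinite'' refers to an indexing of $\cC'$ by its own copy of $\bZ$, not the indexing inherited from $\cC$. The clean route is exactly the one you identify in your final paragraph as the ``main obstacle'', and it dissolves the difficulty rather than creating one: under (i), the two corners of $L_j$ are $c_{j-1}$ and $c_j$ (shared with $L_{j-1}$ and $L_{j+1}$ respectively), and since $\cT(\cC)$ is a tree these are pairwise distinct as $j$ ranges over $\bZ$. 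Hence two lozenges $L_p,L_q$ of $\cC$ share a corner if and only if $|p-q|=1$. Any bi-infinite sub-chain $\{L_{j_k}\}_{k\in\bZ}$ must then have $|j_{k+1}-j_k|=1$ for all $k$, forcing $\{j_k\}=\bZ$ and $\cC'=\cC$. Your first direction is fine as written.
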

The proof of the above lemma is a simple exercise, left to the reader.

\begin{corollary}\label{prop_Z2_stabilize_minimal_chain}
Suppose $G$ acts Anosov like on a bifoliated plane and satisfies Axiom \ref{Axiom_closing}.
Let $H$ be a subgroup of $G$ isomorphic to $\bZ^2$. Then $H$ preserves a minimal bi-infinite chain $\cC'$ of lozenges, and there exists a choice of generators $g, h$ with $g$ fixing each corner of $\cC'$ and $h$ acting freely. 

Moreover, $\cC'$ is unique except if $H$ preserves a scalloped region, in which case $H$ preserves exactly two minimal chains of lozenges.
\end{corollary}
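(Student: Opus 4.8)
The plan is to combine Proposition \ref{prop:no_free_Z2} with the structure theory of chains of lozenges already developed. First I would observe that since $G$ satisfies Axiom \ref{Axiom_closing} and $H \cong \bZ^2 \subset G$, Proposition \ref{prop:no_free_Z2} gives a nontrivial element $g_0 \in H$ acting with a fixed point. Up to replacing $g_0$ with a power, we may assume $g_0$ fixes all rays through its fixed point, so by Corollary \ref{cor:maximal_fixed} the fixed-point set of $g_0$ is either a single point $\{x\}$ or the corner set of a maximal chain $\cC$ of lozenges, and this chain is unique. If $\mathrm{Fix}(g_0) = \{x\}$, then since every element of $H$ commutes with $g_0$ it preserves $\{x\}$, giving $H$ a global fixed point; but then a generator of $H$ not a power of $g_0$ would fix $x$ and all its rays (by Observation \ref{obs:all_rays}, after noting the two commuting elements fixing $x$ force all rays fixed), contradicting Proposition \ref{prop:cyclic_stabilizers} which says the ray-fixing subgroup of a point stabilizer is cyclic. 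So $\mathrm{Fix}(g_0)$ is the corner set of a unique maximal chain $\cC$, and by its uniqueness $H$ preserves $\cC$.

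Next I would cut $\cC$ down to a minimal bi-infinite sub-chain invariant under $H$. Consider the tree $\cT(\cC)$ from Definition \ref{def:chain_tree}; $H$ acts on this tree by automorphisms. The element $g_0$ fixes every vertex of $\cT(\cC)$, so the action of $H$ on $\cT(\cC)$ factors through $H/\langle g_0^{\mathbb{Z}}\rangle \cong \bZ$ (here using that $H\cong\bZ^2$ and $g_0$ generates a rank-one summand, after possibly passing to a finite-index subgroup — one needs to check $g_0$ is primitive in $H$, or just work with the subgroup it generates together with one complementary generator). Pick $h_0 \in H$ whose image generates this $\bZ$. Since $h_0$ acts on the tree $\cT(\cC)$ without fixed vertex (else $h_0$ would fix a corner and all its rays, forcing $h_0\in\mathrm{Stab}$ with a fixed point, putting $H$ in a point stabilizer again, contradiction), Tits' theorem (Theorem \ref{thm_axes_exist} applied to the leaf-space axis, or directly the tree version) gives an axis: an embedded bi-infinite line $\cT_0$ in $\cT(\cC)$ on which $h_0$ acts by translation. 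The corresponding sub-collection of lozenges $\cC'$ is then a bi-infinite chain, invariant under $h_0$ and under $g_0$, hence under $H$. To get minimality in the sense of Lemma \ref{lem_charac_minimal_chain}, I would pass to a sub-chain: among all $H$-invariant bi-infinite sub-chains, the one along the translation axis of $h_0$ can be chosen so no three consecutive lozenges share a corner — if three consecutive ones did share a corner one could "shortcut" through that corner and get a shorter $H$-invariant bi-infinite chain, and iterating (the translation length of $h_0$ on the tree is finite) this process terminates at a minimal one. This gives the desired $\cC'$ with $g := g_0$ fixing each corner and $h := h_0$ acting freely.

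For uniqueness, suppose $\cC_1'$ and $\cC_2'$ are two minimal $H$-invariant bi-infinite chains. Both must be contained in $\mathrm{Fix}(g)$-corners, i.e. in $\cC$ (any $H$-invariant chain's corners are permuted by $g$, but $g$ has a fixed point on each lozenge's corner configuration forcing the corners into $\mathrm{Fix}(g)$ by Axiom \ref{Axiom_A1}), so both are sub-chains of $\cC$, equivalently bi-infinite lines in $\cT(\cC)$ invariant under the $\bZ$-action of $h_0$. Two distinct $h_0$-invariant lines in a tree on which $h_0$ acts by a fixed-point-free automorphism must share the translation axis unless... here is where scalloped regions enter: by Proposition \ref{prop:infinite_lozenge_is_scalloped} an infinite line of lozenges lies in a scalloped region, and a scalloped region is realized as a line of lozenges in \emph{two} distinct ways (sharing $\cF^+$-sides, resp. $\cF^-$-sides). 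So the only way to have two distinct minimal $H$-invariant chains is when $H$ stabilizes a scalloped region $U$ (which by Proposition \ref{prop_stabilizer_scalloped_withC} is exactly the case $H$ is, virtually, the $\bZ^2$ stabilizer of $U$ generated by two fixed-point elements), and then the two minimal chains are precisely the two lines of lozenges comprising $U$. Conversely if $H$ stabilizes no scalloped region, any two $h_0$-invariant lines in $\cT(\cC)$ coincide, giving uniqueness.

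\textbf{Main obstacle.} The delicate point is the passage from "$H$-invariant bi-infinite chain" to "\emph{minimal} $H$-invariant bi-infinite chain," and proving there are exactly two (not more) in the scalloped case. The first part requires care that the shortcutting procedure genuinely produces a bi-infinite (not finite or one-ended) chain and terminates; I expect this to follow from the finiteness of $h_0$'s translation length on $\cT(\cC)$ together with Lemma \ref{lem:half_scalloped} ruling out accumulation, but it needs to be written carefully. The "exactly two" count in the scalloped case should follow from the explicit boundary structure in Definition \ref{def_scalloped} — a scalloped region contains precisely two bi-infinite lines of lozenges — combined with Lemma \ref{lem_scalloped_off_of_themselves} to see $H$ cannot also preserve some third chain outside $U$. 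Establishing that $g_0$ can be chosen primitive in $H$ (or otherwise handling the finite-index subtlety cleanly so the generators $g,h$ genuinely generate $H$ and not just a finite-index subgroup) is a secondary annoyance that should be absorbed by Proposition \ref{prop_primitive_stabilizer}.
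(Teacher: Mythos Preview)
Your overall architecture matches the paper's: use Proposition \ref{prop:no_free_Z2} to get a fixed-point element, pass to the maximal chain $\cC$, work in the tree $\cT(\cC)$, and take the axis of the freely acting element. Two points deserve correction.

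First, the ``shortcutting'' procedure is unnecessary. The axis of a fixed-point-free automorphism of a simplicial tree is an embedded line in which every vertex has valence two. By Lemma \ref{lem_charac_minimal_chain}, valence two at every vertex is exactly the condition that no three consecutive lozenges share a corner, so the sub-chain corresponding to the axis is already minimal. This is how the paper proceeds; there is nothing to iterate.

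Second, and more seriously, your uniqueness argument contains a genuine error. You assert that any two minimal $H$-invariant chains $\cC_1'$, $\cC_2'$ must both lie inside the maximal chain $\cC$; but the justification (``$g$ permutes the corners and has a fixed point, forcing the corners into $\mathrm{Fix}(g)$'') does not hold: $g$ could act freely on the corners of a chain disjoint from $\mathrm{Fix}(g)$ while still preserving it. Worse, your conclusion is self-defeating: if both chains \emph{were} sub-chains of $\cC$, then both would be $h_0$-invariant lines in $\cT(\cC)$, and a free tree automorphism has a \emph{unique} axis, forcing $\cC_1' = \cC_2'$ outright, with no scalloped case possible. The paper's argument runs the other way: uniqueness of the axis in $\cT(\cC)$ shows two distinct minimal $H$-invariant chains \emph{cannot} lie in the same maximal chain. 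Applying the existence part to each of $\cC_1', \cC_2'$ separately yields elements $g_1, g_2 \in H$ fixing their respective corners; since the two maximal chains are distinct, $g_1, g_2$ have disjoint fixed sets and are independent in $H$. Proposition \ref{prop_stabilizer_scalloped} then identifies $H$ as the stabilizer of a scalloped region, and the two minimal chains are the two lines of lozenges comprising it.
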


\begin{proof}
Since $H$ does not act freely (by Proposition \ref{prop:no_free_Z2}), there exists an element $g\in H$ that fixes a point $x\in P$.  Choose such a $g$ that is not a power of another element (recall that $g$ fixes a point iff $g^k$ does), and let $h \in H$ be such that $g$ and $h$ generate $H$. %
Since $g$ and $h$ commute, the points $\{ h^nx : n \in \bZ\}$ are all fixed by $g$, and are distinct for different values of $n$,  because point stabilizers are cyclic. Thus, by Theorem \ref{thm:distinct_fix_is_chain}, $H$ preserves a maximal chain of lozenges $\cC$, whose corners are all fixed by $g$; moreover $h$ acts freely on $\cC$. 

Consider the tree $\cT(\cC)$ associated with $\cC$, as in Definition \ref{def:chain_tree}.  Since $h$ acts freely on $\cC$, it 
acts freely on $\cT(\cC)$.  Thus, $h$ has an invariant axis $\mathcal{A}$ of $\cT_\cC$ which is an embedded line with every vertex having valence two.  This corresponds to a subchain $\cC'$ of $\cC$.  Since vertices have valence two in $\mathcal{A}$, Condition \ref{item_charac_minimal_no_3_shared} of Lemma \ref{lem_charac_minimal_chain} is satisfied by $\cC'$, and thus it is a minimal bi-infinite chain of lozenges that is left invariant by $H$, and we have found generators $g,h$ of $H$ such that $g$ fixes all corners of $\cC'$ and $h$ acts freely on it.

Finally, if $H$ has more than one invariant, minimal chain of lozenges, the above argument shows that they cannot be part of the same maximal chain of lozenges, as otherwise, they would both correspond to the same invariant axis of $h$. 
Thus, there must be two independent elements in $H$ fixing points in $P$. This characterizes stabilizers of scalloped regions by Proposition \ref{prop_stabilizer_scalloped}.
\end{proof}

\section{For flows, perfect fits give lozenges}
In Lemma \ref{lem:fixed_is_corner}, we saw that a leaf fixed by a nontrivial $g\in G$ and making a perfect fit with another leaf is necessarily the corner of a lozenge.  This in fact was a consequence only of Axiom \ref{Axiom_A1} for Anosov-like actions.  
For pseudo-Anosov flows, a much stronger result is true: {\em any} leaf making a perfect fit with another is necessarily the corner of a lozenge.  This is a result of Fenley \cite{Fen16} and uses the (standard, not weak) closing lemma in an essential way.  

Since Fenley's argument can be done largely in the orbit space, we include it here as a transition from general Anosov-like actions to the study of pseudo-Anosov flows on 3-manifolds which will be the focus of the last part of this work.

\begin{theorem}[Fenley, \cite{Fen16}, Proposition 5.5]  \label{thm_perfect_fit_implies_lozenge}
Let $\flow$ be a pseudo-Anosov flow on a compact 3-manifold and 
suppose that two leaves $l^s$ and $l^u$ of $\hfs, \hfu$ respectively make a perfect fit.  Then there is a lozenge $L$ in $\orb$ with this perfect fit as an ideal corner.

Moreover, either $L$ is fixed by some nontrivial element $g\in \pi_1(M)$ or there is another lozenge $L'$ fixed by $g$ and sharing one ideal corner with $L$.
\end{theorem}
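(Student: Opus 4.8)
The plan is to work in the orbit space $\orb$ and use the pseudo-Anosov closing lemma (Proposition \ref{prop_pAclosing_lemma}, in its translated form Proposition \ref{prop_closing_translation}) together with density of periodic leaves (Axiom \ref{Axiom_dense}) to manufacture a nontrivial element with a fixed point near the perfect fit, and then invoke Lemma \ref{lem:fixed_is_corner} to produce the lozenge. So the first step is to set up the picture: let $l^s \in \bfs$ and $l^u \in \bfu$ make a perfect fit, and fix the transverse arcs $\tau^s, \tau^u$ from the definition of perfect fit, so that every leaf of $\bfu$ crossing $\tau^u$ meets $l^s$ and every leaf of $\bfs$ crossing $\tau^s$ meets $l^u$. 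Pick a ray $r^s$ of $l^s$ and $r^u$ of $l^u$ realizing the fit, and consider the trivially foliated ``quadrilateral'' region $R$ bounded by $r^s$, $r^u$, $\tau^s$, and $\tau^u$; points deep inside $R$ near the ideal corner have both their $\bfs$- and $\bfu$-leaves crossing the opposite ray.

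The second step is to find a periodic point of the flow whose orbit ``shadows'' a long path running out towards the ideal corner and back. Concretely, take a point $x$ on $r^s$ far out along the ray, and a point $y$ on $r^u$ far out; by the perfect-fit property there is a leaf-path inside $R$ joining (the leaves of) $x$ and $y$, staying close to the corner. Lifting to $M$, we get an orbit segment of $\flow$ that starts near $x$, runs far out (towards the end of the ray, which projects into a recurrent part of $M$), and returns near $y$; by recurrence and compactness of $M$ we can arrange the endpoints of a sub-segment to be within $\delta$ of each other for the closing lemma applied to a point bounded away from singular orbits. The closing lemma then yields a periodic orbit $\beta$ $\epsilon$-shadowing this segment; translating to $\orb$, its lift gives an orbit $p$ fixed by some nontrivial $g \in \pi_1(M)$ whose stable and unstable leaves both enter $R$ arbitrarily close to the ideal corner. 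The key technical point — and the main obstacle — is controlling the geometry so that $\bfs(p)$ crosses $r^u$ and $\bfu(p)$ crosses $r^s$ (so that $p$ ``sits in the corner'' of the quadrilateral $R$): this requires choosing the shadowing segment to go far enough out along both rays, and using that orbits diverge along unstable leaves (Definition \ref{def_topPA}\ref{item_PAF_backwards_expansivity}) to push the relevant leaf of $p$ across the transversal. This is exactly the kind of estimate carried out in the proof of the closing lemma and in Proposition \ref{prop_density}, so I would quote those arguments rather than redo them.

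The third step is to upgrade to a lozenge. Once we have $p$ fixed by $g$ with $\bfs(p)$ and $\bfu(p)$ both crossing the respective rays of the perfect fit, consider the ray of $\bfs(p)$ pointing towards the ideal corner: it must make a perfect fit with $l^u$ (or with a leaf nonseparated from $l^u$), because the region between $\bfs(p)$ and $r^s$ is trivially foliated and bounded by the fit. Up to replacing $g$ by a power, $g$ fixes all rays through $p$, so Lemma \ref{lem:fixed_is_corner} applies: $p$ is the corner of a lozenge $L_0$ one of whose ideal corners is a perfect fit of $l^u$-type. Now either the ideal corner of $L_0$ is exactly the perfect fit $\{l^s, l^u\}$ we started with — in which case $L$ is fixed by $g$ and we are done with the first alternative — or $L_0$'s ideal corner involves a leaf $l'^u$ nonseparated from $l^u$ (equivalently, $\bfs(p)$ made its fit slightly past the target). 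In the latter case, by Proposition \ref{prop:4weak4strong} the nonseparated leaves $l^u$ and $l'^u$ are consecutive sides of adjacent lozenges in a line of lozenges, all with corners fixed by a common nontrivial element $g'$; the lozenge in this line with side $l^u$ is then a lozenge $L$ having the original perfect fit as an ideal corner, fixed by $g'$, and sharing its other ideal corner with the neighbor, giving the second alternative of the statement. Assembling these cases completes the proof; the only place genuine work is needed is the shadowing estimate in step two, and for that I would lean on the already-established proof of Proposition \ref{prop_pAclosing_lemma} and Lemma \ref{lem_bound_distance}.
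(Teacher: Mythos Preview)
Your proposal has genuine gaps in both Step~2 and Step~3.

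\textbf{Step 2.} You write that a leaf-path in $R$, ``lifting to $M$, [gives] an orbit segment of $\flow$ that starts near $x$, runs far out \ldots, and returns near $y$''. But orbits of $\hflow$ project to \emph{points} of $\orb$: an orbit segment cannot go from a neighborhood of $x$ to a neighborhood of $y$ when $x,y$ are distinct points in the orbit space. A leaf-path in $\orb$ lifts to something transverse to the flow, not to an orbit segment, so you have no input for the closing lemma. The paper proceeds quite differently: it fixes a point $p$ on the projection of $l^s$ to $M$, uses that its $\omega$-limit set is not a single periodic orbit (since $l^s$ was assumed non-periodic), and finds two return times $s<t$ with $\flow^s(p)$ and $\flow^t(p)$ close in $M$ but on \emph{distinct} local stable leaves, with the second return on the $l^u$-side of $l^s$. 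The closing lemma applied to this single recurrent orbit produces the periodic orbit.

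\textbf{Step 3.} Even granting a fixed point $p$ with $\bfs(p)$ crossing $r^u$ and $\bfu(p)$ crossing $r^s$, your deduction collapses: if $\bfs(p)\cap r^u\neq\emptyset$ then $\bfs(p)\cap l^u\neq\emptyset$, so $\bfs(p)$ and $l^u$ \emph{intersect} and cannot make a perfect fit (nor can $\bfs(p)$ be nonseparated from $l^u$). Lemma~\ref{lem:fixed_is_corner} would then produce some lozenge with corner $p$, but nothing ties its sides to $l^s$ or $l^u$.

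What is missing is precisely the dynamical condition isolated in Lemma~\ref{lem_consequence_closing}: an element $g$ with fixed point $x$ satisfying $\bfu(x)\cap l^s\neq\emptyset$, $g$ contracting $\bfs(x)$, and crucially $g(l^s)\cap l^u\neq\emptyset$. The last condition is engineered by the side-control on the return in Step~2 above: choosing the return on the $l^u$-side of $l^s$ forces the deck transformation $g=\gamma^{-1}$ of the shadowing periodic orbit to push $l^s$ across $l^u$. Lemma~\ref{lem_consequence_closing} then iterates $g^{-1}$ on $l^u$, uses no-infinite-product-regions to extract a limit making a perfect fit with a fixed leaf of $g$, and from this builds the lozenge with $l^s,l^u$ as two of its sides. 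Your outline contains no substitute for this mechanism.
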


\begin{remark}[Pseudo-Anosov flows without perfect fits]
The class of pseudo-Anosov flows without perfect fits became an important object recently due to their relationship with structures called \emph{veering triangulations} on hyperbolic 3-manifolds.
Theorem \ref{thm_perfect_fit_implies_lozenge} implies that a pseudo-Anosov flow has perfect fits if and only if lozenges appear in the orbit space, which (as we will see in Section \ref{sec_free_homotopy}) occurs if and only if there exist a pair of periodic orbits which are freely homotopic.  Thus, the pseudo-Anosov flows without perfect fits are exactly those where each orbit is unique in its free homotopy class.  See Section \ref{sec_free_homotopy} for further discussion. 
\end{remark}

To prove Theorem \ref{thm_perfect_fit_implies_lozenge}, we first prove an abstract lemma about Anosov-like actions, which shows the existence of a lozenge given certain dynamical behavior.   To prove the theorem, we will  use the closing lemma for flows (Proposition \ref{prop_pAclosing_lemma}) to find elements satisfying these dynamical conditions. 

\begin{lemma}  \label{lem_consequence_closing}
Assume $G$ acts Anosov-like on a bifoliated plane $P$.  Suppose leaves
$l^+$ and $l^-$ in $\cF^+, \cF^-$ respectively make a perfect fit, and some $g \in G$ has the following properties: 
\begin{itemize}
\item There exists $x$, fixed by $g$, with $\cF^-(x) \cap l^+ \neq \emptyset$,
\item $g$ contracts $\cF^+(x)$, and
\item $g(l^+) \cap l^- \neq \emptyset$.
\end{itemize}
Then $l^+, l^-$ are sides of a lozenge whose opposite ideal corner is fixed by $g$.  
\end{lemma}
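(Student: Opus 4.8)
\textbf{Proof strategy for Lemma \ref{lem_consequence_closing}.}

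The plan is to mimic the argument of Lemma \ref{lem:fixed_is_corner}, with the hypotheses here replacing the conclusion ``$g(l)=l$'' that was available there. First I would set up notation: let $r^+$ be the ray of $\cF^+(x)$ starting at $x$ that makes a perfect fit with $l^-$ (this is the ray contained in $l^+$, since $\cF^-(x)$ meets $l^+$ and $l^+$ makes a perfect fit with $l^-$), and let $r^-$ be the corresponding ray of $\cF^-(x)$. Because $\cF^-(x)\cap l^+\ne\emptyset$ and $g$ contracts $\cF^+(x)$ (so expands $\cF^-(x)$), the images $g^n(l^+)$ for $n\ge 0$ sweep a neighborhood of the ray $r^+$: each $g^n(l^+)$ is a leaf of $\cF^+$ meeting $\cF^-(x)$, and since $g(l^+)\cap l^-\ne\emptyset$ all of the $g^n(l^+)$ for $n\ge 1$ meet $l^-$ as well. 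The intersection points $g^n(l^+)\cap\cF^-(x)$ march monotonically toward $x$ along $r^-$. This is the key geometric input, and it is exactly the role played by the ``$g$-invariant leaf'' in Lemma \ref{lem:fixed_is_corner}.

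Next I would identify the opposite corner. Let $y$ be the fixed point of $g$ on $l^-$, which exists by Axiom \ref{Axiom_A1} applied to the leaf $l^-$ — wait, $g$ need not fix $l^-$, so instead I argue as follows. Consider the $\cF^+$-saturation of the sub-ray of $l^-$ between $g(l^+)\cap l^-$ and the ideal perfect-fit corner of $r^+$ with $l^-$. Since $r^+$ and $l^-$ make a perfect fit, there is a ray $r'$ of some $\cF^+$-leaf that ``continues past'' the perfect fit; more precisely, as in the proof of Lemma \ref{lem:fixed_is_corner}, I want to produce a leaf $l'^+$ of $\cF^+$ making a perfect fit with a ray of $l^-$ and such that $l'^+$ together with $r^-$ and $r^+$ and $l^-$ bound a lozenge. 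The cleanest route: apply the perfect-fit hypothesis to get a leaf $k$ of $\cF^+$ meeting both $r^-$ and (a ray of) $l^-$, then push $k$ along $l^-$ by the contraction; the leaves $g^{-n}(k)$ accumulate, and by uniqueness of perfect fits (Lemma \ref{lem:perfect_fit_unique}) the limiting ray is the opposite side of the sought lozenge. The fixed point of $g$ on that limiting leaf (guaranteed by Axiom \ref{Axiom_A1} once we know $g$ fixes the limiting leaf, which follows because $g$ permutes the nested family $g^{-n}(k)$ and hence fixes the boundary) is the opposite corner, and it is automatically $g$-fixed.

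I expect the main obstacle to be verifying rigorously that $g$ fixes the limit leaf $l'^+$ (equivalently, that the sequence $g^{-n}(k)$ converges to a single leaf rather than branching, and that this limit is $g$-invariant). In Lemma \ref{lem:fixed_is_corner} this was free because $g$ fixed the perfect-fit leaf $l$ outright; here one must instead use that the $\cF^+$-saturation of the relevant segment of $l^-$ is bounded (its boundary cannot escape to infinity, because the intersection points on $\cF^-(x)$ converge to $x$), so the boundary consists of a leaf or a union of nonseparated leaves, and $g$-invariance of the saturated region forces $g$-invariance of the boundary. If the boundary is a union of nonseparated leaves, Proposition \ref{prop:4weak4strong} together with Axiom \ref{Axiom_A1} lets one extract a single $g$-fixed leaf; then Lemma \ref{lem:fixed_is_corner} applies directly to that fixed leaf and its perfect fit with $r^+$, producing the lozenge and completing the argument. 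A symmetric check — that every $\cF^-$-leaf crossing $l^-$ between $y$ and the ideal corner also crosses $r^-$ — finishes the lozenge, exactly as in the last paragraph of the proof of Lemma \ref{lem:fixed_is_corner}.
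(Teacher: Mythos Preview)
Your setup contains a misreading of the hypotheses that derails the argument. You write ``let $r^+$ be the ray of $\cF^+(x)$ \ldots\ this is the ray contained in $l^+$,'' but the hypothesis is that $\cF^-(x)\cap l^+\neq\emptyset$, not that $x\in l^+$. In general $\cF^+(x)$ and $l^+$ are \emph{distinct} leaves of $\cF^+$; indeed the paper's first observation is that $x$ lies in the component of $P\setminus l^+$ not containing $l^-$. So no ray of $\cF^+(x)$ makes a perfect fit with $l^-$, and the attempt to imitate Lemma~\ref{lem:fixed_is_corner} with $r^+\subset l^+$ is based on a false picture. Your later discussion (fixed point on a limit leaf, invoking Proposition~\ref{prop:4weak4strong}) has the right flavor, but it is anchored to the wrong leaf.

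The paper's argument proceeds differently: it studies the backward iterates $g^{-n}(l^-)$, shows inductively that each meets $g(l^+)$ (using that $g(l^+),\,l^-,\,l^+,\,\cF^-(x)$ bound a trivially foliated rectangle), and then uses the absence of infinite product regions to conclude they accumulate on a $g$-invariant leaf or union of nonseparated leaves. A leaf $f^-$ in this limit makes a perfect fit with $\cF^+(x)$ (or a leaf nonseparated with it), so Lemma~\ref{lem:fixed_is_corner} and Proposition~\ref{prop:4weak4strong} produce a $g$-invariant chain with $x$ as a corner and some lozenge $L$ in the chain having a side on $f^-$. The final step is not to extract the lozenge directly from the limit configuration, but to observe that the sides of $L$ together with $g^{-n}(l^+)$ and $g^{-n}(l^-)$ bound a lozenge $L'$ for large $n$; applying $g^n$ to $L'$ shows $l^+,l^-$ themselves are sides of a lozenge. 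The $g$-fixed opposite ideal corner comes from the corner of $L$.
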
 

\begin{proof} 
Let $g, l^+$, and $l^-$ be as in the hypotheses of the lemma.  
Since $g(l^+) \cap l^- \neq \emptyset$, and $g$ expands the leaf $\cF^-(x)$ which intersects $l^+$, it follows that $x$ must be in the connected component of $P \setminus l^+$ which does not contain $l^-$.  

Consider the sequence of leaves $g^{-n}(l^-)$, as $n \to +\infty$.  We claim each such leaf intersects $g(l^+)$.  This is true for $n=0$ by hypothesis.  
Now observe that $g(l^+), l^-, l^+$ and $\cF^-(x)$ bound a trivially foliated region, so each leaf meeting $l^+$ on the side of $\cF^-(x)$ containing $l^-$ also meets $g(l^+)$.  Applying $g^{-k}$ to this picture, we see that $g^{-k}(l^-)$ intersects $g^{-k+1}(l^+)$ and thus inductively also $g(l^+)$.  

Since $P$ contains leaves that make a perfect fit, it is a nontrivial plane, and so there are no infinite product regions by Proposition \ref{prop:no_product_region}.  Thus, it follows that $g^{-n}(l^-)$ cannot escape compact sets, so limits to a leaf or $g$-invariant nontrivial union of leaves.  One of these leaves (call it $f^-$) must make a perfect fit with either $\cF^+(x)$, or a leaf nonseparated with it.  
In the latter case, Proposition \ref{prop:4weak4strong} implies that $\cF^+(x)$ also makes a perfect fit with some leaf.  Thus, in either case Lemma \ref{lem:fixed_is_corner} says that $x$ is the corner of a lozenge invariant by $g$ (as usual, up to replacing $g$ by a power).   Proposition \ref{prop:4weak4strong} also implies that this lozenge is contained in a $g$-invariant chain, and one lozenge in this chain must have one of its sides on $f^-$. Call that lozenge $L$, and 
let $c$ be the corner of $L$ on $f^-$.  

Then $\cF^+(c) \cap g^{-n}(l^-) \neq \emptyset$, for all $n$ sufficiently large.  The opposite corner $L$ is either $x$ or a point $y$ such that $\cF^-(y)$ lies between $\cF^-(x)$ and $l^-$. 
See Figure \ref{fig_obtain_lozenge} for an illustration.  
 In either case, the $\cF^-$ leaf of this corner intersects $g^{-n}(l^+)$.  This gives a lozenge $L'$ with sides on $g^{-n}(l^-), g^{-n}(l^+)$ and two of the leaves forming sides of $L$.  Applying $g^n$ to $L'$, we see that $l^-$ and $l^+$ are two sides of a lozenge as well. 
 \begin{figure}[h]
\includegraphics[width = 13cm]{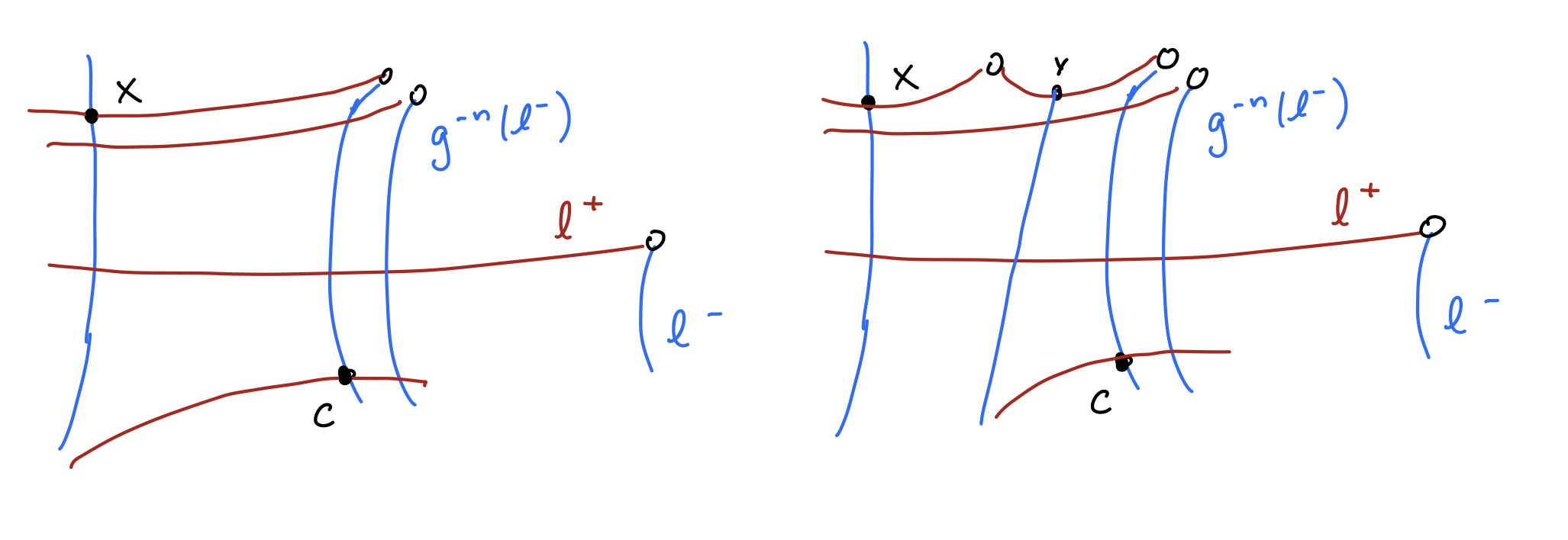}
\caption{Two possible configurations in the proof of Lemma \ref{lem_consequence_closing}}
\label{fig_obtain_lozenge} 
\end{figure}
\end{proof} 

We now use Lemma \ref{lem_consequence_closing} to prove that perfect fits give lozenges in pseudo-Anosov flows.  

\begin{proof}[Proof of Theorem \ref{thm_perfect_fit_implies_lozenge}]
Suppose that $l^s$, $l^u$ in $\orb$ make a perfect fit.  Since leaves of $\bfs,\bfu$ in $\orb$ are in bijective correspondence with those 
of $\hfs, \hfu$ in $\wt M$, we use $l^s$ and $l^u$ to denote the corresponding leaves in $\wt M$ as well.  
  If $l^s$ projects to a cylindrical leaf in $M$, then it is fixed by some nontrivial element of $\pi_1(M)$. Thus, by Lemma \ref{lem:fixed_is_corner}, $l^s$ contains the side of a lozenge, whose other sied is on $l^u$, proving the statement.  

So we assume $l^s$ does not project to a cylindrical leaf.  
We need to find $g \in \pi_1(M)$ satisfying the conditions of Lemma \ref{lem_consequence_closing}, with $l^s, l^u$ playing the roles of $l^+, l^-$, respectively. 
By compactness of $M$, each orbit has a non-empty $\omega$-limit set\footnote{Tthe $\omega$-limit set of a point $p$ consists of all the points $x$ such that there exists a sequence $t_n\to +\infty$ such that $\flow^{t_n}(p) \to x$.}. Pick $p\in M$ a point on the projection of $l^s$. Since $l^s$ was assumed to not project to a cylindrical leaf, we have that $\omega(p)$ is not equal to a single periodic orbit. 

In particular, this implies that one can pick $p' \in \omega(p)$ not on a singular orbit, and such that for any fixed small neighborhood $U$ of $p'$, we can find a sequence $t_n \to +\infty$, such that $\flow^{t_n}(p) \to p'$, $\flow^{t_n}(p)\in U$ and when $n\neq m$ the points $\flow^{t_n}(p), \flow^{t_m}(p)$ are never on the same connected component of $U\cap \cF^s(p')$.  

Fix $\epsilon<0$ smaller than the injectivity radius of $M$, fix a compact neighborhood $N$ of $p'$ that does not contain any singular orbits, and let $\delta>0$ be the uniform constant  given by the pseudo-Anosov closing lemma (Proposition \ref{prop_pAclosing_lemma}) on $N$. 

Pick $t>s>0$ such that $q=\flow^s(p)$ and $\flow^{t}(q)$ are both in the $\delta/2$-neighborhood of $p'$ and on distinct local stable leaves. We can further choose $s$ and $t$ such that the local stable leaf of $\flow^t(q)$ is on our preferred side of the local stable leaf of $q$: That is, if we take a lift $\wt q$ of $q$ to $\wt M$ on $l^s$ (so that the local stable leaf of $\wt q$ is $l^s$), then the corresponding lift $\wt l$ of the local stable leaf of $\flow^t(q)$ is in the connected component of $\wt M \setminus l^s$ containing $l^u$. In particular, if $\delta$ is sufficiently small, then $\wt l$ will intersect $l^u$, by definition of perfect fit.  An illustration of this set-up, and the next step, is given in Figure \ref{fig_using_closing}.  

Applying the closing lemma, we obtain a closed orbit $\gamma$ in a $\epsilon$-flow-box neighborhood $\wt B$ of the segment from $q$ to $\flow^t(q)$, so that (thinking now of $\gamma$ as a deck transformation) we have $\gamma^{-1}(\wt \flow^t(\wt q))$ lies on $\wt l$.  Let $g = \gamma^{-1}$. 
\begin{figure}[h]
\includegraphics[width = 9cm]{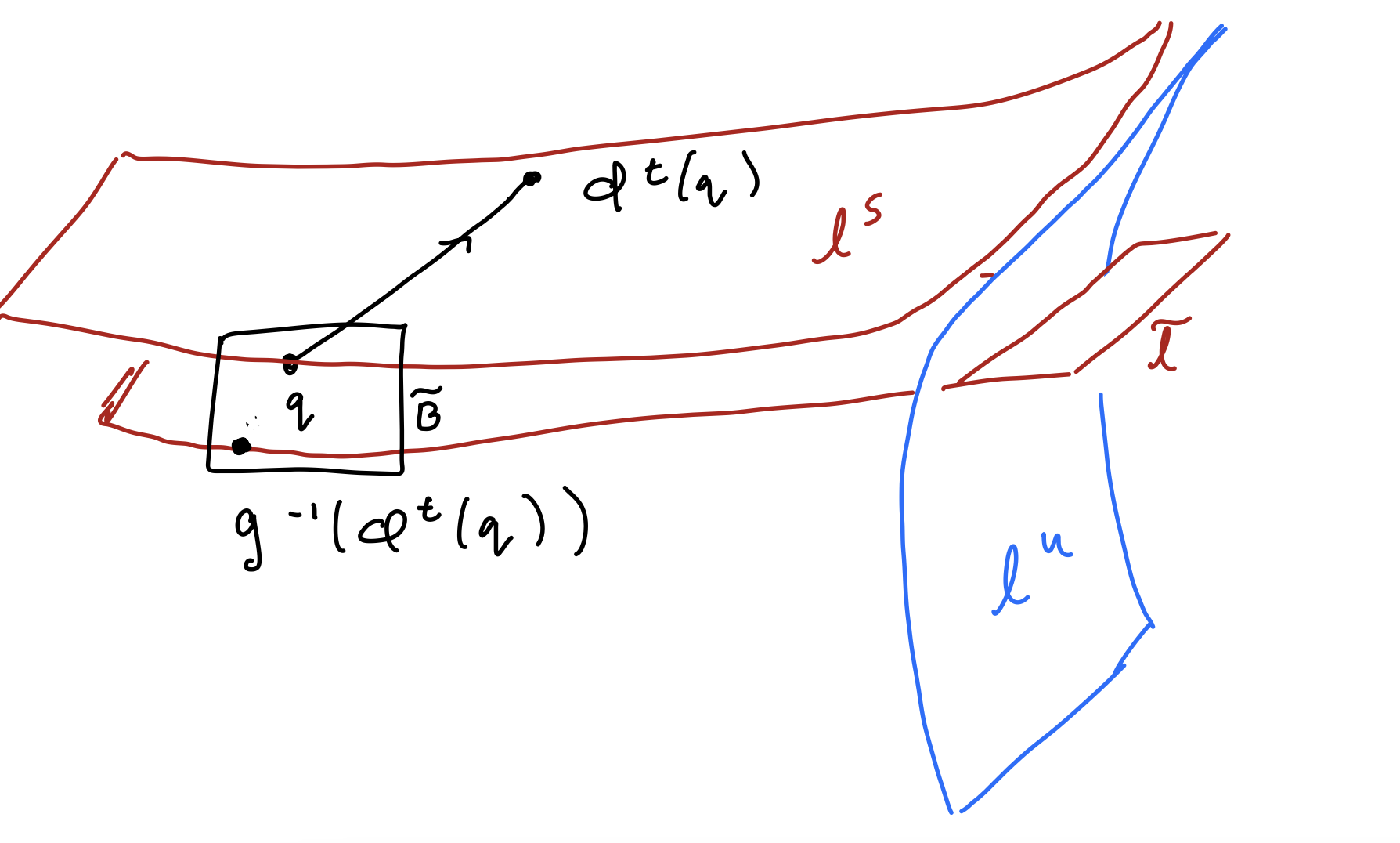}
\caption{Configuration of $q$ and $g^{-1}(\flow^t(q))$ and their leaves in $\wt M$.}
\label{fig_using_closing} 
\end{figure}

We examine the action of $g$ on the orbit space and show it satisfies the conditions of Lemma \ref{lem_consequence_closing}.  
The first condition is due to $\gamma$ being in a flow-box neighborhood of $\wt q$, so the fixed point for $g$ will lie in this neighborhood.  Secondly, since $\gamma$ is oriented in the direction of the flow, $g$ {\em contracts} the stable leaf of $\gamma$ (recall Proposition \ref{prop_hyperbolic}). 
Finally, the fact that $\gamma^{-1}(\wt \flow^t(\wt q) \in g(l^s)$ lies on $\wt l$ implies that $g(l^s) \cap l^u \neq \emptyset$. 
Thus, the conclusion of Lemma \ref{lem_consequence_closing} holds and we obtain the desired lozenge (whose opposite ideal corner is fixed by $g$). 
\end{proof} 

We now discuss an important consequence for the structure of the bifoliations in orbit spaces.  
We call a \emph{triple ideal quadrilateral} an open, trivially foliated region bounded by leaves $l_1^\pm$ and $l_2^\pm$ such that both pairs $l_i^+$ and $l_i^-$ make a perfect fit, and $l_1^+, l_2^-$ also make a perfect fit, but $l_1^-$ and $l_2^+$ intersect. 
Similarly a {\em totally ideal} quadrilateral is an open trivially foliated region as above, but where $l_1^-$ and $l_2^+$ make a perfect fit as well\footnote{In \cite{Fen16}, such regions were called $(3,1)$- and $(4,0)$-quadrilaterals respectively; we use the terminology as in \cite{BFM22} to avoid confusion between the asymmetrical labels $(1,3)$ and $(3,1)$.}. See Figure \ref{fig_13_04_quad}. 

\begin{figure}[h]
\includegraphics[width=6cm]{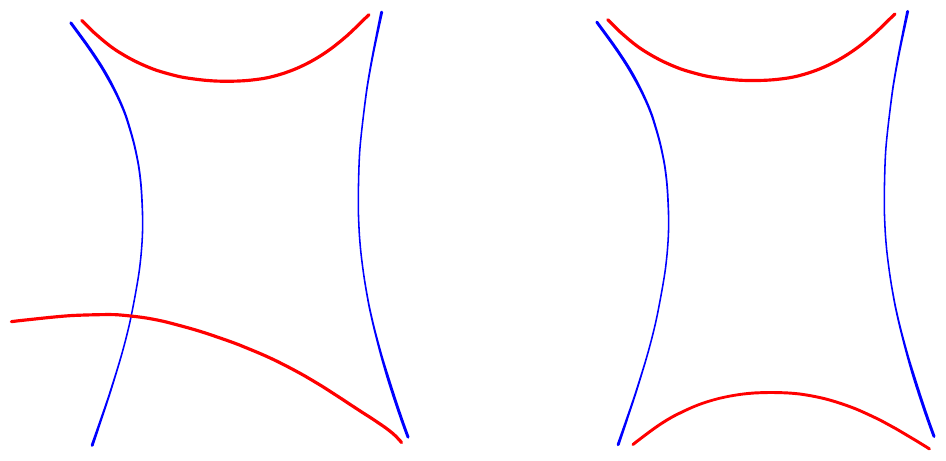}
\caption{Triple ideal and totally ideal quadrilaterals}
\label{fig_13_04_quad}
\end{figure}

\begin{corollary} \label{cor_no_ideal_quad}
The orbit space of a pseudo-Anosov flow has no triple ideal or totally ideal quadrilaterals. 
\end{corollary}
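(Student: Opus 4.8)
I would argue by contradiction. Suppose $\orb$ contains a triple ideal or totally ideal quadrilateral $Q$, with boundary leaves $l_1^{\pm}, l_2^{\pm}$. Recall that $Q$ is by definition an open, trivially (product) foliated region containing no singular point, and that consecutive boundary leaves make perfect fits except possibly for the single pair $l_1^-, l_2^+$ in the triple ideal case, which intersect at a vertex $p$. The plan is to use Theorem \ref{thm_perfect_fit_implies_lozenge} to extract a lozenge from a perfect fit of $Q$ and then show that its second ideal corner is forced inside $Q$, contradicting trivial foliation.

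Concretely, I would apply Theorem \ref{thm_perfect_fit_implies_lozenge} to the perfect fit formed by $l_1^+$ and $l_1^-$ (say the ``bottom-left'' corner of $Q$): this perfect fit is an ideal corner of a lozenge $L$, and, after replacing $L$ by an adjacent lozenge sharing that ideal corner if needed, there is a nontrivial $g \in \pi_1(M)$ with $gL = L$. Then $g$ fixes both corners $c$ and $y$ of $L$, where $c$ lies on the ray of $l_1^+$ and $y$ on the ray of $l_1^-$ that run toward the perfect fit. The remaining two sides of $L$ are a ray of $\cF^-(c)$ and a ray of $\cF^+(y)$; these form the second ideal corner of $L$ and in particular do not intersect. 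Using uniqueness of perfect fits (Lemma \ref{lem:perfect_fit_unique}) one checks that $c$ lies on the sub-arc of $l_1^+$ bounding $Q$ and $y$ on the sub-arc of $l_1^-$ bounding $Q$. Consequently $\cF^-(c)$ enters $Q$, and being a leaf of $\cF^-$ it can only exit $Q$ through $l_2^+$; likewise $\cF^+(y)$ enters $Q$ and exits through $l_2^-$. But then the rays of $\cF^-(c)$ and $\cF^+(y)$ forming the second ideal corner of $L$ both cross the product-foliated region $Q$, so they must intersect there — contradiction. One can equivalently phrase the final step through the non-corner criterion (Lemma \ref{lem_no_corner_criterion}) and its converse (Exercise \ref{ex_noncorner_converse}): once $c$ or $y$ is positioned as above, one of its quadrants both must and cannot contain a lozenge with that point as a corner.

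The same argument should dispose of both cases at once: whether $l_1^-$ and $l_2^+$ make a perfect fit (totally ideal) or meet at a vertex $p$ (triple ideal) is irrelevant, since $l_2^+$ and $l_2^-$ enter only as the two ``exit leaves'' of $Q$ for $\cF^-(c)$ and $\cF^+(y)$. The delicate point — and the step I expect to require the most care — is the positioning claim, namely that the corners of the lozenge $L$ supplied by Theorem \ref{thm_perfect_fit_implies_lozenge} sit on the boundary arcs of $Q$ rather than on distant portions of the leaves $l_1^{\pm}$; this is exactly where uniqueness of perfect fits and the precise meaning of ``$L$ has this perfect fit as an ideal corner'' must be used. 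Beyond Theorem \ref{thm_perfect_fit_implies_lozenge} and the elementary facts about perfect fits and lozenges already established, no new machinery is needed.
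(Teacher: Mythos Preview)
Your core idea—extract a lozenge from one of the perfect fits via Theorem~\ref{thm_perfect_fit_implies_lozenge} and derive a contradiction—matches the paper's. But your choice of perfect fit creates a genuine gap.

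The paper applies the theorem to the perfect fit $(l_1^+, l_2^-)$, not to $(l_1^+, l_1^-)$. The point of this choice is that \emph{both} $l_1^+$ and $l_2^-$ make perfect fits at each of their two ends (with $l_1^-, l_2^-$ and with $l_1^+, l_2^+$ respectively), so each of these leaves bounds $Q$ along its entire length. Hence the corners of the extracted lozenge automatically lie on $\partial Q$, and the paper simply observes that the remaining two sides $f^+, f^-$ of the lozenge each separate $l_1^-$ from $l_2^+$—ruling out both an intersection and a perfect fit between them.

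With your choice of $(l_1^+, l_1^-)$, in the triple ideal case only the ray of $l_1^-$ from the vertex $p = l_1^- \cap l_2^+$ toward the perfect fit with $l_1^+$ bounds $Q$. Theorem~\ref{thm_perfect_fit_implies_lozenge} gives no control on where along $l_1^-$ the corner $y$ sits, so $y$ could lie beyond $p$; then $\cF^+(y)$ is on the far side of $l_2^+$ from $Q$ and never enters $Q$, and your ``they must intersect inside the product region'' contradiction evaporates. Lemma~\ref{lem:perfect_fit_unique} only tells you which leaves can make a perfect fit with a given ray; it does not locate $y$ along $l_1^-$, so it does not fill this gap. Your alternative via the non-corner criterion runs into the same obstruction. (A smaller issue: your ``replace $L$ by an adjacent $g$-invariant lozenge sharing \emph{that} ideal corner'' is not what the theorem guarantees—the $g$-invariant $L'$ may share the \emph{other} ideal corner with $L$—but since your main contradiction does not actually use $g$-invariance, this is minor.) The fix is exactly the paper's: work with the perfect fit $(l_1^+, l_2^-)$ instead.
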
 

\begin{proof} 
Suppose that $l_1^\pm$, $l_2^\pm$ bound a trivially foliated region, where both pairs $l_i^+$ and $l_i^-$ make a perfect fit, and the pair $l_1^+, l_2^-$ also makes a perfect fit.  By Theorem \ref{thm_perfect_fit_implies_lozenge}, $l_1^+$ and $l_2^-$ are part of a lozenge, $L$.  Let $f^+, f^-$ be the other two sides of this lozenge.  

Then $f^+$ and $f^-$ each separate $l_1^-$ from $l_2^+$. In fact, for any leaves $l^-$ and $l^+$ intersecting $l_1^+$ or $l_2^-$ (respectively) so that the point of intersection is {\em outside} of the closure of $L$, the sides $f^+$ and $f^-$ each separate $l^-$ and $l^+$ in $P$.  See Figure \ref{fig_no_04_quad}. It follows that $l_1^-$ and $l_2^+$ can neither intersect each other, nor make a perfect fit, so do not form a triple ideal or totally ideal quadrilateral.  
\begin{figure}[h]
\includegraphics[width = 6cm]{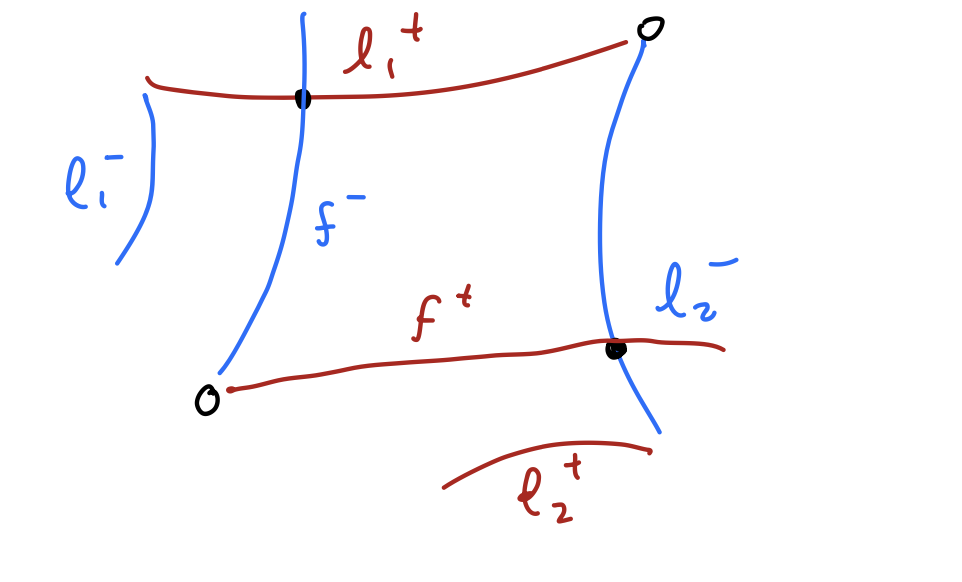}
\caption{}
\label{fig_no_04_quad} 
\end{figure}
\end{proof} 

\begin{rem} \label{rem_no_ideal_quad}
The non-existence of totally ideal quadrilaterals was included as an axiom of Anosov-like actions in \cite{BFM22} and \cite{BBM24b}, however it was used only very sparsely in the development of the theory there, so we chose not to include it as an axiom here. 
\end{rem}

\chapter{Examples of orbit spaces}\label{chap_examples_orbit_spaces}

In this chapter we describe the orbit spaces of each of the (families of) flows exhibited in Section \ref{sec_flows_examples}.   %

\section{Suspension flows}  Let $f$ be a pseudo-Anosov homeomorphism of a surface $\Sigma$, and let $\flow$ denote the suspension flow on the mapping torus $M = \Sigma \times [0,1] / (x, 1) \sim (f(x), 0)$.  Then $\wt{M} \cong \wt{\Sigma} \times \R$, and the lift of $\flow$ is simply the straight line flow $\flow^t(x, s) = (x, s+t)$.  The quotient map from $\wt M$ to the orbit space is projection to the first factor $\wt{\Sigma}$, and the weak stable and unstable foliations project to the (lifts of the) stable and unstable foliations for $f$ on $\wt\Sigma$.   In the case of an Anosov homeomorphism of $\bT^2$, these are simply the foliations by eigendirections (thus giving the trivial bifoliated plane), in the case of a pseudo-Anosov homeomorphism of a higher genus surface, these are the invariant singular foliations.

The fundamental group $\pi_1(M)$ has presentation $\langle \pi_1(\Sigma), s \mid s \gamma s^{-1} = f_\ast \gamma \rangle$.  Here $f_\ast$ is the map induced by $f$ by choosing a basepoint for $\pi_1(M)$ which is fixed by $f$ or an isotopic map.   
The action of $\pi_1(M)$ on $\orb$ restricts to the standard action of $\pi_1(\Sigma)$ on this subgroup, and the action of $s$ comes from lifting (rel.~basepoint) $f$ to $\wt \Sigma$.  
We note that all these orbit spaces are examples of bifoliated planes have no perfect fits.

\section{Geodesic flow and skew planes} \label{ex_geodesic_flow_orbit_space}

Let $\Sigma$ be a surface with a hyperbolic metric, and let $\flow$ denote the geodesic flow on $M = T^1 \Sigma$.  Before lifting to the universal cover to construct the orbit space, we first consider instead the lift of $\flow$ to $T^1 \wt{\Sigma}$, the unit tangent bundle of the hyperbolic plane.  
This lift is simply the geodesic flow on $\bH^2$ and its orbits are the hyperbolic geodesics. 

An oriented geodesic in $\bH^2$ can be specified by its two endpoints on the ideal circle of the compactification of the Poincar\'e disc model of hyperbolic space.  Thus, the space of orbits of geodesic flow on $\bH^2$ is the complement of the diagonal $\Delta$ in $S^1 \times S^1$, with coordinates given by forward and backward endpoint of each geodesic.  The weak-stable (resp.~unstable) leaves of this flow are the sets of geodesics with a common forward (resp.~backwards) endpoint, thus $\cF^s$ and $\cF^u$ are the two coordinate foliations on $S^1 \times S^1 \setminus \Delta$. 

The group of deck transformations of this cover $T^1 \bH^2 \to T^1 \Sigma$ is equal to $\pi_1(\Sigma)$.  The action $\pi_1(\Sigma)$ by isometries on $\bH^2$ induces an action on $\partial \bH^2 = S^1$ by M\"obius transformations.  Thus, action of $\pi_1(\Sigma)$ on the space of geodesics $S^1 \times S^1 \setminus \Delta$ is simply the diagonal of this action $g\cdot (a, b) = (ga, gb)$.  
The universal cover of $T^1 \Sigma$ is the infinite cyclic cover of $T^1 \bH^2$; thus the orbit space of $\phi$ is the infinite cyclic cover of $S^1 \times S^1 \setminus \Delta$, which can be naturally identified as the infinite diagonal strip given in the description of the {\em skew plane} in Section \ref{sec_trivial_skew}.  The 
action of $\pi_1(M)$, which is a central extension of $\pi_1(\Sigma)$ by $\bZ$, is the lift of the diagonal action. 

This perspective makes it easy to ``see" that varying the metric on $\Sigma$ gives orbit-equivalent flows.  Fixing a hyperbolic metric on $\Sigma$ gives a realization of $\pi_1(\Sigma)$ as a group of isometries of $\bH^2$, which act by M\"obius transformations on $\partial \bH^2$.  Different choices of metric give actions on $\partial \bH^2$ which are not conjugate in the group of M\"obius transformations, but {\em are} conjugate in $\Homeo(S^1)$.  Thus, the induced actions on the skew plane coming from the orbit space of different choices of geodesic flow are conjugate.  By Theorem \ref{thm_action_determines_OEflow}, the flows are orbit equivalent.  

\begin{remark}
This argument above also applies to geodesic flow on compact surfaces with metrics of strict negative curvature, using the visual or Gromov boundary of $\wt \Sigma$ which generalizes the compactification of the Poincar\'e disc.    
\end{remark}

\subsection{Skew planes and slithering}  \label{sec_slithering} 
In the case of geodesic flow on $M = T^1 \Sigma$, there is a nice representation of $\wt M$ as a foliated space which makes the quotient to the orbit space particularly nice to see.  This picture also generalizes to other flows with skew orbit space.   We begin with the geodesic flow example.  

Since $\bH^2$ is contractible, $T^1 \bH^2$ is a trivial circle bundle over $\bH^2$.  We choose an explicit trivialization $\tau\colon T^1 \bH^2 \cong \bH^2 \times S^1$ so that the leaves of the stable foliation $\cF^s$ are horizontal, defining $\tau(x)$ to be the pair $(p, \xi)$ where $p$ is the point at which $x$ is based, and $\xi$ is the forward endpoint of the geodesic tangent to $x$.  
In this trivialization, leaves of $\cF^u$ ``spiral" upwards, meeting each leaf of $\cF^s$ along a single geodesic. 
See Figure \ref{fig_slithering}, left.   For each fixed leaf $l^s$ of $\cF^s$, the set of $\cF^u$ leaves that intersect $\cF^s$ can be parametrized by the interval $S^1 \setminus \{\xi\}$ via their (non-$\xi$) endpoint.   
 
Lifting to the universal cover unwraps the $S^1$ direction, giving two foliations on $\bH^2 \times \bR$.  Leaves of $\hfs$ remain horizontal, and each leaf of $\hfu$ projects to fill one fundamental domain for the action of $\pi_1(S^1) \cong \bZ$ on $\bR$.   Projecting to the orbit space, we represent each leaf of $\hfs$ as an interval (identified with $S^1 \setminus \{\xi\}$, as $\xi$ varies continuously now over $\wt{S^1}$), and each leaf of $\hfu$ can be seen as a vertical segment through the set of $\hfs$ leaves it intersects.  In this way one recovers again the skew plane.  See Figure \ref{fig_slithering} for an illustration of one fundamental domain for the action of $\pi_1(S^1)$.  

\begin{figure}[h]
\includegraphics[width=10cm]{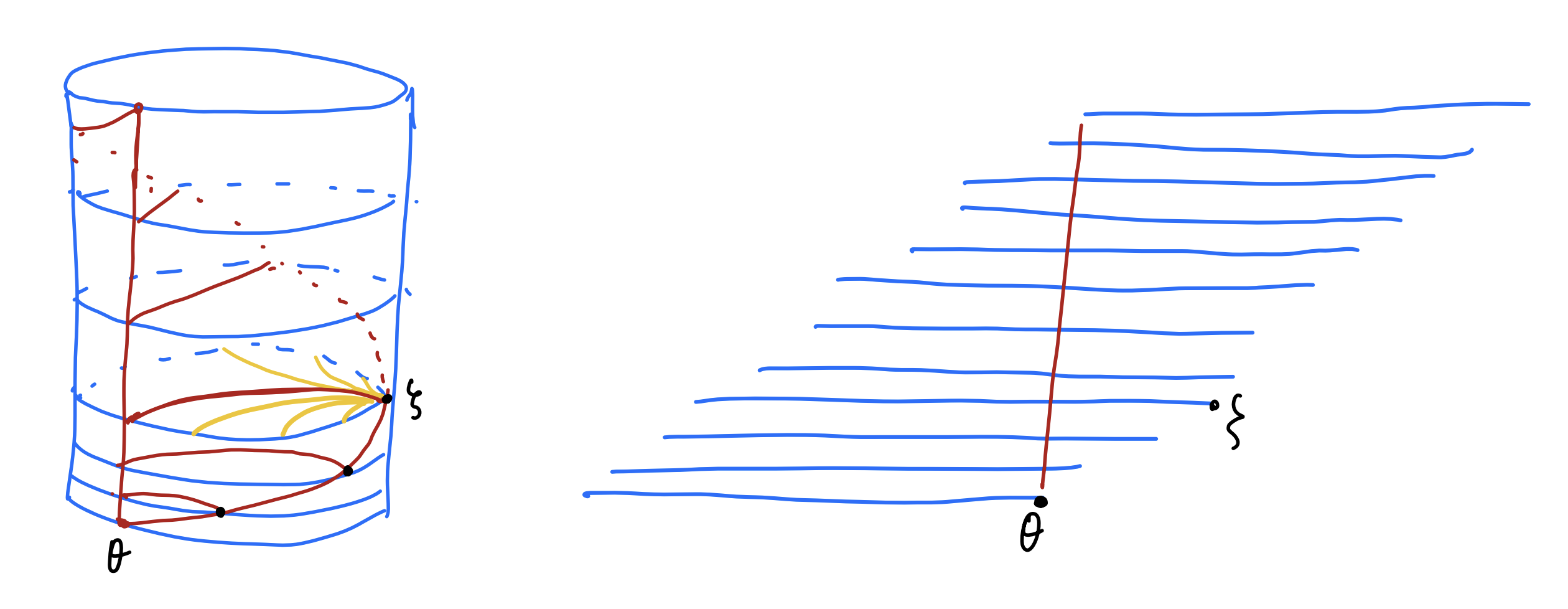}
\caption{.}
 \label{fig_slithering}
\end{figure}

Topologically, the above description of $\R^3 = \bH^2 \times \bR$ as a foliated space agrees with the space of triples (described in Theorem \ref{thm_extended_convergence}) 
\[  \{(x, y, z) \in \R^3 : x < y < z < x+1 \}, \] 
with the foliations whose leaves are obtained by taking $z$ constant, and by taking $x$ constant.  

Thurston observed in \cite{Thu97} that this picture holds for all skew flows: given any Anosov flow $\flow$ on a manifold $M$ whose orbit space is the skew plane, the universal cover $\wt M$, as a foliated space, is exactly $\{(x, y, z) \in \R^3 : x < y < z < x+1 \}$ with the foliations described above.   
One can prove this by constructing the model $W^s_+$ given in the proof of Theorem \ref{thm_induced_action_model_flow}, using the $x$ and $z$ coordinates to specify a (unique) point in the skew plane, and the point $y$ a point on the positive side of the leaf $\bfs(x)$. 
Each of the projections to $S^1 = \bR/\bZ$ coming from the leaf spaces are an example of what Thurston calls a {\em slithering} of the manifold $M$ over $S^1$.

\section{The Handel--Thurston example and generalizations} \label{sec_Handel_Thurston_skew}
The Handel-Thurston construction, introduced in Section \ref{sec_example_Handel_Thurston}, is one example, or rather one family of examples, of a class of Anosov flows on graph manifolds.   This example was built out of two Seifert fibered 3-manifolds with torus boundary, glued along the boundary.    

Following \cite{BF21} we call a  pseudo-Anosov flow $\flow$ on a graph manifold {\em totally free} if in each Seifert fibered piece the free homotopy class of the fiber is not represented by a periodic orbit of $\flow$\footnote{In \cite{Bar96}, these are called ``generalized Handel-Thurston examples".}. We give a more detailed description of the structure of the restriction of a pseudo-Anosov flow to a Seifert piece in Sections \ref{sec_periodic_seifert} and \ref{sec_free_seifert}. 

The following lemma and theorem, while general, prove in particular that the Handel-Thurston example has the skew plane as its obit space. 
For Anosov flows, Theorem \ref{thm_free_implies_skew} was originally proved (with a different argument) by Barbot in \cite[Théorème B]{Bar96}.

\begin{lemma} \label{lem_HT_free}
Any Anosov flow obtained by the Handel-Thurston construction is totally free.
\end{lemma}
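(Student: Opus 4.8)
The plan is to show that for a Handel--Thurston flow $\flow$ on $M=M_n$ ($n\neq 0$) no periodic orbit is freely homotopic to a regular fiber of either Seifert piece. Since every Seifert fiber is freely homotopic to a power of a regular fiber, Corollary~\ref{cor:closed_orbits_primitive} then upgrades this to: no periodic orbit is freely homotopic to any fiber, which is the statement. By Proposition~\ref{prop_closed_orbit_fixed_point}, the assertion is equivalent to saying that the element $\phi_i\in\pi_1(M)$ represented by a regular fiber of the Seifert piece $S_i$ acts on $\orb$ without a fixed point, for $i=1,2$. Recall from Section~\ref{sec_example_Handel_Thurston} that cutting $M$ along the torus $T$ produces one or two Seifert pieces $S_i$, each carrying the Seifert fibration induced from $T^1\Sigma\to\Sigma$ over a subsurface $\Sigma_i$; as $\Sigma_i$ has negative Euler characteristic and nonempty boundary, $\pi_1(S_i)\cong\pi_1(\Sigma_i)\times\langle\phi_i\rangle$ with $\pi_1(\Sigma_i)$ a nonabelian free group and $\phi_i$ central.

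Suppose for contradiction that $\phi:=\phi_i$ fixes a point of $\orb$. Since $\pi_1(S_i)$ centralizes $\phi$, it preserves $\Fix(\phi)$. If $\Fix(\phi)$ were a single point, then $\pi_1(S_i)$ would fix it, contradicting Proposition~\ref{prop:cyclic_stabilizers} ($\pi_1(S_i)$ is not virtually cyclic). Hence, by Corollary~\ref{cor:maximal_fixed}, there is a unique maximal chain of lozenges $\cC$ invariant under $\phi$, whose corner set equals $\Fix(\phi^m)$ for a suitable $m$; uniqueness together with centrality of $\phi$ forces $\pi_1(S_i)$ to preserve $\cC$ and hence to act on the tree $\cT(\cC)$ of Definition~\ref{def:chain_tree}. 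The free group $\pi_1(\Sigma_i)$ cannot fix a vertex of $\cT(\cC)$, since such a vertex would be a corner of $\cC$ whose stabilizer contains all of $\pi_1(S_i)$ (using that $\phi^m$ fixes every corner of $\cC$), again contradicting Proposition~\ref{prop:cyclic_stabilizers}. Being finitely generated and acting without a global fixed point, $\pi_1(\Sigma_i)$ therefore contains an element $g$ acting hyperbolically on $\cT(\cC)$; choose such a $g$ represented by a closed geodesic lying in the \emph{interior} of $\Sigma_i$. This geodesic is a periodic orbit of $\flow$ contained in $S_i$, so $g$ fixes a point of $\orb$, while, being hyperbolic on $\cT(\cC)$, it fixes no corner of $\cC$.

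The contradiction now comes from confronting the fixed data of $g$ with that of $\phi$. After replacing $g$ by a power we may assume it fixes all rays through its fixed points, so $\Fix(g)$ is the corner set of its own maximal chain $\cC_g$ (Corollary~\ref{cor:maximal_fixed}); since $g$ and $\phi$ commute, $\phi$ preserves $\cC_g$ and $g$ preserves $\cC$, and since $g$ fixes no corner of $\cC$ whereas $\phi$ fixes every corner of $\cC$, the chains $\cC$ and $\cC_g$ are distinct. The subgroup $\langle g,\phi\rangle\cong\bZ^2$ thus has two independent elements acting with fixed points but preserves no single maximal chain, so by Corollary~\ref{prop_Z2_stabilize_minimal_chain} it stabilizes a scalloped region $U$. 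Running the same argument with a second element $h\in\pi_1(\Sigma_i)$ chosen so that $\langle g,h\rangle$ is free of rank two yields a scalloped region $U'$ stabilized by $\langle h,\phi\rangle$; since the stabilizer of a scalloped region is virtually $\bZ^2$ (Lemma~\ref{lem_action_of_stabilizer_scalloped}), it cannot contain the rank-two free group $\langle g,h\rangle$, so $U\neq U'$. The $\phi$-fixed line of lozenges of $U$, resp.\ of $U'$, extends to the single maximal chain $\cC$, so $\cC$ contains two distinct bi-infinite subchains along which $\phi$ acts by freely permuting the transverse lozenges, and one concludes, using the discreteness of pivots and finiteness of branching in the orbit space of a pseudo-Anosov flow on a compact manifold (Corollary~\ref{cor_cocompactness_imply_finitely_many_branching}) together with the description of $\bZ^2$-stabilizers in Proposition~\ref{prop_stabilizer_scalloped_withC}, that such a configuration is incompatible with invariance under the whole group $\pi_1(S_i)$.

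I expect this last step --- controlling the combinatorics of the $\pi_1(S_i)$-action on the chain $\cC$ and its scalloped subregions --- to be the real obstacle. A tidier alternative would bypass the orbit space and argue through the Bass--Serre tree of the JSJ splitting $\pi_1(M)=\pi_1(S_1)\ast_{\pi_1(T)}\pi_1(S_2)$ (an HNN extension when $T$ is nonseparating): the fiber $\phi_i$ is elliptic, so a periodic orbit freely homotopic to it would be elliptic, hence conjugate into some $\pi_1(S_j)$; one then shows that a periodic orbit with elliptic free homotopy class is, up to free homotopy, a closed geodesic in the interior of $\Sigma_j$, which projects nontrivially to $\pi_1(\Sigma_j)$ while $\phi_i$ projects trivially. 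The delicate input for that route is ruling out a periodic orbit that meets $T$ transversally but is nonetheless freely homotopic into a Seifert piece, which again uses that $T$ is transverse to $\flow$.
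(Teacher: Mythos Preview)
Your orbit-space argument has a genuine gap, which you yourself flag: the final step, where you try to derive a contradiction from having two distinct scalloped regions $U,U'$ inside the maximal chain $\cC$, is not an argument but a hope. Nothing you cite (discreteness of pivots, finiteness of branching, Proposition~\ref{prop_stabilizer_scalloped_withC}) rules out a maximal chain containing multiple scalloped subregions, nor does invariance under $\pi_1(S_i)$ obviously forbid this. One could try to push further by producing infinitely many pairwise-free elements $g_1,g_2,\ldots$ of $\pi_1(\Sigma_i)$ and arguing that the corresponding scalloped regions must be pairwise distinct and eventually violate some finiteness, but making that precise is nontrivial and you have not done it.

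The paper's proof is exactly your ``tidier alternative,'' carried out directly and without Bass--Serre formalism. It treats a periodic orbit $\gamma$ according to its position: if $\gamma\subset T$ it is by construction not a fiber; if $\gamma$ is contained in a single piece $P_i$ it coincides with a closed geodesic, which projects nontrivially to $\pi_1(\Sigma_i)$ while the fiber projects trivially; and if $\gamma$ crosses $T$, it is a concatenation $\alpha_1\beta_1\alpha_2\cdots$ of geodesic segments. Collapsing the fibers of each $P_i$ sends $T$ to a point, yielding a wedge of two surfaces with fundamental group $A\ast B$; each $\alpha_j$ (resp.\ $\beta_j$) projects to a nontrivial element of $A$ (resp.\ $B$), so $\gamma$ maps to a nontrivial cyclically reduced word in $A\ast B$ and hence cannot be freely homotopic to a fiber. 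The ``delicate input'' you worried about --- ruling out a crossing orbit homotopic into a piece --- is handled by this free-product normal form, not by transversality of $T$. This is much shorter than the orbit-space approach and uses only elementary facts about free products.
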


\begin{theorem} \label{thm_free_implies_skew}
If $\flow$ is a totally free pseudo-Anosov flow on a graph manifold which is not a suspension, then $\orb$ is the skew plane.
\end{theorem}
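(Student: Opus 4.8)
The plan is to combine the trichotomy for Anosov-like actions with the structure of elements acting freely, exploiting the hypothesis that in a totally free flow every Seifert fiber acts freely on $\orb$.

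First I would dispose of the degenerate case. If $\orb$ were the trivial plane, then by Theorem~\ref{thm_trivial_implies_suspension} the flow $\flow$ would be orbit equivalent to the suspension of a linear hyperbolic map of the torus; since every $3$-manifold carrying a pseudo-Anosov flow is a $K(\pi,1)$, the manifold $M$ would then be homeomorphic to that torus bundle (cf.\ the remark after Theorem~\ref{thm_action_determines_OEflow}), contradicting the assumption that $M$ is not a suspension. So $\orb$ is not trivial, and by Theorem~\ref{thm:trichotomy} it suffices to rule out the third alternative (a singular point, or two-sided branching in both foliations). In fact it is cleaner to argue \emph{positively}: I will exhibit a bi-infinite skew region in $\orb$ and then invoke Proposition~\ref{prop_biinfinite_skew}; this automatically also excludes singular points, since the skew plane is nonsingular.

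Now fix a Seifert piece $N$ of the JSJ decomposition of $M$ with regular fiber $h$. Passing to a finite fiberwise cover (which leaves $\orb$, with its bifoliation, unchanged and keeps the lifted flow totally free) we may assume $h$ is central in $\pi_1(N)$. Since $\flow$ is totally free, $h$ is not freely homotopic to a periodic orbit, so by Proposition~\ref{prop_closed_orbit_fixed_point} the element $h$ acts freely on $\orb$; let $A^+\subset\Lambda(\cF^+)$ and $A^-\subset\Lambda(\cF^-)$ be its axes (Theorem~\ref{thm_axes_exist}). By Observation~\ref{obs_conjugate_axis} and Lemma~\ref{lem_commute_same_axis}, the centralizer of $h$ --- which contains $\pi_1(N)$ --- preserves $A^\pm$. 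The first genuine step is to show $A^+\cong\bR$ (and symmetrically $A^-$): if not, Proposition~\ref{prop:axis_structure} writes $A^+=\bigcup_{i\in\bZ}[a_i,b_i]$ with $a_i$ nonseparated from $b_{i-1}$, and $\pi_1(N)$ acts on this bi-infinite string preserving its order, via a homomorphism to $\mathrm{Isom}(\bZ)$ that sends $h$ to a nontrivial translation. The kernel has finite index in $\pi_1(N)$ and fixes the branching leaf $a_0$; hence it lies in the stabilizer of $a_0$, which by Lemma~\ref{lem:common_fixed_point} and Proposition~\ref{prop:cyclic_stabilizers} (the latter using the closing lemma, available for flows) is virtually cyclic --- contradicting that $\pi_1(N)$ is virtually $F_k\times\bZ$ with $k\ge 2$. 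With both axes lines, Proposition~\ref{prop_both_axes_R}/Proposition~\ref{prop_omega} applies to the $\pi_1(N)$-invariant region $\Omega:=\{p\in\orb : \cF^+(p)\in A^+,\ \cF^-(p)\in A^-\}$. Of its four possibilities, $\Omega=\orb$ trivial is already excluded; $\Omega$ a scalloped region is excluded because the stabilizer of a scalloped region is virtually $\bZ^2$ (Lemma~\ref{lem_action_of_stabilizer_scalloped}) while $\pi_1(N)$ is not; and if the boundary functions $s,i\colon A^+\to A^-$ are finite and \emph{strictly} monotone, then $\orb$ is skew and we are done.

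So the whole theorem reduces to excluding the last possibility of Proposition~\ref{prop_omega}: $s,i$ finite but \emph{weakly, not strictly,} monotone, so that $\Omega$ is a proper skew-like piece whose boundary carries perfect fits --- equivalently (Theorem~\ref{thm_perfect_fit_implies_lozenge}) lozenges attached along $\partial\Omega$, coming from flat spots of $s$ or $i$. This is the crux, and the main obstacle: one must use that $h$ is a Seifert \emph{fiber}, not merely some freely acting element. The plan is a global argument across all pieces. For each JSJ torus $T$ the group $\pi_1(T)\cong\bZ^2$ cannot act freely (Proposition~\ref{prop:no_free_Z2}), so by Corollary~\ref{prop_Z2_stabilize_minimal_chain} it preserves a minimal bi-infinite chain of lozenges; total freeness forces this chain to be a string (no shared sides), since two independent periodic-orbit directions in $\pi_1(T)$ would, with the adjacent fibers acting freely, be incompatible. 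These strings, together with the regions $\Omega_N$ over the free Seifert pieces, tile $\orb$, and one shows by a minimality/infinite-descent argument on the tree dual to this tiling that a flat spot of some $s_N$ would propagate through a neighboring piece and ultimately force a periodic (rather than free) Seifert piece, contradicting the hypothesis. This is precisely the content developed in Section~\ref{sec_free_seifert} (and is Barbot's original strategy in the Anosov case). Once this case is excluded, $\Omega$ is strictly skew-like, hence a bi-infinite skew region (Definition~\ref{def:biinfinite_skew}), and Proposition~\ref{prop_biinfinite_skew} gives that $\orb$ is the skew plane --- in particular nonsingular, so $\flow$ is in fact an Anosov flow.
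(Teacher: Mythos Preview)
Your approach diverges from the paper's at the decisive step, and the divergence leaves a genuine gap.

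The paper's argument stays in a single leaf space and never needs Proposition~\ref{prop_both_axes_R} or the region $\Omega$. For each Seifert piece $P_i$ with fiber $\gamma_i$, one shows (as you do) that the axis $A_i\subset\Lambda(\bfs)$ is an embedded copy of $\bR$. The key observation you are missing is that for \emph{adjacent} pieces $P_i,P_j$ glued along a torus $T$, both fibers $\gamma_i,\gamma_j$ lie in $\pi_1(T)\cong\bZ^2$ and hence commute; Lemma~\ref{lem_commuting_axis_2} then forces $A_i=A_j$. By connectedness of the JSJ graph, all the axes coincide in a single $A\subset\Lambda(\bfs)$, and since the $\pi_1(P_i)$ (together with any HNN generators, which normalize some $\pi_1(P_i)$ and so preserve $A$ as well) generate $\pi_1(M)$, the axis $A$ is $\pi_1(M)$-invariant. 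Axioms~\ref{Axiom_A1} and~\ref{Axiom_dense} then force $A=\Lambda(\bfs)$, so the stable leaf space is $\bR$; since the trivial case is excluded, the trichotomy gives skew. No analysis of skew-like regions is needed.

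Your route instead fixes one piece $N$, builds $\Omega$ from both axes of its fiber, and must then rule out case~(2) of Proposition~\ref{prop_omega}. The argument you sketch for this is not carried out and contains a concrete error: you claim that total freeness forces the $\pi_1(T)$-invariant minimal chain to be a string, because ``two independent periodic-orbit directions in $\pi_1(T)$ would, with the adjacent fibers acting freely, be incompatible.'' But when $\pi_1(T)$ stabilizes a scalloped region (so two independent elements have fixed points), the elements acting freely are exactly those with nonzero component in \emph{both} directions of the $\bZ^2$ from Lemma~\ref{lem_action_of_stabilizer_scalloped}; the fibers of the two adjacent pieces can perfectly well be such elements, so there is no contradiction. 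The subsequent tiling and ``infinite descent'' are not made precise, and Section~\ref{sec_free_seifert} does not supply this argument---it classifies the restriction of a flow to a single free piece rather than globalizing across the decomposition. The paper's axis-coincidence argument sidesteps your crux entirely; that is the idea to import.
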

Recall that a graph manifold is a closed, irreducible and $P^2$-irreducible $3$-manifold whose JSJ decomposition (see Theorem \ref{thm_geometric_JSJ_decomposition}) has only Seifert pieces. 
The suspension flow of a linear Anosov map does need to be ruled out, since it is a totally free flow supported on an exceptional graph manifold whose only Seifert piece is a trivial product.

Note that Theorem \ref{thm_free_implies_skew} implies, in particular, that a totally free pseudo-Anosov flow on a graph manifold is actually Anosov.

\begin{proof}[Proof of Lemma \ref{lem_HT_free}]
Following our previous notation, let $M = M_n$ be the manifold supporting the Handel-Thurston flow obtained by cut-and-paste surgery along a torus over a closed geodesic $c$ in $T^1\Sigma$.   There are two cases to consider, depending on whether $c$ is separating or not.  We treat the case where $c$ is separating (hence $M$ has two JSJ pieces) and leave the nonseparating case as an exercise, the outline is essentially the same. 

Let $P_1$ and $P_2$ denote the two JSJ pieces of $M$, glued along the torus $T$.    
We need to check that no periodic orbit is freely homotopic to a fiber.  Consider some periodic orbit $\gamma$.   By construction, if $\gamma \subset T$, then it is not a fiber of either piece.  Also, if $\gamma$ is contained in either $P_1$ or in $P_2$, then it agrees with an orbit of geodesic flow on a surface, so is not a fiber.  It remains to check the case where $\gamma$ crosses $T$.  In this case, $\gamma$ is a concatenation of orbit segments $\alpha_1, \beta_1, \alpha_2,\ldots$ where $\alpha_i \subset P_1$ and $\beta_i \subset P_2$ come from segments of geodesics in $T_1(\Sigma)$.  Consider the quotient space where the fibers of $P_1$ and $P_2$ are collapsed, and so $T$ is reduced to a point. This space is, topologically, the wedge of two surfaces (obtained by collapsing the curve in $\Sigma$ represented by the geodesic $c$ to a point), and its fundamental group is the free product $A \ast B$ of the fundamental group of these two surfaces.  Each segment $\alpha_i$ and $\beta_i$ projects to a nontrivial element of the respective factor of the free product.  Thus, in this quotient $\gamma$ represents a nontrivial word of the form $a_1 b_1 \ldots a_k b_k$ with $a_i \in A, b_i \in B$; it is necessarily cyclically reduced and so nontrivial, hence $\gamma$ was not freely homotopic to a fiber.  
\end{proof}

\begin{proof}[Proof of Theorem \ref{thm_free_implies_skew}]
Let $\flow$ be a totally free pseudo-Anosov flow on $M$ and let $P_1, \ldots P_k$ denote the Seifert fibered pieces of the JSJ decomposition of $M$.  
Let $\gamma_i$ denote the fiber of $P_i$.  By assumption, $\gamma_i$ acts freely on $\orb$, and thus has an axis $A_i$ in the stable leaf space $\Lambda(\bfs)$.  We first show, for each $i$, this axis is homeomorphic to $\bR$.  If not, then it has a structure as a union of intervals $\bigcup_j [a_j, b_j]$ between nonseparated points, as in Proposition \ref{prop:axis_structure}.   Since $\gamma_i$ is central in $\pi_1(P_i)$, each element of $\pi_1(P_i)$ acts on $A_i$ preserving this structure.  This gives a homomorphism $\pi_1(P_i) \to \bZ$.  The kernel of this homomorphism fixes each leaf $a_j$, so is trivial or virtually cyclic by Proposition \ref{prop_top_properties}.  We conclude that $\pi_1(P_i)$ is virtually $\bZ$ or a direct or semi-direct product $\bZ \rtimes \bZ$. Since $\pi_1(P_i)$ has a non-trivial center, we deduce that $\pi_1(P_i)$ is either $\bZ$ or a \emph{direct} product $\bZ \times \bZ$. A standard argument in 3-manifold topology (see for instance \cite[Prop 2.3]{Bar96}) shows that this is impossible.   %

Now, we claim that for any pieces $P_i, P_j$ we have $A_i = A_j$.  By connectedness, it suffices to prove this for adjacent pieces.  But adjacent pieces are glued along tori containing the fiber, and thus $\gamma_i$ and $\gamma_j$ commute.  By Lemma \ref{lem_commuting_axis_2}, this implies that $A_i = A_j$.  Let $A$ denote this common axis.

We next argue that $A$ is invariant by all of $\pi_1(M)$.  In the case where $\pi_1(M)$ is an amalgamated free product of $\pi_1(P_i)$, this is immediate, since the $\pi_1(P_i)$ generate.  However, in general $\pi_1(M)$ may be a graph of groups.  Loops in the graph give additional generators, but each such element normalizes $\pi_1(P_1)$. Since $A$ is the unique invariant embedded copy of $\bR$ in the leaf space for $\gamma_1$, hence the unique invariant  $\bR$ for $\pi_1(P_i)$, this shows that $A$ must be invariant under the normalizer as well.  

We conclude that $A$ is $\pi_1(M)$-invariant.  Thus, it follows from Axioms \ref{Axiom_A1} and \ref{Axiom_dense} that $A$ is equal to $\Lambda(\bfs)$. 
We have thus proved that $\flow$ is $\bR$-covered, and since $\flow$ is not a suspension, Theorem \ref{thm_trivial_implies_suspension} implies that $\orb$ is not the trivial plane. By Theorem \ref{thm:trichotomy}, $\orb$ is skew. 
\end{proof}

\section{The Franks--Williams example} 

Recall that the Franks--Williams construction, described in Example \ref{ex_Franks_Williams}, was obtained from gluing two manifolds $M_1$ and $M_2$.  Each $M_i$ was obtained by taking the mapping torus of a diffeomorphism of the torus obtained by doing an attracting DA on the fixed point of an Anosov diffeomorphism and removing a tubular neighborhood of the attracting periodic orbit, with boundary $T_i$ a torus transverse to the suspension flow.  $M_1$ was equipped with the suspension flow $\psi_1$, which is incoming on the boundary torus, and $M_2$ equipped with the time-reversal of the suspension flow (called $\psi_2$), so the boundary torus is outgoing. The two were then glued along the boundary by a rotation so that the stable foliation for $\psi_1$ on $T_1$ was transverse to the unstable foliation of $\psi_2$ on $T_2$ %
Let $\flow$ denote the glued flow on the glued manifold.  

Camargo showed that the orbit space of $\flow$ (as a bifoliated plane, that is, up to a foliation-preserving homeomorphism) is in fact sensitive to the hyperbolic linear map of the torus used in the construction.  By varying the map, one can produce countably many non-isomorphic examples, distinguishable by an invariant related to the continued fraction expansion of the slope of the eigenspace of the map.  See \cite[Theorem E, Corollary F]{Cam25} for a precise statement.  
However, the orbit spaces of all Franks--Williams type examples share some common features.  Our next goal is to describe these.  To do this, we start by describing the stable and unstable foliations of $\flow$ on $T$.  

Since stable (respectively, unstable) leaves are characterized by the asymptotic behavior of orbits in the future (respectively, past), the stable leaf of a point on $T$ is determined by the flow $\psi_1$ on $T_1$, hence its intersection with $M_1$ (and in particular its intersection with $T$) agrees with the stable leaf for $\psi_1$.  Similarly, the unstable foliation for $\psi_2$ on $T$ agrees with that for $\psi_2$.  
Thus, the intersection of the stable and unstable foliations for $\flow$ with $T$ appear as two transverse Reeb foliations of the torus, as shown in Figure \ref{fig_FW_torus_reeb} (left). 

\begin{figure}[h]
\includegraphics[width=12cm]{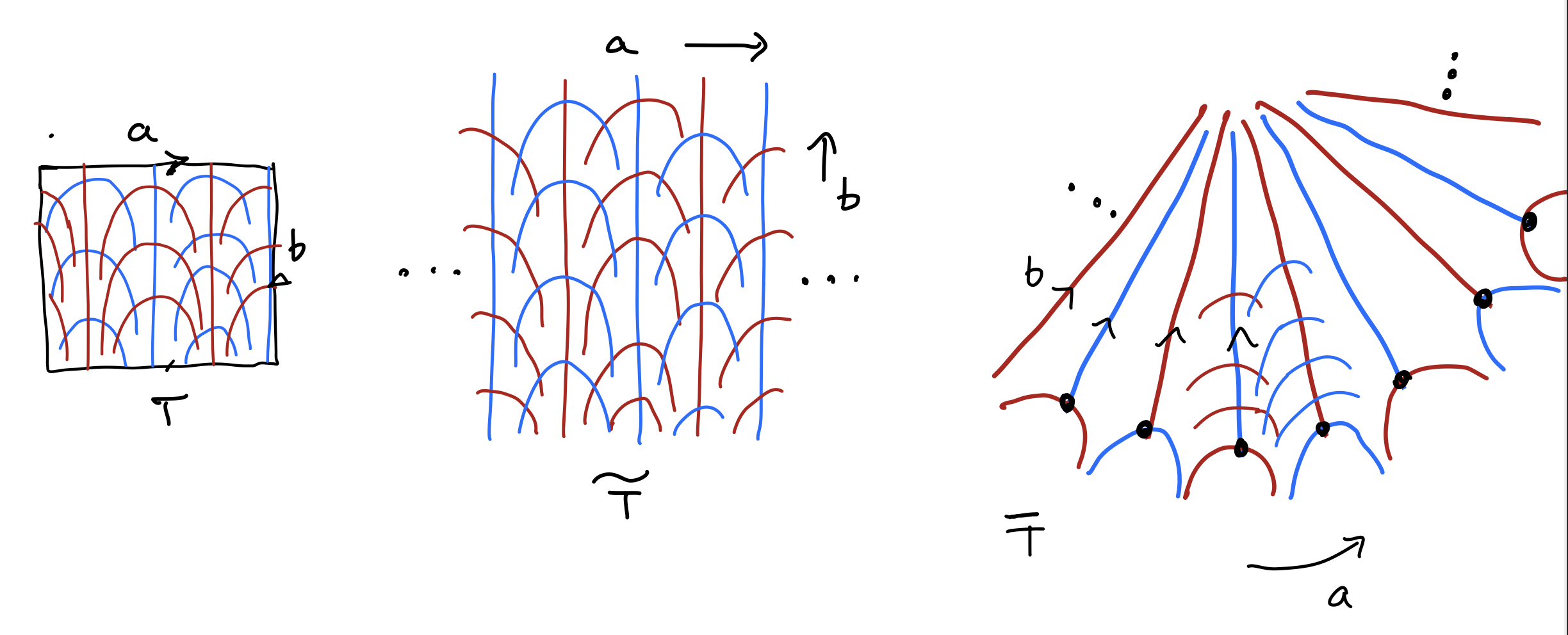}
\caption{.}
 \label{fig_FW_torus_reeb}
\end{figure}

Let $\wt T$ denote a lift of $T$ to $\wt M$. The intersections of $\hfs$ and $\hfu$ with $\wt T$ are shown in Figure \ref{fig_FW_torus_reeb} (middle); and $\pi_1(T)$ acts by horizontal and vertical translations.  

Since $T$ is transverse, the projection of $\wt T$ to $\orb$ is injective (but not proper).  Let $\bar T$ denote its image.  We next describe the foliations $\bfs$ and $\bfu$ on $\bar T$.  The Reeb structure means that adjacent lifts of closed stable leaves are nonseparated, and similarly for unstable.  Moreover, there exists an element $b$ of $\pi_1(T)$ that preserves the lift to $\wt T$ of each closed leaf in $T$.  From this topological structure, it follows that $\bar T$ is  a bi-infinite chain of lozenges, all sharing a single ideal corner, as shown in Figure \ref{fig_FW_torus_reeb} (right).  One of the two standard generators of $\pi_1(T)$ acts on $\bar T$ fixing all corners, and the other translates lozenges along the chain.   

Since no periodic orbit passes through $T$, the various lifts of $T$ to $\wt M$ project to pairwise disjoint subsets of $\orb$.   Thus, in $\orb$ we see countably many disjoint copies of this infinite chain, one for each lift of $T$ in $\wt M$.  The union of the lifts of $T$ to $\orb$ projects to a dense, open subset $U \subset \orb$. Indeed, for any open set $V \subset M_i$, $\flow(V)$ eventually intersects $T_i$.  %
Thus, no open set of orbits avoids all lifts of $T$, so $U$ is dense. Notice that $U$ corresponds to the projection of the \emph{wandering set}\footnote{Recall that a point $x$ is \emph{wandering} if there exists a neighborhood $V$ of $x$ and $T>0$ such that for all $t>T$, $\flow^t(V)\cap V =\emptyset$. See e.g., \cite{FH19} for more details.} of $\flow$.
   The complement of $U$ in $\orb$ is the closure of the boundaries of these regions (which corresponds to the orbits in the non-wandering set of $\flow$).  It has the local structure of a product of an interval and a Cantor set (see, e.g., \cite[Corollary 3.11]{BBM24b}); any leaf of $\cF^s$ or $\cF^u$ which passes through a lozenge in a lift of $T$ meets infinitely many lifts of $T$ along a set of intervals homeomorphic to the complement of a cantor set.  See Figure \ref{fig_FW_orbitspace} for an illustration.  
   
   Finally, we note that the only lozenges in $\orb$ are part of a chain contained in one of the lifts $\bar T$.  If 
    $x\in \orb$  is not one of the corners of one of the chains $g \bar T$ for some $g\in \pi_1(M)$, then either $x$ is an orbit in an attractor of $\flow$, in which case both rays of $\cF^s(x)$ meet $\pi_1(M) \cdot \bar T$, or $x$ is in a repeller, in which case rays of $\cF^u(x)$ meet $\pi_1(M) \cdot \bar T$, or $x$ is in the nonwandering set, where both are true.  
 In all cases, the non-corner criterion (Lemma \ref{lem_no_corner_criterion}) implies that $x$ is not a corner. 
A much more in depth treatment of the orbit space of these flows can be found in \cite[Chapter 4]{Cam25}.  

\begin{figure}[h]
\includegraphics[width=8cm]{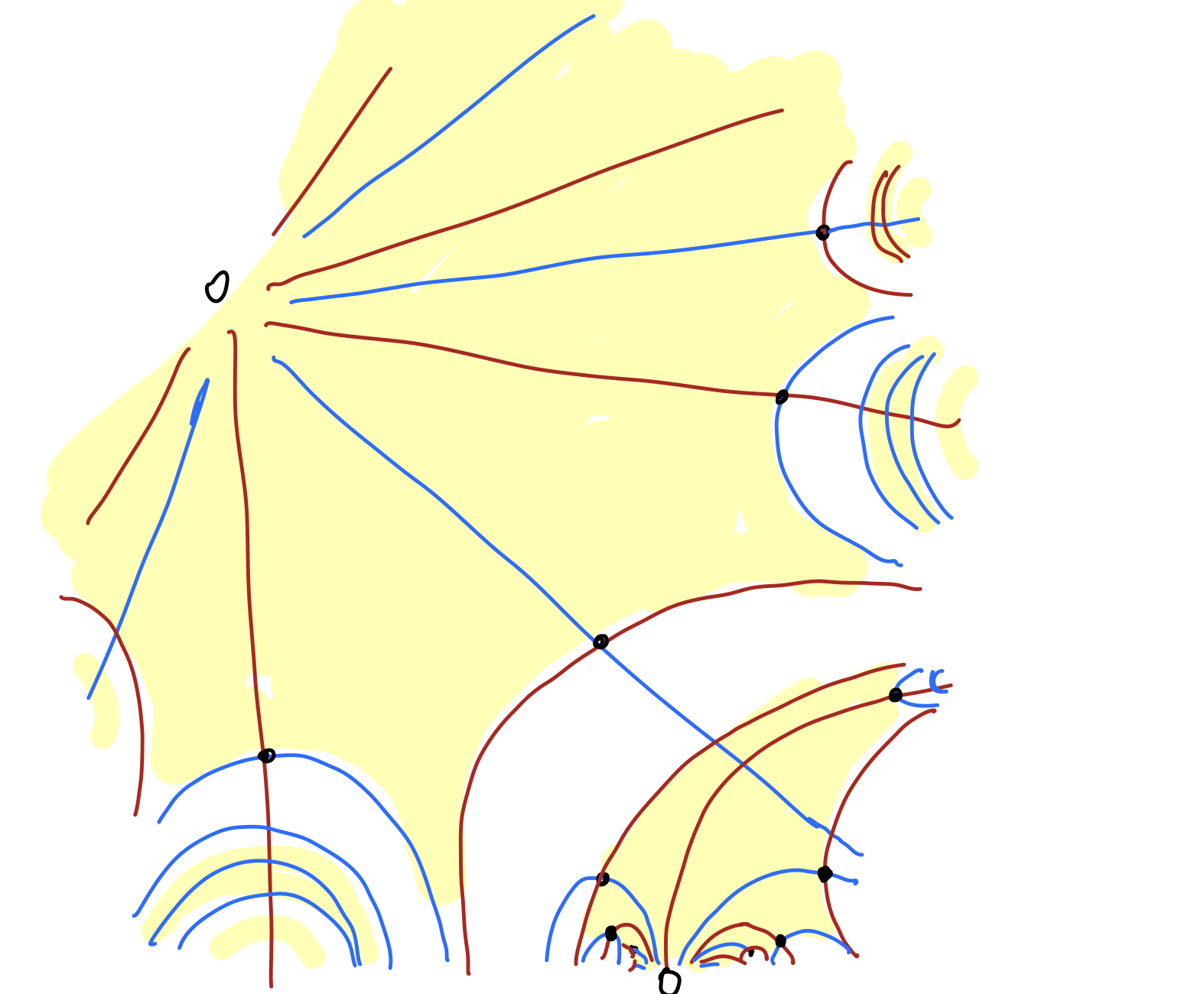}
\caption{The orbit space of a Franks-Williams flow, with a few lifts of $T$ highlighted.}
 \label{fig_FW_orbitspace}
\end{figure}

\subsection{A note on surgeries}

There is no universal procedure for describing the effect of Fried or Goodman's Dehn surgeries on the orbit space of a flow. 
However, in the case of flows with trivial or skew plane orbit space, one can sometimes show that a surgery of correct slope results in a flow whose orbit space is again the skew plane.  Fenley showed the following

\begin{theorem}[Theorem A of \cite{Fen94}]
If the orbit space of $\flow$ is the trivial or skew plane, and $\psi$ is obtained from $\flow$ by {\em coherently oriented Dehn surgery} then $\mathcal{O}_\psi$ is the skew plane. 
\end{theorem}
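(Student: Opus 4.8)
The plan is to use the general trichotomy theorem (Theorem \ref{thm:trichotomy}) and the structural results about skew and trivial planes to reduce the statement to a statement about how Dehn surgery interacts with the leaf spaces $\Lambda(\bfs)$ and $\Lambda(\bfu)$. First I would recall that ``coherently oriented Dehn surgery'' means surgery performed along periodic orbits (in Fried's sense), where on the blown-up boundary torus $T_\alpha$ one chooses a surgery curve that, in each complementary region of the four periodic orbits of $\flow_\ast$ on $T_\alpha$, winds coherently with the direction of the orbit $\alpha$ -- so that the surgered flow $\flow_{\mathcal C}$ is again pseudo-Anosov (or Anosov). By Theorem \ref{thm_action_determines_OEflow} and Theorem \ref{thm_orb_is_plane}, it suffices to understand the bifoliated plane $\mathcal{O}_\psi$ with its $\pi_1$-action; and by the trichotomy, since $\psi$ is visibly not a suspension of an Anosov diffeomorphism (the surgered manifold is not a torus bundle -- for instance because surgery on a geodesic flow or a suspension along a nontrivial slope changes the JSJ decomposition, as discussed in Sections \ref{sec_example_Handel_Thurston} and \ref{sec_example_Dehn_Fried}), the plane $\mathcal{O}_\psi$ is either trivial or skew or genuinely branched. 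So the whole content is to rule out the third possibility, or equivalently to show one of the leaf spaces $\Lambda(\bfs_\psi)$, $\Lambda(\bfu_\psi)$ is Hausdorff (then apply Proposition \ref{prop:both_R} together with the non-suspension fact).

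The key geometric input is that Dehn surgery modifies the flow only in a solid-torus neighborhood of $\alpha$, and outside that neighborhood the flow and its weak foliations are unchanged. On the level of the orbit space, blowing up $\alpha$ replaces the orbit $\alpha$ by a lozenge-like configuration (as in the blow-up picture of Figure \ref{fig_blow_up_Fried}), and then collapsing the coherently oriented surgery curves on $T_\alpha$ has the effect of ``re-gluing'' the stable and unstable leaf spaces along this modification. The main step I would carry out is: starting from the hypothesis that $\Lambda(\bfs_\flow) \cong \R$ (the trivial or skew case, which is a Hausdorff $\R$-tree with either no branching or identified with $\R$ in both cases via the $\R$-covered structure), I would track what happens to $\Lambda(\bfs)$ under the surgery. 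The coherence of the orientation of the surgery curve is exactly what guarantees that the gluing does not create nonseparated stable leaves: a non-coherent gluing would make two surgery-created stable leaves limit onto a common configuration, producing branching, whereas coherent orientation forces the new stable leaves created near the surgered orbit to be linearly ordered consistently with the pre-existing order on $\Lambda(\bfs_\flow) \cong \R$. I would formalize this by working in a lift $\wt{V}$ of the surgery neighborhood and showing the quotient identification on $\Lambda(\hfs)$ respects the linear order; this gives that $\Lambda(\bfs_\psi)$ is still a simply connected Hausdorff $1$-manifold, i.e., homeomorphic to $\R$.

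With $\Lambda(\bfs_\psi) \cong \R$ in hand, Corollary \ref{cor_skew_trivial_are_transitive} and Proposition \ref{prop:both_R} finish the argument: a bifoliated plane admitting an Anosov-like action with one leaf space Hausdorff is trivial or skew, and since $\psi$ is not a suspension, Theorem \ref{thm_trivial_implies_suspension} rules out the trivial plane, so $\mathcal{O}_\psi$ is skew. The main obstacle I anticipate is the bookkeeping in the middle step -- precisely describing the effect of the blow-up-and-collapse on the leaf space and verifying that ``coherently oriented'' is exactly the combinatorial condition preventing branching. This requires carefully identifying, in a neighborhood of a lift of the surgered orbit, which pairs of surgery-created leaves could a priori become nonseparated, and checking that the winding condition on the surgery curve rules this out; getting the orientations and the four periodic orbits on $T_\alpha$ straight is where the real work lies. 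A secondary (smaller) point to check is that the surgery genuinely produces a non-suspension flow, which may need a short separate argument depending on the input flow, but for the trivial-plane input (suspensions) a nontrivial-slope coherent surgery changes $\pi_1$ enough, and for the skew-plane input the source is already not a suspension and surgery on atoroidal or graph-manifold pieces keeps it that way.
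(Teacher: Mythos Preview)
The paper does not actually prove this theorem: it is stated as a result of Fenley \cite{Fen94} in the subsection ``A note on surgeries,'' followed only by a gloss on what ``coherently oriented'' means and a remark that the result also follows from the Birkhoff-section characterization of Asaoka--Bonatti--Marty \cite{ABM24}. So there is no proof in this paper to compare your proposal against.

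That said, your outline is broadly in line with Fenley's original approach in \cite{Fen94}: show that the leaf space of (say) the stable foliation remains homeomorphic to $\bR$ after surgery, then invoke the trichotomy. You correctly identify the crux --- verifying that the coherent-orientation condition on the surgery slope is precisely what rules out the creation of nonseparated leaves near the surgered orbit --- and you are right that this is where the real work lies; in \cite{Fen94} this is done by a careful local analysis in the surgery neighborhood, not merely ``bookkeeping.'' Two small points: your description of the blow-up as producing ``a lozenge-like configuration'' in the orbit space is not quite the right picture (the modification is at the level of the leaf space near a single point, not a lozenge); and your argument that $\psi$ is not a suspension is too casual --- it is true, but appealing vaguely to the JSJ decomposition is not enough without saying which surgeries are excluded (trivial surgery on a suspension returns a suspension, so the statement implicitly requires a nontrivial surgery when starting from the trivial plane).
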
 

``Coherently oriented" means, in the trivial case, that one may simultaneously do surgeries along many periodic orbits, but these must be all positive slope, or all of negative slope.   In the skew case, for a natural choice of orientation determined by the structure of the orbit space and orientation of the flow,  one may perform any Dehn surgery of one sign (what Fenley labels negative, but other conventions would choose as positive slope) but not the other sign.   
 The above result also follows from the characterization obtained by Asaoka, Bonatti and Marty in \cite[Theorem A]{ABM24} of skew Anosov flows as those that admit coherently oriented Birkhoff section (see Lemma 14 and Proposition 15 of \cite{ABM24} for the description of the effect of Fried--Goodman surgery on Birkhoff sections). 

The effect of multiple surgeries on the orbit spaces of suspension flows was further studied in \cite{BI23}.
Note that there is no hope to have a simply stated universal theorem: The Fried--Ghys Conjecture \ref{ghys_fried_conjecture} (if true) implies that every transitive Anosov flow is obtained from a suspension flow by a sequence of Fried--Goodman surgeries, so every possible orbit space will appear.

\section{The Bonatti--Langevin example}  \label{BL_orbit_space}

We recall briefly the Bonatti--Langevin example described in Section \ref{sec_example_bonatti_langevin}.   The construction starts with a model neighborhood of a hyperbolic periodic orbit $\alpha$.  Annuli in the boundary of this neighborhood which are tangent to the (partially defined) flow are then glued together to produce a 3-manifold $\Sigma \times S^1$, where $\Sigma$ is the complement of two discs in $\bR P^2$, with a flow that is transverse to the two torus boundary components, incoming on one and outgoing on the other.  
These boundary are then  glued together so that the stable foliation on one is transverse to the unstable on the other.   Let $M$ denote the resulting (closed) manifold, $\flow$ the Anosov flow, and $T \subset M$ the transverse torus corresponding to the glued boundaries of  $\Sigma \times S^1$.  

To describe the orbit space, we begin (as in our discussion of the Handel--Thurston example) with the transverse torus $T$ and the structure of $\fs$ and $\fu$ on $T$.   Although there is some flexibility in the construction, for concreteness, we assume that $M$ was constructed as in the original paper of Bonatti and Langevin, so that the boundary tori of $\Sigma \times S^1$ are glued by exchanging the two coordinate directions specified by this product structure.  Thus, the periodic orbit $\alpha$ is freely homotopic to {\em both} standard generators of $\pi_1(T)$.   These can both be seen by pushing $\alpha$ along its local stable and unstable foliations:  following $\cF^s(\alpha)$ from $\alpha$ until it meets $T$ gives a closed curve on $T$, following $\cF^u(\alpha)$ approaches $T$ from the other side and gives a transverse closed curve.  
As in our discussion of the Handel--Thurston example, the fact that $\cF^s$ and $\cF^u$ leaves are characterized by future (respectively, past) behavior of orbits, justifies our discussion above: the local stable and unstable leaves of $\alpha$ for the partial flow defined on $\Sigma \times S^1$ agree with those for $\flow$.   Thus, $\cF^s$ and $\cF^u$ meet $T$ in a pair of transverse, trivial foliations.  

Let $\wt T$ denote a lift of $T$ to $\wt M$ and $\bar{T}$ its projection to $\orb$.   This is an open subset of $\orb$ invariant under $\pi_1(T) \cong \bZ^2$, and our discussion above implies that it is trivially foliated.   Both (standard) generators of this $\bZ^2$ subgroup are freely homotopic to the periodic orbit $\alpha$, so act with fixed points on $\orb$.  Thus, by Proposition \ref{prop_stabilizer_scalloped}, $\pi_1(T)$ preserves a (unique) scalloped region $U$ in $\orb$. We will see in the next chapter (Proposition \ref{prop_trace_QT_tori}) that this region $U$ corresponds to $\bar{T}$. 

Having described the image of the torus, as a next step we consider its complement.  
Let $M_0$ denote $M \smallsetminus T \cong  \Sigma \times S^1$.  Its fundamental group is (redundantly) generated by four elements $a, b, c, d$, where $a$ is represented by $\alpha$ (the generator of the $S^1$ factor), $b$ and $c$ each represent a boundary component of $\Sigma$, and $d$ represents the generator of $\pi_1(\mathbb{RP}^2)$, so $db = cd$, as illustrated in Figure \ref{fig_BLpi1}. 

\begin{figure}[h]
\includegraphics[width=12cm]{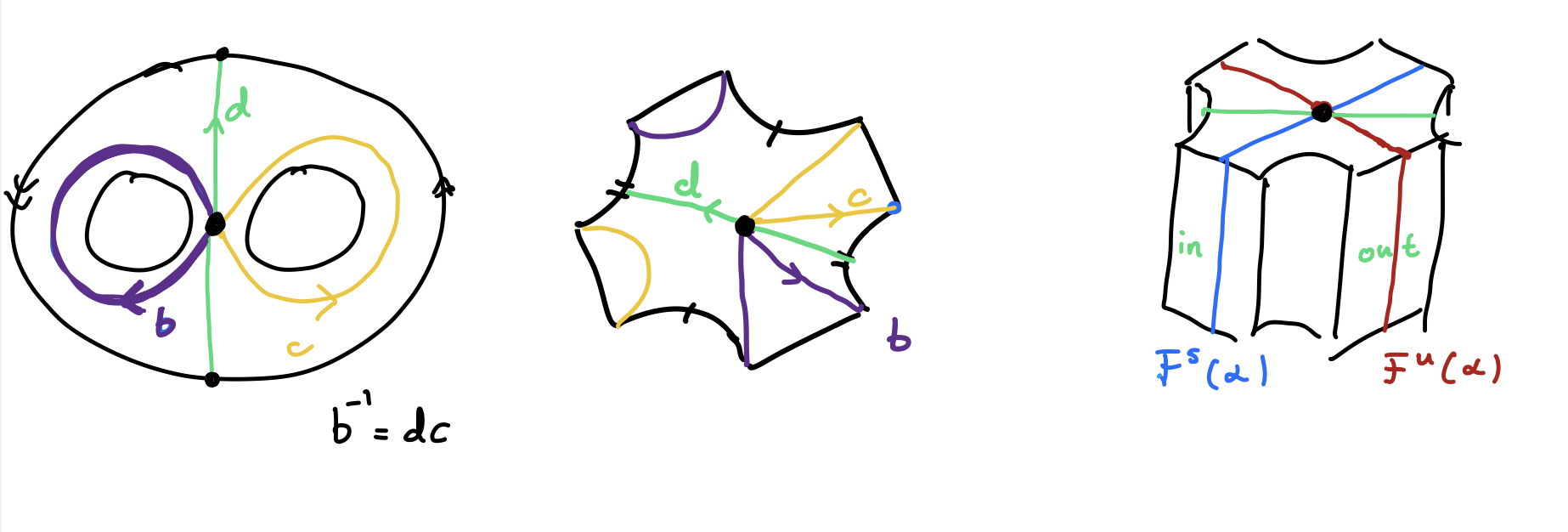}
\caption{.}
 \label{fig_BLpi1}
\end{figure}
The pairs $\{a, b\}$ and $\{b, c\}$ each generate the fundamental group of a boundary torus.  
Following our discussion above, let $U$ denote the scalloped region invariant under the subgroup generated by $a$ and $b$.  The subgroup generated by $a$ and $c$ also stabilizes a scalloped region.  Because this is invariant under $a$, it must share a common lozenge $L$ with $U$, as shown in Figure \ref{fig_BL_scalloped}

\begin{figure}[h]
\includegraphics[width=8cm]{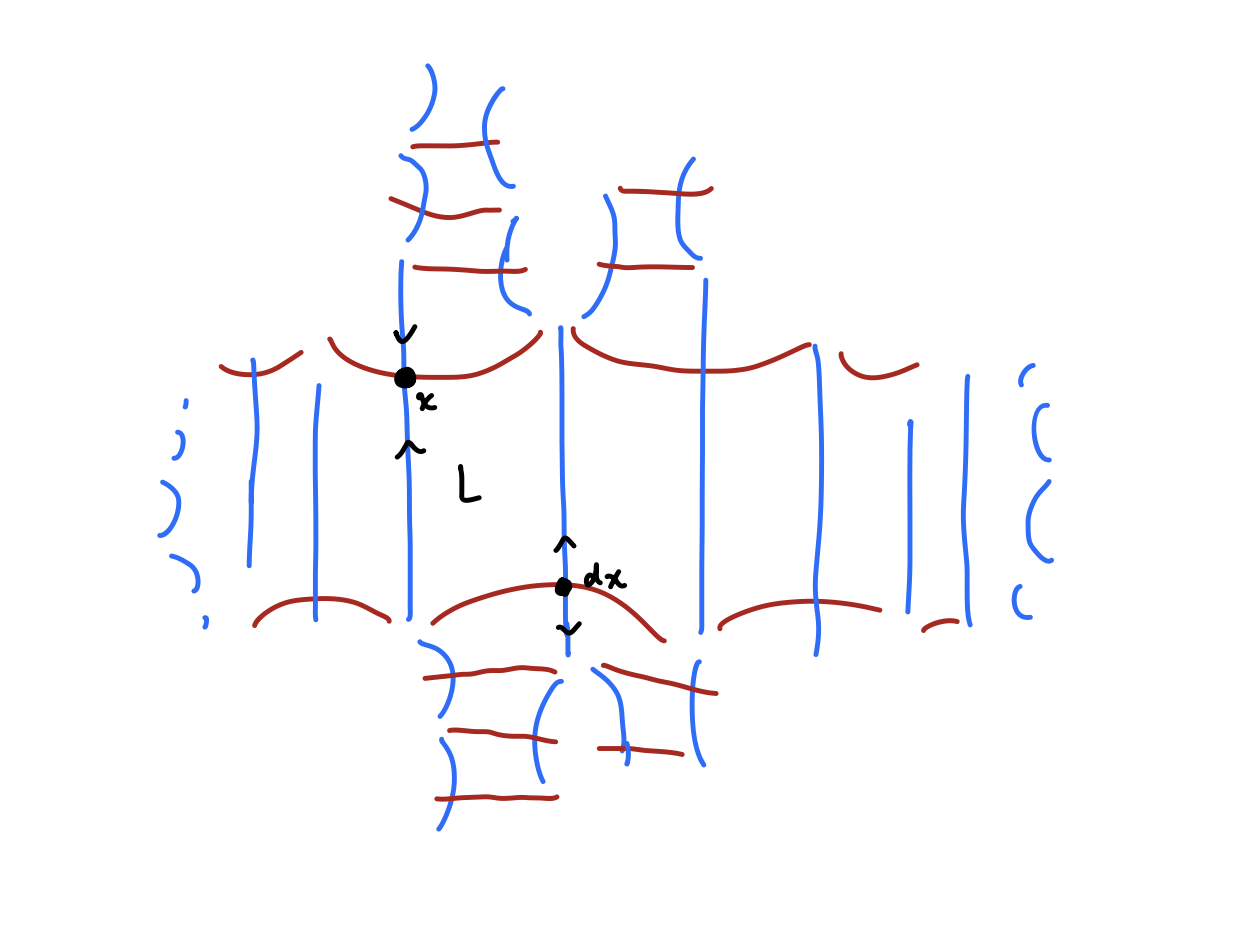}
\caption{.}
 \label{fig_BL_scalloped}
\end{figure}

Thus, we have described the projection to the orbit space of the two lifts of $T$, corresponding to neighboring lifts of the two boundary components of $M_0$.  Consider now the action of $d$.  Recall that tangent boundary annuli were glued with a rotation, so we have $dad^{-1} = a^{-1}$.  Thus, $d$ conjugates the action of $a$ on $\orb$ to the action of $a^{-1}$.  Observe also that $\alpha$ can be pushed off itself along the loop $d$ staying transverse to the flow.  The trace of this isotopy gives an embedded Klein bottle $K$ in $M \smallsetminus T$, which is transverse to the flow on the complement of $\alpha$.  
As we will explain in more detail in the next chapter, the annulus $K \smallsetminus \alpha$ lifted to $\wt{M}$ projects to a lozenge, whose two corners correspond to the two lifts of $\alpha$ differing by the action of $d$.  Considering the action of $d$ on the orbit space, we see now that the lozenge adjacent to $L$ is also part of two scalloped regions.  
Continuing this process iteratively, one can show that the projection of $\wt{M}_0$ to $\orb$ is what is called a {\em tree of scalloped regions} -- a chain of lozenges such that each lozenge is adjacent to another on each of its four sides.  

The universal cover $\wt{M}$ is obtained by gluing copies of $\wt{M}_0$ along boundary components, the lifts of the transverse torus.   As we have seen above, each lift of a boundary torus corresponds to a scalloped region inside a tree of scalloped regions.  Gluing another copy of $\wt{M}_0$ along this boundary torus, corresponds to realizing this scalloped region as a line of lozenges in a different way, and including it into another tree of scalloped regions, as shown in figure \ref{fig_gluing_BL}.   The orbit space can thus be completely filled out by iteratively moving through copies of $\wt{M}_0$.  For a detailed description, applicable not only to this example but to the broader class of {\em totally periodic flows} that will be discussed in Section \ref{sec_periodic_seifert}, we refer the reader to \cite[Chapter 3]{Cam25}. 

\begin{figure}[h]
\includegraphics[width=10cm]{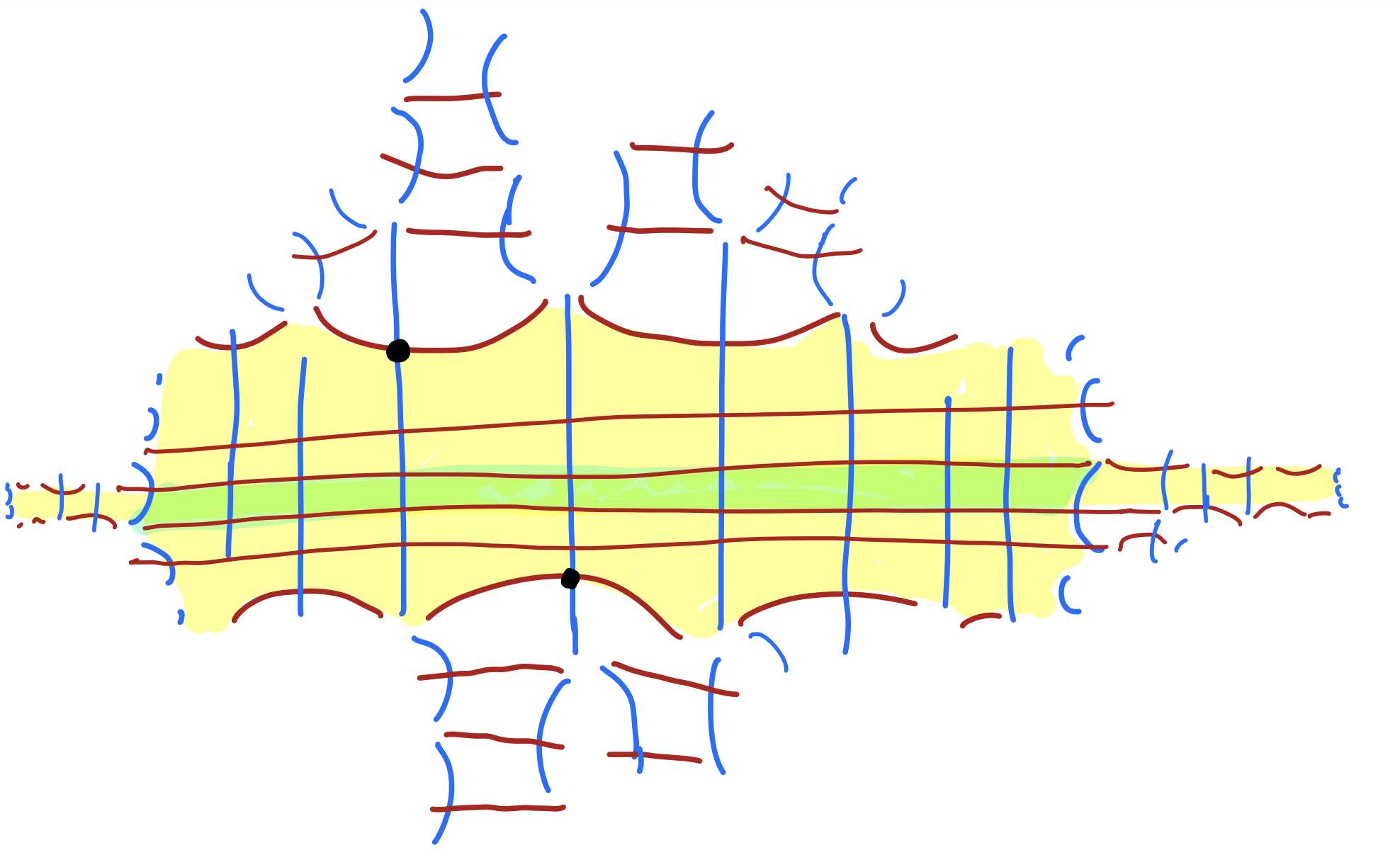}
\caption{A lozenge (highlighted in green) shared by two scalloped regions in a tree of scalloped regions.}
 \label{fig_gluing_BL}
\end{figure}

\chapter{Features in the plane and the 3-manifold}\label{chap_features_and_3manifold}

In this chapter, we describe a dictionary correspondence between topological features of a compact 3-manifold $M$ equipped with a pseudo-Anosov flow $\flow$, and the structures visible in its orbit space $\orb$.    

Our first goal is to describe {\em Birkhoff annuli}.  These are immersed annuli in $M$ which realize a free homotopy between periodic orbits of $\flow$, and they correspond to certain  chains of lozenges in $\orb$.    The structure of these chains also contains significant information about the annuli themselves, such as whether they can be made embedded after a homotopy.  Using this understanding of Birkhoff annuli, we then prove an important theorem of Barbot and Fenley which states that the tori or Klein bottles from the JSJ decomposition of $M$ can be realized as surfaces that are transverse to $\flow$ except possibly along finitely many periodic orbits.  These periodic orbits divide the JSJ surfaces into Birkhoff annuli, hence the need for their study.

We will also see that a realization of the JSJ surfaces in this ``quasi-transverse" position is essentially unique up to homotopy along orbits, 
so gives a canonical decomposition of $M$ into JSJ pieces which are well-adapted or in ``good position" with respect to the flow.  
This decomposition has become an important tool towards the classification of pseudo-Anosov flows on graph manifolds, culminating in the works \cite{BF13,BF15,BF21}.
In Section \ref{sec_periodic_seifert} and \ref{sec_free_seifert} we describe some of this work, classifying the restriction of pseudo-Anosov flows to (well-adapted) Seifert fibered JSJ pieces, again making use of the interaction between orbit space and 3-manifold.   

Quasi-transverse surfaces also play an important role when constructing examples of Anosov flows.  Paulet's work in \cite{Pau24} gives an inverse procedure to the JSJ decomposition along quasi-transverse tori; following this, one may build many examples of pseudo-Anosov flows by gluing together partial flows on manifolds with quasi-transverse torus boundary components.

\section{Free homotopy of periodic orbits} \label{sec_free_homotopy}

We begin with an illustrative example of the relationship between the features of a flow on a 3-manifold and its orbit space.  
 Let $\flow$ be the geodesic flow on $M = T^1 \Sigma$, where $\Sigma$ is a hyperbolic surface. Let $c$ be a periodic orbit of $\flow$ (i.e., a closed geodesic), and let $c'$ denote $c$ with the opposite orientation.  Rotating the fibers of the bundle $T^1 \Sigma \to \Sigma$ gives a free homotopy from $c$ to $c'$.   

To understand this homotopy in the orbit space, we consider first the orbits of the flow lifted to the cover $T^1 \wt{\Sigma}$ of $T^1 \Sigma$, as in Section  \ref{ex_geodesic_flow_orbit_space}. 
Let $\wt c$ be a lift of $c$ to $T^1 \wt{\Sigma}$, and let $h_t$ be a lift of the homotopy with $h_0(\wt{c}) =  \wt c$ and $h_1(\wt{c})$ a lift of $c'$, as illustrated in Figure \ref{fig_free_homotopy_geo}, left. For values of $t$ between $0$ to $1$, the set of forward endpoints of the unit tangent vectors along $c$ covers an interval on $\partial \bH^2$.  When $t$ is small, this interval lies near the forward endpoint of $c$.  It increases in size until $t = 1/2$ when half the boundary is covered, and then decreases again towards the other endpoint of $\wt c$, as shown in Figure \ref{fig_free_homotopy_geo}, left.
Using the explicit description of $\orb$ in Section \ref{ex_geodesic_flow_orbit_space}, one can easily see that the projection of this homotopy to $\orb$ traces out a single lozenge, whose corners are (the projections of) $\wt c$ and $\wt c'$.  See Figure \ref{fig_free_homotopy_geo}, right. 

\begin{figure}[h]
\includegraphics[width=13.5cm]{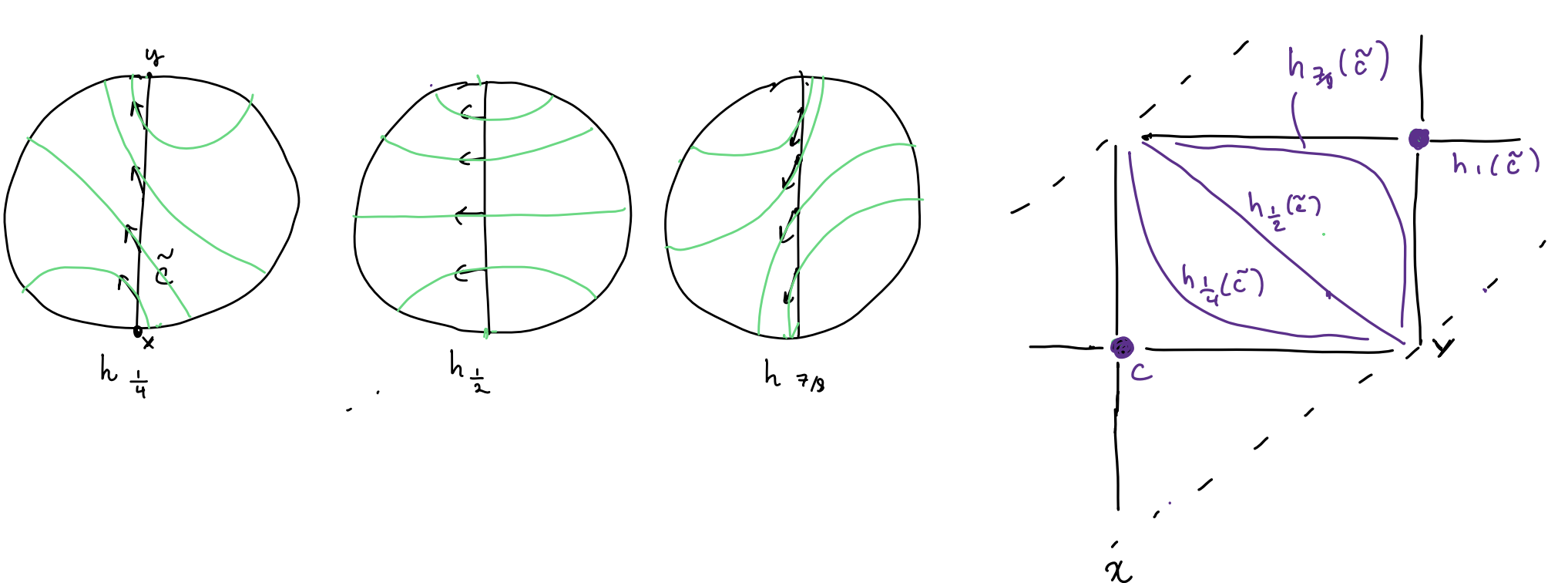}
\caption{The homotopy $h_t$ and its projection to $\orb$}\label{fig_free_homotopy_geo}
\end{figure}

It turns out this picture is quite general. Our first goal in this section is to make precise the relationship between free homotopies of (unoriented) periodic orbits and chains of lozenges invariant by elements of $\pi_1(M)$.  

\begin{proposition}[Fenley, \cite{Fen95b}]\label{prop_free_homotopy}
Let $\alpha, \beta$ be periodic orbits of a pseudo-Anosov flow $\flow$ on $M$ and $g \in \pi_1(M)$.  Then $\alpha, \beta$ are freely homotopic to 
$g$ as \emph{unoriented} curves (and freely homotopic to each other by a {\em nontrivial} free homotopy if $\alpha=\beta$)  if and only if there exist lifts $\wt\alpha, \wt \beta$ in $\wt M$ such that their projections to $\orb$ are distinct corners of a chain of lozenges $\cC$ fixed by $g$. 
\end{proposition}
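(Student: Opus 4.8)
The plan is to prove both implications by moving between $M$, $\wt M$, and $\orb$, using the by-now standard facts that elements of $\pi_1(M)$ fixing an orbit in $\orb$ correspond to periodic orbits in their free homotopy class (Proposition \ref{prop_closed_orbit_fixed_point}), and that an element fixing two distinct points in $\orb$ fixes all rays through them and hence has a fixed set which is a chain of lozenges (Observation \ref{obs:all_rays}, Theorem \ref{thm:distinct_fix_is_chain}, Corollary \ref{cor:maximal_fixed}).

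\textbf{The ``if'' direction.} Suppose $\wt\alpha$ and $\wt\beta$ project to distinct corners $a, b$ of a chain of lozenges $\cC$ fixed by $g$. Then $g$ fixes both $a$ and $b$, so by Proposition \ref{prop_closed_orbit_fixed_point} the orbit $\alpha$ (the projection of $\wt\alpha$) is freely homotopic to $g$ (as an unoriented curve; one passes to a power of $g$ if needed so that $g$ really is the primitive class of $\alpha$, but the statement allows $g$ to be taken to represent the free homotopy class, and by Corollary \ref{cor:closed_orbits_primitive} the primitivity is automatic). Likewise $\beta$ is freely homotopic to $g$. Composing these two free homotopies through $g$ gives a free homotopy from $\alpha$ to $\beta$. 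When $\alpha=\beta$, I must check the homotopy is nontrivial: this follows because $a\neq b$ are distinct corners, so the two lifts $\wt\alpha$ and $\wt\beta$ of the \emph{same} closed orbit in $M$ are distinct orbits of $\hflow$, which forces the loop traced by the homotopy to act on $\wt M$ as the nontrivial deck transformation $g$ — equivalently, the homotopy is not homotopic rel endpoints into $\alpha$ itself.

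\textbf{The ``only if'' direction.} This is the substantive part. Suppose $H\colon S^1\times[0,1]\to M$ is a free homotopy with $H(\cdot,0)$ parametrizing $\alpha$ and $H(\cdot,1)$ parametrizing $\beta$ (with some choice of orientations), realizing the free homotopy class of $g$. Lift $H$ to $\wt H\colon S^1\times[0,1]\to\wt M$ starting at a chosen lift $\wt\alpha$; because $H(\cdot,0)$ and $H(\cdot,1)$ are freely homotopic to $g$, the loop $s\mapsto H(s,0)$ and $s\mapsto H(s,1)$ are each invariant under (a conjugate of) $g$, and choosing the lift correctly, $\wt H(\cdot,0)$ is $g$-invariant and $\wt H(\cdot,1)$ is a $g$-invariant lift $\wt\beta$ of $\beta$. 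Projecting to $\orb$, the homotopy $\wt H$ descends to a path-connected $g$-invariant region in $\orb$ containing the two fixed points $a=[\wt\alpha]$ and $b=[\wt\beta]$ of $g$. If $a=b$ one argues as above that the homotopy being nontrivial implies the two lifts are distinct, contradiction — so in the interesting case $a\neq b$ automatically. Now $g$ fixes the two distinct points $a,b\in\orb$; by Observation \ref{obs:all_rays}, $g$ fixes all rays through $a$ and through $b$, and by Theorem \ref{thm:distinct_fix_is_chain} the points $a$ and $b$ are corners of a chain of lozenges $\cC$, which by Corollary \ref{cor:maximal_fixed} may be taken to be the (unique) maximal $g$-invariant chain; in particular $g$ fixes $\cC$. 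This is exactly the claimed conclusion.

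\textbf{Main obstacle.} The delicate point is the bookkeeping of orientations and of the lift: the statement is about \emph{unoriented} free homotopy, so I must be careful that ``freely homotopic to $g$ as unoriented curves'' means $\alpha$ (with \emph{some} orientation) is freely homotopic to the closed curve representing $g$, and likewise $\beta$; and I must make sure that when I lift the free homotopy $H$ and follow it, the ends really are $g$-invariant lifts rather than lifts invariant under conjugates. This is handled by conjugating $g$ appropriately (equivalently, moving the basepoint along the homotopy), and is where I expect most of the careful writing to go. The second subtlety is the ``nontrivial homotopy if $\alpha=\beta$'' clause, which corresponds precisely to the requirement that $a$ and $b$ be \emph{distinct} corners; I would state this as a short lemma: a based loop obtained by closing up $H$ is nullhomotopic in $M$ if and only if $\wt H(\cdot,0)$ and $\wt H(\cdot,1)$ are the same orbit of $\hflow$, which is immediate from the fact that orbits of $\hflow$ are properly embedded lines (Proposition \ref{prop_top_properties} item \ref{item_properly_embedded}) and that $\wt M$ is simply connected.
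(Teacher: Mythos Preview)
Your proof is correct and follows essentially the same route as the paper's: both directions use covering space theory to translate between free homotopies and $g$-invariant lifts, and the ``only if'' direction concludes via Theorem \ref{thm:distinct_fix_is_chain}. One small notational slip: you write $\wt H\colon S^1\times[0,1]\to\wt M$, but a map from a non-simply-connected domain need not lift; you mean the lift $\bR\times[0,1]\to\wt M$, which is then $g$-equivariant in the $\bR$-factor. Also, your invocation of Observation \ref{obs:all_rays} and Corollary \ref{cor:maximal_fixed} is harmless but unnecessary --- Theorem \ref{thm:distinct_fix_is_chain} already delivers the chain with all corners fixed by $g$, as the paper notes directly.
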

\begin{proof}
Let $\alpha$ and  $\beta$ be periodic orbits of $\flow$. We allow the possibility $\alpha = \beta$.  
By elementary covering space theory, if there exist distinct lifts $\wt\alpha, \wt \beta$ of these orbits to $\wt M$ that are both invariant by $g$, then $\alpha$ and $\beta$ are freely homotopic to a loop representing $g$ in $\pi_(M)$ as unoriented curves, and freely homotopic to each other by a nontrivial homotopy.  
For the other direction, suppose we know that $\alpha, \beta$ are freely homotopic to $g$.  As in Proposition \ref{prop_closed_orbit_fixed_point}, we can pick a lift $\wt \alpha$ of $\alpha$ that is invariant by $g$ and projects to a fixed point for the action of $g$ on $\orb$.  Lifting a nontrivial free homotopy from $\alpha$ to $\beta$ will give a distinct lift $\wt \beta$, also invariant by $g$, which projects to a distinct fixed point for $g$ on $\orb$.  By Theorem \ref{thm:distinct_fix_is_chain}, these fixed points are corners of a $g$-invariant chain of lozenges. Since $g$ fixes two corners of the chain, it fixes every corner.  
\end{proof}

Provided $M$ is orientable and the foliations $\cF^s$, $\cF^u$ are transversely orientable, the free homotopy between two orbits which lift to corners of a {\em single} lozenge always reverses orientation of the curve.  More precisely, if $\alpha$ $\beta$ are freely homotopic and lift to opposite corners of a single lozenge, then as {\em oriented} orbits this free homotopy is between $\alpha$ and $\beta^{-1}$, just as in the example of geodesic flow discussed above.  One can see this by considering the dynamics of the action of $\pi_1(M)$ at the corners of the lozenge: an element $g$ fixing each corner must contract the stable leaf of one, and expands the stable leaf of the other.  
Thus, Proposition \ref{prop_free_homotopy} has the following consequence: 
\begin{corollary} \label{cor_reverse_orientation} 
If there is a nontrivial free homotopy between any periodic orbits of $\flow$, then there exists $g \in \pi_1(M)$ where both $g$ and $g^{-1}$ are represented by (oriented) periodic orbits.  
\end{corollary}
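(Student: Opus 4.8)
The plan is to combine Proposition \ref{prop_free_homotopy} with the non-transitive argument about dynamics at the corners of a lozenge. First I would invoke Proposition \ref{prop_free_homotopy}: a nontrivial free homotopy between periodic orbits $\alpha$ and $\beta$ (possibly $\alpha=\beta$) corresponds to an element $g\in\pi_1(M)$ and a $g$-invariant chain of lozenges $\cC$ in $\orb$ having two distinct corners $\wt\alpha,\wt\beta$ which are lifts of $\alpha,\beta$. Since the chain is $g$-invariant and $g$ fixes two of its corners, by Observation \ref{obs:fixes_all_corners} (or Corollary \ref{cor:maximal_fixed}) $g$ fixes every corner of the maximal chain containing $\cC$. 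In particular there is at least one lozenge $L$ in the chain, with corners $x$ and $y$, both fixed by $g$, and both $x$ and $y$ are projections of periodic orbits (by Proposition \ref{prop_closed_orbit_fixed_point}, since $g$ has a fixed point and thus is freely homotopic to a periodic orbit, and using that the corners are on periodic leaves of $\bfs,\bfu$).

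Next I would analyze the dynamics of $g$ at the two corners of $L$. By Proposition \ref{prop_hyperbolic}, at the corner $x$, the element $g$ is topologically expanding on $\bfs(x)$ and contracting on $\bfu(x)$, or vice versa, depending on how $g$ translates along the corresponding orbit relative to the flow orientation. The key geometric fact is that $x$ and $y$ are opposite corners of a lozenge, so the two $\cF^+$-rays and two $\cF^-$-rays emanating into the lozenge from $x$ and $y$ make perfect fits pairwise; consequently the orientation of $g$'s action forces it to be contracting on $\bfs(x)$ precisely when it is expanding on $\bfs(y)$ (the lozenge "flips" the expanding/contracting roles between its two corners — this is exactly the phenomenon illustrated by geodesic flow, where $g$ moves along $\wt c$ in the flow direction and along $\wt c'$ against it). By Proposition \ref{prop_hyperbolic} again, this means $g$ translates along the orbit through $x$ in the flow direction, and along the orbit through $y$ against the flow direction. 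Therefore, if $\gamma_x$ and $\gamma_y$ denote the periodic orbits projecting to $x$ and $y$ with their $\flow$-orientations, then $g$ is freely homotopic to $\gamma_x$ and to $\gamma_y^{-1}$, i.e., both $g$ and $g^{-1}$ are represented by oriented periodic orbits.

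The main obstacle I anticipate is making precise the claim that the expanding/contracting behavior of $g$ genuinely switches between the two corners of a single lozenge — one must check that a perfect fit between a ray of $\cF^+(x)$ and a ray of $\cF^-(y)$, together with $g$-invariance, forces the translation direction along the corresponding orbit to reverse. This should follow by tracking, in $\wt M$, how a flow-box neighborhood of (a point on) the orbit through $x$ is mapped by the deck transformation $g$, using Lemma \ref{lem_all_converge} and the definition of perfect fit as in the proof of Lemma \ref{lem:fixed_is_corner}; concretely, the ray of $\bfs(x)$ interior to the lozenge is the one along which $g$ is expanding (pushing toward the ideal corner) while the corresponding ray of $\bfs(y)$ is the one along which $g$ is contracting. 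Once this orientation bookkeeping is in place, the conclusion is immediate. I would also remark (as the corollary's hypothesis already assumes $M$ orientable with transversely orientable foliations, implicit via the preceding discussion) that this sign-reversal is precisely why a free homotopy across a single lozenge reverses orbit orientation, and that the statement follows even without that hypothesis by passing to a suitable finite cover, though the cleanest exposition just cites the single-lozenge case established above.
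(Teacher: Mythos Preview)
Your proposal is correct and follows essentially the same approach as the paper: use Proposition~\ref{prop_free_homotopy} to produce a $g$-invariant chain, pick a single lozenge $L$ in it, and then argue via Proposition~\ref{prop_hyperbolic} that $g$ expands the stable leaf at one corner and contracts it at the other, so that $g$ translates with the flow at one corner and against it at the other. The paper states the switching of dynamics at the two corners as a one-line observation; your anticipation of this as the only nontrivial point is accurate, and your sketch of how to verify it (tracking how $g$ acts on the interval of $\cF^s$-leaves crossing $L$, parametrized alternately by the two $\cF^u$-sides) is exactly the right argument. One small clarification: you do not need transverse orientability or a passage to a finite cover here, since Observation~\ref{obs:all_rays} already guarantees that $g$, fixing two distinct points, fixes all rays through both corners, which is all the orientation control the switching argument requires.
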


Surprisingly, the existence of free homotopies between periodic orbits actually gives significant global information about the flow. 
As one example of this phenomenon, the following theorem characterizes suspension flows among all (nonsingular) topological Anosov flows in terms of free homotopies between orbits. 

\begin{theorem}  \label{thm_suspension_free_homotopy}
A topological Anosov flow is a suspension if and only if there is no nontrivial free homotopy between any two periodic orbits.  
\end{theorem}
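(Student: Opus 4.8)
The plan is to use the trichotomy (Theorem~\ref{thm:trichotomy}) together with the characterizations of trivial and skew planes established earlier. One direction is essentially immediate: if $\flow$ is a suspension flow of a linear hyperbolic map of the torus, then by the description of suspension orbit spaces in Chapter~\ref{chap_examples_orbit_spaces}, $\orb$ is the trivial plane, which contains no perfect fits and hence no lozenges; by Proposition~\ref{prop_free_homotopy} there is then no nontrivial free homotopy between periodic orbits (and, since the foliations are nonsingular, there are also no two distinct periodic orbits in the same free homotopy class). So the content is in the converse.

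For the converse, suppose $\flow$ is a topological Anosov flow on $M$ with no nontrivial free homotopy between periodic orbits. First I would observe, via Proposition~\ref{prop_free_homotopy}, that this hypothesis forces $\orb$ to contain no lozenges at all: a lozenge has two corners, each fixed by a common nontrivial $g \in \pi_1(M)$, and each corner is a limit of leaves with nontrivial stabilizer, so by Axiom~\ref{Axiom_dense} together with density of periodic orbits (Proposition~\ref{prop_density}) one can find a genuine pair of distinct periodic-orbit lifts at distinct corners of a $g$-invariant chain --- more directly, Theorem~\ref{thm_perfect_fit_implies_lozenge} shows that a lozenge in $\orb$ yields a $g$-invariant chain with a corner fixed by some nontrivial $g$, and by the closing lemma and density one produces periodic orbits freely homotopic to $g$, contradicting the hypothesis. (I would be careful here to handle the case $\alpha = \beta$: a single lozenge with both corners projecting to the \emph{same} periodic orbit still gives a nontrivial self-free-homotopy, which is excluded.) Hence $\orb$ has no lozenges, so in particular no perfect fits (Theorem~\ref{thm_perfect_fit_implies_lozenge}) and no branching leaves (since by Proposition~\ref{prop:4weak4strong} branching leaves are sides of adjacent lozenges), and since $\flow$ is Anosov there are no prong singularities. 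By the trichotomy Theorem~\ref{thm:trichotomy}, $\orb$ is therefore either trivial or skew.

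Next I would rule out the skew case. If $\orb$ were skew, then every point of $\orb$ is the corner of a lozenge (in the standard model of the skew plane, through any point one has the lozenge obtained from the four perfect fits of its leaves' rays), contradicting the conclusion of the previous paragraph that $\orb$ has no lozenges. Therefore $\orb$ is the trivial plane. Finally, by Theorem~\ref{thm_trivial_implies_suspension}, a pseudo-Anosov flow whose orbit space is the trivial plane is orbit equivalent to the suspension flow of a linear hyperbolic map of the torus, which is what we wanted to prove.

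\textbf{Expected main obstacle.} The routine parts are the trichotomy bookkeeping and the skew-plane exclusion; the one place requiring genuine care is the implication ``lozenge in $\orb$ $\Rightarrow$ nontrivial free homotopy of periodic orbits.'' The cleanest route is Theorem~\ref{thm_perfect_fit_implies_lozenge} (perfect fits give lozenges fixed by group elements, up to an adjacent lozenge), combined with the closing lemma (Proposition~\ref{prop_pAclosing_lemma}) and density of periodic orbits to upgrade ``$g$ fixes two corners of a chain'' into ``two genuine periodic orbits are freely homotopic to $g$,'' via Proposition~\ref{prop_free_homotopy}. The subtlety is ensuring the two periodic orbits obtained are genuinely distinct lifts (so the homotopy is nontrivial), which follows because they lie at distinct corners of the chain and hence project to distinct orbits of $\hflow$; one must also remember that even a single lozenge with a fixed element of infinite order suffices, so no auxiliary distinctness argument about the two orbits being non-equal in $M$ is needed --- a nontrivial self-free-homotopy is already excluded by hypothesis.
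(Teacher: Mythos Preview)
Your proof ultimately works via the ``more directly'' route invoking Theorem~\ref{thm_perfect_fit_implies_lozenge}, but the initial assertion you make is wrong: it is \emph{not} true that an arbitrary lozenge in $\orb$ has its corners fixed by some nontrivial $g\in\pi_1(M)$. Lozenges are purely topological features of the bifoliation; nothing in the axioms guarantees a given corner is a fixed point. Your density argument does not repair this either, since nearby periodic leaves need not pass through the corner. What saves you is precisely Theorem~\ref{thm_perfect_fit_implies_lozenge}, which produces a $g$-invariant lozenge adjacent to any perfect fit; once you have that, Proposition~\ref{prop_free_homotopy} gives the nontrivial free homotopy, and the rest of your trichotomy bookkeeping is fine.

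The paper's proof is shorter and avoids Theorem~\ref{thm_perfect_fit_implies_lozenge} (and hence the closing lemma) entirely. It argues the contrapositive: if $\flow$ is not a suspension, then by Theorem~\ref{thm:trichotomy} the orbit space is skew or has branching. In the skew case, take any leaf fixed by some nontrivial $g$ (Axiom~\ref{Axiom_dense}); its fixed point has a perfect fit, so Lemma~\ref{lem:fixed_is_corner} makes it the corner of a $g$-invariant lozenge. In the branching case, Proposition~\ref{prop:4weak4strong} directly produces lozenges whose corners are fixed by a common element. Either way you get two distinct fixed points for some $g$, and Proposition~\ref{prop_free_homotopy} finishes. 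The advantage of the paper's route is that it never needs to show ``every lozenge is near a fixed one''; it only needs to exhibit \emph{one} fixed lozenge in each non-trivial case, and the trichotomy hands you exactly the structure (skew geometry or branching) that makes this immediate from Chapter~\ref{chap_bifoliated_planes} alone.
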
 
\begin{proof} 
The orbit space of suspension flows is trivial, and by the characterization of Anosov-like actions on trivial planes (Proposition \ref{prop:trivial_affine}) every element of $\pi_1(M)$ either acts freely or with a unique fixed point.  Thus, in the suspension case, Proposition \ref{prop_free_homotopy} gives that there are no nontrivial free homotopies between periodic orbits.  
For the converse, if an Anosov flow is not a suspension, then by Theorem \ref{thm:trichotomy} its orbit space is either skew or has branching.  In the skew case every point is the corner of a lozenge, and in the branching case one obtains lozenges with corners fixed by nontrivial elements of $\pi_1(M)$, thus Proposition \ref{prop_free_homotopy} gives nontrivial free homotopies between orbits.  
\end{proof} 

\begin{remark}[Parallel with Fried's theorem] 
Theorem \ref{thm_suspension_free_homotopy} can be seen as a homotopical version of Fried's homological characterization in \cite{Fri82}.  Fried's characterization can be rephrased as stating that an Anosov flow on $M$ is \emph{not} a suspension if and only if there exists an element $[g]\in H_1(M;\bZ)$ such that both $[g]$ and $-[g]$ (allowing for the possibility $[g]=0\in H_1(M;\bZ)$) are represented by periodic orbits of the flow.  
By Corollary \ref{cor_reverse_orientation}, the homotopical characterization in Theorem \ref{thm_suspension_free_homotopy} says that an Anosov flow is not a suspension if and only if there exists an element $g\in \pi_1(M)$ such that both $g$ and $g^{-1}$ are represented by periodic orbits. 
\end{remark} 

One can also rephrase the trichotomy from Theorem \ref{thm:trichotomy} in the Anosov case in terms of free homotopies between periodic orbits, as follows.  For simplicity we assume some orientability in the statement.  

\begin{theorem}  \label{thm_free_homotopy_trichotomy}
Let $\flow$ be a topological Anosov flow %
and assume $\cF^s$ and $\cF^u$ are transversely orientable.  Exactly one of the following holds: 
\begin{enumerate}[label=(\roman*)]
\item $\flow$ is a suspension and there are no nontrivial free homotopies between periodic orbits. 
\item $\flow$ has skew orbit space, and every periodic orbit admits a nontrivial free homotopy to another. 
\item  $\flow$ has branching leaves, and some periodic orbits are alone in their free homotopy class, while others admit nontrivial free homotopies. 
\end{enumerate} 
\end{theorem}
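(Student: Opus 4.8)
The plan is to show that the three cases of Theorem \ref{thm_free_homotopy_trichotomy} correspond exactly to the three cases of the Trichotomy Theorem \ref{thm:trichotomy} (trivial / skew / branching-or-singular), translating the statement about lozenges into a statement about free homotopies via Proposition \ref{prop_free_homotopy}. Since we assume $\cF^s$ and $\cF^u$ are nonsingular (the flow is topological Anosov, not genuinely pseudo-Anosov), the ``or a singular point in $P$'' alternative of Theorem \ref{thm:trichotomy} does not arise, so the trichotomy there reads: $\orb$ is trivial, skew, or has two-sided branching in both leaf spaces. The three cases are mutually exclusive because the three types of orbit space are mutually non-isomorphic (a trivial plane has Hausdorff leaf spaces, a skew plane has Hausdorff leaf spaces but is not trivial by the global structure of its $s,i$ functions, and a plane with branching has non-Hausdorff leaf spaces), so it suffices to prove the assertion about free homotopies in each case.

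First I would handle case (i). If $\flow$ is a suspension then $\orb$ is the trivial plane (this is the content of the relevant special case; the orbit space of a suspension is trivial as computed in the Examples chapter), and this case is exactly Theorem \ref{thm_suspension_free_homotopy} together with the observation that on a trivial plane the characterization of Anosov-like actions (Proposition \ref{prop:trivial_affine}) forces every nontrivial $g\in\pi_1(M)$ to act either freely or with a unique fixed point; in particular no $g$ fixes two distinct corners of a chain of lozenges, so by Proposition \ref{prop_free_homotopy} there are no nontrivial free homotopies between periodic orbits. I would simply cite Theorem \ref{thm_suspension_free_homotopy} here, and also note that in the trivial plane there are \emph{no} lozenges at all (the bifoliation is globally a product), which again via Proposition \ref{prop_free_homotopy} gives the absence of nontrivial free homotopies directly.

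Next, case (ii): if $\orb$ is skew, then by definition (Definition \ref{def_skew}) every point $x\in\orb$ lies in the closed region bounded by two lines meeting the boundary, and concretely every point of the skew plane is the corner of a lozenge --- the ideal corners being realized by the perfect fits coming from the boundary lines $y=x$ and $y=x+1$. More precisely, given a periodic orbit $\alpha$, pick a lift $\wt\alpha$ which projects to a point $p\in\orb$ fixed by the corresponding $g\in\pi_1(M)$ (Proposition \ref{prop_closed_orbit_fixed_point}); in the skew plane the rays of $\cF^\pm(p)$ each make a perfect fit with a boundary leaf, so by Lemma \ref{lem:fixed_is_corner} $p$ is the corner of a lozenge $L$ fixed by $g$, whose opposite corner $q$ is also fixed by $g$ and is distinct from $p$. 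By Axiom \ref{Axiom_dense} applied together with the density of periodic orbits --- or more simply, since $q$ is fixed by a nontrivial group element it is a periodic orbit by Proposition \ref{prop_closed_orbit_fixed_point} --- the point $q$ is the projection of a lift of a periodic orbit $\beta$, and Proposition \ref{prop_free_homotopy} gives a nontrivial free homotopy from $\alpha$ to $\beta$ (indeed to $\beta^{-1}$ as oriented orbits, by Corollary \ref{cor_reverse_orientation}'s reasoning, though we only need the unoriented statement). Since $\alpha$ was arbitrary, every periodic orbit admits a nontrivial free homotopy to another.

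Finally, case (iii): suppose $\orb$ has branching. By Theorem \ref{thm_branching_are_periodic} (Axiom \ref{Axiom_nonseparated}) a branching leaf is fixed by a nontrivial $g$, and by Proposition \ref{prop:4weak4strong} nonseparated leaves appear as sides of adjacent lozenges, so via Proposition \ref{prop_free_homotopy} we get periodic orbits admitting nontrivial free homotopies --- this produces the ``some'' half. For the complementary ``some periodic orbits are alone in their free homotopy class'' half, I would use the non-corner criterion: by Theorem \ref{thm_characterization_transitive} / Proposition \ref{prop_transitive_non_corner_dense} (or, if the flow is not transitive, by the Smale-class structure giving non-corner fixed points, together with Proposition \ref{prop:no_product_region} ruling out that $\orb$ is trivial) there exists a non-corner point $x\in\orb$ fixed by a nontrivial $g$, which is therefore a periodic orbit $\alpha$; if $\alpha$ were nontrivially freely homotopic to some $\beta$, Proposition \ref{prop_free_homotopy} would make a lift of $\alpha$ a corner of a chain of lozenges, contradicting that $x$ is non-corner. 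Hence $\alpha$ is alone in its free homotopy class. The main obstacle here is making sure the ``non-corner fixed point exists'' step is clean in the non-transitive case; the cited results (density of non-corner fixed points in $\orb$ when $P$ is not skew, Proposition \ref{prop_transitive_non_corner_dense}, or the Smale-class analysis in \cite{BBM24b}) cover it, but I would need to double-check that a non-corner \emph{fixed} point, not merely a non-corner point, is guaranteed --- and the cleanest route is: $\Fix_G$ is nonempty (Axiom \ref{Axiom_dense}) and, since $P$ is not trivial or skew, not every fixed point is a corner, so pick one that is not. This finishes all three cases, and mutual exclusivity follows from the mutual exclusivity in Theorem \ref{thm:trichotomy}.
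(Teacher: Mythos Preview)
Your approach is essentially the same as the paper's: invoke the trichotomy (Theorem~\ref{thm:trichotomy}), dispose of the trivial case via Theorem~\ref{thm_suspension_free_homotopy}, use the skew structure plus Lemma~\ref{lem:fixed_is_corner} to show every periodic point is a corner in case~(ii), and in case~(iii) combine Proposition~\ref{prop:4weak4strong} (giving elements with multiple fixed points) with the existence of non-corner fixed points (giving orbits alone in their class). The paper packages the last ingredient as Theorem~\ref{thm_exist_noncorners}, citing \cite{BBM24b} for the non-transitive case just as you do.

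One wobble to fix: your proposed ``cleanest route'' at the end --- ``since $P$ is not trivial or skew, not every fixed point is a corner, so pick one that is not'' --- is exactly the nontrivial assertion of Theorem~\ref{thm_exist_noncorners}, not something you can simply declare. The paper is explicit that this step is ``much more involved in the nontransitive case'' and treats it as a black box from \cite{BBM24b}. Your earlier citations (Proposition~\ref{prop_transitive_non_corner_dense} for the transitive case, the Smale-class analysis from \cite{BBM24b} otherwise) are the right ones; just drop the claim that there is a shortcut and cite Theorem~\ref{thm_exist_noncorners} directly. With that adjustment your proof matches the paper's.
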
 
The condition of transverse orientability only comes up in the characterization of skew flows: without this, there are examples of flows that have periodic orbits alone in their free homotopy class, but such that the square of the orbit (i.e., going twice around) is freely homotopic to other orbits.  %

In addition to Proposition \ref{prop_free_homotopy}, a key element in the proof is the following
\begin{theorem}[\cite{BFM22, BBM24b}] \label{thm_exist_noncorners}
Let $\flow$ be a pseudo-Anosov flow whose orbit space is not the skew plane.  Then there exists $g \in \pi_1(M)$ whose action on $\orb$ has a unique fixed point, which is not the corner of any lozenge.
\end{theorem}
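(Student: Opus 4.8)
The plan is to combine the structural theory developed in Chapters \ref{chap_bifoliated_planes} and \ref{chap_closing_lemma} with the closing lemma. Recall we want to find $g \in \pi_1(M)$ acting on $\orb$ with a unique fixed point that is a non-corner point. By the trichotomy (Theorem \ref{thm:trichotomy}), since $\orb$ is not skew, either $\orb$ is trivial or it is non-Hausdorff with branching (or has a singular point). I would first dispose of the trivial case: by Proposition \ref{prop:trivial_affine} the action is affine and conjugate to maps $(x,y)\mapsto (ax+b,cy+d)$; any element with $|a|\neq 1\neq|c|$ has a unique fixed point, and in a trivial plane no point is the corner of a lozenge (there are no perfect fits), so we are done immediately. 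Hence the substantive case is when $\orb$ has branching or a singular point.

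In the branching/singular case, the strategy is to use density of non-corner fixed points when the flow is transitive, and to handle the non-transitive case separately. First, suppose the action is topologically transitive. Then by Proposition \ref{prop_transitive_non_corner_dense} (applied since $\orb$ is not skew) the set of non-corner points fixed by nontrivial elements of $\pi_1(M)$ is dense in $\orb$; picking any such point $x$, fixed by some $g$, and passing to a power so that $g$ fixes all rays through $x$, Corollary \ref{cor:maximal_fixed} tells us the fixed set of $g$ is either $\{x\}$ alone or the corners of the maximal chain of lozenges containing $x$. Since $x$ is a non-corner point, $x$ is not a corner of any lozenge, so $g$ fixes no lozenge, and the second alternative is impossible — thus $\Fix(g)=\{x\}$, giving exactly what we want. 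For the non-transitive case, I would invoke the Smale-class machinery: by Proposition \ref{prop_transitive_implies_fix_dense} (contrapositive) there is more than one Smale class, and Lemma \ref{lem_wandering_lozenge} gives either a wandering totally ideal quadrilateral or a wandering lozenge $L$ with a corner in $\Fixbar_G$ but interior disjoint from $\Fixbar_G$. Within such a wandering region one still has, by Axiom \ref{Axiom_dense} and the product structure of Smale classes near it, fixed points; the key point to extract is a fixed point $p$ lying in the interior of a trivially foliated wandering region, so that $p$ cannot be a corner because its $\cF^\pm$-rays stay inside the trivially foliated interior and hence cross no perfect fits and meet no prong leaves — then Lemma \ref{lem_no_corner_criterion} (the non-corner criterion) applied in each quadrant of $p$ shows $p$ is a non-corner, and Corollary \ref{cor:maximal_fixed} again forces the fixing element to have $p$ as its unique fixed point.

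There is a subtlety I should flag: Theorem \ref{thm_exist_noncorners} as stated in the excerpt is about orbit spaces of pseudo-Anosov \emph{flows}, not general Anosov-like actions, so I am entitled to use the genuine closing lemma (Proposition \ref{prop_pAclosing_lemma}) and its orbit-space form (Proposition \ref{prop_closing_translation}), not merely the weak version. This is useful precisely in the non-transitive case, where one wants to promote ``a group element returns a small set near itself'' to ``there is an honest fixed point in a prescribed small neighborhood.'' Concretely, in the complement of the non-wandering set one can take a nonsingular point $z$ whose $\orb$-neighborhood $U$ is trivially foliated and disjoint from all singular leaves, and such that the forward flow orbit of a representative point returns close to itself while lying in a flow-box near $z$; the closing lemma then produces a periodic orbit whose lift is fixed by some $g$ with fixed point in $U$, and triviality of the foliation in $U$ prevents that fixed point from being a corner. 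So the real work is organizing the case division cleanly and making sure the neighborhood produced by the closing lemma genuinely avoids both perfect fits and prong leaves.

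The main obstacle I anticipate is the non-transitive case, specifically ensuring that the fixed point produced genuinely lies in a trivially foliated \emph{open} set (so that the non-corner criterion of Lemma \ref{lem_no_corner_criterion} applies in every one of its quadrants). For the wandering-lozenge output of Lemma \ref{lem_wandering_lozenge}, the corner of $L$ in $\Fixbar_G$ is itself a corner point, which is the wrong thing; one must instead find a fixed point in the \emph{interior} of $L$ (or of a nearby trivially foliated region), and argue that this forces a contradiction unless such a point exists — but the Smale-class product structure (Proposition \ref{prop_basic_product_structure}) combined with Axiom \ref{Axiom_dense} restricted to leaves meeting $L$ should supply it, essentially because the corner of $L$ lies in $\Fixbar_G$ means its leaves are limits of fixed leaves, whose intersections inside $L$ accumulate, and the closing property turns accumulation into an honest fixed point. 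Getting this last step airtight — rather than hand-waved — is where most of the care will be needed, and it may be cleanest to simply cite the relevant portion of the argument in \cite{BBM24b} rather than reprove it.
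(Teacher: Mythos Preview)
Your handling of the trivial and transitive cases is correct and matches the paper: the transitive case is exactly Lemma \ref{lem_transitive_implies_dense_noncorner} (Proposition \ref{prop_transitive_non_corner_dense}), and the reduction to a unique fixed point via Corollary \ref{cor:maximal_fixed} is right.

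The non-transitive argument, however, has a genuine gap. You propose to find a fixed point in the interior of the wandering lozenge $L$ furnished by Lemma \ref{lem_wandering_lozenge} and argue it is a non-corner. But by the very conclusion of that lemma, the interior of $L$ lies in $P \setminus \Fixbar_G$: there are \emph{no} fixed points there. The closing lemma confirms rather than circumvents this---if $gV\cap V\neq\emptyset$ for some small $V$ in the interior, the closing lemma would force a fixed point nearby, contradicting the wandering hypothesis; so no such nontrivial $g$ exists. Your sentence ``the closing property turns accumulation into an honest fixed point'' inside $L$ is therefore self-defeating. The corner of $L$ lying in $\Fixbar_G$ is no help either, since it is itself a corner. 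You also apply Lemma \ref{lem_no_corner_criterion} in the wrong direction: that lemma says rays \emph{crossing} a perfect-fit pair prevent a corner in that quadrant; rays \emph{avoiding} perfect fits proves nothing. (Incidentally, for genuine flows Corollary \ref{cor_no_ideal_quad} rules out totally ideal quadrilaterals, so only the wandering-lozenge alternative of Lemma \ref{lem_wandering_lozenge} is live.)

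The paper itself does not prove the non-transitive case here; immediately after stating the theorem it says this case is ``much more involved'' and refers the reader to \cite[Corollary 6.8]{BBM24b}. So your closing instinct---to cite \cite{BBM24b} rather than reprove it---is exactly what the paper does; but the sketched argument you give for that case should be dropped, as it does not work.
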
 

For transitive pseudo-Anosov flows, this was proved in Lemma \ref{lem_transitive_implies_dense_noncorner}. %
The proof is much more involved in the nontransitive case, even for topological Anosov flows. See \cite[Corollary 6.8]{BBM24b}.  
As in the statement of Proposition \ref{prop_transitive_non_corner_dense} or Lemma \ref{lem_transitive_implies_dense_noncorner}, a much stronger result is in fact true: Among the set of points of $\orb$ fixed by elements of $\pi_1(M)$, a dense subset arise as unique fixed points.   

\begin{proof}[Proof of Theorem \ref{thm_free_homotopy_trichotomy}]
By Theorem \ref{thm:trichotomy}, the orbit space of an Anosov flow is either trivial, skew or has branching leaves.  The trivial case is already covered in Theorem \ref{thm_suspension_free_homotopy}.  
When the orbit space is skew, each point in the orbit space is the corner of a chain of lozenges.  Thus, if some element $g\in \pi_1(M)$ preserves the transverse orientations and fixes a point, it fixes all corners of the chain, hence represents periodic orbits which are related by nontrivial free homotopies. 

Thus, in order to prove the theorem, we have to show that when the plane is neither trivial nor skew, there exists some nontrivial element of $\pi_1(M)$ with more than one fixed point, and some other which preserves transverse orientations and has only one fixed point.  
The existence of elements with multiple fixed points is given by Proposition \ref{prop:4weak4strong}, which states that any pair of nonseparated leaves are fixed by a common nontrivial $g \in \pi_1(M)$, which will thus have a fixed point on each leaf.  Finally, the existence of elements preserving transverse orientations and with only one fixed point is Theorem \ref{thm_exist_noncorners}. 
\end{proof}

\begin{rem}
The characterization of suspensions given by Theorem \ref{thm_free_homotopy_trichotomy} no longer holds for \emph{pseudo}-Anosov flows, as there exists many pseudo-Anosov flows that are not suspensions, but nevertheless do not admit any lozenges and hence non nontrivial free homotopies between periodic orbits.  

These are known in the literature as ``pseudo-Anosov flows without perfect fits".  This is because, (by 
Theorem \ref{thm_perfect_fit_implies_lozenge})  the orbit space of a flow has a perfect fit if and only if it contains a lozenge.   The class of pseudo-Anosov flows without perfect fits, especially on hyperbolic 3-manifolds, has been of particular recent interest due to their relationship with veering triangulations, see for 
instance \cite{Ago11,Gue16,LMT23,LMT24,Mos92a,Mos92b}. 
\end{rem}

Theorem \ref{thm_free_homotopy_trichotomy} gives information about the orbit space of Anosov flow based on {\em how many}  periodic orbits represents a free homotopy class. In \cite{BMB24,BFM22}, it was shown that knowing instead \emph{which} free homotopy classes are represented by at least one periodic orbit gives even more information.  In fact, in many cases, this data completely determines the flow.  To describe this precisely, we make a definition.  

\begin{definition}\label{def_free_hom_data}
Let $\flow$ be a flow on $M$. 
The \emph{free homotopy data} of $\flow$, denoted $\cP(\flow)$ is the set of free homotopy classes represented by unoriented periodic orbits of $\flow$. That is,
\[
\cP(\flow) :=\{ [g] \in [\pi_1(M)] \mid [g] \text{ or } [g^{-1}] \text{ is represented by a periodic orbit of } \flow\},
\]
where $[g]$ denotes the conjugacy class of an element of $\pi_1(M)$.
\end{definition}

The following result, a special case of the main theorem of \cite{BFM22}, gives a large class of examples (including all pseudo-Anosov flows on hyperbolic 3-manifolds) where the free homotopy data determines the flow.  

\begin{theorem}[\cite{BMB24,BFM22}]\label{thm_free_homotopy_data}
Let $\flow_1$, $\flow_2$ be two pseudo-Anosov flows on $M$. Assume that $\flow_1$ is transitive and 
has no transverse, incompressible tori or Klein bottles 
Then $\cP(\flow_1)=\cP(\flow_2)$ if and only if there is an orbit equivalence of $\flow_1$ and $\flow_2$ by a map isotopic to the identity.

More generally, $\cP(\flow_1)$ is the image of $\cP(\flow_2)$ under an automorphism of $\pi_1(M)$ if and only if $\flow_1$ and $\flow_2$ are orbit equivalent (by a map inducing this automorphism). 
\end{theorem}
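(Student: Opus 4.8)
The plan is to reduce the statement to the rigidity theorem of Barbot (Theorem \ref{thm_action_determines_OEflow}) by showing that the free homotopy data determines the induced action on the orbit space up to an equivariant conjugacy. The first step is to package $\cP(\flow)$ into dynamical information about the action of $\pi_1(M)$ on $\orb$. By Proposition \ref{prop_closed_orbit_fixed_point}, an element $g \in \pi_1(M)$ is represented by a periodic orbit if and only if $g$ acts on $\orb$ with a fixed point; and by Theorem \ref{thm_branching_are_periodic}, Proposition \ref{prop_hyperbolic}, and Theorem \ref{thm:distinct_fix_is_chain}, the fixed-point set of such a $g$ is either a single non-corner point or the set of corners of the unique maximal $g$-invariant chain of lozenges (Corollary \ref{cor:maximal_fixed}). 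So $\cP(\flow)$ records exactly which elements have fixed points, and — using Proposition \ref{prop_free_homotopy} — which pairs of fixed points are joined by chains of lozenges fixed by a common element. The idea is that this combinatorial-dynamical skeleton (the set $\Fixbar_G$, organized by lozenges, perfect fits, and the Smale order) is rigid enough to reconstruct the bifoliated plane.

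The second, and main, step is the reconstruction itself: given that $\flow_1$ is transitive and $M$ has no transverse incompressible tori or Klein bottles, one shows that $\orb_{\flow_1}$, with its two foliations, is canonically determined by the $\pi_1(M)$-equivariant pattern of fixed points and lozenges. Transitivity is crucial here: by Theorem \ref{thm_characterization_transitive} the fixed points of nontrivial elements are dense in $\orb$, and by Proposition \ref{prop_transitive_non_corner_dense} the non-corner fixed points are dense (since the plane is not skew — absence of transverse tori rules out the geodesic-flow/skew case via Theorem \ref{thm_Ghys_Plante_theorem} and the totally-free analysis, or more directly one argues the skew plane always has transverse annuli). The absence of essential tori forces the plane to have a particularly rigid structure: by the theory of Chapter \ref{chap_features_and_3manifold} (Birkhoff annuli correspond to chains of lozenges, and essential tori correspond to $\bZ^2$-subgroups stabilizing scalloped regions or minimal chains, via Proposition \ref{prop_stabilizer_scalloped} and Corollary \ref{prop_Z2_stabilize_minimal_chain}), the hypothesis implies there are no scalloped regions and no "large" lozenge configurations, so each lozenge is essentially forced by local data around a single non-corner periodic orbit. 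One then builds a homeomorphism $\bar f \colon \orb_{\flow_1} \to \orb_{\flow_2}$ by first matching the dense sets of non-corner fixed points (via the assumed equality or automorphism of free homotopy data), checking this matching respects the Smale order and the transverse order structure of leaves, and then extending continuously — using density and the product structure on Smale classes (Proposition \ref{prop_basic_product_structure}) together with the characterization of leaves via convergence of orbits (Remark \ref{rem_forward_converge_implies_local_stable}). Equivariance is automatic from the construction since both the dense set and its matching are $\pi_1(M)$-equivariant by hypothesis.

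The third step is to invoke Theorem \ref{thm_action_determines_OEflow}: the equivariant homeomorphism $\bar f$ between the orbit spaces yields an orbit equivalence $f$ of $\flow_1$ and $\flow_2$ inducing the given automorphism $f_\ast$ of $\pi_1(M)$; and when $f_\ast = \id$, the resulting $f$ is isotopic to the identity because a self-homeomorphism of an aspherical closed $3$-manifold inducing the identity on $\pi_1$ is homotopic — hence (in dimension $3$) isotopic — to the identity. The converse direction is immediate: an orbit equivalence carries periodic orbits to periodic orbits preserving free homotopy classes (up to the induced automorphism), so $\cP(\flow_1)$ maps to $\cP(\flow_2)$ under $f_\ast$.

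The main obstacle I anticipate is the continuous extension in step two: matching the dense sets of non-corner fixed points is straightforward from the hypothesis, but proving that this matching extends to a \emph{homeomorphism} of the planes — and that it sends foliations to foliations — requires careful control of how leaves and their branching structure are encoded by the fixed-point data. One must show that nonseparated leaves, perfect fits, and the boundary leaves of Smale classes are all detectable from $\cP(\flow)$, which uses the finer structural results (Proposition \ref{prop:4weak4strong}, Theorem \ref{thm_perfect_fit_implies_lozenge}, Lemma \ref{lem_wandering_lozenge}) in an essential way; this is exactly where the detailed argument of \cite{BFM22, BMB24} is needed and where the no-essential-tori hypothesis does the heavy lifting by eliminating the scalloped regions and infinite lines of lozenges that would otherwise leave ambiguity in the reconstruction.
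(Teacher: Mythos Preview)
The paper does not prove this theorem; immediately after the statement it says ``The proof of this theorem is beyond the scope of this text,'' deferring entirely to \cite{BFM22, BMB24}. So there is no in-paper proof to compare against.

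Your outline does match the broad architecture of those references --- reduce to Theorem \ref{thm_action_determines_OEflow} by reconstructing the bifoliated-plane action from the fixed-point combinatorics encoded in $\cP(\flow)$ --- and you correctly flag the continuous extension as the crux. But one step is genuinely wrong: your claim that the hypothesis forces the orbit space to be non-skew. Neither Theorem \ref{thm_Ghys_Plante_theorem} (which runs in the opposite direction: an abelian normal subgroup implies skew or trivial, not conversely) nor the totally-free analysis of graph manifolds gives this. Skew Anosov flows exist on closed hyperbolic $3$-manifolds (via Fried surgery, as discussed in the remarks on surgery in Chapter \ref{chap_examples_orbit_spaces}); since the skew plane has Hausdorff leaf spaces there are no nonseparated leaves, hence no pair of lozenges sharing a \emph{side}, and so by Theorem \ref{thm_transverse_torus_special_chains} such flows have no transverse torus. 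The skew case is therefore fully compatible with the hypothesis, Proposition \ref{prop_transitive_non_corner_dense} is unavailable, and your plan to match dense non-corner fixed points collapses there. In the references the $\bR$-covered case is handled by a separate argument. A smaller inaccuracy: the no-transverse-tori hypothesis does kill scalloped regions (via Proposition \ref{prop_scalloped_surface}), but it does \emph{not} eliminate infinite strings of lozenges sharing only corners, so ``no large lozenge configurations'' overstates what you get for free.
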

The proof of this theorem is beyond the scope of this text, however the material presented up to this point should give the reader all the prerequisite tools needed to understand it.  We have stated only a special case of the results of \cite{BMB24,BFM22} for simplicity -- the general statements also address the case of flows with transverse tori or Klein bottles, where in certain situations an additional finite invariant is required to determine the flow up to orbit equivalence.  
As a preview of some of the results to come, we note also that the incompressibility hypothesis given in the statement above is redundant: Lemma \ref{lem_transversetorus} below shows that a torus or Klein bottle transverse to a pseudo-Anosov flow is automatically incompressible.

\section{Birkhoff Annuli}

Proposition \ref{prop_free_homotopy} showed that $g$-invariant lozenges in the orbit space correspond to freely homotopic periodic orbits in the manifold.  Such a free homotopy can be realized as a $\pi_1$-injectively immersed annulus bounded by the freely homotopic periodic orbits.   In this section we study a special class of annuli bounded by periodic orbits called {\em Birkhoff annuli} and describe how they correspond to $g$-invariant lozenges or chains.   This study was largely initiated by Barbot in \cite{Bar95b}.

\begin{definition}\label{def_Birkhoff_annulus} 
Let $\flow$ be a pseudo-Anosov flow on $M$.  A {\em Birkhoff annulus} (for $\flow$) is an immersed annulus $A$ such that 
the boundaries of $A$ are periodic orbits, and
$\flow$ is transverse to the interior of $A$.  
\end{definition}

Since the boundaries of a Birkhoff annulus are periodic orbits, Proposition \ref{prop_top_properties} implies such an annulus is automatically $\pi_1$-injectively immersed.  However, the two boundary orbits are permitted to coincide, in which case the image of $A$ in $M$ will (topologiccally) be an immersed torus or Klein bottle.   However, we still think of $A$ as an annulus.  In particular, we use the terminology the {\em core of $A$} to refer to a generator of  the image of $\pi_1(A)$ in $\pi_1(M)$.  

If $A$ is a Birkhoff annulus, the stable and unstable foliations of $\flow$ intersect its interior $\mathring A$ to give two transverse, 1-dimensional foliations of $\mathring A$.  These extend continuously to foliations on $A$ that include the boundary components of $A$ as leaves.  {\em A priori}, these foliations may be singular, but we will very soon prove they are not.  We call these the \emph{induced foliations} on $A$, denoted $\fs_A$ and $\fu_A$.  

\begin{proposition} \label{prop_hopf_nonsingular} 
Let $A$ be a Birkhoff annulus.  Then $\fs_A$ and $\fu_A$ are nonsingular, and closed leaves of $\fs_A$ in $\mathring A$ are disjoint from closed leaves of $\fu_A$.  
\end{proposition}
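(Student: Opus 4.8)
The plan is to use the Euler characteristic of the annulus together with the Poincaré--Hopf index theorem (adapted to foliations with prong singularities) to rule out singular points, and then to invoke the asymptotic behavior of orbits along stable and unstable leaves — exactly as encoded in conditions \ref{item_PAF_converges} and \ref{item_PAF_backwards_expansivity} of Definition \ref{def_topPA} — to obtain the disjointness statement. First I would observe that since $\flow$ is transverse to the interior $\mathring A$, a singular point of the induced foliations $\fs_A$, $\fu_A$ in $\mathring A$ would have to be a point where $\fs$ or $\fu$ has a prong singularity in $M$, i.e. a point on one of the periodic orbits $\alpha_i$; but those are exactly the points where the flow $\flow$ is \emph{tangent} to the singular orbit, contradicting transversality of $\flow$ to $\mathring A$. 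So singularities can only occur on $\partial A$. Since $\partial A$ consists of periodic orbits, which are leaves of both $\fs_A$ and $\fu_A$, the boundary leaves are smooth circles and contribute no prong; hence $\fs_A$ and $\fu_A$ are nonsingular. (Alternatively, one can run an index-count argument: a $p$-prong on a leaf-boundary annulus would force $\chi(A) = 0$ to equal a sum of negative prong contributions, which is impossible; I would keep whichever of these two framings reads most cleanly.)

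For the disjointness claim, suppose for contradiction that $c_s$ is a closed leaf of $\fs_A$ and $c_u$ a closed leaf of $\fu_A$, both contained in $\mathring A$, and that they intersect. A closed leaf of $\fs_A$ is a closed orbit-saturated curve in a leaf of $\fs$, transverse to the flow direction inside $A$; I would first argue that such a closed leaf of $\fs_A$ must in fact be a periodic orbit of $\flow$ — no, that is false in general, so instead: the closed leaf $c_s$ lies in a single leaf $L^s$ of $\fs$ in $M$, and $c_u$ lies in a single leaf $L^u$ of $\fu$. If $c_s$ and $c_u$ meet at a point $x$, then $x$ lies on both $L^s$ and $L^u$, so (lifting to $\wt M$, using Proposition \ref{prop_top_properties}\ref{item_unique_intersection}) the orbit of $x$ is the unique intersection $L^s \cap L^u$. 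The key point: $c_s$ being a closed curve in $L^s$ transverse to the flow gives, via Lemma \ref{lem_all_converge}, that all orbits through points of $c_s$ converge in forward time; similarly all orbits through $c_u$ converge in backward time. But $c_s$ and $c_u$ being closed leaves means the first-return map of $\flow$ to $c_s$ (resp.\ $c_u$) along $A$ is a homeomorphism of the circle, and iterating it forces the existence of orbits that both converge in the future (coming from $c_s$) and in the past (coming from $c_u$) to the orbit of $x$ — this is precisely the kind of ``forwards and backwards convergence'' that condition \ref{item_PAF_backwards_expansivity} forbids for points not on the same local orbit. Making this precise is where Remark \ref{rem_forward_converge_implies_local_stable} is useful: leaves of $\cF^s$ are exactly the maximal sets of orbits converging in the future, so if an orbit lies in the forward-convergent set of $c_s$ and the backward-convergent set of $c_u$, its stable leaf contains $c_s$ and its unstable leaf contains $c_u$, giving a closed transversal inside a single leaf of $\fu$ — contradicting Proposition \ref{prop_closed_transversal}.

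The main obstacle I anticipate is the middle step: rigorously converting ``$c_s$ and $c_u$ are closed leaves that intersect'' into ``there is an orbit converging to a periodic orbit both in forward and backward time'' (or into a nullhomotopic closed transversal). The subtlety is that a closed leaf of $\fs_A$ need not be a closed orbit of $\flow$ — it is only required to be a closed curve in a weak-stable leaf transverse to the orbit foliation of $A$ — so one must track how the flow moves $c_s$ off of $A$ and use that $c_s$ is contained in a leaf $L^s$ whose orbit-foliation is trivial in the universal cover (Lemma \ref{lem_trivial_foliated_lift}). Concretely, I would lift $c_s$ and $c_u$ to $\wt M$, note $c_s$ is null-homotopic in $M$ only if its core is trivial — but its core is the core of $A$, which is nontrivial by Proposition \ref{prop_top_properties}\ref{item_no_contractible_orbit} — so instead $c_s$ is freely homotopic to a boundary orbit of $A$, hence its lift is a curve in a leaf $L^s$ of $\hfs$ invariant under the corresponding $\gamma \in \pi_1(M)$; similarly for $c_u$ in $L^u$. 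Then $L^s \cap L^u$ is a single orbit (Proposition \ref{prop_top_properties}\ref{item_unique_intersection}) invariant under $\gamma$, i.e.\ a periodic orbit $\beta$, and $c_s$, $c_u$ both ``spiral toward'' $\beta$ — $c_s$ toward $\beta$ in forward time and $c_u$ toward $\beta$ in backward time — but as closed transversals to $\flow$ within $A$ this would make $\beta$ have a closed transversal in $\fs(\beta)$ that is also a closed transversal of $\fu$, contradicting Proposition \ref{prop_closed_transversal}. Once that reduction is clean, the proof is short; I expect the write-up to be two short paragraphs plus a figure reference.
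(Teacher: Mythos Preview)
Your nonsingularity argument has a genuine error. You claim that a singularity of $\fs_A$ in $\mathring A$ must lie on a singular periodic orbit of $\flow$, and that transversality of $\mathring A$ to $\flow$ then prevents this. But transversality of $A$ to the flow direction does \emph{not} prevent $\mathring A$ from meeting a singular periodic orbit: the flow is tangent to every orbit, singular or not, and $A$ can perfectly well cross a $k$-prong orbit transversally in its interior, producing a $k$-prong singularity of $\fs_A$ there. This is exactly the possibility the paper has to rule out. Your alternative index argument is on the right track, but you need to deal with the boundary: the paper doubles $A$ along $\partial A$ to obtain a torus (Euler characteristic zero) with a line field whose only possible singularities are $k$-prongs with $k\ge 3$, each of negative index, and then applies Poincar\'e--Hopf. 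Without the doubling step, you have to worry about boundary contributions, and you have not addressed them.

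For the disjointness of closed leaves you are working far too hard, and the arguments you sketch do not actually land a contradiction. Your appeal to Proposition~\ref{prop_closed_transversal} at the end says only that a closed transversal to $\fu$ is not nullhomotopic, which is true of $c_s$ (it is homotopic to the core of $A$) and yields nothing. The convergence-in-both-directions line does not close up either: if $p\in c_s\cap c_u$ then $p$ lies on the single orbit $L^s\cap L^u$, which is $\gamma$-periodic, so you have found a periodic orbit crossing $\mathring A$, not a contradiction. The paper's argument is one sentence of surface topology: once $\fs_A$ and $\fu_A$ are known to be nonsingular and transverse on the annulus, any closed leaf $c_s$ of $\fs_A$ separates $A$, and in a collar of $c_s$ the transverse foliation $\fu_A$ crosses $c_s$ with a consistent coorientation; hence no leaf of $\fu_A$ can meet $c_s$ more than once, and a closed leaf $c_u$ cannot meet it at all. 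No lifting to $\wt M$, no asymptotics, no Proposition~\ref{prop_closed_transversal} is needed.
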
 

To prove this we use the Poincar\'e--Hopf formula, which  relates the Euler characteristic of a surface to the indices around singular points of a vector field with isolated zeroes.  Hopf proved this also for line fields, where index is defined similarly to that for vector fields but may take values in $\frac{1}{2} \bZ$. With this definition, a $k$-prong singularity has index $(2-k)/2$.  
\begin{lemma}[Poincar\'e--Hopf, see \cite{Hop83} Chapter 3.1]
Let $S$ be a closed surface and $\ell$ a continuous line field on $S$ with isolated singularities.  Then the sum of the indices of $\ell$ over all singular points is equal to $\chi(S)$.
\end{lemma}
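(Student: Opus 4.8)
The plan is to reduce to the classical Poincaré--Hopf theorem for vector fields by the standard device of \emph{squaring} the line field, after first passing to the orientation double cover. Concretely: if $\pi\colon \widetilde S\to S$ is the orientation double cover, then $\ell$ pulls back to a line field $\widetilde\ell$ on $\widetilde S$ whose singularities are exactly the preimages of those of $\ell$; since $\pi$ is a local homeomorphism and a $p$-prong is preserved (as a germ) by any local homeomorphism, each singularity $p_i$ of $\ell$ contributes two singularities of $\widetilde\ell$, each of the same index. Thus $\sum_{\widetilde S}\operatorname{index}(\widetilde\ell)=2\sum_S\operatorname{index}(\ell)$, while $\chi(\widetilde S)=2\chi(S)$, so it suffices to prove the formula when $S$ is orientable. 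Fix an orientation on $S$; this makes $TS$ a complex line bundle $L$ with $\langle c_1(L),[S]\rangle=\chi(S)$.

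First I would make the squaring construction precise. Fix a Riemannian metric on $S$. Away from the singularities, each line of $\ell$ contains two unit vectors $\pm v$, and $v\otimes v=(-v)\otimes(-v)$ is a well-defined unit section $X$ of $L^{\otimes 2}$ over $S\setminus\{p_1,\dots,p_n\}$. On a small punctured disc $D_i^\ast$ around $p_i$, trivializing $L^{\otimes 2}$ over $D_i$, the map $X$ becomes a map $D_i^\ast\to S^1$; its winding number along $\partial D_i$ equals $\deg\bigl(\ell|_{\partial D_i}\colon \partial D_i\to \mathbb{RP}^1\bigr)$, because the ``double angle'' map $\mathbb{RP}^1\to S^1$ is an orientation-preserving homeomorphism. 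Extending $X$ radially over each $D_i$ to a section with a single isolated zero at $p_i$, we obtain a section of $L^{\otimes 2}$ over all of $S$ with isolated zeros, the zero at $p_i$ having local degree $\deg\bigl(\ell|_{\partial D_i}\to\mathbb{RP}^1\bigr)=2\operatorname{index}(\ell,p_i)$ (here $\operatorname{index}$ of a line field is defined, as usual, as half the degree over $\mathbb{RP}^1$, so that a $p$-prong has index $1-p/2$). Applying the Poincaré--Hopf index formula to the complex line bundle $L^{\otimes 2}$ — equivalently, counting zeros of a section with signs gives $\langle c_1(L^{\otimes 2}),[S]\rangle$ — yields
\[
\sum_{i=1}^n 2\operatorname{index}(\ell,p_i)=\langle c_1(L^{\otimes 2}),[S]\rangle = 2\langle c_1(L),[S]\rangle = 2\chi(S),
\]
so $\sum_i\operatorname{index}(\ell,p_i)=\chi(S)$, and the reduction in the first paragraph completes the proof.

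The main subtlety, and the reason one cannot simply quote Poincaré--Hopf verbatim, is precisely that a line field need not lift to a genuine vector field: indices can be half-integers (as for odd-pronged singularities), and on non-orientable $S$ there is no global complex structure on $TS$. Both issues are dispatched by the two moves above — the orientation double cover for non-orientability, and passing from $L$ to $L^{\otimes 2}$ (whose sections \emph{do} wind integrally, since $\deg(\ell\to\mathbb{RP}^1)\in\bZ$) for the half-integers. An alternative, more hands-on route would be a Gauss--Bonnet style argument: choose a Riemannian metric, measure the turning of $\ell$ along geodesic triangles of a fine triangulation (using that the turning of a line field along a path is well defined in $\R$ once an initial direction is chosen), and let the edge contributions cancel in pairs; the leftover curvature term integrates to $2\pi\chi(S)$ and the singular contributions to $2\pi\sum_i\operatorname{index}(\ell,p_i)$. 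Either way the statement is classical and goes back to Hopf; for the write-up I would present the double-cover/squaring argument and cite \cite{Hop83} for the underlying index theory.
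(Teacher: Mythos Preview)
The paper does not give a proof of this lemma at all: it simply states the result with a citation to Hopf (\cite{Hop83}, Chapter 3.1) and uses it as a black box in the proof of Proposition~\ref{prop_hopf_nonsingular}. Your squaring/double-cover argument is the standard way to reduce the line-field case to the classical Poincar\'e--Hopf theorem for vector fields, and it is correct; it simply goes further than the paper, which is content to quote the result.
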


\begin{proof}[Proof of Proposition \ref{prop_hopf_nonsingular}]
First we prove the foliations are each nonsingular.  Fix one of the induced foliations and perturb it to be smooth.  Let  $\ell$ be a line field tangent to the smoothed foliation and vanishing only at the prong points (if any).  Double $A$ along the boundary to produce a torus with a line field.  Since $A$ is transverse to $\flow$ except at the boundary, the only possible singularities of the doubled line field are $k$-prongs, where $k\geq 3$.  Such singularities have negative index, and the torus has Euler characteristic zero, so we conclude by Poincar\'e--Hopf that $\ell$ is nonsingular.   
The statement about non-boundary closed leaves of $\fs_A$ and $\fu_A$ being disjoint is now a simple consequence of transversality of these foliations in $\mathring A$.  
\end{proof}

\subsection{From Birkhoff annuli to chains of lozenges} 

Our next goal is to describe the features of $\orb$ corresponding to Birkhoff annuli in $M$, more precisely, their {\em trace}.  

\begin{definition}[Trace of a Birkhoff annulus]
If $A$ is a Birkhoff annulus for $\flow$ with core represented by $g \in \pi_1(M)$, the {\em trace} of $A$ is the subset of $\orb$ obtained by projecting the $g$-invariant lift $\wt A \subset \wt M$ to $\orb$.   
\end{definition} 
The definition of trace depends on $g$, hence on a choice of basepoint for $\pi_1(M)$.  However, a change of basepoint for $\pi_1(M)$ (or equivalently, taking a different lift of $A$ to $\wt M$) modifies the trace only by the action of some element of $\pi_1(M)$ on $\orb$.  Thus, the trace is well defined {\em up to the action of $\pi_1(M)$}, and any property which is well defined up to automorphism of $\orb$, such as being contained in a single lozenge, is a meaningful notion.   We typically use the notation $\hat A$ for the trace of a Birkhoff annulus $A$.   The following result allows one to determine the trace from the closed leaves of
$\fs_A$ and $\fu_A$.  

\begin{theorem}\label{thm_birkhoff_to_chain}
Let $A$ be a Birkhoff annulus with core represented by $g$ and trace $\hat A$.  
Suppose that the induced foliations of $\fs_A$ and $\fu_A$ have a total of $n-1$ closed leaves in $\mathring A$. Then the following hold: 
 \begin{enumerate}[label=(\roman*)]
\item \label{item_trace_vs_chain} $\hat A$ is contained in a chain of $g$-invariant lozenges $\cC = \{L_1, \dots, L_n\}$, and consists of the union of the 
interiors of the lozenges, all sides that are shared between two lozenges, and the corners of $L_1$ and $L_n$ (respectively) not shared by other lozenges of $\cC$.  These two corners are the projections of the boundary components of the $g$-invariant lift $\wt A$.  
\item \label{item_Birkhoff_turn_ideal_corners} Consecutive lozenges in $\cC$ share a side, and no three consecutive lozenges share a common corner. 
\item \label{item_closd_leaves} The closed leaves of $\fs_A$ (resp.~$\fu_A$) project to the shared stable (resp.~unstable) sides of consecutive lozenges in $\cC$.
\end{enumerate}
\end{theorem}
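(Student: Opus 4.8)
The plan is to lift the Birkhoff annulus $A$ to its $g$-invariant lift $\wt A \subset \wt M$ and carefully analyze the foliations $\hfs, \hfu$ restricted to $\wt A$, then project everything to the orbit space. Since $\flow$ is transverse to the interior of $A$, the interior $\mathring{\wt A}$ projects injectively (but not properly) to $\orb$, and the image is an open subset of $\orb$ which is a union of orbits meeting $\wt A$. The boundary components of $\wt A$ are lifts of the two periodic orbits bounding $A$, each invariant by $g$ (since $g$ is the core), so they project to two fixed points $x_1$ and $x_n$ of $g$ in $\orb$. The key observation is that $\wt A$ is an infinite strip: the universal cover of $A$ is a strip, and $\wt A$ is the quotient by the subgroup $\langle g\rangle$ generated by the core, so $\wt A$ is homeomorphic to $\bR \times [0,1]$ with $g$ acting by a vertical translation. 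The two foliations $\hfs|_{\wt A}$ and $\hfu|_{\wt A}$ are nonsingular (by Proposition \ref{prop_hopf_nonsingular}, lifted) and $g$-invariant, with the two boundary lines as leaves of each.

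First I would understand the closed leaves. A closed leaf of $\fs_A$ in $\mathring A$ lifts to a $g$-invariant line in $\mathring{\wt A}$, which is contained in a single leaf $l$ of $\hfs$ fixed by $g$, and hence projects to a single point of $\orb$ fixed by $g$ on the stable leaf through it. Since consecutive closed leaves of $\fs_A$ and $\fu_A$ alternate along $A$ (two closed leaves of the same foliation cannot be adjacent without a closed leaf of the other foliation in between — otherwise one would get an annular subregion foliated by two transverse nonsingular foliations both having the same two boundary circles as leaves, forcing a tangency by Poincar\'e--Hopf on the doubled torus, or one uses transversality directly as in Proposition \ref{prop_hopf_nonsingular}), the $n-1$ closed leaves divide $A$ into $n$ subannuli (counting the two boundary-adjacent ones), and each subannulus lifts to a substrip of $\wt A$. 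Each such substrip is trivially foliated by both $\hfs$ and $\hfu$ (no closed leaves in its interior means the orbit foliation of the substrip, projected to $\orb$, separates points), so its projection to $\orb$ is a trivially foliated open region $R_i$. The boundary of each $R_i$ (in $\orb$) consists of pieces of the images of the closed leaves on either side — these are points, namely the $g$-fixed points coming from the closed leaves — together with portions of leaves of $\hfs, \hfu$ through those points.

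Next I would identify each $R_i$ as (the interior of) a lozenge. The substrip of $\wt A$ bounded by two $g$-invariant lines (lifts of closed leaves, or a closed leaf and a boundary orbit) is trivially foliated and its closure in $\orb$ picks up exactly two $g$-fixed points as the ``pinched'' ends — say $y_{i-1}$ coming from the closed leaf of $\fs_A$ (or $\fu_A$) on one side and $y_i$ from the one on the other side. Because consecutive closed leaves belong to different foliations, one of $y_{i-1}, y_i$ sits on a stable side and the other on an unstable side, and the trivially foliated region $R_i$ limits onto rays of $\hfs(y_{i-1})$ and $\hfu(y_{i-1})$ at one end and rays of $\hfs(y_i)$ and $\hfu(y_i)$ at the other. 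To see these rays make perfect fits, one applies the contracting/expanding dynamics of $g$ (from Axiom \ref{Axiom_A1} / Proposition \ref{prop_hyperbolic}): $g$ fixes $y_{i-1}$ and $y_i$ and acts as a translation on $\wt A$, so the leaves crossing $R_i$ that accumulate onto $\hfs(y_i)$ are carried to each other by $g$, which forces the perfect-fit configuration exactly as in Lemma \ref{lem:fixed_is_corner} and Proposition \ref{prop:4weak4strong}. Hence $R_i$ is the interior of a lozenge $L_i$ with corners $y_{i-1}$ and $y_i$, and consecutive lozenges $L_i, L_{i+1}$ share the corner $y_i$ and in fact share a side (the stable or unstable side through $y_i$ that lies between them). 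This gives items \ref{item_trace_vs_chain} and \ref{item_closd_leaves}, since the closed leaves of $\fs_A$ project precisely to the shared stable sides and those of $\fu_A$ to the shared unstable sides.

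For item \ref{item_Birkhoff_turn_ideal_corners}, the fact that consecutive lozenges share a side follows from the alternation argument just described: between any two consecutive ideal corners along $\wt A$ there is a genuine closed leaf of $\fs_A$ or $\fu_A$ (not merely a pivot), and this closed leaf projects to a shared side. That no three consecutive lozenges share a common corner is because if $L_{i-1}, L_i, L_{i+1}$ all shared the corner $y$, then the two closed leaves bounding the $i$-th substrip would both be leaves through $y$'s fixed point on $\wt A$, which is impossible since they are disjoint closed curves in $\mathring A$ (the $i$-th substrip is a genuine annulus, not degenerate). Equivalently, one invokes that pivots are discrete (Proposition \ref{prop_pivots_discrete}) together with the transversality of $\fs_A$ and $\fu_A$ inside $\mathring A$. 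I expect the main obstacle to be the careful bookkeeping in identifying the boundary of each projected substrip $R_i$ with the correct rays and verifying the perfect-fit property rigorously — one must rule out the possibility that a substrip's projection limits onto branching leaves or escapes to infinity rather than closing up into a lozenge, which is where $g$-invariance and the absence of infinite product regions (Proposition \ref{prop:no_product_region}) combined with the hyperbolic dynamics of $g$ are essential. Once that is in place, the chain structure and items \ref{item_Birkhoff_turn_ideal_corners}--\ref{item_closd_leaves} follow by organizing the local pictures.
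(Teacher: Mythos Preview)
Your approach has a concrete error: the claim that consecutive closed leaves of $\fs_A$ and $\fu_A$ must alternate is false. Two consecutive closed leaves can both be stable (or both unstable); this corresponds precisely to the chain $\cC$ containing a \emph{line} of lozenges in the sense of Definition~\ref{def_line_lozenges}, and the paper remarks right after the theorem that $\cC$ is a line exactly when the closed leaves all lie in one foliation. Your Poincar\'e--Hopf justification fails because a closed leaf of $\fs_A$ is \emph{not} a leaf of $\fu_A$ --- leaves of $\fu_A$ are transverse to it --- so there is no ``two transverse foliations with the same two boundary circles as leaves'' situation to double. Since your identification of each substrip projection $R_i$ as a single lozenge leans on alternation (``one of $y_{i-1}, y_i$ sits on a stable side and the other on an unstable side''), the core of the argument collapses.

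The paper proceeds differently: rather than analyzing substrips, it chooses a transverse arc $\gamma \subset A$ from one boundary to the other meeting each closed leaf once, and tracks the projected arc $\hat\gamma$ in $\orb$. The substance of the proof --- which replaces your acknowledged ``careful bookkeeping'' --- is a sequence of claims showing $\hat\gamma$ stays in a single quadrant of each successive $g$-fixed corner $x_i$. The mechanism is not convergence to a perfect fit but rather the disjointness of $\hat\gamma$ and $g\hat\gamma$ (from embeddedness of the fundamental domain $\wt D$) combined with the expansion/contraction of $g$: if $\hat\gamma$ recrossed a leaf through $x_i$, the dynamics would force $g\hat\gamma$ to cross $\hat\gamma$. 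Once $\hat\gamma$ is confined to a quadrant, Theorem~\ref{thm:distinct_fix_is_chain} supplies a lozenge there (the chain from $a$ to $b$ must pass through it), and one iterates. The ``no three share a corner'' condition then falls out automatically since $\hat\gamma$ meets only one quadrant of each $x_i$. Your substrip route is not hopeless --- the observation that a $g$-invariant leaf crossing $R_i$ would pull back to a closed leaf in $A_i$ forces $R_i$ into one quadrant of each corner --- but you would still need Theorem~\ref{thm:distinct_fix_is_chain} and an argument that $R_i$ fills exactly one lozenge of the chain, none of which is in your outline.
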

This combinatorial description and correspondence was first described in \cite{Bar95b} and then formalized and used in \cite{BBY17,Pau24} under the name of \emph{combinatorial type}, but without the explicit relation to the associated chain of lozenges.  The proof 
we give of Theorem \ref{thm_birkhoff_to_chain} here is different than the treatment in these works; we use the dynamics on the orbit space as the primary tool, rather than topological considerations in the 3-manifold.  

Before giving the proof, we make a few remarks on the conditions in the statement of Theorem \ref{thm_birkhoff_to_chain}.  
First, item \ref{item_trace_vs_chain} says that the chain $\cC = \{L_1, \dots, L_n\}$, while not equal to the trace, uniquely determines the trace.  We will call $\cC$ the {\em chain associated with $\hat A$} (or with $\wt A$). 
Items \ref{item_Birkhoff_turn_ideal_corners} and \ref{item_closd_leaves} impose significant constraints on the possible chains associated with Birkhoff annuli: the chain is a line if and only if the closed leaves of the induced foliations on $A$ are either all in $\cF^s$ or all in $\cF^u$; and, in general, the chain associated to $A$ cannot contain any ``diagonal" lozenges sharing a corner but not a side.  These conditions also give an easy way to recover the combinatorial configuration of the lozenges in $\cC$ from the finite sequence of closed leaves of $\cF^s_A$ and $\cF^u_A$ that appear in $A$.  
See Figure %
\ref{fig_general_birkhoff} for an illustration of one example.

\begin{figure}[h]
\includegraphics[width = 11cm]{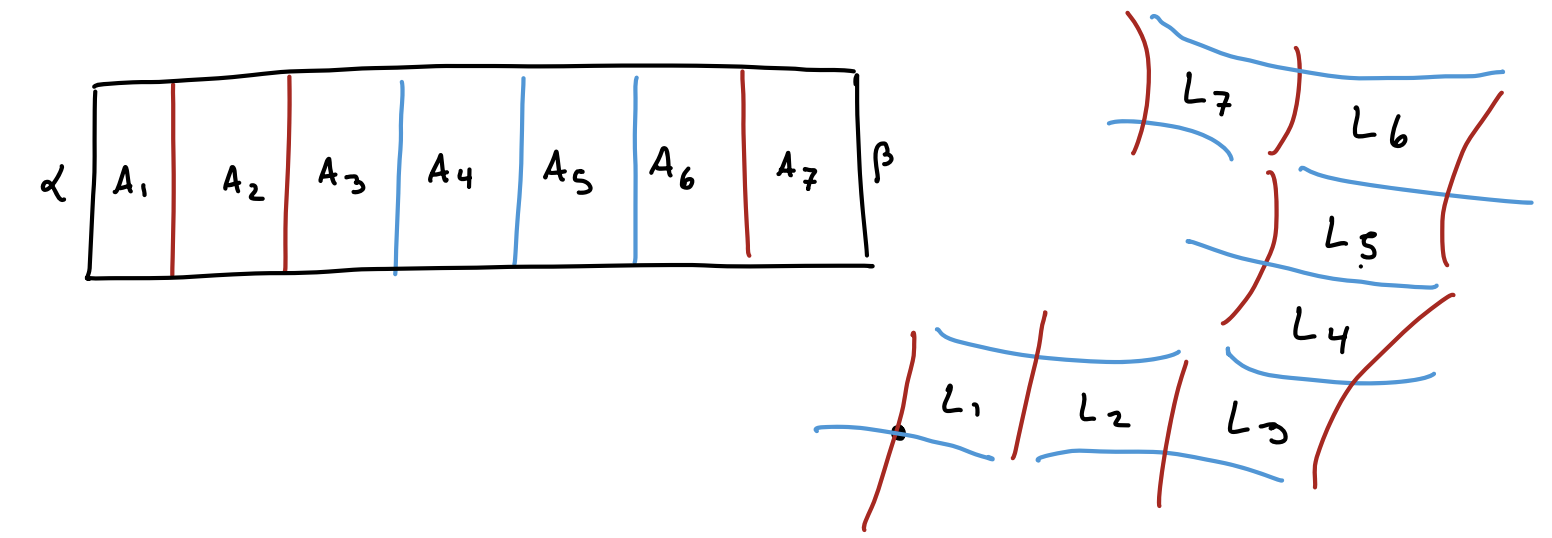}
\caption{An example of a Birkhoff annulus $A$, and its projection to $\orb$. The number and arrangement of the closed leaves in $A$ uniquely determines the configuration of the associated chain of lozenges.}
\label{fig_general_birkhoff}
\end{figure}

\begin{exercise} \label{ex_birkhoff_to_chain}
Draw a series of closed orbits on a Birkhoff annulus, giving some variation on the picture in Figure \ref{fig_general_birkhoff}, and use it to construct a chain of lozenges. Then do the reverse procedure, drawing a finite chain of lozenges satisfying item \ref{item_Birkhoff_turn_ideal_corners} of Theorem \ref{thm_birkhoff_to_chain} and show the corresponding arrangement of closed leaves on a Birkhoff annulus that would give rise to this chain.  
\end{exercise} 

\begin{exercise}
Show that item \ref{item_Birkhoff_turn_ideal_corners} is equivalent to the statement that $\cC$ is a minimal chain between its endpoints (with respect to inclusion), in the sense that any proper sub-collection of lozenges including $L_1$ and $L_n$ is disconnected.   Compare with Lemma \ref{lem_charac_minimal_chain}.  
\end{exercise} 

\begin{proof}[Proof of Theorem \ref{thm_birkhoff_to_chain}]

First, we label the relevant objects and set up notation for good lifts and projections.  
Let $\gamma \subset A$ be a simple arc connecting the two boundary components of $A$ and intersecting each closed leaf of $\cF^s_A$ and $\cF^u_A$ exactly once.  
Let $\wt A$ be the $g$-invariant lift of $A$ to $\wt M$, and let $\wt \gamma$ be a lift of $\gamma$ to $\wt A$.  Then $g \wt\gamma \cap \wt \gamma = \emptyset$ and the region bounded by these two curves and segments of the boundary of $\wt A$ between them gives a fundamental domain $\wt D$ for the action of $g$ on $\wt A$, and the interior of $\wt D$ is an open disc transverse to $\hflow$.  Let $\hat D$ denote its projection to $\orb$. 
Finally, let $a$ and $b$ be the projections to $\orb$ of the two boundary components $\alpha$ and  $\beta$ of $\wt A$.  
Thus, the region $\hat D$ is bounded by the union of the set $\{ a, b\}$ with the projections $\hat \gamma$ and $g \hat \gamma$ of $\wt \gamma$ and $g \wt \gamma$, respectively.   

Let $c_0 = \alpha, c_1, \ldots c_n = \beta$ be the closed leaves of $\cF^s_A \cup \cF^u_A$, enumerated in order and let $A_i$ denote the open sub-annulus bounded by $c_i$ and $c_{i-1}$, with $\wt A_i$ the lift of $A_i$ to $\wt A$ and $\wt D_i := \wt A_i \cap \wt D$. 
Since $\cF^s_A$ and $\cF^u_A$ have no closed leaves in $A_i$, the induced foliation on each strip $\wt A_i$ is trivial.   Thus, as in the proof of Theorem \ref{thm_orb_is_plane}, no orbit of $\hflow$ may intersect such a region twice, otherwise one could produce a closed transversal to $\hfs$ or $\hfu$.  In particular, each set $D_i$ projects injectively to $\orb$.  Let $\hat D_i$ denote its projection and $\hat \gamma_i$ the projection of $\wt \gamma \cap A_i$.    

We can now proceed with the main argument of the proof: we will describe exactly what each $\hat D_i$ looks like and then use this to show that each $A_i$ projects to a single lozenge, arranged in a chain satisfying the properties in the statement of the theorem.   This is done through several claims.   As usual, we use $\bfs$ and $\bfu$ to denote the induced stable and unstable foliations in $\orb$.  

\begin{claim}  \label{claim_D_in_one_quadrant}
The closure of $\hat \gamma_1$ meets $\bfs(a)$ and $\bfu(a)$ only at the point $a$.  Consequently, the closure of $\hat D_1$ lies in a single quadrant $Q$ of $a$.  
\end{claim}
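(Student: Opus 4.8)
The claim is purely local at the point $a = \hat\alpha$, so the strategy is to work in a small flow-box (or model prong) neighborhood of $\alpha$ in $\wt M$ and track where the transversal $\wt\gamma \cap A_1$ goes. The key observation is that $\alpha = c_0$ is a boundary leaf of $A$, and $A_1$ is the strip between $c_0$ and the first closed leaf $c_1$, which lies in exactly one of $\cF^s_A$ or $\cF^u_A$ (by Proposition \ref{prop_hopf_nonsingular} they have disjoint closed leaves). Say for concreteness $c_1$ is a leaf of $\cF^u_A$. Then on the open annulus $A_1$ the induced foliation $\cF^s_A$ has no closed leaves, so every leaf of $\cF^s_A \cap A_1$ is a properly embedded arc running from the boundary $c_0$ out across $A_1$, and the $\cF^s_A$-saturation of $\wt\gamma \cap A_1$ is an embedded half-open band whose one boundary is $\alpha$ itself. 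Lifting to $\wt A_1$, the stable leaf through any point of $\wt\gamma \cap \wt A_1$ has $\alpha$ as a boundary leaf of its $\wt A_1$-piece; but a single leaf $\wt l^s$ of $\hfs$ meets $\alpha$ only along $\alpha$ (it cannot meet $\alpha$ twice, and $\alpha$ lies in $\hfs(\alpha)$), so projecting to $\orb$, no point of $\hat\gamma_1$ can have the same $\bfs$-leaf as $a$ except $a$ itself.

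\textbf{Main steps, in order.} First I would fix the small flow-box neighborhood $B$ of a point on $\alpha$ and unfold the picture of $A$ near $\alpha$: since $\flow$ is transverse to $\mathring A$ and $\alpha$ is a boundary leaf, $A$ approaches $\alpha$ from one side only, and in $B$ the annulus $A_1$ appears as a half-disc with $\alpha$-direction as one edge. Second, I would use Proposition \ref{prop_hopf_nonsingular} to say that $c_1$ lies in a single one of $\cF^s_A, \cF^u_A$, hence in the strip $A_1$ one of the two induced foliations, say $\cF^u_A$, has no closed leaf, so its leaves in $A_1$ are arcs transverse to the direction of $\wt\gamma$ and the other foliation $\cF^s_A$ is a trivial product foliation on $\wt A_1$ with $\alpha$ as a leaf. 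Third — the crux — I would argue that for any point $x \in \wt\gamma \cap \wt A_1$, the leaf $\hfs(x)$ does not coincide with $\hfs(\alpha)$: if it did, then $x$ and a point of $\alpha$ would lie on a common stable leaf, forcing (by triviality of the orbit foliation on $\wt A_1$, as in the proof of Theorem \ref{thm_orb_is_plane}) $\wt\gamma$ to hit $\alpha$, contradicting that $\wt\gamma$ connects the \emph{two distinct} boundary components of $\wt A$. The same argument with $\hfu$ shows $\hfu(x) \ne \hfu(\alpha)$. Fourth, I would observe that $\overline{\hat\gamma_1}$ meets $\bfs(a) \cup \bfu(a)$ only at $a$: interior points are handled by step three, and $a$ is the unique limit point of $\hat\gamma_1$ lying on $\alpha$'s leaves. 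Finally, since $\hat D_1$ is swept out from $\hat\gamma_1$ by $\bfs$- and $\bfu$-leaves (via the product structure on $\wt A_1$) on a fixed side of each of $\bfs(a)$ and $\bfu(a)$, its closure is contained in a single connected component of $\orb \setminus (\bfs(a) \cup \bfu(a))$, i.e.\ a single quadrant $Q$ of $a$ — with the caveat that if $\alpha$ is a singular orbit one must check that the half-disc picture of $A_1$ near $\alpha$ lies within one quadrant of the prong, which follows from $A$ being embedded and transverse near $\alpha$.

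\textbf{Expected obstacle.} The routine part is the flow-box unfolding; the delicate point is step three, ensuring that $\wt\gamma \cap \wt A_1$ genuinely avoids the leaves $\hfs(\alpha)$ and $\hfu(\alpha)$. One must be careful that $\wt\gamma$ is chosen to be a simple arc meeting each closed leaf exactly once and connecting the two distinct boundary components of $\wt A$; the subtlety is that even though $\wt\gamma$ does not touch $\alpha$, a priori its stable or unstable saturation could ``wrap around'' and re-meet $\hfs(\alpha)$. This is ruled out precisely because $\hfs$-leaves are properly embedded planes (Proposition \ref{prop_top_properties}(iv)) and intersect $\hfu$-leaves at most once, so the trivial product foliation on $\wt A_1$ cannot fold back. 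The singular-orbit case (where $a$ has more than four quadrants) requires the extra remark that the local model prong forces the embedded transverse annulus to occupy one prong-quadrant, which is where I would spend the most care in writing the full proof.
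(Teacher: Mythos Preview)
There is a genuine gap in your step three, and it is exactly the heart of the matter.  You claim that for $x \in \wt\gamma \cap \wt A_1$ one has $\hfs(x) \neq \hfs(\alpha)$, and justify this by appealing to ``triviality of the orbit foliation on $\wt A_1$, as in the proof of Theorem~\ref{thm_orb_is_plane}''.  But that argument shows only that no \emph{orbit} meets a trivially bifoliated transversal twice.  It says nothing about a \emph{stable leaf}: the plane $\hfs(\alpha)$ may well intersect the immersed strip $\wt A_1$ in several disjoint arcs, one of which is (the boundary component coming from) $\alpha$ itself and another of which could cross $\wt\gamma_1$ far from $\alpha$.  Your description of the induced foliation is also off: since $c_0=\alpha$ is itself a closed leaf of $\cF^s_A$, the interior $\cF^s_A$-leaves in $A_1$ asymptote to $\alpha$ rather than ``running from the boundary $c_0$'', so the half-band picture you sketch does not hold, and in any case would not preclude $\hfs(\alpha)$ from re-entering $\wt A_1$ elsewhere.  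In short, the assertion $\hfs(x)\neq\hfs(\alpha)$ is equivalent to the Claim itself, and your local topological analysis does not prove it.

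The paper's proof is genuinely different: it works entirely in $\orb$ and uses the \emph{dynamics} of $g$.  Since $g$ fixes $a$ with topologically hyperbolic action on $\bfs(a)\cup\bfu(a)$, and since $\hat\gamma$ and $g\hat\gamma$ are disjoint (they bound the fundamental domain $\hat D$), one shows that a hypothetical second intersection of $\overline{\hat\gamma_1}$ with $\bfs(a)\cup\bfu(a)$ forces $g\hat\gamma$ to cross $\hat\gamma$, a contradiction.  The disjointness of $\hat\gamma$ from its $g$-translate is the crucial ingredient you never invoke; without it (or some substitute), the local flow-box picture near $\alpha$ is not enough.
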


\begin{proof} 
Suppose for contradiction that this is not the case.  
For concreteness, we suppose the first intersection point $x$ of the closure of $\hat \gamma_1$ with $\bfs(a) \cup \bfu(a)$ lies on $\bfu(a)$ and that $g$ contracts $\bfu(a)$, the other cases are symmetric (possibly by considering $g \hat \gamma$ in place of $\hat \gamma$ and the action of $g^{-1}$ instead).  Thus, the closure of $g \hat\gamma_1$ first intersects $\bfu(a)$ at $g x$. 

Parameterize $\gamma_1$ as a curve $s(t)$, $t \in [0,1]$ with $s(0)=a$.  For small values of $t$, we have $\bfu(s(t)) \cap \bfu(a) \neq \emptyset$.  Let $y = s(t) \in \hat \gamma$ either be the first point such that $\bfu(y) \cap \bfs(a) = \emptyset$ as in Figure \ref{fig_D_one_quadrant} (right) or, if this intersection is always nonempty, then the projection of $s(t)$ to the leaf space $\Lambda(\bfu)$ is an interval, and we take $y$ so that $\bfu(y)$ is a boundary point of this interval furthest from $\bfu(a)$ as in Figure \ref{fig_D_one_quadrant} (left). 

\begin{figure}[h]
\includegraphics[width = 12cm]{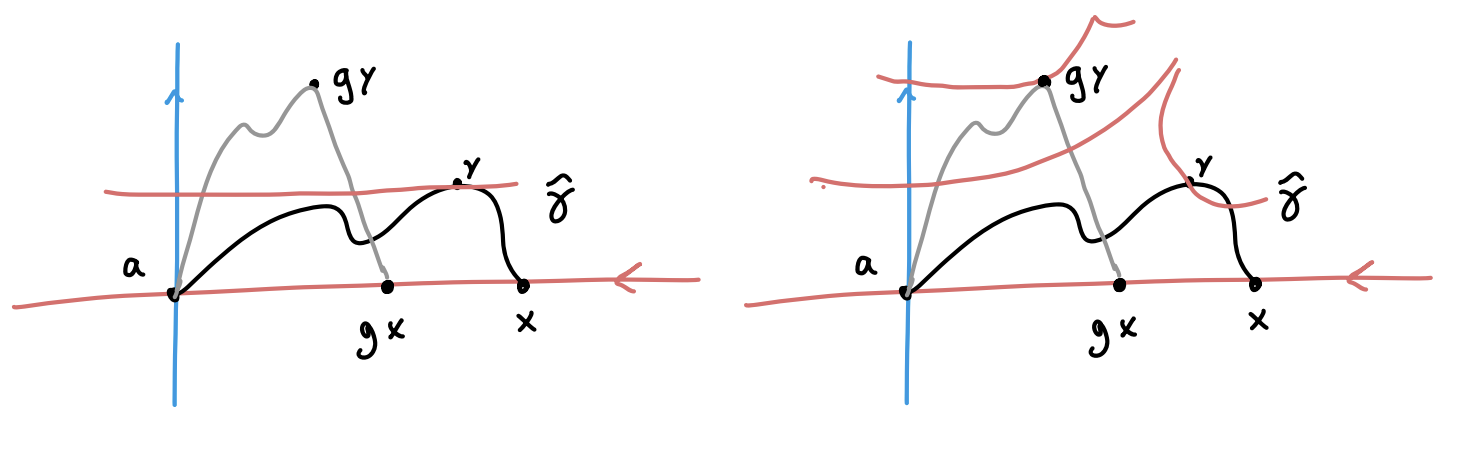}
\caption{If $\hat D$ does not lie only in one quadrant of $a$, then $\hat \gamma$ and $g \hat \gamma$ are forced to intersect.}
\label{fig_D_one_quadrant}
\end{figure}

Since $g$ is expanding along $\bfu(a)$ and contracting along $\bfs(a)$, the points $gy$ and $gx$ are forced to lie in different connected components of $Q \setminus \hat \gamma$ as shown in Figure \ref{fig_D_one_quadrant}.  Thus, we have $g \hat\gamma \cap \hat\gamma \neq \emptyset$, which is a contradiction. 
\end{proof}

Keeping the notation from above, we next show: 
\begin{claim} \label{claim_next_lozenge}
$Q$ contains a lozenge $L_1$ with corner $a$.
\end{claim}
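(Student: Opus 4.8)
The plan is to produce near $a$ a second fixed point of $g$ lying on the $Q$-side of $a$, conclude via Theorem \ref{thm:distinct_fix_is_chain} that $a$ is a corner of a $g$-invariant chain of lozenges, and then pin down that the first lozenge of that chain lies in the quadrant $Q$ itself.

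For the first step, recall from the setup that one of the boundary arcs of $\hat D_1$ is the projection $\hat c_1$ of the $g$-invariant lift $\wt c_1 \subset \wt A$ of the closed leaf $c_1$ bounding $A_1$. Swapping the roles of $\cF^+$ and $\cF^-$ if necessary, we may assume $c_1$ is a closed leaf of $\cF^s_A$, so $\wt c_1$ lies in a stable leaf $\hfs(\wt x_1)$. Since $c_1$ is a core circle of the annulus $A_1$ (and the inclusion $A_1 \hookrightarrow A$ is $\pi_1$-injective with core $g$), $c_1$ represents $\pm g$ in $\pi_1(M)$, so $\hfs(\wt x_1)$ is $g$-invariant and its projection $l^s_1 := \bfs(\pi(\wt x_1))$ is a $g$-invariant leaf of $\bfs$ with $\hat c_1$ a nondegenerate subarc of it. By Axiom \ref{Axiom_A1}, $g$ has a unique fixed point $b_1$ on $l^s_1$. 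Because $\hat c_1 \subseteq \overline{\hat D_1}$ is nondegenerate while $\overline{\hat D_1}$ meets $\bfs(a)$ only at $a$ by Claim \ref{claim_D_in_one_quadrant}, we get $l^s_1 \neq \bfs(a)$, hence $b_1 \neq a$. (If $\orb$ were trivial this would already be impossible, since a trivial plane supports no nontrivial free homotopies between periodic orbits by Theorems \ref{thm_trivial_implies_suspension} and \ref{thm_suspension_free_homotopy}; so $\orb$ is nontrivial and contains lozenges.) By Theorem \ref{thm:distinct_fix_is_chain}, $a$ and $b_1$ are corners of a $g$-invariant chain $\cC_1$ of lozenges; in particular $a$ is the corner of some lozenge. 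If $\alpha$ is singular, so $a$ is a prong, the same argument runs using Axiom \ref{Axiom_prongs_are_fixed}.

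For the second step, I would locate the first lozenge of $\cC_1$ emanating from $a$ using Lemma \ref{lem:fixed_is_corner}, by showing that the ray $r^u$ of $\bfu(a)$ bounding $Q$ makes a perfect fit with a ray of $l^s_1$. Consider $\hat A_1$, the projection of the full $g$-invariant strip $\wt A_1 = \bigcup_n g^n\wt D_1 \subset \wt A$ (this projection is injective, as in the proof of Theorem \ref{thm_orb_is_plane}, because the induced foliations on $\wt A_1$ are trivial, so no orbit of $\hflow$ can meet it twice). Its induced $\bfu$-foliation consists of the projections of the $\cF^u_A$-leaves of $A_1$; by Proposition \ref{prop_hopf_nonsingular} these are nonsingular, and since $c_1$ is transverse to $\cF^u_A$ whereas $\alpha$ is a closed leaf of $\cF^u_A$, the classification of surface foliations (Proposition \ref{prop_foliations_torus}) forces the interior leaves of $\cF^u_A|_{A_1}$ to spiral onto $\alpha$ while crossing $c_1$ once. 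Hence each such leaf projects to a $\bfu$-segment with one endpoint on $l^s_1$ and the other end accumulating on $a$, and these segments accumulate onto $r^u$ as the $\cF^u_A$-leaves approach $\alpha$. Combined with $\hat A_1 \subseteq \overline{Q}$ and the absence of infinite product regions (Proposition \ref{prop:no_product_region}, which prevents $\hat A_1$ from spreading past a perfect-fit configuration), this yields the perfect fit of $r^u$ with a ray of $l^s_1$. Then Lemma \ref{lem:fixed_is_corner}, applied to a power of $g$ fixing all rays of $a$ (whose hyperbolic dynamics on $\bfs(a)$ and $\bfu(a)$ is given by Proposition \ref{prop_hyperbolic}), produces a lozenge $L_1$ with corner $a$ and opposite corner on $l^s_1$; since $r^u$ together with the stable ray $r^s \subset \bfs(a)$ bounds $Q$ and $l^s_1 \subseteq Q$, the lozenge $L_1$ is contained in $Q$.

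The main obstacle is this second step: rigorously establishing the perfect fit between $r^u$ and $l^s_1$ — equivalently, ruling out that $\cC_1$ leaves $a$ through a quadrant other than $Q$, or that $\bfu(a)$ and $l^s_1$ cross transversally. The crucial input is the spiraling of the $\cF^u_A$-leaves of $A_1$ onto $\alpha$, which forces the projected unstable leaves of $\hat A_1$ to be \emph{unbounded} $\bfu$-segments accumulating precisely on $r^u$; this, together with Proposition \ref{prop:no_product_region}, should close the gap. Some additional care is needed with the non-Hausdorff leaf spaces and with the singular cases for $\alpha$ and for the core orbit of the stable leaf containing $c_1$.
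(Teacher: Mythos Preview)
Your route is far more circuitous than the paper's, and the detour is caused by overlooking what is already on the table: the point $b$ itself. Both boundary orbits $\wt\alpha$ and $\wt\beta$ of $\wt A$ are $g$-invariant (since $g$ is the core of $A$), so $a$ and $b$ are two distinct fixed points of $g$ in $\orb$ from the very setup of the theorem. Theorem~\ref{thm:distinct_fix_is_chain} then directly yields a chain of $g$-invariant lozenges joining $a$ to $b$; the first lozenge emanating from $a$ sits in the quadrant of $a$ through which the chain heads toward $b$, and since the closure of $\hat D$ contains $b$ while $\hat D_1 \subset \overline Q$, that quadrant is $Q$. This is the whole of the paper's argument, essentially one line. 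Your construction of an auxiliary fixed point $b_1$ on $l^s_1$ re-derives a second fixed point that was already given.

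Your second step, building a perfect fit between $r^u$ and $l^s_1$ via the spiraling of $\cF^u_A$-leaves on $A_1$, is not needed for the claim as stated, and the sketch you give is, as you admit, incomplete. There is also a small gap in your first step: Claim~\ref{claim_D_in_one_quadrant} only asserts that the closure of $\hat\gamma_1$ meets $\bfs(a)\cup\bfu(a)$ at the single point $a$, not that the closure of $\hat D_1$ does. Since $\hat c_1 \subset \overline{\hat D_1} \subset \overline Q$ and $\overline Q$ contains the bounding rays of $\bfs(a)$ and $\bfu(a)$, you have not yet excluded $\hat c_1$ lying on one of those rays, so your argument that $l^s_1 \neq \bfs(a)$ is not quite justified. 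Both issues are fixable, but once you notice $b$ the entire second step evaporates.
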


\begin{proof}
Since the closure of $\hat D_1$ contains $b$, and $b$ is connected to $a$ by a chain of $g$-invariant lozenges, the quadrant $Q$ containing $\hat D_1$ must be a lozenge in the chain.  
\end{proof}

If the opposite corner of $L_1$ is $b$, Claim \ref{claim_D_in_one_quadrant} applied to $b$ instead of $a$ implies that $\hat \gamma \subset L_1$, and is a simple arc from $a$ to $b$.  Thus, $\hat \gamma$ crosses no other $g$-invariant leaf, $\hat D = \hat D_1$ is contained in a single lozenge (and is a fundamental domain for the action of $g$ on that lozenge) and we have proved what we wanted to show.  
So, going forward, we assume the opposite corner of $L_1$ is some point $x_1 \neq b$, and thus the closure of $\hat \gamma_1$ intersects $(\bfs(x_1) \cup \bfu(x_1))$ at some point $y_1$ projected from $\wt c_1$.  
We now make an argument along the same lines as that in Claim \ref{claim_D_in_one_quadrant} to show that the next segment $\hat \gamma_2$ of $\hat \gamma$ stays in one quadrant of $x_1$.  

\begin{claim} \label{claim_one_intersection}
The closure of $\hat \gamma_2 $ intersects $(\bfs(x_1) \cup \bfu(x_1))$ only at $y_1$. 
\end{claim}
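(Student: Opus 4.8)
The plan is to run the proof of Claim~\ref{claim_D_in_one_quadrant} again, essentially verbatim, with the corner $x_1$ now playing the role of $a$. The only genuinely new input needed is that $g$ still fixes $x_1$ together with each of its rays: since $\wt A$ is $g$-invariant, its two boundary orbits project to distinct fixed points $a,b$ of $g$ in $\orb$, so by Observation~\ref{obs:all_rays} $g$ preserves all rays through $a$ and through $b$, hence fixes both corners of $L_1$, and then by Observation~\ref{obs:fixes_all_corners} $g$ fixes every corner of the maximal chain of lozenges containing $L_1$ and preserves all of their rays. In particular $g$ fixes $x_1$ and each ray at $x_1$, so it preserves each quadrant of $x_1$, and by Axiom~\ref{Axiom_A1} it acts topologically expandingly on one of $\bfs(x_1),\bfu(x_1)$ and contractingly on the other. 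I would also recall at the outset that $y_1$, being the projection of $\wt\gamma\cap\wt c_1$, lies on a side of $L_1$ emanating from $x_1$ and is the initial point of the next subarc $\hat\gamma_2$; and since $c_1\subset\mathring A$ is a closed leaf of one of the \emph{nonsingular} induced foliations (Proposition~\ref{prop_hopf_nonsingular}), the point $y_1$ is a regular point of $\orb$, so $\hat\gamma_2$ leaves $y_1$ into the unique quadrant $Q'$ of $x_1$ bordering $L_1$ across the ray through $y_1$ (the side of that ray opposite $\hat\gamma_1$).

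The argument then proceeds exactly as in Claim~\ref{claim_D_in_one_quadrant}. Suppose, for contradiction, that $\overline{\hat\gamma_2}$ meets $\bfs(x_1)\cup\bfu(x_1)$ at some point other than $y_1$, and let $x$ be the first such point along $\hat\gamma_2$; then the subarc $\sigma\subset\hat\gamma_2$ running from $y_1$ to $x$ is contained in $\overline{Q'}$ and meets $\bfs(x_1)\cup\bfu(x_1)$ only at its endpoints. Up to replacing $g$ by $g^{-1}$ and/or $\hat\gamma$ by $g\hat\gamma$ --- precisely the symmetries invoked in Claim~\ref{claim_D_in_one_quadrant} --- I may assume $y_1$ lies on the ray of $\bfu(x_1)$ bounding $Q'$, that $g$ contracts that ray, and that $g$ expands the ray of $\bfs(x_1)$ bounding $Q'$. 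Parameterising $\sigma$ as $s(t)$ with $s(0)=y_1$ and arguing word for word as in Claim~\ref{claim_D_in_one_quadrant} (with $x_1,\sigma,Q'$ in place of $a,\hat\gamma_1,Q$, and distinguishing according to whether $\bfu(s(t))\cap\bfs(x_1)$ is eventually empty or always nonempty), one obtains a point $y=s(t_0)$ on $\sigma$ such that $gy$ and $gx$ lie in distinct connected components of $Q'\setminus\hat\gamma$; see Figure~\ref{fig_D_one_quadrant}. Since $g$ preserves $Q'$, the set $g\sigma\subset\overline{Q'}$ is connected and contains both $gy$ and $gx$, so it must cross $\hat\gamma$ at a point in the interior of $Q'$.

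This produces a point of $\hat\gamma\cap g\hat\gamma$ lying away from the common endpoints $a,b$. But $\wt\gamma$ and $g\wt\gamma$ bound the fundamental domain $\wt D$ for the action of $g$ on $\wt A$, so $\wt\gamma\cap g\wt\gamma=\emptyset$; projecting, $\hat\gamma$ and $g\hat\gamma$ are the two ``vertical'' boundary arcs of the embedded disc $\hat D$ and both run from $a$ to $b$, hence meet only at $a$ and $b$ --- a contradiction. Therefore $\overline{\hat\gamma_2}$ meets $\bfs(x_1)\cup\bfu(x_1)$ only at $y_1$, which is the assertion of Claim~\ref{claim_one_intersection}. (Afterwards, exactly as for $\hat\gamma_1$, one deduces that $\overline{\hat D_2}$ lies in a single quadrant of $x_1$, and the analogue of Claim~\ref{claim_next_lozenge} then produces the next lozenge $L_2$ of the chain, setting up the induction that completes the proof of Theorem~\ref{thm_birkhoff_to_chain}.)

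I expect no serious obstacle here: the content is the same as in Claim~\ref{claim_D_in_one_quadrant}, and the main care required is bookkeeping --- keeping track of which ray of $x_1$ is expanded and which contracted by $g$, and of whether $y_1$ sits on a stable or an unstable ray according to whether $c_1$ is a closed leaf of $\cF^s_A$ or of $\cF^u_A$, so that the auxiliary point $y$ is chosen on the correct side. All four cases collapse to a single one via the symmetries already noted in Claim~\ref{claim_D_in_one_quadrant}. If $x_1$ happens to be a singular orbit --- and so has more than four quadrants --- nothing changes: the entire argument takes place inside the one quadrant $Q'$ and uses only the hyperbolic dynamics of $g$ on the two rays that bound it.
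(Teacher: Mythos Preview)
Your approach differs from the paper's, and the claim that Claim~\ref{claim_D_in_one_quadrant} carries over ``word for word'' overlooks a genuine asymmetry. In Claim~\ref{claim_D_in_one_quadrant} the arc $\hat\gamma_1$ begins at the \emph{corner} $a$, so the only free datum is on which ray the exit point lies, and the symmetries $g\leftrightarrow g^{-1}$, $\hat\gamma\leftrightarrow g\hat\gamma$ reduce to the single case ``exit on the contracted ray.'' Here $\hat\gamma_2$ begins at $y_1$, which already lies on one of the rays of $x_1$. Your symmetries let you normalise which ray $y_1$ sits on and whether $g$ contracts it, but they do \emph{not} determine which ray the exit point $z$ lies on: swapping $s\leftrightarrow u$ moves $y_1$ as well, and the other two symmetries preserve both rays. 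So two genuinely different sub-cases remain --- $z$ on the same ray as $y_1$, or $z$ on the opposite ray --- and the auxiliary-point construction of Claim~\ref{claim_D_in_one_quadrant} (which was tailored to ``exit on the contracted ray, start at the corner'') does not treat them uniformly.

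The paper instead gives a shorter direct argument that avoids the auxiliary point entirely. Fixing $y_1\in\bfs(x_1)$ with $g$ contracting $\bfs(x_1)$, one uses only that $\hat D_2$ is an embedded disc with $g\hat\gamma_2$ starting closer to $x_1$ than $\hat\gamma_2$ (since $g(y_1)$ lies between $x_1$ and $y_1$): this nesting forces $g\hat\gamma_2$ to stay on the $x_1$-side of $\hat\gamma_2$ throughout $Q_2$. Hence if $\hat\gamma_2$ first exits $Q_2$ at $z$, then either $z\in\bfs(x_1)$ and embeddedness forces $z$ to separate $g(z)$ from $x_1$, contradicting contraction on $\bfs(x_1)$; or $z\in\bfu(x_1)$ and embeddedness forces $g(z)$ to separate $z$ from $x_1$, contradicting expansion on $\bfu(x_1)$. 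Both exit-ray cases are handled in one line each. Your strategy could likely be completed by treating the two sub-cases separately, but the paper's route is both cleaner and what the authors actually do.
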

\begin{proof} 
For concreteness, suppose $y_1 \in \bfs(x_1)$ and $g(y)$ separates $y$ from $x_1$ on $\bfs(x_1)$, so $g$ is contracting along the stable leaf of $x_1$.   As before, the other cases are proved by an identical argument.  Let $Q_2$ denote the quadrant of $x_1$ that $\hat \gamma$ enters after passing from $L_1$ through $y_1$, as shown in Figure \ref{fig_one_intersection}. 

\begin{figure}[h]
\includegraphics[width = 11cm]{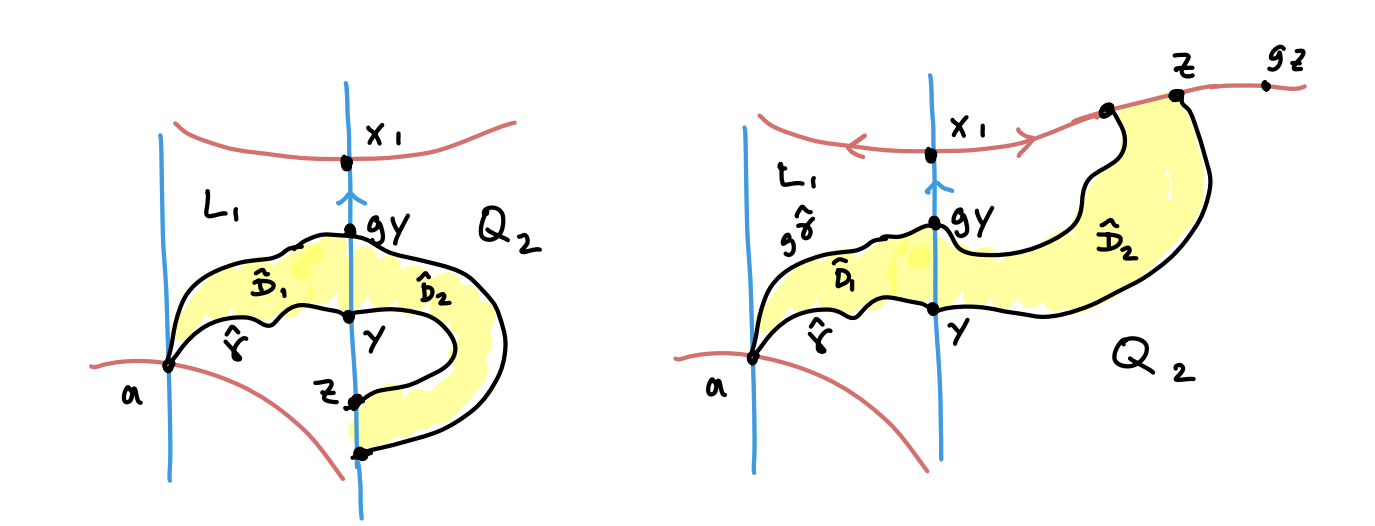}
\caption{Proof of Claim \ref{claim_one_intersection}: if $\hat \gamma$ intersects a stable/unstable leaf of $x_1$ a second time, then embededness of $\hat D_2$ shown here is incompatible with the dynamics of the action of $g$.}
\label{fig_one_intersection}
\end{figure}

Suppose for contradiction that the closure of $\hat \gamma_1$ exits the quadrant $Q_2$, and let $z$ be the first point along $\hat \gamma$ where it exits, which is either on $\bfs(x_1)$ or on $\bfu(x_1)$.  If $z \in \bfs(x_1)$, then the fact that $D$ is embedded implies that $z$ separates $g(z)$ from $x_1$, which contradicts the fact that $g$ contracts stable leaves.  See again Figure \ref{fig_one_intersection} for an illustration.  
If instead $z \in \bfu(x_1)$, again that $D$ is embedded implies that $g(z)$ separates $z$ from $x_1$ on $\bfu(x_1)$, which contradicts the fact that $g$ expands the unstable leaf.  This proves the claim. 
\end{proof} 

Now, the same argument from 
Claim \ref{claim_next_lozenge} 
shows that $Q_2$ contains a lozenge with corner $x_1$.  Let $x_2$ denote its opposite corner.  Iteratively applying Claims \ref{claim_next_lozenge} and \ref{claim_one_intersection}, we produce a chain of lozenges $L_i$, with corners $x_i, x_{i+1}$ which contains $\hat{\gamma}$, and with each $L_i$ intersecting $\hat{\gamma}$ along the arc $\hat \gamma_i$.  Moreover, these have the property that $\hat\gamma$ meets only one quadrant of any corner of any lozenge.  Equivalently, this property means that no three consecutive lozenges in the chain share a corner, and it implies that the projection of $\wt D$ to $\hat D$ is injective.  Since $\hat{\gamma}$ is compact, this process terminates after a finite sequence of lozenges, ending at the corner $b$.  

We can now conclude the proof of the theorem. 
By construction, for each lozenge $L_i$, the set $\hat D \cap L_i$ is a fundamental domain for the action of $g$ on the interior of $L_i$, bounded on one side by an arc $\hat \gamma \cap L_i$ and on the other by $g(\hat \gamma \cap L_i)$.  Thus, the full projection of $\wt A$ to $\orb$ is the union of the interior of these lozenges, their shared sides and the corners $a$ and $b$, with each $\wt A_i$ projecting to a single lozenge, and the lifts of the closed leaves $c_i$ (for $i \neq 0, n$) projecting to shared sides of adjacent lozenges, which is what we needed to show. 
\end{proof} 
 
 \begin{remark}  \label{rem_stay_in_quadrant}
The fact that no three lozenges of $\cC$ share a corner implies that $A$ does not ``spiral" around its boundary component (as often happens for other surfaces called {\em Birkhoff sections}, see Remark \ref{rem_birkhoff_section}) like the local picture in Figure \ref{fig_spiraling}, left, but instead stays in a single quadrant, as in Figure \ref{fig_spiraling}, right.  
 \end{remark} 
 
 \begin{figure}[h]
\includegraphics[width = 7cm]{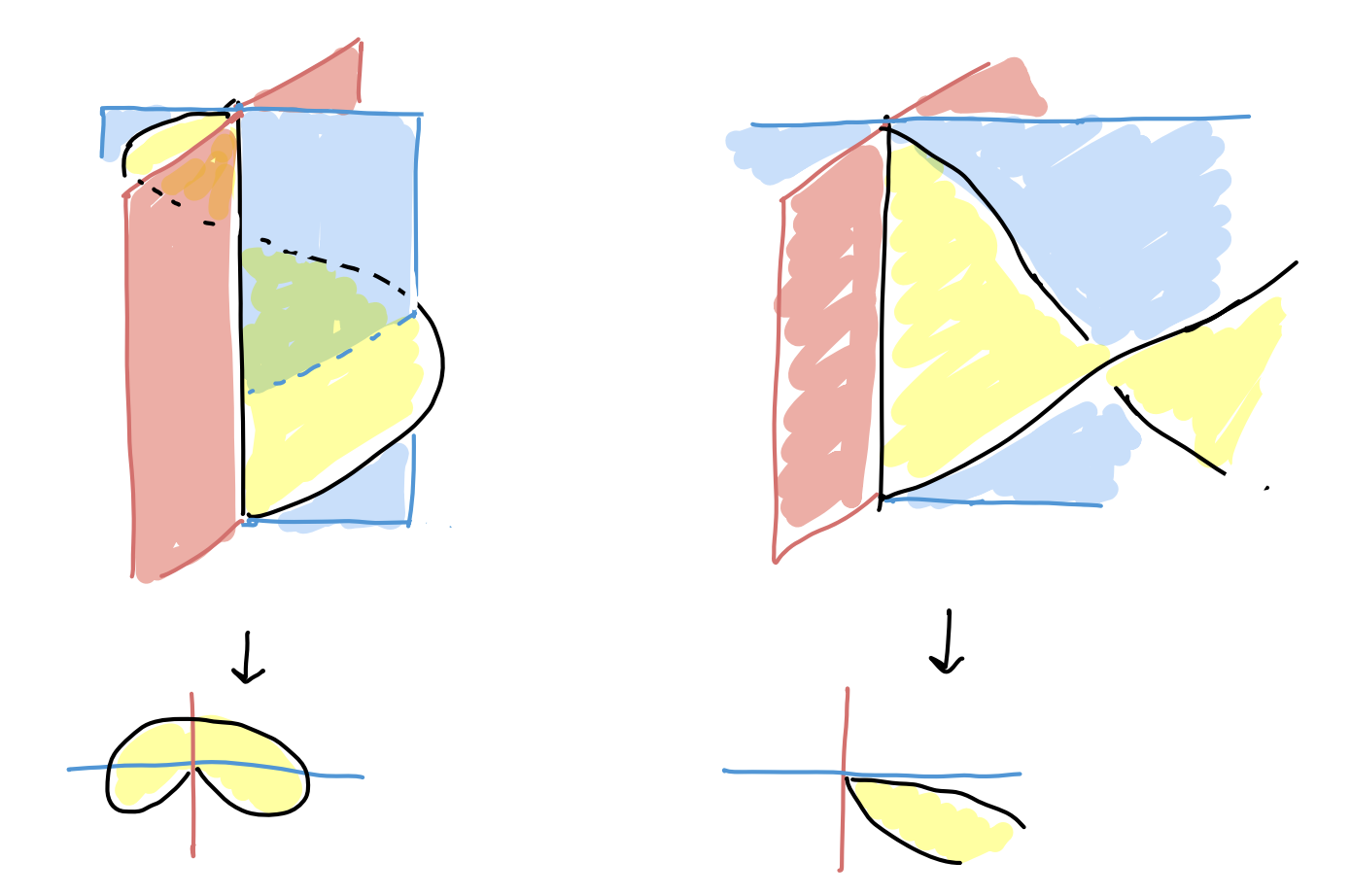}
\caption{A Bikrhoff annulus may ``twist" relative to the direction of the flow, as shown on the right, but always stays in one quadrant of a boundary orbit, unlike the local transverse surface on the left.}
\label{fig_spiraling}
\end{figure}
 
Exercise \ref{ex_birkhoff_to_chain} showed the correspondence between the closed leaves of induced foliations on a Birkhoff annulus and the associated chain.  In fact, one can recover not only the closed leaves but the topology of the induced foliations by paying attention to the foliations on the trace in $\orb$.   Some examples are given in Figure \ref{fig_Reeb_vs_not}, and the following exercise develops a precise version of this correspondence.  

\begin{exercise}\label{ex_induced_foliations}
Show that, from the structure of the associated chain $\cC$, one can completely determine the induced foliations $\cF^u_A$ and $\cF^s_A$ up to homeomorphism. 

Concretely, 
between two consecutive closed leaves $c,c'$ of the \emph{same} foliation, say $\cF_A^s$, the foliation $\cF^s_A$ either has a Reeb component annulus or does not. The Reeb annulus case happens if and only if there exists an \emph{odd} number of closed leaves of $\cF^u_A$ between $c$ and $c'$. Equivalently, calling $\cC = \{L_1, \dots, L_n\}$ the chain associated to $A$, then if $c$ corresponds to the shared side between $L_{i-1}$ and $L_{i}$ and $c'$ corresponds to the shared side between $L_{i+k-1}$ and $L_{i+k}$, then $\{L_i, \dots, L_{i+k-1}\}$ is a line of lozenges (intersecting a common stable leaf) and $\cF^s_A$ has a Reeb component if and only if $k$ is odd. See Figure \ref{fig_Reeb_vs_not}.  
\end{exercise} 
\begin{figure}[h]
\includegraphics[width=10cm]{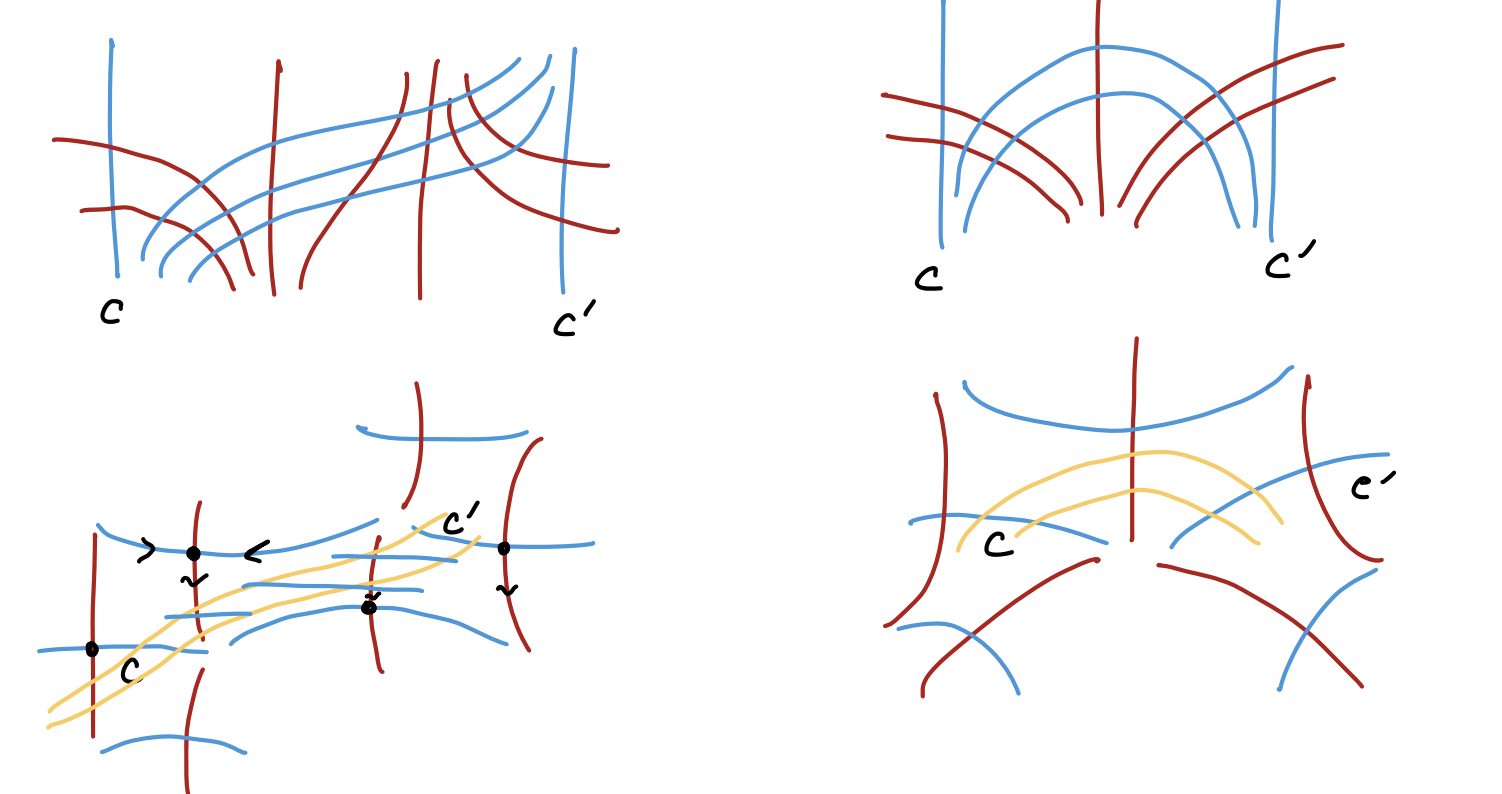}
\caption{The local picture of the induced foliation on a Birkhoff annulus between two closed stable leaves and the associated part of its trace.}\label{fig_Reeb_vs_not}
\end{figure}

\subsection{From chains of lozenges to Birkhoff annuli} 

Our next goal is to prove a converse to Theorem \ref{thm_birkhoff_to_chain}.  We show that any chain of lozenges with corners fixed by some nontrivial $g \in \pi_1(M)$, and satisfying the property that consecutive lozenges share a side and no three share a corner, does in fact correspond to the trace of a Birkhoff annulus with core represented by $g$.  This is Theorem \ref{thm_chain_to_anulus} below. 

We begin with the case of a single lozenge.  

\begin{lemma}  \label{lem_lozenge_to_annulus}
Let $L$ be a lozenge with corners fixed by some nontrivial $g \in \pi_1(M)$.  Then there is a Birkhoff annulus in $M$ whose trace is $L$. 
\end{lemma}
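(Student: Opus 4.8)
Let $L$ be a lozenge with corners $p$ and $q$, both fixed by a nontrivial $g \in \pi_1(M)$. After replacing $g$ by a power, we may assume $g$ fixes all rays through $p$ and $q$ (so $g$ fixes each side of $L$ and preserves $L$ itself), and that $g$ is topologically contracting on $\cF^+(p)$ and expanding on $\cF^-(p)$; by the structure of a lozenge, $g$ is then expanding on $\cF^+(q)$ and contracting on $\cF^-(q)$. The strategy is to build a $g$-invariant immersed disc $\wt A$ in $\wt M$ that is transverse to $\hflow$ in its interior and whose two boundary components are the orbits over $p$ and $q$; then $\wt A / g$ is the desired Birkhoff annulus in $M$, and its trace is exactly $L$ by construction.

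First I would construct a transverse section over the interior of $L$. The interior of $L$ is trivially foliated, so it is homeomorphic to an open rectangle $(0,1)^2$ with $\bfs, \bfu$ as the coordinate foliations. Over $L^\circ$, the restriction of the fibration $\wt M \to \orb$ (given by projecting along orbits of $\hflow$) is a trivial $\R$-bundle; fix a continuous section $\sigma_0 \colon L^\circ \to \wt M$. Now I would correct $\sigma_0$ to a $g$-invariant section: since $g$ acts on $L^\circ$ with no fixed point (the only fixed points of $g$ in $\overline L$ are the corners) and properly discontinuously, pick a fundamental domain $F$ for this action — concretely a closed "diagonal strip" in $L^\circ$ bounded by an arc $\tau$ transverse to both foliations and running between the two ideal corners, together with $g(\tau)$ — and use $\sigma_0$ on $F$ while extending equivariantly by $\sigma(gx) = g \cdot \sigma_0(x)$ after smoothing along the gluing arc $g(\tau)$ so that the flowline parameters match up. This produces a continuous $g$-invariant embedded disc $\wt A^\circ = \sigma(L^\circ) \subset \wt M$ transverse to $\hflow$, with $\wt A^\circ / g$ an embedded annulus transverse to $\flow$ in the manifold $M \setminus (\alpha \cup \beta)$, where $\alpha, \beta$ are the periodic orbits over $p, q$.

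The main work is to show this annulus limits correctly onto $\alpha$ and $\beta$, i.e.\ that $\overline{\wt A^\circ}$ in $\wt M$ is obtained by adding exactly the orbits over $p$ and $q$, and that the closure is a disc with these orbits as boundary and with $\flow$ transverse up to the boundary. This is where the hyperbolic dynamics of $g$ is essential: along a leaf $l^+ = \cF^+(p)$, the section $\sigma$ restricted to the part of $\wt A^\circ$ over $l^+ \cap L$ gives a transversal to the orbits, and I would use Lemma \ref{lem_all_converge} (global forward convergence along stable leaves) together with the fact that $g$ contracts $l^+$ towards $p$ to see that, suitably reparameterized, the section converges to the orbit $\hflow(\sigma(p))$ over $p$; symmetrically along $\cF^-(q)$ using backward convergence. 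One also needs to check that as one approaches the corner from inside the lozenge the transverse disc does not spiral — this follows because near $p$ the lozenge occupies a single quadrant and the section can be taken to stay in a flow-box neighborhood of a point on $\alpha$ (after flowing). Concretely, I expect to choose the initial section $\sigma_0$ near each ideal corner to already lie in a flow-box neighborhood of the relevant periodic orbit, so that the limiting behavior is visibly the standard ``Birkhoff annulus boundary'' picture of Figure \ref{fig_spiraling} (right). The hard part is precisely this boundary-regularity analysis: verifying that the closure of the transverse disc is an embedded closed disc meeting $\alpha$ and $\beta$ transversally (as annulus boundary leaves), rather than wrapping around or accumulating in a complicated way. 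Once that is established, $A := \overline{\wt A^\circ}/g$ is an immersed annulus with $\partial A = \alpha \cup \beta$ periodic orbits and $\flow$ transverse to $\mathring A$, hence a Birkhoff annulus; its core represents $g$ by construction; and its trace is the $g$-invariant lift $\overline{\wt A^\circ}$, which projects to $\overline L$ — more precisely, by Theorem \ref{thm_birkhoff_to_chain} (with $n=1$, no interior closed leaves) the trace is $L$ together with its two corners, exactly as claimed.
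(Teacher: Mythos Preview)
Your overall strategy --- build a $g$-invariant surface in $\wt M$ transverse to $\hflow$ on its interior, bounded by the orbits over the two corners, then quotient by $g$ --- is exactly the paper's. The difference is in the choice of fundamental domain, and your choice creates a difficulty that the paper's avoids entirely.

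You take $\tau$ running between the two \emph{ideal} corners (the perfect fits), so your fundamental strip $F$ lies in $L^\circ$ and never meets the actual corners $p,q$. To understand $\sigma$ near $p$ you must then pass through $g^{-n}F$ for $n\to\infty$, and the limiting behaviour depends on how $\sigma_0$ behaves near the ideal corners --- which are points at infinity in $\orb$, not points of the plane. Your proposed fix, choosing $\sigma_0$ ``near each ideal corner to already lie in a flow-box neighborhood of the relevant periodic orbit,'' does not make sense: orbits near an ideal corner are far from \emph{both} periodic orbits $\wt\alpha,\wt\beta$ (their $\bfs$-leaf is close to one side of $L$ and their $\bfu$-leaf close to the adjacent side, so they are close to neither corner). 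The boundary-regularity analysis you flag as ``the hard part'' is thus genuinely incomplete as written.

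The paper instead chooses a path $c$ in $L$ from the \emph{actual} corner $a$ to the \emph{actual} corner $b$, transverse to both foliations and with $c\cap gc=\emptyset$. This $c$ lifts to an arc $\wt c\subset\wt M$ from a point $x\in\wt\alpha$ to a point $y\in\wt\beta$, transverse to $\hflow$. The region $D\subset L$ bounded by $c\cup gc$ then lifts to a closed disc $\wt D$ whose boundary is $\wt c\cup g\wt c$ together with the orbit segments of $\wt\alpha$ from $x$ to $gx$ and of $\wt\beta$ from $y$ to $gy$. Now $\wt A:=\bigcup_{n}g^n\wt D$ is manifestly a bi-infinite strip with boundary exactly $\wt\alpha\cup\wt\beta$: the periodic orbits are already in $\partial\wt D$, so no limiting argument is needed at all. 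Switching your $\tau$ from the anti-diagonal to this corner-to-corner diagonal makes the ``hard part'' disappear.
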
 

\begin{proof} Denote the corners of $L$ in $\orb$ by $a$ and $b$ and 
let $\wt \alpha$ and $\wt \beta$ be the orbits of $\hflow$ in $\wt M$ projecting to $a$ and $b$ respectively. Consider a path $c$ in $L$ between $a$ and $b$, transverse to both foliations and with $c \cap gc = \emptyset$.  
Lift this it to a path $\wt c$ in $\wt M$ between points $x\in \wt \alpha$ and $y\in \wt \beta$.  Since $\wt M \to \orb$ is a trivial fibration, we can chose $\wt c$ to be transverse to $\hflow$.  

Similarly, we may lift the domain $D$ bounded by $c \cup gc$ to a domain $\wt D$ transverse to the flow on its interior, and bounded by the union of $\wt c$, $g\wt c$, the segments of $\wt \alpha$ between $x$ and $g(x)$, and the segment of $\wt \beta$ between $y$ and $g(y)$. 

Call $\wt A$ the union of $\wt D$ with all its translates by powers of $g$.  Topologically $\wt A$ is a bi-infinite strip, bounded by $\wt \alpha$ and $\wt\beta$, and transverse to the flow in its interior. It is also $g$-invariant by construction.   Thus, the projection of $A$ to $M$ is a Birkhoff annulus bounded by the projections $\alpha$ and $\beta$ of $\wt \alpha$ and $\wt \beta$.  It is of course possible that $\alpha = \beta$.   The trace of this annulus is, by construction, $L\cup \{a,b\}$. 
\end{proof} 

Previously, we saw that the induced foliations on a Birkhoff annulus can be read off of the orbit space.  By paying attention to curves $c$ and $g(c)$ bounding a (local) fundamental domain  as in the proof above, one can also read the transverse orientation of the flow from the orbit space: 

\begin{lemma}  \label{lem_direction_of_twisting} 
Let $A_1, A_2$ be two Birkhoff annuli sharing a common boundary component $\alpha$.  Lift $A_1 \cup A_2$ to an annulus containing a lift $\wt \alpha$ of $\alpha$, and let $\hat A_i$ denote the projection of this lift of $A_i$ to $\orb$; which share a corner $a$.  The transverse orientation of $\flow$ agrees on $A_1$ and $A_2$ if and only if $\hat{A}_1$ and $\hat {A}_2$ have an even number of quadrants between them. 
\end{lemma}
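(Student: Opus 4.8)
The plan is to understand the transverse orientation of $\flow$ along a Birkhoff annulus in terms of the ``twisting'' information encoded in the trace, following the same local picture used in the proof of Lemma \ref{lem_lozenge_to_annulus}. Recall from that proof that a Birkhoff annulus $A$ with core represented by $g$ has a $g$-invariant lift $\wt A \subset \wt M$, and a fundamental domain $\wt D$ for the $g$-action on $\wt A$ is bounded by a transverse arc $\wt c$, its translate $g \wt c$, and segments of the two boundary orbits. Along $\partial \wt A$, the orbits $\wt\alpha, \wt\beta$ of $\hflow$ are oriented, and $\flow$ is transverse to the interior of $\wt A$; so $\wt A$ (minus its boundary) inherits a co-orientation from the flow. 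First I would record precisely how this co-orientation looks near the corner $a$ of $\hat A$ in the orbit space: the trace $\hat A$ lies in a single quadrant $Q$ of $a$ (by the no-three-lozenges-share-a-corner property from Theorem \ref{thm_birkhoff_to_chain}\ref{item_Birkhoff_turn_ideal_corners}), and the ``side'' of the flow (i.e. whether $\hflow^t$ for small $t>0$ takes a point of $\wt A$ near $\wt\alpha$ into the region $\wt D$ or out of it) is determined by which quadrant $Q$ is, relative to the orientation of $\wt\alpha$ and the contracting/expanding directions of $g$ at $a$ given by Proposition \ref{prop_hyperbolic}.

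The key steps, in order, would be: (1) Fix a lift of $A_1 \cup A_2$ to an annulus in $\wt M$ containing a common lift $\wt\alpha$ of $\alpha$; let $\wt A_1, \wt A_2$ be the two sub-strips meeting along $\wt\alpha$, with traces $\hat A_1, \hat A_2$ meeting at the corner $a$. Orient $\wt\alpha$ by the flow. (2) Using the local transverse-arc description above, show that the co-orientation of $\flow$ on $\wt A_i$ near $\wt\alpha$ is a function only of which of the (finitely many) quadrants at $a$ the trace $\hat A_i$ occupies: specifically, label the quadrants of $a$ cyclically $Q_1, Q_2, \dots, Q_{2k}$ (there are an even number, $2k \ge 4$, for a nonsingular orbit $2k = 4$, for a $p$-prong $2k = 2p$) in the cyclic order around $a$ inherited from $\orb$; then as one passes from one quadrant to the cyclically adjacent one, the transverse orientation of a Birkhoff annulus occupying that quadrant flips. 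This is the heart of the matter and follows because crossing a stable or unstable ray of $a$ switches the side of that ray, and the flow direction along $\wt\alpha$ (together with the fixed expanding/contracting behavior of $g$) pins down the co-orientation as ``clockwise-next quadrant has the opposite co-orientation.'' (3) Conclude: if $\hat A_1$ occupies quadrant $Q_i$ and $\hat A_2$ occupies $Q_j$, then the number of quadrants ``between them'' is $|i - j|$ (taken appropriately, i.e. the number of quadrant-walls crossed going from one to the other), and the co-orientations of $\flow$ on $A_1$ and $A_2$ agree precisely when $|i-j|$ is even, i.e. when there is an even number of quadrants between them.

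I would carry out step (2) by a direct picture-chasing argument in a flow-box (or model-prong) neighborhood of a point of $\wt\alpha$: the neighborhood maps to $[-1,1]^3$ (or the prong model) with the flow in one factor, and $\wt A_i$ appears as a surface containing the flow-direction segment of $\wt\alpha$ and spanned by one of the transverse quadrant-directions; the assertion that $\flow$ is transverse and ``outgoing'' on one side of $\wt\alpha$ and ``incoming'' on the other (recall Proposition \ref{prop_hopf_nonsingular} guarantees the induced foliations are nonsingular, so $\wt A_i$ genuinely separates a neighborhood of $\wt\alpha$ into a positive and negative side) translates directly into the parity statement about quadrants. The main obstacle I anticipate is bookkeeping the orientation conventions cleanly — making sure that ``number of quadrants between $\hat A_1$ and $\hat A_2$'' is unambiguously defined (one must go through the quadrants on the side actually containing no other boundary orbit, or note the answer's parity is independent of the route since the total number of quadrants is even) and that the flip-per-quadrant claim is stated consistently with the orientation of $\wt\alpha$ chosen by $\flow$ rather than by $g$. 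Once the parity-flip lemma for a single shared orbit is established, the statement of Lemma \ref{lem_direction_of_twisting} is immediate. I would also remark that this is consistent with, and indeed refines, the computation in Exercise \ref{ex_induced_foliations} on Reeb components, since a Reeb component in $\cF^s_A$ between two closed leaves is exactly the manifestation of the transverse orientation of $\flow$ reversing across an odd number of quadrants.
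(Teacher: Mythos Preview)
Your proposal is correct and takes essentially the same approach as the paper: the paper's argument is that the direction of ``twisting'' of a fundamental-domain strip for $A_i$ near $\wt\alpha$ is dictated by the dynamics of $g$ at the corner $a$, and hence flips parity with each quadrant crossed, exactly as in your step (2). In fact the paper explicitly declines to formalize this (``obvious from the picture, but notationally heavy to write\ldots we leave the formalization as an exercise''), so your outline is precisely the formalization the authors omit; your care about orientation conventions and the well-definedness of ``number of quadrants between'' (parity being independent of route since the total number of quadrants is even) fills in the details they gesture at with Figure~\ref{fig_direction_of_twisting}.
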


The proof relies on the fact that the direction a Birkhoff annulus ``twists" with respect to the flow depends on the dynamics on the orbit space, as shown in Figure \ref{fig_direction_of_twisting}.  When $A_1$ and $A_2$ are separated by an odd number of quadrants, a strip representing a fundamental domain for the annulus $A_1 \cup A_2$ will make a full twist between the two boundary components of $A_1$ and $A_2$. When $A_1$ and $A_2$ are separated by an even number of quadrants, the annulus will make a half-twist across $A_1$ and then twist back, keeping the same transverse orientation, across $A_2$.  

As with many results of this nature, the proof is obvious from the picture, but notationally heavy to write.  We refer the reader to Figure \ref{fig_direction_of_twisting} for the visual proof, and leave the formalization as an exercise.  
\begin{figure}[h]
\includegraphics[width=12cm]{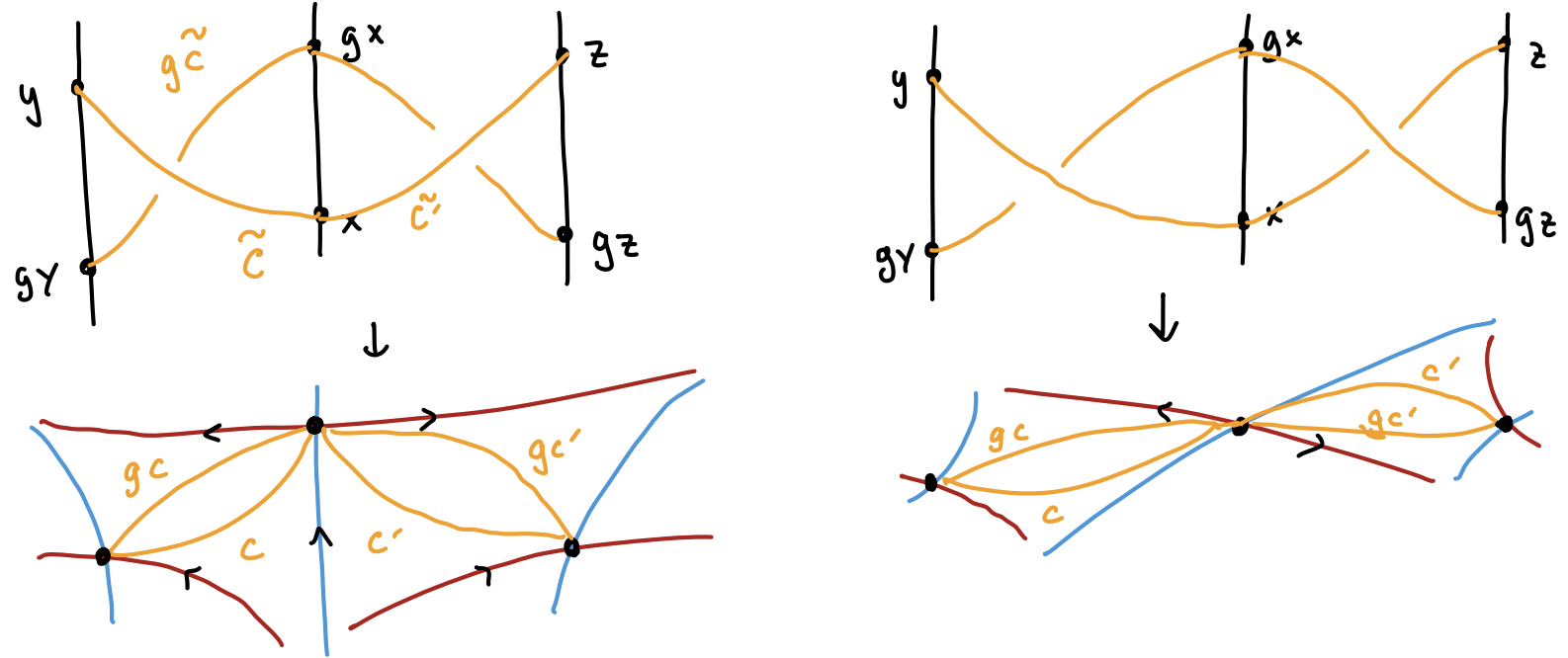}
\caption{The dynamics of $g$ on the orbit space and the quadrants of $A_1 \cup A_2$ determine the transverse orientation of the flow.}
\label{fig_direction_of_twisting}
\end{figure}

Birkhoff annuli whose trace is a single lozenge have a special name. 
 \begin{definition}[Barbot \cite{Bar95b}] \label{def_elementary_birkhoff}
 A Birkhoff annulus $A$ is called \emph{elementary} if the induced foliations do not have any closed leaves in $\mathring A$; equivalently, if its trace is a single lozenge.  
 \end{definition}

Given a finite chain of lozenges $\cC$ with corners fixed by some nontrivial $g \in \pi_1(M)$, one can apply the procedure of Lemma \ref{lem_lozenge_to_annulus} to each lozenge to produce a union of elementary Birkhoff annuli, glued along their boundaries, projecting to $\cC$.   We now show that if the chain 
is of the form specified in Theorem \ref{thm_birkhoff_to_chain}, this union can be perturbed to a single Birkhoff annulus with $\cC$ as its trace.  To motivate this, we begin with the inverse procedure.  

\begin{proposition}[Homotoping to elementary annuli, \cite{Bar95b} Corollaire 5.6]\label{prop_Birkhoff_to_union_elementary}
Any Birkhoff annulus is homotopic along flow lines to an arbitrarily small perturbation of a union of elementary Birkhoff annuli. 
\end{proposition}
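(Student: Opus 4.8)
The plan is to work directly with the trace of a Birkhoff annulus $A$ in the orbit space and use the combinatorial description furnished by Theorem \ref{thm_birkhoff_to_chain}. Let $g \in \pi_1(M)$ represent the core of $A$, and let $\cC = \{L_1, \dots, L_n\}$ be the chain of $g$-invariant lozenges associated to $\hat A$, so that the trace consists of the interiors of the $L_i$, their shared sides, and the two outer corners. The closed leaves $c_1, \dots, c_{n-1}$ of $\cF^s_A \cup \cF^u_A$ project to the shared sides $s_1, \dots, s_{n-1}$ of consecutive lozenges. The idea is that the annulus $A$, near each closed leaf $c_i$, can be pushed along flow lines so that it comes arbitrarily close to the orbit lying over (a corner on) the shared side $s_i$, thereby separating $A$ into pieces each of which is (close to) an elementary Birkhoff annulus whose trace is one lozenge $L_i$.

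First I would set up the $g$-invariant lift $\wt A \subset \wt M$ and the fundamental domain $\wt D$ for the $g$-action, exactly as in the proof of Theorem \ref{thm_birkhoff_to_chain}: $\wt D$ is an embedded disc, transverse to $\hflow$ on its interior, bounded by arcs $\wt\gamma$, $g\wt\gamma$ and segments of the two boundary orbits, and it projects injectively to a set $\hat D \subset \orb$ which is a fundamental domain for the $g$-action on $\mathrm{int}(\cC)$. Inside each lozenge $L_i$ the set $\hat D \cap L_i$ is a quadrilateral-like region bounded by $\hat\gamma \cap L_i$ and $g(\hat\gamma \cap L_i)$. For each shared side $s_i$ of $L_i$ and $L_{i+1}$, pick the orbit $o_i$ over a point of $s_i$ (say the one in $\hat D$); since the induced foliation of $\wt A$ is trivial on the strip between $\wt c_{i}$ and $\wt c_{i+1}$ — with no closed leaves in its interior — the segment $\hat\gamma_i := \hat\gamma \cap L_i$ can be homotoped, keeping endpoints on $\cF^\pm(x_{i-1})$ and $\cF^\pm(x_i)$ (the corners of $L_i$) and staying transverse to both foliations, so that it runs arbitrarily close to the boundary $\partial L_i$, in particular arbitrarily close to the corners. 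Lifting such a homotopy of $\hat\gamma$ to $\wt M$ (using that $\wt M \to \orb$ is a trivial fibration over the trivially foliated region, so transversality to $\hflow$ is maintained) and spreading it $g$-equivariantly over all translates gives a homotopy of $\wt A$, along flow lines, to a strip $\wt A'$ which, near each lift of $c_i$, lies within an $\varepsilon$-neighbourhood of a lift of the periodic orbit $o_i$. Projecting down, $A$ becomes homotopic along flow lines to a small perturbation of the concatenation $A_1 \cup \dots \cup A_n$, where $A_i$ is the image of the $g$-invariant strip with trace contained in $L_i$, hence an elementary Birkhoff annulus by Lemma \ref{lem_lozenge_to_annulus} (after adjusting so that the pieces $A_i$ and $A_{i+1}$ actually share the periodic orbit $o_i$ as a common boundary component).

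The main obstacle I expect is the transversality bookkeeping: one must check that the homotopy of the transverse disc $\wt D$ constructed by pushing $\hat\gamma$ toward $\partial L_i$ can be taken to preserve transversality of the interior to $\hflow$ at every stage, and that the pieces $A_i$ can genuinely be closed up along the orbits $o_i$ into honest elementary Birkhoff annuli rather than strips that merely spiral toward $o_i$ — here one uses the ``stays in a single quadrant'' property of Remark \ref{rem_stay_in_quadrant}, i.e. item \ref{item_Birkhoff_turn_ideal_corners} of Theorem \ref{thm_birkhoff_to_chain}, which guarantees no three consecutive lozenges share a corner and so the annulus does not twist around a boundary orbit. A secondary, purely bookkeeping issue is matching transverse orientations of the $A_i$ along their common boundaries, which is controlled by Lemma \ref{lem_direction_of_twisting}: the perturbation can be chosen so that consecutive elementary pieces are glued compatibly, at the (expected) cost of the perturbation being genuinely nontrivial near each $c_i$ rather than an isotopy. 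Once these points are handled, the statement follows, and the smallness of the perturbation is exactly the smallness of the homotopy of $\hat\gamma$ toward $\partial L_i$, which can be made as small as desired.
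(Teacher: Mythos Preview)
Your setup via Theorem \ref{thm_birkhoff_to_chain} is correct and matches the paper's, but the implementation of the homotopy is confused. You propose to homotope $\hat\gamma$ inside the orbit space so that it runs close to the corners, then lift this to $\wt M$ and call the result a homotopy ``along flow lines''. It is not: a homotopy along flow lines moves each point along its $\hflow$-orbit, hence leaves the projection to $\orb$ unchanged. Deforming $\hat\gamma$ in $\orb$ moves points \emph{between} orbits, which is exactly the opposite. So the phrase ``gives a homotopy of $\wt A$, along flow lines'' is simply false, and the argument as written does not deliver what the proposition asks for.

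The paper's argument, by contrast, works entirely in $M$ and exploits the dynamics you never invoke: the closed leaf $c_i \subset A$ lies on the stable (or unstable) leaf of the periodic orbit $\alpha_i$ projecting to the corner $x_i$. By the forward (respectively backward) convergence of orbits on stable (unstable) leaves, flowing $A$ forward (or backward) along orbits carries $c_i$ into an arbitrarily small neighbourhood of $\alpha_i$. This is the ``homotopy along flow lines'' part, and it does not change the trace at all. Once $c_i$ is close to $\alpha_i$, a genuinely local \emph{perturbation} (not along flow lines, and supported near $\alpha_i$) slides the annulus onto $\alpha_i$, producing elementary Birkhoff annuli $A'_{i-1}, A'_i$ sharing $\alpha_i$ as boundary. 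Your proposed orbit-space deformation is neither of these two steps; to fix your argument, replace the ``homotope $\hat\gamma$'' paragraph by the flow-line push using stable/unstable convergence, and then perform the local sliding near each $\alpha_i$.
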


\begin{proof}
Let $A$ be a Birkhoff annulus which lifts to $\wt A$, and let $\cC=\{L_1,\dots, L_n\}$ be the associated chain in $\orb$. Call $x_i$ the shared corner of $L_{i-1}$ and $L_i$ and $\alpha_i$ the periodic orbit in $M$ whose lift projects to $x_i$. As we showed in the previous section, each orbit $\alpha_i$ lies on a $\cF^s$ or $\cF^u$ leaf of a closed leaf $c_i$ of one of the induced foliations on $A$. 
As before, we let $A_i$ denote the subannulus of $A$ bounded by $c_{i-1}$ and $c_i$.

By pushing $A$ along the orbits of the flow, we may assume that each $c_i$ lies in an arbitrarily small neighborhood of the closed orbit $\alpha_i$.  Having done this, we can make a deformation of $A_{i-1}$ and $A_i$ in that small neighborhood, as shown in Figure \ref{fig_mod_Birkhoff_annulus} to obtain two new subannulli $A'_{i-1}$ and $A'_i$ which are transverse to the flow, agree with (the pushed versions of) $A_{i-1}$ and $A_i$ outside of the neighborhood, and who have $\alpha_i$ as their common boundary\footnote{Barbot \cite{Bar95b} called this modification ``sliding'' (or ``coulissage'' in the original French), since it amounts to sliding the annulus $A$ along the leaf of $c_i$ until you reach $\alpha_i$.}. 
Doing this local procedure simultaneously for each of the $\alpha_i$ gives us a union of elementary Birkhoff annuli if $\alpha_i \neq \alpha_{i+1}$ for each $i$.   
\end{proof}
\begin{figure}[h]
\includegraphics[width=9cm]{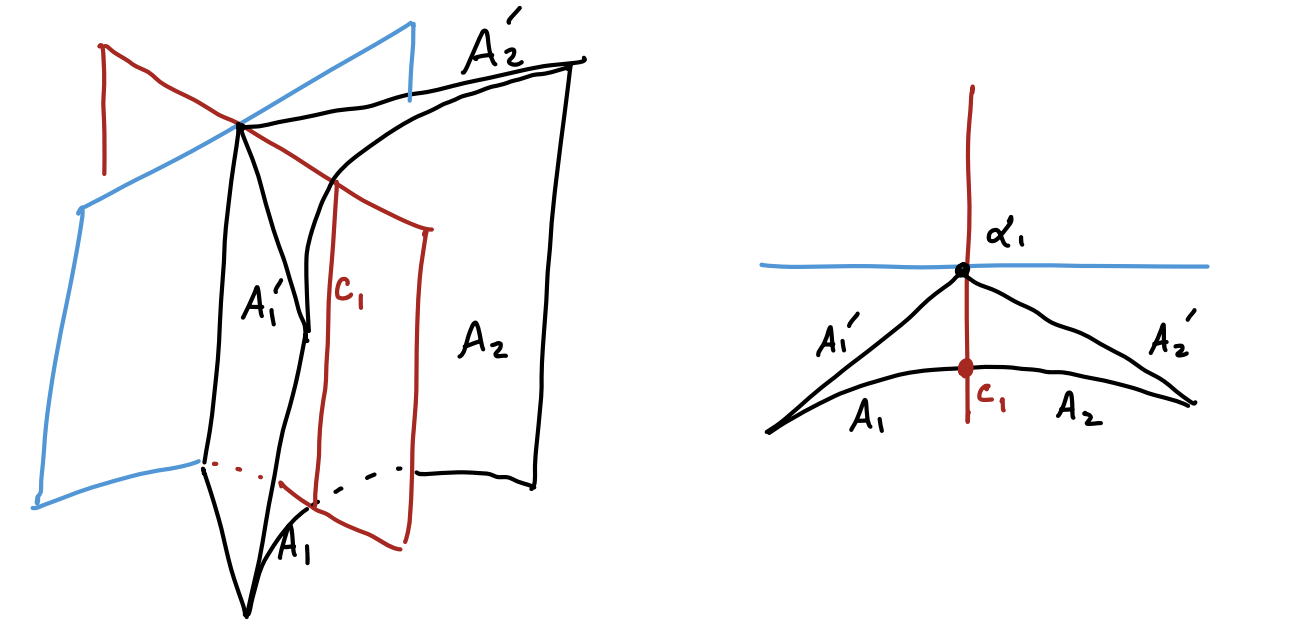}
\caption{Modifying a Birkhoff annulus near a periodic orbit}
\label{fig_mod_Birkhoff_annulus}
\end{figure}

The next proposition reverses the strategy above, creating a Birkhoff annulus from two elementary Birkhoff annuli, provided that their traces lie in adjacent quadrants.

\begin{proposition}[Pushing off periodic orbits]\label{prop_union_elementary_to_Birkhoff}
Let $A_1'$, $A_2'$ be two elementary Birkhoff annuli with a common boundary orbit $\alpha$, and let $\wt \alpha$ be a lift of $\alpha$.  
Suppose the lifts of $A_i'$ along $\wt \alpha$ project to lozenges $L_1$ and $L_2$ which share a side. Then 
there is an arbitrarily small deformation of $A'_1\cup A'_2$ to a Birkhoff annulus which admits a lift whose associated chain is $\{L_1, L_2\}$.  
\end{proposition}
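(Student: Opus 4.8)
The plan is to reverse the ``sliding'' (coulissage) modification used in the proof of Proposition \ref{prop_Birkhoff_to_union_elementary}.  Since $L_1$ and $L_2$ share a side, there is a closed leaf $c$ (a side of both lozenges) lying in $\cF^s$ or $\cF^u$; without loss of generality say it is a stable leaf, and $\alpha \subset c$.  The boundary orbit $\alpha$ is the common corner of $L_1$ and $L_2$, so each $A_i'$ has $\alpha$ as a boundary component, and (after lifting $A_1'\cup A_2'$ along $\wt\alpha$) the two lifts project to $L_1$ and $L_2$ which abut along the leaf $c$.  First I would fix a small embedded tubular neighborhood $V$ of $\alpha$ in $M$, thin enough to be a flow-box-type neighborhood in which the stable leaf $c$ appears as an embedded half-plane on each side of $\alpha$, and small enough that the requested perturbation will be as small as desired.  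Inside $V$, the annuli $A_1'$ and $A_2'$ each approach $\alpha$ along $c$; the goal is to detach them from $\alpha$ and reconnect them to each other slightly off $\alpha$.

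The key steps, in order: (1) Push $A_1'$ and $A_2'$ along flow lines so that each meets $\partial V$ in a curve close to $\alpha$ and, inside $V$, runs monotonically towards $\alpha$ along $c$.  (2) Inside $V$, replace the two ``prongs'' of $A_1'\cup A_2'$ that land on $\alpha$ by a single strip that crosses $c$ transversally near $\alpha$ but misses $\alpha$ itself --- this is exactly the inverse of the local move depicted in Figure \ref{fig_mod_Birkhoff_annulus}.  The fact that $L_1$ and $L_2$ share a side (and in particular are on ``opposite sides'' of $c$ in the sense that $c$ separates their interiors in $\orb$) is what makes this reconnection possible while keeping the strip transverse to $\flow$: the direction in which $\flow$ crosses $c$ is consistent on the two sides, so a small transverse strip threading between the two old prongs is everywhere transverse to $\flow$.  (Here I would invoke Lemma \ref{lem_direction_of_twisting} / the discussion around Figure \ref{fig_direction_of_twisting}: $L_1$ and $L_2$ sharing a side means there is an odd number of quadrants of $\alpha$ between $\hat A_1'$ and $\hat A_2'$, so the transverse orientations of $\flow$ on $A_1'$ and $A_2'$ disagree across $\alpha$, which is precisely the configuration in which the local reconnection yields a transverse strip rather than a folded one.)  (3) Verify the result: the new surface $A$ is still an immersed annulus (its core still represents the same $g\in\pi_1(M)$, since the modification was supported in a ball and did not change the boundary behaviour outside $V$ up to homotopy), its two boundary components are still periodic orbits (those of $A_1'$ and $A_2'$ other than $\alpha$), and $\flow$ is transverse to its interior.  (4) Compute the trace: lifting $A_1'\cup A_2'$ along $\wt\alpha$ gives an annulus projecting to $L_1\cup L_2\cup(\text{shared side})$, and the local reconnection inside $V$ only deforms the picture near the shared corner, so the lift of the new annulus $A$ along the same lift of $\wt\alpha$ projects to exactly $L_1\cup L_2$ together with the shared side and the two non-shared corners; hence by Theorem \ref{thm_birkhoff_to_chain}\ref{item_trace_vs_chain} the chain associated with this lift is $\{L_1,L_2\}$, and by construction $A$ has exactly one closed leaf of its induced foliations in its interior (the image of $c$), consistent with the trace being a line of two lozenges.

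The main obstacle I anticipate is making step (2) precise while simultaneously controlling three things: that the modified strip is genuinely transverse to $\flow$ (not merely immersed), that the global object remains an immersed annulus with the same core class, and that the modification can be made arbitrarily $C^0$-small.  Transversality is the subtle point, because near $\alpha$ the stable leaf $c$ is tangent to $\flow$ (it is saturated by orbits), so the reconnecting strip must be chosen to cross $c$ cleanly; the correct local normal form is to work in the linearizing (flow-box / prong) coordinates near $\alpha$ supplied by Definition \ref{def_flow_box}, in which $\alpha$ is the core of a solid-torus neighborhood, $c$ is one of the coordinate half-planes through the core, and the reconnection is the obvious surgery on two parallel half-planes into one nearby plane missing the core.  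Everything else --- the identification of the trace and the computation of the associated chain --- then follows mechanically from Theorem \ref{thm_birkhoff_to_chain} and the construction of traces, essentially reading the proof of Lemma \ref{lem_lozenge_to_annulus} in reverse and concatenating the two fundamental domains.
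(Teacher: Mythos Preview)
Your approach is essentially the paper's: reverse the sliding procedure of Proposition \ref{prop_Birkhoff_to_union_elementary}, using that $L_1, L_2$ sharing a side places their lifts in \emph{adjacent} quadrants of $\wt\alpha$, which permits pushing a neighborhood of $\alpha$ in $A_1'\cup A_2'$ off $\alpha$ transversally to the appropriate foliation (transverse to $\cF^u$ when the shared side is stable, and vice versa). The paper's proof is only a few lines and does not invoke Lemma \ref{lem_direction_of_twisting} at all.

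One correction to your step (2): the parity is reversed. Sharing a side means the two lozenges occupy adjacent quadrants, so there are \emph{zero} (hence an even number of) quadrants between $\hat A_1'$ and $\hat A_2'$; by Lemma \ref{lem_direction_of_twisting} the transverse orientations of $\flow$ on $A_1'$ and $A_2'$ therefore \emph{agree} near $\alpha$. It is precisely this agreement, not disagreement, that lets the reconnecting strip be transverse: a consistent normal direction for $\flow$ across both annuli allows you to interpolate without introducing a tangency. This does not break your argument---the geometric move you describe is the correct one---but the justification you give for why it yields a transverse strip is backwards.
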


\begin{proof}
The proof consists essentially in reversing the procedure from that of Proposition \ref{prop_Birkhoff_to_union_elementary}.  
A local picture of the lifts of $A'_1 \cup A'_2$ is given by Figure \ref{fig_mod_Birkhoff_annulus}. The assumption that $L_1$ and $L_2$ share as side implies that their lifts are in adjacent quadrants of $\wt \alpha$.  
Thus, one can push a small neighborhood of $\alpha$ in $A'_1 \cup A'_2$ away from $\alpha$, in the direction transverse to $\cF^s$ if their traces share an unstable side (as in the figure) or transverse to $\cF^u$ if they share a stable side.  The resulting annulus will be transverse to $\flow$, and hence give the desired Birkhoff annulus $A$. 
\end{proof}

Since this procedure was purely local, we easily obtain the following:  
\begin{theorem}  \label{thm_chain_to_anulus}
Let $\cC$ be a finite chain of lozenges so that adjacent lozenges share sides, and no three share a corner, which is invariant by some nontrivial $g \in \pi_1(M)$.  Then there is a Birkhoff annulus with trace $\cC$.  
\end{theorem}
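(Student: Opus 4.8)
The plan is to build the Birkhoff annulus one lozenge at a time and glue, using Lemma \ref{lem_lozenge_to_annulus} as the base case and Proposition \ref{prop_union_elementary_to_Birkhoff} as the inductive gluing step. Write $\cC = \{L_1, \dots, L_n\}$ for the chain, with $g \in \pi_1(M)$ fixing all corners (note that since consecutive lozenges share sides, the corners of $\cC$ form a $g$-invariant set; by Observation \ref{obs:fixes_all_corners} a power of $g$ fixes all rays through all corners, and we may replace $g$ by this power without changing the conclusion, since a periodic orbit representing $g^k$ exists iff one representing $g$ does by Corollary \ref{cor:closed_orbits_primitive}). For each $i$, apply Lemma \ref{lem_lozenge_to_annulus} to $L_i$ to obtain an elementary Birkhoff annulus $A_i$ with trace $L_i$, whose two boundary orbits are the periodic orbits of $\flow$ projecting to the two corners of $L_i$. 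The consecutive lozenges $L_i$ and $L_{i+1}$ share a corner, say $x_{i+1}$, so $A_i$ and $A_{i+1}$ share the boundary orbit $\alpha_{i+1}$ through the lift of $x_{i+1}$. One subtlety: the constructions in Lemma \ref{lem_lozenge_to_annulus} are carried out independently, so one must first check the $A_i$ can be chosen so that $A_i$ and $A_{i+1}$ genuinely agree along $\alpha_{i+1}$ as immersed annuli (not merely have the same boundary orbit); this is a matter of choosing the transverse arcs and fundamental domains compatibly in a fixed flow-box neighborhood of $\alpha_{i+1}$ in $M$, which is possible since both lifts use the {\em same} lift $\wt\alpha_{i+1}$ of the orbit once we work along a fixed lift of the union.

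Next I would run the gluing. For each $i$ from $1$ to $n-1$, the hypothesis that $L_i$ and $L_{i+1}$ share a side means their lifts along $\wt\alpha_{i+1}$ lie in {\em adjacent} quadrants of $\wt\alpha_{i+1}$, which is exactly the hypothesis of Proposition \ref{prop_union_elementary_to_Birkhoff}. Applying that proposition produces an arbitrarily small deformation of $A_i \cup A_{i+1}$, supported in a small neighborhood of $\alpha_{i+1}$, which is a genuine Birkhoff annulus transverse to $\flow$ away from its boundary orbits $\alpha_i$ and $\alpha_{i+2}$, and which has a lift whose associated chain is $\{L_i, L_{i+1}\}$. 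Because these modifications are local (each supported in a small neighborhood of the corresponding $\alpha_{i+1}$), and because $n$ is finite, one can perform all of them simultaneously — choosing the neighborhoods of the $\alpha_i$ pairwise disjoint, which is possible since distinct periodic orbits are disjoint compact sets. The result is a single immersed annulus $A$, transverse to $\flow$ on its interior, with boundary the periodic orbits $\alpha_1$ and $\alpha_{n+1}$ projecting to the two extremal corners of $\cC$, hence a Birkhoff annulus.

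Finally I would verify that the trace of $A$ is $\cC$ (more precisely, that the associated chain of $A$ is $\cC$, in the sense of Theorem \ref{thm_birkhoff_to_chain}\ref{item_trace_vs_chain}). This follows because the deformations were supported in small neighborhoods of the $\alpha_i$ and did not change the flow-transverse picture outside of those: away from the corners, each piece still projects into the interior of the corresponding $L_i$, and the closed leaves of the induced foliations on $A$ are exactly the images of the $\alpha_i$ for $1 < i < n+1$, which project to the shared sides $f_i$ of $L_{i-1}$ and $L_i$. Then Theorem \ref{thm_birkhoff_to_chain} (applied to $A$) identifies its associated chain with a chain of lozenges determined by this sequence of closed leaves and their foliation types, and by construction this is precisely $\cC$; the hypothesis that no three consecutive lozenges of $\cC$ share a corner is exactly condition \ref{item_Birkhoff_turn_ideal_corners} there and guarantees consistency with the combinatorics coming out of the theorem.

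I expect the main obstacle to be the compatibility bookkeeping in the first two paragraphs: making the independently-constructed elementary annuli $A_i$ and $A_{i+1}$ literally match up along their common boundary orbit so that their union makes sense as an immersed surface before applying Proposition \ref{prop_union_elementary_to_Birkhoff}, and ensuring the simultaneous local deformations at the various $\alpha_i$ do not interfere. This is not conceptually difficult — it is the ``purely local'' remark preceding the theorem made precise — but it requires a careful choice of flow-box neighborhoods and of the transverse arcs $\wt\gamma$ in each lozenge, fixing once and for all a lift of the whole prospective annulus to $\wt M$ and building every $A_i$ inside it. Everything else is a direct assembly of the lemmas already proved.
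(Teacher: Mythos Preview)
Your proposal is correct and follows essentially the same approach as the paper: the paper's proof is just two sentences (replace $g$ by a power fixing all corners, then apply the push-off procedure of Proposition~\ref{prop_union_elementary_to_Birkhoff} one corner at a time), and you have fleshed out exactly this outline, including the implicit use of Lemma~\ref{lem_lozenge_to_annulus} to produce the elementary annuli first. One small caution: your justification for disjoint neighborhoods assumes the $\alpha_i$ are distinct periodic orbits in $M$, but different corners of $\cC$ can project to the same orbit; this is harmless since the modifications are performed on the domain annulus (which is only immersed in $M$) rather than in $M$ itself, so the local deformations at different $i$ do not interfere regardless.
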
 

\begin{proof} 
Up to replacing $g$ with a power, we can assume that each corner of $\cC$ is fixed by $g$.  Now apply the push-off procedure one by one to each periodic orbit represented by a corner of $\cC$.   
\end{proof}

\subsection{Weakly embedded versus immersed Birkhoff annuli}

Birkhoff annuli are not required to be embedded, but simply $\pi_1$-injectively immersed. In this section we discuss results on which self-intersections of a given Birkhoff annulus (or union of annuli) are essential, and which can be removed by an isotopy.  This can be seen from the structure of their associated chains.  We begin with some definitions. 

\begin{definition}
A Birkhoff annulus $A$ is \emph{weakly embedded} if its interior is embedded in $M$.
\end{definition}

One obvious reason that a Birkhoff annulus may fail to be weakly embedded is when the boundary orbits also intersect the interior.  Stated in terms of the orbit space, this corresponds to a corner of a lozenge in the associated chain being taken to the interior of the chain by some element of $\pi_1(M)$.  For convenience we make the following definition.  

\begin{definition}\label{def_simple_chain}
A chain of lozenges $\cC$ is called \emph{simple} if for every corner $c$ in $\cC$ and every $g\in \pi_1(M)$, $gc$ is \emph{not} in the interior of $\cC$.  If such a chain $\cC$ is a single lozenge, we say it is a {\em simple lozenge}.  
\end{definition}

 In  Theorem \ref{thm_simple_chain_gives_weakly_embedded}, we will see that failure of simplicity is the only irresolvable obstruction to replacing a Birkhoff annulus with a weakly embedded one with the same trace.  Conversely, Theorem \ref{thm_weakly_embedded_Birkhoff_to_simple} says that weakly embedded Birkhoff annuli always have simple trace.  For this, we need the following
 general lemma on simple chains.  
 
\begin{lemma} \label{lem_simple_chains}
Suppose $\cC =\{L_1,\dots, L_n\}$ is a chain of lozenges such that, for all $i$, $L_i$ and $L_{i+1}$ share a side, and $L_i$, $L_{i+1}$ and $L_{i+2}$ never share a corner.  Let $a, b$ be the corners (of $L_1$ and $L_n$) not shared by two lozenges.  
Then $\cC$ is simple if and only if $g(\{a, b\}) \cap (\mathring L_1 \cup \mathring L_n) = \emptyset$.  
\end{lemma}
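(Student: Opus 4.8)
The forward direction is immediate: the open lozenges $\mathring L_1$ and $\mathring L_n$ are contained in the interior of $\cC$, and $a,b$ are among the corners of $\cC$, so if $\cC$ is simple then $g(\{a,b\})\cap(\mathring L_1\cup\mathring L_n)=\emptyset$ for every $g\in\pi_1(M)$.

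For the converse I would first fix notation: label the corners of $\cC$ in order $x_0=a,\,x_1,\dots,x_n=b$, so that $L_i$ has corners $x_{i-1},x_i$ and $L_i,L_{i+1}$ meet along the side through their common corner $x_i$. The hypothesis that no three consecutive lozenges share a corner says the chain never folds back, i.e.\ $x_{i-1}\neq x_{i+1}$, and from this one checks that the open lozenges $\mathring L_1,\dots,\mathring L_n$ are pairwise disjoint, that their union is the topological interior of $\cC$, and that each corner $x_i$ lies on $\partial\cC$. Thus ``$\cC$ simple'' means precisely that $gx_k\notin\mathring L_m$ for all corners $x_k$, all $1\le m\le n$, and all $g$. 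Assume the condition $g(\{a,b\})\cap(\mathring L_1\cup\mathring L_n)=\emptyset$ for all $g$, and suppose for contradiction that some ``bad triple'' $(x_k,L_m,g)$ with $gx_k\in\mathring L_m$ exists; I would take a counterexample in which the length $n$ of $\cC$ is as small as possible (so that the lemma is available for all shorter chains).

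The engine of the argument is a propagation step built on Lemma \ref{lem_markovian_or_corner}. Since $gx_k$ is a corner of $gL_k$ lying in $\mathring L_m$, that lemma puts us in its ``corner'' case: one corner $x_j$ of $L_m$ with $j\in\{m-1,m\}$ lies in $\mathring{gL_k}$, so $g^{-1}x_j\in\mathring L_k$; applying the lemma once more to a lozenge of $\cC$ having $x_j$ as a corner and to $L_k$, and using pairwise disjointness of the $\mathring L_i$ to discard one of the two alternatives, one produces a new bad triple in which the source corner has been moved to a \emph{neighbour} $x_{k\pm1}$ of $x_k$. Iterating pushes the source corner all the way to an end, $a$ or $b$; when instead $x_k$ is interior, cutting $\cC$ at $x_k$ into the two sub-chains $\{L_1,\dots,L_k\}$ and $\{L_{k+1},\dots,L_n\}$ and applying the (smaller-length) lemma to each — at least one of them must fail to be simple — feeds the failure back toward an end lozenge $L_1$ or $L_n$ with an end corner, contradicting the standing hypothesis. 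The ``no three consecutive share a corner'' hypothesis is exactly what prevents $gL_k$ from overlapping more than one lozenge of $\cC$ beyond a given corner, keeping the propagation controlled.

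I expect the bookkeeping in this propagation to be the main technical obstacle: one must make precise which alternative of Lemma \ref{lem_markovian_or_corner} is forced at each step, and verify that the complexity measure used to set up the minimal counterexample strictly decreases, all while the chain may ``zigzag'', with consecutive shared sides alternating between $\cF^+$ and $\cF^-$. A conceptually cleaner route, which suffices for all intended applications (where $\cC$ is invariant under a nontrivial element and hence is the trace of a Birkhoff annulus by Theorems \ref{thm_birkhoff_to_chain} and \ref{thm_chain_to_anulus}), is to pass to the $3$-manifold: realize $\cC$ as the trace of $A=A_1\cup\dots\cup A_n$, a union of elementary Birkhoff annuli with $A_i\leftrightarrow L_i$, so that $\operatorname{int}(\cC)$ corresponds to the union of the flow-transverse interiors $\mathring A_i$ and $\mathring L_1\cup\mathring L_n$ to the two end pieces; then ``$\cC$ not simple'' means that a periodic orbit lying on $A$ crosses $\mathring A$ transversally away from itself, and following that orbit through $A$ forces it to exit through one of the two end pieces $A_1,A_n$, contradicting the hypothesis.
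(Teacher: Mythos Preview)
Your forward direction matches the paper's.  For the converse, however, you are working much harder than necessary, and the propagation/induction you sketch is left genuinely incomplete: you never pin down which alternative of Lemma~\ref{lem_markovian_or_corner} is forced, nor why the process terminates at an end corner landing in an end lozenge rather than cycling.  Your fallback via Birkhoff annuli also requires $\cC$ to be invariant under a nontrivial element, which is not assumed.

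The paper's argument is a one-step application of the non-corner criterion (Lemma~\ref{lem_no_corner_criterion}), which you never invoke.  Suppose $g(x_i)\in\mathring L_j$.  Then a corner $c$ of $L_j$ lies in $\mathring{g(L_i)}$ by Lemma~\ref{lem_markovian_or_corner}.  Now look at the four quadrants of $c$: two of them contain the ideal corners (perfect fits) of $g(L_i)$, so by Lemma~\ref{lem_no_corner_criterion} no lozenge with corner $c$ can lie in those quadrants.  But a lozenge of $\cC$ sharing a \emph{side} with $L_j$ at $c$ would have to occupy exactly one of those forbidden quadrants.  Hence no such lozenge exists, forcing $c\in\{a,b\}$ and $j\in\{1,n\}$.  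The symmetric argument, using the perfect fits of $L_j$ to constrain the quadrants of $g(x_i)$, shows that $x_i$ cannot be a side-sharing corner of $\cC$ either, so $x_i\in\{a,b\}$.  That is the whole proof: no iteration, no induction on $n$, no passage to the $3$-manifold.
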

Note that, if $\cC$ is a single lozenge, the statement is just the definition of simple.  

\begin{proof}
One direction is immediate: if $ga$ or $gb$ are in the interior of $L_1$ or $L_n$ for some $g\in G$, then $\cC$ is not simple by definition.
To prove the converse, suppose that $\cC$ is not simple. 
Call $x_i$ the common corner of $L_i$ and $L_{i-1}$; with $x_0 = a$ and $x_{n}=b$.  Since $\cC$ is not simple, there exists $x_i$ and some $g$ such that $g(x_i)$ is in the interior of $\cC$; which means $g(L_i) \cap L_j \neq \emptyset$ for some $i, j$.  By Lemma \ref{lem_markovian_or_corner}, we have also that one of the corners of $L_j$ lies in $g(L_i)$.  Let $c$ denote this corner.  

\begin{figure}[h]
\includegraphics[width=5cm]{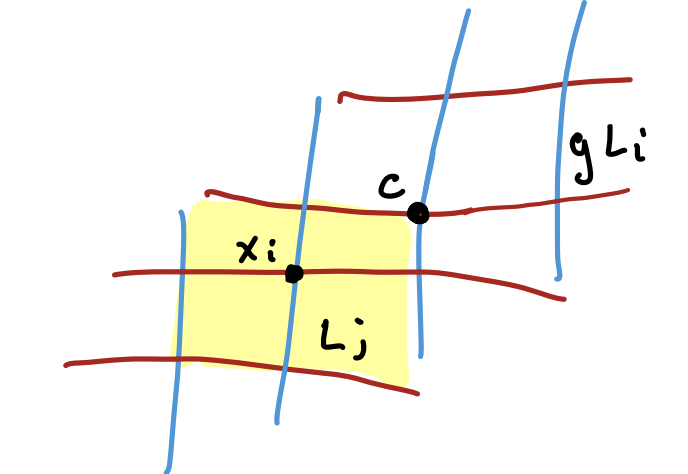}
\caption{If $\cC$ is not simple, lozenges that intersect cannot have members of the chain in adjacent quadrants.}
\label{fig_not_simple}
\end{figure}

By the non-corner criterion (Lemma \ref{lem_no_corner_criterion}) applied to the perfect fits from the lozenge $g(L_i)$, no lozenge can share a side with $L_j$ at the corner $c$.  We conclude that $j \in \{1, n\}$ and $c \in \{a, b\}$.  Similarly, no lozenge can share a side with $L_i$ at the corner $x_i$, so we also have $i \in \{1, n\}$ and $x_i \in \{a, b\}$.  
\end{proof} 

Using this, we show that weakly embedded Birkhoff annuli correspond to simple chains. 

\begin{theorem}\label{thm_weakly_embedded_Birkhoff_to_simple}
Let $A$ be a Birkhoff annulus, and $\cC$ the chain of lozenges associated to its trace. If $A$ is weakly embedded, then $\cC$ is simple. 
\end{theorem}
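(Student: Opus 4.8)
The plan is to prove the contrapositive: assuming the associated chain $\cC = \{L_1,\dots,L_n\}$ is \emph{not} simple, I will show that the interior of the Birkhoff annulus $A$ has a self-intersection. By Lemma~\ref{lem_simple_chains} (and the constraints on $\cC$ established in Theorem~\ref{thm_birkhoff_to_chain}\ref{item_Birkhoff_turn_ideal_corners}), failure of simplicity means that one of the ``extreme'' corners $a$ or $b$ of $\cC$ satisfies $g(a) \in \mathring L_1 \cup \mathring L_n$ or $g(b) \in \mathring L_1 \cup \mathring L_n$ for some $g \in \pi_1(M)$. For concreteness I will treat the case $g(a) \in \mathring L_1$; the other cases are entirely symmetric.

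First I would recall from the proof of Theorem~\ref{thm_birkhoff_to_chain} that the trace $\hat A$ of $A$, which is the projection to $\orb$ of the $g_0$-invariant lift $\wt A$ (where $g_0$ represents the core of $A$), consists of the interiors of the lozenges $L_i$, the shared sides, and the two extreme corners $a$ and $b$; and that the extreme corners are precisely the projections of the two boundary orbits of $\wt A$. In particular, $a$ is the image of (the orbit of) one boundary component of $\wt A$, hence $a \in \hat A$, and therefore the point $g(a)$ also lies in $g \hat A$, which is the projection of the translate $g \wt A$. Now, $g(a) \in \mathring L_1 \subset \hat A$ as well. So the two subsets $\hat A$ and $g \hat A$ of $\orb$ have nonempty intersection: both contain the point $g(a)$.

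The key step is to upgrade this intersection in the orbit space to a genuine intersection of $\mathring A$ with itself in $M$. The point $g(a) \in \mathring L_1$ is an interior point of a lozenge, so it is a nonsingular point of $\orb$, and it lies in the interior of the trace of $A$ from two ``sheets'': it is in the interior of $\hat A$ (coming from the lift $\wt A$) and it is the image of the boundary orbit $a$ under $g$ (coming from the lift $g \wt A$). Concretely, the orbit $o := g(a)$ of $\hflow$ in $\wt M$ lies in $\mathring{\wt A}$ (since $g(a)$ is an interior point of the trace, so $o$ meets $\wt A$ transversally in its interior) and also lies in $g(\wt A)$, in fact $o$ is a \emph{boundary} orbit of $g(\wt A)$ since $a$ is a boundary orbit of $\wt A$. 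Projecting to $M$: the point $a$ projects to a \emph{boundary} periodic orbit $\alpha$ of $A$, while the fact that $g(a)$ (which is in the same $\pi_1(M)$-orbit as $a$) lies in the interior of the trace means that $\alpha$ itself passes through the interior of the image of $A$ in $M$. This contradicts the hypothesis that $A$ is weakly embedded, since a weakly embedded Birkhoff annulus has embedded interior, and the boundary orbits (being limits of interior orbits in the trace picture but distinguished from them) cannot meet the interior --- more precisely, if $\mathring A$ is embedded, then each point of $\mathring A$ has a unique preimage under the immersion, but a boundary orbit of $A$ appearing in $\mathring A$ forces a point of $\mathring A$ with a preimage on $\partial A$ and another in the interior of the domain annulus, violating embeddedness of $\mathring A$.

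The main obstacle I anticipate is making the last step fully rigorous: carefully distinguishing, in $M$, between a boundary orbit $\alpha$ of $A$ passing through $\mathring A$ (which does contradict weak embeddedness) versus the benign situation where $\mathring A$ meets $\alpha$ only along $\partial A$ itself. The orbit-space bookkeeping handles this because $g(a)$ is a point of $\mathring L_1$, hence strictly interior to $\hat A$ and not on $\partial \hat A$; so its lift $o$ is genuinely an interior orbit of $\wt A$, and thus projects into $\mathring A$, not into $\partial A$. One must also be careful that the immersion $A \to M$ is such that $\mathring A$ embedded really does prohibit a boundary orbit from entering $\mathring A$; this follows because the closure of $\mathring A$ in $M$ is the image of $A$, and the image of $\partial A$ is disjoint from $\mathring A$'s image precisely when $\mathring A$ is embedded and $A$ is an honest (not weakly degenerate) annulus --- so a point of $M$ lying on both $\mathring A$ and $\alpha = $ (image of a boundary component) with a lift showing it strictly interior to $\wt A$ gives the needed contradiction. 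I would spell this out with a short local-coordinates argument near the relevant point of $M$.
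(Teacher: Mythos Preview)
Your proposal is correct and follows essentially the same route as the paper's own proof: both argue the contrapositive, invoke Lemma~\ref{lem_simple_chains} together with Theorem~\ref{thm_birkhoff_to_chain}\ref{item_Birkhoff_turn_ideal_corners} to reduce non-simplicity to the statement that some translate of an extremal corner lands in the interior of $\cC$, and then translate this into the boundary orbit $\alpha$ (or $\beta$) meeting the interior of $A$ in $M$. The paper's proof simply asserts at the end that ``$\alpha$ or $\beta$ intersects the interior of $A$, and so $A$ is not weakly embedded,'' whereas you spend considerable effort worrying about precisely this implication; your caution is legitimate but the step is indeed routine (e.g.\ take a sequence $q_n \to q$ in $\mathring A$ with $q \in \partial A$, and use that $\iota|_{\mathring A}$ is a homeomorphism onto its image to derive a contradiction with uniqueness of limits).
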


\begin{proof}[Proof of Theorem \ref{thm_weakly_embedded_Birkhoff_to_simple}]
Let $A$ be a Birkhoff annulus, with boundary orbits $\alpha$ and $\beta$. Let $\cC$ be its associated chain of lozenges and let $a$ and $b$ denote the two extremal corners of $\cC$; these are the projections of the lifts of $\alpha$ and $\beta$. 
By Theorem \ref{thm_birkhoff_to_chain}, $\cC$ satisfies the first condition in Lemma \ref{lem_simple_chains}.  Thus, if $\cC$ is not simple, Lemma \ref{lem_simple_chains} states that either $ga$ or $gb$ intersects the interior of $\cC$.  
Translating this back to the 3-manifold, this means that $\alpha$ or $\beta$ intersects the interior of $A$, and so $A$ is not weakly embedded. 
\end{proof}

The converse statement is also true: 

\begin{theorem}\label{thm_simple_chain_gives_weakly_embedded}
Let  $\cC=\{L_1,\dots L_n\}$ be a simple chain of lozenges with corners fixed by $g \in \pi_1(M)$, satisfying the property that consecutive lozenges share a side, no three successive lozenges share a corner, and no lozenge $L_i$ of $\cC$ is the image of another lozenge $L_j$ under some element of $\pi_1(M)$.  Then
there exists a weakly embedded Birkhoff annulus $A$ such that its trace is $\cC$.   
\end{theorem}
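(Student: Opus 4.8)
The plan is to build the Birkhoff annulus lozenge-by-lozenge using the constructions already established, and then to leverage simplicity to show that the resulting immersed annulus has embedded interior. First I would apply Theorem \ref{thm_chain_to_anulus} to the chain $\cC$ (using that consecutive lozenges share sides and no three share a corner) to obtain a Birkhoff annulus $A$ with trace $\cC$ and core represented by $g$. By Theorem \ref{thm_birkhoff_to_chain}, the boundary orbits $\alpha,\beta$ of $A$ project to the extremal corners $a,b$ of $\cC$, and the closed leaves of the induced foliations on $A$ project to the shared sides of consecutive lozenges. So the only thing left is to promote $A$ to a \emph{weakly} embedded annulus --- that is, to arrange that the interior $\mathring A$ is embedded in $M$ --- without changing the trace.

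The key step is to analyze self-intersections of $\mathring A$ in terms of the orbit space. A self-intersection of $\mathring A$ in $M$ lifts to a pair of points $x \in \wt A$ and $hx' \in h\wt A$ (for some $h \in \pi_1(M)$, with $\wt A$ the $g$-invariant lift and $x'$ also in $\wt A$) lying on a common orbit of $\hflow$; projecting to $\orb$, this says $h$ carries an interior point of the trace $\hat A$ to another interior point of $\hat A$, i.e. $h \mathring L_i \cap \mathring L_j \neq \emptyset$ for some (possibly equal) $i,j$, where the intersection point is in the interior of both traces. Two cases arise. If $h$ is in the cyclic stabilizer $\langle g \rangle$ of $\cC$, then $h$ permutes the lozenges of $\cC$; since $g$ fixes each $L_i$ (after replacing $g$ by a power, as in the proof of Theorem \ref{thm_chain_to_anulus}), this only produces intersections \emph{within} a single lift of a single $A_i$, and these can be removed: each $\wt A_i$ projects injectively to a single lozenge by the argument in Theorem \ref{thm_birkhoff_to_chain} (no orbit hits the trivially-foliated strip $\wt A_i$ twice, else one builds a closed transversal to $\hfs$ or $\hfu$, contradicting Proposition \ref{prop_closed_transversal}), so up to the construction we may take the $A_i$ honestly embedded and glued along the periodic orbits $\alpha_i$ via the push-off of Proposition \ref{prop_union_elementary_to_Birkhoff}, which is a local modification and introduces no new interior self-intersections. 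If instead $h \notin \langle g\rangle$, then $h$ maps some lozenge $L_i$ of $\cC$ to a lozenge $hL_i$ which is not a lozenge of $\cC$ (by the hypothesis that no $L_i$ is the $\pi_1(M)$-translate of another $L_j$), yet $hL_i \cap L_j$ has nonempty interior-to-interior intersection. By Lemma \ref{lem_markovian_or_corner}, such an intersection forces a corner of $hL_i$ to lie in the interior of $L_j$, or a corner of $L_j$ to lie in $hL_i$. Using the non-corner criterion (Lemma \ref{lem_no_corner_criterion}) and the argument from Lemma \ref{lem_simple_chains}, this forces the offending corner to be one of the extremal corners $a,b$, so $h\{a,b\}$ meets $\mathring L_1 \cup \mathring L_n$. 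But simplicity of $\cC$ says precisely that no $\pi_1(M)$-translate of a corner lands in the interior of $\cC$, a contradiction. Hence no such $h$ exists.

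Assembling these observations: after the elementary-annulus construction and the local push-offs, the only possible self-intersections of $\mathring A$ come from $h \in \langle g\rangle$ restricted to individual strips, which have been eliminated by taking the $A_i$ embedded, so $\mathring A$ is embedded and $A$ is weakly embedded, with trace $\cC$ by construction. I expect the main obstacle to be the careful bookkeeping in the case analysis of interior self-intersections --- in particular, making precise the correspondence ``self-intersection of $\mathring A$ in $M$'' $\leftrightarrow$ ``$\pi_1(M)$-translate of an interior point of the trace lands again in the interior of the trace'', and checking that the local push-off modifications of Proposition \ref{prop_union_elementary_to_Birkhoff} can be performed simultaneously at all the periodic orbits $\alpha_i$ without creating new crossings (which is where one uses that consecutive lozenges share a side so the push-offs go in compatible transverse directions, and that the $\alpha_i$ are distinct, guaranteed again by no three lozenges sharing a corner together with point stabilizers being cyclic). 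The reduction to the extremal corners via Lemma \ref{lem_simple_chains} is the conceptual heart, and it is already essentially available in the excerpt.
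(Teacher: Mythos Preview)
Your case analysis via Lemma~\ref{lem_markovian_or_corner} has a genuine gap. That lemma has two possible conclusions when $hL_i \cap L_j \neq \emptyset$: either a corner of one lozenge lies in the interior of the other, \emph{or} the intersection is Markovian, meaning (up to swapping labels) $hL_i \subset \cF^+(L_j)$ and $L_j \subset \cF^-(hL_i)$. You treat only the corner-inside case. In the Markovian case no corner of $hL_i$ lands in $\mathring L_j$ and no corner of $L_j$ lands in $h\mathring L_i$, so simplicity of $\cC$ is not violated and your contradiction never fires; yet $h\hat A$ and $\hat A$ still overlap in $\orb$, and such overlaps can and do produce interior self-intersections of the Birkhoff annulus in $M$. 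Markovian crossings of translated lozenges are common (the configuration in Figure~\ref{fig:non-corner-dense} is one instance), so your conclusion ``no such $h$ exists'' is simply too strong and the argument does not close.

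The paper's approach is different in kind: it never tries to rule out trace overlaps. For a single lozenge it applies \emph{Fried desingularization}, a cut-and-paste of the immersed annulus along its curves of double and triple points, guided by the transverse orientation from the flow; simplicity ensures the boundary orbits avoid $\mathring A$ so the surgery stays in the interior, and an Euler-characteristic count shows the result is still a single annulus. For a general chain, the paper first applies the single-lozenge case to get each elementary piece weakly embedded, glues and pushes off to get a Birkhoff annulus $A'$, puts the remaining self-intersections in general position (so they lie along circles of double and triple points in $\mathring{A'}$), and then uses the hypothesis that no $L_i$ is a $\pi_1(M)$-translate of another $L_j$ to show these circles are \emph{inessential} in $A'$. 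An innermost-disk argument, using asphericity of $M$, removes them by an isotopy along flowlines. The non-translate hypothesis is thus used not to forbid intersections but to make the intersections that do occur topologically removable --- precisely the mechanism your approach lacks.
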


This theorem is proved in Barbot \cite{Bar95b}, first for the case where $L$ is a simple lozenge and then for the case where $\cC$ is invariant under a $\mathbb{Z}^2$ subgroup (so corresponds to a Birkhoff annulus which is topologically a torus).  However, the $\bZ^2$ invariance does not play an important role in the proof, and the techniques of proof give this more general statement.  Barbot's proof uses two different tools to remove self-intersection points of surfaces, one called {\em Fried desingularization}, and the second involving pushing loci of double points along orbits. %

We note also that removing the assumption that $\cC$ is simple in Theorem \ref{thm_simple_chain_gives_weakly_embedded} but applying the same proof gives the existence of an {\em immersed} Birkhoff annulus with trace $\cC$.  See also \cite[Theorem 24]{ABM24} for a stronger version of this result in the case where $\cC$ is a single lozenge.

\begin{proof}[Proof of Theorem \ref{thm_simple_chain_gives_weakly_embedded}]
First consider the special case where $\cL = \{L\}$ is a single lozenge.   
 By Theorem \ref{thm_chain_to_anulus}, there is a Birkhoff annulus $A$ in $M$ whose trace is $L$.  However, $A$ may not necessarily be weakly embedded in $M$. 
To fix this, one can modify $A$ by a process known as  \emph{Fried desingularization}.  This process was described by Fried on p.301 of \cite{Fri83}; a precise account can be found in \cite{ABM24}.  Since this is purely topological, we give only the idea of the proof and refer the reader to these references for details.  

 The idea of Fried desingularization is as follows: Since $\cC$ is assumed to be simple, the boundaries  $\alpha$ and $\beta$ of $A$ do not intersect the interior of $A$.  If $A$ has any self-intersections, these can be put in general position by doing a small perturbation of the interior of $A$, thus keeping $A$ transverse to the flow, and keeping the boundary disjoint from the interior.  After this, there are only two types of self-intersections which remain:  triple points (of which there can only be finitely many) and double points (which lie along finitely many curves).   The transverse orientation to $A$ given by the flow allows one to cut and paste $A$ along these intersections to obtain a non self-intersecting surface $B$, which remains transverse to the flow.  This cut and past procedure is illustrated in Figure \ref{fig_Fried_cutandpaste}.
\begin{figure}[h]
\includegraphics[width=9cm]{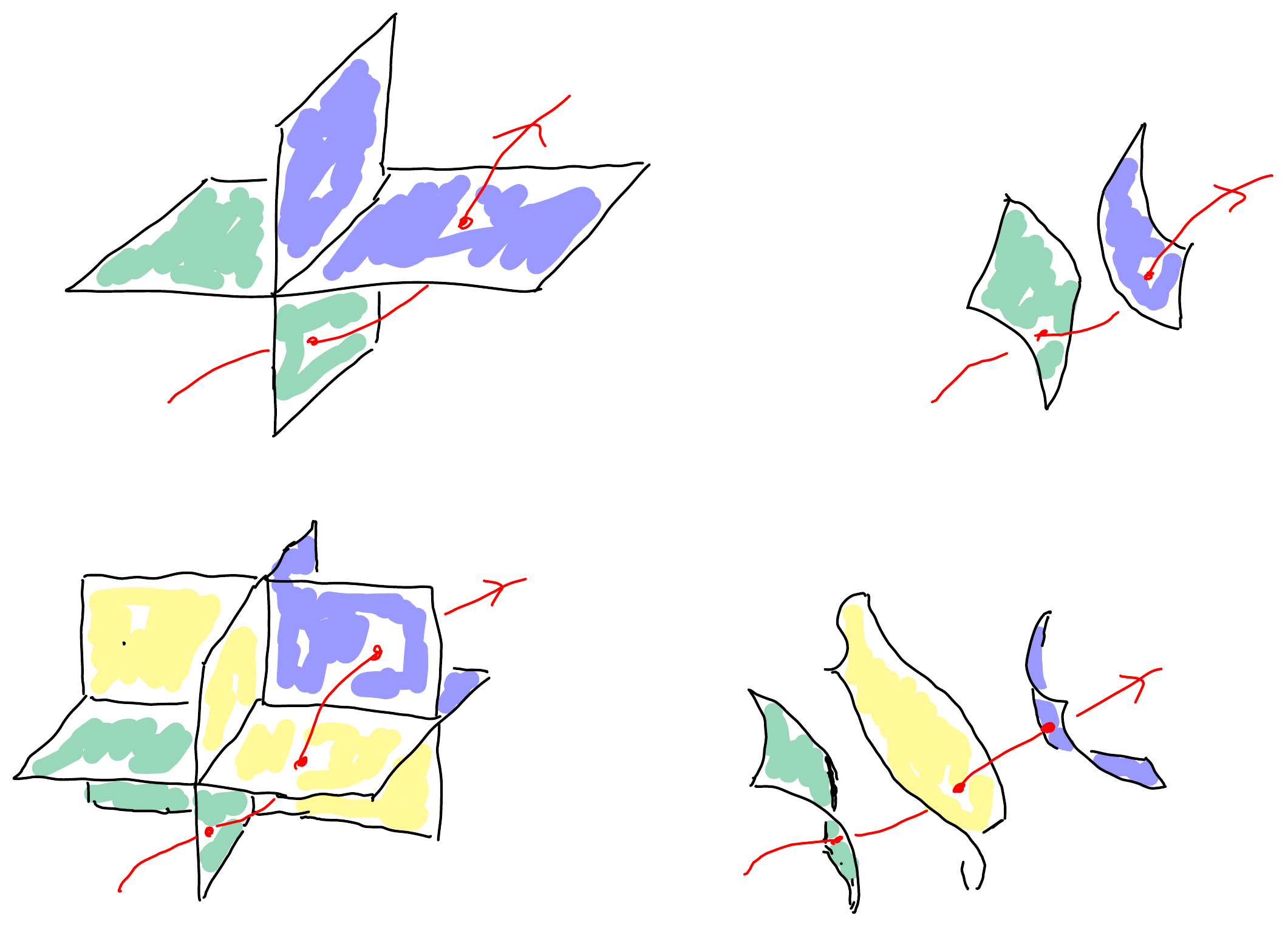}
\caption{The cut and paste procedure near curves of double intersections and points of triple intersection. The arrows show the orientation of the flowlines.}
\label{fig_Fried_cutandpaste}
\end{figure}

Applying the cut and paste procedure may {\em a priori} result in a disconnected surface $B$. However, since all the modifications were done in the interior of $A$, the boundary of $B$ is still equal to $\alpha\cup\beta$, and by transversality of the flow, the stable and unstable foliations induce two transverse, non-singular, foliations on $B$.  Thus, $B$ has Euler characteristic $0$, and so must be a connected annulus; hence a weakly embedded Birkhoff annulus.

What is left to show is that, perhaps after a further modification, the trace of $B$ in $\orb$ still corresponds to $L$.   Since no modifications were done to $B$ near the boundary, we can take a lift $\wt B$ of $B$ to $\wt M$ which is bounded by $\wt \alpha$ and trace containing an open neighborhood of the corner $a$ in $L$ (and thus contains $L$).   If the trace of $B$ is a chain $\cC'$ strictly bigger than $L$, then we can apply Proposition \ref{prop_Birkhoff_to_union_elementary}, to turn $B$ into a union of elementary Birkhoff annuli or Klein bottles. The first of them will be a weakly embedded Birkhoff annulus with trace $L$, as claimed. 

For the general case where $\cC=\{L_1,\dots L_n\}$ contains more than one lozenge, we first apply our work in the case of a single lozenge to find weakly embedded Birkhoff annuli $A_1, \ldots A_n$ such that the trace of $A_i$ is $L_i$.  We can also arrange that adjacent $A_i$ agree along their boundary.  Thus, Proposition \ref{prop_union_elementary_to_Birkhoff} implies that the union of the $A_i$ has an (arbitrarily small) deformation to a Birkhoff annulus $A'$.  However, this union may fail to be weakly embedded if $h(L_i) \cap L_j \neq \emptyset$ for some $L_i \neq L_j$ in $\cC$ and $h \in \pi_1(M)$.  To treat this case, we analyze the points of self-intersection of $A'$, and show that these can be removed by an isotopy. 
As with the procedure for Fried desingularization, we give here the essential steps and refer the reader to \cite[Section 7]{Bar95b} for the details, which are again purely topological.  

First, as in the lozenge case, after a small deformation we may assume that $A'$ is in general position, so self-intersections lie along loci of double or triple points, which form a union of embedded circles in $A'$, disjoint from the boundary.  (Note we are using the notation $A'$ for the annulus itself, not for its immersed image in $M$.)   Each such circle is either a connected component of the set of double points, or formed by a union of open intervals of double points bounded on either side by triple points.   We next show no such circle is essential in $A'$:  for contradiction, assume some circle $c$ is essential.  Then it represents the generator of $\pi_1(A')$ which, under the induced map to $\pi_1(M)$, corresponds to an element $g$ fixing all corners of the lozenge.  Since $c$ is a locus of multiplicity, there is another circle $c'$ with the same image in $M$ as $c$, hence also representing $g$.   We may choose lifts $\wt{c}, \wt{c'}$ to curves in $\wt M$ which are invariant by $g$ and project each to a path between the ideal corners of some lozenges $L_i, L_j$ of $\cC$.  Since these have the same image in $A'$, it follows that $L_i = h L_j$ for some $i \neq j$ and $h \in \pi_1(M)$, which was excluded by assumption.  

Thus, the singular loci are circles bounding discs in $A'$, and these discs are glued in pairs along their boundaries by the immersion of $A'$ into $M$.  Since $M$ is aspherical, each of these paired discs bounds a ball.  Now a standard ``innermost discs" argument allows one to iteratively push one disc past another to remove circles of self-intersection, starting with an innermost circle on $A'$.   See Figure \ref{fig_innermost_discs}. Moreover, since the image of $A'$ is 
transverse to the flow away from the boundary, this ``push" can be done by an isotopy along flowlines.  
\end{proof}

\begin{figure}[h]
\includegraphics[width=10cm]{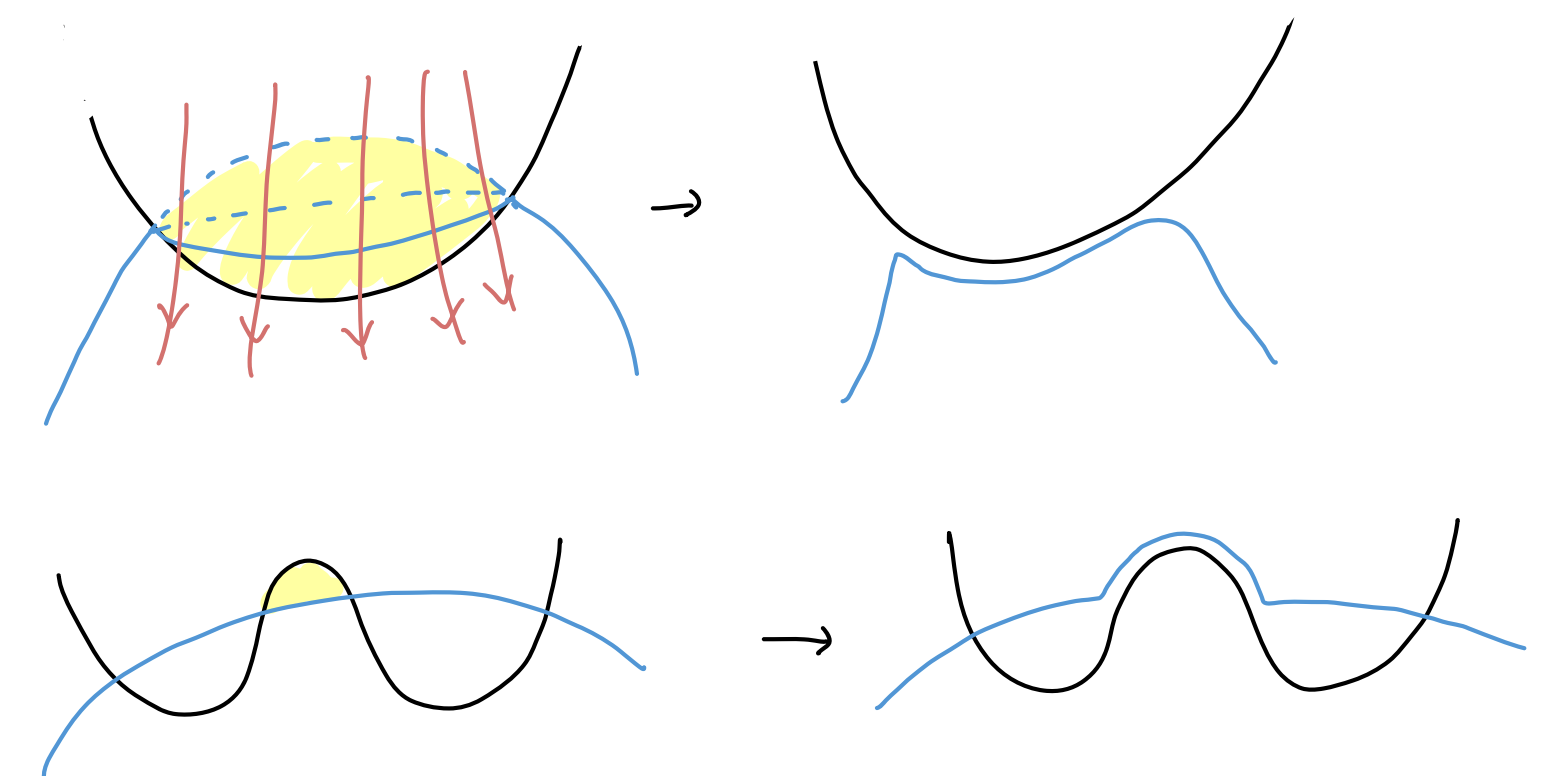}
\caption{When annuli intersect along a circle, one can push along flowlines across a ball to remove the intersection.  In the case of multiple intersections, one iterates this process starting with an innermost disc component of the complement of the circles of multiplicity in $A$ as shown in the lower half of the figure.}
\label{fig_innermost_discs}
\end{figure}

\section{Quasi-transverse surfaces and the JSJ decomposition}\label{sec_quasi_transverse_surface}
As described in the introduction to this chapter, Barbot and Fenley showed that for any 3-manifold $M$ with a pseudo-Anosov flow $\flow$, the JSJ surfaces of $M$ can be realized in a good position with respect to the orbits of $\flow$, called ``quasi-transverse".  In the other direction; Paulet showed that flows can be in fact be composed, rather than {\em de}composed by gluing partial flows along such quasi-transverse surfaces. 
 Here we describe the theory of such surfaces, and make precise the definition of  ``quasi-transverse". 
 
Throughout this section $M$ is a closed three manifold and $\flow$ a pseudo-Anosov flow.  

\begin{definition}\label{def_quasi_transverse}
A  surface $S$
 in $M$ is said to be \emph{quasi-transverse}\footnote{Slight variations on this definition have appeared in the literature, sometimes assuming additional properties as in Definition \ref{def_quasi_transverse_subclasses} below.  We have chosen to use the terminology ``quasi-transverse" for this general class of surfaces because it seems natural: these are precisely the surfaces which are transverse except along included periodic orbits.  Definition \ref{def_quasi_transverse_subclasses}  disambiguates other possibly desirable additional properties.} for $\flow$ if it is immersed, incompressible and:
\begin{enumerate}[label=(\roman*)]
\item There are finitely many (possibly zero) periodic orbits $\alpha_1, \dots, \alpha_n$ of $\flow$ contained in $S$;
\item The surface $S \smallsetminus \{\alpha_1,\dots,\alpha_n\}$ is transverse to $\flow$.
\item If $S$ has boundary, then its boundary is a union of periodic orbits.  %
\end{enumerate}
\end{definition}

Generalizing the description for Birkhoff annuli, we make the following definition. 
\begin{definition} 
A quasi-transverse surface $S$ is {\em weakly embedded} if $S \smallsetminus \{\alpha_1,\dots,\alpha_n\}$ is embedded. 
\end{definition}

\begin{remark}[Birkhoff sections] \label{rem_birkhoff_section}
One class of examples of weakly embedded quasi-transverse surfaces are {\em Birkhoff sections}.  A Birkhoff section for $\flow$ is an immersed surface (with boundary) in $M$ that is transverse to $\flow$ and embedded on its interior, tangent to the flow on its boundary, and has the property that every orbit meets the surface in uniformly bounded time, both in the past and the future.   
These play an important role especially in the theory of  transitive (pseudo)-Anosov flows, since they allow to see the flow as a suspension outside of the finitely many boundary orbits.
However, the interplay between these objects and the orbit space is not well understood, so we will not discuss them further here.  
\end{remark} 

There are many known examples of higher genus quasi-transverse surfaces, such as the Birkhoff sections mentioned above.  Since our focus in this chapter is on JSJ decompositions, we will consider only quasi-transverse annuli, tori and Klein bottles.

When a quasi-transverse surface is formed by a union of Birkhoff annuli, there may be different ways to put it (up to homotopy or isotopy) into a quasi-transverse position.   Using Propositions \ref{prop_Birkhoff_to_union_elementary} and \ref{prop_union_elementary_to_Birkhoff}, one may push the surface away from periodic orbits (when possible), or conversely to push it {\em on} to periodic orbits.   To describe the extreme cases of this, and disambiguate previous definitions of quasi-transverse in the literature, we make the following definition: 

\begin{definition}\label{def_quasi_transverse_subclasses}
Let $T$ be a quasi-transverse torus or Klein bottle, and $\alpha_1, \dots, \alpha_n$\footnote{Here we allow also for the possibility that $n = 0$, i.e., that this collection is empty.} the periodic orbits of $\flow$ on $T$.  Let $A_i$ denote the open annuli bounded by $\alpha_{i-1}$ and $\alpha_i$. We say $T$ is: 
\begin{enumerate}
\item \emph{Alternating} if $n$ is even, and for all $i$ the transverse orientation of $\flow$ switches between $A_i$ and $A_{i+1}$.
\item \emph{Maximally transverse}  if $n=0$ or if $A_i$ and $A_{i+1}$ are in non-adjacent quadrants of $\alpha_i$, for all $i$. 
\item \emph{Maximally periodic} if the only closed leaves of the induced foliations on $T$ are $\alpha_1, \dots, \alpha_n$.
\end{enumerate}
\end{definition}

Barbot's original definition of quasi-transverse in \cite{Bar95b} corresponds to those quasi-transverse surfaces that are at the same time maximally periodic and alternating.  Paulet \cite{Pau24} takes alternating as a requirement in the definition of quasi-transverse. As we will see below, any alternating quasi-transverse surface is automatically maximally transverse, and the converse also holds \emph{except} sometimes when some of the $\alpha_i$ are singular orbits.   Maximally periodic quasi-transverse surfaces are what Barbot and Fenley call ``Birkhoff surfaces" in \cite{BF13}, and is the notion of JSJ tori in ``good position" that is used in that paper.  We have avoided the terminology {\em Birkhoff surface} since it is easily confused with the quite different notion of {\em Birkhoff section} described in Remark \ref{rem_birkhoff_section}.   

The three cases of Definition \ref{def_quasi_transverse_subclasses} 
can be rephrased in terms of their projection to the orbit space, according to the following description.  This description also shows that maximally transverse surfaces do indeed contain the fewest possible periodic orbits among such surfaces associated to a fixed chain of lozenges (so are as transverse as possible), and maximally periodic contain the most periodic orbits.  

\begin{proposition}[Trace of a quasi-transverse torus] \label{prop_trace_QT_tori}
Let $T$ be a quasi-transverse torus or Klein bottle, $\wt T$ a lift of $T$ to $\wt M$, and $\hat T$ its projection to 
$\orb$.  Then, the closure of $\hat T$ is a %
chain of lozenges $\cC$ with the property that consecutive lozenges share sides and no three share a corner.  Furthermore, for any two ``diagonal" lozenges of $\cC$ sharing a corner but no side, the shared corner is in $\hat{T}$, and: 
\begin{enumerate}[label=(\roman*)]
\item\label{item_trace_QT_maximally_transverse} $T$ is maximally transverse if and only if the only corners in $\hat T$ are shared by diagonal lozenges. 
\item \label{item_trace_QT_alternating} $T$ is alternating if and only if $T$ is maximally transverse and, for each corner $c$ contained in $\hat T$, there are an \emph{odd} number of quadrants separating the two lozenges sharing $c$. 
\item\label{item_trace_QT_maximally_periodic} $T$ is maximally periodic if and only if each corner of $\cC$ is in $\hat T$.
\end{enumerate}
\end{proposition}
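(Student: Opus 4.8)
The strategy is to transfer everything to the orbit space using the machinery of the previous section (Birkhoff annuli, their traces, and chains of lozenges), treating a quasi-transverse torus or Klein bottle $T$ as a cyclic union of Birkhoff annuli $A_1, \ldots, A_n$ glued along the periodic orbits $\alpha_1, \ldots, \alpha_n$ (with indices read cyclically). First I would handle the degenerate case $n=0$: then $T$ is genuinely transverse to $\flow$, so (exactly as for flow boxes, using that no closed transversal to $\cF^s$ or $\cF^u$ is nullhomotopic, Proposition~\ref{prop_closed_transversal}) no orbit meets $\wt T$ twice, $\wt T$ projects injectively to $\orb$, and $\hat T$ is a trivially foliated open region. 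Since $\pi_1(T) \cong \bZ^2$ acts on $\orb$ and $T$ incompressible gives a $\bZ^2$ in $\pi_1(M)$, Proposition~\ref{prop:no_free_Z2} forces some element to act with a fixed point, then Proposition~\ref{prop_stabilizer_scalloped} (or its weak-closing version) says $\pi_1(T)$ stabilizes a scalloped region; one checks $\hat T$ \emph{is} that scalloped region, whose closure is a bi-infinite line of lozenges, consistent with all three cases since $\hat T$ contains no corners at all. For $n \geq 1$, each orbit $\alpha_i$ lifts, along $\wt T$, to an orbit $\wt \alpha_i$ fixed by a common element $g$ (the core of $T$, since all $\alpha_i$ are freely homotopic into $T$); its projection $a_i$ to $\orb$ is a corner, and by Theorem~\ref{thm:distinct_fix_is_chain} these $a_i$ are corners of a $g$-invariant chain.

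Next I would apply Theorem~\ref{thm_birkhoff_to_chain} to each Birkhoff annulus $A_i$ separately: its trace $\hat A_i$ is built from a sub-chain $\cC_i$ in which consecutive lozenges share sides and no three share a corner. Gluing these sub-chains along the shared corners $a_i$ gives the full chain $\cC = \overline{\hat T}$, and the only subtlety at a junction $a_i$ is whether the two lozenges of $\cC$ on either side of $a_i$ (the last lozenge of $\cC_{i-1}$ and the first of $\cC_i$) share a side or not. Lemma~\ref{lem_direction_of_twisting} relates this to the transverse orientation of $\flow$ on $A_{i-1}$ versus $A_i$: an \emph{odd} number of quadrants between them corresponds to the transverse orientation switching (alternating behavior locally), an \emph{even} number to it being preserved. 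When the number of quadrants is exactly two, the two lozenges share a side; when it is odd and $\geq 1$ they are ``diagonal'' (share a corner but no side); a larger even number cannot happen because, as in Theorem~\ref{thm_birkhoff_to_chain} item~\ref{item_Birkhoff_turn_ideal_corners}, $\wt T$ (being embedded away from the boundary of each $A_i$ in its fundamental domain, or at worst in general position) forces the strip not to skip more than one quadrant at a corner that is \emph{not} contained in $\hat T$; whereas at a corner $a_i$ that \emph{is} a periodic orbit on $T$, $\hat T$ genuinely contains that corner. I would spell out: a corner of $\cC$ lies in $\hat T$ precisely when it equals some $a_i$ (an actual periodic orbit of $\flow$ on $T$), and then either it is a ``turning'' corner internal to some $\hat A_i$ (not possible by item~\ref{item_Birkhoff_turn_ideal_corners}) or it is a junction $a_i$. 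This establishes the ``Furthermore'' sentence: diagonal lozenges of $\cC$ share a corner that must be one of the $a_i$'s, hence in $\hat T$.

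With the chain and the set of corners in $\hat T$ identified, the three equivalences become bookkeeping on the local pictures. For \ref{item_trace_QT_maximally_transverse}: $T$ is maximally transverse iff no two consecutive $A_i, A_{i+1}$ sit in adjacent quadrants of $\alpha_i$, i.e. iff at no junction do the two lozenges share a side, i.e. iff every corner of $\cC$ lying in $\hat T$ is a diagonal corner — which, combined with the Furthermore sentence, says the corners in $\hat T$ are \emph{exactly} the shared corners of diagonal lozenges. For \ref{item_trace_QT_alternating}: alternating means $n$ even and the transverse orientation flips at every $\alpha_i$; by Lemma~\ref{lem_direction_of_twisting} this is the ``odd number of quadrants'' condition at every junction, which in particular rules out the two-quadrant (shared-side) case, hence gives maximally transverse, plus the parity statement; conversely odd parity at every corner in $\hat T$ plus maximal transversality reconstructs an alternating sequence of transverse orientations (the evenness of $n$ follows since the orientation must return to itself after going around). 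For \ref{item_trace_QT_maximally_periodic}: by Theorem~\ref{thm_birkhoff_to_chain} item~\ref{item_closd_leaves} the closed leaves of the induced foliation on $T$ in the interior of some $A_i$ correspond to shared sides of consecutive lozenges of $\cC_i$; $T$ is maximally periodic iff there are no such interior closed leaves, iff each $\cC_i$ is a single lozenge, iff every corner of $\cC$ is a junction $a_i$, iff every corner of $\cC$ lies in $\hat T$. The main obstacle I expect is the careful case analysis at junctions where some $\alpha_i$ is a \emph{singular} orbit: there the local model prong has more than four quadrants, the "odd/even number of quadrants" and "adjacent quadrant" notions need to be interpreted in the prong's cyclic quadrant structure, and one must check (using the model-prong description from Definition~\ref{def_model_prong} and the behavior of Birkhoff annuli staying in one quadrant, Remark~\ref{rem_stay_in_quadrant}) that the correspondence between transverse-orientation data and quadrant-count parity still holds; this is where the ``\emph{except} sometimes when some $\alpha_i$ are singular'' caveat mentioned after Definition~\ref{def_quasi_transverse_subclasses} enters, and it is the only place the argument is genuinely delicate rather than a transcription of Theorems~\ref{thm_birkhoff_to_chain} and~\ref{thm_chain_to_anulus}.
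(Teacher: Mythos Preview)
Your approach---cut $T$ along its periodic orbits into Birkhoff annuli $A_i$, apply Theorem~\ref{thm_birkhoff_to_chain} to each, and read the three characterizations off the junction behavior via Lemma~\ref{lem_direction_of_twisting}---is exactly what the paper does (the paper phrases it as ``maximal transverse subsurfaces'' and re-derives the relevant parts of Theorem~\ref{thm_birkhoff_to_chain} inline rather than citing it). Your derivations of \ref{item_trace_QT_maximally_transverse}, \ref{item_trace_QT_alternating}, \ref{item_trace_QT_maximally_periodic} are correct.

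There is, however, a genuine error in your treatment of the fully transverse case $n=0$. Proposition~\ref{prop_stabilizer_scalloped} does \emph{not} say that every $\bZ^2$ stabilizes a scalloped region: it requires two independent elements with \emph{disjoint} fixed sets, and Proposition~\ref{prop:no_free_Z2} only produces one element with a fixed point. The correct general statement is Corollary~\ref{prop_Z2_stabilize_minimal_chain}, which gives an invariant \emph{minimal chain}, not necessarily scalloped. And indeed the trace of a transverse torus need not be a scalloped region: in the Franks--Williams example the trace $\bar T$ is a bi-infinite chain in which consecutive lozenges share sides alternately in $\bfs$ and $\bfu$ (``all sharing a single ideal corner''), not a line of lozenges. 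The paper avoids this by working directly with closed leaves of the induced foliations on $T$: when $A=T$ one takes a band in $\wt T$ between consecutive lifts of a closed leaf and proceeds as for a Birkhoff annulus. Such closed leaves always exist, since the element $g\in\pi_1(T)$ with a fixed point $x$ fixes the leaves $\bfs(x),\bfu(x)$, and their $g$-invariant intersection with $\hat T$ descends to closed leaves in $T$.

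A smaller point: your assertion ``when the number of quadrants is exactly two, the two lozenges share a side; a larger even number cannot happen'' is valid only at nonsingular junction corners. At a $k$-prong there are $2k$ quadrants and an even separation $\geq 2$ with no shared side is perfectly possible---this is precisely the configuration where $T$ is maximally transverse but not alternating. Theorem~\ref{thm_birkhoff_to_chain}~\ref{item_Birkhoff_turn_ideal_corners} only constrains corners \emph{internal} to a single $\hat A_i$, not the junction corners $a_i$ (which are in $\hat T$ by construction). You already flag the singular case as the delicate part, so this is a matter of tightening the exposition rather than a gap in the plan.
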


\begin{rem}\label{rem_transverse_torus_no_diagonal}
In particular, a quasi-transverse torus $T$ is actually transverse if and only if $\hat T$ does not contain any corners, which can happen only if there are no diagonal lozenges in $\cC$.
\end{rem}

\begin{proof}
Let $A \subset T$ be a maximal transverse subsurface, so $A = T$ if $T$ is transverse. Otherwise, $A$ is an open annulus bounded by periodic orbits. 
 Let $c_1,\dots, c_n$ be the closed leaves of the induced foliations on $\mathring A$, and further decompose $A$ into subannuli $A_i$ bounded by $c_{i-1}$ and $c_i$ (with $c_0$ being a boundary of $A$).  Let $\wt A$ be a lift of $A$ contained in $\wt T$, or (to streamline the proof) in the case where $A =T$ we take $\wt A$ to be a band bounded by two consecutive lifts of $c_n$ in $\wt T$.   Let $\wt A_i$ be the lift of $A_i$ in $\wt A$. Then the projections of $\wt A_i$ to $\orb$ are open lozenges (without their sides or corners), whose sides are the projections of $\wt c_{i-1}$ and $\wt c_i$.   
This is true for each lift of each maximal transverse subsurface of $T$, and we may apply this procedure inductively to 
produce a chain of lozenges $\{L_i\}_{i\in \bZ}$ associated to the projection of $\wt T$.  Between two adjacent maximal transverse subsurfaces, the shared periodic orbit in their boundary projects to a corner of the chain shared by two, but not three, lozenges.   

When two consecutive lozenges $L_{i-1}$ and $L_i$ share a corner but not a side, connectedness of $\hat{T}$ forces this corner to be in the projection.  Item \ref{item_trace_QT_maximally_transverse} now follows from the definition of maximally transverse.  
Similarly, $\hat{T}$ contains the shared side of two lozenges $L_{i-1}$ and $L_i$ if and only if this side corresponds to a closed leaf $c_i$ in $T$.  These closed leaves correspond to periodic orbits precisely when the corner shared by $L_{i-1}$ and $L_i$ lies in $\hat{T}$. This proves item \ref{item_trace_QT_maximally_periodic}. 

Finally, Lemma \ref{lem_direction_of_twisting} states that 
direction of transversality of the flow through $\wt T$ changes only when passing through a periodic orbit $\wt \alpha_i$ such that the two sides of $\wt T\smallsetminus \{\wt \alpha_i\}$ have an odd number of quadrants of $\wt\alpha_i$ between them. Equivalently, this says there are an odd number of quadrants between $L_{i-1}$ and $L_i$, proving item \ref{item_trace_QT_alternating}.   
\end{proof} 

Combined with the description above, Propositions \ref{prop_Birkhoff_to_union_elementary} and \ref{prop_union_elementary_to_Birkhoff} has the following immediate consequence. 
\begin{corollary} \label{cor_max_transverse_or_periodic}
Let $T$ be an embedded, incompressible, torus or Klein bottle that is homotopic to a weakly embedded quasi-transverse surface. Then it is also homotopic to a weakly embedded, maximally transverse quasi-transverse surface and to a weakly embedded, maximally periodic quasi-transverse surface.
\end{corollary}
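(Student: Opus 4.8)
The plan is to leverage the structure theory we have developed for traces of quasi-transverse tori together with the two key modification results, Propositions \ref{prop_Birkhoff_to_union_elementary} and \ref{prop_union_elementary_to_Birkhoff}. First I would set up notation: let $T$ be an embedded, incompressible torus or Klein bottle homotopic to a weakly embedded quasi-transverse surface $T_0$. By Proposition \ref{prop_trace_QT_tori}, the closure of the trace $\hat{T_0}$ of $T_0$ is a chain of lozenges $\cC = \{L_i\}$ with the property that consecutive lozenges share sides and no three share a corner, and this chain $\cC$ is an invariant of $T$ up to homotopy along flow lines, since it is determined by the projection of a lift $\wt{T}$ to $\wt M$, which depends only on the free homotopy class of $T$ (and, when $T$ is a torus or Klein bottle, the $\bZ^2$ or Klein bottle subgroup it determines acts on $\orb$ preserving this chain). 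So the strategy is to show that, starting from $T_0$, we can produce via homotopies along flow lines two other weakly embedded quasi-transverse surfaces with the same trace-chain $\cC$: one that is maximally transverse and one that is maximally periodic.

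For the maximally periodic case, I would apply Proposition \ref{prop_Birkhoff_to_union_elementary} to each maximal transverse subannulus of $T_0$: this homotopes $T_0$ along flow lines (and by an arbitrarily small perturbation) to a union of elementary Birkhoff annuli glued along periodic orbits. By Theorem \ref{thm_birkhoff_to_chain}, the trace of this new surface is again $\cC$ (the process only pushes closed leaves of the induced foliations onto periodic orbits, which in the orbit space corresponds to including the corners of $\cC$ into the trace), and by item \ref{item_trace_QT_maximally_periodic} of Proposition \ref{prop_trace_QT_tori} a quasi-transverse torus whose trace contains every corner of its chain is maximally periodic. The one thing to check is that this union of elementary Birkhoff annuli remains weakly embedded: since the modification in Proposition \ref{prop_Birkhoff_to_union_elementary} is local near each periodic orbit and the chain $\cC$ is simple (because $T_0$ was weakly embedded, by Theorem \ref{thm_weakly_embedded_Birkhoff_to_simple}, and simplicity is a property of the chain, hence preserved), Theorem \ref{thm_simple_chain_gives_weakly_embedded} — or rather the local push-off arguments inside its proof, applied to the already-weakly-embedded pieces — guarantees that the resulting surface is weakly embedded.

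For the maximally transverse case, I would instead apply Proposition \ref{prop_union_elementary_to_Birkhoff} iteratively: wherever two adjacent elementary Birkhoff annuli of the maximally periodic surface have traces that are two lozenges sharing a side (as opposed to a "diagonal" pair sharing only a corner), one can push a neighborhood of the common periodic orbit off of that orbit, producing a transverse subannulus. Doing this for every such pair removes from the trace every corner that is shared by two lozenges meeting along a side, leaving only the corners shared by diagonal lozenges — which by item \ref{item_trace_QT_maximally_transverse} of Proposition \ref{prop_trace_QT_tori} is exactly the condition for being maximally transverse. Again the push-off is local and by an arbitrarily small deformation along flow lines, preserving weak embeddedness; and since no lozenge of $\cC$ is a $\pi_1(M)$-translate of another (this again follows from weak embeddedness of $T_0$ together with the structure of $\cC$, via Lemma \ref{lem_simple_chains}), there is no essential self-intersection introduced. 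I expect the main obstacle to be the bookkeeping needed to verify that weak embeddedness is genuinely preserved at each push-off or push-on step — one must check that the local modifications do not create new curves of double points linking the modified region to a distant part of the surface, which amounts to re-running the innermost-disc argument from the proof of Theorem \ref{thm_simple_chain_gives_weakly_embedded} in this slightly more general setting; everything else is a direct combination of the cited propositions.
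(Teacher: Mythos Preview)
Your proposal is correct and takes essentially the same approach as the paper, which states the corollary as an ``immediate consequence'' of Propositions \ref{prop_Birkhoff_to_union_elementary} and \ref{prop_union_elementary_to_Birkhoff} combined with the trace description in Proposition \ref{prop_trace_QT_tori}. You supply considerably more detail than the paper does, particularly on preserving weak embeddedness, which the paper leaves implicit; your identification of the innermost-disc argument from Theorem \ref{thm_simple_chain_gives_weakly_embedded} as the relevant tool for that bookkeeping is apt.
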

Thus, ``maximally transverse" and ``maximally periodic" can be thought of as two (typically different) normal forms for a quasi-transverse surface.   We also obtain the following consequence: 

\begin{corollary} \label{thm_weakly_to_simple_general}
If $T$ is a weakly embedded quasi-transverse annulus, torus or Klein bottle, then its associated chain in $\orb$ is simple.
\end{corollary}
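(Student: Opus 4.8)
The plan is to bootstrap from the already-established case of Birkhoff annuli (Theorem~\ref{thm_weakly_embedded_Birkhoff_to_simple}) by decomposing a weakly embedded quasi-transverse torus or Klein bottle into Birkhoff annuli and keeping track of how their traces fit together in $\orb$. First I would handle the case where $T$ is already transverse: by Remark~\ref{rem_transverse_torus_no_diagonal} (equivalently the proof of Proposition~\ref{prop_trace_QT_tori}) the trace $\hat{T}$ contains no corners and no diagonal lozenges, so the associated chain is vacuously simple --- indeed in this case there are no periodic orbits to worry about, and no corner can be carried into the interior. (One should also note that a transverse torus is automatically incompressible, so ``quasi-transverse'' is not a vacuous hypothesis; this is Lemma~\ref{lem_transversetorus} as referenced earlier.)

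For the general case, let $\alpha_1,\dots,\alpha_n$ be the periodic orbits of $\flow$ on $T$ and let $A_1,\dots,A_n$ (cyclically indexed) be the closures of the complementary transverse sub-annuli, so each $A_i$ is a Birkhoff annulus with boundary orbits among the $\alpha_j$, and $T = \bigcup_i A_i$. By Proposition~\ref{prop_trace_QT_tori}, the closure of $\hat{T}$ is a chain $\cC = \{L_1,\dots,L_n\}$ in which consecutive lozenges share a side and no three consecutive lozenges share a corner; this is exactly the first hypothesis of Lemma~\ref{lem_simple_chains}. Since $T$ is weakly embedded, each $A_i$ is weakly embedded, so by Theorem~\ref{thm_weakly_embedded_Birkhoff_to_simple} the chain associated to each $A_i$ is a simple lozenge: no group element carries an extremal corner of $A_i$'s trace into its own interior. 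But this only controls self-intersections of individual annuli; I must rule out the possibility that some $g\in\pi_1(M)$ carries a corner of $\cC$ into the interior of $\cC$ via a lozenge $L_j$ different from the one it is naturally associated to. This is where weak embeddedness of the whole surface (not just of its pieces) enters: if $g(c)$ lies in the interior of some $\mathring{L_j}$, then translating back to $M$, the periodic orbit $\alpha$ projecting to $c$ intersects the interior of the sub-annulus $A_j$ (or a lift situation forces two sub-annuli $A_i, A_j$ to have overlapping interiors in $M$), contradicting that $S\smallsetminus\{\alpha_1,\dots,\alpha_n\}$ is embedded. Concretely: a point of $\mathring{L_j}$ fixed-orbit-wise hit by $g(\wt\alpha)$ lifts to an orbit of $\hflow$ that meets the interior $\mathring{\wt A_j}$ of the lift, and since distinct lifts of pieces of $T$ that meet the same orbit project to overlapping points of $\orb$, weak embeddedness of $T$ is violated.

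The main obstacle will be the bookkeeping in this last step: one must be careful that ``a corner $c$ of $\cC$ sent by $g$ into the interior of $\cC$'' really does produce a genuine self-intersection of $S\smallsetminus\{\alpha_i\}$ in $M$, rather than merely an intersection in $\wt M$ of two lifts that happen to map to the same piece of $T$ harmlessly. The correct way to organize this is: (i) observe that the interior of $\cC$ is the union of the interiors of the $L_j$ together with the shared sides, and that each such interior/side is the injective projection of the corresponding piece $\mathring{\wt A_j}$ or $\wt c_j$ of $\wt T$ (injectivity on each piece follows from triviality of the induced foliation on that piece, exactly as in the proof of Theorem~\ref{thm_birkhoff_to_chain}); (ii) note that $g(c)$ lying in $\mathring{L_j}$ means $g\wt\alpha$ meets $\mathring{\wt A_j}$, i.e.\ the orbit of $\hflow$ through a point of $g\wt\alpha$ passes through the interior of $\wt A_j$; (iii) project to $M$: the periodic orbit $\alpha$ (which lies in $T$) then passes through the interior of $A_j\subset T$, so $S\smallsetminus\{\alpha_1,\dots,\alpha_n\}$ is not embedded, the required contradiction. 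Hence no corner of $\cC$ is carried into the interior of $\cC$, which is precisely the definition of a simple chain. I would close by remarking that the same argument works verbatim for a quasi-transverse \emph{annulus} --- there one need not even invoke the torus/Klein bottle structure, only that the associated chain is finite of the form given by Theorem~\ref{thm_birkhoff_to_chain} --- so the statement as phrased covers all three surface types uniformly.
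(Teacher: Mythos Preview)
Your proposal has a genuine gap. In step (iii) you write ``the periodic orbit $\alpha$ (which lies in $T$)'', but nothing you have said guarantees this: a corner $c$ of the chain $\cC$ need not correspond to a periodic orbit lying on $T$. By Proposition~\ref{prop_trace_QT_tori}, the only corners forced to lie in $\hat T$ are the ``diagonal'' ones (shared by consecutive lozenges that do not share a side); the remaining corners correspond to periodic orbits that are merely in the closure of $\hat T$, not on $T$ itself. If such an $\alpha$ intersects the transverse interior of $T$, this does not contradict weak embeddedness, since $\alpha$ is not part of the surface. The same issue undermines your transverse case: you claim the chain is ``vacuously simple'' because $\hat T$ contains no corners, but the \emph{chain} $\cC$ still has corners --- they simply do not lie in the trace --- and you must still rule out some $g$ carrying one into the interior.

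The paper's proof avoids this entirely by first homotoping $T$ (via Corollary~\ref{cor_max_transverse_or_periodic}) to a weakly embedded, \emph{maximally periodic} quasi-transverse surface. In that position every corner of $\cC$ lies on $T$, so one can cut along a single periodic orbit and apply Theorem~\ref{thm_weakly_embedded_Birkhoff_to_simple} directly to the resulting Birkhoff annulus. Your argument is salvageable if you insert the observation (implicit in the proof of Lemma~\ref{lem_simple_chains}) that any corner $c$ with $gc$ in the interior of a lozenge must be a diagonal corner --- the non-corner criterion forbids a lozenge sharing a side with $L_i$ at $c$ --- and diagonal corners do lie in $\hat T$; but as written, the key step is unjustified.
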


\begin{proof} 
By corollary \ref{cor_max_transverse_or_periodic}, $T$ can be made maximally periodic, so there is at least one periodic orbit in the image of $T$.  By cutting $T$ along this orbit, we can view it as a Birkhoff annulus, and apply Theorem \ref{thm_weakly_embedded_Birkhoff_to_simple}.  
\end{proof} 

 \subsection{The JSJ decomposition} 
 The {\em JSJ decomposition} %
is the following classical statement giving a canonical decomposition of any irreducible 3-manifold.  
For a proof which treats also the case of non-orientable 3-manifolds, see \cite{NS97}.  

\begin{theorem}[Geometric JSJ decomposition \cite{JS79, Joh79, NS97} ]\label{thm_geometric_JSJ_decomposition}
Let $M$ be a closed, irreducible and $P^2$-irreducible\footnote{Recall that a $3$-manifold is called $P^2$-irreducible if it is irreducible and does not admit a $2$-sided embedding of $\mathbb{RP}^2$, see, e.g., \cite{Hat23}.} 3-manifold.  There exists a collection $\mathcal J$ of disjoint incompressible tori and Klein bottles in $M$ such that each component of $M \setminus \mathcal J$ is either Seifert fibered or atoroidal.   Moreover, a minimal such collection is unique up to isotopy.
\end{theorem}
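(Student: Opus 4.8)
This is a classical theorem, so I will only outline the strategy, referring to \cite{JS79,Joh79} for the orientable case and to \cite{NS97} for the version that covers non-orientable pieces as well. The plan is to realize $\mathcal J$ as a maximal finite system of disjoint essential tori and Klein bottles, and then to identify the complementary pieces using the Torus Theorem.

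\emph{Existence and finiteness.} If $M$ is atoroidal (contains no $\pi_1$-injective torus, equivalently no $\pi_1$-injective Klein bottle), take $\mathcal J = \emptyset$; then $M$ itself is the single complementary piece and we are done. Otherwise, the first step is to invoke the Kneser--Haken finiteness theorem: in a compact, irreducible, $P^2$-irreducible $3$-manifold there is an a priori bound on the number of disjoint, two-sided, pairwise non-parallel incompressible surfaces of bounded genus. Consequently one may choose a collection $\mathcal J$ of disjoint, pairwise non-isotopic, incompressible tori and Klein bottles which is maximal (first in cardinality, then under inclusion) subject to the requirement that no component is isotopic into the boundary of a complementary piece. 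Finiteness of $\mathcal J$ is then immediate.

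\emph{Structure of the complementary pieces.} Next I would show that each component $N$ of $M \setminus \mathcal J$ has the property that every essential torus in $N$ is isotopic into $\partial N$. If not, such a torus could, after an innermost-curve isotopy using incompressibility of the $\mathcal J$-components and irreducibility of $M$, be made disjoint from $\mathcal J$ and non-parallel to all its components, contradicting maximality. With this established, the main input is the \emph{Torus Theorem} together with the resolution of the Seifert Fibered Space Conjecture (Casson--Jungreis, Gabai): a compact, irreducible $3$-manifold with (possibly empty) torus boundary in which every essential torus is boundary-parallel is either atoroidal or Seifert fibered. Applying this to each $N$ gives the desired dichotomy. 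In the non-orientable setting one lifts to the orientation double cover, runs the argument equivariantly, and descends, as in \cite{NS97}. I expect this to be the principal obstacle: it is the only genuinely deep ingredient, while every other step is a cut-and-paste argument.

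\emph{Uniqueness of the minimal system.} Finally, given two minimal collections $\mathcal J$ and $\mathcal J'$ satisfying the conclusion, isotope them to meet transversally with a minimal number of intersection curves. An innermost-disc argument (removing trivial circles of intersection using irreducibility and incompressibility) followed by an outermost-arc analysis reduces to the case where each curve of $\mathcal J \cap \mathcal J'$ is essential on both surfaces. In a Seifert fibered piece every essential annulus and torus can be isotoped into vertical or horizontal normal form; using this one pushes the surviving components of $\mathcal J'$ off $\mathcal J$ one piece at a time. Minimality of both systems then forces $\mathcal J$ and $\mathcal J'$ to agree up to isotopy. The only subtlety is handling small Seifert fibered pieces admitting more than one fibration, where one checks directly that the decomposing surfaces are still matched; this is routine given the classification of Seifert fiberings.
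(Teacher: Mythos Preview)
The paper does not give its own proof of this theorem: it is stated as a classical result with references to \cite{JS79, Joh79, NS97}, and the text simply remarks ``For a proof which treats also the case of non-orientable 3-manifolds, see \cite{NS97}'' before moving on. Your outline is a reasonable sketch of the standard argument and is consistent with what those references do, so there is nothing to compare against here.
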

Note that, as a consequence of minimality, for a fixed collection $\mathcal J$, any other embedded torus or Klein bottle in $M$ can be isotoped either to an element of $\cJ$ or to lie in a single Seifert fibered piece. 

We first show that JSJ surfaces can, individually, be made quasi-transverse.  

\begin{theorem}[Barbot--Fenley \cite{BF13}, Theorem 6.10]\label{thm_embedded_tori_to_QT}
Let $T$ be an embedded incompressible torus or Klein bottle in $M$. 
Then $T$ is homotopic to a weakly embedded quasi-transverse surface. 
\end{theorem}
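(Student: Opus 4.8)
The plan is to reduce the statement to the already-established machinery relating the orbit space and Birkhoff annuli, by first producing the right chain of lozenges in $\orb$ and then invoking Theorem \ref{thm_chain_to_anulus} and Theorem \ref{thm_simple_chain_gives_weakly_embedded}. First I would observe that, by Lemma \ref{lem_transversetorus} (referenced later, but available here), an embedded incompressible torus or Klein bottle $T$ is $\pi_1$-injective, so $\pi_1(T)$ injects into $\pi_1(M)$ and we may regard it as a subgroup. There are two cases depending on whether $\pi_1(T) \cong \bZ^2$ (the torus case, or the orientation double cover of a Klein bottle) or $\pi_1(T)$ is the Klein bottle group $\bZ \rtimes \bZ$. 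In either case $\pi_1(T)$ contains a $\bZ^2$ subgroup $H$ acting on $\orb$. By Proposition \ref{prop:no_free_Z2} (which applies once we know $\orb$ is not trivial; and if $\orb$ is trivial then $\flow$ is a suspension by Theorem \ref{thm_trivial_implies_suspension}, $M$ is a torus bundle, and the JSJ decomposition is degenerate — a case one handles directly since the JSJ torus is then literally a fiber, transverse to the flow), some nontrivial $g \in H$ acts with a fixed point. By Corollary \ref{prop_Z2_stabilize_minimal_chain}, $H$ preserves a minimal bi-infinite chain of lozenges $\cC$, with a generator $g$ fixing all corners and the other generator $h$ acting freely along it.

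The key step is to show that a lift $\wt T$ of $T$ to $\wt M$ can be homotoped (equivariantly, $\pi_1(T)$-invariantly) so that its projection $\hat T$ to $\orb$ is exactly the trace of the quasi-transverse surface associated to $\cC$, i.e. so that $\wt T$ becomes transverse to $\hflow$ away from the orbits projecting to the corners of $\cC$. For this I would use Theorem \ref{thm_chain_to_anulus}: since $\cC$ is a minimal chain — consecutive lozenges share a side and no three share a corner, by Lemma \ref{lem_charac_minimal_chain} — there is a Birkhoff annulus $A$ with trace $\cC$, whose core is represented by $h$ (or an appropriate power). When $H \cong \bZ^2$ and $\cC$ is $H$-invariant, $A$ closes up: the two boundary orbits of $A$ are freely homotopic via the action of $g$, and in fact $A$ can be taken to be an immersed torus or Klein bottle (the non-orientable case arising when an element of $\pi_1(T) \setminus H$ acts by reversing orientation on $\cC$, which by Lemma \ref{lem_direction_of_twisting} corresponds to an appropriate twisting). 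Then I would argue that this immersed torus/Klein bottle is homotopic in $M$ to $T$: both are incompressible, $\pi_1$-inject with the same image subgroup (both correspond to $\pi_1(T)$, up to conjugacy and automorphism), and in an aspherical $3$-manifold two incompressible tori inducing conjugate subgroups of $\pi_1$ are homotopic — this is a standard consequence of $M$ being a $K(\pi,1)$ together with Waldhausen-type rigidity for essential tori.

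Finally, having a quasi-transverse immersed representative $T'$ homotopic to $T$, I would make it weakly embedded. One possibility is to note that the chain $\cC$ is \emph{simple}: since $T$ was embedded to begin with, no translate of a corner of $\cC$ can lie in the interior of $\cC$ — this is where the embeddedness hypothesis on $T$ gets used, via the same reasoning as Theorem \ref{thm_weakly_embedded_Birkhoff_to_simple} in reverse. More carefully, the minimality of $\cC$ rules out $L_i = h'L_j$ for $i \neq j$ (otherwise one could produce a smaller invariant chain), and simplicity of corners follows because an embedded $T$ cannot intersect itself. Then Theorem \ref{thm_simple_chain_gives_weakly_embedded} (in its torus/Klein bottle form, which Barbot proves) produces a weakly embedded quasi-transverse surface with trace $\cC$, and by the homotopy-uniqueness argument above it is homotopic to $T$. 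The main obstacle I expect is the verification that the immersed Birkhoff torus built from $\cC$ is genuinely homotopic to the original $T$ rather than merely $\pi_1$-conjugate in an abstract sense: one must be careful that the $\bZ^2$ (or Klein bottle group) subgroup of $\pi_1(M)$ preserving $\cC$ is \emph{the same} subgroup as $\pi_1(T)$, not just an abstractly isomorphic one — this requires knowing that $\cC$ is canonically attached to the conjugacy class of $\pi_1(T)$, which follows from the uniqueness clause in Corollary \ref{prop_Z2_stabilize_minimal_chain} (unique minimal invariant chain, except for scalloped regions where there are two), combined with the fact that all $\bZ^2$ subgroups of a $3$-manifold group that come from an essential torus are peripheral and well-understood.
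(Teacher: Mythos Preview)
Your overall strategy matches the paper's: pass to the invariant minimal chain $\cC$ via Corollary~\ref{prop_Z2_stabilize_minimal_chain}, build an immersed quasi-transverse torus or Klein bottle from it using Theorem~\ref{thm_chain_to_anulus}, argue it is homotopic to $T$ via $3$-manifold topology, and then upgrade to weakly embedded via Theorem~\ref{thm_simple_chain_gives_weakly_embedded} once $\cC$ is known to be simple. Two small points: your opening appeal to Lemma~\ref{lem_transversetorus} is misplaced (that lemma concerns tori already \emph{transverse} to the flow; here incompressibility is a hypothesis and nothing needs proving), and in the passage describing the Birkhoff annulus you have swapped the roles of $g$ and $h$ (the core of the annulus is the element $g$ fixing all corners; it is the freely acting $h$ that identifies the two boundary orbits).

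The substantive gap is your argument for simplicity of $\cC$. Saying ``an embedded $T$ cannot intersect itself'' or invoking Theorem~\ref{thm_weakly_embedded_Birkhoff_to_simple} ``in reverse'' does not suffice: $T$ is not yet quasi-transverse, so its embeddedness says nothing direct about whether corners of $\cC$ meet interiors of lozenges, and that theorem runs from weak embeddedness of a \emph{quasi-transverse} surface to simplicity, not from embeddedness of an arbitrary representative. The paper supplies the missing bridge in its Claim~\ref{claim_chain_simple} by first constructing the immersed quasi-transverse surface $S$ homotopic to $T$, and then arguing by contradiction: if some $f \in \pi_1(M)$ sends a corner of $\cC$ into the interior of a lozenge, then $f \notin \Stab(\cC)$, hence $f \notin \Stab(\wt T)$, so by embeddedness of $T$ the translates $f^n \wt T$ are pairwise disjoint properly embedded planes and $d(f^n \wt T, \wt T) \to \infty$. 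Since the homotopy $T \simeq S$ has bounded tracks, the lifts $\wt T$ and $\wt S$ are at uniformly bounded Hausdorff distance; but $f$ has an $f$-invariant curve lying in $\wt S$, contradicting the divergence of $f^n \wt S$. It is exactly this transfer --- using the homotopy to convert embeddedness of the original $T$ into a constraint on $\cC$ in the universal cover --- that your outline omits, and it is where the hypothesis that $T$ is embedded actually enters.
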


The statement of Theorem 6.10 of \cite{BF13} is not exactly the same as we give here. In 
\cite{BF13}, it is proved in many cases that $T$ is homotopic (in fact isotopic) to an \emph{embedded} quasi-transverse surface, and not just a weakly embedded one. However, there are two cases where one cannot turn $T$ into a truly embedded surface: either when $T$ is inside, or bounds, a Seifert piece which is \emph{periodic} (see Definition \ref{def_periodic_Seifert}), or when $T$ is homotopic to the boundary of a one-sided Klein bottle.  See Remark \ref{rem_1_sided_klein} below.  
When $T$ is homotopic to the boundary of a one-sided Klein bottle \cite[Theorem 6.10]{BF13} shows that the Klein bottle can indeed be made embedded and quasi-transverse, but does not mention the fact that $T$ itself is still homotopic to a weakly embedded quasi-transverse torus.

We present this simple version here since it is all that is needed for the JSJ decomposition.

\begin{rem} \label{rem_1_sided_klein}
Quasi-transverse one-sided Klein bottles do appear in examples of Anosov flows: We saw one instance already in the example of Bonatti-Langevin (see section \ref{sec_example_bonatti_langevin}) where a Klein bottle can be realized as a Birkhoff annulus starting and ending at the special periodic orbit.
Another example appears in the case of a geodesic flow on a non-orientable surface: The fiber over a one-sided geodesic gives a quasi-transverse one-sided Klein bottle. By doing an appropriate surgery on such a geodesic flow, one can even obtain a neighborhood of a quasi-transverse, one-sided Klein bottle as piece of the JSJ decomposition .
\end{rem}

Given all we have so far, the strategy of the proof of Theorem \ref{thm_embedded_tori_to_QT} is relatively straightforward: $\pi_1(T)$ is (up to passing to an index 2 subgroup) a subgroup of $\pi_1(M)$ isomorphic to $\bZ^2$. 
In the case where $\flow$ is a suspension of an Anosov diffeomorphism of $\mathbb{T}^2$, any incompressible torus is homotopic to the fiber torus $\mathbb{T}^2$, which is transverse to the flow.  Otherwise, Proposition \ref{prop:no_free_Z2} says that no $\bZ^2$-subgroup acts freely on $\orb$, so the fundamental group of $T$ must preserve a chain of lozenges $\cC$.  Using our work in the previous section, we will show that $\cC$ is simple and thus we can build a weakly embedded quasi-transverse surface whose trace is $\cC$. Then, we will apply results in $3$-manifold topology to show that that quasi-transverse surface is homotopic (in fact, often isotopic) to $T$.  
We now give the details: 

\begin{proof}[Proof of Theorem \ref{thm_embedded_tori_to_QT}]
Let $\wt T$ be a lift of $T$ to the universal cover $\wt M$, which is stabilized by $\pi_1(T)$.  Let $H \cong \bZ^2$ be either equal to $\pi_1(T)$, or in the case where $T$ is a Klein bottle, we take $H$ to be an index two subgroup isomorphic to $\bZ^2$.  Proposition \ref{prop:no_free_Z2} implies that $H$ does not act freely on $\orb$, so by Corollary \ref{prop_Z2_stabilize_minimal_chain}, $H$ leaves invariant a 
minimal bi-infinite chain $\cC$ of lozenges, with one generator fixing all corners.  This chain is either unique, or one of two invariant minimal chains forming a scalloped region.  In either case, it is easy to see that $\pi_1(T)$ preserves $\cC$.

Let $g, h \in H$ be generators such that $g$ fixes all corners and $h$ acts freely. Such generators exist by Corollary \ref{prop_Z2_stabilize_minimal_chain}. 

Since $\cC$ is minimal, no three lozenges share a corner.  
Thus, we can take a subchain $\mathcal{D} \subset \cC$ that is a fundamental domain for the action of $h$ on $\cC$, and use Theorem \ref{thm_chain_to_anulus} to construct a Birkhoff annulus $A$ whose trace is $\mathcal{D}$, whose core is represented by $g$, and whose boundary orbits are corners of the form $x$ and $h(x)$ in $\cC$, so are identified under projection to $M$.   This means that the image of $A$ is topologically a $\pi_1$-injective immersed torus or Klein bottle.   To clarify notation, write $S$ for the surface obtained by identifying boundary orbits of $A$, so $\pi_1(S)$ agrees with either $\pi_1(T)$ or $H$
as (conjugacy classes of) subgroups of $\pi_1(M)$.  Thus, $\wt S$ is the union of the translates of $\wt A$ under powers of $h$, and has trace $\cC$.  

To show that $S$ can be made weakly embedded, we prove the following.  

\begin{claim}\label{claim_chain_simple}
The chain $\cC$ is simple.
\end{claim}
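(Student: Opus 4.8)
The plan is to prove Claim \ref{claim_chain_simple} by contradiction, using the minimality of the chain $\cC$ together with the non-corner criterion (Lemma \ref{lem_no_corner_criterion}) and the description of how lozenges intersect (Lemma \ref{lem_markovian_or_corner}). Suppose $\cC$ is not simple. Since $\cC$ is a bi-infinite chain and no three lozenges share a corner, Lemma \ref{lem_simple_chains} does not directly apply (it was stated for finite chains with extremal corners $a,b$), so the first step is to observe that non-simplicity of the bi-infinite chain $\cC$ means there is some corner $c$ of $\cC$ and some $g \in \pi_1(M)$ with $g(c)$ in the interior of $\cC$; equivalently $g(L_i) \cap L_j \neq \emptyset$ for some lozenges $L_i, L_j$ of $\cC$ with $g(L_i) \neq L_j$ (if $g(L_i)=L_j$ then a power of $g$ would fix a corner of $L_i$ lying in the interior, which one can handle separately). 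By Lemma \ref{lem_markovian_or_corner}, either $g(L_i) \subset \cF^+(L_j)$ and $L_j \subset \cF^-(g(L_i))$ (or with labels swapped), or a corner of $L_j$ lies in $g(L_i)$ and a corner of $g(L_i)$ lies in $L_j$.

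In the second case, let $c$ be the corner of $L_j$ lying in $g(L_i)$. The two perfect fits that are the ideal corners of $g(L_i)$ give a pair of leaves through each quadrant of $c$ of the type forbidden by the non-corner criterion (Lemma \ref{lem_no_corner_criterion}): $c$ is surrounded by the sides of $g(L_i)$, so in the quadrants of $c$ "facing into" $g(L_i)$ the rays meet pairs of leaves making perfect fits. Hence $c$ cannot be a corner shared by two lozenges of $\cC$ sitting in those quadrants. But $\cC$ is a bi-infinite chain: every corner of $\cC$ is shared by exactly two lozenges of $\cC$ (this uses that $\cC$ has no endpoints), and since $\cC$ is minimal, consecutive lozenges share a side, so the two lozenges of $\cC$ at $c$ occupy opposite-side quadrants — precisely the configuration ruled out. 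This contradiction disposes of the "corner inside a lozenge" case. In the first case (one lozenge inside the $\cF^+$-saturation of the other), the infinite family $\{g^n(L_j) : n \in \bZ\}$, or better the orbit of $L_i$ under the group generated by $g$ and the element $h$ translating $\cC$, produces an infinite line of lozenges inside $\cC$; by Proposition \ref{prop:infinite_lozenge_is_scalloped} this line lies in a scalloped region, and then Lemma \ref{lem_action_of_stabilizer_scalloped} forces $g$ into (a finite extension of) the $\bZ^2$ stabilizer of that scalloped region. One then checks, using that $\cC$ is one of at most two minimal chains of that scalloped region and that $\pi_1(T)$ already stabilizes $\cC$, that $g$ must in fact preserve $\cC$, so $g(c)$ is a corner of $\cC$, not an interior point — contradiction.

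The main obstacle I expect is the bookkeeping in the scalloped-region case: one must rule out the possibility that the non-simplicity is "absorbed" by a scalloped region in which $\cC$ embeds as a line of lozenges, and show this still forces $g$ to normalize $\cC$. The cleanest route is probably to invoke Proposition \ref{prop_stabilizer_scalloped} and Lemma \ref{lem_action_of_stabilizer_scalloped} to pin down the stabilizer of the scalloped region as virtually $\bZ^2$, note that it contains $\pi_1(T)$ as a finite-index subgroup (since both are virtually $\bZ^2$ and $\pi_1(T)$ already acts), and conclude $g$ preserves the set of the (at most two) minimal chains in the scalloped region, then that it preserves $\cC$ itself because $\pi_1(T)$ does and $g$ is conjugate into the abelian-by-finite stabilizer. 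Once Claim \ref{claim_chain_simple} is established, the proof of the theorem finishes quickly: Corollary \ref{thm_weakly_to_simple_general}'s converse, namely Theorem \ref{thm_simple_chain_gives_weakly_embedded} applied to a fundamental domain $\mathcal D$ of $\cC$ (with the extra hypotheses verified using minimality of $\cC$ and the fact that distinct lozenges of a minimal chain are not $\pi_1(M)$-translates of one another — itself a consequence of simplicity), yields a weakly embedded quasi-transverse surface $S$ with trace $\cC$, and standard $3$-manifold topology (Waldhausen-type arguments, as in \cite{BF13}) shows that $S$ is homotopic to $T$, possibly after accounting for the index-$2$ subtlety when $T$ is the boundary of a tubular neighborhood of a one-sided Klein bottle.
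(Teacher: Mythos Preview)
Your approach is genuinely different from the paper's, and it has real gaps.

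First, a small but important error: ``since $\cC$ is minimal, consecutive lozenges share a side'' is false. Minimality (Lemma~\ref{lem_charac_minimal_chain}) says no three consecutive lozenges share a corner; it does \emph{not} force consecutive lozenges to share a side. Strings (Definition~\ref{def_string_lozenges}) are minimal chains where consecutive lozenges share only a corner.

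Second, your case~2 argument does not reach a contradiction. If a corner $c$ of $L_j$ lies in the interior of $g(L_i)$, the non-corner criterion only says that the two lozenges of $\cC$ at $c$ must sit in the two \emph{diagonal} quadrants containing the corners of $g(L_i)$---that is, $\cC$ is in string configuration at $c$. This is entirely consistent; it is exactly the situation analyzed in the later proof of Theorem~\ref{thm_spines_exist}, where the conclusion is that $\cC$ is a string, not that simplicity holds. Since a string can perfectly well be invariant under a $\bZ^2$, nothing in your argument excludes this.

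Third, your case~1 argument is unjustified: the element $g$ is an arbitrary element of $\pi_1(M)$ witnessing non-simplicity, and there is no reason $g^n(L_j)$ (or the $\langle g,h\rangle$-orbit of $L_i$) lies in $\cC$. The scalloped-region reasoning never gets off the ground. In fact your case split is beside the point: non-simplicity already puts you in the ``corner inside a lozenge'' situation, since by definition some $g(c)$ is in the interior of some $L_k$.

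The deeper issue is that you never use the hypothesis that $T$ is \emph{embedded}. The paper's argument is a geometric one in $\wt M$: because $T$ is embedded, any $f\in\pi_1(M)$ not stabilizing $\wt T$ has $f^n\wt T$ disjoint from $\wt T$, and in fact escaping to infinity as $|n|\to\infty$; since the immersed surface $S$ built from $\cC$ is homotopic to $T$, the lifts $\wt S$ and $\wt T$ are at bounded Hausdorff distance, so $f^n\wt S$ must also go to infinity. But non-simplicity produces an $f$-invariant curve inside $\wt S$, forcing $f^n\wt S$ to intersect $\wt S$ for all $n$---contradiction. Without embeddedness (i.e., for merely $\pi_1$-injectively immersed tori) there is no reason to expect $\cC$ to be simple, so an argument that ignores embeddedness is unlikely to succeed.
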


\begin{proof}
Suppose for contradiction that $\cC$ is not simple.
Let $f\in \pi_1(M)$ be an element that sends one corner of $\cC$ to the interior of a lozenge in $\cC$. Then, $f$ is not in $H' = \Stab(\cC)$, so, in particular, not in $\Stab (\wt T)$.  
Seen as a curve in $M$, $f$ is freely homotopic to a curve $c$ contained in the image of $S$ in $M$. Let $\wt S$ denote a lift of of $S$ to $\wt M$, so there exists a lift $\wt c$ of $c$, contained in $\wt S$ and invariant by $f$.

Since $T$ is embedded, any element of $\pi_1(M)$ either stabilizes $\wt T$ or moves it disjoint from itself.  Thus, we have $f^n \wt T\cap \wt T= \emptyset$ for all $n\neq 0$.  Since $f$ acts by isometries on $\wt M$ and $\wt T$ is a properly embedded plane, the minimal distance between $f^n(\wt T)$ and $\wt T$ gets arbitrarily large as $|n| \to \infty$.  
However, $T$ and $S$ are homotopic in $M$ , and that homotopy moves points a finite distance, so the lifts $f^n \wt T$ and $f^n \wt S$ are at uniformly bounded Hausdorff distance apart, for all $n$, and thus $f^n \wt S$ can be made arbitrarily far from $\wt S$ by taking $n$ large.  This contradicts the fact that 
$f^n\wt S$ intersects $\wt c$ for all $n$, and concludes the proof that $\cC$ is simple. 
\end{proof}

Now that we know that $\cC$ is simple, we can 
choose a fundamental domain of $\cC$ for the action of 
$\pi_1(T)$ on $\cC$ and apply Theorem \ref{thm_simple_chain_gives_weakly_embedded} to obtain a surface $K$ that is a union of weakly embedded elementary annuli, and with $\pi_1(K)$ conjugate to $\pi_1(T)$. 

If $T$ is not the boundary torus of a neighborhood of a one-sided Klein bottle, then the  classification of essential singular tori in 3-manifolds and uniqueness of JSJ decomposition implies that $K$ is homotopic to $T$. (A concise summary statement of this result and references to proofs can be found in Section 3 and Theorem 3.5 of \cite{Bon02}.) Else, it is possible that 
$K$ is homotopic to a one-sided Klein bottle. In this case, $T$ is homotopic to a double cover of $K$ which we can consider, after the homotopy, as an immersion of a torus $T'$ where all points are double points. Pushing off one of the ``sheets'' of $T'$ by the flow outside of the periodic orbits then gives a weakly embedded torus $T''$, homotopic to $T'$. %
\end{proof}

We also point out the following consequence of the proof of Theorem \ref{thm_embedded_tori_to_QT}, phrased in algebraic language.  

\begin{proposition}
Suppose $\flow$ is not the suspension of an Anosov diffeomorphism.  
If $H$ is a subgroup of $\pi_1(M)$ isomorphic to $\bZ^2$ (and $\orb$ not trivial), then $H$ is conjugate to the $\pi_1$ of an immersed quasi-transverse torus.
\end{proposition}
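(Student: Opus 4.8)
The plan is to run essentially the same argument as in the proof of Theorem~\ref{thm_embedded_tori_to_QT}, but starting from the algebraic hypothesis rather than from an embedded surface, and stopping one step earlier (we only need an \emph{immersed} quasi-transverse torus, not a weakly embedded one, so the asphericity/innermost-disc cleanup and the delicate distinction involving one-sided Klein bottles are unnecessary). First I would invoke Proposition~\ref{prop:no_free_Z2}: since $\orb$ is nontrivial and (by hypothesis) $\flow$ is not a suspension of an Anosov diffeomorphism, no $\bZ^2$ subgroup of $\pi_1(M)$ acts freely. Hence $H$ does not act freely on $\orb$. Applying Corollary~\ref{prop_Z2_stabilize_minimal_chain}, $H$ preserves a minimal bi-infinite chain of lozenges $\cC$, and there is a choice of generators $g, h$ of $H$ such that $g$ fixes every corner of $\cC$ and $h$ acts freely, translating the lozenges along the chain. (Here I am implicitly using that the weak closing property Axiom~\ref{Axiom_closing} holds for orbit space actions of pseudo-Anosov flows, via Proposition~\ref{prop_closing_translation}; this is exactly the hypothesis under which Proposition~\ref{prop:no_free_Z2} and Corollary~\ref{prop_Z2_stabilize_minimal_chain} were stated.)

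Next I would build the surface. Since $\cC$ is minimal, no three consecutive lozenges share a corner, so $\cC$ satisfies the combinatorial condition in Theorem~\ref{thm_chain_to_anulus}: consecutive lozenges share a side and no three share a corner. Take a subchain $\mathcal{D} \subset \cC$ which is a fundamental domain for the action of $h$ (a finite chain, since $h$ translates along $\cC$). Up to replacing $g$ by a power we may assume every corner of $\mathcal{D}$ is fixed by $g$, and then Theorem~\ref{thm_chain_to_anulus} produces a Birkhoff annulus $A$ whose trace is $\mathcal{D}$ and whose core is represented by $g$, with boundary orbits being corners $x$ and $h(x)$ of $\cC$. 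These two boundary orbits are identified under the projection $\wt M \to M$ (they differ by the deck transformation $h \in H \subset \pi_1(M)$), so the image of $A$ in $M$ is a $\pi_1$-injectively immersed torus $S$ (it is $\pi_1$-injective because the boundary orbits are periodic, by Proposition~\ref{prop_top_properties}). By construction $S \smallsetminus \{\text{periodic orbits}\}$ is transverse to $\flow$ and there are only finitely many periodic orbits of $\flow$ in $S$ (the corners of $\mathcal{D}$), so $S$ is quasi-transverse in the sense of Definition~\ref{def_quasi_transverse}; incompressibility is automatic from $\pi_1$-injectivity. Finally, $\pi_1(S)$ is, as a conjugacy class of subgroups of $\pi_1(M)$, generated by $g$ and $h$, i.e.\ exactly $H$, so $H$ is conjugate to the fundamental group of this immersed quasi-transverse torus, as claimed.

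I do not expect any serious obstacle here: unlike Theorem~\ref{thm_embedded_tori_to_QT}, we are not starting from an embedded torus, so there is no need to prove that $\cC$ is simple (Claim~\ref{claim_chain_simple} in that proof used embeddedness of $T$ in an essential way) nor to homotope the constructed surface to a given one using $3$-manifold topology. The only mild subtlety to be careful about is the case where $H$ preserves a scalloped region, in which case Corollary~\ref{prop_Z2_stabilize_minimal_chain} gives two invariant minimal chains rather than one; either choice works, and the two resulting immersed tori are both quasi-transverse with fundamental group $H$, so the conclusion is unaffected. It is also worth a sentence noting that the excluded case---$\flow$ a suspension of an Anosov diffeomorphism---genuinely must be excluded, since there $\orb$ is trivial, $\pi_1(M)$ has a $\bZ^2$ subgroup (the fiber torus) acting freely, and the corresponding torus is transverse (not merely quasi-transverse), so the statement as phrased does not literally apply; but this is consistent with the hypothesis ``$\orb$ not trivial'' built into the statement.
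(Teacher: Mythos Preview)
Your proposal is correct and follows exactly the approach the paper indicates (the proposition is stated there as a consequence of the proof of Theorem~\ref{thm_embedded_tori_to_QT}, and you have simply extracted that consequence, correctly noting that simplicity of the chain and the $3$-manifold topology step are not needed for the merely immersed conclusion).

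One small caution: minimality of $\cC$ guarantees that no three consecutive lozenges share a corner, but it does \emph{not} by itself force consecutive lozenges to share a side (a string of lozenges is a minimal chain in which no two adjacent lozenges share a side), so Theorem~\ref{thm_chain_to_anulus} does not literally apply as written. This slip is inherited from the paper's own exposition of Theorem~\ref{thm_embedded_tori_to_QT}. The fix is immediate and does not affect your argument: use Lemma~\ref{lem_lozenge_to_annulus} to build an elementary Birkhoff annulus for each lozenge in $\mathcal{D}$ and glue them along their shared boundary periodic orbits; the resulting union is already an immersed quasi-transverse torus with fundamental group $H$, which is all you need.
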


Theorem \ref{thm_embedded_tori_to_QT} allows us to put JSJ tori and Klein bottles in quasi-transverse position.  However, this union of quasi-transverse surfaces may not give the JSJ decomposition since the different JSJ surfaces may intersect each other.   This is indeed unavoidable in certain cases (see, for instance, the examples built in \cite[Figures 5 and 6, p.~1948-1949]{BF13})  
but the failure is very mild: one can always take such surfaces to never cross one another, and intersect only along periodic orbits, as follows.  

\begin{theorem}[The Barbot--Fenley modified JSJ decomposition] \label{thm_well_adapted_JSJ}  
Let $M$ be a $3$-manifold with pseudo-Anosov flow $\flow$.  
Let $\cJ = \{T_1,\dots, T_n\}$ be a collection of torus and Klein bottles giving the JSJ decomposition of $M$. Then there exists a collection $\cJ_\flow=\{T'_1, \dots, T'_n\}$ of weakly embedded quasi-transverse surfaces such that 
\begin{itemize} 
\item for each $i$, $T_i'$ and $T_i$ are homotopic, 
\item the union of the $T_i'$ is weakly embedded, i.e. it is embedded on the complement of the union of periodic orbits contained in the $T_i'$, and
\item each connected component of $M\smallsetminus \cT_{\flow}$ is either a manifold representing a piece of the JSJ decomposition, or it has an arbitrarily small neighborhood which is a piece of the JSJ decomposition. 
\end{itemize} 
\end{theorem}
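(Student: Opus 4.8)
The plan is to combine Theorem~\ref{thm_embedded_tori_to_QT}, which realizes each individual JSJ surface as a weakly embedded quasi-transverse surface, with the uniqueness part of the JSJ decomposition and a disk-swap / innermost-curve argument to remove the inessential intersections between the distinct surfaces. First I would apply Theorem~\ref{thm_embedded_tori_to_QT} to each $T_i$ to obtain a weakly embedded quasi-transverse surface $T_i'$ homotopic to $T_i$. Using Corollary~\ref{cor_max_transverse_or_periodic}, I would normalize each $T_i'$ to be, say, maximally periodic (this will be convenient because then each $T_i'$ is a union of elementary Birkhoff annuli, and all its self- and mutual intersections are controlled). The resulting union $\bigcup_i T_i'$ is quasi-transverse in the weak sense away from the included periodic orbits, but the interiors of distinct surfaces may cross. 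The goal is to homotope (along flowlines) to remove all such crossings that occur along embedded circles, leaving only intersections along common periodic orbits.

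The core of the argument is the following observation, mirroring the proof of Theorem~\ref{thm_simple_chain_gives_weakly_embedded}: after a small perturbation keeping everything transverse to $\flow$ away from the periodic orbits, the intersection locus of $T_i' \cup T_j'$ (on the complement of the periodic orbits) is a union of embedded circles (together with finitely many arcs ending on the periodic orbits). For a circle $c$ of intersection, I would argue it cannot be essential in either $T_i'$ or $T_j'$: if it were, then $c$ would represent, up to powers, an element $g\in\pi_1(M)$ that is conjugate into both $\pi_1(T_i)$ and $\pi_1(T_j)$, and a short lift-to-$\wt M$ argument (using that $T_i, T_j$ are either disjoint JSJ surfaces or one is isotopic into a Seifert piece disjoint from the other, together with Proposition~\ref{prop:no_free_Z2}, which forces the relevant $\bZ^2$'s to preserve chains of lozenges) shows the two surfaces would have to be parallel, contradicting minimality and distinctness of the $T_i$. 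Once every intersection circle bounds a disk on each surface, asphericity of $M$ (Corollary~\ref{cor_aspherical}) implies each pair of glued disks bounds a ball; then an innermost-disk argument pushes one disk past the other across this ball, and since both surfaces are transverse to $\flow$ on these disks, the push can be realized by an isotopy along flowlines. Iterating removes all circle intersections; the remaining arcs can only end on the periodic orbits, and by a further local push-off (Propositions~\ref{prop_Birkhoff_to_union_elementary} and~\ref{prop_union_elementary_to_Birkhoff}) one arranges that distinct $T_i'$ meet only \emph{along} common periodic orbits and do not otherwise cross there.

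Having obtained the collection $\cJ_\flow$ with the union weakly embedded, the last point — that components of $M\smallsetminus\cT_\flow$ are (or have small neighborhoods that are) JSJ pieces — follows from the uniqueness statement in Theorem~\ref{thm_geometric_JSJ_decomposition}: since each $T_i'$ is homotopic (hence, for incompressible surfaces in irreducible $3$-manifolds, isotopic after the disk-swaps are undone topologically) to $T_i$, cutting along $\cT_\flow$ yields the same pieces as cutting along $\cJ$, except that where two (or more) of the $T_i'$ run along a common periodic orbit the corresponding JSJ tori have been ``amalgamated'' along an annulus or circle; in that case one recovers the genuine JSJ piece by taking a small tubular neighborhood of the collapsed configuration. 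I would formalize this by noting that the complement $M\smallsetminus\bigcup T_i$ and $M\smallsetminus\bigcup T_i'$ differ only in a neighborhood of finitely many periodic orbits, over which the local model is explicit.

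The main obstacle I expect is the step establishing that intersection circles are inessential and the subsequent bookkeeping: one must carefully handle the case where $T_i$ and $T_j$ are \emph{not} disjoint in the JSJ collection (as the cited examples in \cite[Figures 5,6]{BF13} show this genuinely occurs), so that the two $\pi_1$'s may share cyclic subgroups without the surfaces being parallel; here one must argue that such shared elements correspond exactly to common periodic orbits (common corners of the associated chains of lozenges) and hence to the allowed intersections, rather than to removable crossings. Keeping the isotopies flow-line–preserving throughout, and ensuring the final object is still quasi-transverse (in particular still weakly embedded and with only finitely many periodic orbits), is where the technical care is concentrated; the rest is a combination of Theorem~\ref{thm_embedded_tori_to_QT}, standard $3$-manifold topology, and the structural results on chains of lozenges already developed.
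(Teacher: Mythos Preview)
Your overall strategy matches the paper's: realize each $T_i$ individually via Theorem~\ref{thm_embedded_tori_to_QT}, put the union in general position, then use an innermost-discs argument (as in the proof of Theorem~\ref{thm_simple_chain_gives_weakly_embedded}) to push away the inessential intersection circles along flowlines. The final bullet point is handled exactly as you say.

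The gap is in the step you yourself flag as the main obstacle. Your argument that an intersection circle $c$ cannot be essential in $T_i'$ is that an essential $c$ would force $\pi_1(T_i)$ and $\pi_1(T_j)$ to share a cyclic subgroup, hence the tori would be parallel. This is false: distinct JSJ tori bounding a common Seifert piece share the fiber class, and more generally (as you note) the Barbot--Fenley examples show JSJ tori can genuinely intersect along periodic orbits. So sharing a cyclic subgroup is not a contradiction, and your proposed ``parallel'' argument does not go through. You then say one must argue that such shared elements correspond exactly to common corners of the chains, but you give no mechanism for this.

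The paper fills this gap differently and more directly, by working in the orbit space rather than in $M$. It proves that the \emph{union} of the chains $\cC_i$ associated to the $T_i'$ is simple: no corner of any $\cC_i$ is sent by any $f\in\pi_1(M)$ into the interior of a lozenge of any $\cC_j$. The argument is verbatim that of Claim~\ref{claim_chain_simple}: if $f(c)$ lies in the interior of a lozenge of $\cC_j$, the corresponding periodic orbit $\alpha$ crosses $T_j$; but by the JSJ property $\alpha$ is freely homotopic to a curve disjoint from $T_j$, and the bounded-distance/proper-embedding argument in $\wt M$ gives a contradiction. Once this combined simplicity is established, the innermost-discs argument from Theorem~\ref{thm_simple_chain_gives_weakly_embedded} applies without change to the whole collection, and the question of essential intersection circles never arises separately. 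This orbit-space detour is the missing idea in your proposal.
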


\begin{remark} \label{rem_choose_tori}
In this statement, the $T_i'$ can be chosen to be any type of quasi-transverse surfaces, from maximally transverse to maximally periodic as well as anything in between.  Choosing the $T_i'$ to be maximally transverse minimizes the instersection of the surfaces, but does not necessarily eliminate all of them.
 \end{remark}

 \begin{proof} 
By Theorem \ref{thm_embedded_tori_to_QT} we can put each $T_i$, individually, in weakly embedded position.  
We now claim that the {\em union} of the chains giving the traces of the $T_i$ is simple, in the senes that no vertex of any chain gets mapped to the interior of a lozenge of any another. This is proved by the same argument as in Claim \ref{claim_chain_simple}:  Suppose for contradiction that there is a corner $c$ of one chain, say the chain associated to $T_i$, such that $f(c)$ is in the interior of a lozenge of another chain, say that associated to $T_j$.  Let $\wt \alpha$ be the orbit corresponding to intersecting $f(c)$ so $\wt \alpha$ intersects a lift $\wt T_j$ of $T_j$.  Let $\alpha$ be the projection of $\wt \alpha$ to $M$.   By the JSJ decomposition theorem, we have that $\alpha$ is freely homotopic in $M$ to a curve disjoint from $T_j$.  Thus, we can lift this homotopy and make $\wt \alpha$ disjoint from $\wt T_j$.  But this is impossible since the free homotopy moves points a bounded distance, $\wt T_j$ is a properly embedded plane not invariant under $f$, and $\wt\alpha=f^n(\wt\alpha)$ intersects $f^n(\wt T_j)$ for all $n$.  This shows simplicity.  

Now, just as in the proof of Theorem \ref{thm_simple_chain_gives_weakly_embedded}, one can first  put the union of the $T_i$ in general position, then analyze the loci of double and triple points showing they lie along circles in the interior of annuli.  Finally, one applies the innermost discs argument to resolve these intersections by  a homotopy along orbits.  This gives embedded surfaces $T'_i$, the union of which is embedded except along periodic orbits.  

Finally, the only way for a connected component of $M \setminus \cT_\flow$ to not be an actual piece of the JSJ decomposition must arise from the failure of the collection $\cT_\flow$ to be embedded: if all the surfaces are pairwise disjoint and embedded, they decompose $M$ into submanifolds giving the JSJ decomposition. If this fails because surfaces coincide along periodic orbits, one must pass to a small neighborhood to obtain a true piece of the JSJ decomposition, as claimed.  
 \end{proof}

 \section{Transverse surfaces} \label{sec_transverse} 
In this section, we show that the existence of truly transverse (rather than merely quasi-transverse) surfaces can be recognized directly from the structure of the orbit space. 
This provides an important certificate to show a flow is transitive, since nontransitive flows always admit transverse tori.  For Anosov flows, this was originally obtained by Brunella \cite{Brunella}, and is attributed to Mosher in the pseudo-Anosov case.  See \cite{BBM24b} for further discussion of the pseudo-Anosov and Anosov-like setting.    

\begin{theorem}\label{thm_transverse_torus_special_chains}
Suppose that $\flow$ is not a suspension of an Anosov diffeomorphism. Then $\flow$ admits a transverse torus or Klein bottle if and only if there exists a minimal bi-infinite chain of lozenges $\cC=\{L_i\}_{i\in \bZ}$ such that for all $i$, $L_{i}$ and $L_{i+1}$ always share a side.
\end{theorem}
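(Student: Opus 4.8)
The plan is to prove both directions by relating transverse tori/Klein bottles to chains of lozenges using the dictionary from Section~\ref{sec_quasi_transverse_surface}. For the ``if'' direction, suppose $\cC = \{L_i\}_{i \in \bZ}$ is a minimal bi-infinite chain in which every pair of consecutive lozenges shares a side. By Proposition~\ref{prop:4weak4strong} (applied to the nonseparated leaves arising as sides of these lozenges) together with Corollary~\ref{prop_Z2_stabilize_minimal_chain} -- or more directly, since a minimal bi-infinite chain is exactly what is stabilized by a $\bZ^2$ subgroup -- there is a $\bZ^2$ subgroup $H = \langle g, h\rangle$ of $\pi_1(M)$ with $g$ fixing all corners of $\cC$ and $h$ acting freely, translating lozenges along $\cC$. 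Since all consecutive lozenges share sides, $\cC$ is in fact a bi-infinite \emph{line} of lozenges; by Proposition~\ref{prop:infinite_lozenge_is_scalloped} it belongs to a scalloped region, but what matters here is simpler: I will first check that $\cC$ is simple. Simplicity follows from the argument of Claim~\ref{claim_chain_simple}: if some corner $c$ of $\cC$ were sent by $f\in\pi_1(M)$ into the interior of a lozenge of $\cC$, then the periodic orbit through $c$ would be forced to intersect, for all powers of the relevant element, a properly embedded plane at uniformly growing distance, a contradiction. Then Theorem~\ref{thm_simple_chain_gives_weakly_embedded} produces a weakly embedded quasi-transverse surface $T$ (a torus or Klein bottle, since $\pi_1(T)$ is $H$ or contains $H$ with index two) whose trace is $\cC$. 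Because consecutive lozenges of $\cC$ all share a side -- and hence no two are ``diagonal'' -- Proposition~\ref{prop_trace_QT_tori}, specifically Remark~\ref{rem_transverse_torus_no_diagonal}, guarantees that $\hat T$ contains no corners, so $T$ can be taken genuinely transverse to $\flow$.

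For the ``only if'' direction, suppose $T$ is a transverse torus or Klein bottle. Since $\flow$ is not a suspension of an Anosov diffeomorphism, Proposition~\ref{prop:no_free_Z2} shows that the $\bZ^2$ subgroup $H \le \pi_1(T)$ cannot act freely on $\orb$, so by Corollary~\ref{prop_Z2_stabilize_minimal_chain} it preserves a minimal bi-infinite chain $\cC$. The trace $\hat T$ of $T$ in $\orb$ has closure equal to a chain of lozenges with the properties listed in Proposition~\ref{prop_trace_QT_tori}, and by Corollary~\ref{cor_max_transverse_or_periodic} we may assume $T$ is already maximally transverse; in fact, since $T$ is genuinely transverse, Remark~\ref{rem_transverse_torus_no_diagonal} tells us that $\hat T$ contains no corners, which forces the chain to contain no diagonal lozenges -- that is, every pair of consecutive lozenges shares a side. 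The remaining point is to identify this chain with a \emph{minimal} one: I will argue that the chain associated to a transverse $T$ is automatically minimal, using Lemma~\ref{lem_charac_minimal_chain}. Indeed if three consecutive lozenges $L_{i-1}, L_i, L_{i+1}$ shared a common corner $c$, then by Proposition~\ref{prop_trace_QT_tori} the corner $c$ would lie in $\hat T$ (connectedness of the trace), contradicting that $\hat T$ has no corners. Hence the chain is minimal with every consecutive pair sharing a side, as required.

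The main obstacle I anticipate is the bookkeeping around \emph{which} chain one gets in the ``only if'' direction: the $\bZ^2$ subgroup may stabilize two minimal chains (the case of a scalloped region, by Corollary~\ref{prop_Z2_stabilize_minimal_chain} and Proposition~\ref{prop_stabilizer_scalloped}), and one must check that the chain arising as the closure of $\hat T$ is one of these minimal chains rather than a larger non-minimal chain in the same maximal chain. Resolving this requires carefully matching the combinatorial description of $\hat T$ from Proposition~\ref{prop_trace_QT_tori} (closed leaves of the induced foliations on $T$ $\leftrightarrow$ shared sides of consecutive lozenges; periodic orbits on $T$ $\leftrightarrow$ corners in $\hat T$) with the fact that $T$ is transverse, hence contains \emph{no} periodic orbits, hence $\hat T$ contains no corners. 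A transverse $T$ has exactly one ``maximal transverse subsurface,'' namely $T$ itself, and its lift $\wt T$ is a single band between consecutive translates of a closed leaf; unwinding this shows the chain is exactly the orbit of a fundamental domain of finitely many lozenges under the free generator $h$, which is visibly minimal. I would also need to handle the degenerate possibility that $\hat T$ is a single lozenge (corresponding to $T$ an elementary Birkhoff-type surface that happens to be transverse, i.e. a ``fibered face'' situation), but this is covered by the same statements with the chain being a $\bZ$-periodic line of one lozenge up to translation. Finally, I would remark, as in Theorem~\ref{thm_free_implies_skew}, that the hypothesis excluding suspensions is genuinely needed since the fiber torus of a suspension is transverse but its orbit space (the trivial plane) contains no lozenges at all.
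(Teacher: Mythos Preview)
Your ``only if'' direction (transverse surface $\Rightarrow$ chain) is essentially right, though you have over-complicated it by routing through Corollary~\ref{prop_Z2_stabilize_minimal_chain} and then worrying about matching chains. The paper simply invokes Lemma~\ref{lem_transversetorus} (a transverse torus/Klein bottle is automatically incompressible, hence quasi-transverse) and then reads off the chain from Proposition~\ref{prop_trace_QT_tori}; transversality means no corners in the trace, hence consecutive lozenges share sides and no three share a corner. You should state the incompressibility step explicitly, since Proposition~\ref{prop_trace_QT_tori} needs it.

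The real gap is in your ``if'' direction. You assert that the given minimal chain $\cC$ is stabilized by a $\bZ^2$ subgroup $\langle g,h\rangle$ with $h$ translating along $\cC$, citing Corollary~\ref{prop_Z2_stabilize_minimal_chain}. But that corollary goes the other way: it produces a minimal chain \emph{from} a $\bZ^2$ subgroup. There is no reason a given bi-infinite chain should be $h$-periodic for any $h\in\pi_1(M)$. (You also claim $\cC$ is a \emph{line} of lozenges; this is false---consecutive lozenges can share sides alternately in $\cF^+$ and $\cF^-$, with no common transversal leaf, as in Figure~\ref{fig_general_birkhoff}.) The paper's key idea, which you are missing, is: every corner of $\cC$ is a pivot, and by cocompactness there are only finitely many $\pi_1(M)$-orbits of pivots (Corollary~\ref{cor_cocompactness_imply_finitely_many_branching}). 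Hence some $g\in\pi_1(M)$ sends a corner $c_j$ to $c_{j+k}$ and $L_j$ to $L_{j+k}$. But $g$ need \emph{not} preserve $\cC$. The paper then uses Lemma~\ref{lem_merging_transverse_chains} to splice the $g$-translates of $\{L_0,\dots,L_{k-1}\}$ into a new chain $\cC'$ which \emph{is} $g$-invariant and still has consecutive lozenges sharing sides with no three sharing a corner; the element $h$ fixing all corners comes from Proposition~\ref{prop:4weak4strong}, and $\langle g,h\rangle\cong\bZ^2$ stabilizes $\cC'$. Your appeal to Claim~\ref{claim_chain_simple} for simplicity is also circular: that claim's proof uses an already-embedded torus, which is what you are trying to build.
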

Aside from a mention in a remark in \cite[Remark 5.6]{BBM24b}, this Theorem has not yet appeared in the literature; however, much of the proof follows from existing work of Barbot and Fenley \cite{Bar95b,BF13}.

Recall that the definition of quasi-transverse surface assumed that the surface was incompressible, i.e. $\pi_1$-injectively immersed. 
To show that transverse tori or Klein bottles give chains of the form described in Theorem \ref{thm_transverse_torus_special_chains}, we will show first that they are incompressible, a fact proved by Fenley for Anosov flows in \cite[Corollary 2.2]{Fen95b} (see also \cite[Lemma 1]{Brunella} and \cite[Proof of Proposition 2.7]{Mos92a} for a special case).  The proof we give here is quite different than Fenley's.  

\begin{lemma}\label{lem_transversetorus}
Let $T$ be a torus or Klein bottle transverse to a pseudo-Anosov flow $\flow$. Then $T$ is incompressible in $M$. 
\end{lemma}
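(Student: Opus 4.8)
The statement to prove is that a torus or Klein bottle $T$ transverse to a pseudo-Anosov flow $\flow$ on a closed $3$-manifold $M$ is incompressible (i.e.\ $\pi_1$-injectively immersed). My plan is to argue by contradiction: suppose the map $\pi_1(T) \to \pi_1(M)$ has nontrivial kernel. Since $T$ is a torus or Klein bottle, $\pi_1(T)$ (or an index-two subgroup, in the Klein bottle case) is isomorphic to $\bZ^2$, and the kernel of a map $\bZ^2 \to \pi_1(M)$ is either all of $\bZ^2$ (impossible, since $T$ is transverse to a nowhere-zero flow and hence carries an essential $1$-dimensional foliation, so cannot have image bounded, but let me instead rule this out cleanly) or an infinite cyclic subgroup. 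The first step is therefore to pin down: if $T$ is transverse to $\flow$, the restriction of $\fs$ and $\fu$ to $T$ give a pair of transverse, possibly singular, $1$-dimensional foliations; since $T$ is transverse to $\flow$ the only possible singularities are $p$-prongs with $p\geq 3$ coming from intersections of $T$ with singular orbits, which have negative index, so by Poincar\'e--Hopf (exactly as in the proof of Proposition \ref{prop_hopf_nonsingular}) the induced foliations on $T$ are in fact \emph{nonsingular}. In particular $T$ is disjoint from all singular orbits.

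Next, assuming toward a contradiction that a nontrivial element $\gamma \in \pi_1(T)$ maps to the identity in $\pi_1(M)$: represent $\gamma$ by an embedded loop $c$ on $T$ transverse to the induced foliation $\fs_T$ wherever possible (or, since the induced foliations are nonsingular, one can use the standard classification of foliations on the torus from Proposition \ref{prop_foliations_torus} to understand closed leaves and transversals on $T$). The key point is that $c$, being nullhomotopic in $M$ but transverse to $\flow$, can be used to build a nullhomotopic closed transversal to $\fs$ (or $\fu$) in $M$ --- this is exactly the situation forbidden by Proposition \ref{prop_closed_transversal}. The technical heart here is showing that the transversality of $T$ to the flow lets us perturb $c$ (which is transverse to $\flow$, as it lies on $T$) to a loop transverse to one of the two weak foliations, without changing its homotopy class: $c$ is transverse to $\flow$, hence transverse to the line field tangent to orbits, so by the flow-box structure (Definition \ref{def_flow_box}) near a nonsingular orbit, a small perturbation makes $c$ simultaneously transverse to $\fu$, say. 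Since $c$ bounds a disc in $M$, this contradicts Proposition \ref{prop_closed_transversal}.

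The remaining step is to handle the ``all of $\bZ^2$ in the kernel'' case --- i.e.\ $\pi_1(T)\to \pi_1(M)$ trivial. But then $T$ lifts to $\wt M$, and since $\wt M \cong \bR^3$ (Corollary \ref{cor_aspherical}) a lift $\wt T$ is a compact surface in $\bR^3$, which must separate and bound a compact region, contradicting transversality: the flow $\hflow$ restricted to this region would have to exit through $\wt T$ everywhere (as $T$ is transverse), forcing $\hflow$ to have a zero or a closed orbit inside by an index argument --- but $\hflow$ has no closed orbits in $\wt M$ and is nonsingular. Alternatively and more simply, a transverse torus cannot bound in $\wt M$ because the flow would then be forced to be tangent to it somewhere. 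I expect the main obstacle to be the careful perturbation argument in the second paragraph --- making precise, using the flow-box / foliation-chart structure, that a loop on $T$ which is nullhomotopic in $M$ can be isotoped to a genuine closed transversal of $\fs$ or $\fu$; one has to be slightly careful about which of the two foliations one perturbs toward and that the perturbation does not destroy embeddedness or the homotopy class. Everything else reduces to invoking Proposition \ref{prop_closed_transversal}, Corollary \ref{cor_aspherical}, and the Poincar\'e--Hopf argument already established for Birkhoff annuli.
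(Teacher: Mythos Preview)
Your approach via Proposition~\ref{prop_closed_transversal} is different from the paper's, which instead lifts $T$ to $\wt T \subset \wt M$, shows each orbit of $\hflow$ meets $\wt T$ at most once (using the argument from Theorem~\ref{thm_orb_is_plane}), deduces $\wt T$ is not closed, and then, if $\wt T$ is an annulus, projects a fundamental domain to $\orb$ and gets a contradiction from the hyperbolic dynamics of the generator of the deck group.

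Your route could be made to work, but the key perturbation step is wrong as stated. The claim ``by the flow-box structure\ldots a small perturbation makes $c$ simultaneously transverse to $\fu$'' does not follow: in a flow box with orbits along $\partial_z$ and $\fu$ given by $\{y = \text{const}\}$, transversality to the flow means $(\dot x, \dot y) \neq (0,0)$ while transversality to $\fu$ means $\dot y \neq 0$; these are independent codimension-one conditions, and a curve with $\dot y = 0$ somewhere cannot in general be perturbed to have $\dot y \neq 0$ everywhere while remaining closed in the same homotopy class. The correct statement is that a curve on $T$ is transverse to $\fu$ in $M$ if and only if it is transverse to the induced $1$-foliation $\fu_T$ on $T$, and whether a given class $\gamma \in \pi_1(T)$ admits such a representative depends on the structure of $\fu_T$: if $\fu_T$ has closed leaves in class $[b]$, then no multiple of $[b]$ can be a closed transversal to $\fu_T$.

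The gap is fixable, but not by a local perturbation --- you need a global argument using \emph{both} induced foliations and the classification in Proposition~\ref{prop_foliations_torus}. If $\gamma$ is a primitive generator of the kernel and happens to be the closed-leaf class of $\fu_T$, represent it by such a closed leaf, which (being tangent to $\fu$ but transverse to the flow) is automatically transverse to $\fs$; symmetrically if $\gamma$ is the closed-leaf class of $\fs_T$. If $\gamma$ is neither, then $\gamma$ has nonzero algebraic intersection with whichever closed-leaf class exists (two distinct primitive classes in $\bZ^2$ always pair nontrivially), and one must invoke the fact that this suffices to produce a closed transversal in class $\gamma$; the case where one of the foliations has irrational rotation number and no closed leaves also needs separate justification. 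None of this is in your write-up, and it is exactly the ``technical heart'' you flagged --- so as written the argument has a genuine gap at its crucial step.
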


\begin{proof}
Suppose $T$ is transverse to $\flow$.  Let $\wt T$ be a lift of $T$ to $\wt M$, so $\wt T$ is transverse to $\hflow$.  
Since $T$ has Euler characteristic $0$, the induced foliations on $T$ have no prong singularities, so $\wt T$ is a set transverse to $\hflow$ whose intersections with $\hfs$ and $\hfu$ give a trivial foliation.  The argument from Theorem \ref{thm_orb_is_plane} (which used only the fact that we were working with a trivially foliated transversal) shows that each orbit of $\hflow$ can intersect  $\wt T$ at most once. In particular, $\wt T$ cannot be closed, for otherwise there would be recurrent orbits of $\hflow$, and we showed in Proposition \ref{prop_top_properties} that orbits are properly embedded lines.  

Thus, $\wt T$ is topologically an annulus or a plane, since the transverse direction of $\hflow$ induces an orientation so it cannot be a topological Mobius band.  If it is a plane, we are done; so assume for contradiction that $\wt T$ is an annulus.  
Let $g\in \pi_1(M)$ be a nontrivial element that leaves $\wt T$ invariant so that $\wt T / \langle g \rangle$ is a torus, and let $A \subset \wt T$ be an annulus fundamental domain for $g$.   
Since orbits of $\hflow$ intersect $\wt T$ at most once, $A$ projects injectively to an annulus $\hat A$ in $\orb$.  Let $c$ be the ``inner'' boundary $c$ (i.e. the boundary of $\hat A$ that bounds a topological disc $D$ disjoint from $\hat A$). Since $A$ is a fundamental domain for $g$, the curve $gc$ must be the outer boundary of $\hat A$, and thus $g$ has a fixed point $x$ inside $D$.  The leaves $\cF^\pm(x)$ are properly embedded, so intersect $c$.  However, the fact that $g$ is topologically expanding along one leaf of $\cF^\pm(x)$ and contracting along the other now implies that  
$gc\cap c \neq \emptyset$. This contradiction concludes the proof.  %
\end{proof}

For the other direction of Theorem \ref{thm_orb_is_plane}, we  need the following lemma.

\begin{lemma}\label{lem_merging_transverse_chains}
Let $\cW:=\{L_i, i \in I\}$ and $\cW':=\{L'_i,i\in I'\}$ be two chains of lozenges, finite or infinite, with index sets $I$ and $I'$ some consecutive subsets of integers containing $0$.  Suppose $\cW$ satisfies the following two conditions:
\begin{enumerate}[label=(\roman*)]
\item \label{item_chain_no_triple_shared_corners} three consecutive lozenges never share a corner; and
\item\label{item_chain_share_side} two consecutive lozenge always share a side.
\end{enumerate} 
If there exists $k \geq 0$ such that, for all $0\leq i\leq k$, $L_i = L'_i$ (if $k=0$ we suppose moreover that the shared corner of $L_{-1}$ and $L_0$ is the same as the shared corner of $L'_{-1}$ and $L'_0= L_0$), 
then  $\cW'' := \{L_i,i\leq k\}\cup \{L'_i,i\geq k+1\}$
also satisfies conditions \ref{item_chain_no_triple_shared_corners} and \ref{item_chain_share_side}. 
\end{lemma}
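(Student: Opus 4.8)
<br>

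The plan is to do a direct combinatorial verification, paying attention only to what happens at the three lozenges $L_{k-1}, L_k = L'_k, L'_{k+1}$ where the two chains are spliced together; everywhere else the conditions hold because they already hold for $\cW$ or $\cW'$ individually. So the content of the lemma is entirely local to the splice point.

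First I would set up notation: for a chain satisfying conditions \ref{item_chain_no_triple_shared_corners} and \ref{item_chain_share_side}, each pair of consecutive lozenges shares exactly one side, and that side is a ray of one of the foliations emanating from the common corner; the two shared sides at $L_j$ (with $L_{j-1}$ and $L_{j+1}$) are either both rays of the same foliation (a ``line of lozenges'' configuration) or one ray of each — and condition \ref{item_chain_no_triple_shared_corners} is exactly the statement that $L_{j-1}$ and $L_{j+1}$ do not share the corner of $L_j$. The key observation is that for $\cW$ the pair $(L_{k-1}, L_k)$ shares a side, and since $L_k = L'_k$, the lozenge $L_k$ has (at least) two of its four sides being shared sides, at its two corners. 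Since a single lozenge has two corners, $L_{k-1}$ is attached at one corner of $L_k$ along one of the two rays through that corner, and by hypothesis (either automatically, when $k \geq 1$, because $L_{k-1} = L'_{k-1}$ wait no — let me re-read: we have $L_i = L'_i$ for $0 \leq i \leq k$, so in fact $L_{k-1} = L'_{k-1}$ when $k \geq 1$, and the extra hypothesis for $k = 0$ is precisely what pins down the corner) the lozenge $L_{k-1}$ and $L'_{k+1}$ are attached to $L_k$ and we need to check they are not attached at the same corner with three sharing that corner.

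The heart of the argument: $\cW'$ satisfies \ref{item_chain_no_triple_shared_corners}, so $L'_{k-1}$ and $L'_{k+1}$ do not share the corner of $L'_k = L_k$. When $k \geq 1$ we have $L_{k-1} = L'_{k-1}$, so immediately $L_{k-1}$ and $L'_{k+1}$ do not share the corner of $L_k$, which is condition \ref{item_chain_no_triple_shared_corners} at index $k$ for $\cW''$. When $k = 0$, the supplementary hypothesis says the shared corner of $L_{-1}, L_0$ equals that of $L'_{-1}, L'_0$; combined with \ref{item_chain_no_triple_shared_corners} for $\cW'$ (that $L'_{-1}$ and $L'_1$ do not share the corner of $L'_0$), this says $L_{-1}$ and $L'_1$ do not share the corner of $L_0$, again giving \ref{item_chain_no_triple_shared_corners} at the splice. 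Condition \ref{item_chain_share_side} at the splice index ($k$, between $L_{k-1}$ and $L_k$, and $k+1$, between $L'_k$ and $L'_{k+1}$) is inherited directly: $L_{k-1}$ and $L_k$ share a side by \ref{item_chain_share_side} for $\cW$, and $L'_k$ and $L'_{k+1}$ share a side by \ref{item_chain_share_side} for $\cW'$ (using $L_k = L'_k$). All other indices involve three consecutive lozenges all lying entirely in $\cW$ or entirely in $\cW'$, where the conditions are given.

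I expect the main obstacle to be purely bookkeeping: making the $k = 0$ boundary case airtight and being careful that ``$L_{k-1}$ and $L'_{k+1}$ do not share the corner of $L_k$'' is exactly the negation of three consecutive lozenges sharing a corner (as opposed to, say, $L_{k-1}$ and $L'_{k+1}$ merely intersecting somewhere). For this I would invoke Lemma \ref{lem_markovian_or_corner} to control how $L_{k-1}$ and $L'_{k+1}$ can intersect given that each shares a side with $L_k$: two lozenges both sharing a side with a common lozenge and also sharing a corner with it are precisely in the ``line of lozenges'' or ``string'' local picture, and the hypothesis rules out the degenerate three-at-a-corner case. There is no hard analysis here — it is a finite case check once the geometry of ``sharing a side'' versus ``sharing only a corner'' is pinned down — so the whole proof should be short.
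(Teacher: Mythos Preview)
Your approach is correct and essentially identical to the paper's: both reduce to checking conditions \ref{item_chain_no_triple_shared_corners} and \ref{item_chain_share_side} at the splice indices, use $L_k = L'_k$ (and $L_{k-1} = L'_{k-1}$ when $k \geq 1$) to inherit from $\cW'$, and treat $k=0$ via the supplementary corner hypothesis. Your invocation of Lemma~\ref{lem_markovian_or_corner} is unnecessary --- since a lozenge has exactly two corners and $L_{k-1}$, $L'_{k+1}$ each share one with $L_k$, the triple-corner question is purely whether those two shared corners coincide, which your argument already settles.
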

\begin{proof}

We start with condition \ref{item_chain_share_side}, i.e., we need to show that consecutive lozenges in $\cW''$ share a side. By assumption on $\cW$ and $\cW'$, this is true for any pair of consecutive lozenges in $\{L_i,i\leq k\}$ or in $\{L'_i,i\geq k+1\}$. So we only need to check that it also holds for $L_k$ and $L_{k+1}'$. But this directly follows from the fact that $L_k= L'_k$.

Similarly, to prove condition \ref{item_chain_no_triple_shared_corners}, i.e., that no triple of lozenges in $\cW''$ share a common corner, it is enough to prove this at the joining of the two chains, that is, for the triples $L_{k-1}, L_k, L'_{k+1}$ and $L_{k}, L'_{k+1}, L'_{k+2}$.
Since $L_k=L'_k$, the three lozenges $L_{k}, L'_{k+1}, L'_{k+2}$ cannot share a common corner.
For the triple $L_{k-1}, L_k, L'_{k+1}$, we separate the case $k=0$ from $k>0$: If $k>0$, then $L_{k-1}=L'_{k-1}$ and $L_k=L'_k$, so the three do not share corners thanks to our assumption on $\cW'$.
 
 If $k=0$, call $c_i$, resp.~$c'_i$, the shared corner of $L_{i-1}$ and $L_i$, resp.~$L'_{i-1}$ and $L'_i$. The  triple $L_{-1}, L_0, L'_{1}$ share a common corner if and only the corner shared by $L_{-1}, L_0$, which is $c_0$, is the same as the corner shared by $L_0=L'_0$ and $L'_1$, which is $c_1'$. But by assumption, we have that $c_0=c'_0$ and $c_1=c'_1$, so $c_0\neq c_1'$ and we deduce that $\cW''$ satisfies \ref{item_chain_no_triple_shared_corners}. This finishes the proof.
\end{proof}

\begin{proof}[Proof of Theorem \ref{thm_transverse_torus_special_chains}]
First, suppose that $T$ is a transverse torus or Klein bottle. By Lemma \ref{lem_transversetorus}, $T$ is incompressible, so it is a quasi-transverse surface.  By Proposition \ref{prop_trace_QT_tori}, the trace of $T$ in $\orb$ is contained in a bi-infinite chain of lozenges $\cC$. Since $T$ is transverse, its trace does not contain any corner of $\cC$. Hence, any two consecutive lozenges in $\cC$ share a side, as claimed.

We now prove the other direction: assume that there exists a chain of lozenges $\cC=\{L_i\}_{i\in \bZ}$ such that for all $i$, $L_{i}$ and $L_{i+1}$ always share a side, and $L_{i-1}$, $L_{i}$ and $L_{i+1}$ never share a corner. Our goal is to show that there exists possibly another chain $\cC'$ satisfying the same properties, i.e.with consecutive lozenges sharing a side and no three sharing a corner, and left invariant by a subgroup $H$ of $\pi_1(M)$, isomorphic to $\bZ^2$. By Theorem \ref{thm_simple_chain_gives_weakly_embedded} and Corollary \ref{cor_max_transverse_or_periodic}, $H$ is then the $\pi_1$ of a maximally transverse quasi-transverse torus $T'$ whose associated chain of lozenges is $\cC'$. Since no two lozenges in $\cC'$ are diagonal, Remark \ref{rem_transverse_torus_no_diagonal} tells us that $T'$ is in fact transverse.

Thus, what remains is to construct $\cC'$. 
Call $c_i$ the common corner of $L_i$ and $L_{i+1}$. By condition \ref{item_chain_share_side}, every such corner $c_i$ is a pivot -- recall from Section \ref{subsec_pivot} that a {\em pivot} is by definition a common corner of two lozenges sharing a side.   By Corollary \ref{cor_cocompactness_imply_finitely_many_branching}, there exists $g\in \pi_1(M)$ such that, for some $j$ we have $g c_j = c_{j+k}$. Moreover, we can assume that $g L_j = L_{j+k}$. Using again that each corner  is a pivot, there exists $h\in \pi_1(M)$ that fixes all the corners of $\cC$.  Take such an $h$ that is a generator of the stabilizer of all the corners in $\cC$.  

For simplicity, we re-index the chain so that $j=0$.  
By Lemma \ref{lem_merging_transverse_chains}, the chain $\{L_0, L_{1}, \ldots, L_{k}= g(L_0), g(L_{1}), \ldots, g(L_k)\}$ satisfies 
conditions \ref{item_chain_no_triple_shared_corners} and \ref{item_chain_share_side}.  Now define 
$\cC'$ to be the chain consisting of lozenges $g^n(L_i)$ for $n\in \bZ$ and $i \in [0, k]$. %
  Iterated applications of Lemma \ref{lem_merging_transverse_chains} shows that this is a bi-infinite chain satisfying \ref{item_chain_no_triple_shared_corners} and \ref{item_chain_share_side}, and by construction $g$ preserves $\cC'$ and acts on it freely.
Hence $g$ and $h$ commute and so generate a $\bZ^2$-subgroup of $\pi_1(M)$ that fixes $\cC'$, as claimed.
\end{proof}

The special case of a transverse torus (or Klein bottle) whose trace is a line of lozenges can be detected from the free homotopy data of periodic orbits.  

\begin{proposition}\label{prop_scalloped_surface}
Suppose $T$ is a torus or Klein bottle in $M$ transverse to a pseudo-Anosov flow $\flow$.  The trace of $T$ is a scalloped region if and only if $\pi_1(T)$ contains two independent elements which are freely homotopic to periodic orbits of $\flow$.  
Moreover, any scalloped region in $\orb$ can be realized as the trace of a torus or Klein bottle transverse to $\flow$.  
\end{proposition}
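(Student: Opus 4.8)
The plan is to prove the two directions of the biconditional separately, and then handle the ``moreover'' statement. For the first direction, suppose $T$ is transverse to $\flow$ and its trace $\hat T$ is a scalloped region $U$. By Lemma \ref{lem_action_of_stabilizer_scalloped}, the stabilizer of $U$ in $\pi_1(M)$ is virtually $\bZ^2$, generated by elements $g,h$ where $g$ fixes each corner of one of the two lines of lozenges comprising $U$ and $h$ fixes each corner of the other. In particular $g$ and $h$ each act with fixed points, so by Proposition \ref{prop_closed_orbit_fixed_point} each is freely homotopic to a periodic orbit of $\flow$. Since $T$ is transverse, Lemma \ref{lem_transversetorus} says $T$ is incompressible, hence $\pi_1(T)$ (or its index-two $\bZ^2$ subgroup, in the Klein bottle case) is exactly the stabilizer of $U$, or a finite-index subgroup thereof; in either case it contains independent elements conjugate to $g$ and $h$, giving the two independent periodic-orbit homotopy classes.

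For the converse, suppose $T$ is transverse and $\pi_1(T)$ contains two independent elements $g_1, g_2$ (necessarily commuting, as $\pi_1(T)$ is virtually $\bZ^2$), each freely homotopic to a periodic orbit. By Proposition \ref{prop_closed_orbit_fixed_point} each $g_i$ fixes a point in $\orb$, and these fixed points are distinct since $g_1, g_2$ are independent and point stabilizers are cyclic (one can arrange the $g_i$ to fix all rays through their fixed points by passing to powers, which does not change independence). Then $g_1$ and $g_2$ have disjoint nonempty fixed sets and generate a $\bZ^2$ subgroup, so Proposition \ref{prop_stabilizer_scalloped} gives a (unique) scalloped region $U$ stabilized by both. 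Since $U$ is $\pi_1(T)$-invariant (the full group $\pi_1(T)$, containing the $\bZ^2$, must preserve this unique scalloped region), and since by Theorem \ref{thm_transverse_torus_special_chains} and Proposition \ref{prop_trace_QT_tori} the trace $\hat T$ is (the interior of) a bi-infinite chain of lozenges with consecutive lozenges sharing sides, this chain must coincide with one of the two lines of lozenges making up $U$; hence $\hat T$ is that scalloped region. One needs to check here that the trace chain is exactly $U$ and not a proper sub-chain — this follows because $\pi_1(T)$ acts cocompactly on its trace, and a $\bZ^2$ acting cocompactly on a line of lozenges inside $U$ forces the line to be one of the two canonical ones.

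For the ``moreover'' statement, let $U$ be any scalloped region in $\orb$. By Lemma \ref{lem_action_of_stabilizer_scalloped} its stabilizer $H$ contains a $\bZ^2$ subgroup generated by $g,h$, with $g$ fixing all corners of one line of lozenges $\cC$ of $U$ and acting freely on the other. The chain $\cC$ has consecutive lozenges sharing a side and no three sharing a corner (by the structure of scalloped regions, Definition \ref{def_scalloped}), and $\cC$ is simple: one checks this exactly as in Claim \ref{claim_chain_simple} in the proof of Theorem \ref{thm_embedded_tori_to_QT}, using that a corner of $\cC$ mapped into the interior of $\cC$ would force a curve in the immersed surface to intersect a translate of the properly embedded plane $\wt T$ infinitely often at bounded distance, a contradiction — alternatively one invokes Corollary \ref{thm_weakly_to_simple_general} after constructing the surface. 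Taking a fundamental domain of $\cC$ for the action of $H$ and applying Theorem \ref{thm_simple_chain_gives_weakly_embedded}, we obtain a weakly embedded quasi-transverse torus or Klein bottle $T'$ whose trace is (contained in the chain determined by) $\cC$; since no two lozenges of $\cC$ are diagonal, Remark \ref{rem_transverse_torus_no_diagonal} guarantees $T'$ is genuinely transverse, with trace the scalloped region $U$.

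\textbf{Main obstacle.} I expect the delicate point to be showing in the converse direction that the trace of $T$ is \emph{all} of the scalloped region $U$ and not merely a proper bi-infinite sub-line of lozenges of it — i.e. matching the cocompact $\pi_1(T)$-action on the trace with the canonical lines of lozenges of $U$ from Lemma \ref{lem_action_of_stabilizer_scalloped}. This requires carefully combining the uniqueness of the invariant scalloped region (Proposition \ref{prop_stabilizer_scalloped}) with the structure of $\bZ^2$-invariant chains (Corollary \ref{prop_Z2_stabilize_minimal_chain}) and the description of traces of quasi-transverse tori (Proposition \ref{prop_trace_QT_tori}).
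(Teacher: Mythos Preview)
Your forward direction and the overall structure match the paper. For the converse, however, the paper takes the contrapositive: if the trace chain $\cC$ is \emph{not} a scalloped region, then (via Corollary~\ref{prop_Z2_stabilize_minimal_chain}) the minimal $\pi_1(T)$-invariant chain is unique, so a unique cyclic subgroup of $\pi_1(T)$ fixes all corners of $\cC$, and any element whose square is not in this subgroup acts freely. This completely sidesteps the ``matching'' problem you correctly flag as the main obstacle. Your direct approach can be made to work, but the step ``the trace chain must coincide with one of the two lines of $U$'' is not justified by the cocompactness remark you give; what you actually need is the uniqueness clause of Corollary~\ref{prop_Z2_stabilize_minimal_chain}: since $\pi_1(T)$ preserves the scalloped region $U$, the only two minimal $\pi_1(T)$-invariant chains are the two lines of $U$, and the trace (minimal by Proposition~\ref{prop_trace_QT_tori} and Lemma~\ref{lem_charac_minimal_chain}) must be one of them.

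There is a genuine gap in your ``moreover'' argument, specifically in establishing that the chain $\cC$ is simple. Neither of your two suggested routes applies. Invoking Corollary~\ref{thm_weakly_to_simple_general} is circular: you need simplicity as a hypothesis of Theorem~\ref{thm_simple_chain_gives_weakly_embedded} before you can construct the weakly embedded surface to which the corollary would apply. And the argument of Claim~\ref{claim_chain_simple} relies on comparing with translates of a pre-existing \emph{embedded} plane $\wt T$ in $\wt M$; here you have no such surface yet --- constructing it is precisely the goal. The paper instead argues simplicity directly from the geometry: the entire scalloped region $S$, viewed as a chain, is simple (this is checked from the description of how two scalloped regions can intersect, as in the proof of Lemma~\ref{lem_scalloped_off_of_themselves}), and therefore any finite subchain $\cC \subset S$ is simple as well.
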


This has important applications in the analysis of flows obtained by gluing constructions.  As we will see in the next section, many examples can be constructed by gluing together partial flows on manifolds with transverse, or sometimes quasi-transverse, boundary tori.  However, understanding the orbit space of the resulting example is typically quite difficult, even if the building blocks are simple and well understood.  Proposition \ref{prop_scalloped_surface} gives one tool to detect features in the orbit space of a glued flow.  

\begin{proof}[Proof of Proposition \ref{prop_scalloped_surface}]
Let $T$ be a transverse torus or Klein bottle and $\hat T$ its trace.   Suppose that $\hat T$ is a scalloped region $S$, so $\pi_1(T)$ is a subgroup of the stabilizer of $S$. Lemma \ref{lem_action_of_stabilizer_scalloped} says that this stabilizer is either $\bZ^2$ or has an index two subgroup isomorphic to $\bZ^2$, and in either case this $\bZ^2$ subgroup is generated by two elements which fix the corners of the two lines of lozenges comprising this region.   Thus, $\pi_1(T)$ must also contain independent elements which each fix the corners of the two different lines of lozenges.  Moreover, these elements are not conjugate in $\pi_1(M)$, since one preserves each leaf of the $\cF^s$ boundaries of the (unique, invariant) scalloped region while permuting the $\cF^u$ boundary leaves, while the other preserves each $\cF^u$ boundary leaf.  
We conclude that $T$ contains two non-freely-homotopic periodic orbits.  

Conversely, if $\hat T$ is a chain $\cC$ of lozenges which does not form a scalloped region, a unique cyclic subgroup of $\pi_1(T)$ fixes all the corners of $\cC$, and any element whose square is not in this subgroup fixes no corners. %

For the second statement, suppose $S$ is a scalloped region, and let $g, h$ be generators of the stabilizer of $S$ where $h$ fixes all corners of one of the lines of lozenges making up $S$, and $g$ permutes these lozenges nontrivially.  Let $\cC \subset S$ be a sub-chain of lozenges acting as fundamental domain for the action of $g$.  The scalloped region $S$ is a simple chain (this is an easy exercise, or see Figure \ref{fig_intersection_scalloped} and the proof of Lemma \ref{lem_scalloped_off_of_themselves} for a description of the only possible intersections of scalloped regions), so it follows that $\cC$ is simple as well.  Thus, we can 
apply Theorem \ref{thm_simple_chain_gives_weakly_embedded} to build a quasi-transverse Birkhoff annulus $A$ with trace $\cC$; and by construction this has its two boundary components identified by $g$, so is topologically a torus or Klein bottle, depending on whether $g$ preserves or reverses orientation.  Finally, the fact that $S$ is scalloped means no three lozenges share a corner and each consecutive pair share a side, so, as above, Corollary \ref{cor_max_transverse_or_periodic} and Remark \ref{rem_transverse_torus_no_diagonal} tells us that $A$ can be made transverse. %
\end{proof}

\section{Periodic Seifert fibered pieces} \label{sec_periodic_seifert} 
By putting JSJ tori in quasi-transverse position, one obtains a (nearly canonical) decomposition of a pseudo-Anosov flow $\flow$ into partial flows on Seifert fibered or hyperbolic manifolds with torus boundary components.  Thus, one approach to classifying pseudo-Anosov flows is to first classify such partial flows on JSJ pieces.   For Seifert fibered pieces, and more generally for graph manifolds (those comprised of only Seifert pieces)  this has been carried out to great success in \cite{BF13,BF15,BF21}, although the program is not yet entirely complete\footnote{By contrast, the restriction of pseudo-Anosov flows to hyperbolic pieces is much less understood.  However, this is not very  surprising given what we know for manifolds with trivial JSJ decomposition: Theorem \ref{thm_Ghys_Plante_theorem} give a complete classification for Seifert-fibered manifolds, and this result has no known counterpart for hyperbolic 3-manifolds.}.  Our goal in this, and the following, section is to describe the main results of this work.   As usual, we take as standing assumption that $\flow$ is a pseudo-Anosov flow on a compact 3-manifold $M$.

Seifert fibered pieces of $M$ fall into two broad classes, depending on whether the fiber is represented by a periodic orbit of $\flow$.   
\begin{definition} \label{def_periodic_Seifert}
Let $P$ be a Seifert fibered JSJ piece of $M$.  The piece $P$ is called {\em periodic} (for $\flow$) if a regular fiber of some Seifert fibration of $P$ is freely homotopic to a periodic orbit of $\flow$, and is called a {\em free piece} otherwise.  
\end{definition} 

Most Seifert manifolds admit only one Seifert fibration up to isotopy, but a short list of them (5 types for orientable manifolds, see e.g.~\cite{Hat23}) may admit several, which is why one needs the ``...of some Seifert fibration...'' clause in the definition.  In the rest of this text, when we talk about fibers of a periodic Seifert piece, we adopt the convention to always take the fibration where the fiber is  represented by a periodic orbit.  This is unambiguous thanks to Theorem \ref{thm_spines_exist}, which implies that when one fiber is represented by a periodic orbit, then any periodic orbit in $P$ is freely homotopic to a (power of a) fiber.  
This theorem also says much more, describing a very specific structure of Seifert fibered pieces:  

\begin{theorem}[\cite{BF13} Theorem B, and \cite{BFeM23} Theorem 3.1] \label{thm_spines_exist}
Let $P$ be a periodic Seifert fibered piece for a pseudo-Anosov flow on $M$.  Then there exists a connected, finite union of elementary Birkhoff annuli in $M$, embedded along the complement of their periodic orbit boundaries,  such that any sufficiently small neighborhood of this union is a representative for the JSJ piece $P$. 
\end{theorem}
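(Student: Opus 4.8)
The plan is to analyze the restriction of the flow to a periodic Seifert piece $P$ via its action on the orbit space, and to build the required union of elementary Birkhoff annuli (which Barbot and Fenley call a \emph{spine}) from a chain of lozenges. First I would set up the group-theoretic picture: let $\gamma \in \pi_1(M)$ denote the class of a regular fiber of $P$, which by hypothesis is freely homotopic to a periodic orbit, so $\gamma$ acts on $\orb$ with a fixed point. Since $P$ is Seifert fibered with $\pi_1$-injective boundary tori, $\gamma$ is central in $\pi_1(P)$, and $\pi_1(P)$ is a (virtual) surface-group-by-$\bZ$ extension. The key claim to establish is that \emph{every} periodic orbit in $P$ is freely homotopic to a power of the fiber: this is what makes the fibration unambiguous, as noted after Definition \ref{def_periodic_Seifert}. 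To see this, suppose $\alpha \subset P$ is a periodic orbit represented by $h \in \pi_1(P)$. Then $h$ and $\gamma$ commute, so (using Theorem \ref{thm:distinct_fix_is_chain} and Corollary \ref{cor:maximal_fixed}) the fixed-point sets interact via a common invariant chain of lozenges, and one argues — using that point stabilizers are cyclic (Proposition \ref{prop:cyclic_stabilizers}) together with the centrality of $\gamma$ — that $h$ must itself fix a point, i.e.\ be represented by a periodic orbit, and then that it is a power of $\gamma$ up to conjugacy. This requires ruling out $\pi_1(P) \cong \bZ^2$ and $\bZ$ by the usual $3$-manifold arguments (a periodic Seifert piece of a JSJ decomposition is not a solid torus or $T^2 \times I$), much as in the proof of Theorem \ref{thm_free_implies_skew}.

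Next I would produce the chain of lozenges carrying the spine. Let $\wt P$ be a lift of $P$ to $\wt M$ and consider the subgroup $\Gamma = \pi_1(P)$ acting on $\orb$ with central element $\gamma$ fixing a point $x_0$. By the previous step, the set of $\Gamma$-translates of fixed points of powers of $\gamma$ — equivalently, the lifts of periodic orbits in $P$ — are all corners of a single maximal chain of lozenges $\cC$ invariant under $\Gamma$ (apply Theorem \ref{thm:distinct_fix_is_chain} repeatedly, and Corollary \ref{cor:maximal_fixed} for maximality and $\Gamma$-invariance, using normality of $\langle\gamma\rangle$). I would then pass to a minimal sub-chain, or rather to the full $\Gamma$-orbit of a fundamental-domain sub-chain, arranged so that consecutive lozenges share a side and no three share a corner; this can be done since the "diagonal" configurations can be eliminated by the non-corner criterion (Lemma \ref{lem_no_corner_criterion}) combined with the discreteness of pivots (Proposition \ref{prop_pivots_discrete}). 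The crucial point is that this chain is \emph{simple} in the sense of Definition \ref{def_simple_chain}: this follows by exactly the argument of Claim \ref{claim_chain_simple} in the proof of Theorem \ref{thm_embedded_tori_to_QT} — a corner of $\cC$ mapped into the interior of $\cC$ by some $f \in \pi_1(M)$ would force $f$ to permute lifts of a boundary torus of $P$ incompatibly with the JSJ structure and properness of embedded planes in $\wt M$.

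Then I would invoke Theorem \ref{thm_chain_to_anulus} and Theorem \ref{thm_simple_chain_gives_weakly_embedded}: applied lozenge-by-lozenge to a fundamental domain of $\cC$ under the action of $\Gamma$ (using an element acting freely together with $\gamma$, as in Corollary \ref{prop_Z2_stabilize_minimal_chain}-style bookkeeping but now for the surface-group-by-$\bZ$ group), these give a connected finite union $W$ of weakly embedded elementary Birkhoff annuli whose periodic-orbit boundaries are the fibers, with $\pi_1(W)$ conjugate to $\Gamma$. Since each elementary annulus is transverse to $\flow$ except along the fiber orbits, $W$ is a Seifert-fibered object: the flow lines through the interiors of the annuli, suitably grouped, trace out the circle fibers, so $W$ is a "spine" onto which $P$ Seifert-retracts. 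Finally I would check that a regular neighborhood of $W$ is homeomorphic to $P$: $\pi_1$ of the neighborhood equals $\pi_1(W) \cong \pi_1(P)$, both are Seifert fibered with the fiber matching, the boundary tori correspond, and one appeals to the standard rigidity of Seifert fibered pieces in a JSJ decomposition (as in the references cited for Theorem \ref{thm_geometric_JSJ_decomposition}) to conclude the neighborhood \emph{is} the JSJ piece $P$.

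The main obstacle I anticipate is the step showing every periodic orbit in $P$ is freely homotopic to a power of the fiber, and the closely related task of showing the chain $\cC$ assembled from these orbits is connected and has the exact combinatorial shape (consecutive lozenges sharing sides, no three at a corner) needed to feed into Theorems \ref{thm_chain_to_anulus} and \ref{thm_simple_chain_gives_weakly_embedded}. This is where one must use genuine $3$-manifold topology — the structure of $\pi_1$ of Seifert fibered pieces, centrality of the fiber, and the JSJ rigidity statement — rather than just the orbit-space axioms, and it is the part that the cited works \cite{BF13, BFeM23} develop at length. A secondary technical point is ensuring the union of elementary annuli can be taken \emph{connected} and weakly embedded simultaneously; this should follow from the innermost-disc/Fried-desingularization arguments already used in the proof of Theorem \ref{thm_simple_chain_gives_weakly_embedded}, applied carefully to the whole chain at once rather than one lozenge at a time.
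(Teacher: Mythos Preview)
Your proposal has a genuine misconception about the combinatorial structure of the chain underlying the spine. You aim to arrange $\cC$ so that ``consecutive lozenges share a side and no three share a corner,'' and then to invoke Theorems \ref{thm_chain_to_anulus} and \ref{thm_simple_chain_gives_weakly_embedded}. But those theorems build a \emph{single} Birkhoff annulus from a linear chain; the spine is a connected \emph{union} of elementary annuli, one per lozenge, and its chain $\cC'$ is generally tree-like with vertices of valence greater than two (corresponding to vertices of the fatgraph of Section~\ref{sec_periodic_seifert}). So the ``no three at a corner'' condition is neither achievable nor desired here. The paper's replacement for your minimality step is to use the tree $\cT(\cC)$ of the maximal $\gamma$-invariant chain and take $\cC'$ to be the subchain corresponding to the \emph{union of axes} of all elements of $\pi_1(P)$ acting freely on $\cT(\cC)$; connectedness comes from an elementary tree lemma (Exercise~\ref{ex_pruned_tree_connected}), and $\pi_1(P)$-invariance is automatic.

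Your simplicity argument also needs replacing. Claim~\ref{claim_chain_simple} relies on having an \emph{embedded} torus $T$ so that $f^n\wt T \cap \wt T = \emptyset$; here there is no such surface --- the spine is precisely what you are trying to construct. The paper argues instead that if $\cC'$ were not simple, repeated application of the non-corner criterion (Lemma~\ref{lem_no_corner_criterion}) forces $\cC'$ to be a \emph{string} (every vertex valence two, adjacent lozenges sharing only corners), which makes the stabilizer of $\cC'$ virtually $\bZ$ or $\bZ\rtimes\bZ$; then $P$ would be a solid torus, $T^2\times I$, or a twisted $I$-bundle over the Klein bottle, each of which is ruled out (the last via Theorem~\ref{thm_embedded_tori_to_QT} applied to its boundary torus). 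Finally, your detour through ``every periodic orbit in $P$ is freely homotopic to a power of the fiber'' is unnecessary: the paper notes this as a \emph{consequence} of the theorem, not a step in its proof.
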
 

Following \cite{BF13}, this union of Birkhoff annuli is called the {\em spine} of $P$. 

Theorem \ref{thm_spines_exist} was proved assuming $M$ orientable in \cite{BF13}, and generalized (using essentially no new ingredients) to the non-orientable case in \cite{BFeM23}.  Here we give a self-contained simplified exposition of the proof.  
For this, we need an elementary lemma for abstract groups acting on trees, which we leave as an exercise. (For a solution, see \cite[Section 7]{BF13}.)

\begin{exercise} \label{ex_pruned_tree_connected}
Let $T$ be a tree, and $g, h$ automorphisms of $T$ acting freely, with axes $A(g)$ and $A(h)$.  If $A(g)$ and $A(h)$ are disjoint, then the (unique) path between them is contained in the axis either of $gh$ or $gh^{-1}$. 
\end{exercise} 

\begin{proof}[Proof of Theorem \ref{thm_spines_exist}]
Let $P$ be a periodic Seifert piece and $g \in \pi_1(M)$ a representative of a regular fiber periodic orbit.   
If $g$ acts on $\orb$ with a unique fixed point, then the fact that $g$ is central implies that this point is globally preserved by $\pi_1(P)$.  Since point stabilizers are virtually cyclic, and 
$P$ is a piece of a JSJ decomposition, $\pi_1(P)$ is not virtually cyclic and this case is impossible.  

Thus, $g$ preserves a nontrivial chain of lozenges. Let $\cC$ be the maximal chain invariant under $g$.  Again, since $g$ is central in $\pi_1(P)$, the chain $\cC$  is  $\pi_1(P)$ invariant.  

Recall $\cT(\cC)$, the {\em tree associated to $\cC$}, from Definition \ref{def:chain_tree}. This is a graph with vertices given by corners of lozenges in $\cC$, and two vertices connected by an edge if they are corners of the same lozenge.  Since point stabilizers are cyclic, any non-central element of $\pi_1(P)$ acts freely on $\cT(\cC)$ so has an axis.  Let $\mathcal{T}' \subset \cT(\cC)$ be the union of all axes of elements of $\pi_1(P)$ which act freely on $\cT(\cC)$.  By Exercise \ref{ex_pruned_tree_connected}, $\mathcal{T}'$ is connected, so corresponds to a subchain $\cC' \subset \cC$ of lozenges.  Furthermore, since the image of an axis under an automorphism is the axis of a conjugate element, $\mathcal{T}'$ is invariant under the action of $\pi_1(P)$.  

The remainder of the proof consists in showing that $\cC'$ is simple, so can be realized as a weakly embedded union of Birkhoff annuli, and then that a small thickening of this union of annuli is a representative for $P$.  

Assume for a contradiction that $\cC'$ is not simple, so there exists some $f \in \pi_1(M)$ and corner $c_0 \in \cC'$ such that $f(c_0) \in L$, where $L$ is a lozenge of $\cC'$.
We will start by showing that, in this case, $\cC'$ must be a string:  Since $\mathcal{T}' = \cT(\cC')$ is a union of axes, each corner of $\cC'$ is the corner of at least two lozenges.  Since $f(c_0) \in L$, it is a nonsingular point so has four quadrants, two diagonal ones containing corners of $L$ and the other two containing the perfect fits of $L$.  By the non-corner criterion (Lemma \ref{lem_no_corner_criterion}), there cannot be a lozenge with corner $f(c_0)$ in the quadrants containing perfect fits, so $f(c_0)$ has lozenges in the other two quadrants, each containing a corner of $L$. Call these lozenges $f(L_0)$ and $f(L_1)$.  An illustration of this configuration is given in Figure \ref{fig_non_simple_string}. 

Applying the same argument to each of the corners of $L$ in place of $f(c_0)$, we conclude that each of them has another lozenge in the quadrant diagonal to $L$ (and not in the other quadrants), and these diagonal lozenges contain the other corners, say $f(c_1)$ and $f(c_{-1})$ of $L_1$ and $L_0$ respectively.   Iterating this argument shows that $\cC'$ must be equal to a string of lozenges and so $\cT(\cC')$ is a graph where each vertex has valence 2.  

\begin{figure}[h]
\centerline{ \mbox{
\includegraphics[width=5.5cm]{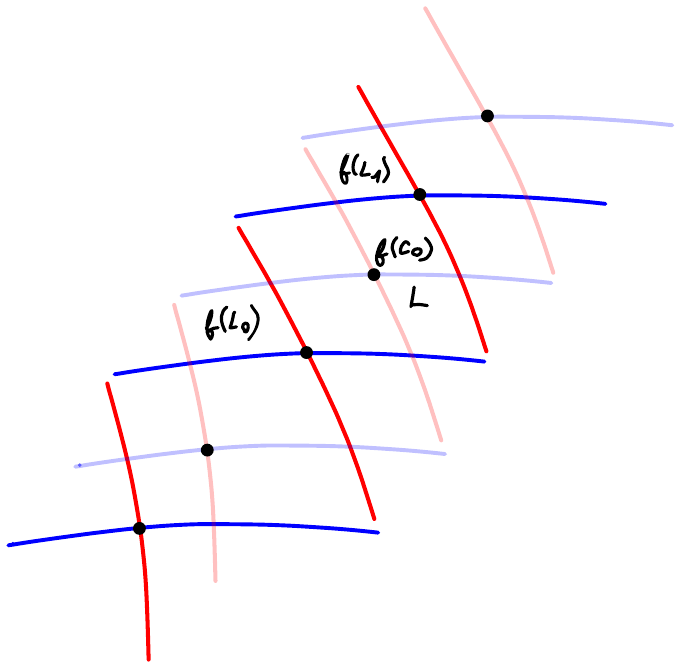}}}
\caption{If $\cC'$ was not simple then it must be a string.}
\label{fig_non_simple_string}
\end{figure} 

As a consequence, the stabilizer of $\cC'$ is (algebraically) small:  the action on $\cT(\cC')$ gives a morphism to the semidirect product of $\bZ$ with $\bZ/2\bZ$.  The kernel of this action fixes all corners of $\cC$, hence is cyclic.  Thus, the stabilizer, up to an index two subgroup, is $\bZ$ or a semi-direct product of $\bZ$ and $\bZ$. This implies that $P$ is either a solid torus, $T^2\times I$ or a twisted $I$-bundle over the Klein bottle. The first case is impossible because solid tori are not pieces of a JSJ decomposition. In the second case, i.e., $P=T^2 \times I$, we would have that $M$ must be a mapping torus, and therefore $\flow$ would be a suspension, contradicting the fact that $P$ is periodic. So we must be in the last case, where $P$ is a twisted $I$-bundle over the Klein bottle.  Consider the boundary torus $T$ of $P$. Its fundamental group $\pi_1(T)$ is an index $2$ subgroup of $\pi_1(P)$ so must preserves the string $\cC'$. By Theorem \ref{thm_embedded_tori_to_QT}, $T$ is homotopic to a weakly embedded quasi-transverse surface whose trace is $\cC'$.  But this would mean that $\cC'$ is simple (by Theorem \ref{thm_weakly_to_simple_general} or directly from Claim \ref{claim_chain_simple}), contradicting our assumption.\footnote{Note that this argument does \emph{not} say that the spine is never a string, it says that even if it is a string, it will be simple.}

Having shown $\cC'$ is simple, we may apply the technique of the proof of Theorem \ref{thm_embedded_tori_to_QT}  to build a union $B$ of elementary Birkhoff annuli, one for each lozenge in a fundamental domain for the action of $\pi_1(P)$ on $\cT(\cC')$, glued along their periodic orbits, and then resolve any interior intersections to make this union weakly embedded. 

Let $\wt B$ be a lift of $B$ to $\wt M$.  Then $\pi_1(P)$ acts on $\wt B$, and we claim this action is free:  indeed, if $h \in \pi_1(P)$ were to fix some point $p \in \wt B$, then the projection of $p$ to $\orb$ would be a fixed point in $\cC'$, and hence necessarily a corner.  Thus, $p$ represents a periodic orbit, and $h$ is the identity (otherwise, it would translate nontrivially along this orbit, fixing no point).   We conclude that $B$ is homotopic to $P$ in $M$ and thus any small thickening of it to an open subset of $M$ is a representative of $P$.   
\end{proof}

\subsection{Constructing periodic pieces} 
Theorem \ref{thm_spines_exist} shows that all periodic Seifert pieces are obtained by taking a neighborhood of a union of Birkhoff annuli glued along their boundaries.  This suggests a ``building-block" approach to constructing examples:  first describe a convenient local model for a neighborhood of a Birkhoff annulus, then understand how such neighborhoods glue together when annuli are identified along boundaries.   We describe this approach here, following \cite{BFeM23, BF13}. 

\begin{construction}[Block neighborhood of a Birkhoff annulus]  \label{const_block}
Let $N= I \times I \times S^1$, where $I= [-\pi/2,\pi/2]$ and $S^1 = \R/\bZ$, with coordinates $(x,y,z)$. Fix some $\lambda > 1$, and define a vector field $X_\lambda$ in coordinates by 
\[
\begin{cases} 
 \dot{x} = 0 ,\\
 \dot y = \cos^2(x) + \sin^2(y)\sin^2(x)\\
 \dot z = + \lambda \sin (x) \cos (y). 
\end{cases}
\] 
Let $\psi_\lambda$ denote the flow generated by $X_\lambda$.
\end{construction}
\begin{figure}[h]
 \labellist 
  \small\hair 2pt
     \pinlabel $y$ at 49 70 
     \pinlabel $x$ at 220 15 
   \pinlabel $z$ at 20 200
 \endlabellist
\centerline{ \mbox{
\includegraphics[width=5.5cm]{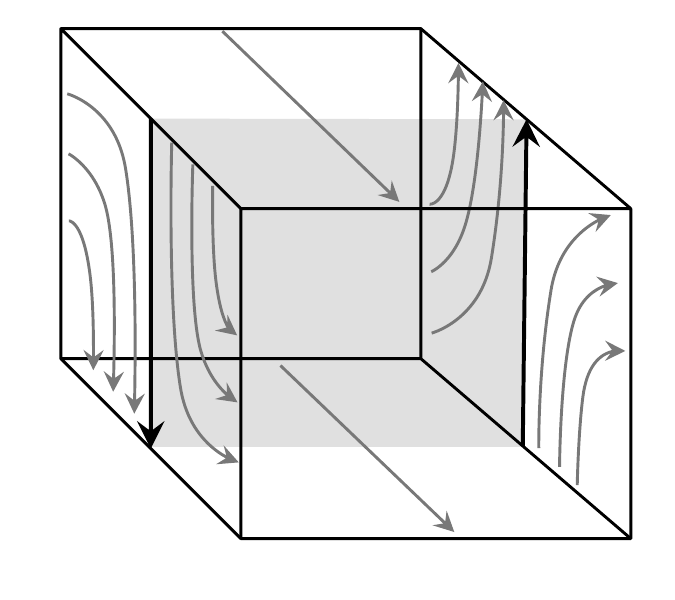}}}
\caption{The model block with flow $\psi_\lambda$. The shaded region is a Birkhoff annulus.}
\label{fig:model_block}
\end{figure} 

Note that $\psi_\lambda$ has exactly two closed orbits, $\alpha_1 = \{-\pi/2\} \times \{0\} \times S^1$ and $\alpha_2 = \{\pi/2\} \times \{0\} \times S^1$, with $\alpha_1$ going in the positive $z$-direction and $\alpha_2$ in the negative $z$-direction, and these bound a  Birkhoff annulus $I  \times \{0\} \times S^1$. 
The boundary $I  \times \{-\pi/2\} \times S^1$ is incoming for $\psi_\lambda$ and the boundary $I  \times \{\pi/2\} \times S^1$ is outgoing, and the other boundaries are tangent to the flow, made of stable and unstable sets for the periodic orbits.  

The parameter $\lambda$ controls the amount of shearing in the $z$-direction as orbits go through the block, the larger the $\lambda$, the more shear.

\begin{exercise} \label{ex_model_block}
Show that any embedded Birkhoff annulus $A$ of a pseudo-Anosov flow on a manifold $M$ has a neighborhood\footnote{This is technically not an open neighborhood since the boundary periodic orbits remain in the boundary of $N$, but rather a thickening of $A$ in the direction of the flow together with stable and unstable sets for the periodic orbits.  To keep the writing simple, we will follow this conventional abuse of terminology and call it a neighborhood.} that is orbit equivalent to $N$ with the model flow $\psi_\lambda$ (for any $\lambda>1$), via a homeomorphism taking $A$ to the set $[-\pi/2,\pi/2] \times \{0\} \times S^1$ in the model.  

More generally, a Birkhoff annulus that is only weakly embedded admits a neighborhood that is a local embedding of $N$, injective on the interior but possibly non-injective on the faces containing the periodic orbits.  %
\end{exercise}

The control on $\lambda$ is used to prove hyperbolicity of flows built from gluing such pieces. For most of this section this control will be irrelevant (and when needed, any $\lambda$ sufficiently large will suffice), so we fix some $\lambda>1$ and drop $\lambda$ from the notation for now, writing $\psi$ for $\psi_\lambda$.

Two copies of $(N, \psi)$ can be glued together either just along a periodic orbit, or along the stable or unstable manifold face of a periodic orbit, preserving the $S^1$ fiber factor, provided that they are oriented so that the incoming boundaries are adjacent, and the flows agree on the glued face.  This is achieved by flipping the $z$-coordinate direction of one of the blocks, as shown in Figure \ref{fig:gluings}, left.   Doing this gluing with four blocks around a common periodic orbit corner produces a standard neighborhood of a periodic orbit in a periodic Seifert piece for an Anosov flow; gluing $2k$ produces a standard singular $k$-prong.   Figure \ref{fig:gluings} (right) shows the resulting local picture in the orbit space of the (partially defined) flow. 
 The reader may wish, as an exercise, to write explicit  descriptions of these gluings in coordinates, or may consult Section 4 of \cite{BFeM23} for a detailed description.   

Assembling pieces in this way gives a (trivially) Seifert fibered manifold of the form $S \times S^1$, 
where $S$ is a surface with boundary and the $S^1$ fiber is represented by a periodic orbit, with a flow for which all boundary components are transverse or quasi-transverse, and all Birkhoff annuli are {\em embedded}.  To modify this to obtain weakly embedded Birkhoff annuli instead, one can perform a Dehn-Fried surgery along any periodic orbits (see Section \ref{sec_example_Dehn_Fried}). This has the effect of changing the number of times a Birkhoff annulus boundary covers the periodic orbit, resulting in a singular fiber for the Seifert fibration, and a weakly embedded, rather than embedded, annulus.

\begin{figure}[h]
   \labellist 
    \small\hair 2pt
         \pinlabel $\alpha_1$ at 10 120
     \pinlabel $N_1$ at 110 145
   \pinlabel $\alpha_2$ at 175 155
     \pinlabel $\gamma$ at 115 65
 \pinlabel $N_2$ at 180 210
   \pinlabel {\em in} at 90 187
     \pinlabel {\em in} at 132 210
        \pinlabel {\em out} at 130 25
          \pinlabel {\em out} at 232 215 
 \pinlabel $\alpha_1$ at  370 90
 \pinlabel $\alpha_2$ at 460 110
  \pinlabel $\gamma$ at 405 80
 \pinlabel $L$ at 400 150
  \pinlabel $L'$ at 470 180 
    \endlabellist
\centerline{ \mbox{
\includegraphics[width=11cm]{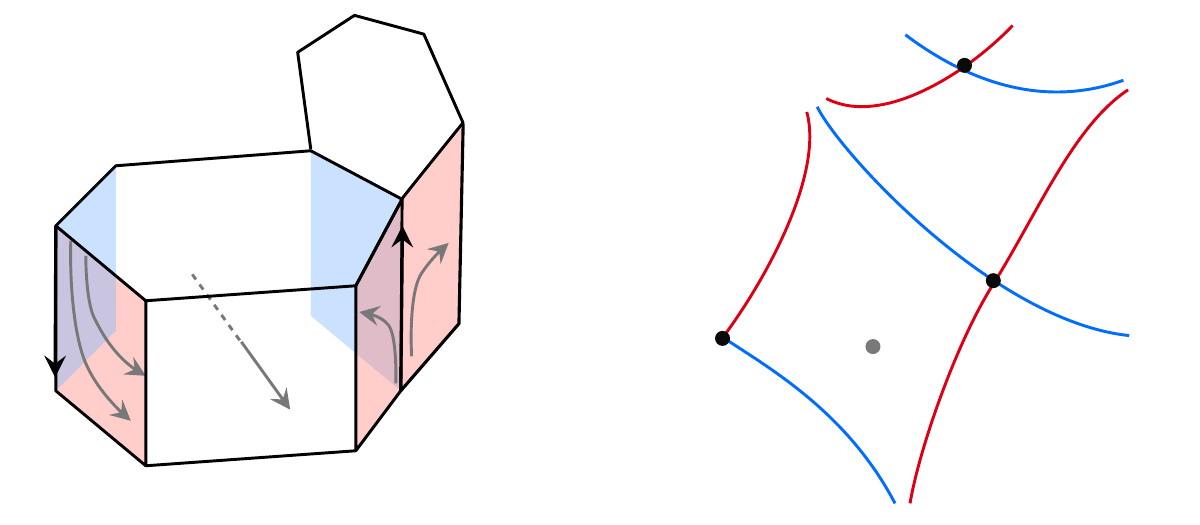}}}
\caption{{\em Left:} two model blocks glued along a half-face, the rear/right block is flipped vertically.  Orbits $\alpha_i$ are labelled according to the block $N_1$.  {\em Right:} When such blocks occur as part of a flow, projecting to the orbit space gives two adjacent lozenges with corners fixed by the element representing the fiber.  In this image, the flow direction is out of the page, towards the reader.}
\label{fig:gluings}
\end{figure} 

There is one other type of gluing which is less obvious, and creates reflector arcs in the orbifold base of the Seifert fibration: 

\begin{construction}[Gluing a reflector arc]  \label{const:arc_glue}
Let $N_1$, $N_2$ be copies of $N$, and glue the half-face $\{-\pi/2\} \times [-\pi/2, 0] \times S^1$ of $N_1$ to the half-face $\{\pi/2\} \times [-\pi/2, 0] \times S^1$ of $N_2$ by the map $(y_1,z_1) \mapsto (y_2, -z_2)$ in local coordinates $(x_i, y_i, z_i)$ on $N_i$; as described above. 
Now, glue the half-face $\{-\pi/2\} \times [0, \pi/2] \times S^1$ of $N_1$ to itself by $(y_1, z_1) \mapsto (y_1, z_1 + 1/2)$, and similarly glue  $\{\pi/2\} \times [0, \pi/2] \times S^1$ of $N_2$ to itself by  $(y_1, z_1) \mapsto (y_1, z_1+1/2)$.  This commutes with $X$ so gives a flow on the quotient manifold, which has a Seifert fiber structure with an arc of singular fibers.  
\end{construction} 
The result of Construction \ref{const:arc_glue} is a local neighborhood of a hyperbolic periodic orbit with stable and unstable leaves as in a topological Anosov flow, this easily generalizes to produce a singular hyperbolic periodic orbit by gluing additional copies of $N$ before introducing a self-gluing of faces.  

\subsection{Pieces with transverse boundary}
The gluing constructions described above were worked out as a means of producing examples in \cite[Section 8]{BF13}.  Thanks to Theorem \ref{thm_spines_exist}, these types of gluings should provide a recipe to produce {\em all} examples.  This has been rigorously proved in the setting of periodic Seifert pieces with transverse boundary: 

\begin{theorem}[Periodic pieces have models] \label{thm_periodic_are_models} 
Suppose that $P \subset M$ is a periodic Seifert piece of a pseudo-Anosov flow $\flow$ so that all JSJ boundary surfaces of $P$ can be realized transversely.    Then there exists a subset $P' \subset M$ such that the restriction of $\flow$ to $P'$ is obtained by gluing some number of homeomorphic copies of $(N, \psi)$ on tangent boundary components, then performing Dehn-Fried surgeries in some periodic orbits.  
\end{theorem}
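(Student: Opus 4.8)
The plan is to combine the structural result of Theorem \ref{thm_spines_exist} with the local model of Construction \ref{const_block} and the description of traces from Proposition \ref{prop_trace_QT_tori}. First I would invoke Theorem \ref{thm_spines_exist} to obtain the spine $\Sigma$ of $P$: a connected, finite union of elementary Birkhoff annuli $A_1,\dots,A_k$, embedded away from their periodic boundary orbits, such that a small neighborhood $P'$ of $\Sigma$ represents the JSJ piece $P$. The hypothesis that all JSJ boundary surfaces of $P$ can be realized transversely will be used, via Remark \ref{rem_transverse_torus_no_diagonal} and Theorem \ref{thm_transverse_torus_special_chains}, to control the combinatorics of how the annuli of $\Sigma$ meet along their periodic boundary orbits: near each boundary-adjacent periodic orbit the local chain contains no diagonal lozenges, so the annuli meeting that orbit are arranged in the ``non-spiraling'' configuration of Figure \ref{fig:model_block}, i.e.\ consecutive elementary annuli around that orbit sit in adjacent (rather than non-adjacent) quadrants and the transverse orientation alternates, exactly matching the way copies of $(N,\psi)$ are glued on tangent faces.

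Next I would build a neighborhood of $\Sigma$ block by block. By Exercise \ref{ex_model_block}, each elementary Birkhoff annulus $A_i$ admits a neighborhood (a thickening in the flow direction together with the stable/unstable faces of its two periodic boundary orbits) orbit equivalent to a copy $(N_i,\psi_{\lambda})$ of the model block, via a homeomorphism sending $A_i$ to $[-\pi/2,\pi/2]\times\{0\}\times S^1$ and sending the flow to $\psi_\lambda$ for any chosen $\lambda>1$; if $A_i$ is only weakly embedded the neighborhood is a local embedding of $N_i$, injective on the interior. Since $\Sigma$ is obtained by identifying these annuli along periodic orbits or along stable/unstable faces of periodic orbits, the corresponding block neighborhoods are glued along tangent faces or half-faces. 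The key point is that all these identifications are, after an orbit equivalence, of the standard form used in the gluing constructions: along full tangent faces (flipping the $z$-direction of one block so incoming boundaries are adjacent and the flows agree), or along half-faces as in Construction \ref{const:arc_glue} when a reflector arc appears in the orbifold base. That these are the only possibilities follows from Proposition \ref{prop_top_properties} (orbits are properly embedded, leaves are planes), the transversality hypothesis (no diagonal lozenges near boundary orbits), and the fact that the stable/unstable sheets bounding each block are exactly the leaves of $\hfs,\hfu$ through the periodic orbit. Because the resulting manifold is Seifert fibered with $S^1$-fiber represented by a periodic orbit, the spine $\Sigma$ being embedded would give $P'\cong S\times S^1$ with all Birkhoff annuli embedded; the general weakly-embedded case differs from this only in that some boundary orbit of some $A_i$ covers its periodic orbit more than once, which (by Remark \ref{rem_collapse_curves} and the description in Section \ref{sec_example_Dehn_Fried}) is precisely the effect of performing a Dehn–Fried surgery along that periodic orbit. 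So $P'$ is obtained from the embedded model by surgeries on the finitely many periodic orbits in $\Sigma$.

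Finally I would check that the orbit equivalences on the individual blocks can be made to agree on the overlaps, so that they patch to a global orbit equivalence between $\flow|_{P'}$ and the glued model flow. On each tangent face shared by two blocks, both the $S^1$-fiber coordinate and the transverse flow direction are intrinsically determined (the fiber up to isotopy, the flow direction by orientation), so the two block charts agree up to a homeomorphism of the face commuting with $\psi$; such a homeomorphism can be absorbed into a reparametrization of one of the blocks without disturbing the rest of the gluing pattern, using the contraction/expansion along the stable/unstable faces given by Proposition \ref{prop_hyperbolic}. Iterating over the (finite) tree of blocks making up $\Sigma$ — which is connected by Theorem \ref{thm_spines_exist} — produces the desired global identification.

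The main obstacle I expect is the patching step: making the block-by-block orbit equivalences of Exercise \ref{ex_model_block} simultaneously compatible on all shared faces, especially at periodic orbits where several blocks meet and where, in the weakly embedded case, the boundary circles of the annuli wrap multiply around the orbit. Handling this carefully is where one must track the $S^1$-coordinate and the transverse orientation data across the whole spine and verify that the only genuine ambiguity is the surgery slope, i.e.\ that no further modification of the flow is needed. This is essentially the content of the computations in \cite[Section 8]{BF13} combined with the rigidity of Birkhoff annulus neighborhoods; the conceptual input beyond that is just the translation, via Proposition \ref{prop_trace_QT_tori} and Remark \ref{rem_transverse_torus_no_diagonal}, of the transverse-boundary hypothesis into the non-spiraling combinatorics of the spine.
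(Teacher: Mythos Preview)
Your approach is essentially the same as the paper's: invoke Theorem \ref{thm_spines_exist} for the spine, apply Exercise \ref{ex_model_block} to get a model block for each elementary annulus, glue along tangent half-faces, and resolve the weakly-embedded case by Dehn--Fried surgery on the periodic orbits.

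Two points where your emphasis diverges from the paper's. First, your use of the transversality hypothesis is slightly off-target. You invoke it to conclude that consecutive annuli sit in adjacent quadrants at boundary-adjacent periodic orbits; the paper instead uses it to conclude that \emph{all} stable/unstable half-faces of the blocks get pairwise identified---i.e.\ there are no ``loose'' tangent half-faces left over, which is exactly what would happen if a periodic orbit lay in $\partial P$. This is what forces the product structure $S\times S^1$ (before surgery) and gives $P'$ only transverse boundary. Your reading is not wrong, but it addresses a secondary consequence rather than the main one. Second, your patching step is largely unnecessary: the paper works with the block neighborhoods as actual subsets of $M$, so when two annuli share a stable/unstable half-leaf the corresponding half-faces literally coincide in $M$ and no separate compatibility argument is needed. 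The orbit equivalence with the abstract glued model then follows directly from Exercise \ref{ex_model_block} on each block. (Also, the spine is a graph---a fatgraph---not a tree, so ``iterating over the finite tree of blocks'' should be rephrased.)
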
 

Theorem \ref{thm_periodic_are_models}, for $M$ assumed orientable, is part of the Main Theorem of \cite{BF15} and was generalized to the non-orientable case in \cite{BFeM23}.  This latter work assumed flows were topologically Anosov, for simplicity, but the proof carries over in the general case.  
The proof also easily extends to cover the case where $M$ and the base surface are orientable, and all Birkhoff annuli are embedded rather than only weakly embedded.  However, the general case for arbitrary Seifert pieces with quasi-transverse boundary has not been treated in the literature, for several reasons.   For one, the necessary modification of doing a Dehn-Fried surgery along a periodic orbit is more cumbersome-- one needs a slightly different description of how to modify the Seifert structure when boundary periodic orbits are present (for instance, first by enlarging the piece, then surgering, then restricting).  In general, the  presences of boundary periodic orbits may make it necessary to pass to a neighborhood of $P'$ to obtain a representative of the Seifert piece, which leads to notational difficulties.  %
We describe the main strategy of the proof in the simpler transverse setting.  

\begin{proof}[Sketch of proof of Theorem \ref{thm_periodic_are_models}]
First, recall from Theorem \ref{thm_spines_exist} that 
$P$ can be represented by a neighborhood of a finite union of weakly embedded, elementary Birkhoff annuli. Exercise \ref{ex_model_block} shows that any individual elementary Birkhoff annulus in a flow has a neighborhood (cut off along the adjacent stable and unstable sets for its bounding periodic orbits) which is orbit-equivalent to an immersed copy of $(N, \psi)$, embedded along the interior.   Thus, what we must show here is that these neighborhood blocks can be glued together, then surgered, to produce $P'$.

Since the interior of the Birkhoff annuli of the spine are embedded, there is no obstruction to embedding a union of copies of the interior of $N$ -- what requires care is gluing along the periodic orbits and boundaries.  For this, one must pay attention to the faces of the local stable and unstable manifolds of the periodic orbits bounding the quadrants containing the annuli.  

The situation is simplest when all annuli are embedded: in this case, local stable and unstable manifolds of adjacent annuli will either coincide, or be disjoint, and this can be seen from the configuration of lozenges in the trace of the spine. When they coincide, the model neighborhoods of each annulus in $M$ get identified along half-faces, as in Figure \ref{fig:gluings}. Thus, one obtains a union $P'$ of copies of $N$ in $M$, identified in pairs along some (possibly all) half-faces corresponding to stable and unstable sets for the periodic orbits of the spine, and by construction $P'$ is homotopic to and contains the spine, hence any small neighborhood will be a representative for $P$. 

When some or all annuli in the spine are not embedded but only weakly embedded, the situation is more complicated.  As in Exercise \ref{ex_model_block}, each annulus $A$ has a neighborhood which is orbit-equivalent to an immersed copy $N_A$ of $N$, embedded on its interior.  There are two kinds of failures of embedding which may occur.  First, it may be that the complement of the periodic orbits in $N_A$ is embedded.  Or, it is also possible that some of the unstable and stable sets in the boundary of $N_A$ also fail to be embedded.   In this latter case, since the interior of $N$ is embedded, one can see that this failure of embedding corresponds to a reflector arc in the base orbifold of $P$, and a half-face of the boundary of $N$ is mapped two-to-one into $M$ giving a self-gluing as in Construction \ref{const:arc_glue}.  

If instead the only failure occurs on the periodic orbits themselves, then one can embed a copy of $N \setminus (\alpha_1 \cup \alpha_2)$ for each such annulus $A$ in $M$, gluing boundary faces along stable/unstable sets of the periodic orbits when the local stable and unstable faces of the corresponding lozenges coincide.   Since all boundary tori of $P$ are transverse, all the stable/unstable sets get pairwise identified, and the resulting space has a product structure of the form $S \times S^1$ where $S$ is a surface homeomorphic to a surface $\sigma$ obtained by choosing a section for the Seifert fibered structure of $P$ in the complement of the singular fibers, then puncturing at any remaining periodic orbits.  To obtain the piece $P'$, one performs a Dehn filling of integral slope according to the multiplicity of the map of the Birkhoff annulus on each periodic orbit (with a trivial (1,0) surgery for embedded orbits), or equivalently the multiplicity of the singular fiber in the space $P'$.   In general, one must combine these two gluing strategies to account for both reflector arcs and cone points.  
\end{proof}

\subsection{Fatgraphs} 
A {\em fatgraph} 
$(X,\Sigma)$ is by definition a graph $X$ embedded in a surface $\Sigma$ that deformation retracts to $X$. 

Thinking of the section $I \times I \times \{0\}$ of $N$ as a 2-dimensional thickening of the ``edge" $I \times \{0\} \times \{0\}$ corresponding to the Birkhoff annulus in this section, assemblies of $N$ (glued together as trivial $S^1$ bundles, without Dehn surgeries on periodic or bits or face-gluing over reflector arcs) can be specified by the data of a fatgraph.  
The following definition, from \cite{BF13} picks out the kind of fatgraphs that can be used to produce orientable periodic Seifert pieces with transverse 
torus boundary:  
 
\begin{definition}
A fatgraph $(X, \Sigma)$ is called \emph{admissible}\footnote{``admissible" is the terminology from \cite{BF13}} if it satisfies the following properties:
\begin{enumerate} 
\item the valence of every vertex is even, 
\item the set of boundary components of $\Sigma$ can be partitioned in two subsets (``outgoing" and ``incoming") so that for each edge $e$ of $X$, the two sides of $e$ lie in different element of this partition, and 
\item each loop in $X$ corresponding to a boundary component contains an even number of edges.
\end{enumerate}
\end{definition}

Given an admissible fatgraph $X$, one may associate a copy of a model flow $\psi$ on $N$  to each edge of $X$,
 and glue appropriate stable or unstable leaves of the vertical orbits by flipping the $z$-coordinate direction of one of the blocks, as described above (and illustrated in Figure \ref{fig:gluings}) so that the outgoing (resp.~incoming) labels on the components of $\partial \Sigma$ correspond to the outgoing (resp.~incoming) annuli of $N$. 
 The result is a compact Seifert manifold with boundary that is a circle bundle over $\Sigma$, with a flow that is incoming on some boundary tori and outgoing on others. Item (3) in the definition ensures that the boundary components are tori rather than Klein bottles, which comes from the desire for orientability.  
 
If all vertices have valence 4, the resulting partial flow is Anosov;  higher valence vertices give singular orbits of pseudo-Anosov flows.   Vertices of valence 2 were allowed in \cite{BF13} to produce a class of examples called ``one-prong pseudo-Anosov flows" which can have quite different behavior than the pseudo-Anosov flows considered in this text. We will assume going forward that admissible fatgraphs have valence at least 4 at each vertex. 

The following theorem, originally proved in \cite{BF13} shows that this procedure produces all possible pieces with transverse torus boundary, up to Dehn surgery.  
\begin{theorem}[\cite{BF13, BF15,  BFeM23}] \label{thm_admissible_graphs_realization}
Let $(X, \Sigma)$ be an admissible fatgraph with $\Sigma$ orientable\footnote{A similar statement should hold without the orientability conditions, but has not yet been treated in the literature.} and all vertices of valence at least 4.  Then there exists a periodic Seifert piece $P$ of a pseudo-Anosov flow on an orientable 3-manifold
whose spine can be built from the above procedure, starting with the fatgraph $(X, \Sigma)$.  Moreover, one may also modify these examples by applying an integral slope Dehn surgery along any collection of periodic orbits of $P$.

Conversely, if $P$ is a periodic Seifert piece of a pseudo-Anosov flow $\flow$ on an oriented 3-manifold with all boundary tori transverse to $\flow$, then there exists an admissible fatgraph $(X, \Sigma)$, with $\Sigma$ orientable and vertices of valence at least 4, so that $P$ is constructed by gluing a model according to $(X, \Sigma)$, then performing Dehn surgery along periodic orbits. 
\end{theorem}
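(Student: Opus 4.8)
The plan is to prove both directions of Theorem~\ref{thm_admissible_graphs_realization} by combining the structural results already established in this chapter, with the bulk of the work being bookkeeping about how the combinatorial data of the fatgraph encodes the local flow picture.

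\textbf{The converse direction (realizing a given piece by a fatgraph).}  Suppose $P$ is a periodic Seifert piece with all boundary tori transverse to $\flow$.  By Theorem~\ref{thm_spines_exist}, $P$ is (up to taking a small neighborhood) represented by the spine: a connected finite union of weakly embedded elementary Birkhoff annuli $A_1, \dots, A_k$ glued along their periodic orbit boundaries.  By Theorem~\ref{thm_periodic_are_models} (and its proof), since all boundary tori are transverse, the restriction of $\flow$ to a neighborhood $P'$ of the spine is obtained by gluing immersed copies of the model block $(N, \psi)$, one per annulus, identifying them in pairs along the half-faces corresponding to coinciding local stable/unstable sets of the periodic orbits, and then performing integral Dehn surgeries along the periodic orbits to account for the multiplicity of each Birkhoff annulus boundary (with trivial $(1,0)$ surgery on embedded orbits).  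From this gluing data I would then read off a fatgraph $(X,\Sigma)$: take one edge of $X$ per annulus/block, one vertex per periodic orbit, with the cyclic ordering of edges at a vertex given by the cyclic ordering of the (at least four, since flows here have no one-prongs) quadrants around the corresponding periodic orbit in $\orb$ --- this is exactly the local picture of the chain of lozenges in the trace of the spine, as in Theorem~\ref{thm_birkhoff_to_chain} and Figure~\ref{fig:gluings}.  The surface $\Sigma$ is the one the fatgraph thickens to.  One then checks the three admissibility conditions: even valence at each vertex follows because each periodic orbit is a $p$-prong with $p \geq 2$ and the corners alternate with perfect fits (the ``no three consecutive lozenges share a corner'' condition of Theorem~\ref{thm_birkhoff_to_chain}), so each corner has evenly many adjacent lozenges in the spine chain; the incoming/outgoing partition of $\partial\Sigma$ comes from the fact that each elementary Birkhoff annulus has one incoming and one outgoing transverse face; and the even-edge-count around each boundary loop is precisely the orientability/no-Klein-bottle condition, guaranteed because the boundary tori of $P$ are tori and were assumed transverse (using Lemma~\ref{lem_direction_of_twisting} and Proposition~\ref{prop_trace_QT_tori}\ref{item_trace_QT_alternating}).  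Vertices of $X$ have valence $\geq 4$ because there are no one-prong orbits.

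\textbf{The forward direction (a given fatgraph is realized).}  Given an admissible fatgraph $(X,\Sigma)$ with $\Sigma$ orientable and all valences $\geq 4$, I would assemble the partial flow exactly by the recipe described after the definition of admissible: assign a copy of $(N,\psi_\lambda)$ to each edge, glue the stable/unstable half-faces of the vertical periodic orbits using the flip of the $z$-coordinate so that incoming faces meet incoming faces and outgoing meet outgoing (made consistent by the incoming/outgoing partition).  The result is a compact Seifert manifold $P_0$ that is an $S^1$-bundle over $\Sigma$, with a partial flow transverse to all boundary tori (tori, not Klein bottles, by the even-edge-count condition).  At this stage the relevant point is that $P_0$ with its partial flow, its transverse boundary tori, and the stable/unstable foliations induced on those tori, can be glued to other such blocks (or to a ``model'' transitive flow) to produce a genuine pseudo-Anosov flow on a closed manifold; here I would invoke the gluing results that guarantee hyperbolicity of the glued flow --- this is where the control on the shearing parameter $\lambda$ enters (choose $\lambda$ sufficiently large), using the cone-field criterion in the spirit of Mañé's result as in the Franks--Williams and Bonatti--Langevin constructions, and more systematically the gluing theorems of Béguin--Bonatti--Yu and Paulet referenced in the text.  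Once the glued closed flow is pseudo-Anosov, $P_0$ (or a representative obtained from it) is by construction a periodic Seifert piece whose spine is built from $(X,\Sigma)$, and the JSJ boundary tori are the transverse tori we started with.  The final clause about modifying by integral Dehn surgeries along periodic orbits of $P$ is then immediate from the fact that Dehn--Fried surgery (Section~\ref{sec_example_Dehn_Fried}) along a periodic orbit of any pseudo-Anosov flow produces another pseudo-Anosov flow, changing the multiplicity of the corresponding singular fiber and turning an embedded Birkhoff annulus into a weakly embedded one.

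\textbf{Main obstacle.}  The serious analytic input --- that the blocks assembled from an admissible fatgraph can actually be completed to a globally hyperbolic (pseudo-Anosov) flow on a closed manifold, and conversely that the local block model faithfully captures a neighborhood of any Birkhoff annulus up to orbit equivalence --- is exactly the content of Theorems~\ref{thm_periodic_are_models} and the cited gluing theorems, which are here invoked rather than reproved.  The genuinely new work in this theorem is therefore the \emph{combinatorial translation}: showing that the three admissibility conditions on $(X,\Sigma)$ correspond precisely to (a) the local prong structure at periodic orbits (even valence), (b) the incoming/outgoing alternation of transverse faces of elementary Birkhoff annuli (the partition of $\partial\Sigma$), and (c) orientability of the boundary tori (even edge count around boundary loops) --- and keeping careful track, via the trace-in-$\orb$ dictionary of Theorem~\ref{thm_birkhoff_to_chain} and Proposition~\ref{prop_trace_QT_tori}, of how the chain of lozenges of the spine encodes the fatgraph and vice versa.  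I expect the cyclic-ordering bookkeeping at high-valence (singular) vertices, and the consistency of the incoming/outgoing flip-gluing over the whole fatgraph, to be the fiddliest points, though none of it is conceptually hard given the machinery already in place.
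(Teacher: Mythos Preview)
Your proposal is correct and follows essentially the same approach as the paper's own proof: the converse direction is deduced from Theorem~\ref{thm_periodic_are_models} by reading off the fatgraph from the spine and checking admissibility, and the forward direction assembles the partial flow from the fatgraph data and then completes it to a closed pseudo-Anosov flow via a gluing theorem. The only notable difference is the specific completion step: where you invoke the general B\'eguin--Bonatti--Yu and Paulet gluing machinery, the paper uses the more elementary trick of taking a second copy of the assembled block with the flow reversed and gluing each outgoing torus to the matching incoming one (choosing the shear parameter $\lambda$ large enough to get hyperbolicity); this sidesteps the heavier gluing theorems but of course yields only a particular closed realization rather than the full flexibility your approach gives.
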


The second implication follows directly from the proof of Theorem \ref{thm_periodic_are_models}, paying attention to what combinatorial instructions for gluing result in an orientable piece with transverse boundary.
Barbot and Fenley prove a special case of the first implication in \cite[Section 8]{BF13} by 
showing that, provided the number of incoming boundary components of $(X, \Sigma)$ is equal to the number of outgoing boundary components, and the parameter $\lambda$ (which we have been suppressing) from the construction of $\psi_\lambda$ in \ref{const_block} is large enough, these components can be glued in pairs by linear maps to produce a hyperbolic flow.  One thus obtains a graph manifold (with a single JSJ piece) modeled on $(X, \Sigma)$.  However, the strategy applies more generally: having constructed an assembly of blocks from the data of $(X, \Sigma)$, one could take a second copy of it with the flow reversed and glue outgoing to incoming boundary tori using the same strategy of proof to show it is pseudo-Anosov.  This is sufficient to prove Theorem \ref{thm_admissible_graphs_realization}.

The machinery introduced in \cite{BBY17}, gives many more options for inserting a piece modeled on a fatgraph $(X, \Sigma)$ into an Anosov flow.  Multiple Seifert pieces can be glued together in many configurations, and one can also glue any number of additional {\em attractors} and {\em repellers} onto any remaining boundary components.   This flexibility is described by the following proposition.  

\begin{proposition}[See \cite{BFeM23}]\label{prop_gluing_fatgraphs} 
Let $X_1, \dots, X_n$ be admissible fatgraphs with valence at least 4. 
Let $(M_i,\phi_i)$, $i=1, \dots, n$, be partial flows built from this data, and call $S_{in}$ and $S_{out}$ the union of the boundary surfaces that are respectively entering and outgoing. Suppose that $h$ is a diffeomorphism from a subset $\hat S_{in}$ of $S_{in}$ to a subset $\hat S_{out}$ of $S_{out}$, such that the image by $h$ of the stable boundary lamination is not isotopic to the unstable boundary lamination. 

Then there exist (not necessarily connected) manifolds $A, E$ with torus and Klein bottle boundary components, flows $\psi_A$ and $\psi_E$ on $A$ and $E$ respectively which are incoming on $\partial A$ and outgoing on $\partial E$, diffeomorphisms $f_A\colon \partial A \to S_{out}\smallsetminus\hat S_{out}$ and $f_E\colon \partial E \to S_{in}\smallsetminus\hat S_{in}$ such that the flow obtained by gluing $(M_i,\phi_i)$, $(A,\psi_A)$ and $(E,\psi_E)$ using the diffeomorphisms $f_A$, $f_E$, and $h$ (or a modification of $h$ by an isotopy) is a pseudo-Anosov flow on a closed manifold.
\end{proposition}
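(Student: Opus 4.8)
The plan is to follow the ``building block'' strategy of B\'eguin, Bonatti and Yu \cite{BBY17}, combined with the gluing criterion of Barbot and Fenley \cite[Section 8]{BF13}, adapting it so that the boundary components not matched by $h$ are capped off by attractors and repellers. First I would recall the structure of the pieces built from admissible fatgraphs. Given an admissible fatgraph $X_i$ with all vertices of valence at least $4$, the partial flow $(M_i,\phi_i)$ is obtained by assigning a copy of the model flow $\psi_\lambda$ on $N$ (Construction \ref{const_block}) to each edge and gluing the stable and unstable faces of the vertical periodic orbits as prescribed by $X_i$ (possibly with self-gluings over reflector arcs, Construction \ref{const:arc_glue}). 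Its maximal invariant set $\Lambda_i$ is the union of the finitely many periodic orbits --- one $p$-prong orbit per vertex of valence $2p$ --- together with their weak stable and unstable leaves, and $\Lambda_i$ is hyperbolic for an adapted metric. The boundary $\partial M_i$ is a union of tori transverse to $\phi_i$, partitioned into the incoming and the outgoing families, and on each boundary torus $T$ the weak foliations $\cF^s,\cF^u$ trace out a transverse pair of $1$-dimensional (possibly singular) laminations $L^s_T$ and $L^u_T$, each with finitely many closed leaves and Reeb-type complementary annuli; the shear parameter $\lambda$ of each block may be chosen arbitrarily large.

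Next I would construct the attractor and repeller caps. For each outgoing boundary torus $T$ of $\bigsqcup_i M_i$ not in $\hat S_{out}$, choose a compact $3$-manifold $A_T$ with torus boundary carrying a flow $\psi_{A_T}$ incoming on $\partial A_T$ and having a hyperbolic attractor in its interior, realizing (up to isotopy of the boundary) the pair $(L^s_T,L^u_T)$ as the traces of its weak stable and unstable foliations on $\partial A_T$. Such hyperbolic attractor plugs are classical: one may use the attractor side of a DA construction as in Example \ref{ex_Franks_Williams}, suitably modified so that its boundary laminations have the required combinatorial type, or an attractor block from \cite{BBY17}; every transverse pair of laminations of the relevant combinatorial type is realized this way. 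Dually, for each incoming torus not in $\hat S_{in}$ choose a repeller plug $(E_T,\psi_{E_T})$ outgoing on its boundary and realizing the corresponding pair. Put $(A,\psi_A)=\bigsqcup_T A_T$ and $(E,\psi_E)=\bigsqcup_T E_T$, and let $f_A\colon \partial A\to S_{out}\setminus \hat S_{out}$ and $f_E\colon \partial E\to S_{in}\setminus \hat S_{in}$ be the identifications matching each cap with its designated torus.

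Now glue $\bigsqcup_i M_i$, $A$ and $E$ along their torus boundaries using $h$, $f_A$ and $f_E$. Since all flows are transverse to the relevant tori with matching incoming/outgoing orientations, their generating vector fields agree up to a positive time-change on a collar of each gluing torus (as in Example \ref{ex_Franks_Williams}), so after reparametrization they patch to a flow $\Psi$ on the closed manifold $W$ produced by the gluing. Any orbit of $\Psi$ not contained in some $\Lambda_i$ eventually enters an attractor plug in forward time (a wandering orbit of a block runs from an incoming to an outgoing torus in bounded time, and never leaves an attractor once inside) and, dually, came out of a repeller plug in backward time; hence the non-wandering set of $\Psi$ is the union of the $\Lambda_i$, the hyperbolic attractors inside the $A_T$, and the hyperbolic repellers inside the $E_T$ --- compact and hyperbolic away from the prong orbits.

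Finally I would upgrade ``hyperbolic non-wandering set'' to ``$\Psi$ is pseudo-Anosov'', which is the technical core. Here I invoke the cone-field gluing criterion of Ma\~n\'e \cite{Man77} and B\'eguin--Bonatti--Yu \cite{BBY17}, in the form of \cite[Section 8]{BF13} and \cite{BFeM23}: on each torus where $T_{in}$ is glued to $T_{out}$ by $h$, the hypothesis that $h(L^s_{T_{out}})$ is not isotopic to $L^u_{T_{in}}$ lets one replace $h$ by an isotopic map so that $h(L^s_{T_{out}})$ and $L^u_{T_{in}}$ are in efficient position, filling the torus with essential, non-removable intersections; the analogous condition at the tori glued by $f_A,f_E$ is automatic, since the two laminations on a cap boundary are traces of transverse foliations. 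Taking the $\lambda$ of each block large enough that orbits spend sufficiently long in the blocks between consecutive boundary visits, the stable and unstable cone fields of the pieces then match across every gluing torus, so $\Psi$ obeys the cone criterion; combined with the description of the non-wandering set this forces $\Psi$ to be expansive, hence pseudo-Anosov by Theorem \ref{thm_expansive_implies_pA} (and Anosov exactly when every $X_i$ has only valence-$4$ vertices). The main obstacle is precisely this last step: extracting the quantitative matching of cone fields from the topological non-isotopy of the boundary laminations, which needs both the control on $\lambda$ and the efficient-position normal form for the laminations --- essentially the content of \cite{BBY17} transported to this setting --- with the (routine but nontrivial) realization of prescribed transverse boundary laminations by attractor/repeller plugs as a secondary point.
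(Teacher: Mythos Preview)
The paper does not give its own proof of this proposition; it is stated with a citation to \cite{BFeM23}, and the preceding paragraph merely sketches the idea (use the machinery of \cite{BBY17} to glue fatgraph pieces, capping remaining boundaries with attractors and repellers). Your outline follows this same route, and the cone-field/gluing strategy in your final step is the correct one.

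However, your third paragraph contains a genuine error. You claim that any orbit of the glued flow $\Psi$ not contained in some $\Lambda_i$ eventually enters an attractor plug in forward time, and conclude that the non-wandering set of $\Psi$ is the union of the $\Lambda_i$ with the attractors and repellers. This is false: the gluings via $h$ between outgoing and incoming tori of the $M_i$ typically create abundant new recurrence. An orbit can exit $M_i$ through a torus in $\hat S_{out}$, enter $M_j$ via $h$, exit $M_j$ again through $\hat S_{out}$, and so on indefinitely, never reaching an attractor. Concretely, the Bonatti--Langevin flow (Section~\ref{sec_example_bonatti_langevin}) is built from a single fatgraph piece with one periodic orbit, self-glued along its transverse tori, and the resulting flow is \emph{transitive}; its non-wandering set is all of $M$, not a finite collection of hyperbolic pieces. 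Your appeal to Ma\~n\'e's result is likewise misplaced here: in the paper's use of \cite{Man77} for the Franks--Williams example, the key hypothesis is precisely that orbits cross the transverse torus at most once so that no additional recurrence is created, which fails in the present setting.

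Fortunately, this error is confined to a step you do not actually need. The cone-field criterion, once established globally across the gluing tori (via the non-isotopy of boundary laminations and large $\lambda$), gives hyperbolicity of the flow directly, without any separate description of the non-wandering set; in the presence of prongs, the criterion yields expansivity away from the finitely many singular orbits, and one concludes pseudo-Anosov via Theorem~\ref{thm_expansive_implies_pA}. So drop the third paragraph's claim entirely and pass straight from the construction of the glued flow to the cone-field argument.
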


The techniques described above, give a  powerful tool to build examples of pseudo-Anosov flows with a wide range of properties, which can often be specified by giving a particular structure to the fatgraph before assembling blocks into a periodic Seifert piece.  
To give some examples, in  \cite{BFeM23} this  is used to build flows which are not orbit equivalent, but have the same free homotopy classes of periodic orbits.  In \cite[Section 8]{BBM24b}, it is used to build exotic examples of nontransitive pseudo-Anosov flows, with nonwandering set disagreeing with the chain recurrent set in different ways.  

\subsection{Classifying totally periodic flows} 
Now that we know how to build examples of flows out of periodic Seifert pieces, we would like to have an invariant to distinguish them up to orbit equivalence.  Since, on an individual piece, this is determined by the spine, one might naively guess that knowing the spine on all pieces of a {\em totally periodic flow}, that is, a flow on a graph manifold where all pieces are periodic Seifert fibered is enough to determine the flow.  However, this is not quite enough.  Examples from \cite{BF15} show that flows may have identical spine, but not be orbit equivalent by any map isotopic to the identity.  In \cite[Section 7.1]{BFeM23} there are examples which have the same spine but are not orbit equivalent at all.  The additional data needed to get a complete invariant is to preserve the direction of the orbits in the boundaries of the Birkhoff annuli of the spine.  Following standard terminology, these are called {\em vertical orbits}.  More formally, we have the following: 

\begin{theorem}[Barbot--Fenley \cite{BF15}]\label{thm_classification_totally_periodic}
Let $\flow_1,\flow_2$ be two totally periodic pseudo-Anosov flows. Let $P_i$ be the Seifert pieces of the decomposition of $M$.  Let $Z_{1,i}$ and $Z_{2,i}$ be the spines of respectively $\flow_1$ and $\flow_2$ in $P_i$ for each $i$.
Then $\flow_1$ and $\flow_2$ are orbit equivalent by a homeomorphism preserving direction of the flow if and only if there exists a homeomorphism of $M$ mapping the collection of spines $\{Z_{1,i}\}$ to $\{Z_{2,i}\}$ while preserving the orientation of each vertical orbit.
\end{theorem}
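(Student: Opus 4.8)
The plan is to use Barbot's Theorem \ref{thm_action_determines_OEflow} to reduce orbit equivalence to finding a $\pi_1(M)$-equivariant homeomorphism of orbit spaces, and then to build such a homeomorphism from the combinatorial data of the spines. Suppose first that $\flow_1$ and $\flow_2$ are orbit equivalent by a direction-preserving homeomorphism $f\colon M\to M$. Then $f$ sends periodic orbits to periodic orbits and, since the spine $Z_{j,i}$ of $\flow_j$ on $P_i$ is determined (up to isotopy) by the JSJ piece $P_i$ via Theorem \ref{thm_spines_exist}, $f$ can be isotoped so that it maps $Z_{1,i}$ to $Z_{2,i}$ for each $i$; because $f$ preserves the direction of the flow it preserves the orientations of the vertical orbits in these spines. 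This gives the easy direction. For the converse, I would start from a homeomorphism $h\colon M\to M$ carrying $\{Z_{1,i}\}$ to $\{Z_{2,i}\}$ and preserving the orientation of each vertical orbit, and produce an orbit equivalence inducing the same map on $\pi_1$.

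The key steps for the converse are as follows. First, lift $h$ to $\wt h\colon \wt M\to\wt M$ and let $h_\ast$ be the induced automorphism of $\pi_1(M)$. The goal is to construct an $h_\ast$-equivariant homeomorphism $\bar f\colon \cO_{\flow_1}\to\cO_{\flow_2}$ and then invoke Theorem \ref{thm_action_determines_OEflow}. The construction of $\bar f$ proceeds piece by piece: for each periodic Seifert piece $P_i$, the restriction of $\flow_j$ to (a representative of) $P_i$ is, by Theorem \ref{thm_spines_exist} and the discussion around Construction \ref{const_block}, determined by gluing model blocks along the Birkhoff annuli of the spine $Z_{j,i}$, possibly after Dehn--Fried surgeries on periodic orbits. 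The data that $h$ carries $Z_{1,i}$ to $Z_{2,i}$ as fatgraphs-in-the-manifold, together with the matching of orientations of vertical orbits, identifies the two assemblies of model blocks; hence the restrictions of $\flow_1$ and $\flow_2$ to $P_i$ are orbit equivalent by a map respecting vertical directions, and this orbit equivalence lifts to a piece of the desired $\bar f$ on the portion of the orbit space corresponding to the $\pi_1(P_i)$-translates of the trace of the spine (a chain of lozenges, by Theorem \ref{thm_birkhoff_to_chain}). Second, these piecewise equivariant maps must be glued along the JSJ tori. The orbit-space trace of a (maximally transverse, say) JSJ torus is a bi-infinite chain of lozenges as in Proposition \ref{prop_trace_QT_tori}, shared as a boundary object by the two adjacent pieces; the condition that $h$ matches the spines and orientations of vertical orbits ensures the piecewise maps agree on these overlap chains, and one checks they can be interpolated over the remaining (wandering/attractor--repeller) part of the orbit space using the product structure of those regions together with Theorem \ref{thm_characterization_transitive} locally. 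Third, by construction the resulting $\bar f$ is $h_\ast$-equivariant and sends $\bfs$ to $\bfs$ (since $h$ preserves flow direction and hence, via Proposition \ref{prop_conjugacy_preserves_foliations}, stable goes to stable), so Theorem \ref{thm_action_determines_OEflow} produces a direction-preserving orbit equivalence inducing $h_\ast$.

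The main obstacle I expect is the gluing step: verifying that the piecewise equivariant homeomorphisms of the chains-of-lozenges associated to the spines extend compatibly across the JSJ tori and fill in the complementary regions of $\orb$. On a single periodic Seifert piece the spine determines everything, but the orbit space of the whole flow is built by an infinite cascade of these pieces glued along transverse (or quasi-transverse) chains — as illustrated by the Bonatti--Langevin ``tree of scalloped regions'' picture in Section \ref{BL_orbit_space} — and one must ensure that the choices made on each piece, in particular the orientations of the vertical orbits bounding each Birkhoff annulus, propagate consistently. This is exactly where the ``preserving the orientation of each vertical orbit'' hypothesis is used and cannot be dropped: without it one only recovers orbit equivalence up to the ambiguity exhibited by the examples of \cite{BF15} and \cite[Section 7.1]{BFeM23}, where flows with the same spine are not orbit equivalent, or only after a map not isotopic to the identity. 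A careful bookkeeping of how a change of transverse orientation of the flow on one Birkhoff annulus forces a ``flip'' propagating through the adjacent blocks (the $z\mapsto -z$ flip in the block gluings of Construction \ref{const_block} and Figure \ref{fig:gluings}) is what makes the vertical-orbit data a complete invariant, and turning this intuition into a rigorous inductive construction of $\bar f$ over all of $\orb$ is the technical heart of the argument.
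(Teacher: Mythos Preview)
Your easy direction matches the paper's. For the converse, however, the paper's argument (following Barbot--Fenley) is organized quite differently from your piece-by-piece construction. Rather than trying to build $\bar f$ on the trace of each spine and then glue across JSJ tori, they first observe that for a totally periodic flow the boundary tori of adjacent periodic Seifert pieces can all be realized \emph{transversely} (by Proposition~\ref{prop_scalloped_surface}, since the fiber elements of the two adjacent pieces are independent and both represented by periodic orbits). Consequently every non-vertical orbit in $\wt M$ crosses a bi-infinite sequence of lifts of these transverse tori, and this \emph{itinerary} uniquely determines the orbit. An isomorphism of spines then gives a natural group-equivariant bijection between orbit spaces by matching itineraries; the hard technical point is proving continuity, not building the bijection. (The paper also notes a shortcut in the transitive case via the free-homotopy-data theorem of \cite{BFM22}.)

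Your approach is not wrong in spirit, but the obstacle you flag is real and, in the way you have set things up, not obviously surmountable. The orbit space $\orb$ does not decompose into disjoint pieces indexed by the $P_i$: the traces of the spines are just countably many chains of lozenges, and the rest of the plane is filled in by an intricate cascade of such chains for \emph{all} lifts of \emph{all} pieces (as in the Bonatti--Langevin picture). Defining $\bar f$ ``on the portion of the orbit space corresponding to $\pi_1(P_i)$-translates of the trace of the spine'' and then ``interpolating over the remaining part'' is not a well-posed program, because those portions are not open, do not cover, and their closures interact in a non-local way. The itinerary viewpoint sidesteps this entirely: it defines $\bar f$ on every orbit at once from global combinatorial data, and the vertical-orbit orientation hypothesis is used precisely to make itineraries match. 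If you want to rescue your outline, the cleanest fix is to encode each point of $\orb$ by its itinerary through the transverse tori and argue from there.
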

Immediate from the statement is that $\flow_1$ and $\flow_2$ are orbit equivalent by a homeomorphism {\em reversing} direction of the flow if and only if there exists a homeomorphism of $M$ mapping the collection of spines $\{Z_{1,i}\}$ to $\{Z_{2,i}\}$ while reversing the orientation of each vertical orbit.  The pathological examples from \cite[Section 7.1]{BFeM23} are constructed such that any candidate for orbit equivalence would need to preserve some, and reverse other, directions of vertical orbits.  

We now comment briefly on the proof.  
The fact that two flows that are orbit equivalent will have the same spines and orientations (up to global reversal) of vertical orbits 
follows directly from the construction of the spine in terms of the orbit space of the flow, as we did in the proof of Theorem \ref{thm_spines_exist}.  So the difficult direction of the theorem is to show that the data of the spine and vertical orbit direction is enough to build the orbit equivalence.

To prove this, Barbot and Fenley show that one can uniquely characterize each orbit in $\wt M$ (except for the vertical ones) by
the bi-infinite sequence of the lifts of the boundary surfaces $\partial P_i$ that the orbit intersects.  This sequence is called an {\em itinerary}. The key fact here is that the boundary tori of two periodic Seifert piece can always be realized transverse to the flow (by Proposition \ref{prop_scalloped_surface}), since the Seifert fiber element coming from the two sides are algebraically independent and are both represented by periodic orbits of the flow.  This also motivated the theory of admissible fatgarphs to describe such pieces.  

Thus, an isomorphism of the spines gives an identification of itineraries, and hence a natural group-equivariant bijection between the orbit space of one flow with the other. To finish the proof, one needs to show that this bijection is actually continuous.  This is the most technical part of the proof in \cite{BF15}.

We note also that there is an alternative approach in the special case of transitive flows, by using the main theorem of \cite{BFM22}.   
Without considering all itineraries, one can show much more easily that the {\em free homotopy data} of two flows with homeomorphic spines, via a homeomorphism preserving direction on vertical orbits, agree, and then apply the classification theorem for transitive flows via their free homopty data.  See \cite[Theorem 1.6]{BFeM23}.

\section{Free Seifert pieces}\label{sec_free_seifert} 

Having described the structure of periodic Seifert pieces, we now turn to free pieces, which are the subject of 
 \cite{BF21}. 
 
 We have already seen one example (in fact a family of examples) of a flow built from free Seifert pieces.  This is the 
Handel--Thurston construction, described in Sections \ref{sec_example_Handel_Thurston} and \ref{sec_Handel_Thurston_skew}.  Recall this flow is obtained by (re)-gluing together one or two pieces of a geodesic flow along a quasi-transverse torus.  This quasi-transverse torus becomes a JSJ torus in the re-glued manifold, and Lemma \ref{lem_HT_free} shows that each JSJ piece is free. 

Pieces of geodesic flows can be inserted into other Anosov flows in a more general way using the gluing theorem of \cite{Pau24}, producing more examples of flows with free Seifert pieces.  We describe this procedure below.   Remarkably, this kind of ``insertion of geodesic flow" describes essentially all possible examples: the main result of \cite{BF21} says (in a way stated more precisely in Theorem \ref{thm_free_pieces}) that all free pieces, up to passing to covers, resemble glued-in parts of geodesic flows.  

\subsection{Building free pieces} 
Let $S$ be a hyperbolic surface with geodesic boundary, and $T^1S$ its unit tangent bundle.  Let $\psi$ be the geodesic flow on $T^1 S$; of course this is only a partial flow since it exits and enters the boundary, which consists of a union of quasi-transverse tori or Klein bottles.  For simplicity, in this discussion we will assume the boundary are tori.  
Each torus boundary component is the union of two elementary Birkhoff annuli, one entering $N$ the other exiting $N$, and so is both alternating and maximally periodic in the sense of Definition \ref{def_quasi_transverse_subclasses}.   
Note also that, lifting to any finite fiberwise cover of $T^1 S$, the boundary remains alternating and maximally periodic. 

The Gluing Theorem \cite[Theorem A]{Pau24} gives conditions under which one can glue such a flow $\psi$ (or a lift to a fiberwise cover) to different ``building blocks" to obtain an Anosov flow.  A {\em building block} in \cite{Pau24} is a compact 3-manifold $P$ with boundary, with a $C^1$ vector field such that the maximal invariant set $\Lambda_P$ of the generated flow is
(saddle) hyperbolic, and such that $\partial P$ is alternating quasi-transverse.  Such building blocks generalize the notion of hyperbolic plugs of \cite{BBY17} described in the previous section, by replacing the condition of having a transverse boundary with more general alternating quasi-transverse boundaries.  The geodesic flow pieces and lifts to covers described above, and more generally any Seifert piece of an Anosov flow with alternating transverse boundary, are all examples. 
 
Paulet shows that building blocks can be glued together to produce Anosov flows provided that the one-dimensional laminations on $\partial P$ induced by $\cF^s(\Lambda_P)$ and $\cF^u(\Lambda_P)$ are ``filling" (a technical condition that is often easily checkable in practice) and the 
gluing map $f$ is {\em strongly quasi-transverse}, meaning that the one-dimensional laminations from each piece are glued in such a way that they can be extended to a pair of foliations on the glued torus that are transverse except along the periodic orbits in $\partial P$, where they coincide.
  
 The gluing in the Handel-Thurston example satisfies these conditions; the stable and unstable laminations between two periodic orbits are shown in Figure \ref{fig_different_number} (left).  But even more general cases apply.  For instance, two quasi-transverse tori with a different number of closed leaves of $\cF^s$ and $\cF^u$, as in Figure
\ref{fig_different_number} (right). 
\begin{figure} 
\centerline{ \mbox{
\includegraphics[width=12cm]{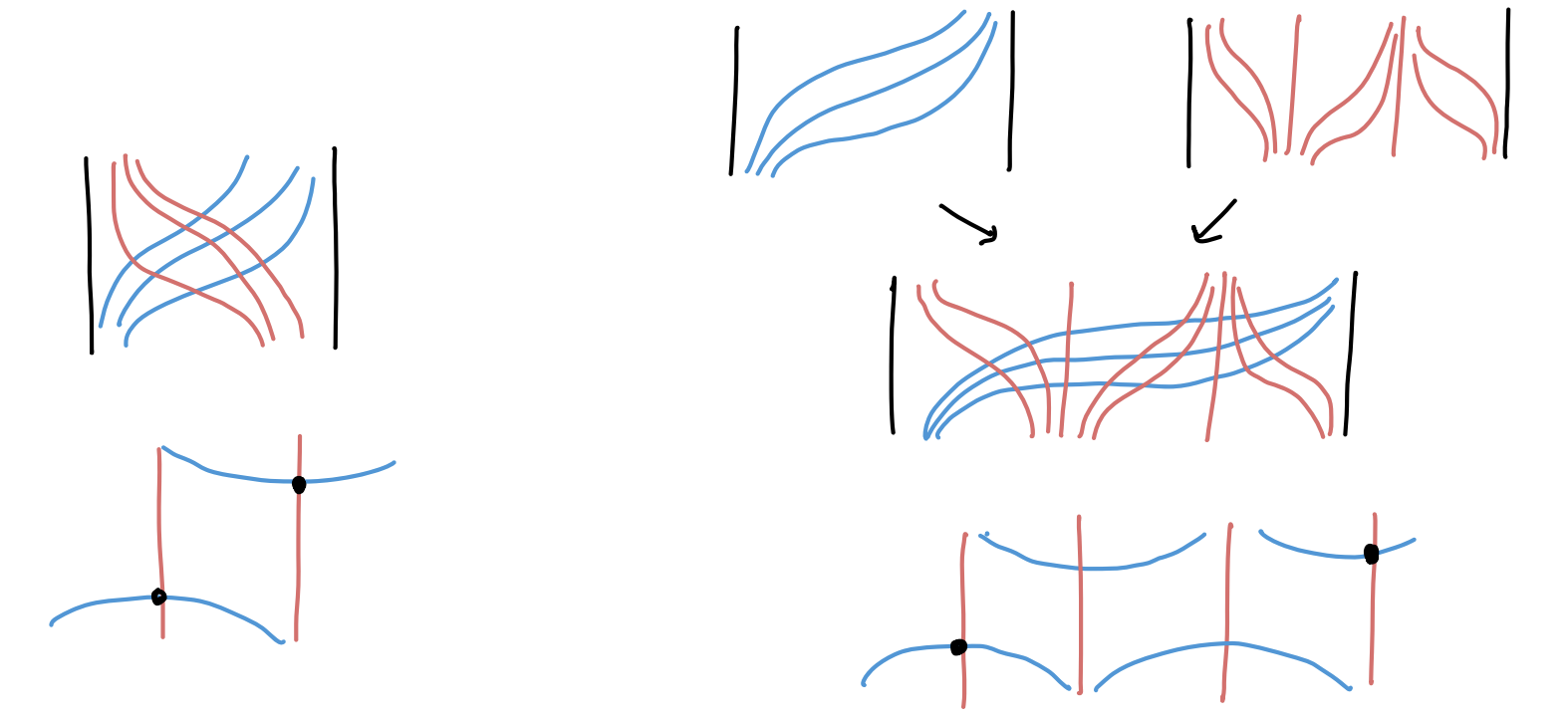}}}
\caption{On the left, the Handel-Thurston gluing.  On the right, an example of a gluing of annuli between two periodic orbits with topologically distinct foliations. The resulting trace of the Birkhoff annuli is a line.}
\label{fig_different_number}
\end{figure}

Applying this gluing procedure to the geodesic flow on a surface with boundary $T^1S$ (by gluing it to itself, if there are multiple boundary components, and/or to another block) gives a flow $\flow$ on a manifold $M$ with the following properties:
\begin{enumerate}
\item $T^1 S$ is a free Seifert piece for $\flow$;
\item The maximal hyperbolic set $\Lambda = \Lambda_{T^1 S}$ for $T^1S$ is invariant by $\flow$ and the restriction of $\flow$ to $\Lambda$ agrees with the restriction of $\psi$.  \footnote{In fact, one can build an orbit equivalence between the partial flows $\flow|_{T^1 S}$ and $\psi|_{T^1S }$ on the whole of $T^1S$, but that orbit equivalence will only preserve the stable and unstable foliations of $\Lambda$.}
\item Each boundary component $T$ is an alternating quasi-transverse torus for $\flow$, which is the union of two Birkhoff annuli;
\item Depending on the choice of the building block glued to $\psi$ (see \cite[Section 7]{Pau24}), the Birkhoff annulus contained in $T$ where the flow exits $T^1 S$ can have any even number of closed leaves of the stable foliation. Similarly, the entering Birkhoff annulus can have any even number of closed leaves of the unstable foliation. 
\end{enumerate}
Equivalent to the last condition is that the trace of the Birkhoff annuli in $T$ can be a line of lozenges instead of a single lozenge, as shown in Figure \ref{fig_different_number}.

This shows that one can always build examples of Anosov flows with free Seifert pieces, such that the restriction of the flow to the maximal invariant of the piece is (up to finite covers) a lift of a geodesic flow on a surface with boundary, and where the induced stable and unstable foliations on the boundary surfaces of the piece may (or may not) have extra closed leaves.  We next discuss a converse to this statement.  

\subsection{Classification of free pieces} 
The following is a special case of the main theorem of \cite{BF21}, under the simplifying assumption that the union of quasi-transverse tori in $P$ are embedded.  

\begin{theorem}[Barbot--Fenley \cite{BF21}]\label{thm_free_pieces}
Let $P$ be a free Seifert piece of a pseudo-Anosov flow $\flow$ on a manifold $M$ with base (topologically) a hyperbolic surface $S$. Assume that all the boundaries of $P$ are embedded quasi-transverse tori, and let  $\Lambda$ denote the maximal invariant of $\flow|_P$.  Then there exists a flow $\psi$ on $P$ which is orbit equivalent to a lift of the geodesic flow on $S$, whose maximal invariant is $\Lambda$, and such that $\psi|_\Lambda$ is orbit equivalent to $\flow|_\Lambda$.
\end{theorem}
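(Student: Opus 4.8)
The plan is to reduce everything to the orbit space, following the strategy already used for Theorem \ref{thm_free_implies_skew} and Proposition \ref{prop_omega}, and then recognize the resulting picture as that of a lift of a geodesic flow via the convergence group theorem. Let $\gamma \in \pi_1(P) \subset \pi_1(M)$ be a regular fiber of $P$. Since $P$ is a free Seifert piece, $\gamma$ is not freely homotopic to a periodic orbit, so by Proposition \ref{prop_closed_orbit_fixed_point} it acts freely on $\orb$, hence freely on $\Lambda(\bfs)$ and $\Lambda(\bfu)$ by Axiom \ref{Axiom_A1}. As in the proof of Theorem \ref{thm_free_implies_skew}, both axes $A^+ \subset \Lambda(\bfs)$ and $A^- \subset \Lambda(\bfu)$ of $\gamma$ are homeomorphic to $\bR$: if, say, $A^+$ had the branching structure of Proposition \ref{prop:axis_structure}, the induced action of $\pi_1(P)$ on the ordered set of its nonseparated pairs would give a surjection $\pi_1(P) \to \bZ$ with virtually cyclic kernel (a leaf stabilizer, by Proposition \ref{prop_top_properties} and Lemma \ref{lem_no_compact_leaves}), forcing $\pi_1(P)$ to be virtually $\bZ$ or $\bZ^2$, impossible for a JSJ piece with hyperbolic base. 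Since $\gamma$ is central in $\pi_1(P)$, Observation \ref{obs_conjugate_axis} gives that $A^\pm$, and hence the set $\Omega := \{p \in \orb : \bfs(p) \in A^+, \ \bfu(p) \in A^-\}$, are $\pi_1(P)$-invariant. Applying Proposition \ref{prop_omega} to $\gamma$, and ruling out the trivial plane (which by Theorem \ref{thm_trivial_implies_suspension} would make $P$ a Euclidean-base piece) and the scalloped case (whose stabilizer is virtually $\bZ^2$ by Lemma \ref{lem_action_of_stabilizer_scalloped}), we conclude $\Omega$ is a skew-like piece, on which $\pi_1(P)$ acts preserving the two transverse foliations.

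Next I would study the $\pi_1(P)$-action on $A^+ \cong \bR$, which is a genuine (Hausdorff) line, so that no collapsing of $\Omega$ is needed at this stage. By the translation-number argument used in Theorem \ref{thm_normal_abelian_implies_rigidity}, after semiconjugacy the central element $\gamma$ acts on $A^+$ as an integral translation and every element of $\pi_1(P)$ commutes or anti-commutes with it; after passing to the index at most two subgroup $G_0$ preserving orientations, $G_0/\langle\gamma\rangle$ acts faithfully on the circle $S^1 = A^+/\langle\gamma\rangle$, with every nontrivial element having at most two fixed points by Axiom \ref{Axiom_A1}. This is a convergence action of a virtually free group — namely (a finite-index subgroup of) the fundamental group of the surface-with-boundary $S$ — and by the convergence group theorem of Tukia, Gabai and Casson--Jungreis, together with the realization of $S$ as a geometrically finite Fuchsian surface (here is where one chooses the hyperbolic structure on $S$), this action is conjugate to the standard action of a Fuchsian group on $\partial \bH^2$. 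Lifting back to $A^+\cong\bR$ and using the description of actions on the skew plane, the straightened $\pi_1(P)$-action on the associated skew plane is conjugate to the action of the fundamental group of a fiberwise cover of $T^1 S$ on the orbit space of the corresponding lift of geodesic flow, exactly as in Section \ref{ex_geodesic_flow_orbit_space}.

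To produce the flow $\psi$ itself and match it with $\flow$ on $\Lambda$, I would use the model flow construction of Theorem \ref{thm_induced_action_model_flow} (the space $W^s_+$ built from the $\pi_1(P)$-action on the straightened skew plane), adapted to a manifold with quasi-transverse boundary, to obtain a flow $\psi$ on a 3-manifold homeomorphic to $P$ which is orbit equivalent to a lift of the geodesic flow on $S$. One then checks $\psi$ can be arranged to have $\Lambda$ as its maximal invariant, using that the lifted maximal invariant $\wt\Lambda \subset \wt P$ projects into $\Omega$, and that the collapsing $\Omega \to \Omega'$ (which straightens the traces of the quasi-transverse boundary tori, where they are lines of lozenges, into single lozenges) is injective on the image of $\wt\Lambda$ — no collapsing occurs on the nonwandering part of $\flow|_P$. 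Finally, $\flow|_\Lambda$ and $\psi|_\Lambda$ are hyperbolic flows on the same set $\Lambda$ whose orbit spaces are canonically and $\pi_1(P)$-equivariantly identified, compatibly with the stable and unstable foliations; a relative version of Barbot's Theorem \ref{thm_action_determines_OEflow} (using that $\flow|_\Lambda$ has local product structure, and that one recovers the foliations, hence the unoriented orbit equivalence, from the orbit-space data rather than the flow parametrization) then yields $\flow|_\Lambda \simeq \psi|_\Lambda$.

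The main obstacle I expect is the content of the last paragraph: both the $W^s_+$ model and Barbot's theorem are stated for closed manifolds, so constructing the genuine flow $\psi$ on the 3-manifold-with-boundary $P$ with maximal invariant precisely $\Lambda$, and upgrading the equivariant homeomorphism of orbit spaces to an honest orbit equivalence of the hyperbolic sets $\flow|_\Lambda$ and $\psi|_\Lambda$, requires a careful relative analysis of the behavior near the quasi-transverse boundary that does not follow formally from the closed-manifold results cited above.
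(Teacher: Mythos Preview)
Your overall architecture matches the paper's sketch: produce the $\pi_1(P)$-invariant skew-like piece $\Omega$ via the axes of the fiber element, then recognize a hyperbolic-orbifold picture via a convergence-type theorem, then build $\psi$.  The first paragraph is essentially correct and parallels the paper.

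The gap is in your second paragraph.  You pass directly from the action of $\pi_1(P)/\langle\gamma\rangle$ on $A^+/\langle\gamma\rangle\cong S^1$ to the convergence group theorem, with the sole justification that ``every nontrivial element has at most two fixed points by Axiom \ref{Axiom_A1}.''  Two problems.  First, that claim is not what Axiom \ref{Axiom_A1} gives you: an element of $\pi_1(P)$ can fix many leaves in $A^+$ (the corners of its invariant chain), and the elements coming from the boundary tori of $P$ typically do exactly this---their traces are \emph{lines} of lozenges, so they descend to circle homeomorphisms with more than two fixed points.  Second, even if it held, ``at most two fixed points'' is not sufficient to invoke the convergence group theorem; the Kova\v{c}evi\'c examples mentioned after Exercise \ref{ex_charac_skew} are M\"obius-like actions that are not convergence groups.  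What the paper actually does is use the skew-like structure of $\Omega$ (the weakly monotone maps $s,i$ and their counterparts $s',i'$) to build a \emph{modified} one-step-up map, then restrict to the minimal set of the action on $A^+$ and show the result is semiconjugate to an \emph{extended convergence group} in Thurston's sense (Theorem \ref{thm_extended_convergence}, in a version for manifolds with boundary).  In other words, the collapsing you defer to your third paragraph---straightening the lines of lozenges at $\partial P$ into single lozenges---has to happen \emph{before} you can appeal to any convergence-group machinery, and the correct output is proper discontinuity on triples, not a pointwise fixed-point count.  Your acknowledged obstacle in the final paragraph is real, but the more fundamental missing idea is this modification step.
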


If boundary tori are not embedded, one can instead obtain an analogous result after lifting to a cover of $P$.  See \cite[Theorem 7.1]{BF21} for a detailed statement.   
However, if one is willing to pass to covers, one can do even better --  \cite[Lemma 9.2]{FPPR21}, together with the precise description of non-embedded tori in \cite[Theorem 6.10]{BF13} shows that one can find a finite cover $\hat M$ of $M$ such that all the boundary tori of the lift $\hat P$ in $\hat M$ are embedded, and then Theorem \ref{thm_free_pieces} applies directly.  

As an immediate corollary, we have:
\begin{corollary}
If $P$ is a free periodic piece of Barbot-Fenley modified JSJ decomposition of $\flow$, then $\flow$ does not have any singular orbit in the interior of $P$.
\end{corollary}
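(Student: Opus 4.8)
The plan is to derive the corollary directly from Theorem \ref{thm_free_pieces}. That theorem provides a flow $\psi$ on $P$ which is orbit equivalent to a lift of the geodesic flow on the base surface, has the same maximal invariant set $\Lambda$ of $\flow|_P$, and satisfies that $\psi|_\Lambda$ is orbit equivalent to $\flow|_\Lambda$. Since lifts of geodesic flows are (topological) Anosov flows, $\psi$ is nonsingular on $P$; in particular every periodic orbit of $\psi$ has exactly two stable and two unstable prongs. The idea is then that a singular orbit of $\flow$ contained in the interior of $P$ would lie in $\Lambda$ and, transported by the orbit equivalence of the restricted flows, would force some periodic orbit of $\psi$ to be singular.

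First I would reduce to the case in which all boundary tori of $P$ are embedded quasi-transverse surfaces, so that Theorem \ref{thm_free_pieces} applies on the nose. If they are not embedded, then by \cite[Lemma 9.2]{FPPR21} together with the description of non-embedded quasi-transverse tori in \cite[Theorem 6.10]{BF13} one may pass to a finite cover $\widehat{M} \to M$ in which the preimage $\widehat{P}$ of $P$ has embedded boundary tori; a singular orbit of $\flow$ contained in the interior of $P$ lifts to finitely many singular orbits of the lifted flow contained in the interior of $\widehat{P}$, so it suffices to treat this case. (We also tacitly use that the base orbifold of a free Seifert piece of a JSJ decomposition is hyperbolic; the competing Euclidean-base possibilities, $T^2 \times I$ and the twisted $I$-bundle over the Klein bottle, either are incompatible with a nontrivial JSJ decomposition or can be handled separately.)

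Now suppose for contradiction that $\alpha$ is a singular orbit of $\flow$ contained in the interior of $P$. Being periodic, its complete orbit is the closed curve $\alpha$, which stays in the interior of $P$, so $\alpha \subset \Lambda$. Let $\beta$ be the periodic orbit of $\psi$ that corresponds to $\alpha$ under the orbit equivalence $\flow|_\Lambda \to \psi|_\Lambda$. The central claim is that the number $p \geq 3$ of prongs of $\alpha$ is an invariant of $\flow|_\Lambda$: since $\Lambda$ is a hyperbolic set with local product structure and $\alpha$ is a non-isolated point of $\Lambda$ sitting in the interior of the JSJ piece, each of the $p$ stable prongs and each of the $p$ unstable prongs of the local weak stable and unstable leaves of $\alpha$ meets $\Lambda$ in a set accumulating on $\alpha$ (produced by bracketing $\alpha$ with nearby points of $\Lambda$ lying in distinct quadrants of $\alpha$). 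An orbit equivalence of the restricted flows carries these ``prong directions inside $\Lambda$'' bijectively onto those of $\beta$, so $\beta$ would have at least three stable and at least three unstable prongs, contradicting that $\psi$ is nonsingular. Hence $\flow$ has no singular orbit in the interior of $P$.

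The step I expect to be the main obstacle is making this central claim precise: showing that every prong of a singular orbit $\alpha \subset \Lambda$ really does meet $\Lambda$ near $\alpha$, so that the prong count is recorded by $\flow|_\Lambda$ and thus preserved by the orbit equivalence. The cleanest implementation is probably in the orbit space: the singular point $\widehat{\alpha} \in \orb$ has $2p$ quadrants, each carrying a half-leaf of $\bfs$ and a half-leaf of $\bfu$ through $\widehat{\alpha}$, and one checks that the projection of $\widetilde{\Lambda}$ to $\orb$ meets each such quadrant arbitrarily close to $\widehat{\alpha}$; transferring this through the equivariant homeomorphism between the relevant portions of the two orbit spaces induced by the orbit equivalence yields the contradiction. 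An alternative would be to show that $\widehat{\alpha}$ must lie in the ``skew-like piece'' $\Omega$ of $\orb$ associated by Proposition \ref{prop_both_axes_R} to the freely acting fiber of $P$, and to rule out prong singularities in $\Omega$ using that the bifoliation there behaves like that of the skew plane; I would try both and keep whichever is shorter.
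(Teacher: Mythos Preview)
Your approach is the paper's: the corollary is stated there as ``an immediate corollary'' of Theorem~\ref{thm_free_pieces} with no further argument, and your plan---a singular orbit in the interior of $P$ lies in $\Lambda$, gets carried by the orbit equivalence $\flow|_\Lambda \cong \psi|_\Lambda$ to a periodic orbit of $\psi$, and $\psi$ (being a lift of a geodesic flow) has no prongs---is exactly the intended deduction. The reduction to embedded boundary via a finite cover is also appropriate.

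You are right to flag the step ``the prong count is visible in $\flow|_\Lambda$'' as the one place needing care: the orbit equivalence is only asserted on $\Lambda$, not on a neighborhood, so one cannot simply invoke the ambient foliations. The paper glosses over this. Of your two proposed fixes, the second is closer to the machinery already in place: since the element $g$ fixing $\hat\alpha$ lies in $\pi_1(P)$ and commutes with the fiber $h$, the $\pi_1(P)$-invariant axes $A^s, A^u \cong \bR$ (properly embedded, as in the proof sketch of Theorem~\ref{thm_free_pieces}) are $g$-invariant, and one can argue that $\cF^s(\hat\alpha)$ and $\cF^u(\hat\alpha)$ must lie on these axes---but then the line $A^s$ can pass through the branch point $\cF^s(\hat\alpha)$ along only two of its $p\ge 3$ branches, which is incompatible with the skew-like structure of $\Omega$ and its $\pi_1(P)$-equivariance. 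Your first route (showing directly that each prong of $\alpha$ meets $\Lambda$) also works but requires arguing that the local product structure of $\Lambda$ is rich enough in every quadrant, which is less self-contained. Either way the point is minor relative to Theorem~\ref{thm_free_pieces} itself, which is presumably why the paper does not dwell on it.
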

Note however that quasi-transverse boundary of $P$ may contain singular periodic orbits, and such examples do indeed exist.  

Giving a complete proof of Theorem \ref{thm_free_pieces} is beyond the scope of this text, but much like we did for 
Theorem \ref{thm_classification_totally_periodic}, we are well equipped here to describe the main ideas. 
The guiding principle is to show that the action of $\pi_1(P)$ on $\orb$ has an invariant copy of $\bR$ in each leaf space, allowing one to produce an action of $\pi_1(P)$ on $\bR$ which resembles that of an action coming from a skew plane.  By ``resembles" we mean that one can modify this action in a prescribed way to get an {\em extended convergence group} action in the sense of Thurston \cite{Thu97}, and thus, it is induced by a skew flow, as in Theorem \ref{thm_extended_convergence} (more precisely, one uses a version of Thurston's characterization applicable to manifolds with boundary).  
The control on the precise type of modification is enough to see that the maximal invariant survives, and hence there is an orbit equivalence with this skew flow on the maximal invariant. Finally, because the fiber element is central in $\pi_1(P)$, the same line of argument as in Theorem \ref{thm_normal_abelian_implies_rigidity} shows that some power of the one step up map is represented by the central element of $\pi_1(P)$ and so this skew flow is in fact a cover of a geodesic flow on the unit tangent bundle of a hyperbolic orbifold.  

We now explain how the invariant $\bR$ and the action are obtained.
Consider $h\in \pi_1(P)$ the element representing a regular Seifert fiber element. By definition, $h$ acts freely and thus by Theorem \ref{thm_axes_exist} preserves axes $A^{s}\subset \Lambda(\cF^{s})$  and $A^{u} \subset \Lambda(\cF^{u})$ in the stable and unstable leaf spaces, respectively.  

For concreteness, we focus on $A^s$.  Recall from Proposition \ref{prop:axis_structure} and Lemma \ref{lem_commuting_axis_2} that if $A^{s}$ is not a properly embedded copy of $\bR$, then it is either the stable saturation of a scalloped region $U$ stabilized by $h$, or a union of intervals indexed by $\bZ$, such that consecutive intervals have nonseparated boundary points.  
In the first case we deduce that $\pi_1(P)$ must stabilize $U$ and hence must contain a finite-index subgroup isomorphic to $\bZ^2$ by Proposition \ref{prop_stabilizer_scalloped_withC}. In the second case, one obtain an epimorphism of $\pi_1(P)$ onto $\bZ$, with kernel contained in the stabilizers of the non-separated leaves, which is virtually cyclic.   So in either case, $\pi_1(P)$ is virtually a direct or (semi)-direct product $\bZ\rtimes\bZ$, this contradicts the fact that the base is a hyperbolic orbifold.   

The same argument applies to $A^u$, so we can conclude that both axes $A^s$ and $A^u$ are properly embedded copies of $\bR$. Thus, one can apply Theorem \ref{thm_both_axes_R} to get that the set
\[
\Omega = \{ x\in P : \cF^s(x) \in A^s \text{ and }\cF^u(x) \in A^u\}
\]
is a skew-like piece of $P$, bounded by the monotone and $\pi_1(P)$-equivariant maps $i(l^s)=\inf\{l^u\in A^u \mid l^s \cap l^u \neq \emptyset\}$ and $s(l^s)=\sup\{l^u\in A^u \mid l^s \cap l^u \neq \emptyset\}$.

One can similarly build maps  $i',s' \colon A^u \to A^s$ that are also monotone and $\pi_1(P)$-equivariant. With this skew-like structure, one builds a suitably modified version of the  ``one-step-up map'' $A^s \to A^s$ such that the restriction of the action of $\pi_1(P)$ on $A^s$ to its minimal subset commutes with this map and is semiconjugate to an extended convergence group action of $\pi_1(P)$ on $\bR$, and apply Thurston's theorem as described above. \qed

\bibliographystyle{amsalpha}
\bibliography{monograph_ref}
 
\end{document}